\documentclass[a4paper,10pt]{article}
\usepackage[utf8]{inputenc}
\usepackage[T1]{fontenc}
\usepackage{geometry}
\usepackage{tabularx}
\usepackage{booktabs}
\usepackage{array}
\usepackage{microtype}
\usepackage{setspace}
\usepackage{xcolor}
\definecolor{lightgreen}{rgb}{.20,.60,.22}
\usepackage{pdfpages}
\usepackage[normalem]{ulem}

\usepackage{amssymb,amsmath,amsopn,amsxtra,amsthm,amsfonts}
\usepackage{mathtools}
\usepackage[mathcal]{euscript}
\let\mathscr\mathcal
\usepackage[bb=px]{mathalfa}

\usepackage{tikz}
\usetikzlibrary{matrix}
\usepackage{tikz-cd}
\usepackage{enumitem}
\usepackage{xspace}
\usepackage{xifthen}
\usepackage{xparse}

\usepackage[
backend=biber,
style=alphabetic,
maxnames=99,        
maxalphanames=99,   
minalphanames=99    
]{biblatex}
\setlist[enumerate,1]{label={(\arabic*)},itemsep=\parskip} 
\setlist[itemize,1]{itemsep=\parskip} 

\newlist{thmlist}{enumerate}{2}
\setlist[thmlist,1]{label={\em(\roman*)},ref={(\roman*)},%
	itemsep=\parskip,leftmargin=*,align=left}
\setlist[thmlist,2]{label={\em(\alph*)},ref={(\alph*)},%
	itemsep=\parskip,leftmargin=*,align=left,topsep=0.1cm}

\newlist{defnlist}{enumerate}{2}
\setlist[defnlist,1]{label={(\roman*)},ref={(\roman*)},itemsep=\parskip,%
	leftmargin=*,align=left}
\setlist[defnlist,2]{label={(\alph*)},ref={(\alph*)},itemsep=\parskip,%
	leftmargin=*,align=left,topsep=0.1cm}

\DeclareFontFamily{U}{min}{}
\DeclareFontShape{U}{min}{m}{n}{<-> udmj30}{}

\newcommand{\matone}{\ensuremath{\bigl(\begin{smallmatrix}1 & * \\ 0 & 1\end{smallmatrix}}\bigr)}
\newcommand{\matoneF}{\ensuremath{\bigl(\begin{smallmatrix}1 & * \\ 0 & 1\end{smallmatrix}}\bigr)}

\newcommand{\nc}{\newcommand}
\nc{\renc}{\renewcommand}
\nc{\ssec}{\subsection}
\nc{\sssec}{\subsubsection}
\nc{\on}{\operatorname}
\nc{\term}[1]{#1\xspace}

\nc{\sA}{\ensuremath{\mathcal{A}}\xspace}
\nc{\sB}{\ensuremath{\mathcal{B}}\xspace}
\nc{\sC}{\ensuremath{\mathcal{C}}\xspace}
\nc{\sD}{\ensuremath{\mathcal{D}}\xspace}
\nc{\sE}{\ensuremath{\mathcal{E}}\xspace}
\nc{\sF}{\ensuremath{\mathcal{F}}\xspace}
\nc{\sG}{\ensuremath{\mathcal{G}}\xspace}
\nc{\sH}{\ensuremath{\mathcal{H}}\xspace}
\nc{\sI}{\ensuremath{\mathcal{I}}\xspace}
\nc{\sJ}{\ensuremath{\mathcal{J}}\xspace}
\nc{\sK}{\ensuremath{\mathcal{K}}\xspace}
\nc{\sL}{\ensuremath{\mathcal{L}}\xspace}
\nc{\sM}{\ensuremath{\mathcal{M}}\xspace}
\nc{\sN}{\ensuremath{\mathcal{N}}\xspace}
\nc{\sO}{\ensuremath{\mathcal{O}}\xspace}
\nc{\sP}{\ensuremath{\mathcal{P}}\xspace}
\nc{\sQ}{\ensuremath{\mathcal{Q}}\xspace}
\nc{\sR}{\ensuremath{\mathcal{R}}\xspace}
\nc{\sS}{\ensuremath{\mathcal{S}}\xspace}
\nc{\sT}{\ensuremath{\mathcal{T}}\xspace}
\nc{\sU}{\ensuremath{\mathcal{U}}\xspace}
\nc{\sV}{\ensuremath{\mathcal{V}}\xspace}
\nc{\sW}{\ensuremath{\mathcal{W}}\xspace}
\nc{\sX}{\ensuremath{\mathcal{X}}\xspace}
\nc{\sY}{\ensuremath{\mathcal{Y}}\xspace}
\nc{\sZ}{\ensuremath{\mathcal{Z}}\xspace}

\nc{\bA}{\ensuremath{\mathbf{A}}\xspace}
\nc{\bB}{\ensuremath{\mathbf{B}}\xspace}
\nc{\bC}{\ensuremath{\mathbf{C}}\xspace}
\nc{\bD}{\ensuremath{\mathbf{D}}\xspace}
\nc{\bE}{\ensuremath{\mathbf{E}}\xspace}
\nc{\bF}{\ensuremath{\mathbf{F}}\xspace}
\nc{\bG}{\ensuremath{\mathbf{G}}\xspace}
\nc{\bH}{\ensuremath{\mathbf{H}}\xspace}
\nc{\bI}{\ensuremath{\mathbf{I}}\xspace}
\nc{\bJ}{\ensuremath{\mathbf{J}}\xspace}
\nc{\bK}{\ensuremath{\mathbf{K}}\xspace}
\nc{\bL}{\ensuremath{\mathbf{L}}\xspace}
\nc{\bM}{\ensuremath{\mathbf{M}}\xspace}
\nc{\bN}{\ensuremath{\mathbf{N}}\xspace}
\nc{\bO}{\ensuremath{\mathbf{O}}\xspace}
\nc{\bP}{\ensuremath{\mathbf{P}}\xspace}
\nc{\bQ}{\ensuremath{\mathbf{Q}}\xspace}
\nc{\bR}{\ensuremath{\mathbf{R}}\xspace}
\nc{\bS}{\ensuremath{\mathbf{S}}\xspace}
\nc{\bT}{\ensuremath{\mathbf{T}}\xspace}
\nc{\bU}{\ensuremath{\mathbf{U}}\xspace}
\nc{\bV}{\ensuremath{\mathbf{V}}\xspace}
\nc{\bW}{\ensuremath{\mathbf{W}}\xspace}
\nc{\bX}{\ensuremath{\mathbf{X}}\xspace}
\nc{\bY}{\ensuremath{\mathbf{Y}}\xspace}
\nc{\bZ}{\ensuremath{\mathbf{Z}}\xspace}

\nc{\dA}{\ensuremath{\mathds{A}}\xspace}
\nc{\dB}{\ensuremath{\mathds{B}}\xspace}
\nc{\dC}{\ensuremath{\mathds{C}}\xspace}
\nc{\dD}{\ensuremath{\mathds{D}}\xspace}
\nc{\dE}{\ensuremath{\mathds{E}}\xspace}
\nc{\dF}{\ensuremath{\mathds{F}}\xspace}
\nc{\dG}{\ensuremath{\mathds{G}}\xspace}
\nc{\dH}{\ensuremath{\mathds{H}}\xspace}
\nc{\dI}{\ensuremath{\mathds{I}}\xspace}
\nc{\dJ}{\ensuremath{\mathds{J}}\xspace}
\nc{\dK}{\ensuremath{\mathds{K}}\xspace}
\nc{\dL}{\ensuremath{\mathds{L}}\xspace}
\nc{\dM}{\ensuremath{\mathds{M}}\xspace}
\nc{\dN}{\ensuremath{\mathds{N}}\xspace}
\nc{\dO}{\ensuremath{\mathds{O}}\xspace}
\nc{\dP}{\ensuremath{\mathds{P}}\xspace}
\nc{\dQ}{\ensuremath{\mathds{Q}}\xspace}
\nc{\dR}{\ensuremath{\mathds{R}}\xspace}
\nc{\dS}{\ensuremath{\mathds{S}}\xspace}
\nc{\dT}{\ensuremath{\mathds{T}}\xspace}
\nc{\dU}{\ensuremath{\mathds{U}}\xspace}
\nc{\dV}{\ensuremath{\mathds{V}}\xspace}
\nc{\dW}{\ensuremath{\mathds{W}}\xspace}
\nc{\dX}{\ensuremath{\mathds{X}}\xspace}
\nc{\dY}{\ensuremath{\mathds{Y}}\xspace}
\nc{\dZ}{\ensuremath{\mathds{Z}}\xspace}

\nc{\bbA}{\ensuremath{\mathbb{A}}\xspace}
\nc{\bbB}{\ensuremath{\mathbb{B}}\xspace}
\nc{\bbC}{\ensuremath{\mathbb{C}}\xspace}
\nc{\bbD}{\ensuremath{\mathbb{D}}\xspace}
\nc{\bbE}{\ensuremath{\mathbb{E}}\xspace}
\nc{\bbF}{\ensuremath{\mathbb{F}}\xspace}
\nc{\bbG}{\ensuremath{\mathbb{G}}\xspace}
\nc{\bbH}{\ensuremath{\mathbb{H}}\xspace}
\nc{\bbI}{\ensuremath{\mathbb{I}}\xspace}
\nc{\bbJ}{\ensuremath{\mathbb{J}}\xspace}
\nc{\bbK}{\ensuremath{\mathbb{K}}\xspace}
\nc{\bbL}{\ensuremath{\mathbb{L}}\xspace}
\nc{\bbM}{\ensuremath{\mathbb{M}}\xspace}
\nc{\bbN}{\ensuremath{\mathbb{N}}\xspace}
\nc{\bbO}{\ensuremath{\mathbb{O}}\xspace}
\nc{\bbP}{\ensuremath{\mathbb{P}}\xspace}
\nc{\bbQ}{\ensuremath{\mathbb{Q}}\xspace}
\nc{\bbR}{\ensuremath{\mathbb{R}}\xspace}
\nc{\bbS}{\ensuremath{\mathbb{S}}\xspace}
\nc{\bbT}{\ensuremath{\mathbb{T}}\xspace}
\nc{\bbU}{\ensuremath{\mathbb{U}}\xspace}
\nc{\bbV}{\ensuremath{\mathbb{V}}\xspace}
\nc{\bbW}{\ensuremath{\mathbb{W}}\xspace}
\nc{\bbX}{\ensuremath{\mathbb{X}}\xspace}
\nc{\bbY}{\ensuremath{\mathbb{Y}}\xspace}
\nc{\bbZ}{\ensuremath{\mathbb{Z}}\xspace}

\nc{\mrm}[1]{\ensuremath{\mathrm{#1}}\xspace}
\nc{\mbf}[1]{\ensuremath{\mathbf{#1}}\xspace}
\nc{\mcal}[1]{\ensuremath{\mathcal{#1}}\xspace}
\nc{\msc}[1]{\ensuremath{\mathscr{#1}}\xspace}
\nc{\mfr}[1]{\ensuremath{\mathfrak{#1}}\xspace}

\renc{\bar}[1]{\overline{#1}}

\let\S\relax

\nc{\sub}{\subset}
\nc{\too}{\longrightarrow}
\nc{\hook}{\hookrightarrow}
\nc*{\hooklongrightarrow}{\ensuremath{\lhook\joinrel\relbar\joinrel\rightarrow}}
\nc{\hooklong}{\hooklongrightarrow}
\nc{\twoheadlongrightarrow}{\relbar\joinrel\twoheadrightarrow}
\nc{\shiso}{\approx}
\nc{\isoto}{\xrightarrow{\sim}}
\nc{\isofrom}{\xleftarrow{\sim}}
\renc{\ge}{\geqslant}
\renc{\le}{\leqslant}

\nc{\id}{\mathrm{id}}

\DeclareMathOperator{\Hom}{\on{Hom}}
\nc{\uHom}{\underline{\smash{\Hom}}}

\DeclareMathOperator{\End}{\on{End}}

\nc{\uEnd}{\underline{\smash{\End}}}

\renc{\lim}{\varprojlim}

\makeatletter
\newcommand{\colim@}[2]{%
	\vtop{\m@th\ialign{##\cr
			\hfil$#1\operator@font colim$\hfil\cr
			\noalign{\nointerlineskip\kern1.5\ex@}#2\cr
			\noalign{\nointerlineskip\kern-\ex@}\cr}}%
}
\newcommand{\colim}{%
	\mathop{\mathpalette\colim@{\rightarrowfill@\textstyle}}\nmlimits@
}
\makeatother

\nc{\Cofib}{\on{Cofib}}
\nc{\Fib}{\on{Fib}}
\nc{\initial}{\varnothing}
\newcommand{\opp}{\mathrm{op}}

\usepackage{url}
\usepackage{hyperref}
\hypersetup{
	colorlinks=true,
	linkcolor=purple,
	citecolor=lightgreen,
	urlcolor=cyan,
}

\usepackage[capitalise,nameinlink]{cleveref}

\newtheorem{ThmAlpha}{Theorem}
\newtheorem{thm}{Theorem}[section]
\newtheorem{cor}[thm]{Corollary}
\newtheorem{lem}[thm]{Lemma}
\newtheorem{prop}[thm]{Proposition}

\newtheorem{qu}[thm]{Question}

\theoremstyle{definition}

\newtheorem{de}[thm]{Definition}

\newtheorem{rem}[thm]{Remark}

\newtheorem{ex}[thm]{Example}

\newtheorem{nota}[thm]{Notation}

\newtheorem{warn}[thm]{Warning}
\newtheorem{cov}[thm]{Convention}

\renewcommand{\eqref}[1]{(\ref{#1})}

\numberwithin{equation}{subsection}

\nc{\Spc}{\mrm{Spc}}
\nc{\Spt}{\mrm{Spt}}
\nc{\Spec}{\on{Spec}}
\nc{\Stk}{\mrm{Stk}}
\nc{\Sch}{\mrm{Sch}}
\nc{\aff}{\mrm{aff}}
\nc{\A}{\mbf{A}}
\renc{\P}{\mbf{P}}
\nc{\cl}{{\mrm{cl}}}
\nc{\bDelta}{\mathbf{\Delta}}
\nc{\un}{\mathbf{1}}
\nc{\Tot}{\on{Tot}}
\nc{\Cech}{\textnormal{\v{C}}}
\nc{\Mod}{\mrm{Mod}}
\nc{\Qcoh}{\on{Qcoh}}
\nc{\free}{\mrm{free}}

\nc{\perf}{\mrm{perf}}
\nc{\aperf}{\mrm{aperf}}
\nc{\coh}{\mrm{coh}}
\newcommand{\Cat}{\mrm{Cat}}
\nc{\unitm}{\mbf{1}}
\nc{\sphere}{\mbf{S}}
\nc{\Z}{\mbf{Z}}
\nc{\Map}{\mrm{Map}}
\nc{\map}{\mrm{map}}
\nc{\PrL}{\mathcal{P}r^\mrm{L}}
\nc{\PrLst}{\mathcal{P}r^\mrm{L}_\mrm{St}}
\nc{\Motnc}{\mathcal{M}_{\mrm{loc}}}
\nc{\Motadd}{\mathcal{M}_{\mrm{add}}}
\nc{\Einfty}{{\sE_\infty}}
\nc{\E}[1]{{\sE_{#1}}}
\nc{\modmod}{/\!\!/}
\nc{\heart}{\heartsuit}
\nc{\proj}{\mrm{proj}}
\nc{\LL}{\on{L}}
\nc{\K}{\on{K}}
\nc{\G}{\on{G}}
\nc{\GL}{\on{GL}}
\nc{\BGL}{\on{BGL}}
\nc{\M}{\on{M}}
\nc{\KH}{\on{KH}}
\nc{\Alg}{\on{Alg}}
\nc{\CAlg}{\on{CAlg}}
\nc{\cn}{\mrm{cn}}
\nc{\hw}{\mrm{Hw}}
\nc{\htt}{\mrm{Ht}}
\nc{\Fun}{\on{Fun}}
\nc{\Funadd}{\on{Fun}_{\mrm{add}}}
\nc{\Funex}{\on{Fun}_{\mrm{ex}}}
\nc{\Ind}{\on{Ind}}
\nc{\Pro}{\on{Pro}}
\nc{\Kar}{\on{Kar}}
\nc{\Obj}{\on{Obj}}

\nc{\scr}{\term{simplicial commutative ring}}
\nc{\scrs}{\term{simplicial commutative rings}}

\nc{\Einfring}{\term{$\Einfty$-ring}}
\nc{\Einfrings}{\term{$\Einfty$-rings}}

\nc{\Ering}{\term{$\sE_1$-ring}}
\nc{\Erings}{\term{$\sE_1$-rings}}

\nc{\inftyCat}{\term{$\infty$-category}}
\nc{\inftyCats}{\term{$\infty$-categories}}

\nc{\inftyTop}{\term{$\infty$-topos}}
\nc{\inftyTops}{\term{$\infty$-toposes}}

\nc{\inftyGrpd}{\term{$\infty$-groupoid}}
\nc{\inftyGrpds}{\term{$\infty$-groupoids}}

\def\mc{\mathcal}
\def\mb{\mathbf}
\def\mbb{\mathbb}
\def\op{\mathrm}
\def\ein{\term{$\mathbb{E}_{\infty}$}}
\long\def\enu#1{%
	\begin{enumerate}[label=(\arabic*),font=\normalfont]
		#1
	\end{enumerate}
}

\newcommand{\alg}{\operatorname{Alg}}
\newcommand{\calg}{\operatorname{CAlg}}

\newcommand{\modu}{\operatorname{Mod}}
\newcommand{\lmodu}{\operatorname{LMod}}
\newcommand{\rmodu}{\operatorname{RMod}}

\newcommand{\mapp}{\operatorname{Map}}
\newcommand{\unmap}{\underline{\operatorname{Map}}}
\newcommand{\udmap}{\underline{\operatorname{Map}}}

\newcommand{\colimit}{\mathrm{colim}}
\newcommand{\funct}{\operatorname{Fun}}
\newcommand{\taug}{\tau_{\geq0}}

\newcommand{\groo}{\ensuremath{\operatorname{Groth}_{1}}\xspace}

\newcommand{\ageq}{\ensuremath{\mathcal{A}_{\geq0}}\xspace}
\newcommand{\aaa}{\ensuremath{\mathcal{A}}\xspace}

\newcommand{\prl}{\ensuremath{\mc{P}\mathrm{r}^L}\xspace}
\newcommand{\prlv}{\ensuremath{\mc{P}\mathrm{r}^L_{\mathcal{V}}}\xspace}

\newcommand{\prv}{\ensuremath{\mc{P}\mathrm{r}_{\mathcal{V}}}\xspace}

\newcommand{\pr}{\ensuremath{\mc{P}\mathrm{r}}\xspace}
\newcommand{\prad}{\ensuremath{\mc{P}\mathrm{r}_{\op{ad}}}\xspace}
\newcommand{\prlad}{\ensuremath{\mc{P}\mathrm{r}_{\op{ad}}^L}\xspace}

\newcommand{\pradone}{\ensuremath{\mc{P}\mathrm{r}_{\op{ad},1}}\xspace}
\newcommand{\prladone}{\ensuremath{\mc{P}\mathrm{r}^L_{\op{ad},1}}\xspace}
\newcommand{\prlst}{\ensuremath{\mc{P}\mathrm{r}_{\op{st}}^L}\xspace}

\newcommand{\spgeq}{\ensuremath{\operatorname{Sp}_{\geq0}}\xspace}

\newcommand{\aotimes}{\ensuremath{\mc{A}^\otimes}\xspace}
\newcommand{\botimes}{\ensuremath{\mc{B}^\otimes}\xspace}
\newcommand{\cotimes}{\ensuremath{\mc{C}^\otimes}\xspace}

\newcommand{\hatcat}{\ensuremath{\widehat{\op{Cat}}_{\infty}}\xspace}

\newcommand{\eone}{\term{\mathbb{E}_1}}

\newcommand{\einfring}{\term{$\mathbb E_\infty$-ring}}

\newcommand{\infcat}{\term{$\infty$-category}}
\newcommand{\infcats}{\term{$\infty$-categories}}

\newcommand{\syn}{\op{Syn}}

\newcommand{\mca}{\ensuremath{\mathcal{A}}\xspace}
\newcommand{\mcb}{\ensuremath{\mathcal{B}}\xspace}
\newcommand{\mcc}{\ensuremath{\mathcal{C}}\xspace}
\newcommand{\mcd}{\ensuremath{\mathcal{D}}\xspace}
\newcommand{\mce}{\ensuremath{\mathcal{E}}\xspace}

\newcommand{\mci}{\ensuremath{\mathcal{I}}\xspace}

\newcommand{\mcp}{\ensuremath{\mathcal{P}}\xspace}

\newcommand{\mcs}{\ensuremath{\mathcal{S}}\xspace}

\newcommand{\mcv}{\ensuremath{\mathcal{V}}\xspace}
\newcommand{\mcw}{\ensuremath{\mathcal{W}}\xspace}

\newcommand{\spec}{\op{Spec}}

\newcommand{\coker}{\op{coker}}

\newcommand{\opsp}{\ensuremath{\op{Sp}}\xspace}

\begin{document}
	\title{Higher algebra in $t$-structured tensor triangulated $\infty$-categories}
	\author{Jiacheng Liang}   
	\date{}
	\let\mathbb=\mathbf

	\maketitle
	
	\setstretch{1.1}
	\setcounter{tocdepth}{2}
	
	{\renewcommand{\thefootnote}{}\footnotetext{Date: August 26, 2026}}
	\vspace{-1.5em}
	\begin{abstract}
		We generalize fundamental notions of higher algebra, traditionally developed
		within the $\infty$-category of spectra, to presentably  symmetric
		monoidal stable $\infty$-categories equipped with compatible accessible
		$t$-structures, which we call $ttt$-$\infty$-categories. Under a natural structural condition, which we call ``projective rigidity'', we establish higher categorical analogues of Lazard's theorem and prove the existence and universal property of Cohn localizations.
		
		Furthermore, we generalize higher almost ring theory \cite{hebestreit2024note} to the $ttt$-$\infty$-categorical setting, showing that $\pi_0$-epimorphic idempotent algebras are in natural bijection with idempotent ideals. Additionally, by exploiting deformation theory, we establish a general \text{\text{\'{e}}}tale rigidity theorem.
		
		Finally, we characterize the moduli of such projectively rigid
		$ttt$-$\infty$-categories and show that the $\infty$-category of
		$\mathrm{Sp}$-valued presheaves on the $1$-dimensional framed cobordism
		$\infty$-category is universal among them.
	\end{abstract}
	
	\tableofcontents

	\section*{Introduction}
	\addcontentsline{toc}{section}{Introduction}
	
	Over the past few decades, the integration of homotopy theory and higher categories into derived geometry has fundamentally reshaped modern mathematics. Classical algebraic geometry and ring theory, while incredibly powerful, often struggle to gracefully handle derived phenomena such as non-transverse intersections, higher Tor-groups, and the subtleties of deformation theory. To resolve these limitations, people turned to the derived categories of rings $\mathcal{D}(R)$. However, the structural deficiencies of classical triangulated categories, such as the lack of functorial mapping cones and well-behaved limits, eventually necessitated a more robust and coherent framework. This culminated in the development of \textit{higher algebra} \cite{ha}, which systematically replaces discrete commutative rings with $\mathbb{E}_\infty$-rings and classical module categories with the stable $\infty$-categories of modules over $\mathbb{E}_\infty$-rings. 
	
	In this stable $\infty$-categorical setting, the classical tools of commutative algebra (including tensor products, localizations, flat descent, étale morphisms, and the cotangent complex) are not merely recovered but are vastly generalized and computationally enriched. The resulting machinery over the stable $\infty$-category of spectra $\mathrm{Sp}$ has been highly successful, providing foundational techniques for derived algebraic geometry, chromatic homotopy theory, and modern arithmetic geometry.  By natively encoding derived phenomena, it provides a rigorous homotopical foundation for geometric constructions.

	However, the geometric and algebraic behaviors of  spectra are frequently governed not by their topological nature, but by their interaction with the underlying $t$-structure. A key fact motivating this paper is that the flatness of an  $R$-module over a connective $\mathbb{E}_\infty$-ring $R$ can be entirely characterized by the $t$-exactness of the tensor product functor, as observed in \cite[\textsection7.2.2]{ha}.
	This suggests that the machinery of higher algebra, including (faithfully) flat morphisms, finitely presented morphisms, and étale morphisms, does not strictly depend on the specific $\infty$-category of spectra. Instead, these notions can be generalized to the setting of
	$ttt$-$\infty$-categories, namely presentably  symmetric monoidal stable
	$\infty$-categories equipped with compatible accessible $t$-structures.

	Adopting this generalized framework yields a unified approach to higher algebra
	across diverse underlying geometries, such as equivariant, motivic
	\cite{bachmann2022chow,burklund2020galois}, and analytic settings, where
	nonstandard $t$-structures naturally arise and play an important role. The foundation of this approach is the structural interplay between an accessible $t$-structure and a compatible symmetric monoidal structure, which provides the essential categorical scaffolding required to support these operations. Similar philosophies have recently surfaced in the context of derived (analytic) geometry  \cite{kelly2025localisinginvariantsderivedbornological,ben2024perspective,mantovani2023localizations,kelly2022analytic,raksit2020hochschild}.
	
	The ubiquity of this framework is perhaps best reflected by the rich variety of examples that naturally inhabit it. A central focus of this paper is the notion of \textit{projective rigidity}, a structural condition under which the dualizable objects of the connective part coincide with its compact projectives. This condition is remarkably widespread. Key examples of projectively rigid $ttt$-$\infty$-categories include the following (for a comprehensive list, see \cref{A.6}):
	
	\begin{itemize}
		\item \textbf{Spectra and filtered variants:} The standard $\infty$-category of spectra $\opsp$, graded spectra $\op{Gr(\op{Sp})}$, and filtered spectra $\op{Fil}(\opsp)$ equipped with either the pointwise or the homotopy $t$-structure.
		
		\item \textbf{Synthetic and Dirac spectra:} For an $\mathbb{E}_\infty$-ring $R$, the $\infty$-category of (connective) synthetic $R$-modules $\mathrm{Syn}_{R,\geq 0} = \mathcal{P}_\Sigma(\mathrm{Mod}_R^{\mathrm{ff}}(\op{Sp}))$ \cite{hesselholt2023dirac}. Here, $\mathrm{Mod}_R^{\mathrm{ff}}(\op{Sp}) \subset \modu_R(\opsp)$ denotes the smallest full subcategory containing $R$ that is closed under finite direct sums, shifts, and retractions. This forms a projectively rigid $ttt$-$\infty$-category whose heart is $\mathrm{Syn}_R^\heartsuit \simeq \mathrm{Mod}_{\pi_*(R)}(\mathrm{Ab})$.
		
		\item \textbf{Equivariant and motivic contexts:} The $\infty$-category of genuine $G$-spectra $\op{Sp}_{G}$ for a finite group $G$ equipped with the pointwise $t$-structure, the Artin motivic spectra $\op{SH}(k)^{\op{A}}$ over a perfect field $k$ equipped with the very effective $t$-structure \cite{burklund2020galois}, and the Voevodsky $\infty$-category $\op{DM}(k,\mathbb{Z}[1/p])$ equipped with the Chow $t$-structure \cite{bondarko2010weight,bachmann2021norms}.
		
		\item \textbf{Geometric and sheaf-theoretic examples:} (Derived) quasi-coherent sheaves $\op{QCoh}(X)$ on an affine quotient stack, and the $\infty$-category of sheaves $\op{Shv}(X,\op{Sp})^\otimes$ on a stone space.
	\end{itemize}
	
	In this work, we systematically develop the theory of higher algebra internal to $ttt$-$\infty$-categories. Under the assumption of projective rigidity, we establish generalized analogues of Lazard's theorem and étale rigidity. Furthermore, we connect our framework with recent advances in higher almost ring theory \cite{hebestreit2024note} and locally rigid $\infty$-categories \cite{ramzi2024locally}. Ultimately, we show that the class of projectively rigid $ttt$-$\infty$-categories is governed by a universal geometric example related to the $1$-dimensional cobordism category.
	\begin{cov}
		Throughout this paper, a $ttt$-$\infty$-category
		\[
		(\mathcal{A}^\otimes,\mathcal{A}_{\geq0},\mathcal{A}_{\leq0})
		\]
		means a presentably  symmetric monoidal stable $\infty$-category
		$\mathcal{A}^\otimes\in\operatorname{CAlg}(\prlst)$ equipped with an
		accessible $t$-structure
		$(\mathcal{A}_{\geq0},\mathcal{A}_{\leq0})$
		compatible with the symmetric monoidal structure in the following sense:
		\enu{
			\item The unit $\mathbf{1}$ belongs to $\mathcal{A}_{\geq0}$;
			\item If $X,Y\in\mathcal{A}_{\geq0}$, then
			$X\otimes Y\in\mathcal{A}_{\geq0}$.
		}
	\end{cov}
	\begin{rem}
		The term $ttt$ is used here as a name and is motivated by the tensor
		triangulated structure on the homotopy category
		$\operatorname{h}\mathcal{A}$ together with the chosen $t$-structure
		on $\mathcal{A}$.
	\end{rem}
	\subsection*{Main results}
	\addcontentsline{toc}{subsection}{Main results}

	We begin with the structure of flat and almost perfect modules. Under the assumption of projective rigidity, we establish a higher categorical analogue of Lazard's theorem, demonstrating that flat modules are entirely controlled by the filtered colimits of compact projective modules:
	
	\begin{ThmAlpha}[\cref{cocompfil}, and \cref{cocompsimplicial}]
		Let $R \in \operatorname{Alg}(\mathcal{A}_{\geq 0})$ be a connective $\eone$-algebra in \mca. Then the following hold:
		\enu{
			\item (Lazard’s Theorem.)\\
			Suppose that $\mathcal{A}^\otimes_{\geq 0}$ is projectively rigid (see \cref{projrigid}). Then an $R$-module is flat if and only if it can be written as a filtered colimit of compact projective $R$-modules. 
			\item 
			Suppose that $\mathcal{A}_{\geq 0}$ is projectively generated.	An $R$-module is almost perfect if and only if it can be written as a geometric realization of compact projective $R$-modules.
		}
	\end{ThmAlpha}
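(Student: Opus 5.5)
The plan is to follow the strategy of \cite[\S7.2.2]{ha} (Lazard's theorem for connective $\Einfty$-rings), with the heart $\mathcal{A}^\heart$ replacing abelian groups. Projective rigidity is used precisely where Lurie uses that finitely generated free modules have well-behaved duals. Throughout, a compact projective left $R$-module means a retract of $R\otimes Q$ with $Q$ compact projective in $\mathcal{A}_{\geq0}$. I use the following characterization of flatness: $M$ is flat if $\pi_0M$ is flat over $\pi_0R$ in $\mathcal{A}^\heart$ and $\pi_nR\otimes_{\pi_0R}\pi_0M\simeq\pi_nM$. This agrees with $t$-exactness of $-\otimes_R M$ by the same argument as in \cite[7.2.2.15]{ha}.

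\textbf{Easy direction of (1).} I first show that compact projective $R$-modules are flat. By projective rigidity, a compact projective $Q$ is dualizable in $\mathcal{A}_{\geq0}$, with $Q^\vee$ connective. Then $Q\otimes-$ is left adjoint to $Q^\vee\otimes-$, and both functors preserve connective objects. Hence $Q\otimes-$ preserves both $\mathcal{A}_{\geq0}$ and $\mathcal{A}_{\leq0}$, so it is $t$-exact, and therefore $(N\otimes_R R)\otimes Q$ is $t$-exact in $N$. Flatness passes to retracts and to filtered colimits, since the $t$-structure is accessible and hence $\mathcal{A}_{\leq0}$ is closed under filtered colimits. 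This gives the ``if'' direction.

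\textbf{Hard direction of (1).} Let $M$ be flat. I first prove an abelian Lazard theorem in $\mathrm{LMod}_{\pi_0R}(\mathcal{A}^\heart)$: a flat object is a filtered colimit of compact projectives $\pi_0R\otimes\pi_0Q$. The approach is to show that the category of compact projectives over $\pi_0M$ is filtered via the equational criterion.
\begin{itemize}
\item Given maps $P\to\pi_0M$, one takes a direct sum, so cocones over finite discrete diagrams exist.
\item Given $f,g\colon P\rightrightarrows P'$ equalized by a map $h\colon P'\to\pi_0M$, one must factor $h$ through some $P'\to P''$ that coequalizes $f$ and $g$.
\end{itemize}
Dualizability is exactly what makes the second step work. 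It identifies $\operatorname{Hom}(\pi_0R\otimes\pi_0Q,N)\simeq\pi_0(\pi_0Q^\vee\otimes N)$ functorially in $N$, so the relation $h\circ(f-g)=0$ becomes an element of a kernel of the form $K\otimes_{\pi_0R}\pi_0M$. Flatness of $\pi_0M$ then gives $\ker(\cdots)\otimes\pi_0M\simeq\ker(\cdots\otimes\pi_0M)$, which lets one write that element as coming from a finite projective stage, exactly as in the classical proof. I expect this to be the main obstacle. One needs enough projective generators of the heart of the form $\pi_0Q$ with $Q$ dualizable, and $\pi_0$ of such must still behave like a dual in $\mathcal{A}^\heart$.

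\textbf{Lifting to $R$-modules.} Once the abelian statement holds, I lift it, following \cite[7.2.2.15]{ha}. I consider the $\infty$-category $\mathcal{C}=(\mathrm{LMod}_R^{\mathrm{cp}})_{/M}$. For compact projective $P$, $\pi_0\operatorname{Map}(P,M)\simeq\operatorname{Hom}(\pi_0P,\pi_0M)$, and $\pi_0$ induces an equivalence between the homotopy category of compact projective $R$-modules and compact projective $\pi_0R$-modules. Filteredness of $\mathcal{C}$ (extending $\partial\Delta^n$-diagrams) then reduces to the abelian statement together with connectivity of mapping spaces, as in Lurie. The canonical map $\colim_{\mathcal{C}}P\to M$ is an isomorphism on $\pi_0$ by the abelian Lazard theorem. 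Both sides are flat, so the formula $\pi_n(-)\simeq\pi_nR\otimes_{\pi_0R}\pi_0(-)$ makes it an isomorphism on all $\pi_n$, hence an equivalence.

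\textbf{Part (2).} Here only projective generation of $\mathcal{A}_{\geq0}$ is assumed, so that the objects $R\otimes Q$ generate $\mathrm{LMod}_R(\mathcal{A})_{\geq0}$ under sifted colimits.
\begin{itemize}
\item For ``if'': given $M\simeq|P_\bullet|$, the skeletal filtration shows $\tau_{\leq n}M\simeq\tau_{\leq n}|\mathrm{sk}_{n+1}P_\bullet|$. The object $|\mathrm{sk}_{n+1}P_\bullet|$ is a finite colimit of compact objects, so $\tau_{\leq n}M$ is compact in $\tau_{\leq n}\mathrm{LMod}_R$, i.e.\ $M$ is almost perfect.
\item For ``only if'': I build a simplicial resolution by the standard inductive skeletal construction (\cite[7.2.4.11]{ha}). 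At stage $n$, almost perfectness means the fiber of $|\mathrm{sk}_nP_\bullet|\to M$ has $\pi_n$ finitely generated in the heart. Choosing a compact projective cover of that $\pi_n$ and attaching it as $P_{n+1}$ kills it, and the colimit over all stages gives $|P_\bullet|\simeq M$.
\end{itemize}
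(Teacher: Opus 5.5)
Part (2) matches the paper. The ``only if'' direction is the same inductive construction as \cite[Prop.~7.2.4.11]{ha}: the paper builds $D(0)\to D(1)\to\cdots$ with $\operatorname{cofib}(f_n)[-n]$ compact projective and then applies Dold--Kan. Your ``if'' direction is what \cref{aper}(4) records.

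For part (1) you follow Lurie's two-step scheme: prove Lazard in $\mathrm{LMod}_{\pi_0R}(\mathcal A^\heartsuit)$, then lift. The paper follows Stefanich and never passes through the heart.
\begin{itemize}
\item It takes the left fibration $\mathcal E\to\mathrm{RMod}_R(\mathcal A)_{\geq0}$ classified by $\operatorname{Map}_{\mathcal A_{\geq0}}(\mathbf 1,-\otimes_RM)$.
\item $\mathcal E$ is cofiltered because flatness makes this functor left exact.
\item Compact projectivity of $\mathbf 1$ moves cone points into compact projectives.
\item Dualizing via \cref{ldualeq} shows that $\operatorname{Map}(-,M)$, restricted to compact projective left modules, is an Ind-object.
\end{itemize}
This handles all higher coherences at once and needs neither the abelian theorem nor a final $\pi_n$-comparison.

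Your abelian step is sound; it is in effect \cref{lazard1}(3). With $u=f-g$, the relation $h\circ u=0$ gives, by $1$-flatness, a map $\mathbf 1\to\ker(u^\vee)\otimes_{\pi_0R}\pi_0M$. Since $\mathbf 1$ is compact $1$-projective in the heart, this factors through $Q\otimes_{\pi_0R}\pi_0M$ for some compact projective $Q\to\ker(u^\vee)$. Dualizing gives $P'\to Q^\vee$.

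The gap is in the lifting step. The equivalence of $h(\mathrm{LMod}_R^{\mathrm{cp}})$ with compact $1$-projective $\pi_0R$-modules, plus connectivity of mapping spaces, does not give filteredness of $\mathcal C=(\mathrm{LMod}_R^{\mathrm{cp}})_{/M}$.
\begin{itemize}
\item The mapping spaces of $\mathcal C$ are fibers of $\operatorname{Map}(P,P')\to\operatorname{Map}(P,M)$. Here $\pi_k\operatorname{Map}(P,M)\cong\operatorname{Hom}(\pi_0P,\pi_kM)$ and $\pi_kM\cong\pi_kR\otimes_{\pi_0R}\pi_0M$, which is nonzero in general.
\item Extending a finite diagram $K\to\mathcal C$ to a cone amounts to factoring $N=\operatorname{colim}_KP_\bullet\to M$ through a compact projective.
\item Such an $N$ is connective and perfect but usually not projective. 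Already for $K=\partial\Delta^2$ one gets $N\simeq\operatorname{cofib}(P_0\to P_2)$. A map $N\to M$ then carries a nullhomotopy whose indeterminacy lies in $\operatorname{Hom}(\pi_0P_0,\pi_1M)$, which the $\pi_0$-level data you control cannot see.
\end{itemize}

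The missing idea is a factorization lemma via duality. $N$ is left dualizable, being a finite colimit of left dualizables, and $t$-exactness of $-\otimes_RM$ gives
\[
\pi_0\operatorname{Map}(N,M)\cong\operatorname{Hom}_{\mathcal A^\heartsuit}\bigl(\mathbf 1,\pi_0(N^\vee)\otimes_{\pi_0R}\pi_0M\bigr),
\]
where $N^\vee$ is non-connective. Now combine the abelian Lazard theorem for $\pi_0M$ with compactness of $\mathbf 1$. Then lift the resulting compact $1$-projective stage, together with its map to $\pi_0M$, using \cref{eqproj} and projectivity. This produces the factorization.

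With this lemma your route goes through. Your final $\pi_0$-comparison is also fine: $h\mathcal C$ maps fully and essentially surjectively, hence cofinally, onto the slice of compact $1$-projectives over $\pi_0M$.

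Two smaller points.
\begin{itemize}
\item Accessibility of the $t$-structure does not by itself make $\mathcal A_{\leq0}$ closed under filtered colimits. Here it holds because $\mathcal A$ is Grothendieck under projective generation and right completeness.
\item The converse half of your flatness characterization is not formal. In this paper it is \cref{flatgradedcriterion}, which needs projective rigidity and a K\"unneth spectral sequence, and it fails without rigidity (\cref{differenceflat}). You only use the forward direction, \cref{pi0flat}(1),(3), so this is harmless.
\end{itemize}
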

	
	This tight control over modules via projective rigidity allows us to systematically lift discrete algebraic structures from the heart $\mathcal{A}^\heartsuit$ to the ambient category $\mathcal{A}$. Specifically, we show that the derived category of a discrete ring in the heart provides a  model for its module category in $\mathcal{A}$, generalizing such result in the case of spectra.
	
	\begin{ThmAlpha}[\cref{dicre}]
		Suppose that the $ttt$-\infcat $(\mc{A}^\otimes, \mc{A}_{\geq0}) $ is projectively rigid. 
		For any discrete $\eone$-algebra $R\in \alg(\mathcal{A}^{\heartsuit})$, there exists a unique equivalence 
		$$\mc{D}(\lmodu_{\pi_0R}(\mathcal{A}^{\heartsuit}))\xrightarrow{\sim} \lmodu_{R}(\mathcal{A})$$
		which induces the identity functor on the heart.
		Furthermore, if $R$ is an \ein-algebra, then this equivalence is symmetric monoidal.	
	\end{ThmAlpha}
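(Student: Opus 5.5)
The idea is to identify $\lmodu_R(\mathcal{A})$ as the derived $\infty$-category of its heart. By Lurie's recognition criterion \cite[Prop.~1.3.3.7]{ha} (the derived $\infty$-category of a Grothendieck abelian category with enough projectives is characterized by its universal property among stable categories with $t$-structure), it suffices to establish three things.
\begin{itemize}
\item[(a)] The $t$-structure on $\lmodu_R(\mathcal{A})$ induced from $\mathcal{A}$ (connective means the underlying object is connective) is accessible, right complete and left complete.
\item[(b)] Its heart is equivalent to $\lmodu_{R}(\mathcal{A}^\heartsuit)$.
\item[(c)] The connective part is generated under sifted colimits by a set of compact projective objects $P$ which are discrete, i.e.\ lie in the heart. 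For such $P$ and every discrete module $M$, we need $\op{Ext}^i(P,M)=0$ for $i>0$.
\end{itemize}
Given these, the canonical $t$-exact functor $\mathcal{D}^-(\lmodu_{R}(\mathcal{A}^\heartsuit))\to \lmodu_R(\mathcal{A})$ extending the identity on hearts is an equivalence on bounded-above parts. It then extends to the full categories by left completeness, on both sides, using the $\mathcal{D}$ of a Grothendieck abelian category with its completed convention. Uniqueness follows from the universal property: a $t$-exact functor from $\mathcal{D}$ is determined by its restriction to the heart.

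For (a) and (b): the forgetful functor $\lmodu_R(\mathcal{A})\to\mathcal{A}$ is conservative, preserves limits and colimits, and $R$ is connective, so the $t$-structure transfers. Left and right completeness of the module category reduce to those of $\mathcal{A}$. Projective rigidity should imply this, since $\mathcal{A}_{\ge0}$ is then projectively generated and hence of the form $\mathcal{P}_\Sigma$. The heart identification is the standard one: truncation $\tau_{\le0}$ is lax monoidal on connective objects, so a discrete module over $R$ is the same as a module over $\pi_0R=R$ in $\mathcal{A}^\heartsuit$.

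The main step is (c). By projective rigidity, take compact projective generators $Q$ of $\mathcal{A}_{\ge0}$ that are dualizable with connective dual; the free modules $R\otimes Q$ then generate $\lmodu_R(\mathcal{A})_{\ge0}$ under sifted colimits. The obstacle is that $R\otimes Q$ need not be discrete. I will fix this with the duality: for a discrete module $M$,
\[\Map_R(R\otimes Q, M)\simeq \Map_{\mathcal{A}}(Q,M)\simeq \Map_{\mathcal{A}}(\mathbf{1}, Q^\vee\otimes M).\]
Since $Q^\vee$ is connective and compact projective, $\Map(Q,-)$ is $t$-exact on connective objects, so $\Map(Q,M)$ is discrete. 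Hence $\tau_{\le 0}(R\otimes Q)=\pi_0(R\otimes Q)$ is a compact projective object of the heart. It remains to show $\op{Ext}^i$ vanishes out of it into discrete modules, i.e.\ that the projective objects of $\lmodu_R(\mathcal{A})_{\ge0}$ are already discrete. I would deduce this from the Lazard-type statement (Theorem A(1) / \cref{cocompfil}), by showing $R\otimes Q$ is flat over the discrete $R$ and hence itself discrete. Indeed, flat modules over a discrete algebra satisfy $\pi_n M\simeq \pi_nR\otimes_{\pi_0R}\pi_0M=0$ for $n>0$. Flatness of $R\otimes Q$ holds because tensoring with a dualizable object with connective dual is $t$-exact (it has adjoints on both sides preserving connectivity). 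So $R\otimes Q$ is discrete and projective, which gives (c).

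For the symmetric monoidal claim when $R$ is $\mathbb{E}_\infty$: $\lmodu_R(\mathcal{A})$ is presentably symmetric monoidal with compatible $t$-structure, and its compact projective generators $R\otimes Q$ are closed under tensor products. Both sides are then obtained as $\mathcal{P}_\Sigma$ of the same symmetric monoidal additive $1$-category of such projectives (inverting the suspension/stabilizing). I would conclude by the universal property of the symmetric monoidal structure on $\mathcal{P}_\Sigma$ and its stabilization, using that the equivalence restricts to the identity on these projectives.
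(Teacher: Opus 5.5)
Your proposal is correct and turns on the same point as the paper's proof: the compact projective $R$-modules are retracts of $R\otimes Q$, and these are discrete because $(-)\otimes Q$ is $t$-exact for $Q$ dualizable with connective dual (so Lazard's theorem is not needed), hence $\pi_0$ identifies them with the compact $1$-projectives of the heart---the paper simply concludes from the $\mathcal{P}_\Sigma$-descriptions of both connective parts plus right completeness, instead of invoking Lurie's $\mathcal{D}^-$ recognition criterion. One correction: the passage from the bounded parts $\mathcal{D}^-$ to the full categories uses right completeness ($X\simeq\varinjlim_n\tau_{\geq -n}X$), not left completeness, which only enters as the hypothesis on the target in Lurie's criterion; both sides are right complete (no ``completed convention'' for $\mathcal{D}$ is needed), so the step goes through.
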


	We demonstrate that a faithfully flat map whose $\pi_0$-truncation is an $\aleph_n$-compact module automatically is descendable, generalizing a result in \cite{mathew2016galois}.
	\begin{ThmAlpha}[{\cref{ffdescendable}}]
		Suppose that the $ttt$-\infcat $(\mc{A}^\otimes, \mc{A}_{\geq0}) $ is projectively rigid.  Let $\phi: A\to B \in \calg(\mc{A})$ be a faithfully flat map of \ein-algebras such that $\pi_0B$ is a $\aleph_n$-compact $\pi_0A$-module\footnote{It means $\pi_0B\in\modu_{\pi_0A}(\mca^\heartsuit)^{\aleph_n}$.} for some $n\geq 0$. Then $\phi$ is descendable.
	\end{ThmAlpha}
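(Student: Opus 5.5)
We follow the strategy of \cite{mathew2016galois} for spectra: reduce descendability to the vanishing of a phantom-type obstruction, and control that obstruction using the Lazard theorem (Theorem A(1)) together with a bound on projective dimension for $\aleph_n$-compact flat modules.

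\textbf{Step 1: faithful flatness and $\operatorname{Fib}$.} Let $I=\operatorname{Fib}(A\to B)\in\modu_A(\mca)$. Descendability of $\phi$ means that $A$ lies in the thick $\otimes$-ideal generated by $B$. Equivalently, for some $m$ the composite
\[
I^{\otimes m}\to I^{\otimes(m-1)}\to\cdots\to I\to A
\]
is nullhomotopic as a map of $A$-modules. Since $\phi$ is faithfully flat, $\pi_0A\to\pi_0B$ is injective and $\pi_*B\simeq\pi_*A\otimes_{\pi_0A}\pi_0B$. Hence $I\simeq \Sigma^{-1}\operatorname{cofib}$, where the cofiber $C=B/A$ is a flat $A$-module with $\pi_0C=\pi_0B/\pi_0A$. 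So we must show that the $m$-fold tensor power of the map $\delta\colon\Sigma^{-1}C\to A$ is null for some $m$. Note that $\delta$ becomes null after tensoring with $B$, because $A\to B$ splits after base change. Flatness of $C$ follows from faithful flatness by a standard long exact sequence in the heart.

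\textbf{Step 2: Lazard presentation and projective dimension.} By Theorem A(1), which uses projective rigidity, $C$ is a filtered colimit of compact projective $A$-modules. Since $\pi_0C$ is $\aleph_n$-compact as a $\pi_0A$-module (a quotient of $\aleph_n$-compact modules), we can choose the colimit indexed by a filtered diagram of size $\le\aleph_n$. By the classical Gruson--Jensen/Mitchell-type argument, an $\aleph_n$-indexed filtered colimit of projectives has projective dimension $\le n+1$. This is shown by induction on $n$:
\begin{itemize}
\item for $n=0$ one uses a countable telescope, giving a cofiber sequence of sums of projectives;
\item for higher $n$ one writes the $\aleph_n$-directed system as a continuous union of $\aleph_{n-1}$-directed subsystems.
\end{itemize}
The argument transports verbatim to $\modu_A(\mca)$, using that compact projectives are closed under sums and that $\Hom(P,-)$ is computed on $\pi_0$ for projective $P$.

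We deduce that $\mathrm{Ext}^{k}_A(C,M)=0$ for $k>n+1$ and all discrete $M$. We also get that $C^{\otimes j}$ is again flat of the same type: it is an $\aleph_n$-indexed colimit of compact projectives, using that compact projectives are closed under $\otimes$ by projective rigidity (they coincide with the dualizables).

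\textbf{Step 3: vanishing of the obstruction.} The map $\delta^{\otimes m}\colon \Sigma^{-m}C^{\otimes m}\to A$ is a class in $\pi_0\Map_A(C^{\otimes m},\Sigma^{m}A)$. Its restriction to every compact projective $P\to C^{\otimes m}$ vanishes, since $P$ is projective and $\Sigma^m A$ is $m$-connective in the relevant sense. So $\delta^{\otimes m}$ is a phantom-type map, detected by $\lim^k$ terms of the Lazard system. These $\lim^k$ vanish for $k>n+1$ by Step 2. Filtering by $k$ (the Milnor-type spectral sequence for $\Map$ out of a filtered colimit), each power of $\delta$ raises the filtration by one, because $\delta$ itself is already zero on projectives. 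Hence $\delta^{\otimes(n+2)}\simeq0$, which shows that $\phi$ is descendable with exponent $\le n+2$.

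\textbf{Main obstacle.} The hardest part is making this last filtration argument precise in $\mca$. One must verify that the $\lim^k$/Ext filtration on $\Map_A(C^{\otimes m},-)$ is multiplicative with respect to composing the maps $\delta\otimes\mathrm{id}$, as in Mathew's proof via Adams-type towers. The first thing to try is to run Mathew's argument directly with the $B$-based Adams tower: show that the $B$-Adams filtration of $\mathrm{id}_A$ is bounded by the projective dimension of $C$ from Step 2, using only exactness and compact-projective generation.
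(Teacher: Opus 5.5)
Your architecture matches the paper's. With $K=\operatorname{fib}(\phi)$ and $C=\Sigma K=\operatorname{cofib}(\phi)$ flat, it suffices that $\rho(m)\colon K^{\otimes m}\to A$ be null for some $m$. Then $A$ is a retract of $\operatorname{cofib}(\rho(m))$, which lies in the thick subcategory generated by $B$-modules, and $[\rho(m)]$ is a class in $\operatorname{Ext}^m_A(C^{\otimes m},A)$.

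\textbf{The gap is Step 3, and the ``main obstacle'' you single out is not the real issue.} You need no multiplicativity of a filtration under composing the maps $\delta\otimes\mathrm{id}$. The claim that each power of $\delta$ raises the filtration by one is neither proved nor needed. The missing observation is that for $m>n$ the \emph{whole} group $\operatorname{Ext}^m_A(C^{\otimes m},A)$ is zero. Indeed, $C^{\otimes m}$ is again flat, $\pi_0(C^{\otimes m})\simeq(\pi_0C)^{\otimes m}$ is again $\aleph_n$-compact, and $A$ is connective. So \cref{D.3.3.7} ($\operatorname{Ext}^m_A(M,N)=0$ for $m>n$, with $M$ flat with $\aleph_n$-compact $\pi_0$ and $N$ connective) kills the group, and $\rho(m)$ is null for degree reasons alone. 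This is visible even in your own picture. Each $\operatorname{Map}_A(P_i,\Sigma^mA)$ has $\pi_j=0$ for $j<m$, so the only contributions to $\pi_0$ of the limit are $\lim^s\pi_s$ with $s\geq m$. These vanish in the range you established in Step 2, so every class in the group is zero, not just $\delta^{\otimes m}$.

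\textbf{Step 2 also needs repair.} You transfer the Gruson--Jensen argument ``verbatim'' to $\operatorname{Mod}_A(\mathcal{A})$ by choosing a Lazard presentation of $C$ itself indexed by a small diagram. The hypothesis only says that $\pi_0C$ is $\aleph_n$-compact in the heart, and Lazard's theorem (\cref{cocompfil}) gives no size control on $C$. The obvious way to get it, lifting a factorization of $\mathrm{id}_{\pi_0C}$ through some $\pi_0C_{I'}$ to $C$, runs into obstructions in higher Ext groups.

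The paper avoids this entirely:
\begin{enumerate}
\item A Postnikov induction on the target reduces to discrete $N$.
\item For discrete $N$ one has $\operatorname{Map}_A(M,N)\simeq\operatorname{Map}_{\pi_0A}(\pi_0A\otimes_AM,N)$, and $\pi_0A\otimes_AM\simeq\pi_0M$ by flatness.
\item $\pi_0M$ is a flat module over the discrete algebra $\pi_0A$ and is $\aleph_n$-compact in the heart. By \cref{lazard1} it is therefore a retract of an $\aleph_n$-small filtered colimit of compact $1$-projectives.
\item \cite[Lemma D.3.3.6]{sag} then shows the resulting limit of $n$-connective spaces is connected.
\end{enumerate}

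You should also first reduce to connective $A$ and $B$. Use $B\simeq A\otimes_{\tau_{\geq0}A}\tau_{\geq0}B$ from \cref{flatpush} together with base-change stability of descendability. Both Lazard's theorem and your use of the connectivity of $\Sigma^mA$ require this reduction.

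Finally, $\aleph_n$-compactness gives index categories of size $<\aleph_n$, so the vanishing already holds for $m>n$. Your bound $n+1$ is a harmless off-by-one.
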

	Another fundamental result about (higher) idempotent algebras is the theory of (higher) almost mathematics. Generalizing recent work of Hebestreit and Scholze \cite{hebestreit2024note} to the $ttt$-$\infty$-categorical setting, we demonstrate that $\pi_0$-epimorphic idempotent algebras can be classified entirely in terms of classical idempotent ideals in the discrete heart:
	
	\begin{ThmAlpha}[\cref{almostalg}]
		Assume that $\mathcal{A}$ is both right and left complete. Let $R \in \calg(\mathcal{A}_{\geq 0})$. Consider the full subcategory $\mathrm{LQ}_R$ of $\mathrm{CAlg}(\mathcal{A}_{\geq 0})_{R/}$ spanned by the maps $\varphi: R \rightarrow S$ for which
		\enu{ 
			\item the multiplication $S \otimes_R S \rightarrow S$ is an equivalence, i.e., $\varphi$ is idempotent,
			\item $\pi_0(\varphi): \pi_0 R \rightarrow \pi_0 S$ is epimorphic in $\mathcal{A}^\heartsuit$.
		}
		Then the functor
		$$\mathrm{LQ}_R \longrightarrow\left\{I \subset \pi_0 R \mid I^2=I\right\}, \quad \varphi \longmapsto \operatorname{Ker}(\pi_0 \varphi)$$
		is an equivalence of categories, where we regard the target as a poset via the inclusion ordering. The inverse image of some $I \subset \pi_0(R)$ can be described more directly as $R / I^{\infty}$, where
		$$I^{\infty}=\lim _{n \in \mathbb{N}^{\mathrm{op}}} J_I^{\otimes_R n}$$
		with $J_I \rightarrow R$ the fibre of the canonical map $R \rightarrow H(\pi_0(R) / I)$. Furthermore, this inverse system stabilizes on $\pi_i$ for $n>i+1$. 
		
		The image of the fully faithful restriction functor $\operatorname{Mod}_{R / I^{\infty}}(\mathcal{A})\rightarrow \operatorname{Mod}_R(\mathcal{A})$ consists exactly of those modules whose homotopy is killed by $I$.
	\end{ThmAlpha}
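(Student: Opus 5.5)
We will follow the strategy of Hebestreit--Scholze \cite{hebestreit2024note}, replacing homotopy groups of spectra by the homotopy objects $\pi_i$ in the heart $\mathcal{A}^\heartsuit$. Completeness on both sides is used so that equivalences and limits can be tested on $\pi_*$.

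\textbf{Step 1: the construction $R/I^\infty$ is idempotent and $\pi_0$-epimorphic.} Fix an idempotent ideal $I\subset\pi_0R$. Let $J=J_I=\Fib(R\to H(\pi_0R/I))$. Then $J$ is connective and $\pi_0J=I$.
\begin{itemize}
\item Using the compatibility of the $t$-structure with $\otimes$, we compute $\pi_0(J\otimes_RJ)=I\otimes_{\pi_0R}I$.
\item We show the multiplication $J\otimes_R J\to J$ is $1$-connective on the relevant fibre. Its cofibre is $J\otimes_R H(\pi_0R/I)$, whose $\pi_0$ is $I/I^2=0$. Inductively, the fibre of $J^{\otimes(n+1)}\to J^{\otimes n}$ is $n$-connective.
\item Hence the tower stabilizes on $\pi_i$ for $n>i+1$, and the limit $I^\infty$ exists with a controlled Milnor sequence. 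Left completeness ensures the limit is computed degreewise.
\end{itemize}
Next we check that the map $I^\infty\otimes_R I^\infty\to I^\infty$ is an equivalence; tensor products of connective modules commute with the stabilizing limit in each degree. The unit $I^\infty\to R$ is then a "non-unital idempotent", so the cofibre $R/I^\infty$ carries a unique $\Einfty$-$R$-algebra structure with $R/I^\infty\otimes_RR/I^\infty\simeq R/I^\infty$. This uses the general theory of idempotent algebras in $\Mod_R(\mathcal{A})$. Finally, $\pi_0(R/I^\infty)=\pi_0R/I$, so the map is $\pi_0$-epimorphic with kernel $I$.

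\textbf{Step 2: the kernel of $\pi_0\varphi$ is idempotent.} Let $\varphi:R\to S$ lie in $\mathrm{LQ}_R$ and let $I=\ker\pi_0\varphi$. From $S\otimes_RS\simeq S$ we get $\pi_0S\otimes_{\pi_0R}\pi_0S\simeq\pi_0S$. Since $\pi_0S=\pi_0R/I$, this reads $\pi_0R/I\otimes\pi_0R/I=\pi_0R/I$, which is automatic. So idempotence of $I$ needs more: use the fibre $F=\Fib(\varphi)$. The composite $F\otimes_R S\to S$ vanishes and is in fact $0$ by idempotence. Taking $\pi_0$ of $F\otimes_RS\simeq0$ gives $\pi_0F\otimes\pi_0R/I=0$. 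Since $\pi_0F\to I$ is surjective, this yields $I/I^2=0$.

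\textbf{Step 3: the essential image and full faithfulness.} Idempotent algebras are determined by their module categories, and $\Mod_S\subset\Mod_R$ is fully faithful. The essential image consists of modules $M$ with $M\simeq S\otimes_RM$, i.e.\ those with $\Fib(\varphi)\otimes_RM=0$.
\begin{itemize}
\item For $S=R/I^\infty$, we show this condition is equivalent to $I$ killing every $\pi_iM$.
\item One direction: if $I^\infty\otimes M=0$, then $\pi_*M$ is a $\pi_0R/I$-module.
\item Conversely, if $I$ kills $\pi_*M$, then $J\otimes_RM$ is built from pieces on which $I\otimes I\to I$ acts. Inducting over Postnikov towers, using both completeness assumptions, gives $I^\infty\otimes M=0$. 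Right completeness handles nonconnective $M$, since the condition is checked on truncations and colimits.
\end{itemize}
With the essential image identified, the map $R/I^\infty\to S$ exists and is unique in $\mathrm{LQ}_R$ whenever $\ker\pi_0\varphi\subseteq I$. Together with Step 2 this shows that $\mathrm{LQ}_R$ is a poset equivalent to the poset of idempotent ideals. Essential surjectivity also requires $S\simeq R/I^\infty$ for $I=\ker\pi_0\varphi$. Both are idempotent with the same module-category condition, because $\pi_*S$ is a $\pi_0R/I$-module and so $S$ lies in $\Mod_{R/I^\infty}$, and conversely. Hence they agree.

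\textbf{Main obstacle.} The hardest point is the converse in Step 3: showing that $I^\infty\otimes_RM=0$ when $I$ kills $\pi_*M$. Equivalently, one must show $I^\infty\otimes_R H(N)=0$ for a discrete $\pi_0R/I$-module $N$. I would first try the Hebestreit--Scholze argument. Compute $J\otimes_RH(N)$ and show that the transition maps $J^{\otimes(n+1)}\otimes N\to J^{\otimes n}\otimes N$ are zero on $\pi_i$ for $i<n-1$, using $I=I^2$ and $IN=0$. Then pass to the limit, exchanging it with $\otimes$ by the stabilization from Step 1.
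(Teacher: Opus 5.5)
There is a genuine gap at the end of Step 3, in the identification $S\simeq R/I^{\infty}$ for $\varphi\colon R\to S$ in $\mathrm{LQ}_R$ with $I=\operatorname{Ker}(\pi_0\varphi)$. This identification is what gives injectivity on objects and order-reflection, and hence the equivalence of posets.

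Your description of the essential image of $\operatorname{Mod}_{R/I^\infty}(\mathcal{A})$ correctly gives $S\in\operatorname{Mod}_{R/I^\infty}(\mathcal{A})$, i.e.\ a map $\psi\colon R/I^\infty\to S$. The ``and conversely'', namely $R/I^\infty\in\operatorname{Mod}_S(\mathcal{A})$, is not justified. It would require knowing that every module whose homotopy is killed by $I$ is an $S$-module. You have proved that characterization only for $R/I^\infty$, not for an arbitrary $S\in\mathrm{LQ}_R$, so as written the step assumes what it must prove.

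The paper closes this with the Nakayama lemma (\cref{naka}). Since $\psi$ is an isomorphism on $\pi_0$ (both sides are $\pi_0R/I$), the functor $S\otimes_{R/I^\infty}(-)$ is conservative on bounded below modules. It carries $\psi$ to $S\to S\otimes_{R/I^\infty}S\simeq S\otimes_RS\simeq S$, which is an equivalence by idempotence, so $\psi$ is an equivalence. Concretely, $\operatorname{cofib}(\psi)$ is $1$-connective and is killed by $S\otimes_{R/I^\infty}(-)$. Yet its lowest nonzero homotopy object would survive, since $\pi_n(S\otimes_{R/I^\infty}\operatorname{cofib}(\psi))\cong\pi_0S\otimes_{\pi_0R/I}\pi_n\operatorname{cofib}(\psi)\cong\pi_n\operatorname{cofib}(\psi)$.

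Alternatively, you can stay inside your own framework. The algebra $H(\pi_0R/I)\simeq\tau_{\leq0}S$ is an $S$-algebra, so every discrete module killed by $I$ is an $S$-module. Then rerun your Postnikov argument with the connective object $\operatorname{fib}(\varphi)$ in place of $I^\infty$. This shows $\operatorname{fib}(\varphi)\otimes_RM\simeq0$ for every $M$ whose homotopy is killed by $I$, and in particular $R/I^\infty\in\operatorname{Mod}_S(\mathcal{A})$.

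Three smaller points.

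\textbf{The connectivity claim in Step 1 is off by one.} It is the cofibre $J^{\otimes n}\otimes_RH(\pi_0R/I)$ of $J^{\otimes(n+1)}\to J^{\otimes n}$ that is $n$-connective, not the fibre; already for $n=0$ the fibre of $J\to R$ is $H(\pi_0R/I)[-1]$. The correct estimate is exactly what gives stabilization for $n>i+1$. The inductive step also needs that $X=J^{\otimes n}\otimes_RH(\pi_0R/I)$ has homotopy killed by $I$, so that $\pi_n(X\otimes_RJ)\cong\pi_n(X)\otimes_{\pi_0R/I}I/I^2=0$.

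\textbf{The inclusion in Step 3 is reversed.} A map $R/I^\infty\to S$ exists if and only if $I\subseteq\operatorname{Ker}(\pi_0\varphi)$, not the other way round.

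\textbf{Your ``main obstacle'' is easier than you expect.} Step 1 already gives $I^\infty\otimes_RJ\simeq I^\infty$, hence $I^\infty\otimes_RH(\pi_0R/I)\simeq0$. A discrete $N$ killed by $I$ is an $H(\pi_0R/I)$-module, so $I^\infty\otimes_RN\simeq(I^\infty\otimes_RH(\pi_0R/I))\otimes_{H(\pi_0R/I)}N\simeq0$; this is the paper's route. Your tower approach also works, since $J^{\otimes n}\otimes_RN$ is itself $n$-connective by the same induction, but it is longer.
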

	
	This higher algebraic behavior naturally leads to deformation-theoretic questions. 
	By utilizing the cotangent complex formalism over a $ttt$-$\infty$-base, we establish a general \text{\text{\'{e}}}tale rigidity theorem. 
	This generalizes both Lurie's foundational result for spectra \cite[\textsection7.5]{ha} and recent developments by Hesselholt and Pstr\k{a}gowski in Dirac geometry \cite[Theorem 4.33]{hesselholt2023dirac}\footnote{Obtained by applying $\mathcal{A}_{\geq0}=\mathrm{Syn}_{R,\geq0}$.}.
	
	\begin{ThmAlpha}[\cref{etrig}, Étale rigidity]
		Let $A\in \operatorname{CAlg}(\mathcal{A})$ be an \ein-algebra.
		\begin{enumerate}[label=(\arabic*),font=\normalfont]
			\item Suppose that $\mathcal{A}$ is Grothendieck (see \cref{gro}) and left complete. Let $\operatorname{CAlg}(\mathcal{A})_{A /}^{\text{fl},L\text{-}\mathrm{et}}$ denote the full subcategory of $\operatorname{CAlg}(\mathcal{A})_{A /}$ spanned by the flat L-étale maps $A \rightarrow B$. Suppose that $A$ is connective. Then the functor $\pi_0$ induces an equivalence
			$$\operatorname{CAlg}(\mathcal{A})_{A /}^{\text{fl},L\text{-}\mathrm{et}}\xrightarrow{\sim} \operatorname{CAlg}(\mathcal{A}^{\heartsuit})_{\pi_0A /}^{\text{fl},L\text{-}\mathrm{et}}$$
			with the (1-)category of the discrete flat L-étale commutative $\pi_0 A$-algebras.
			\item Suppose that the $ttt$-\infcat $(\mc{A}^\otimes, \mc{A}_{\geq0}) $ is projectively rigid. Let $\operatorname{CAlg}(\mathcal{A})_{A /}^{\mathrm{et}}$ denote the full subcategory of $\operatorname{CAlg}(\mathcal{A})_{A /}$ spanned by the étale maps $A \rightarrow B$. Then the functor $\pi_0$ induces an equivalence
			$$\operatorname{CAlg}(\mathcal{A})_{A /}^{\mathrm{et}}\xrightarrow{\sim} \operatorname{CAlg}(\mathcal{A}^{\heartsuit})_{\pi_0A /}^{\mathrm{et}}$$
			with the (1-)category of the discrete étale commutative $\pi_0 A$-algebras.
		\end{enumerate}
	\end{ThmAlpha}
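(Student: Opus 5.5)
We follow Lurie's proof of étale rigidity in \cite[\textsection7.5]{ha}, working in $\mathcal{A}$ instead of $\mathrm{Sp}$. The plan has three steps: reduce to connective algebras, construct the inverse on the heart, and then establish full faithfulness by deformation theory along the Postnikov tower.

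\textbf{Reduction to connective $A$ in part (2).} An étale map $A\to B$ is flat, so $B\simeq A\otimes_{\tau_{\geq0}A}\tau_{\geq0}B$. Hence base change along $\tau_{\geq0}A\to A$ identifies étale $\tau_{\geq0}A$-algebras with étale $A$-algebras, and both sides of (2) depend only on $\pi_0A$. Once the connective case is done, base change is fully faithful, since mapping spaces are computed after connective cover. Essential surjectivity then follows from the connective case. In (1), $A$ is already connective.

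\textbf{Essential surjectivity.} Given a discrete flat (L-)étale $\pi_0A$-algebra $B_0$, we build $B=\lim_n B_n$ with $\pi_0B_n=B_0$, $B_n$ flat over $A$, and $B_n\otimes_A\tau_{\leq n}A\simeq B_n$ being $n$-truncated. Start with $B_0$ over $\tau_{\leq0}A=\pi_0A$. The Postnikov stage $\tau_{\leq n}A\to\tau_{\leq n-1}A$ is a square-zero extension by $\Sigma^n\pi_nA$, classified by a derivation; this uses the cotangent complex formalism over the ttt-base. The obstruction to lifting the flat algebra $B_{n-1}$ lies in
\[
\mathrm{Ext}^{n+2}_{B_{n-1}}(L_{B_{n-1}/\tau_{\leq n-1}A},\,\Sigma^{n}\pi_nA\otimes_{\pi_0A}B_0),
\]
and the space of lifts is controlled by the lower Ext groups. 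The (L-)étale hypothesis says $L_{B/A}\simeq0$: in (1) L-étale is exactly this, and in (2) it follows from étale $=$ flat plus finitely presented plus $\pi_0$-étale, via the connectivity estimate for the cotangent complex. Therefore lifts exist and are unique up to contractible choice. Left completeness gives $B\simeq\lim B_n$ with the right homotopy, and flatness is detected via $t$-exactness of $B\otimes_A-$ on each stage.

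\textbf{Full faithfulness.} For $B,C$ étale over $A$, we show $\mathrm{Map}_A(B,C)\to\mathrm{Hom}_{\pi_0A}(\pi_0B,\pi_0C)$ is an equivalence. Write $\mathrm{Map}_A(B,C)=\lim_n\mathrm{Map}_A(B,\tau_{\leq n}C)$, again using left completeness. Each fibre of $\mathrm{Map}_A(B,\tau_{\leq n}C)\to\mathrm{Map}_A(B,\tau_{\leq n-1}C)$ is a torsor under $\mathrm{Map}(L_{B/A},\Sigma^{n+1}\pi_nC)$, which is contractible. The base case is discrete, because $\mathrm{Map}_A(B,\pi_0C)=\mathrm{Map}_{\pi_0A}(\pi_0A\otimes_AB,\pi_0C)$ and $\pi_0A\otimes_AB\simeq\pi_0B$ by flatness.

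\textbf{Main obstacle.} The hardest step in (2) is proving that an étale map has vanishing cotangent complex, and that the lifted algebras are again étale, i.e.\ finitely presented. This is where projective rigidity enters. Through Lazard's theorem (\cref{cocompfil}) and \cref{cocompsimplicial}, it lets us recognise almost perfect and compact objects via $\pi_0$ and transport Lurie's argument that $L_{B/A}$ is almost perfect with $\pi_0(\pi_0B\otimes_BL_{B/A})=\Omega_{\pi_0B/\pi_0A}=0$, hence $L_{B/A}=0$ by a Nakayama-type argument in $\mathcal{A}_{\geq0}$. In (1) the Grothendieck hypothesis (\cref{gro}) is what makes flat objects and filtered colimits in the heart behave well enough for the $t$-exactness argument above.
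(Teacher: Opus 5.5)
Your treatment of (1) follows the paper's route, which is Lurie's. Your full-faithfulness computation is the paper's \cref{mappetale}. Your lifting of flat L-étale algebras along the square-zero extensions $\tau_{\leq n}A\to\tau_{\leq n-1}A$ is what \cref{sqet}, \cref{prop8.22}(1),(2) and \cref{truneq} package. The paper runs both halves at once, as a tower of $\infty$-categories of flat L-étale $\tau_{\leq n}A$-algebras whose transition functors are equivalences.

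Two points you pass over should be supplied:
\begin{itemize}
\item that the extended derivation yields a \emph{pushout} square over $\tau_{\leq n}A\to\tau_{\leq n-1}A$ (this is part of \cref{5.8});
\item that $B=\lim_nB_n$ is L-étale over $A$, not only flat (this is \cref{trun1}(1)).
\end{itemize}
There are also two small slips, both immaterial since $L=0$: $B_n$ is flat over $\tau_{\leq n}A$, not over $A$; and the obstruction group is $\operatorname{Ext}^{n+2}$ with unshifted coefficients $\pi_nA\otimes_{\pi_0A}B_0$.

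The genuine gap is in (2). In this paper, $f$ is étale if it is flat and $\tau_{\geq0}f$ is L-étale and finitely presented (compact in $\operatorname{CAlg}(\mathcal{A}_{\geq0})_{\tau_{\geq0}A/}$). The discrete side uses the same derived notion. So the vanishing of the cotangent complex is a hypothesis, not something you need to prove.

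Your proposed proof of that vanishing would fail in any case. By flatness, $\pi_0B\otimes_BL_{B/A}\simeq L_{\pi_0B/\pi_0A}$, and \cref{naka} gives $L_{B/A}=0$ only if this entire derived cotangent complex vanishes. Knowing $\Omega_{\pi_0B/\pi_0A}=\pi_0L_{\pi_0B/\pi_0A}=0$ only makes $L_{B/A}$ $1$-connective. There is no general analogue here of the classical theorem that étale ring maps have vanishing cotangent complex, which is why L-étaleness is built into the definition.

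What you actually must show, and never do, is that the flat L-étale $A$-algebra $B$ built from a discrete étale $B_0$ is \emph{finitely presented} over $A$. Module-level results such as \cref{cocompfil} and \cref{cocompsimplicial} do not by themselves give finite presentation of an algebra.

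The missing ingredient is the cotangent-complex criterion \cref{lfinite}(1): if $L_{B/A}$ is perfect (here zero) and $\pi_0B$ is finitely presented as a discrete commutative $\pi_0A$-algebra, then $B$ is finitely presented over $A$.
\begin{itemize}
\item Its proof attaches cells $B(-1)\to B(0)\to\cdots$, starting from \cref{appro}.
\item It then uses \cref{perfflat} (flat and almost perfect implies compact projective) to make $L_{B/B(k)}[-k-2]$ compact projective. This is exactly where projective rigidity is needed.
\item One more attachment then gives $B(k+1)\simeq B$.
\end{itemize}
The paper feeds this into \cref{trun2} and reruns the tower argument. Alternatively, since $\pi_0$ preserves compact objects, for connective $A$ the étale $A$-algebras are exactly the flat L-étale ones with $\pi_0B$ finitely presented over $\pi_0A$. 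So (2) is simply the restriction of the equivalence in (1).
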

	
	Finally, having developed this comprehensive algebraic machinery internal to a fixed $ttt$-$\infty$-category, we zoom out to study the global moduli of such categories. We prove that the entire landscape of projectively rigid $ttt$-$\infty$-categories is governed by a remarkable geometric universal  example:
	
	\begin{ThmAlpha}[\cref{uniexamp}, Universal projectively rigid $ttt$-$\infty$-category] The \infcat 	$\calg^{\op{rig,at}}_{\spgeq}$ of projectively rigid $ttt$-$\infty$-categories
		admits a compact generator
		$$\funct(\mb{Cob}_1^{\op{op}},\spgeq)^\otimes\in \calg^{\op{rig,at}}_{\spgeq}$$
		where the symmetric monoidal structure is given by Day convolution and $\mb{Cob}_1$ denotes the $1$-dimensional framed cobordism $(\infty,1)$-category.
	\end{ThmAlpha}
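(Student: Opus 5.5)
The plan is to reduce the statement to a corepresentability result for small rigid additive symmetric monoidal categories, and then invoke the framed cobordism hypothesis in dimension $1$. I read $\calg^{\op{rig,at}}_{\spgeq}$ as the $\infty$-category of projectively rigid connective parts $\mca_{\geq0}^\otimes$, with morphisms the symmetric monoidal left adjoints that preserve compact projectives. Write $\mca_{\geq0}^{\op{cp}}$ for the compact projectives of $\mca_{\geq0}$.

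\textbf{Step 1: pass to compact projectives.} Projective rigidity (\cref{projrigid}) says $\mca_{\geq0}$ is projectively generated and $\mca_{\geq0}^{\op{cp}}$ is exactly the dualizable objects. So $\mca_{\geq0}\simeq\mcp_\Sigma(\mca_{\geq0}^{\op{cp}})$, and $\mca_{\geq0}^{\op{cp}}$ is a small, idempotent complete, additive, rigid symmetric monoidal \infcat. Conversely, $\mcp_\Sigma$ of such a category, with Day convolution, is projectively rigid. I would therefore establish an equivalence
\[
\calg^{\op{rig,at}}_{\spgeq}\simeq \calg^{\op{rig}}(\Cat^{\op{add},\op{idem}}_\infty),
\]
where the right-hand side is small rigid idempotent complete additive symmetric monoidal \infcats. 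This uses the universal property of $\mcp_\Sigma$ for additive categories, together with the fact that a left adjoint between projectively generated categories which preserves compact projectives is determined by its restriction to them.

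\textbf{Step 2: corepresentability.} Next I would identify the candidate generator. $\funct(\mb{Cob}_1^{\op{op}},\spgeq)$ with Day convolution is $\mcp_\Sigma$ of the additive, idempotent-complete, $\spgeq$-linearized closure $\mb{Cob}_1^{\op{add}}$ of $\mb{Cob}_1$, and this closure is still rigid. By the universal properties of additivization and idempotent completion, a morphism $\mb{Cob}_1^{\op{add}}\to\mcc$ into a rigid additive $\mcc$ is the same as a symmetric monoidal functor $\mb{Cob}_1\to\mcc$. The $1$-dimensional framed cobordism hypothesis identifies these with $(\mcc^{\op{dual}})^{\simeq}=\mcc^{\simeq}$. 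Hence
\[
\mapp\bigl(\funct(\mb{Cob}_1^{\op{op}},\spgeq),\mca_{\geq0}\bigr)\simeq (\mca_{\geq0}^{\op{cp}})^{\simeq}.
\]

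\textbf{Step 3: compactness.} I would show that filtered colimits in $\calg^{\op{rig}}(\Cat^{\op{add},\op{idem}}_\infty)$ are computed underlying. The reason is that a filtered colimit of rigid additive idempotent-complete categories is again rigid, additive and idempotent complete: duals and retracts are finite data. Since $(-)^{\simeq}$ commutes with filtered colimits of \infcats, the corepresented functor preserves filtered colimits, so the object is compact.

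\textbf{Step 4: generation, the main obstacle.} I expect the real work to be showing that $\mcc\mapsto\mcc^{\simeq}$ is conservative, because the core a priori forgets non-invertible maps. Here additivity rescues us. Suppose a functor $F$ is an equivalence on cores; then it is essentially surjective. For objects $X,Y$, the space $\mapp(X,Y)$ is recovered from the core as the space of automorphisms of $X\oplus Y$ of the unipotent form $\bigl(\begin{smallmatrix}1 & 0\\ f & 1\end{smallmatrix}\bigr)$. More precisely, it is the fibre of $\op{Aut}(X\oplus Y)$ over the identities on the diagonal blocks, after composing with the structure inclusions and projections. The delicate point is making this identification functorial and homotopy coherent; I would do it via the pullback of $\op{Aut}(X\oplus Y)\to\op{Aut}(X)\times\op{Aut}(Y)$ restricted to automorphisms compatible with the filtration $Y\subset X\oplus Y$. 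Granting this, $F$ is fully faithful, hence an equivalence. So the corepresented functor is conservative, and it also preserves sifted colimits by the same underlying-computation argument as in Step 3. The monadicity theorem then exhibits every object as a colimit of free objects built from $\funct(\mb{Cob}_1^{\op{op}},\spgeq)$, i.e.\ it is a compact generator.
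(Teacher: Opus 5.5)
Steps 1--3 and the idea behind Step 4 follow the paper's route: reduce via \cref{A.4}, corepresent the core using the cobordism hypothesis, get compactness from filtered colimits, and get conservativity from the unipotent-matrix trick of \cref{coreconserv}. However, your closing argument rests on a false claim. The corepresented functor $\mathcal{D}\mapsto(\mathcal{D}^{\op{cproj}})^{\simeq}$ does not preserve sifted colimits, because the core does not commute with geometric realizations, and this persists in the rigid additive setting. For instance, $R\mapsto\modu_R(\spgeq)$ is a colimit-preserving functor $\calg(\spgeq)\to\calg^{\op{rig,at}}_{\spgeq}$. Realize the bar construction for $\mathbb{Z}[x,y]\otimes_{\mathbb{Z}[s]}\mathbb{Z}$, with $s\mapsto xy$ and $s\mapsto 1$. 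Every term has $\pi_0$ a polynomial ring over $\mathbb{Z}$, with units $\pm1$. Hence on the rank-one component the realization of cores has $\pi_1$ a quotient of $\{\pm1\}$. The colimit, however, is $\modu_{\mathbb{Z}[x^{\pm1}]}(\spgeq)$, whose unit has automorphisms with $\pi_0=\{\pm x^n\}$. So monadicity over $\mathcal{S}$ is not available. It is also unnecessary. The paper proves that $\calg^{\op{rig,at}}_{\spgeq}$ is presentable, just before the theorem. Given that, if $G$ is compact and $\mapp(G,-)$ is conservative, the colimit closure of $G$ is a presentable full subcategory. Its inclusion therefore has a right adjoint. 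The counit of that adjunction becomes an equivalence after applying $\mapp(G,-)$, so it is an equivalence. This compact-plus-conservative argument is exactly how the paper concludes.

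In Step 4, the way you propose to make the identification coherent is circular. You cut out the automorphisms of $X\oplus Y$ that are compatible with $Y\subset X\oplus Y$, or that lie over the identities of the diagonal blocks. Doing so uses the mapping spaces $\mapp(Y,X)$, $\mapp(X,X)$ and $\mapp(Y,Y)$, and preservation of these by $F$ is precisely what you are trying to prove. Equivalently, you are using automorphisms in the arrow category, which the core of $\mathcal{C}$ does not see. The paper sidesteps any identification. It uses only that $\mapp(X,Y)$ is a retract of $\mathrm{Aut}(X\oplus Y)$: send $f$ to the unipotent automorphism with off-diagonal entry $f$, then read that entry back off. This retraction is natural in the additive functor $F$. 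The middle comparison map is $\Omega_{X\oplus Y}$ applied to $F^{\simeq}$, so it is an equivalence, and a retract of an equivalence is an equivalence. With these two repairs your proof coincides with the paper's.
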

	\subsection*{Outline}
	\addcontentsline{toc}{subsection}{Outline}
	The paper is organized as follows.
	\begin{itemize}
		\item \textbf{Section 1 (Preliminaries):} We review the theory of prestable $\infty$-categories and presentable Grothendieck categories, establishing the foundational relationship between a presentable prestable $\infty$-category and its stabilization.
		\item \textbf{Section 2 (Flatness and Faithful Flatness):} We redefine flatness via the $t$-exactness of the relative tensor product. We establish the stability of flat modules under base change and filtered colimits.
		\item \textbf{Section 3 (Projective Modules):} We introduce the core condition of \textit{projective rigidity}. Under this hypothesis, we prove a generalized Lazard's Theorem, showing that flat modules are precisely filtered colimits of compact projectives, and establish the universal property of Cohn localizations.
		\item \textbf{Section 4 (Finiteness Properties):} We define perfect and almost perfect modules, utilizing Tor-amplitude bounds to control homological behavior in a $ttt$-$\infty$-category.
		\item \textbf{Section 5 (Faithful and Descendable Algebras):} We study descent in higher algebra. Generalizing a theorem of Hebestreit and Scholze \cite{hebestreit2024note}, we classify epimorphic idempotent algebras via idempotent ideals.
		\item \textbf{Section 6 (Deformation Theory and Étale Rigidity):} We employ the cotangent complex to establish the property of étale rigidity in this setting. We show that the étale topology of an $\mathbb{E}_\infty$-algebra depends only on its $\pi_0$-truncation, which extends the result found in \cite[\textsection7.5]{ha}.
		\item \textbf{Section 7 (The $\infty$-category of projectively rigid $ttt$-$\infty$-categories):} We prove that the presheaf $\infty$-category on the $1$-dimensional framed cobordism category serves as the universal projectively rigid $ttt$-$\infty$-category.
		\item \textbf{Section 8 (Questions and Future Directions):} We conclude by discussing several remaining questions and future directions.
	\end{itemize}
	
	\subsection*{Conventions and notations}
	\addcontentsline{toc}{subsection}{Conventions and notations}
	In this paper, we make heavy use of the theory of $\infty$-categories and related notions, as developed in Lurie’s foundational references \cite{htt,ha,sag}.
	
	\begin{cov}
		\enu{
			\item Since a $t$-structure is determined by its connective part, we will simply denote a $ttt$-$\infty$-category by $(\mathcal{A}^\otimes, \mathcal{A}_{\geq 0})$ rather than $(\mathcal{A}^\otimes, \mathcal{A}_{\geq 0}, \mathcal{A}_{\leq 0})$.
			\item All $ttt$-$\infty$-categories considered in this paper are
			presentable; we do not consider a small analogue.
			\item We will often say $\mathcal{A}$ satisfies some property if the $t$-structured stable $\infty$-category $(\mathcal{A}, \mathcal{A}_{\geq 0})$ satisfies property $P$. For example, we will say that $\mathcal{A}$ is right complete if its $t$-structure is right complete. We will say that $\aotimes$ satisfies some property if this property involves the monoidal structure.
			\item We often denote the tensor product in the truncated \infcat $\mathcal{A}_{[0,n]}$ by $-\overline{\otimes}-$.
		}
	\end{cov}
	
	\begin{nota}
		For the reader's convenience, we record below the recurring  notations used throughout.
		
		\begin{itemize}
			\item In this paper, \infcats refer to $(\infty,1)$-categories.
			\item We denote by $\mathcal{S}$  the $\infty$-category of spaces (i.e., $\infty$-groupoids). We denote by $\mathrm{Cat}_{\infty}$  the $\infty$-category of small $\infty$-categories, functors, and natural transformations. We denote by $\mathrm{Sp}$  the $\infty$-category of spectra.
			
			\item For $n\in\mathbb{N}$, $\mathcal{S}_{\leq n}$ is the full subcategory of $\mathcal{S}$ spanned by $n$-truncated spaces. For example, $\mathcal{S}_{\leq -1}$ is the full subcategory spanned by $\varnothing$ and $*$, and $\mathcal{S}_{\leq -2}$ is the full subcategory spanned by $*$.
			
			\item For functor categories $\mathrm{Fun}(\mathcal{C},\mathcal{D})$, we use superscripts $\mathrm{lex}$, $\mathrm{rex}$, $\mathrm{ex}$, $\mathrm{lim}$, and $\mathrm{colim}$ to indicate the full subcategories of functors preserving finite limits, finite colimits, finite limits and finite colimits (exact functors), small limits, and small colimits, respectively.
			\item 
			We use the superscript in $\mcc^\otimes$ to denote the $\infty$-category \mcc equipped with a certain ($\mathbb{E}_k$-)monoidal structure for some $0 \leq k \leq \infty$.
			
			\item Let $\mcc^\otimes$ be a monoidal $\infty$-category. We denote by $\alg(\mc{C})$ the $\infty$-category of $\mathbb{E}_1$-algebras in $\mc{C}$.  If $\mcc^\otimes$ is $\mbb{E}_k$-monoidal  for some $0 \leq k \leq \infty$, we denote by $\alg_{\mbb{E}_k}(\mc{C})$ the $\infty$-category of $\mathbb{E}_k$-algebras in $\mc{C}$. In particular when $\mcc^\otimes$ is symmetric monoidal, we denote by $\calg(\mc{C})$ the $\infty$-category of $\mathbb{E}_{\infty}$-algebras (i.e. commutative algebra objects) in $\mc{C}$.
			\item We work relative to a chain of strongly inaccessible cardinals $\delta_{0}<\delta_{1}<\delta_{2}$. Then each $V_{\delta_i}$ is a Grothendieck universe, and elements of $V_{\delta_0},V_{\delta_1},V_{\delta_2}$ are called small, large, and very large, respectively. A hat $\widehat{\cdot}$ indicates largeness of the objects in the category, e.g., $\widehat{\mathcal{S}}$ or $\widehat{\mathrm{Cat}}_{\infty}$ for the (very large) $\infty$-categories of large spaces or large $\infty$-categories.
			\item We denote by $\prl$  the $\infty$-category of presentable $\infty$-categories with left adjoint functors. Its default symmetric monoidal structure is the Lurie tensor product, which corepresents functors out of the Cartesian product that are colimit-preserving in each variable. For a cardinal $\kappa$, $\prl_{\kappa}\subset\prl$ denotes the subcategory of $\kappa$-accessible presentable $\infty$-categories with functors that preserve $\kappa$-compact objects.
			\item We denote $\prlst$ to be  the $\infty$-category of presentable stable $\infty$-categories. We denote $\prlad$ to be  the $\infty$-category of presentable additive $\infty$-categories. We denote $\prladone$ to be  the $\infty$-category of presentable additive $1$-categories.
			\item We use the notation $\unmap$ to indicate  enrichment if the enriched context is clear.
		\end{itemize}
	\end{nota}
	
	\subsection*{Acknowledgments}
	\addcontentsline{toc}{subsection}{Acknowledgments}
	
	I would like to express my sincere appreciation to Germán Stefanich for sharing his work \cite{stefanich2023classification}, which has laid the groundwork for many foundational results closely tied to this paper. I am also grateful to Jack Davies for highlighting the possibility of étale rigidity in my context, and to Maxime Ramzi for emphasizing the equivalence between our notion of ``projectively rigid'' and the atomically generated rigid $\spgeq$-algebras. 
	Additionally, I wish to convey my deep gratitude to my advisor, David Gepner, for his unwavering support and inspiration throughout this endeavor.
	
	I would also like to thank Ko Aoki, Tobias Barthel, Tom Bachmann, Tim Campion, Marc Hoyois, Xiansheng Li, Marius Nielsen, Arpon Raksit, Xiangrui Shen, Vova Sosnilo, Yifei Zhu, and Changhan Zou for their helpful conversations at various stages of this project. I am also grateful to the anonymous referee for a careful reading of the manuscript and for numerous helpful comments and suggestions.
	
	Furthermore, much of this work was carried out during my visit to the Max Planck Institute for Mathematics (MPIM) in Spring 2025, and I gratefully acknowledge the hospitality extended to me by the institute.
	\section{Preliminaries}
	The goal of this section is to gather basic preliminaries about prestable \infcats and abelian categories.
	
	\subsection{Prestable $\infty$-categories}
	Before developing homological algebra over arbitrary bases, we must establish the formal properties of the connective part underlying our $ttt$-$\infty$-categories. In this subsection, we recall the theory of prestable $\infty$-categories as developed in \cite{sag}, focusing heavily on Grothendieck prestable $\infty$-categories. These categories naturally serve as the connective parts of Grothendieck $t$-structures. 
	
	\begin{de}[{See \cite{sag} C.1.2.2}]
		A prestable $\infty$-category is an $\infty$-category $\mathcal{C}$ satisfying the following properties:
		
		\begin{enumerate}[label=(\arabic*),font=\normalfont]
			\item The initial and final objects of $\mathcal{C}$ agree (that is, $\mathcal{C}$ is pointed).
			\item Every cofiber sequence in $\mathcal{C}$ is also a fiber sequence.
			\item Every map in $\mathcal{C}$ of the form $f: X \rightarrow \Sigma(Y)$ is the cofiber of its fiber.
		\end{enumerate}
		Moreover, we say $\mathcal{C}$ is a Grothendieck prestable $\infty$-category if it further satisfies that it is presentable and that filtered colimits and finite limits commute in $\mathcal{C}$. We let $\op{Groth}_{\infty}\subset \prl$ denote the full subcategory whose objects are Grothendieck prestable $\infty$-categories.
	\end{de}
	
	\begin{ex}
		\enu{
			\item Any stable $\infty$-category is prestable.
			\item Let $\mathcal{C}$ be a stable $\infty$-category equipped with a $t$-structure $\left(\mathcal{C}_{\geq 0}, \mathcal{C}_{\leq 0}\right)$. Then the full subcategory $\mathcal{C}_{\geq 0} \subset \mathcal{C}$ is prestable.
		}
	\end{ex}
	Prestable $\infty$-categories are closely related to stable
	$\infty$-categories equipped with $t$-structures. We will freely use
	the standard results on prestable and Grothendieck prestable
	$\infty$-categories from \cite[Appendix C]{sag}. In particular, a
	prestable $\infty$-category with finite limits can be identified with
	the connective part of a stable $\infty$-category equipped with a
	$t$-structure, and the Grothendieck condition is equivalent to the
	compatibility of the canonical $t$-structure on its stabilization with
	filtered colimits; see \cite[\S C.1]{sag}.
	\begin{thm}[See \cite{sag} C.4.2.1]
		The full subcategory $\mathrm{Groth}_{\infty}\subset \prlad$ contains the unit object of $\op{Sp}_{\geq0}$ and is closed under Lurie tensor products. Consequently, $\mathrm{Groth}_{\infty}$ inherits a symmetric monoidal structure for which the inclusion $\mathrm{Groth}_{\infty} \hookrightarrow \prlad$ is symmetric monoidal.
	\end{thm}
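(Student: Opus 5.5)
The unit statement is immediate: $\spgeq$ is presentable and prestable, and filtered colimits commute with finite limits in it. Finite limits in $\spgeq$ are computed in $\opsp$ followed by $\tau_{\geq0}$, and both operations commute with filtered colimits. So $\spgeq\in\mathrm{Groth}_\infty$. The "consequently" clause is formal: a full subcategory of a symmetric monoidal $\infty$-category that contains the unit and is closed under the tensor product inherits a symmetric monoidal structure making the inclusion symmetric monoidal (\cite[2.2.1.1]{ha}). The real content is closure under the Lurie tensor product, and I would prove it by reducing to a presheaf-type description.

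The first step is to describe Grothendieck prestable $\infty$-categories as accessible left exact localizations of presheaf-type categories. By \cite[C.2.1.6]{sag}, every $\mathcal{C}\in\mathrm{Groth}_\infty$ is a left exact localization of $\funct^{\times}(\mathcal{D}^{\op{op}},\spgeq)\simeq \mathcal{P}_\Sigma(\mathcal{D};\spgeq)$ for some small additive $\infty$-category $\mathcal{D}$. Equivalently, $\mathcal{C}$ is an accessible localization $L$ of $\mathcal{P}_\Sigma(\mathcal{D})\otimes\spgeq$ whose localization functor is left exact. Presheaf-type categories of this kind are closed under the tensor product:
\[
\mathcal{P}_\Sigma(\mathcal{D})\otimes \mathcal{P}_\Sigma(\mathcal{D}')\simeq \mathcal{P}_\Sigma(\mathcal{D}\otimes\mathcal{D}'),
\]
where $\mathcal{D}\otimes\mathcal{D}'$ is the additive tensor product. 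These are Grothendieck prestable, by the same argument as for $\spgeq$ applied pointwise.

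The second step handles general $\mathcal{C}=L\mathcal{P}$ and $\mathcal{C}'=L'\mathcal{P}'$. Since the Lurie tensor product preserves localizations, $\mathcal{C}\otimes\mathcal{C}'$ is the localization of $\mathcal{P}\otimes\mathcal{P}'$ at the strongly saturated class generated by $S\boxtimes \mathcal{P}'\cup\mathcal{P}\boxtimes S'$. One can localize in two stages, as $(L\otimes\id)$ followed by $(\id\otimes L')$. It therefore suffices to show that if $L:\mathcal{P}\to\mathcal{C}$ is a left exact localization and $\mathcal{E}$ is Grothendieck prestable, then $L\otimes\id_\mathcal{E}$ is still a left exact localization of a Grothendieck prestable category. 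The target is then automatically presentable and prestable, because a left exact localization of a prestable category is prestable (\cite[C.2.3]{sag}). It also inherits commutation of filtered colimits with finite limits, since those are computed by applying $L$ to the corresponding ones upstairs.

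The main obstacle is preserving left exactness after tensoring. I would pass to stabilizations. By \cite[C.3]{sag}, left exact localizations of Grothendieck prestable categories correspond to localizations of the stabilization compatible with the $t$-structure, i.e.\ the kernel is generated by a set of connective objects and $L$ is $t$-exact on the heart level. Tensoring with $\op{Sp}(\mathcal{E})$ keeps the kernel generated by $K\boxtimes\mathcal{E}$, which consists of connective objects. Left exactness then amounts to the kernel being closed under $\tau_{\geq0}$-truncations of fibers. This can be checked on generators, using that $\mathcal{E}$ is itself a left exact localization of $\mathcal{P}_\Sigma(\mathcal{D}'')$ and that the truncation in a presheaf category is computed pointwise.

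If this direct route becomes too messy, I would fall back on the alternative characterization from \cite[C.3.4]{sag}: Grothendieck prestable categories are exactly those of the form $\mathcal{P}\otimes\spgeq$ modulo a left exact localization, equivalently the separated quotients of presheaf-type categories. I would then verify the Grothendieck axiom for $\mathcal{C}\otimes\mathcal{C}'$ directly, viewing it as $\funct^{\mathrm{R}}(\mathcal{C}^{\op{op}},\mathcal{C}')$. In that description, finite limits and filtered colimits are computed pointwise in $\mathcal{C}'$ wherever the accessibility of the functors permits.
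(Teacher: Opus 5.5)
The paper gives no proof of this statement; it only cites \cite[C.4.2.1]{sag}, so your proposal has to stand on its own. Several parts of it are fine: the check that $\operatorname{Sp}_{\geq0}$ lies in $\mathrm{Groth}_\infty$, the formal passage to a symmetric monoidal structure, the Gabriel--Popescu reduction, and the claim that tensor products of presheaf-type categories are Grothendieck prestable.

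The gap is the step you yourself flag as the main obstacle, and it is the whole content of the theorem. You never show that $\mathcal{C}\otimes\mathcal{E}$ is a \emph{left exact} localization of a Grothendieck prestable category.

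Your two-stage factorization correctly reduces everything to the one-sided claim, but your sketch of that claim is circular. The first stage $L\otimes\mathrm{id}_{\mathcal{P}'}$ is harmless: under $\mathcal{P}\otimes\mathcal{P}'\simeq\operatorname{Fun}^{\times}((\mathcal{D}')^{\mathrm{op}},\mathcal{P})$ it is just $L$ applied pointwise. The second stage $\mathrm{id}_{\mathcal{C}}\otimes L'$ acts on $\operatorname{Fun}^{\times}((\mathcal{D}')^{\mathrm{op}},\mathcal{C})$ and localizes in the presheaf variable with an arbitrary Grothendieck coefficient $\mathcal{C}$. That is your general claim again with the two factors swapped, so writing $\mathcal{E}$ as a localization of $\mathcal{P}_\Sigma(\mathcal{D}'')$ only recovers the easy case.

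The left exactness criterion you propose is also false. In $D(\mathbb{Z})_{\geq0}$, the connective rational complexes are closed under colimits, extensions, fibers and truncations. Yet the corresponding localization $Y\mapsto\operatorname{cofib}(\operatorname{RHom}_{\mathbb{Z}}(\mathbb{Q},Y)\to Y)$ sends the discrete object $\mathbb{Q}/\mathbb{Z}$ to $\Sigma\widehat{\mathbb{Z}}$, so it is not left exact. What fails there is a Serre-type condition on monomorphisms $X\to K$ from arbitrary $X$ into kernel objects (here $\mathbb{Z}\to\mathbb{Q}$). No such condition can be checked on the generators $K\boxtimes E$.

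The fallback does not work either. Filtered colimits in $\operatorname{Fun}^{\mathrm{R}}(\mathcal{C}^{\mathrm{op}},\mathcal{C}')$ are not computed pointwise: a pointwise filtered colimit of limit-preserving functors need not preserve infinite limits, and controlling the reflection back is precisely the difficulty. Moreover, Grothendieck prestable $\infty$-categories need not be separated, so they are not in general separated quotients of presheaf-type categories.

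One way to close the gap within your framework is to stop localizing sequentially. Work instead in
\[
\mathcal{Q}=\mathcal{P}\otimes\mathcal{P}'\simeq\operatorname{Fun}^{\times,\times}(\mathcal{D}^{\mathrm{op}}\times(\mathcal{D}')^{\mathrm{op}},\operatorname{Sp}_{\geq0}),
\]
where finite limits and filtered colimits are computed pointwise. Inside $\mathcal{Q}$, both $\mathcal{C}\otimes\mathcal{P}'\simeq\operatorname{Fun}^{\times}((\mathcal{D}')^{\mathrm{op}},\mathcal{C})$ and $\mathcal{P}\otimes\mathcal{E}\simeq\operatorname{Fun}^{\times}(\mathcal{D}^{\mathrm{op}},\mathcal{E})$ are reflective. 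Their reflectors apply $L$ to each $F(-,d')$, respectively $L'$ to each $F(d,-)$, so both reflectors are left exact. Moreover, $\mathcal{C}\otimes\mathcal{E}$ is exactly the intersection of these two subcategories. In the model $\operatorname{Fun}^{\mathrm{R}}(\mathcal{P}^{\mathrm{op}},\mathcal{P}')$, it consists of the functors that invert $L$-equivalences and take values in $\mathcal{E}$.

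It then remains to show that the intersection of two accessible left exact localizations $L_1,L_2$ of a Grothendieck prestable $\mathcal{Q}$ is again a left exact localization. Choose a regular cardinal $\kappa$ such that both local subcategories are closed under $\kappa$-filtered colimits in $\mathcal{Q}$. Set $T_0=\mathrm{id}$, let $T_{\alpha+1}$ be $L_1T_\alpha$ or $L_2T_\alpha$ alternately, and put $T_\lambda=\operatorname{colim}_{\alpha<\lambda}T_\alpha$ at limit ordinals. Every $T_\alpha$ is left exact because filtered colimits are left exact in $\mathcal{Q}$. This is where the Grothendieck axiom is genuinely used, and your sketch never uses it. The object $T_\kappa X$ lies in both local subcategories, and $X\to T_\kappa X$ is an equivalence for the joint localization. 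Hence $T_\kappa$ is the left exact reflector onto $\mathcal{C}\otimes\mathcal{E}$, which is therefore Grothendieck prestable.
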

	
	\begin{de}[See \cite{sag} C.3.1.3]
		Let $\mathcal{C}$ be a presentable stable $\infty$-category. We define a full subcategory $\mathcal{C}_{\geq 0} \subset \mathcal{C}$ to be a core if it is closed under small colimits and extensions. 
		
		We will refer to $\pr^{+}_{\op{st}}$ as the $\infty$-category of cored stable $\infty$-categories. The objects of $\pr^{+}_{\op{st}}$ are pairs $(\mathcal{C}, \mathcal{C}_{ \geq 0})$, where $\mathcal{C}$ is a presentable stable $\infty$-category and $\mathcal{C}_{\geq 0} \subset \mathcal{C}$ is a core. A morphism from $(\mathcal{C}, \mathcal{C}_{ \geq 0})$ to $(\mathcal{D}, \mathcal{D}_{ \geq 0})$ is given by a colimit-preserving functor $f: \mathcal{C} \rightarrow \mathcal{D}$ satisfying $f\left(\mathcal{C}_{\geq 0}\right) \subset \mathcal{D}_{\geq 0}$.
	\end{de}
	\begin{warn}
		$\mathcal{C}_{\geq 0}$ in this definition does not necessarily form a $t$-structure, but if $\mathcal{C}_{\geq 0}$ is presentable, then it does. In that case, we call the pair $(\mathcal{C}, \mathcal{C}_{ \geq 0})$ a \emph{presentably $t$-structured stable $\infty$-category}.
	\end{warn}
	\begin{rem}
		\enu{
			
			\item Our $\pr^{+}_{\op{st}}$ actually refers to $\mathrm{Groth}_{\infty}^{+}$ in \textup{\cite[Remark C.3.1.3]{sag}}.
			\item In fact, we only care about the full subcategory $\pr^{t\text{-rex}}_{\op{st}}\subset\pr^{+}_{\op{st}}$ spanned by those presentably $t$-structured stable $\infty$-categories with right $t$-exact functors. However, the technical advantage of $\pr^{+}_{\op{st}}$ is that it admits good colimits and limits \textup{\cite[Remark C.3.1.7]{sag}}.
			
		}
	\end{rem}
	
	\begin{de}
		There is a natural symmetric monoidal structure on $\pr^{+}_{\op{st}}$ given by the construction:
		$$
		\left(\mathcal{C}, \mathcal{C}_{\geq 0}\right) \otimes\left(\mathcal{D}, \mathcal{D}_{\geq 0}\right)=\left(\mathcal{C} \otimes \mathcal{D}, m_{!}\left(\mathcal{C}_{\geq 0}, \mathcal{D}_{\geq 0}\right)\right)
		$$
		where $\mathcal{C} \otimes \mathcal{D}$ is the Lurie tensor product and $m_{!}\left(\mathcal{C}_{\geq 0}, \mathcal{D}_{\geq 0}\right)$ is the smallest full subcategory of $\mathcal{C} \otimes \mathcal{D}$ which is closed under colimits and extensions and contains the objects $m(C, D)$ for each $C \in \mathcal{C}_{\geq 0}$ and $D \in \mathcal{D}_{\geq 0}$.
	\end{de}
	
	\begin{rem}\label{prt-rex}
		\enu{
			\item The full subcategory $\pr^{t\text{-rex}}_{\op{st}}\subset\pr^{+}_{\op{st}}$ is closed under tensor products, since $m_{!}\left(\mathcal{C}_{\geq 0}, \mathcal{D}_{\geq 0}\right)$ is presentable if both $\mathcal{C}_{\geq 0}$ and $\mathcal{D}_{\geq 0}$ are presentable.
			\item An object in $\op{CAlg}(\pr^{t\text{-rex}}_{\op{st}})$ can be identified with a $ttt$-$\infty$-category.
		}
	\end{rem}
	
	We will also use that stabilization identifies colimit-preserving
	functors between Grothendieck prestable $\infty$-categories with
	right $t$-exact colimit-preserving functors between their
	stabilizations; see \cite[Proposition C.3.1.1]{sag}.
	
	\begin{cor}\label{pret}\,
		\enu{
			\item The construction $\mathcal{C} \mapsto\left(\operatorname{Sp}(\mathcal{C}), \operatorname{Sp}(\mathcal{C})_{\geq 0}\right)$ determines a fully faithful embedding $$\op{Groth}_{\infty}\hookrightarrow \pr^{+}_{\op{st}}$$ from the $\infty$-category of Grothendieck prestable $\infty$-categories to the $\infty$-category of cored stable $\infty$-categories.
			\item A pair $(\mathcal{C}, \mathcal{C}_{ \geq 0})$ belongs to the essential image of $\op{Groth}_{\infty}\hookrightarrow \pr^{+}_{\op{st}}$ if and only if it forms an accessible $t$-structure $\left(\mathcal{C}_{\geq 0}, \mathcal{C}_{\leq 0}\right)$ which is compatible with filtered colimits and is right complete.
			\item Furthermore, the embedding $\op{Groth}_{\infty}\hookrightarrow \pr^{+}_{\op{st}}$ is symmetric monoidal, hence induces a fully faithful embedding $$\op{CAlg}(\op{Groth}_{\infty})\hookrightarrow \op{CAlg}(\pr^{+}_{\op{st}}).$$
		}
	\end{cor}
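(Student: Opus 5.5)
The plan is to deduce all three parts from the cited \cite[Proposition C.3.1.1]{sag} together with the standard dictionary between Grothendieck prestable $\infty$-categories and right complete $t$-structures compatible with filtered colimits \cite[\S C.1]{sag}.

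For (1), we first check that the construction is well defined. Given $\mathcal{C}\in\op{Groth}_\infty$, the stabilization $\operatorname{Sp}(\mathcal{C})$ is presentable stable. Moreover, $\Sigma^\infty\colon\mathcal{C}\to\operatorname{Sp}(\mathcal{C})$ identifies $\mathcal{C}$ with $\operatorname{Sp}(\mathcal{C})_{\geq0}$, which is closed under colimits and extensions. So the pair is an object of $\pr^{+}_{\op{st}}$, and a colimit-preserving $F\colon\mathcal{C}\to\mathcal{D}$ induces a colimit-preserving, right $t$-exact $\operatorname{Sp}(F)$. Functoriality comes from the functoriality of stabilization $\mathcal{C}\mapsto\operatorname{Sp}(\mathcal{C})\simeq\mathcal{C}\otimes\operatorname{Sp}$ on $\prl$.

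Full faithfulness is then the statement that restriction along $\Sigma^\infty$ induces an equivalence
\[
\operatorname{Map}_{\pr^{+}_{\op{st}}}\bigl((\operatorname{Sp}\mathcal{C},\operatorname{Sp}\mathcal{C}_{\geq0}),(\operatorname{Sp}\mathcal{D},\operatorname{Sp}\mathcal{D}_{\geq0})\bigr)\simeq \operatorname{Fun}^{\mathrm{colim}}(\mathcal{C},\mathcal{D}).
\]
By definition, the left side is the space of colimit-preserving functors $\operatorname{Sp}\mathcal{C}\to\operatorname{Sp}\mathcal{D}$ that carry connective objects to connective objects. This is exactly the content of \cite[Proposition C.3.1.1]{sag}. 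The mechanism is as follows: a colimit-preserving functor out of $\operatorname{Sp}(\mathcal{C})=\colim(\mathcal{C}\xrightarrow{\Sigma}\mathcal{C}\xrightarrow{\Sigma}\cdots)$ is determined by its restriction to $\mathcal{C}$, and a functor $\mathcal{C}\to\operatorname{Sp}\mathcal{D}_{\geq0}\simeq\mathcal{D}$ extends uniquely to the stabilization because the target is stable.

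For (2), consider first an object in the image. Its core $\operatorname{Sp}(\mathcal{C})_{\geq0}\simeq\mathcal{C}$ is presentable, so by the warning above it determines an accessible $t$-structure. This $t$-structure is right complete, since $\operatorname{Sp}(\mathcal{C})=\colim_n\Sigma^{-n}\mathcal{C}$. It is also compatible with filtered colimits, since filtered colimits commute with finite limits in $\mathcal{C}$ and hence $\tau_{\leq0}$-objects are closed under filtered colimits \cite[C.1.4.1]{sag}.

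Conversely, suppose $(\mathcal{C},\mathcal{C}_{\geq0})$ is such a pair. Then $\mathcal{C}_{\geq0}$ is presentable prestable, and compatibility with filtered colimits makes it Grothendieck. Right completeness then identifies $\operatorname{Sp}(\mathcal{C}_{\geq0})\simeq\mathcal{C}$ via the canonical comparison map. I expect this identification to be the main point requiring care: one should check that the comparison is an equivalence on connective parts and then use $\mathcal{C}=\bigcup_n\mathcal{C}_{\geq -n}$.

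For (3), we use the definition of the tensor product on $\pr^{+}_{\op{st}}$. Since $\operatorname{Sp}(\mathcal{C}\otimes\mathcal{D})\simeq\operatorname{Sp}(\mathcal{C})\otimes\operatorname{Sp}(\mathcal{D})$ by associativity of the Lurie tensor product with the idempotent $\operatorname{Sp}$, it remains to identify the cores. We need $(\mathcal{C}\otimes\mathcal{D})$, viewed as the connective part, to equal $m_!(\mathcal{C},\mathcal{D})$. It is generated under colimits by the objects $C\otimes D$, so it is contained in $m_!(\mathcal{C},\mathcal{D})$. On the other hand, it is closed under colimits and extensions as the connective part of a $t$-structure, since $\mathcal{C}\otimes\mathcal{D}\in\op{Groth}_\infty$ by \cite[C.4.2.1]{sag}. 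This gives the reverse inclusion.

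The unit $\operatorname{Sp}_{\geq0}$ is sent to $(\operatorname{Sp},\operatorname{Sp}_{\geq0})$, which is the unit of $\pr^{+}_{\op{st}}$. Since stabilization is symmetric monoidal on $\prl$, the embedding acquires a symmetric monoidal structure. A fully faithful symmetric monoidal functor induces a fully faithful functor on $\op{CAlg}$, because mapping spaces there are computed as limits of mapping spaces in the underlying categories.
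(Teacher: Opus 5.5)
Your route is the paper's own. The paper gives no argument beyond citing \cite[Proposition C.3.1.1]{sag}, the dictionary of \cite[\S C.1]{sag}, and \cite[Theorem C.4.2.1]{sag}, and your sketches of (1) and (3) are fine.

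The step that fails is the one you flagged in the converse of (2). You propose to prove that the comparison functor $\mathcal{C}\to\operatorname{Sp}(\mathcal{C}_{\geq0})$ is an equivalence by checking it on connective parts and then using $\mathcal{C}=\bigcup_n\mathcal{C}_{\geq -n}$. That equality is false, because right completeness does not mean every object is bounded below. For instance, $\operatorname{Sp}$ with its standard $t$-structure is right complete, yet $\bigoplus_{n\geq0}\Sigma^{-n}H\mathbb{Z}$ lies in no $\operatorname{Sp}_{\geq -k}$. So the argument as planned only matches up the bounded below objects on the two sides. Similarly, ``$\operatorname{Sp}(\mathcal{C})=\colim_n\Sigma^{-n}\mathcal{C}$'' in your forward direction is correct only as a colimit in $\prl$, as in your part (1). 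That means a limit along the right adjoints $\tau_{\geq -n}$, not an increasing union of full subcategories.

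The missing point is that right completeness is, by definition, the assertion that
\[
\mathcal{C}\longrightarrow\lim\bigl(\cdots\longrightarrow\mathcal{C}_{\geq -2}\xrightarrow{\tau_{\geq -1}}\mathcal{C}_{\geq -1}\xrightarrow{\tau_{\geq 0}}\mathcal{C}_{\geq 0}\bigr)
\]
is an equivalence. Transport along $\Sigma^n\colon\mathcal{C}_{\geq -n}\simeq\mathcal{C}_{\geq0}$. Each transition functor becomes $\tau_{\geq0}\circ\Sigma^{-1}$, which is the loop functor $\Omega$ of the prestable $\infty$-category $\mathcal{C}_{\geq0}$. Hence the limit is $\lim(\cdots\xrightarrow{\Omega}\mathcal{C}_{\geq0}\xrightarrow{\Omega}\mathcal{C}_{\geq0})\simeq\operatorname{Sp}(\mathcal{C}_{\geq0})$. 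The composite is your comparison functor $X\mapsto(\tau_{\geq0}\Sigma^nX)_{n\geq0}$.

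Read this way, right completeness of $\operatorname{Sp}(\mathcal{C})$ in the forward direction is a tautology. In the converse it is exactly the equivalence $\mathcal{C}\simeq\operatorname{Sp}(\mathcal{C}_{\geq0})$ you need, with no reduction to bounded below objects.
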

	
	\begin{de}[Grothendieck $t$-structured stable $\infty$-categories]\label{gro}
		We say that a presentably $t$-structured stable $\infty$-category $(\mathcal{C},\mathcal{C}_{\geq0})\in \pr^{t\text{-rex}}_{\op{st}}$ is Grothendieck if it lies in the essential image of the embedding $\op{Groth}_{\infty}\hookrightarrow \pr^{t\text{-rex}}_{\op{st}}$, or equivalently, if the $t$-structure on $\mathcal{C}$ is right complete and compatible with filtered colimits \textup{(see \cite[Remark C.3.1.5]{sag})}.
	\end{de}
	
	\begin{prop}
		Let $\mcc\in \prlst$ is a compactly generated stable \infcat and $S\subset \mcc^\omega$ be an arbitrary subset. Let $\mcc_{\geq0}\subset \mcc$ be the smallest full subcategory generated by $S$ under small colimits and extensions. Then the $t$-structure $(\mcc,\mcc_{\geq0})$ is compatible with filtered colimits\footnote{One can even show that $\mcc_{\geq0}$ is compactly generated and the inclusion to \mcc preserves compact objects; cf. \cite[Remark 2.5]{rossanigo2026quasiaffineschemessinglycompactly}.}.
		
		Furthermore, if $S$ is a set of compact generators (meaning they generate \mcc under small colimits and shifts),
		then $(\mcc,\mcc_{\geq0})$ is a Grothendieck $t$-structured stable \infcat.
	\end{prop}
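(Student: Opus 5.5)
By \cite[Proposition 1.4.4.11]{ha}, since $S$ is a small set of objects of the presentable stable $\mcc$, the subcategory $\mcc_{\geq0}$ is presentable and is the connective part of an accessible $t$-structure $(\mcc_{\geq0},\mcc_{\leq0})$. The proof then splits into two parts: compatibility with filtered colimits, and right completeness in the generating case. Together these give the Grothendieck property by \cref{gro}.

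\textbf{Compatibility with filtered colimits.} We must show that $\mcc_{\leq0}$ is closed under filtered colimits. First identify the coconnective part via adjunction. An object $Y$ lies in $\mcc_{\leq -1}$ if and only if $\Map(X,Y)\simeq *$ for all $X\in\mcc_{\geq0}$. The class of $X$ with this property is closed under colimits and extensions, so it suffices to test on $X=\Sigma^n s$ with $s\in S$ and $n\geq0$. Hence
\[
\mcc_{\leq -1}=\{Y : \pi_0\Map(\Sigma^n s,Y)=0 \text{ for all } s\in S,\ n\geq 0\}.
\]
Each $\Sigma^n s$ is compact, so $\pi_0\Map(\Sigma^n s,-)$ commutes with filtered colimits. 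Therefore $\mcc_{\leq-1}$ is closed under filtered colimits, and shifting gives the same for $\mcc_{\leq0}$. This also shows that $\tau_{\leq 0}$ commutes with filtered colimits, which is the compatibility in the sense of \cite[\S C.1]{sag}.

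\textbf{Right completeness when $S$ generates.} Assume $S$ is a set of compact generators. Since the $t$-structure is compatible with filtered colimits, right completeness reduces by \cite[Proposition C.1.2.10 / Remark C.3.1.5]{sag} to $\bigcap_n \mcc_{\leq -n}=0$. Take $Y$ in this intersection. Then $\pi_0\Map(\Sigma^m s,Y)=0$ for all $s\in S$ and all $m\in\mathbb{Z}$, because for any fixed $m$ one can pick $n$ with $m\geq -n+1$ and apply the description above. Since $S$ generates $\mcc$ under colimits and shifts, these mapping spaces detect zero objects, so $Y\simeq0$.

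Equivalently, $\mcc\simeq \colim_n \Sigma^{-n}\mcc_{\geq0}$ because the union of the $\mcc_{\geq -n}$ contains all shifts of $S$ and is closed under colimits. Either way, \cref{gro} (via \cref{pret}) shows that $(\mcc,\mcc_{\geq0})$ is Grothendieck.

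\textbf{Main subtlety.} The delicate point is the reduction to generators: the orthogonality class must be closed under extensions as well as colimits. This holds because $\Map(-,Y)$ turns cofiber sequences into fiber sequences of spaces, and a fiber sequence whose outer terms are contractible has contractible middle term.
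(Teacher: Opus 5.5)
Your proof is correct and follows the paper's argument essentially step for step. You use HA Proposition 1.4.4.11 for the $t$-structure, then reduce the orthogonality test for $\mathcal{C}_{\leq -1}$ to the generators; the paper tests $\mathcal{C}_{\leq 0}$ against $S[1]$ instead. Compactness then gives closure under filtered colimits, and right completeness follows from $\bigcap_n \mathcal{C}_{\leq -n}=0$ via the dual of HA Proposition 1.2.1.19, which you check by orthogonality to all shifts of $S$.

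One caveat concerns your ``equivalently'' aside, which is misjustified as written. The union $\bigcup_n \mathcal{C}_{\geq -n}$ is not closed under arbitrary small colimits; for example, $\bigoplus_n \Sigma^{-n}s$ is typically not bounded below. Your main argument does not rely on this aside, so the proof is unaffected.
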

	\begin{proof}
		By \cite[Proposition 1.4.4.11]{ha}, $\mcc_{\geq 0}$ forms an accessible $t$-structure on $\mcc$. 
		By the orthogonality property of $t$-structures, an object $X \in \mcc$ belongs to $\mcc_{\leq 0}$ if and only if $\map_{\mcc}(Y, X) \simeq 0$ for all $Y \in \mcc_{\geq 1}$. Since $\mcc_{\geq 1}$ is generated under small colimits and extensions by $S[1] = \{K[1] \mid K \in S\}$, it suffices to check orthogonality against the generators. That is, $X \in \mcc_{\leq 0}$ if and only if $\map_{\mcc}(K[1], X) \simeq 0$ for all $K \in S$, so $\mcc_{\leq 0}\hookrightarrow\mcc$ is closed under filtered colimits by compactness. 
		
		Now assume that $S$ is a set of compact generators. To show that this $t$-structure is Grothendieck, it suffices to verify two properties: 
		\enu{\item  compatibility with filtered colimits (i.e., $\mcc_{\leq 0}$ is closed under filtered colimits);
			\item  $\bigcap_{n \in \mathbb{Z}} \mcc_{\leq n} \simeq \{0\}$ (this implies right completeness under the condition (1) by the dual of \cite[Proposition 1.2.1.19]{ha}).
		}		
		We only need to show that the infinite intersection $\bigcap_{m \in \mathbb{Z}} \mcc_{\leq m}=0$. Suppose $X \in \bigcap_{m \in \mathbb{Z}} \mcc_{\leq m}$. For any integer $n \in \mathbb{Z}$, we have $X \in \mcc_{\leq n-1}$. By orthogonality, $\map_{\mcc}(A, X) \simeq 0$ for all $A \in \mcc_{\geq n}$.
		Since $K[n] \in \mcc_{\geq n}$ for all $K \in S$, it follows that $\map_{\mcc}(K[n], X) \simeq 0$ for all $K \in S$ and all $n \in \mathbb{Z}$. 
		Since $X$ is orthogonal to all shifts of the generators, we must have $X \simeq 0$. Therefore, the $t$-structure is right complete.
	\end{proof}
	
	\begin{de}[See \cite{sag} C.1.2.12]
		Let $\mathcal{C}$ be a prestable $\infty$-category which admits finite limits. We say that an object $X \in \mathcal{C}$ is $\infty$-connective if $\tau_{\leq n} X \simeq 0$ for every integer $n$. \\
		We say $\mathcal{C}$ is separated if every $\infty$-connective object of $\mathcal{C}$ is a zero object. \\
		We say $\mathcal{C}$ is left complete if it is a homotopy limit of the tower of $\infty$-categories:
		$$
		\cdots \rightarrow \tau_{\leq 2} \mathcal{C} \xrightarrow{\tau_{\leq 1}} \tau_{\leq 1} \mathcal{C} \xrightarrow{\tau \leq 0} \tau_{\leq 0} \mathcal{C}=\mathcal{C}^{\heartsuit} .
		$$
		In other words, $\mathcal{C}$ is left complete if it is Postnikov complete.
	\end{de}
	
	\begin{rem}
		\enu{
			\item If a prestable $\infty$-category $\mathcal{C}$ is left complete, then it is separated.
			\item Let $\mathcal{C}$ be a stable $\infty$-category. Then $\mathcal{C}$ is separated if and only if $\mathcal{C} \simeq *$.
		}
	\end{rem}
	
	\begin{prop}\label{comple}
		Let $\mathcal{C}$ be a prestable $\infty$-category with finite limits. Then:
		\enu{
			\item The canonical $t$-structure $(\op{Sp}(\mathcal{C})_{\geq0},\op{Sp}(\mathcal{C})_{\leq0})$ is hypercomplete if and only if $\mathcal{C}$ is separated.
			\item The canonical $t$-structure $(\op{Sp}(\mathcal{C})_{\geq0},\op{Sp}(\mathcal{C})_{\leq0})$ is left complete if and only if $\mathcal{C}$ is left complete.
		}
	\end{prop}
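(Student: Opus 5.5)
The plan is to reduce both statements to the fact that, for a prestable $\infty$-category $\mathcal{C}$ with finite limits, the stabilization $\op{Sp}(\mathcal{C})$ has a $t$-structure whose connective part is $\mathcal{C}$ (via $\Sigma^\infty$) and whose truncations restrict on connective objects to the truncations of $\mathcal{C}$. In particular, for $X\in\mathcal{C}$ and $n\geq0$, $\tau_{\leq n}$ computed in $\mathcal{C}$ agrees with $\tau_{\leq n}$ computed in $\op{Sp}(\mathcal{C})$, and $\mathcal{C}_{\leq n}$ is identified with $\op{Sp}(\mathcal{C})_{[0,n]}$. We will freely use these identifications from \cite[\S C.1]{sag}.

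For (1), recall that hypercompleteness of the $t$-structure means that every $\infty$-connective object of $\op{Sp}(\mathcal{C})$, i.e.\ every object of $\bigcap_n \op{Sp}(\mathcal{C})_{\geq n}$, is zero. Any such object lies in $\op{Sp}(\mathcal{C})_{\geq 0}\simeq\mathcal{C}$, and it is $\infty$-connective in $\op{Sp}(\mathcal{C})$ precisely when $\tau_{\leq n}X\simeq 0$ for all $n\geq 0$. By the compatibility of truncations above, this is exactly the condition of being $\infty$-connective in $\mathcal{C}$. Hence the two vanishing conditions coincide, and (1) is immediate.

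For (2), first handle the direction in which the $t$-structure is left complete. Then $\op{Sp}(\mathcal{C})\simeq\lim\op{Sp}(\mathcal{C})_{\leq n}$. Restricting to connective parts, $\mathcal{C}=\op{Sp}(\mathcal{C})_{\geq0}$ maps to $\lim_n\op{Sp}(\mathcal{C})_{[0,n]}=\lim_n\tau_{\leq n}\mathcal{C}$. The connective objects of the limit are those whose components are connective, since connectivity can be tested on $\tau_{\leq 0}$. So the equivalence restricts to an equivalence $\mathcal{C}\simeq\lim\tau_{\leq n}\mathcal{C}$.

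For the converse, assume $\mathcal{C}\simeq\lim\tau_{\leq n}\mathcal{C}$. We must show that for every $Y\in\op{Sp}(\mathcal{C})$ the map $Y\to\lim\tau_{\leq n}Y$ is an equivalence, and that every compatible tower comes from an object. The approach is to shift: $\op{Sp}(\mathcal{C})_{\geq -k}\simeq\Sigma^{-k}\mathcal{C}$, and left completeness of $\Sigma^{-k}\mathcal{C}$ follows from that of $\mathcal{C}$ by shifting. Equivalently, one can use the criterion of \cite[Proposition 1.2.1.17]{ha}: the $t$-structure is left complete iff $\op{Sp}(\mathcal{C})_{\geq -k}$ is left complete for each $k$ and these assemble. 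Concretely, given a Postnikov tower $\{Y_n\in\op{Sp}(\mathcal{C})_{\leq n}\}$, its truncations $\tau_{\geq -k}Y_n$ give towers in $\Sigma^{-k}\mathcal{C}$. These have limits $Y^{(k)}$ by left completeness of $\mathcal{C}$, and we glue them as $\operatorname{colim}_k Y^{(k)}$.

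The main obstacle is this gluing step. We must check that the colimit over $k$ of these limits recovers the tower, i.e.\ that $\tau_{\leq n}$ commutes with this colimit. Since the $Y^{(k)}$ form a sequence whose cofibers become increasingly coconnective, $\tau_{\geq -k}$ stabilizes. So on each truncation the colimit is eventually constant, and no filtered-colimit compatibility is needed. I would verify this by showing that $\tau_{\geq -k}$ of the glued object equals $Y^{(k)}$, using that $\tau_{\geq -k}$ preserves limits of Postnikov towers. The latter holds because it is a right adjoint, and the tower is bounded below degreewise.
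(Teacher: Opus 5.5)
Your arguments for (1) and for the ``only if'' half of (2) are correct. In (1) you are even more direct than the paper: an object with $\tau_{\leq n}X\simeq 0$ for all $n$ is already connective, so no right completeness is needed.

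The gap is in the ``if'' half of (2), exactly at the gluing step you flagged.

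First, $\mathcal{C}$ is only assumed to have finite limits and colimits, so $\op{Sp}(\mathcal{C})$ need not admit sequential colimits or limits. Neither $\operatorname{colim}_kY^{(k)}$ nor sequential limits such as $\lim_n\tau_{\leq n}Y$ or $\lim_nY_n$ are known to exist.

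Second, even granting existence, your justification does not identify the truncations of the glued object. What is eventually constant is $\tau_{\geq -j}Y^{(k)}\simeq Y^{(j)}$ for $k\geq j$. By contrast, $\tau_{\leq n}Y^{(k)}\simeq\tau_{\geq -k}Y_n$ is in general not eventually constant in $k$. So identifying $\tau_{\leq n}$ of the glued object with $Y_n$ amounts to knowing that $Y_n$ is the colimit of its connective covers $\tau_{\geq -k}Y_n$. Likewise, $\tau_{\geq -k}$ being a right adjoint lets it commute with limits, not with $\operatorname{colim}_kY^{(k)}$.

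Finally, full faithfulness of $\op{Sp}(\mathcal{C})\to\lim_n\op{Sp}(\mathcal{C})_{\leq n}$ is never addressed. The clause ``these assemble'' in your appeal to \cite[Proposition 1.2.1.17]{ha} is precisely what needs proof.

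The missing ingredient is right completeness of $\op{Sp}(\mathcal{C})$, which holds for every prestable $\mathcal{C}$ with finite limits by construction. Indeed $\op{Sp}(\mathcal{C})\simeq\lim(\cdots\xrightarrow{\Omega}\mathcal{C}\xrightarrow{\Omega}\mathcal{C})$. Under $\Sigma^{-k}\colon\mathcal{C}\simeq\op{Sp}(\mathcal{C})_{\geq -k}$, the loop functor $\Omega_{\mathcal{C}}\simeq\tau_{\geq 0}\circ\Sigma^{-1}$ (computed in $\op{Sp}(\mathcal{C})$) becomes $\tau_{\geq -k}$, so $\op{Sp}(\mathcal{C})\simeq\lim_k\op{Sp}(\mathcal{C})_{\geq -k}$.

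This replaces your colimit: the compatible family $(Y^{(k)})_k$ is an object of $\lim_k\op{Sp}(\mathcal{C})_{\geq -k}$. The whole statement then becomes an interchange of limits of $\infty$-categories, which is the paper's argument:
\[
\op{Sp}(\mathcal{C})\simeq\lim_k\op{Sp}(\mathcal{C})_{\geq -k}\simeq\lim_k\lim_n\op{Sp}(\mathcal{C})_{[-k,n]}\simeq\lim_n\lim_k\op{Sp}(\mathcal{C})_{[-k,n]}\simeq\lim_n\op{Sp}(\mathcal{C})_{\leq n}.
\]
The second equivalence is your shifted left completeness of $\Sigma^{-k}\mathcal{C}$. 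The last is right completeness restricted to $\op{Sp}(\mathcal{C})_{\leq n}$, since membership in $\op{Sp}(\mathcal{C})_{\leq n}$ is detected on the covers $\tau_{\geq -k}$. This yields full faithfulness and essential surjectivity simultaneously.

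A posteriori, right completeness also shows that $\operatorname{colim}_kY^{(k)}$ exists and is the glued object. So your formula is correct, but only once this ingredient is in place.
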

	
	\begin{proof}\,
		(1) The ``only if'' direction is obvious. Now assume that $\mathcal{C}$ is separated and that $X\in \op{Sp}(\mathcal{C})$ satisfies $\tau_{\leq i}X=0$ for every integer $i$. We need to show that $X=0$. Since $\op{Sp}(\mathcal{C})$ is right complete, we have $X\simeq \varinjlim \tau_{\geq -n}X$. However, each $\Sigma^n\tau_{\geq -n}X$ is $\infty$-connective as an object in $\mathcal{C}$, so $\tau_{\geq -n}X=0$ by assumption, therefore $X=0$.
		
		(2) The ``only if'' direction is obvious. Now assume that $\mathcal{C}$ is left complete. We wish to show the diagram
		$$\begin{tikzcd}
			& \vdots \arrow[d] \\
			& \op{Sp}(\mathcal{C})_{\leq1} \arrow[d] \\
			\op{Sp}(\mathcal{C}) \arrow[r] \arrow[ru] \arrow[ruu] & \op{Sp}(\mathcal{C})_{\leq0}
		\end{tikzcd}$$
		is a limit diagram of \infcats. Since $\op{Sp}(\mathcal{C})$ is right complete, we have that $$\op{Sp}(\mathcal{C})\xrightarrow{\varprojlim\tau_{\geq -n}} \varprojlim\op{Sp}(\mathcal{C})_{\geq -n}$$ is an equivalence of \infcats. However, the diagram
		$$\begin{tikzcd}
			& \vdots \arrow[d] \\
			& \op{Sp}(\mathcal{C})_{[-n,1]} \arrow[d] \\
			\op{Sp}(\mathcal{C})_{\geq -n} \arrow[r] \arrow[ru] \arrow[ruu] & \op{Sp}(\mathcal{C})_{[-n,0]}
		\end{tikzcd}$$
		is a limit diagram of \infcats by the left completeness of $\mathcal{C}$, so by \cite[Lemma 5.5.2.3]{htt} we are done.
	\end{proof}
	\begin{de}
		Let $\mcc$ be a prestable \infcat with finite limits. We will say that a tower 
		$$X_{\bullet}: (\mathbb{Z}_{\geq 0}^{\triangleright })^{\op{op}} \rightarrow \mcc$$
		is highly connected if, for every $n \geq 0$, there exists an integer $k$ such that the induced map $\tau_{\leq n} X_\infty \rightarrow \tau_{\leq n} X_{k^{\prime}}$ is an equivalence for $k^{\prime} \geq k$. We will say that a pretower 
		$$Y_{\bullet}: \mathbb{Z}_{\geq 0}^{\op{op}} \rightarrow \mcc$$
		is highly connected if, for every $n \geq 0$, there exists an integer $k$ such that the map $\tau_{\leq n} Y_{k^{\prime \prime}} \rightarrow \tau_{\leq n} Y_{k^{\prime}}$ is an equivalence for $k^{\prime \prime} \geq k^{\prime} \geq k$. It is clear that every Postnikov (pre)tower is highly connected.
	\end{de}
	\begin{prop}\label{limtower}
		Assume $\mcc$ is a left complete prestable $\infty$-category with countable limits. Let $Y_\bullet:\mathbb{Z}_{\geq 0}^{\op{op}} \rightarrow \mcc$ be a highly connected pretower. Then:
		\enu{\item For any $n\geq0$, there exists an integer $k$ such that for any $l\geq k$ the natural projection $$\varprojlim Y_\bullet\to Y_{l}$$ has $n$-connective cofiber.
			\item Suppose further that \mcc admits a  monoidal structure $\mcc^\otimes$ which is compatible with finite colimits. For any object $X\in\mcc$, the natural maps $$X\otimes \varprojlim Y_\bullet\to \varprojlim (X\otimes Y_\bullet) \,\text{ and }\,  \varprojlim Y_\bullet\otimes X\to \varprojlim ( Y_\bullet \otimes X)$$ are  equivalences.
		}
	\end{prop}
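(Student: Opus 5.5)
Write $Y_\infty:=\varprojlim Y_\bullet$. The plan is to reduce both statements to the truncated pieces $\tau_{\leq n}\mcc$, where the pretower becomes eventually constant, and then use left completeness to reassemble. The key input is the identification $\mcc\simeq\varprojlim_n \tau_{\leq n}\mcc$ together with the fact that limits in $\mcc$ can be computed levelwise through this equivalence.

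\textbf{Step 1: the truncations of $Y_\infty$.} Fix $n$. Highly connectedness gives $k_n$ such that $\tau_{\leq n}Y_{k''}\to\tau_{\leq n}Y_{k'}$ is an equivalence for $k''\geq k'\geq k_n$. Thus the pretower $\tau_{\leq n}Y_\bullet$ in $\tau_{\leq n}\mcc$ is eventually constant, and its limit is $\tau_{\leq n}Y_{k_n}$. By left completeness, limits in $\mcc$ are computed componentwise in $\varprojlim_n\tau_{\leq n}\mcc$. One caveat is that the functors $\tau_{\leq m}$ are left adjoints, so the limit in the inverse-limit category is the compatible family of limits formed in each $\tau_{\leq n}\mcc$. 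The transition functors preserve these particular limits because the towers are eventually constant. It follows that $Y_\infty$ corresponds to the compatible family $(\tau_{\leq n}Y_{k_n})_n$, and hence $\tau_{\leq n}Y_\infty\simeq\tau_{\leq n}Y_l$ for all $l\geq k_n$.

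\textbf{Step 2: part (1).} Take $l\geq k_{n}$, applying Step 1 with $n$ replaced by $n$ or $n-1$ as needed. The map $Y_\infty\to Y_l$ then induces an equivalence on $\tau_{\leq n}$. Using the fiber/cofiber sequence in the prestable setting, work in $\op{Sp}(\mcc)$: the cofiber $C$ of $Y_\infty\to Y_l$ has $\pi_i C=0$ for $i\leq n$. This uses that $\pi_{n+1}$ of the map is only surjective, so one should take the bound from $k_{n+1}$ to be safe. It follows that $C$ is $n$-connective, indeed $(n+1)$-connective.

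\textbf{Step 3: part (2).} I expect this to be the main obstacle, since tensoring does not commute with limits in general. By left completeness, and because the tower $X\otimes Y_\bullet$ is again highly connected, it suffices to show that $\tau_{\leq n}$ of the comparison map is an equivalence for every $n$.

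To see that $X\otimes Y_\bullet$ is highly connected, use right exactness of $X\otimes-$ on the prestable $\mcc$. The map $Y_{k''}\to Y_{k'}$ has fiber $F$ and cofiber in $\mcc_{\geq n+1}$ (in the sense of Step 2). Tensoring with the connective object $X$ preserves cofiber sequences and preserves $(n+1)$-connectivity, so $\tau_{\leq n}(X\otimes Y_{k''})\to\tau_{\leq n}(X\otimes Y_{k'})$ is an equivalence.

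Now apply Step 1 to both sides. This gives $\tau_{\leq n}\varprojlim(X\otimes Y_\bullet)\simeq\tau_{\leq n}(X\otimes Y_l)$. By Step 2, $\tau_{\leq n}(X\otimes Y_\infty)\simeq\tau_{\leq n}(X\otimes Y_l)$, since $\operatorname{cofib}(X\otimes Y_\infty\to X\otimes Y_l)\simeq X\otimes C$ is $(n+1)$-connective. These identifications are compatible with the comparison map, so the map is an equivalence on every $\tau_{\leq n}$, hence an equivalence by left completeness. The right-hand version $Y_\bullet\otimes X$ is symmetric.

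The delicate points are the index bookkeeping, namely using $k_{n+1}$ rather than $k_n$, and the claim that tensoring with a connective object preserves the connectivity of cofibers. The latter holds because $\mcc^\otimes$ is compatible with finite colimits and $\mcc$ is prestable, so every object is connective.
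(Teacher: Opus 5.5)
Your proposal is correct and follows essentially the paper's route. Part (1) goes through the eventual stabilization of $\tau_{\leq n}Y_\bullet$ together with left completeness; your computation of limits in $\varprojlim_n\tau_{\leq n}\mathcal{C}$ repackages the paper's interchange of the double limit $\varprojlim_m\varprojlim_d\tau_{\leq d}Y_m$. Part (2) goes through the commuting triangle via $X\otimes Y_l$: you apply two-out-of-three on $\tau_{\leq n}$, where the paper instead bounds the connectivity of the cofiber of the comparison map.

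One bookkeeping correction. A $\tau_{\leq n}$-equivalence always has $(n+1)$-connective cofiber, so Step 2 needs no $\pi_{n+1}$-surjectivity. Conversely, an $(n+1)$-connective cofiber only gives an equivalence on $\tau_{\leq n-1}$ (plus a surjection on $\pi_n$). The index shift is therefore needed in Step 3 and in your check that $X\otimes Y_\bullet$ is highly connected, not in Step 2. Fix it by taking $l\geq k_{n+1}$, so that $X\otimes C$ is $(n+2)$-connective, or simply reindex, since $n$ is arbitrary.
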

	\begin{proof}
		(1) Let us analyze the Postnikov towers of the objects $Y_m$. For each integer $d$, let $\tau_{\leq d}$ denote the truncation functor. By the definition of a highly connected pretower, the connectivity of the cofiber of the transition maps $Y_m \to Y_l$ tends to infinity. This means that for any fixed degree $d \geq 0$, there exists an integer $K(d)$ such that for all $m > l \geq K(d)$, the cofiber of $Y_m \to Y_l$ is $(d+1)$-connective. 
		Consequently, applying the truncation functor yields an equivalence $\tau_{\leq d} Y_m \xrightarrow{\sim} \tau_{\leq d} Y_l$ for all $m > l \geq K(d)$. In other words, for any fixed $d$, the tower $\{\tau_{\leq d} Y_m\}_{m}$ is essentially constant (pro-constant), and its limit is achieved at the finite stage $K(d)$.
		
		Since $\mcc$ is left complete, every object is the limit of its Postnikov tower. We can therefore form a bitower and compute the limit of $Y_\bullet$ by commuting the limits:
		$$ \varprojlim_m Y_m \simeq \varprojlim_m \varprojlim_d \tau_{\leq d} Y_m \simeq \varprojlim_d \left( \varprojlim_m \tau_{\leq d} Y_m \right) $$
		Let $X = \varprojlim_m Y_m$. Because the tower $\{\tau_{\leq d} Y_m\}_m$ stabilizes, the limit over $m$ commutes with the truncation $\tau_{\leq d}$. Thus, we obtain a canonical equivalence for the truncations:
		$$ \tau_{\leq d} X \simeq \varprojlim_m \tau_{\leq d} Y_m \simeq \tau_{\leq d} Y_l \quad \text{for any } l \geq K(d). $$
		Now, for any given $n \geq 0$, let $k = K(n-1)$. For any $l \geq k$, the natural projection $X \to Y_l$ induces an equivalence on their $(n-1)$-truncations:
		$$ \tau_{\leq n-1} X \xrightarrow{\sim} \tau_{\leq n-1} Y_l $$
		This precisely means that the cofiber of the projection $\varprojlim Y_\bullet \to Y_l$ has a vanishing $(n-1)$-truncation, i.e., the cofiber is $n$-connective.
		
		(2) Let $C$ be the cofiber of the natural map $X\otimes \varprojlim Y_\bullet\to \varprojlim (X\otimes Y_\bullet)$. Since $\mcc$ is left complete, to show $C \simeq 0$ (and thus the map is an equivalence), it suffices to show that $C$ is $m$-connective for arbitrarily large $m$.
		For any index $l$, consider the following commutative diagram:
		$$\begin{tikzcd}
			X\otimes \varprojlim Y_\bullet \arrow[rr] \arrow[rd, "p_l"'] &               & \varprojlim (X\otimes  Y_\bullet) \arrow[ld, "q_l"] \\
			& X\otimes  Y_l &                                              
		\end{tikzcd}$$
		Given any integer $m \geq 0$, by (1), we can find a sufficiently large $l$ such that the cofiber of $\varprojlim Y_\bullet \to Y_l$ is $m$-connective. Since $X \in \mcc$ is always ($0$-)connective, and taking the tensor product with a connective object preserves $m$-connective objects, the cofiber of $p_l$ is $m$-connective.
		Similarly, since $X$ is connective, the tower $X \otimes Y_\bullet$ is also a highly connected pretower. Applying (1) to this new tower, for $l$ large enough, the cofiber of $q_l$ is also $m$-connective.
		The commutative triangle induces a fiber sequence of cofibers:
		$$ C \to \op{cofib}(p_l) \to \op{cofib}(q_l) $$
		Since both $\op{cofib}(p_l)$ and $\op{cofib}(q_l)$ can be made $m$-connective by choosing $l$ large enough, their fiber $C$ is at least $(m-1)$-connective. Since $m$ was arbitrary, $C$ is $m$-connective for all $m$. By left completeness (hypercompleteness suffices here), we conclude $C \simeq 0$, hence the arrow is an equivalence. 
		
		The argument for $\varprojlim Y_\bullet\otimes X\to \varprojlim ( Y_\bullet \otimes X)$ is similar.
	\end{proof}

	\subsection{Algebra theory in Grothendieck abelian categories}\label{groab}
	
	A standard and powerful technique for studying derived algebraic geometry is to reduce structural questions to their discrete components via the $\pi_0$ functor. In this subsection, we establish the 1-categorical foundations of algebra within a symmetric monoidal Grothendieck abelian category $\mb{A}^\otimes$.
	
	We introduce the notion of 1-projective rigidity and provide 1-categorical analogues for flatness, finitely presented maps, and Lazard's theorem, which will serve as the discrete shadows for our higher categorical generalizations. Some of them have been developed in \cite{toen2009dessous}, \cite{banerjee2012centre} and \cite{banerjee2017noetherian}.
	
	
	\begin{de}
		Let $\op{Groth}_{1} \subset \prladone$ denote the full subcategory of presentable additive 1-categories spanned by those abelian categories such that filtered colimits commute with finite limits in it. We call an object in $\op{Groth}_{1}$ a Grothendieck abelian category.
	\end{de}
	\begin{cov}
		Throughout \cref{groab}, we fix a symmetric monoidal Grothendieck abelian category $\mb{A}^\otimes\in \calg(\op{Groth}_{1})$.
	\end{cov}
	
	\begin{rem}
		For a Grothendieck prestable \infcat $\mc{C}\in \op{Groth}_{\infty}$, the heart $\mc{C^{\heartsuit}}$ is a Grothendieck Abelian category. 
	\end{rem}

	\begin{rem}
		$\groo$ is closed under the Lurie tensor product on $\prl$ by \cite[Theorem C.5.4.16]{sag}. We call an object in  $\calg(\groo)$  a symmetric monoidal Grothendieck abelian category. 
	\end{rem}

	\begin{de} \label{f.p.}
		Let $R \in \alg(\mb{A})$.
		\enu{\item We say a left $R$-module $M$ is finitely presented if $M$ is a compact object in $\modu_R(\mb{A})$.
			\item We say a left $R$-module $M$ is (faithfully) 1-flat if the relative tensor product functor $(-)\otimes_R M: \rmodu_R(\mb{A})\to \mb{A}$ is (conservative) left exact.
			\item We define a left ideal $I$ of $R$ as a left $R$-submodule of $R$.
			
		}
	\end{de}
	
	\begin{prop}\label{cokerff}
		Let $f: A\to B\in \calg(\mb{A})$ be a faithfully 1-flat map. Then 
		\enu{
			\item For any $A$-module $M$, the map $M\simeq M\otimes_{A}A\to M\otimes_{A}B$ is monomorphic. In particular, $A\to B$ is monomorphic.
			\item $\op{Coker}(f)$ is a 1-flat $A$-module.
		}
	\end{prop}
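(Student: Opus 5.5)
For part (1), I would use the standard faithfully flat descent argument. Let $K$ be the kernel of $M\to M\otimes_A B$ in $\modu_A(\mb{A})$. Because $B$ is 1-flat, the functor $-\otimes_A B$ is left exact, hence exact (it is always right exact). So $K\otimes_A B$ is the kernel of
$$M\otimes_A B\to M\otimes_A B\otimes_A B.$$
This map is split injective. A retraction is given by the multiplication $B\otimes_A B\to B$ applied after $M\otimes_A -$; this is where commutativity of $B$ and its algebra structure are used. Hence $K\otimes_A B=0$.

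Faithfulness, in the sense that $-\otimes_A B$ is conservative and exact, then gives $K=0$. Indeed, the zero map $K\to 0$ becomes an isomorphism after tensoring, so it was already an isomorphism. Taking $M=A$ shows that $f$ itself is monic.

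For part (2), set $C=\op{Coker}(f)$, so there is a short exact sequence $0\to A\to B\to C\to 0$ of $A$-modules, using (1). I would show that $-\otimes_A C$ preserves monomorphisms. Take a monomorphism $N'\to N$ of $A$-modules and apply $-\otimes_A$ to the short exact sequence. Each resulting row $N'\to N'\otimes_A B\to N'\otimes_A C\to 0$ is right exact, and by (1) its first map is monic. So we get a map of short exact sequences
$$0\to N'\to N'\otimes_AB\to N'\otimes_AC\to0\quad\longrightarrow\quad 0\to N\to N\otimes_AB\to N\otimes_AC\to0.$$
The middle vertical map is monic because $B$ is flat.

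The snake lemma gives an exact sequence
$$0\to\ker_1\to\ker_2\to\ker_3\to\op{coker}_1\to\op{coker}_2,$$
with $\ker_2=0$. It therefore suffices to show that $\op{coker}_1\to\op{coker}_2$ is monic, that is, $N/N'\to (N\otimes_AB)/(N'\otimes_AB)\simeq (N/N')\otimes_AB$. That identification uses exactness of $-\otimes_AB$. The map is monic by (1) applied to $M=N/N'$. Hence $\ker_3=0$, and $C$ is 1-flat.

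The main subtlety is bookkeeping in the abstract Grothendieck abelian setting. The relative tensor product over a commutative $A$ in $\mb{A}$ must be right exact, which holds since $\otimes$ preserves colimits in each variable. The retraction $M\otimes_AB\otimes_AB\to M\otimes_AB$ must be natural and compatible with the map in question; this is where the commutative algebra structure of $B$ is needed. I would verify the retraction claim carefully first, since the rest is formal diagram chasing.
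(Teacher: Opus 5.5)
Your proposal is correct and follows essentially the same route as the paper. For (1), the paper also takes the kernel $N$ of $M\to M\otimes_A B$, uses the base-change adjunction (equivalently, the unit--multiplication identity behind your retraction) to see that $N\otimes_A B\to M\otimes_A B$ is zero, and concludes $N=0$ from faithful 1-flatness; for (2), it deduces the claim from (1) and the snake lemma, leaving implicit the bookkeeping you spell out (identifying the map on cokernels with the unit map of $N/N'$).
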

	\begin{proof}
		(1) Let $N$ be the kernel of $M\to M\otimes_A B$. Considering the following diagram.
		$$
		\begin{tikzcd}
			N \arrow[d, "i", hook] \arrow[r] & N\otimes_{A}B \arrow[d, "i\otimes_A B"] \\
			M \arrow[r]                      & M\otimes_{A}B                          
		\end{tikzcd}$$
		Then by the base change adjoint we conclude that the map $N\otimes_{A}B\to M\otimes_{A}B$ is zero. The faithful 1-flatness implies $i=0$ and hence $N=0$.\\
		(2) It follows immediately from (1) and snake lemma. 
	\end{proof}
	The converse of \cref{cokerff} does not hold for an arbitrary
	symmetric monoidal Grothendieck abelian category. Namely if $f: A\to B\in \calg(\mb{A})$ is a monomorphism such that $\coker(f)$ is flat, $f$ need not be faithfully flat\footnote{However, this holds when $\mb{A}$ admits a symmetric monoidal prestable enhancement; see \cref{pi0flat}.}. 
	
	\begin{ex}\label{cokerflatnotff}
		
		Indeed, let
		\[
		R=\mathbb{Q}[t],\qquad K=\mathbb{Q}(t),
		\]
		and let $\mathbf{T}\subset\operatorname{Mod}_R(\operatorname{Ab})$
		be the Grothendieck abelian category of torsion $R$-modules (i.e. those $R$-modules $M$ such that $M\otimes_R K=0$). Consider
		the Grothendieck abelian category
		\[
		\mathbf{A}=\operatorname{Ab}\times\mathbf{T}
		\]
		equipped with the (presentably) symmetric monoidal structure
		\[
		(P,M)\otimes(Q,N)
		=
		\left(
		P\otimes_{\mathbb Z}Q,\,
		(P\otimes_{\mathbb Z}N)
		\oplus
		(Q\otimes_{\mathbb Z}M)
		\oplus
		(M\otimes_RN)
		\right),
		\]
		whose tensor unit is $\mathbf{1}=(\mathbb Z,0)$.
		This is the unital version of a non-tame tensor category due to
		Goodwillie.\footnote{See
			\url{https://mathoverflow.net/questions/57651/tame-abelian-tensor-categories}.}
		
		Set
		\[
		X=(0,R/tR),\qquad
		Y=(0,K/tR),\qquad
		F=(0,K/R).
		\]
		There is a short exact sequence
		\[
		0\longrightarrow X\longrightarrow Y\longrightarrow F\longrightarrow0.
		\]
		The object $F$ is flat in $\mathbf A$. Indeed, for every
		$(P,M)\in\mathbf A$ we have
		\[
		(P,M)\otimes F
		\simeq
		\left(0,P\otimes_{\mathbb Z}(K/R)\right),
		\]
		since $M\otimes_R(K/R)=0$ for every torsion $R$-module $M$.
		Moreover, $K/R$ is a $\mathbb Q$-vector space and hence is flat over
		$\mathbb Z$.
		
		Now form the square-zero commutative algebras
		\[
		A=\mathbf{1}\oplus X,
		\qquad
		B=\mathbf{1}\oplus Y.
		\]
		The inclusion $X\hookrightarrow Y$ induces a monomorphism
		$
		f:A\longrightarrow B
		$
		in $\operatorname{CAlg}(\mathbf A)$, with
		$
		\operatorname{Coker}(f)\simeq F.
		$
		We regard $F$ as an $A$-module via the augmentation
		$A\to\mathbf{1}$. Since
		$
		X\otimes F=0,
		$
		for every right $A$-module $M$ the canonical map induces an
		equivalence
		\[
		M\otimes_A F\simeq M\otimes F,
		\]
		where on the right-hand side we forget the $A$-module structure.
		Consequently, $F$ is a flat $A$-module.
		
		However, $f$ is not faithfully flat. To see this, consider the free
		$A$-module
		$
		P=A\otimes X.
		$
		The unit map for base change along $f$ identifies with
		\[
		A\otimes X\longrightarrow B\otimes X.
		\]
		We have
		\[
		A\otimes X
		\simeq
		X\oplus(X\otimes X),
		\qquad
		B\otimes X
		\simeq
		X\oplus(Y\otimes X).
		\]
		But
		\[
		X\otimes X\simeq X\neq0,
		\qquad
		Y\otimes X
		\simeq
		\left(0,(K/tR)\otimes_RR/tR\right)
		\simeq0,
		\]
		since multiplication by $t$ on $K/tR$ is surjective. Hence
		$
		P\longrightarrow P\otimes_A B
		$
		has the nonzero kernel $X\otimes X$.
		
		If $B$ were faithfully flat over $A$, then
		$M\to M\otimes_A B$ would be monomorphic for every $A$-module $M$:
		after tensoring with $B$, this map becomes split by the multiplication
		$B\otimes_A B\to B$, and exactness and conservativity of
		$(-)\otimes_A B$ would then imply that the original map is
		monomorphic. This gives a contradiction.
		
		Thus a monomorphism of commutative algebras may have flat cokernel
		without being faithfully flat.
	\end{ex}
	\begin{de}Let $\mb{B}\in\op{Groth}_{1}$ be a Grothendieck abelian category.
		\enu{
			\item We say an object $X\in\mb{B}$ is 1-projective if it is a projective object in the ordinary sense of an abelian category.
			\item We say $\mb{B}$ is 1-projectively generated if $\mb{B}$ is generated by a small set of compact 1-projective objects under small colimits.
			\item If $\mb{B}\in\calg(\op{Groth}_{1})$ is a symmetric monoidal Grothendieck abelian category, then we say $\mb{B}^\otimes$ is 1-projectively rigid if $\mb{B}$ is 1-projectively generated and the dualizable objects in $\mb{B}$ coincide with compact 1-projective objects.
		}
		
	\end{de}
	\begin{rem}
		We use the terminology ``1-projective'' to distinguish it from the ``projective'' in the sense of \textup{\cite[Definition 5.5.8.18]{htt}} for an $\infty$-category. Generally they do not agree in an abelian 1-category  \textup{ \cite[see][Example 5.5.8.21]{htt}}.
	\end{rem}
	\begin{prop}
		Suppose that $\mb{A}$ is 1-projectively generated and $R \in \calg(\mb{A})$. Then:
		\begingroup
		\catcode`\&=13
		\enu{\item Any $R$-module $M$ can be written as a pushout in $\modu_R(\mb{A})$ as the following form
			
			$$\begin{tikzcd}
				P_1 \arrow[r] \arrow[d] & 0 \arrow[d] \\
				P_0 \arrow[r]                & M         
			\end{tikzcd}$$ where $P_1,P_0 \in \modu_R(\mb{A})$ are 1-projective $R$-modules.
			
			\item An $R$-module $M$ is finitely presented if and only if $P_1,P_0$ can be promoted to compact 1-projective $R$-modules.
		}\endgroup
	\end{prop}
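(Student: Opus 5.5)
The plan is to move the projective generators of $\mb{A}$ into $\modu_R(\mb{A})$ via the free-module functor $R\otimes(-)\colon \mb{A}\to\modu_R(\mb{A})$. This functor is left adjoint to the forgetful functor $U$.

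\textbf{Step 1: free modules on projectives are projective.} Let $\{G_i\}$ be a small set of compact $1$-projective generators of $\mb{A}$. Since $\Hom_R(R\otimes G_i,-)\simeq\Hom_{\mb{A}}(G_i,U(-))$, and $U$ is exact and preserves filtered colimits (colimits in $\modu_R(\mb{A})$ are computed underlying, as $R\otimes-$ preserves colimits), each $R\otimes G_i$ is a compact $1$-projective $R$-module. Moreover, $U$ is faithful and conservative, so the $R\otimes G_i$ generate $\modu_R(\mb{A})$: for every $M$ the counit $\bigoplus R\otimes G_{i}\to M$, indexed over all maps $G_i\to UM$, is an epimorphism. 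This is because $\bigoplus G_i\to UM$ is epi in $\mb{A}$ by generation, and it factors through $U$ of the counit.

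\textbf{Step 2: part (1).} Take an epimorphism $P_0:=\bigoplus R\otimes G_{i}\twoheadrightarrow M$ as in Step 1. Its kernel $K$ again admits an epimorphism $P_1\twoheadrightarrow K$ with $P_1$ a direct sum of the $R\otimes G_j$. Direct sums of $1$-projectives are $1$-projective. Then $P_1\to P_0\to M\to 0$ is exact, which in an abelian category is exactly the statement that $M$ is the pushout of $0\leftarrow P_1\to P_0$, i.e.\ $M\simeq\coker(P_1\to P_0)$.

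\textbf{Step 3: part (2), the ``if'' direction.} Finite colimits of compact objects are compact, so if $P_0,P_1$ are compact then $M$ is compact.

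\textbf{Step 4: part (2), the ``only if'' direction.} This is the main, though standard, obstacle. Write $P_0=\varinjlim_F\bigoplus_{i\in F}R\otimes G_i$ as a filtered colimit over finite subsums. Compactness of $M$ means $\Hom(M,-)$ commutes with filtered colimits. Since filtered colimits are exact in a Grothendieck category, $M\simeq\varinjlim_F \operatorname{Im}(P_0^F\to M)$, and $\id_M$ factors through some stage. Hence some finite subsum $Q_0=P_0^F\to M$ is already an epimorphism, and $Q_0$ is compact and $1$-projective.

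Next, let $K=\ker(Q_0\to M)$ and write $K=\varinjlim K_\alpha$ as a filtered union of images of finite sums of generators mapping to $K$. Then $M\simeq\varinjlim_\alpha Q_0/K_\alpha$ by exactness of filtered colimits. By compactness, $\id_M$ factors through some $Q_0/K_\alpha$, giving a section $s\colon M\to Q_0/K_\alpha$ of the surjection $Q_0/K_\alpha\to M$. Here the main subtlety appears: one must ensure that $K_\alpha=K$, not merely that $M$ is a retract. For this, enlarge $\alpha$ so that the composite $Q_0/K_\alpha\to M\xrightarrow{s}Q_0/K_\alpha$ and the identity agree after further passing to $Q_0/K_\beta$. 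This is possible because $Q_0$ is compact: the two maps $Q_0\to Q_0/K_\alpha$ become equal in the colimit $M$, hence at some stage $\beta$. It follows that $Q_0/K_\beta\to M$ is an isomorphism, so $K=K_\beta$ is a quotient of a finite sum $Q_1$ of generators. Then $Q_1\to Q_0\to M\to 0$ gives the desired presentation by compact $1$-projectives.
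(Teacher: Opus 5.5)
Your proof is correct and follows the same route as the paper: free modules on compact 1-projective generators give a presentation, compactness of $M$ cuts $P_0$ down to a finite subsum, and a second compactness argument shows that the kernel is finitely generated. The only local difference is in that last step, and it is a genuine but small variation.
\begin{itemize}
\item The paper shows that $K/Q_k$ is compact, as a retract of the compact module $P_0/Q_k$, and applies this compactness to $K/Q_k\simeq\varinjlim_{\alpha\geq k}Q_\alpha/Q_k$.
\item You instead use compactness of $Q_0$ to make the identity and the composite $Q_0/K_\alpha\to M\xrightarrow{s}Q_0/K_\alpha$ agree at a later stage $\beta$.
\end{itemize}
Because $Q_0/K_\alpha\to Q_0/K_\beta$ is an epimorphism, this makes the image of $s$ in $Q_0/K_\beta$ a two-sided inverse of $Q_0/K_\beta\to M$, so $K_\beta=K$. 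Your Step 1 also makes explicit that $\modu_R(\mb{A})$ is projectively generated by the modules $R\otimes G_i$, which the paper takes for granted.
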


	\begin{proof}
		Let $\mathcal{C}=\modu_R(\mb{A})$.
		
		(1) Since $\mathcal{C}$ has enough 1-projective objects, for every
		$R$-module $M$ there exists an epimorphism
		$
		P_0\twoheadrightarrow M
		$
		with $P_0$ 1-projective. Let
		$
		K=\ker(P_0\to M).
		$
		Again using the existence of enough 1-projectives, we may choose an
		epimorphism $P_1\twoheadrightarrow K$ with $P_1$ 1-projective.
		Composing with the inclusion $K\hookrightarrow P_0$ gives a right exact
		sequence
		\[
		P_1\longrightarrow P_0\longrightarrow M\longrightarrow 0,
		\]
		or equivalently a pushout square
		\[
		\begin{tikzcd}
			P_1 \arrow[r] \arrow[d] & 0 \arrow[d] \\
			P_0 \arrow[r]           & M .
		\end{tikzcd}
		\]
		
		(2) Suppose first that $P_0$ and $P_1$ can be chosen to be compact
		1-projective. Since compact objects are closed under finite colimits,
		the above pushout square implies that $M$ is compact, hence finitely
		presented.
		
		Conversely, suppose that $M$ is finitely presented, i.e.\ compact in
		$\mathcal{C}$. Since $\mathcal{C}$ is 1-projectively generated, there
		exists an epimorphism
		\[
		p:\bigoplus_{i\in I}G_i\twoheadrightarrow M,
		\]
		where each $G_i$ is compact 1-projective. For each finite subset
		$F\subset I$, let
		\[
		M_F=\operatorname{Im}\left(
		\bigoplus_{i\in F}G_i\longrightarrow M
		\right).
		\]
		The objects $M_F$ form a filtered system of subobjects of $M$, and
		\[
		\colim_{F\subset I,\; F\text{ finite}}M_F\simeq M.
		\]
		Since $M$ is compact, the identity of $M$ factors through $M_F$ for
		some finite subset $F\subset I$. Thus the inclusion $M_F\hookrightarrow
		M$ admits a section. Since it is also a monomorphism, it is an
		isomorphism. Consequently,
		\[
		P_0:=\bigoplus_{i\in F}G_i\twoheadrightarrow M
		\]
		is an epimorphism, and $P_0$ is compact 1-projective.
		
		Let
		$
		K=\ker(P_0\to M).
		$
		Since $\mathcal{C}$ is generated by compact 1-projective objects, we
		may write $K$ as a filtered union
		$
		K=\colim_{\alpha}Q_\alpha
		$
		of finitely generated submodules, where each $Q_\alpha$ admits an
		epimorphism from a compact 1-projective object. Set
		\[
		M_\alpha=P_0/Q_\alpha.
		\]
		Since filtered colimits are exact in the Grothendieck abelian category
		$\mathcal{C}$, we have
		\[
		\colim_\alpha M_\alpha
		\simeq
		P_0/\left(\colim_\alpha Q_\alpha\right)
		\simeq P_0/K
		\simeq M.
		\]
		By compactness of $M$, the identity of $M$ factors through some
		$M_k$:
		\[
		M\xrightarrow{s}M_k\xrightarrow{q_k}M,
		\qquad
		q_k\circ s=\operatorname{id}_M.
		\]
		Hence the short exact sequence
		\[
		0\longrightarrow K/Q_k
		\longrightarrow M_k
		\xrightarrow{q_k}M
		\longrightarrow0
		\]
		splits.
		Choose an epimorphism $P'_1\twoheadrightarrow Q_k$ with $P'_1$
		compact 1-projective. Then
		\[
		P'_1\longrightarrow P_0\longrightarrow M_k\longrightarrow0
		\]
		is right exact. Since $P'_1$ and $P_0$ are compact and compact objects
		are closed under finite colimits, $M_k$ is compact. It follows from the
		splitting above that $K/Q_k$, being a retract of $M_k$, is also
		compact.
		
		After passing to the cofinal subsystem of indices $\alpha\geq k$, we
		have
		\[
		K/Q_k
		\simeq
		\colim_{\alpha\geq k}Q_\alpha/Q_k.
		\]
		Applying compactness once more, the identity of $K/Q_k$ factors through
		some $Q_\ell/Q_k$:
		\[
		K/Q_k\longrightarrow Q_\ell/Q_k
		\longrightarrow K/Q_k,
		\]
		with composite equal to the identity. The second map is a
		monomorphism, since $Q_\ell\subset K$, and it is also a split
		epimorphism; hence it is an isomorphism. Therefore
		\[
		Q_\ell/Q_k\simeq K/Q_k,
		\qquad\text{and consequently}\qquad
		Q_\ell=K.
		\]
		By construction, there is an epimorphism
		\[
		P_1\twoheadrightarrow Q_\ell=K
		\]
		with $P_1$ compact 1-projective. We therefore obtain the desired right
		exact sequence
		\[
		P_1\longrightarrow P_0\longrightarrow M\longrightarrow0,
		\]
		with both $P_0$ and $P_1$ compact 1-projective.
	\end{proof}

	\begin{prop}Suppose that $\mb{A}^\otimes$ is 1-projectively rigid. Let $R$ be in $\calg(\mc{A}_{\geq 0})$. Then $\modu_R(\mb{A})^\otimes$ is 1-projectively rigid too.
	\end{prop}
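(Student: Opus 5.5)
We need to show two things for $R\in\calg(\mb{A})$ (the statement writes $\calg(\mc{A}_{\geq0})$, which we read as $\calg(\mb{A})$ in this 1-categorical subsection). First, $\modu_R(\mb{A})$ is 1-projectively generated. Second, its dualizable objects are exactly its compact 1-projective objects.

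The tool throughout is the free–forgetful adjunction
\[
R\otimes(-):\mb{A}\rightleftarrows \modu_R(\mb{A}):U .
\]
The free functor $R\otimes(-)$ is symmetric monoidal. The forgetful functor $U$ is exact, conservative, and commutes with filtered colimits, since filtered colimits of modules are computed underlying.

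\textbf{Step 1: generation.} Let $\{G_i\}$ be a set of compact 1-projective generators of $\mb{A}$. We claim the free modules $R\otimes G_i$ generate $\modu_R(\mb{A})$.

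\begin{itemize}
\item Each $R\otimes G_i$ is 1-projective, because $\Hom_R(R\otimes G_i,-)\cong\Hom_{\mb{A}}(G_i,U(-))$ and $U$ is exact.
\item Each $R\otimes G_i$ is compact, because $U$ preserves filtered colimits.
\item They generate: every module $M$ receives the epimorphism $R\otimes UM\to M$, and $UM$ is a colimit of the $G_i$. So every $M$ is a quotient, hence a cokernel, of a sum of the $R\otimes G_i$.
\end{itemize}

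A useful consequence: every compact 1-projective $R$-module $P$ is a retract of some $R\otimes G$ with $G$ a finite sum of the $G_i$. Indeed, $P$ is a quotient of a sum of generators, compactness lets us pass to a finite sub-sum as in the proof of the preceding proposition, and projectivity splits the resulting epimorphism.

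\textbf{Step 2: compact 1-projective implies dualizable.} By 1-projective rigidity of $\mb{A}$, each $G_i$ is dualizable in $\mb{A}$. Since $R\otimes(-)$ is symmetric monoidal, $R\otimes G_i$ is dualizable in $\modu_R(\mb{A})$, with dual $R\otimes G_i^\vee$. Finite sums and retracts of dualizable objects are dualizable; for retracts we use idempotent completeness of $\modu_R(\mb{A})$. The consequence at the end of Step 1 then gives the claim.

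\textbf{Step 3: dualizable implies compact 1-projective.} This is the main obstacle. Let $M$ be dualizable with dual $M^\vee$. Then
\[
\Hom_R(M,-)\cong\Hom_R(R,M^\vee\otimes_R -)\cong\Hom_{\mb{A}}(\mathbf{1},U(M^\vee\otimes_R-)).
\]
The functor $M^\vee\otimes_R-$ has both adjoints, so it is exact and preserves colimits. It therefore suffices to know that $\mathbf{1}$ is compact 1-projective in $\mb{A}$. This holds because $\mathbf{1}$ is dualizable in $\mb{A}$, and 1-projective rigidity of $\mb{A}$ identifies dualizable objects with compact 1-projective ones.

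The main subtlety is the first isomorphism, $\Hom_R(M,N)\cong\Hom_R(R,M^\vee\otimes_R N)$. It requires that $\modu_R(\mb{A})$ carry a closed symmetric monoidal structure with unit $R$, so that dualizable objects have internal homs given by $M^\vee\otimes_R-$. This is standard for commutative $R$ in a presentably symmetric monoidal Grothendieck abelian category, where $\otimes_R$ is defined as a reflexive coequalizer.

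Once this is in place, $M$ is compact because $U$ and $M^\vee\otimes_R-$ preserve filtered colimits, and $M$ is 1-projective because both are exact and $\Hom(\mathbf{1},-)$ is exact on $\mb{A}$. Combining Steps 1–3 shows that $\modu_R(\mb{A})^\otimes$ is 1-projectively rigid.
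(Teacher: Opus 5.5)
Your proof is correct and follows the same route as the paper. Free modules $R\otimes P$ on compact 1-projectives are dualizable. Every compact 1-projective $R$-module is a retract of a finite sum of these: the paper gets this from the analogue of \cref{l1}(2), where you argue directly. The reverse inclusion comes from the unit being compact 1-projective. The subtlety you flag in Step 3 is not one: the isomorphism $\Hom_R(M,N)\cong\Hom_R(R,M^\vee\otimes_R N)$ follows from the duality datum of $M$ alone, with no closedness hypothesis needed.
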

	\begin{proof}
		Since the symmetric monoidal functor $$\mb{A}^\otimes\xrightarrow{R\otimes(-)} \modu_R(\mb{A})^\otimes$$
		preserves compact 1-projective objects and dualizable objects, we conclude that 
		\begin{enumerate}
			\item 	The unit $R$ is dualizable in $\modu_R(\mb{A})$.
			\item 	If $P\in\mb{A}$ is compact 1-projective, then $R\otimes P$ is dualizable in $\modu_R(\mb{A})$ .	
		\end{enumerate}
		So the full subcategory of dualizable objects $\modu_R(\mb{A})^d$ contains $\{R\otimes X|X\in \mb{A}^{1\text{-}\op{cproj}}\}$. Then combining \cref{l1}(2) and \cref{du}(2)(3), we get $\modu_R(\mb{A})^{1\text{-}\op{cproj}}\subset \modu_R(\mb{A})^d$. Finally, the fact that the unit $R$ is compact 1-projective implies the equality $\modu_R(\mb{A})^{1\text{-}\op{cproj}}= \modu_R(\mb{A})^d$.
	\end{proof}

	Lazard's theorem over  \ein-algebras appeared in \textup{\cite[Prop. 2.2.22]{stefanich2023classification}}. We find the argument there also works in the non-commutative setting.
	
	\begin{thm}[Lazard's theorem]\label{lazard1}
		Suppose that $\mb{A}^\otimes$ is 1-projectively rigid. Let $R\in \alg(\mb{A})$ and $M$ be a left $R$-module of $\mb{A}$. Then:
		\enu{ \item If $M$ is 1-projective, then $M$ is 1-flat.
			\item $M$ is compact 1-projective if and only if it is left dualizable in $\lmodu_{R}(\mb{A})$.
			\item $M$ is 1-flat if and only if it is a filtered colimit of compact 1-projective left $R$-modules.
		} 
	\end{thm}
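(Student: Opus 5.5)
The proof will follow \cite[Prop. 2.2.22]{stefanich2023classification}, using the key consequence of 1-projective rigidity: the free modules $R\otimes P$ with $P\in\mb{A}$ compact 1-projective (hence dualizable) form a set of compact 1-projective generators of $\lmodu_R(\mb{A})$. They are also left dualizable as left $R$-modules, with dual the right $R$-module $P^\vee\otimes R$. The three statements will be proved in the order (2), (1), (3).

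For (2), I first check that each generator $R\otimes P$ is left dualizable. This holds because $N\otimes_R(R\otimes P)\simeq N\otimes P$, and $P$ is dualizable in $\mb{A}$. Left dualizable objects are closed under finite direct sums and retracts. A compact 1-projective left $R$-module is a retract of a finite sum of generators: it is a quotient of a sum of generators, compactness reduces this to a finite sum, and projectivity splits the surjection. Hence it is left dualizable. Conversely, suppose $M$ is left dualizable with dual $M^\vee$. Then $\Hom_R(M,-)\simeq\Hom_{\mb{A}}(\mathbf 1, M^\vee\otimes_R -)$. The functor $M^\vee\otimes_R-$ is a left adjoint, so it is right exact and preserves colimits; being also a right adjoint, it is exact. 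Since $\mathbf 1$ is compact 1-projective (the unit is dualizable, hence compact 1-projective by rigidity), $\Hom_{\mb{A}}(\mathbf 1,-)$ is exact and commutes with filtered colimits. So $M$ is compact 1-projective. The step I expect to need the most care is identifying the dual of $M$ with a right $R$-module and verifying exactness of $M^\vee\otimes_R-$ in the non-commutative setting.

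For (1), the functor $-\otimes_R(R\otimes P)\simeq -\otimes P$ is exact because $P$ is dualizable: tensoring with $P$ has the two-sided adjoint $-\otimes P^\vee$. This exactness passes to arbitrary direct sums, since filtered colimits are exact in a Grothendieck abelian category, and then to retracts. Every 1-projective module is a retract of a sum of generators, so it is 1-flat.

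For (3), the "if" direction follows from (1), since filtered colimits of 1-flat modules are 1-flat by exactness of filtered colimits. For "only if", I will run the classical Lazard argument. Consider the category $\mathcal I$ of pairs $(P,\,P\to M)$ with $P$ compact 1-projective. It is small up to equivalence, and the canonical map $\colim_{\mathcal I}P\to M$ is an isomorphism once $\mathcal I$ is shown to be filtered. This is because the compact projective generators are dense: every $M$ is canonically a colimit over the comma category of compact projectives, and $\mathcal I$ is closed under finite sums. Filteredness reduces to equalizing: given $f,g:P\rightrightarrows Q\to M$ with equal composites, I must find $Q\to Q'$ over $M$ coequalizing $f$ and $g$. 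Set $h=f-g$, which satisfies $P\xrightarrow{h}Q\to M$ equal to zero. Dualize by (2): the morphism $Q\to M$ corresponds to a global section of $Q^\vee\otimes_R M$ that is killed by $h^\vee\otimes_R M$. Flatness of $M$ gives $\ker(h^\vee)\otimes_R M\simeq\ker(h^\vee\otimes_R M)$. Writing $M$ as a filtered colimit of $P_i$ and using compactness of $\mathbf 1$, this section lifts to $\ker(h^\vee)\otimes_R P_i$ for some $i$. Dualizing back gives the factorization $Q\to P_i\to M$ with $P\to Q\to P_i$ equal to zero. This is the main technical step: the translation between maps out of duals and tensor products, together with the lifting through the kernel, must be done carefully. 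It uses only left exactness of $-\otimes_R M$ and compactness of $\mathbf 1$.
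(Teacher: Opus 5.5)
Your proofs of (1) and (2) match the paper's. Your route to (3) is also essentially the paper's argument in classical form. You check directly that the comma category $\mathcal I$ of compact 1-projectives over $M$ is filtered, with $\ker(h^\vee)$ serving as the equalizer of $f^\vee,g^\vee$. The paper instead shows that the category of elements of the left exact functor $N\mapsto \Hom_{\mb{A}}(\mathbf 1,N\otimes_R M)$ on right $R$-modules stays cofiltered after restricting to compact 1-projectives, and then dualizes.

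The gap is in the lifting step of your equalizing argument, which is circular as written. You lift the section $\mathbf 1\to \ker(h^\vee)\otimes_R M$ by ``writing $M$ as a filtered colimit of $P_i$''. But a presentation of $M$ as a filtered colimit of compact 1-projectives is exactly the conclusion of (3). Compactness of $\mathbf 1$ alone does not fix this. The canonical colimit over $\mathcal I$ is not yet known to be filtered. A filtered presentation by finitely presented modules $M_j$ would only give a factorization $Q\to M_j\to M$ through a non-projective $M_j$, which is not a morphism in $\mathcal I$.

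The missing ingredient is projectivity of $\mathbf 1$, contrary to your closing remark that only compactness is used. The repair goes as follows:
\begin{enumerate}
\item Choose an epimorphism $\bigoplus_{i\in I}P_i\twoheadrightarrow M$ with each $P_i$ compact 1-projective.
\item Set $K=\ker(h^\vee)$. Since $K\otimes_R(-)$ is right exact and commutes with direct sums, $\bigoplus_{i}K\otimes_R P_i\to K\otimes_R M$ is an epimorphism.
\item Because $\mathbf 1$ is 1-projective, the section lifts. Because $\mathbf 1$ is compact, the lift lands in $K\otimes_R P'$ for a finite subsum $P'=\bigoplus_{i\in F}P_i$. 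This $P'$ is an object of $\mathcal I$ via the induced map $P'\to M$.
\item Pushing the lift into $Q^\vee\otimes_R P'$ gives $r\colon Q\to P'$ over $M$ with $r\circ h=0$.
\end{enumerate}
The rest of your argument then goes through. The paper performs the analogous lift in the other variable: it covers the right module by compact projectives and uses that $\mathbf 1$ is compact projective.
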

	
	\begin{proof}
		(1) Since 1-flat modules are closed under small coproducts and retractions, we reduce to the case $M=R\otimes P$ where $P\in\mb{A}^{1\text{-}\op{cproj}}$ is compact 1-projective. It becomes easy because $(-)\otimes_R(R\otimes P)\simeq (-)\otimes P$ reduces to the case $R=\mb{1}$, which follows from the dualizability of $P$ in $\mb{A}$.\\
		(2) By \cref{rigidlygene}, we see that left dualizable objects are closed under finite coproducts and retracts. We observe that every $R\otimes P$ is left dualizable (given by $P^{\vee}\otimes R$), which proves ``only if'' direction. For the ``if'' direction, if $M$ is left dualizable, then it follows from $$\mapp_{\lmodu_R(\mb{A})}(M,-)\simeq \mapp_{\mb{A}}(\mb{1},\,  ^{\vee}M\otimes_R -)$$ and compact 1-projectivity of the unit.
		\\(3) It is a parallel argument with \cref{cocompfil}.	\end{proof}

	\begin{cor}\label{f.p.flat}
		Suppose that $\mb{A}^\otimes$ is 1-projectively rigid. Let $R\in \alg(\mb{A})$ and let $M$ be a left $R$-module. Then the following are equivalent:
		\enu{
			\item  $M$ is a compact 1-projective left $R$-module.
			\item  $M$ is a finitely presented and 1-flat left $R$-module.
		}
	\end{cor}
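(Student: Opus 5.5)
The implication (1)$\Rightarrow$(2) follows directly from \cref{lazard1}. A compact 1-projective left $R$-module is finitely presented by \cref{f.p.}(1), since finite presentation means compactness in $\lmodu_R(\mb{A})$. It is 1-flat by \cref{lazard1}(1).

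For (2)$\Rightarrow$(1), let $M$ be finitely presented and 1-flat. By \cref{lazard1}(3) we write
\[
M\simeq \colim_{\alpha} P_\alpha
\]
as a filtered colimit of compact 1-projective left $R$-modules. Because $M$ is compact in $\lmodu_R(\mb{A})$, the identity of $M$ factors through some stage $P_\alpha$:
\[
M\xrightarrow{s} P_\alpha\xrightarrow{q_\alpha} M,\qquad q_\alpha\circ s=\id_M .
\]
Hence $M$ is a retract of $P_\alpha$. Compact objects are closed under retracts, and so are 1-projective objects: a retract of a projective object in an abelian category is projective, since $\Hom(M,-)$ is then a retract of the exact functor $\Hom(P_\alpha,-)$. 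Therefore $M$ is compact 1-projective.

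Alternatively, one could use \cref{lazard1}(2): compact 1-projective modules are exactly the left dualizable ones, and left dualizable objects are closed under retracts. Either route works.

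The only nontrivial ingredient is Lazard's theorem \cref{lazard1}(3), which we are allowed to assume. The remaining step, factoring the identity of a compact object through a stage of a filtered colimit, is routine. It needs only the fact that $\Hom(M,\colim_\alpha P_\alpha)\simeq\colim_\alpha\Hom(M,P_\alpha)$ in the 1-category $\lmodu_R(\mb{A})$, which is what compactness means there.
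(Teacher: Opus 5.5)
Your proposal is correct and follows the paper's proof. In both, (1)$\Rightarrow$(2) is immediate from the definition of finite presentation together with \cref{lazard1}(1). For (2)$\Rightarrow$(1), both write $M$ as a filtered colimit of compact 1-projectives via \cref{lazard1}(3) and use compactness to exhibit $M$ as a retract of one of them; you just spell out the retract-closure details that the paper leaves implicit.
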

	\begin{proof}
		The direction $(1)\Rightarrow (2)$ is obvious. For $(2)\Rightarrow (1)$, by \cref{lazard1} $M$ can be written as the filtered colimit of a set of compact 1-projective left $R$-modules. Then the compactness implies $M$ is the retraction of some compact 1-projective module, and hence compact 1-projective too.
	\end{proof}

	\begin{de}\label{ringfp}
		We say  a map $R\to S \in \calg(\mb{A})$ is of finite presentation if $S$ is a compact object in $\calg(\mb{A})_{R/}$.
	\end{de}
	
	\begin{prop}
		Suppose that $\mb{A}$ is compactly generated. Then a map $R\to S \in \calg(\mb{A})$ is of finite presentation if and only if $S$ can be written as a pushout in $\calg(\mb{A})$ as the following form
		$$\begin{tikzcd}
			\op{Sym}^*_R(N) \arrow[r, "\alpha"] \arrow[d] & R \arrow[d] \\
			\op{Sym}^*_R(M) \arrow[r]                & S          
		\end{tikzcd}$$ where $M,N \in \modu_R(\mb{A})^\omega$ are compact $R$-modules and $\alpha$ is the natural augmentation. (Note that  $\phi$ here is not necessarily induced by a map of $N\to M$.)
		Furthermore, if $\mb{A}$ is 1-projectively generated, then $M,N$ can be chosen as compact 1-projective $R$-modules.
	\end{prop}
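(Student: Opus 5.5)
We prove the ``if'' direction first, since it is formal. Because $\mb{A}$ is compactly generated, so is $\modu_R(\mb{A})$: the objects $R\otimes G$, with $G$ a compact generator of $\mb{A}$, are compact generators. The forgetful functor $\calg(\mb{A})_{R/}\to\modu_R(\mb{A})$ preserves filtered colimits. Filtered colimits in $\calg(\mb{A})$ are computed underlying, because $\otimes$ commutes with filtered colimits in each variable and the symmetric powers are computed as coinvariants of finite groups. Hence its left adjoint $\op{Sym}^*_R$ sends compact $R$-modules to compact objects of $\calg(\mb{A})_{R/}$. Compact objects are closed under finite colimits, and $R$ is initial. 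So a pushout $\op{Sym}^*_R(M)\otimes_{\op{Sym}^*_R(N)}R$ of compact objects is compact, and $S$ is of finite presentation.

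For the ``only if'' direction, we exhibit $S$ as a filtered colimit of objects of the displayed form and then use compactness to split off $S$. Write $\mathcal{K}$ for the class of $R$-algebras presented as in the statement, with $M,N$ compact (or compact 1-projective in the refined version). The key steps are as follows.

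\begin{enumerate}
\item Choose an epimorphism $\bigoplus_i G_i\twoheadrightarrow S$ of $R$-modules with each $G_i$ compact (or compact 1-projective). This gives a surjection of algebras $\op{Sym}^*_R(\bigoplus G_i)\to S$.
\item Write the kernel ideal $J$ as a quotient of a sum of compact modules $H_j$. Then $S\simeq\op{Sym}^*_R(\bigoplus G_i)\otimes_{\op{Sym}^*_R(\bigoplus H_j)}R$, where $\bigoplus H_j\to\op{Sym}^*_R(\bigoplus G_i)$ is the induced map into the ideal and the map $\op{Sym}^*_R(\bigoplus H_j)\to R$ is the augmentation. This is the usual presentation of a quotient $A/J$ as $A\otimes_{\op{Sym}(J)}R$, with generators mapping to $0$.
\item Each map $H_j\to\op{Sym}^*_R(\bigoplus_i G_i)$ factors through a finite sub-sum of the $G_i$ and a finite truncation of $\op{Sym}^*$, by compactness of $H_j$. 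Indexing over finite subsets $F$ of generators and finite subsets $E$ of relations compatible with these factorizations, we get a filtered diagram $S_{F,E}\in\mathcal{K}$ whose colimit is $S$. Here we use that filtered colimits commute with pushouts and with $\op{Sym}^*_R$.
\item Since $S$ is compact, $\id_S$ factors through some $S_{F,E}$, so $S$ is a retract of an object of $\mathcal{K}$.
\end{enumerate}

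The main obstacle is to remove the retract: we must show $S$ itself lies in $\mathcal{K}$ and not just a retract of something in $\mathcal{K}$. We would argue this in the same way as the module-level proposition above. Let $e\colon S_{F,E}\to S\to S_{F,E}$ be the idempotent. Then $S$ is the coequalizer (in $R$-algebras) of $\id$ and $e$, equivalently the quotient of $S_{F,E}$ by the relations $x=e(x)$ for $x$ running over the images of the compact generators $\bigoplus_{i\in F}G_i$. So $S\simeq S_{F,E}\otimes_{\op{Sym}^*_R(\bigoplus_F G_i)}R$, using the map $g\mapsto g-e(g)$. Pushing out twice along augmented symmetric algebras combines into a single such pushout with $N'=N\oplus\bigoplus_{i\in F}G_i$ and the same $M$. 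Note that $N\to\op{Sym}^*_R(M)$ need not come from a module map $N\to M$, which is why the statement allows arbitrary $\alpha$-type maps. $N'$ is again compact (compact 1-projective in the 1-projectively generated case), which gives the stated form.

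For the last assertion, when $\mb{A}$ is 1-projectively generated, we use that $\modu_R(\mb{A})$ is generated by compact 1-projectives $R\otimes P$. In steps 1, 2 and the retract argument we then choose all $G_i$, $H_j$ to be of this form, so $M,N$ come out compact 1-projective.
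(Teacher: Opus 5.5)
Your ``if'' direction and Steps 1--4 are sound. The maps $\operatorname{Sym}^*_R(\bigoplus_{i\in F}G_i)\to\operatorname{Sym}^*_R(\bigoplus_{i\in I}G_i)$ are split monomorphisms, so the factorizations in Step 3 are unique and $(F,E)\mapsto S_{F,E}$ is a genuine filtered diagram. Your identification of $S$ with the coequalizer of $\mathrm{id}$ and $e$ on $T=S_{F,E}$ is also correct: it is $T\otimes_{\operatorname{Sym}^*_R(M)}R$ along $\iota-e\iota\colon M\to T$, where $M=\bigoplus_{i\in F}G_i$.

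The gap is the final merging step. To combine this with $T=\operatorname{Sym}^*_R(M)\otimes_{\operatorname{Sym}^*_R(N)}R$ into a single pushout with $N'=N\oplus M$, you need a module map $M\to\operatorname{Sym}^*_R(M)$ lifting $\iota-e\iota$ along the epimorphism $\operatorname{Sym}^*_R(M)\twoheadrightarrow T$. The summand $\iota$ lifts tautologically, but $e\iota=s\circ q\circ\iota$ has no reason to lift unless $M$ is $1$-projective.

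For instance, take $\mathbf{A}=\mathrm{Ab}$, $R=\mathbb{Z}$ and $S=\mathbb{Z}[y]/(2y)$. Suppose your generators include $\mathbb{Z}/2\to S$ and $\mathbb{Z}\to S$, both sending $1\mapsto y$ (call the variables $x$ and $t$), and your relations include $2t$.
\begin{itemize}
\item The stage $T=\mathbb{Z}[x,t]/(2x,2t)$ admits the section $s(y)=t$, so Step 4 may land there, and then $e(x)=t$.
\item A lift of $x\mapsto x-t$ would be a $2$-torsion element of $\operatorname{Sym}^*_{\mathbb{Z}}(\mathbb{Z}/2\oplus\mathbb{Z})=\mathbb{Z}[x,t]/(2x)$ mapping to $x-t$.
\item That $2$-torsion is $x\,\mathbb{F}_2[x,t]$, whose image is $xT$, and $x-t\notin xT$.
\end{itemize}
So for merely compact generators the module $N'$ with the claimed map need not exist. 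When $\mathbf{A}$ has no compact $1$-projective generators, you cannot sidestep this by choosing the $G_i$ projective. In the $1$-projectively generated case your argument is fine, since $M$ is then $1$-projective and the lift exists.

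The step can be repaired as follows. Pull back $\operatorname{Sym}^*_R(M)\twoheadrightarrow T$ along $\iota-e\iota$, and cover the pullback by a sum of compact modules. Compactness of $M$ then gives a finite subsum $\widetilde M$ for which $\widetilde M\to M$ is still an epimorphism. The composite of $\iota-e\iota$ with $\widetilde M\twoheadrightarrow M$ lifts to $\operatorname{Sym}^*_R(M)$, and killing it is the same as killing $\iota-e\iota$, so $N'=N\oplus\widetilde M$ works.

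Alternatively, the paper's route (the proof of \cref{appro}) avoids retracts altogether. It first uses compactness of $S$, and the fact that the images of $\operatorname{Sym}^*_R(\bigoplus_{i\in F}G_i)$ form a filtered family of subalgebras exhausting $S$, to get an epimorphism $\operatorname{Sym}^*_R(M)\twoheadrightarrow S$ with $M$ compact. It then uses that $S$ remains compact in $\operatorname{CAlg}(\mathbf{A})_{\operatorname{Sym}^*_R(M)/}$, and writes $S$ as the filtered colimit of the quotients $S_E$ of $\operatorname{Sym}^*_R(M)$ by finitely many relations. A section $S\to S_E$ \emph{over} $\operatorname{Sym}^*_R(M)$, between two quotients of $\operatorname{Sym}^*_R(M)$, forces their kernels to agree, so $S_E\cong S$. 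Since no lifting is ever needed, that argument handles the compact and compact $1$-projective cases uniformly.
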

	\begin{proof}
		The proof is parallel with the proof of \cref{appro}.
	\end{proof}
	
	\begin{prop}
		Let $R\to S \in\calg(\mb{A})$ be an epimorphism of commutative algebras. Then the forgetful functor $\modu_S(\mb{A})\to\modu_R(\mb{A})$ is fully faithful.
	\end{prop}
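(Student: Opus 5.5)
The plan is to reduce full faithfulness to the statement that the counit of the base change adjunction is an isomorphism on $S$-modules, and then to use the epimorphism hypothesis in the form $S\otimes_R S\simeq S$. Write $f\colon R\to S$, with forgetful functor $f^*\colon \modu_S(\mb{A})\to\modu_R(\mb{A})$ and left adjoint $S\otimes_R(-)$. Then $f^*$ is fully faithful if and only if, for every $S$-module $N$, the counit
\[
S\otimes_R N\longrightarrow N
\]
given by the action map is an isomorphism.

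\textbf{Step 1: epimorphism gives idempotence.} Since $f$ is an epimorphism in $\calg(\mb{A})$, the two coprojections $S\rightrightarrows S\otimes_R S$ into the pushout agree. Both are sections of the multiplication $\mu\colon S\otimes_R S\to S$. Call the common map $i_1=i_2$. For $a\otimes b$ one has $a\otimes b=(a\otimes 1)(1\otimes b)$. Formally, $\id_{S\otimes_R S}$ is the composite
\[
S\otimes_R S\xrightarrow{i_1\otimes_R i_2}(S\otimes_R S)\otimes_R(S\otimes_R S)\xrightarrow{\text{mult}}S\otimes_R S.
\]
Replacing $i_2$ by $i_1$, this composite factors as $(i_1\circ\mu)$, so $\id=i_1\mu$. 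Together with $\mu i_1=\id$, this shows $\mu$ is an isomorphism. This is a diagram chase in the symmetric monoidal $1$-category $\modu_R(\mb{A})$; I expect the only care needed is writing that factorization via the commutative algebra structure.

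\textbf{Step 2: counit is an isomorphism.} For an $S$-module $N$, use the identifications
\[
N\simeq S\otimes_S N,\qquad S\otimes_R N\simeq (S\otimes_R S)\otimes_S N,
\]
the latter from associativity of relative tensor products, which holds because $\mb{A}$ is Grothendieck and $\otimes$ preserves colimits in each variable. Under these identifications the counit becomes $\mu\otimes_S N$. By Step 1 this is an isomorphism, so the counit is an isomorphism and $f^*$ is fully faithful.

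\textbf{Main obstacle.} The main point to check is the identification in Step 2. One must verify that the counit corresponds to $\mu\otimes_S N$, where the $S$-module structure on $S\otimes_R S$ is taken through the second factor. This means checking compatibility of the coequalizer presentations
\[
S\otimes_R N=\coker\bigl(S\otimes R\otimes N\rightrightarrows S\otimes N\bigr),
\]
and the corresponding presentation over $S$. This bookkeeping is routine.
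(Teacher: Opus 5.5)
Your proposal is correct and follows essentially the paper's route. The paper asserts that an epimorphism is the same as an idempotent algebra ($S\otimes_R S\simeq S$) and then cites \cite[Prop. 4.8.2.10]{ha} for full faithfulness; you prove both facts directly, via the equal-coprojections argument for idempotence and the identification of the counit with $\mu\otimes_S N$.
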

	
	\begin{proof}
		Since $R\to S$ is an epimorphism is equivalent to that $S$ is an idempotent commutative $R$-algebra, the result follows immediately from \cite[Prop. 4.8.2.10]{ha}.
	\end{proof}
	\begin{prop}
		A faithfully 1-flat epimorphism in $\calg(\mb{A})$ is an isomorphism.
	\end{prop}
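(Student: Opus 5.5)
Let $f\colon A\to B$ be a faithfully 1-flat epimorphism in $\calg(\mb{A})$. The plan is to show $f$ is an isomorphism by showing that both its kernel and cokernel vanish. We combine two facts. First, \cref{cokerff}(1) gives that $f$ is a monomorphism. Second, an epimorphism of commutative algebras is idempotent, i.e.\ the multiplication map $B\otimes_A B\to B$ is an isomorphism (the fact used in the preceding proposition). So it remains to show that $\op{Coker}(f)=0$.

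To do this, apply the exact functor $(-)\otimes_A B$ to the short exact sequence
\[
0\longrightarrow A\xrightarrow{\ f\ } B\longrightarrow \op{Coker}(f)\longrightarrow 0 .
\]
Because $B$ is 1-flat over $A$, the result is again exact:
\[
0\longrightarrow B\xrightarrow{\ f\otimes_A B\ } B\otimes_A B\longrightarrow \op{Coker}(f)\otimes_A B\longrightarrow 0 .
\]
The first map is the unit $B\simeq A\otimes_A B\to B\otimes_A B$, $b\mapsto 1\otimes b$. Composing it with the multiplication gives the identity of $B$. Since the multiplication is an isomorphism, $f\otimes_A B$ is its inverse and hence also an isomorphism. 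Therefore $\op{Coker}(f)\otimes_A B\simeq 0$.

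Faithful 1-flatness says that $(-)\otimes_A B$ is conservative. Apply this to the zero map $\op{Coker}(f)\to 0$: after tensoring with $B$ it becomes an isomorphism $0\to 0$, so $\op{Coker}(f)\simeq 0$. Hence $f$ is both a monomorphism and an epimorphism in $\mb{A}$, with vanishing kernel and cokernel, so it is an isomorphism in the abelian category $\mb{A}$. An algebra map whose underlying map is an isomorphism is an isomorphism of algebras, which completes the argument.

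The step needing the most care is identifying $f\otimes_A B$ with the map whose composite with multiplication is $\id_B$. Concretely, this means checking that the unit $B\to B\otimes_A B$ is a section of the multiplication, which is immediate from unitality. Everything else is formal.
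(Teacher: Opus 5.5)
Your argument is correct and is essentially the paper's: idempotence of the epimorphism makes $f\otimes_A B\colon A\otimes_A B\to B\otimes_A B$ an isomorphism, and faithful 1-flatness then forces $f$ to be an isomorphism. The paper applies conservativity of $(-)\otimes_A B$ directly to $f$ at that point, so your detour through \cref{cokerff}(1), exactness, and the vanishing of $\op{Coker}(f)$ is correct but unnecessary.
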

	\begin{proof}
		The epimorphism implies the map $R\otimes_R S\to S\otimes_R S$ is an isomorphism, so by the fully faithful 1-flatness,  $R\to S$ is an isomorphism.
	\end{proof}

	\section{Flatness and faithful flatness}
	The main goal of this section is to study the flatness in the setting of $ttt$-\infcats.
	We are inspired by equivalent conditions of the flatness over structured ring spectra appeared in \cite[Theorem 7.2.2.15]{ha}.
	\subsection{$t$-structure on the category of modules}
	Given a $ttt$-$\infty$-category $\mathcal{A}$ and a connective algebra $R \in \alg(\mathcal{A}_{\geq 0})$, the $\infty$-category of left $R$-modules naturally inherits an associated $t$-structure. In this subsection, we demonstrate that $\lmodu_R(\mathcal{A})$ inherits the completeness and projective generation properties of the base category $\mathcal{A}$. We establish the precise compatibilities between the symmetric monoidal structure, the module structure, and Postnikov truncations, setting the stage for robust homological algebra over $R$.
	\begin{lem}\label{l1}
		Let $\mathcal{C}\underset{G}{\stackrel{F}{\rightleftarrows}}\mathcal{D}$ be an adjoint pair of $\infty$-categories.
		\enu{
			\item Assume $\kappa$ is a regular cardinal, $\mathcal{C}$ is $\kappa$-presentable, and $\mathcal{D}$ is locally small and admits small colimits. If $G$ is conservative and preserves small $\kappa$-filtered colimits, then $\mathcal{D}$ is $\kappa$-presentable. Furthermore, $\mathcal{D}^\kappa$ is the smallest full subcategory generated by $F(\mathcal{C}^\kappa)$ under $\kappa$-small colimits and retractions. Consequently, $\mathcal{D}$ is generated by the image of $F$ under small colimits.
			\item \textup{(See \cite[Corollary 4.7.3.18]{ha}).}\\ Assume $\mathcal{D}$ admits small filtered colimits and geometric realizations, and $G$ preserves both. Also, assume $\mathcal{C}$ is projectively generated\footnote{That means $\mcc\simeq\mcp_\Sigma(\mcc^{\op{cproj}})$; see \cite[Definition 5.5.8.23]{htt}.} If the functor $G$ is conservative, then $\mathcal{D}$ is projectively generated. An object $D \in \mathcal{D}$ is compact and projective if and only if there exists a compact projective object $C \in \mathcal{C}$ such that $D$ is a retract of $F(C)$. Hence, $\mathcal{D}$ is generated by the image of $F$ under small colimits.
		}
	\end{lem}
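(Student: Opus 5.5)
Part (2) is quoted from \cite[Corollary 4.7.3.18]{ha}, so only part (1) needs an argument: that $\mathcal{D}$ is $\kappa$-presentable, the description of $\mathcal{D}^\kappa$, and generation by the image of $F$.

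The plan is to reduce everything to a detection principle: a family of objects $F(C)$, $C\in\mathcal{C}^\kappa$, that are $\kappa$-compact in $\mathcal{D}$ and jointly detect equivalences. First, $\kappa$-compactness of $F(C)$ is formal. By adjunction, $\Map_{\mathcal{D}}(F(C),-)\simeq \Map_{\mathcal{C}}(C,G(-))$, and $G$ preserves $\kappa$-filtered colimits, so for $C\in\mathcal{C}^\kappa$ the object $F(C)$ is $\kappa$-compact in $\mathcal{D}$. Second, $\mathcal{C}^\kappa$ is essentially small and every object of $\mathcal{C}$ is a $\kappa$-filtered colimit of $\kappa$-compacts. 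Hence a map $f$ in $\mathcal{D}$ with $\Map(F(C),f)$ an equivalence for all $C\in\mathcal{C}^\kappa$ has $G(f)$ an equivalence, so $f$ is an equivalence by conservativity of $G$. Thus the essentially small set $S=\{F(C)\}$ of $\kappa$-compact objects jointly detects equivalences in $\mathcal{D}$.

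From detection I then get generation. Let $\mathcal{D}'\subset\mathcal{D}$ be the closure of $S$ under $\kappa$-small colimits and retracts. This is essentially small and consists of $\kappa$-compact objects, since $\kappa$-compact objects are closed under $\kappa$-small colimits and retracts. Because $\mathcal{D}$ admits small colimits, the inclusion $\mathcal{D}'\hookrightarrow\mathcal{D}$ extends to a colimit-preserving functor $L:\operatorname{Ind}_\kappa(\mathcal{D}')\to\mathcal{D}$. This $L$ is fully faithful, because $\mathcal{D}'$ consists of $\kappa$-compact objects (\cite[Proposition 5.3.5.11]{htt}). It has a right adjoint, namely restricted Yoneda $D\mapsto \Map(-,D)|_{\mathcal{D}'}$, which is where local smallness of $\mathcal{D}$ is used; alternatively one can invoke the adjoint functor theorem, since $\operatorname{Ind}_\kappa(\mathcal{D}')$ is presentable. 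For any $D$, the counit $LR(D)\to D$ induces an equivalence on $\Map(F(C),-)$: $F(C)$ is $\kappa$-compact and lies in the image of $L$ with $L$ fully faithful, so $\Map(F(C),LRD)\simeq\Map_{\operatorname{Ind}}(F(C),RD)\simeq\Map(F(C),D)$. By the detection principle the counit is an equivalence, so $L$ is essentially surjective and hence an equivalence. This proves $\mathcal{D}\simeq\operatorname{Ind}_\kappa(\mathcal{D}')$ is $\kappa$-presentable.

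The description $\mathcal{D}^\kappa=\mathcal{D}'$ then follows from the standard fact that the $\kappa$-compact objects of $\operatorname{Ind}_\kappa(\mathcal{D}')$ are the retracts of objects of $\mathcal{D}'$, and $\mathcal{D}'$ is already retract-closed. Since $\mathcal{D}$ is generated under $\kappa$-filtered colimits by $\mathcal{D}'$, which in turn is built from $F(\mathcal{C}^\kappa)$ by colimits and retracts, $\mathcal{D}$ is generated under small colimits by the image of $F$. Here I use that retracts are colimits of idempotents, which are small colimits. For the projectively generated setting of part (2) the same reasoning applies with geometric realizations in place of $\kappa$-filtered colimits, which is the content of the cited corollary.

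The main obstacle is the full faithfulness of $L$ together with the counit computation. This needs care about compactness being computed in $\mathcal{D}$ rather than only in $\mathcal{D}'$, and about the existence of the right adjoint before presentability is known; I would handle the latter directly via restricted Yoneda and local smallness of $\mathcal{D}$.
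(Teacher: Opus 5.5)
Your proposal is correct and takes essentially the same route as the paper. Both arguments embed $\operatorname{Ind}_\kappa$ of the closure of $F(\mathcal{C}^\kappa)$ fully faithfully into $\mathcal{D}$ and show the embedding is an equivalence because the objects $F(C)$, $C\in\mathcal{C}^\kappa$, jointly detect equivalences via conservativity of $G$; the paper phrases this as conservativity of the right adjoint of the embedding, which amounts to your counit argument, and it likewise defers part (2) to Lurie's corollary. Your version is slightly more careful than the paper's: you close under $\kappa$-small colimits rather than finite ones, and you construct the right adjoint explicitly via restricted Yoneda.
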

	
	\begin{proof}
		We first prove (1). Let $\mathcal{D}_0\subset \mathcal{D}$ be the smallest full subcategory generated by $F(\mathcal{C}^\kappa)$ under finite colimits and retractions. Then the inclusion $\mathcal{D}_0\subset \mathcal{D}$ extends to a fully faithful embedding $F_2:\op{Ind}_\kappa(\mathcal{D}_0)\hookrightarrow \mathcal{D}$ (by \cite[5.3.5.10]{htt}). Since $F$ preserves small colimits, it admits a right adjoint $H$ (\cite[Proposition 5.5.1.9]{htt}). Thus, we have the following factorization of adjoint pairs:
		$$
		\mathcal{C}\underset{G_1}{\stackrel{F_1}{\rightleftarrows}}\op{Ind}_\kappa(\mathcal{D}_0)\underset{G_2}{\stackrel{F_2}{\rightleftarrows}}\mathcal{D}
		$$
		It will therefore suffice to show that the functor $G_2$ is conservative. Let $\alpha: X \rightarrow Y$ be a morphism in $\mathcal{D}$ such that $G_2(\alpha)$ is an equivalence. We aim to show that $\alpha$ is an equivalence. For this, since $\mathcal{C}$ is $\kappa$-compactly generated, it will suffice to show that $\alpha$ induces a homotopy equivalence
		$$
		\theta: \operatorname{Map}_{\mathcal{C}}\left(C, G(X)\right) \rightarrow \operatorname{Map}_{\mathcal{C}}\left(C, G(Y)\right)
		$$
		for every $\kappa$-compact object $C \in \mathcal{C}$. This map can be identified with
		$$
		\theta: \operatorname{Map}_{\op{Ind}_\kappa(\mathcal{D}_0)}\left(F_1(C), G_2(X)\right) \rightarrow \operatorname{Map}_{\op{Ind}_\kappa(\mathcal{D}_0)}\left(F_1(C), G_2(Y)\right)
		$$
		Our assumption that $G_2(\alpha)$ is an equivalence guarantees that $\theta$ is a homotopy equivalence, as desired.
		
		For (2), the argument is entirely parallel. See also \cite[Cor. 4.7.3.18]{ha}.
	\end{proof}
	
	\begin{rem}
		The condition in (2) guarantees that $\mathcal{D}$ is locally small because it is monadic over $\mathcal{C}$ by the Barr-Beck-Lurie theorem (see \textup{\cite[Thm. 4.7.3.5]{ha}}).
	\end{rem}
	
	\begin{cor}\label{l1.2}
		Let $R\in \alg(\mc{A})$. Applying \cref{l1} to the adjoint pair $\mc{A} \underset{}{\stackrel{R\otimes-}{\rightleftarrows}} \lmodu_R(\mc{A})$, we obtain:
		\enu{
			\item If $\mc{A}$ is $\kappa$-presentable for some regular cardinal $\kappa$, then so is $\lmodu_R(\mc{A})$.
			\item $\lmodu_R(\mc{A})$ is presentable.
			\item $\lmodu_R(\mc{A})$ is generated by $\{R\otimes X \mid X\in \mc{A}\}$ under small colimits.
		}
	\end{cor}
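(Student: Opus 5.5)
The plan is to apply \cref{l1}(1) to the free--forgetful adjunction $F=R\otimes(-)\colon \mc{A}\rightleftarrows \lmodu_R(\mc{A})\colon G$. The main work is to check its hypotheses. By \cite[Corollary 4.2.3.7]{ha} and its relatives, $\lmodu_R(\mc{A})$ admits small colimits and is locally small. The forgetful functor $G$ is conservative, and it preserves all small colimits, in particular $\kappa$-filtered ones, because $\mc{A}^\otimes\in\calg(\prlst)$ means $\otimes$ preserves colimits separately in each variable (\cite[Corollary 4.2.3.5]{ha}). Since $G$ preserves colimits, $F$ is indeed left adjoint to it.

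For (1), suppose $\mc{A}$ is $\kappa$-presentable. Then \cref{l1}(1) applies directly and gives $\kappa$-presentability of $\lmodu_R(\mc{A})$. It also identifies $\lmodu_R(\mc{A})^\kappa$ as the closure of $\{R\otimes X\mid X\in\mc{A}^\kappa\}$ under $\kappa$-small colimits and retracts.

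For (2), note that $\mc{A}$ is presentable, so it is $\kappa$-presentable for some regular $\kappa$. Then (1) shows that $\lmodu_R(\mc{A})$ is presentable.

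For (3), choose $\kappa$ as in (2). By \cref{l1}(1), every object of $\lmodu_R(\mc{A})$ is a $\kappa$-filtered colimit of $\kappa$-compact objects. Each of these is built from objects $R\otimes X$ with $X\in\mc{A}^\kappa$ by $\kappa$-small colimits and retracts. A retract is itself a colimit, namely the colimit of the idempotent, via a sequential colimit in the stable setting. Hence every object lies in the colimit closure of $\{R\otimes X\mid X\in\mc{A}\}$.

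An alternative route for (3) is to use the bar resolution $M\simeq|R^{\otimes\bullet+1}\otimes M|$. This exhibits any $M$ as a geometric realization of free modules directly, by \cite[Proposition 4.7.3.14]{ha}.

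The only mildly delicate point is verifying that $G$ preserves $\kappa$-filtered colimits, together with local smallness. Both are standard for module categories over an algebra in a presentably monoidal $\infty$-category, so I expect no real obstacle here.
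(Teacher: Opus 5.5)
This is correct and is the paper's argument: the corollary is obtained by applying \cref{l1}(1) to the free--forgetful adjunction, with (2) following from (1) and (3) being the final clause of that lemma. Your retract and bar-resolution remarks only re-derive that clause. One small slip: $G$ preserving colimits would give a right adjoint, not a left one. The adjunction $R\otimes(-)\dashv G$ comes from the free-module construction (or from $G$ preserving limits and being accessible), and in any case the statement presupposes it.
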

	
	\begin{rem}
		By \cite[Proposition 7.1.1.4]{ha}, $\lmodu_R(\mc{A})$ is stable for any $R\in \alg(\mc{A})$.
	\end{rem}
	
	\begin{de}
		Let $\mc{C}$ be a stable $\infty$-category equipped with a $t$-structure. We say that it is hypercomplete, if for an object $X\in \mc{C}$, the condition $\tau_{\leq n}X=0$ for every integer $n$ implies $X=0$.
	\end{de}
	
	\begin{ex}
		Let $\mc{X}$ be a hypercomplete $\infty$-topos. Then the natural $t$-structure $$(\op{Shv}(\mc{X},\op{Sp})^\otimes,\op{Shv}(\mc{X},\op{Sp})_{\geq0})$$ developed in \cite[Proposition 1.3.2.7]{sag} is hypercomplete by \cite[Proposition 1.3.3.3]{sag}.
	\end{ex}
	
	\begin{prop}\label{l2}
		Let $R$ be in $\alg(\mc{A}_{\geq 0})$. Then $\lmodu_R(\mc{A})$ is a presentable stable $\infty$-category which admits a natural accessible $t$-structure $(\lmodu_R(\mc{A})_{\geq 0},\lmodu_R(\mc{A})_{\leq 0})$ satisfying the following properties\footnote{
			See also a similar discussion in \textup{\cite[Appendix]{antieau2021cartier}}.}:
		\enu{
			\item $\lmodu_R(\mc{A})_{\geq 0}$ and $\lmodu_R(\mc{A})_{\leq 0}$ are the inverse images of $\mc{A}_{\geq 0}$ and $\mc{A}_{\leq 0}$ under the projection $\theta: \lmodu_R(\mc{A})\to \mc{A}$.
			\item The natural inclusion $\lmodu_R(\mc{A}_{\geq 0})\hookrightarrow \lmodu_R(\mc{A})$ induces an equivalence $\lmodu_R(\mc{A}_{\geq 0})\xrightarrow{\sim}\lmodu_R(\mc{A})_{\geq 0}$.
			\item The functor $\tau_{ \leq n}:\mc{A}_{\geq 0}\to \mc{A}_{[0,n]}$ induces an equivalence $\lmodu_R(\mc{A})_{[0,n]} \xrightarrow{\sim}\lmodu_{\tau_{ \leq n} R}(\mc{A}_{[0,n]})$. In particular, the $\pi_0$ functor induces an equivalence $\lmodu_R(\mc{A})^{\heartsuit} \xrightarrow{\sim}\lmodu_{\pi_0R}(\mc{A}^{\heartsuit})$.
			\item If $\mc{A}$ is left (resp. right, resp. hyper) complete, then so is $\lmodu_R(\mc{A})$.
			\item If the $t$-structure on $\mc{A}$ is compatible with filtered colimits, meaning $\mc{A}_{\leq0}\subset \mc{A}$ is closed under filtered colimits, then so is the induced $t$-structure on $\lmodu_R(\mc{A})$.
			\item If $\mc{A}_{\geq 0}$ is projectively generated, then so is $\lmodu_R(\mc{A})_{\geq 0}$.
		}
	\end{prop}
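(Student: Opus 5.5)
The plan is to follow \cite[Proposition 7.1.1.13]{ha} and \cite[Appendix]{antieau2021cartier}, working throughout with the forgetful functor $\theta:\lmodu_R(\mc{A})\to\mc{A}$. This functor is conservative, preserves all limits and colimits, and has left adjoint $R\otimes-$.

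\textbf{The $t$-structure and (1).} Define $\lmodu_R(\mc{A})_{\geq0}$ as the smallest full subcategory closed under colimits and extensions generated by $\{R\otimes X\mid X\in\mc{A}_{\geq0}\}$. Since $\mc{A}_{\geq0}$ is presentable, it is generated under colimits by a small set. Hence \cite[Proposition 1.4.4.11]{ha} gives an accessible $t$-structure on $\lmodu_R(\mc{A})$.
\begin{itemize}
\item \emph{Coconnective part.} By adjunction, $M$ is right orthogonal to $\Sigma(R\otimes X)$ for all $X\in\mc{A}_{\geq0}$ exactly when $\theta M\in\mc{A}_{\leq0}$. So the coconnective part is $\theta^{-1}(\mc{A}_{\leq0})$.
\item \emph{Connective part.} Since $R$ is connective and $\mc{A}_{\geq0}$ is closed under tensor products, colimits and extensions, we get $\theta(R\otimes X)\in\mc{A}_{\geq0}$. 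Thus the connective part lies in $\theta^{-1}(\mc{A}_{\geq0})$.
\item \emph{Reverse inclusion.} For $M$ with $\theta M$ connective, write $M$ as the geometric realization of its bar resolution $R^{\otimes n+1}\otimes\theta M$. Each term is free on a connective object, so $M$ lies in the colimit closure. This proves (1).
\end{itemize}

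\textbf{Parts (2) and (3).} Part (2) is immediate from (1), because $\lmodu_R(\mc{A}_{\geq0})$ is the full subcategory of modules whose underlying object is connective. For (3), note that $\mc{A}_{[0,n]}$ inherits a monoidal structure making $\tau_{\leq n}:\mc{A}_{\geq0}\to\mc{A}_{[0,n]}$ monoidal, as in \cite[Proposition 2.2.1.9]{ha}. The tensor there is $\tau_{\leq n}(X\otimes Y)$, since the truncation is a localization compatible with $\otimes$: the tensor of a connective object with an $(n+1)$-connective object is $(n+1)$-connective. Applying \cite[Proposition 7.1.3.15]{ha}-style reasoning, or directly identifying $\lmodu_R(\mc{A}_{\geq0})_{\leq n}$ with $\lmodu_{\tau_{\leq n}R}(\mc{A}_{[0,n]})$, yields the equivalence. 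Here we use that for $M$ $n$-truncated, the action $R\otimes M\to M$ factors uniquely through $\tau_{\leq n}(R\otimes M)\simeq\tau_{\leq n}(\tau_{\leq n}R\otimes M)$. The case $n=0$ gives the heart.

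\textbf{Parts (4) and (5).} These are formal consequences of (1), since $\theta$ is conservative, $t$-exact, and preserves limits and colimits.
\begin{itemize}
\item \emph{Left completeness.} The Postnikov limit is computed underlying, so it is checked in $\mc{A}$.
\item \emph{Right completeness.} Likewise, $\varinjlim\tau_{\geq-n}$ is computed underlying.
\item \emph{Hypercompleteness.} This follows from conservativity together with $\theta\tau_{\leq n}=\tau_{\leq n}\theta$.
\item \emph{Compatibility with filtered colimits (5).} $\theta$ preserves filtered colimits and detects coconnectivity.
\end{itemize}

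\textbf{Part (6), the main obstacle.} We apply \cref{l1}(2) to the adjunction $\mc{A}_{\geq0}\rightleftarrows\lmodu_R(\mc{A}_{\geq0})$. The forgetful functor is conservative and preserves sifted colimits, because $R\otimes-$ preserves them in each variable on $\mc{A}_{\geq0}$. We also need $\lmodu_R(\mc{A}_{\geq0})$ to admit these colimits, which follows from (2): colimits in $\mc{A}_{\geq0}$ are computed in $\mc{A}$ and stay connective.

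The delicate point is the truncation compatibility in (3). One must check that $\tau_{\leq n}$ is monoidal on connective objects using only the compatibility axioms of a $ttt$-$\infty$-category, namely that the tensor product preserves connectivity. If a cleaner route is needed, I would first try to reduce (3) to \cite[Proposition 2.2.1.9]{ha} applied to the localization $\tau_{\leq n}$ of $\mc{A}_{\geq0}$.
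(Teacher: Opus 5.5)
Parts (1), (2), (4), (5) and (6) are fine. In (1) you generate the $t$-structure from the free modules $R\otimes X$ and recover the connective part by the bar resolution. The paper instead defines the connective part as $\theta^{-1}(\mc{A}_{\geq 0})$ and gets generation by free modules from \cref{l1}. These routes are interchangeable.

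The genuine gap is (3). The only input you give there is that, for $n$-truncated $M$, the action $R\otimes M\to M$ factors through $\tau_{\leq n}(\tau_{\leq n}R\otimes M)$. This concerns a single structure map. It says nothing about the higher coherence data of a module, it does not construct a functor between the two $\infty$-categories, and it does not show that anything is fully faithful or essentially surjective. Your appeal to ``7.1.3.15-style reasoning'' and to ``directly identifying'' the two categories are placeholders for exactly the content of (3).

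You also put the difficulty in the wrong place. Compatibility of $\tau_{\leq n}$ with $\otimes$ is immediate from your own remark that $\tau_{\geq n+1}Y\otimes Z\in\mc{A}_{\geq n+1}$. This gives $\tau_{\leq n}(Y\otimes Z)\simeq\tau_{\leq n}(\tau_{\leq n}Y\otimes Z)$, which is the hypothesis of \cite[Proposition 2.2.1.9]{ha}. But that result only produces $\mc{A}_{[0,n]}^\otimes$, not the comparison of module categories.

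The paper closes (3) with a generator argument for which you already have every ingredient.
\begin{enumerate}
\item The monoidal left adjoint $\tau_{\leq n}$ induces a colimit-preserving functor $F\colon\lmodu_R(\mc{A}_{\geq0})\to\lmodu_{\tau_{\leq n}R}(\mc{A}_{[0,n]})$ with $F(R\otimes X)\simeq\tau_{\leq n}R\,\overline{\otimes}\,\tau_{\leq n}X$. It factors through a functor $F_0$ on $\lmodu_R(\mc{A})_{[0,n]}$.
\item Fix $N\in\lmodu_R(\mc{A})_{[0,n]}$. The class of $M$ for which $\mapp(M,N)\to\mapp(FM,FN)$ is an equivalence is closed under colimits. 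It contains every $R\otimes X$, since both sides reduce to $\mapp_{\mc{A}}(X,\theta N)$ because $\theta N$ is $n$-truncated. By your bar-resolution generation this class is all of $\lmodu_R(\mc{A})_{\geq 0}$, so $F_0$ is fully faithful.
\item The essential image of $F_0$ is closed under colimits and contains $F_0(\tau_{\leq n}(R\otimes Y))\simeq\tau_{\leq n}R\,\overline{\otimes}\,Y$ for $Y\in\mc{A}_{[0,n]}$. These free modules generate the target by \cref{l1}, so $F_0$ is essentially surjective.
\end{enumerate}

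Your fallback via \cite[Proposition 2.2.1.9]{ha} can also be made to work, but it needs two further steps. First, apply it to the $\mathcal{LM}^\otimes$-operad; this identifies $\lmodu_{\tau_{\leq n}R}(\mc{A}_{[0,n]})$ with the $n$-truncated connective $\tau_{\leq n}R$-modules in $\mc{A}$. Second, prove change of rings along $R\to\tau_{\leq n}R$ on $n$-truncated connective modules. For example, check that $M\to\tau_{\leq n}(\tau_{\leq n}R\otimes_R M)$ is an equivalence: the cofiber of $R\to\tau_{\leq n}R$ is $(n+2)$-connective, hence so is its tensor with connective $M$.
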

	
	\begin{proof}
		We first prove (1). It follows immediately from the definitions that the full subcategory $\lmodu_R(\mc{A})_{\geq 0}\subset \lmodu_R(\mc{A})$ is closed under small colimits and extensions. Also, note that $\lmodu_R(\mc{A})_{\geq 0}$ is presentable since the following is a pullback square in $Pr^L$:
		$$\begin{tikzcd}
			\lmodu_R(\mc{A})_{\geq 0} \arrow[r] \arrow[d] \arrow[rd, "\ulcorner", phantom, very near start] & \lmodu_R(\mc{A}) \arrow[d, "\theta"] \\
			\mc{A}_{\geq 0} \arrow[r] & \mc{A}
		\end{tikzcd}$$
		Using \cite[Prop. 1.4.4.11]{ha}, we deduce the existence of an accessible $t$-structure $$(\lmodu_R(\mc{A})_{\geq 0},\lmodu_R(\mc{A})')$$ on $\lmodu_R(\mc{A})$. To complete the proof, it will suffice to show that $\lmodu_R(\mc{A})^{\prime}=\lmodu_R(\mc{A})_{\leq 0}$.
		
		Suppose first that $N \in \lmodu_R(\mc{A})^{\prime}$. Then the mapping space $\operatorname{Map}_{\lmodu_R(\mc{A})}(M, N)$ is discrete for every object $M \in \lmodu_R(\mc{A})_{\geq 0}$. In particular, for every connective object $X \in \mc{A}_{\geq 0}$, the mapping space $\operatorname{Map}_{\lmodu_R(\mc{A})}(R \otimes X, N) \simeq \operatorname{Map}_{\mc{A}}(X, \theta(N))$ is discrete, so that $\theta(N) \in \mc{A}_{\leq 0}$, and therefore $N \in \lmodu_R(\mc{A})_{\leq 0}$.
		
		Conversely, suppose that $N \in \lmodu_R(\mc{A})_{\leq 0}$. We wish to prove that $N \in \lmodu_R(\mc{A})^{\prime}$. Let $\mathcal{C}$ denote the full subcategory of $\lmodu_R(\mc{A})$ spanned by those objects $M \in \lmodu_R(\mc{A})$ for which the mapping space $\operatorname{Map}_{\lmodu_R(\mc{A})}(M, N)$ is discrete. We wish to prove that $\mathcal{C}$ contains $\lmodu_R(\mc{A})_{\geq 0}$. Firstly, we have that $\theta$ induces a functor $\lmodu_R(\mc{A})_{\geq 0} \rightarrow \mc{A}_{\geq 0}$ which is conservative and preserves small colimits; moreover, this functor has a left adjoint $Fr$, given informally by the formula $Fr(X) \simeq R \otimes X$. Using  \cref{l1}, we conclude that $\lmodu_R(\mc{A})_{\geq 0}$ is generated under small colimits by the essential image of $Fr$. Since $\mathcal{C}$ is stable under colimits, it will suffice to show that $\mathcal{C}$ contains the essential image of $Fr$. Unwinding the definitions, we are reduced to proving that the mapping space
		$$
		\operatorname{Map}_{\lmodu_R(\mc{A})}(F(X), N) \simeq \operatorname{Map}_{\mc{A}}(X, \theta(N))
		$$
		is discrete for every connective object $X$ in $\mathcal{A}_{\geq 0}$, which is equivalent to our assumption that $N \in \lmodu_R(\mc{A})_{\leq 0}$. This completes the proof of (1).
		
		For (2), the proof follows directly from the definition.
		
		For (3), we observe that we have a natural factorization:
		$$
		\begin{tikzcd}
			& \lmodu_R(\mc{A})_{[0,n]} \arrow[rd, "F_0", dashed] \\
			\lmodu_R(\mc{A}_{\geq 0}) \arrow[rr, "F"] \arrow[ru] & & \lmodu_{\tau_{ \leq n}R}(\mc{A}_{[0,n]})
		\end{tikzcd}
		$$
		It suffices to prove that $F_0$ is fully faithful and essentially surjective. It is easy to see that $F$ and $F_0$ preserve colimits. We wish to prove that, for a fixed $N \in \lmodu_R(\mc{A})_{[0,n]}$, the full subcategory $\mathcal{D}$ of $\lmodu_R(\mc{A})_{\geq 0}$ spanned by those objects $M$ for which the map $$ \operatorname{Map}_{\lmodu_R(\mc{A})}(M, N)\to \operatorname{Map}_{\lmodu_{\tau_{\leq n}R}(\mc{A}_{[0,n]})}(F( M), F(N))$$ is an equivalence. It is easy to see that $\mathcal{D}$ is stable under colimits and contains $R\otimes X$ for all $X\in \mc{A}_{\geq 0}$. \cref{l1} shows that $\mathcal{D}=\lmodu_R(\mc{A})_{\geq 0}$. In particular, $F_0$ is fully faithful.
		
		It remains to show that $F_0$ is essentially surjective. Since $F_0$ is fully faithful and preserves small colimits, the essential image of $F_0$ is closed under small colimits. By applying Lemma \cref{l1} to $\mc{A}_{[0,n]}\rightleftarrows \lmodu_{\tau_{\leq n}R}(\mc{A}_{[0,n]})$, it will therefore suffice to show that every free left $\tau_{\leq n}R$-module $\tau_{\leq n}R\overline{\otimes} Y$ where $Y\in \mc{A}_{[0,n]}$ belongs to the essential image of $F_0$, where $\overline{\otimes}$ denotes the tensor product in $\mc{A}_{[0,n]}$. We now conclude by observing that $F\left(R\otimes X\right) \simeq \tau_{\leq n}R\overline{\otimes} \tau_{\leq n}X$.
		
		(4) and (5) are concluded by the fact that $\theta: \lmodu_R(\mc{A})\to \mc{A}$ is $t$-exact, conservative, and preserves small colimits and limits.
		
		(6) is concluded by \cref{l1}(2) applied to the adjoint pair $\mc{A}_{\geq 0}\underset{G}{\stackrel{F}{\rightleftarrows}}\lmodu_R(\mc{A})_{\geq 0}$.
	\end{proof}
	\begin{cor}
		By \cref{l2}, we see that if the presentably $t$-structured stable $\infty$-category  $(\mathcal{A},\mathcal{A}_{\geq0})$ is Grothendieck, then so is $(\modu_R(\mathcal{A}),\modu_R(\mathcal{A})_{\geq0})$ for any $R\in \calg(\mathcal{A}_{\geq0})$.
	\end{cor}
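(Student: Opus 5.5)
The statement to prove is the corollary: if $(\mathcal{A},\mathcal{A}_{\geq0})$ is Grothendieck, then so is $(\modu_R(\mathcal{A}),\modu_R(\mathcal{A})_{\geq0})$ for every $R\in\calg(\mathcal{A}_{\geq0})$.

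I will use the characterization in \cref{gro}: a presentably $t$-structured stable $\infty$-category is Grothendieck if and only if its $t$-structure is right complete and compatible with filtered colimits. The plan is to check these two conditions for the $t$-structure on $\modu_R(\mathcal{A})$ constructed in \cref{l2}. Since $R$ is commutative, left modules and modules coincide, so \cref{l2} applies with $\lmodu_R(\mathcal{A})=\modu_R(\mathcal{A})$.

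First, \cref{l2} says that $\modu_R(\mathcal{A})$ is presentable and stable and carries an accessible $t$-structure. By \cref{l2}(2), its connective part is $\modu_R(\mathcal{A}_{\geq0})$, which is presentable. So the pair $(\modu_R(\mathcal{A}),\modu_R(\mathcal{A})_{\geq0})$ is a presentably $t$-structured stable $\infty$-category, i.e. an object of $\pr^{t\text{-rex}}_{\op{st}}$.

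Second, the hypothesis that $\mathcal{A}$ is Grothendieck gives, via \cref{gro}, that $\mathcal{A}$ is right complete and that $\mathcal{A}_{\leq0}$ is closed under filtered colimits. \cref{l2}(4) transfers right completeness to $\modu_R(\mathcal{A})$, and \cref{l2}(5) transfers compatibility with filtered colimits. Both transfers come from the fact that the forgetful functor $\theta:\modu_R(\mathcal{A})\to\mathcal{A}$ is $t$-exact, conservative, and preserves small limits and colimits. Applying \cref{gro} (equivalently \cref{pret}(2)) then shows that $\modu_R(\mathcal{A})$ is Grothendieck.

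No step here is a real obstacle; the only point needing care is right completeness in \cref{l2}(4).
\begin{itemize}
\item Right completeness asks that $M\simeq\varinjlim_n\tau_{\geq -n}M$ for every $M\in\modu_R(\mathcal{A})$.
\item Because $\theta$ is $t$-exact, it commutes with the truncations $\tau_{\geq -n}$.
\item Because $\theta$ preserves colimits, it carries the comparison map $\varinjlim_n\tau_{\geq -n}M\to M$ to the corresponding comparison map for $\theta M$ in $\mathcal{A}$, which is an equivalence by right completeness of $\mathcal{A}$.
\item Because $\theta$ is conservative, the original comparison map is an equivalence.
\end{itemize}
Compatibility with filtered colimits follows the same pattern. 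A filtered colimit of objects of $\modu_R(\mathcal{A})_{\leq0}$ is computed by $\theta$, hence lands in $\mathcal{A}_{\leq0}$, and by \cref{l2}(1) an object lies in $\modu_R(\mathcal{A})_{\leq0}$ exactly when its image under $\theta$ lies in $\mathcal{A}_{\leq0}$.
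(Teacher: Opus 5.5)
Your proposal is correct and follows the paper's argument: the paper deduces the corollary directly from \cref{l2}. Part (4) gives right completeness, part (5) gives compatibility with filtered colimits, and both rest on the forgetful functor $\theta$ being $t$-exact, conservative and colimit-preserving. One minor point: the condition $M\simeq\varinjlim_n\tau_{\geq -n}M$ is equivalent to right completeness only once compatibility with filtered colimits is known, but you prove that as well, so there is no gap.
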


	\begin{cor}\label{l3}
		Let $R\in \alg_{\mbb{E}_{k+1}}(\mc{A}_{\geq 0})$ be a connective $\mbb{E}_{k+1}$-algebra where $1\leq k\leq \infty$. Then the presentably $\mbb{E}_k$-monoidal category $\lmodu_R(\mc{A})^\otimes\to \mbb{E}_k^\otimes$ satisfies:
		\enu{
			\item The natural $t$-structure $(\lmodu_R(\mc{A})_{\geq 0},\lmodu_R(\mc{A})_{\leq 0})$ is compatible with the monoidal structure.
			\item The natural inclusion $\lmodu_R(\mc{A}_{\geq 0})\hookrightarrow \lmodu_R(\mc{A})$ is an $\mbb{E}_k$-monoidal functor which induces an equivalence $\lmodu_R(\mc{A}_{\geq 0})^\otimes\xrightarrow{\sim}\lmodu_R(\mc{A})_{\geq 0}^\otimes$ of $\mbb{E}_k$-monoidal categories.
			\item The symmetric monoidal functor $\tau_{ \leq n}^\otimes:\mc{A}_{\geq 0}^\otimes\to \mc{A}_{[0,n]}^\otimes$ induces an equivalence $\lmodu_R(\mc{A})_{[0,n]}^\otimes \xrightarrow{\sim}\lmodu_{\tau_{ \leq n} R}(\mc{A}_{[0,n]})^\otimes$ of $\mbb{E}_k$-monoidal $(n+1)$-categories. In particular, the $\pi_0$ functor induces an equivalence of $\mbb{E}_k$-monoidal 1-categories $\lmodu_R(\mc{A})^{\heartsuit,\otimes} \xrightarrow{\sim}\lmodu_{\pi_0R}(\mc{A}^{\heartsuit})^\otimes$. Note that when $k>1$, $\mbb{E}_k$-algebras in $\mc{A}^\heartsuit$ are $\mbb{E}_\infty$-algebras, so $\lmodu_{\pi_0R}(\mc{A}^{\heartsuit})^\otimes$ is symmetric monoidal in this case.
		}
	\end{cor}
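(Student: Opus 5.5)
We deduce \cref{l3} from \cref{l2}, upgrading each statement from the underlying categories to the $\mbb{E}_k$-monoidal structures. The starting point is \cite[\S4.8.3]{ha} (or \cite[Corollary 4.8.5.20]{ha}): since $R$ is an $\mbb{E}_{k+1}$-algebra, $\lmodu_R(\mc{A})$ inherits a presentably $\mbb{E}_k$-monoidal structure. Its tensor product is the relative tensor product $M\otimes_R N$, computed as the geometric realization of the bar construction $|M\otimes R^{\otimes\bullet}\otimes N|$, and its unit is $R$.

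\emph{Step 1: compatibility, part (1).} The unit $R$ is connective by hypothesis. Given $M,N\in\lmodu_R(\mc{A})_{\geq0}$, the underlying object of $M\otimes_R N$ is the colimit of the bar construction. Each term $M\otimes R^{\otimes n}\otimes N$ lies in $\mc{A}_{\geq0}$, because $\mc{A}_{\geq0}$ is closed under tensor products. Moreover $\mc{A}_{\geq0}$ is closed under colimits, and the forgetful functor $\theta$ commutes with geometric realizations. Hence $\theta(M\otimes_R N)\in\mc{A}_{\geq0}$, and \cref{l2}(1) gives $M\otimes_R N\in\lmodu_R(\mc{A})_{\geq0}$.

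\emph{Step 2: part (2).} The inclusion $\mc{A}_{\geq0}^\otimes\hookrightarrow\mc{A}^\otimes$ is symmetric monoidal and preserves colimits, in particular geometric realizations. Functoriality of the construction $\lmodu_R(-)^\otimes$ for such functors \cite[\S4.8.3]{ha} therefore yields an $\mbb{E}_k$-monoidal functor $\lmodu_R(\mc{A}_{\geq0})^\otimes\to\lmodu_R(\mc{A})^\otimes$. By \cref{l2}(2), this functor is fully faithful with essential image $\lmodu_R(\mc{A})_{\geq0}$. By Step 1 that image is a monoidal full subcategory, so the functor factors through $\lmodu_R(\mc{A})_{\geq0}^\otimes$. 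An $\mbb{E}_k$-monoidal functor whose underlying functor is an equivalence is an equivalence of $\mbb{E}_k$-monoidal categories, which proves (2).

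\emph{Step 3: part (3).} We use that $\tau_{\leq n}^\otimes:\mc{A}_{\geq0}^\otimes\to\mc{A}_{[0,n]}^\otimes$ is symmetric monoidal (this is how $\overline{\otimes}$ is defined, as the localization compatible with $\otimes$, cf. \cite[Prop. 2.2.1.9]{ha}) and preserves colimits. Functoriality again produces an $\mbb{E}_k$-monoidal functor
\[
\lmodu_R(\mc{A}_{\geq0})^\otimes\to\lmodu_{\tau_{\leq n}R}(\mc{A}_{[0,n]})^\otimes .
\]
This functor inverts $\tau_{\leq n}$-equivalences. We must check that the truncation $\tau_{\leq n}$ on $\lmodu_R(\mc{A})_{\geq0}$ is compatible with $\otimes_R$, i.e. that if $M\to M'$ is a $\tau_{\leq n}$-equivalence then so is $M\otimes_R N\to M'\otimes_R N$. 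This holds because the fiber is $(n+1)$-connective, and tensoring with a connective object over a connective algebra preserves $(n+1)$-connectivity, by the bar construction argument of Step 1. Consequently $\lmodu_R(\mc{A})_{[0,n]}$ inherits a localized $\mbb{E}_k$-monoidal structure \cite[Prop. 2.2.1.9]{ha}, and the functor factors through it. The underlying functor of the factorization is the equivalence $F_0$ of \cref{l2}(3), so the factorization is an $\mbb{E}_k$-monoidal equivalence. Both sides are $(n+1)$-categories. Taking $n=0$ gives the statement about the hearts. The final remark, that $\mbb{E}_k$-algebras in a $1$-category are commutative when $k\ge2$, is standard.

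The main obstacle is Step 3. One has to identify the monoidal structure transported along $F_0$ with the localized one, which amounts to checking the compatibility of $\tau_{\leq n}$-equivalences with $\otimes_R$ carefully at the level of operads. Steps 1 and 2 are routine.
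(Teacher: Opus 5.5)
Your strategy is the one the paper intends: \cref{l3} is stated as an immediate corollary of \cref{l2}, with no separate proof, and your Steps 1 and 2 correctly supply the monoidal upgrade. The problem is the justification in Step 3, which is where the real content lies. A $\tau_{\leq n}$-equivalence $f\colon M\to M'$ of connective modules need \emph{not} have $(n+1)$-connective fiber. If $X$ is a connective module with $\pi_0X\neq 0$, then $0\to X[n+1]$ is a $\tau_{\leq n}$-equivalence whose fiber $X[n]$ has $\pi_n\neq 0$. What is true is that the \emph{cofiber} of a $\tau_{\leq n}$-equivalence is $(n+1)$-connective, but that class of maps is too large. For instance, $Y[n]\to 0$ with $Y$ discrete and nonzero has $(n+1)$-connective cofiber but is not a $\tau_{\leq n}$-equivalence. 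So the sentence ``the fiber is $(n+1)$-connective'' does not establish the compatibility hypothesis needed for \cite[Prop.~2.2.1.9]{ha}.

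The compatibility itself is true, and the cheapest repair uses what you already have. Your $\mathbb{E}_k$-monoidal functor $\Phi\colon\lmodu_R(\mc{A}_{\geq0})^\otimes\to\lmodu_{\tau_{\leq n}R}(\mc{A}_{[0,n]})^\otimes$ detects $\tau_{\leq n}$-equivalences: $f$ is one if and only if $\Phi(f)$ is an equivalence, because both forgetful functors are conservative and $\theta$ is $t$-exact. So if $f$ is a $\tau_{\leq n}$-equivalence, then $\Phi(f\otimes_R N)\simeq\Phi(f)\,\overline{\otimes}_{\tau_{\leq n}R}\,\Phi(N)$ is an equivalence, and likewise in the other variable. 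There are two other ways to fix it. You can use two-out-of-three in the square formed by $f$, $\tau_{\leq n}f$ and the units to reduce to the maps $M\to\tau_{\leq n}M$, whose fiber $\tau_{\geq n+1}M$ really is $(n+1)$-connective. Or you can apply \cref{lemma4.3}(1) to the right exact functor $(-)\otimes_R N$, which gives $\tau_{\leq n}(M\otimes_R N)\simeq\tau_{\leq n}(\tau_{\leq n}M\otimes_R N)$ naturally. With any of these fixes, the rest of Step 3 goes through as you wrote it.
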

	
	\begin{rem}
		\enu{
			\item When $R\in \calg(\mc{A}_{\geq 0})$ is a connective $\mathbb{E}_\infty$-algebra, the symmetric monoidal \infcat of left modules $\lmodu_R(\mc{A})^\otimes$ with the induced $t$-structure forms a new 
			$ttt$-\infcat $(\lmodu_R(\mc{A})^\otimes,\lmodu_R(\mc{A})_{\leq 0})$. 
			\item \cref{l2} also holds for the right module category $\rmodu_R(\mc{A})$ and the bimodule category $_R\op{BMod}_S(\mc{A})$ when $R,S$ are connective.
		}
	\end{rem}
	
	\begin{cov}
		In the case where $R\in \calg(\mc{A})$ is commutative, we will simply denote $\lmodu_R(\mc{A})^\otimes$ by $\modu_R(\mc{A})^\otimes$.
	\end{cov}
	
	\subsection{Flat modules and algebras}
	Motivated by the classical equivalent conditions of flatness over structured ring spectra, we define flat modules over an algebra $R \in \alg(\mathcal{A})$ via the $t$-exactness of the relative tensor product. This subsection explores the stability of flat modules under base change, composition, etc. Crucially, we extend these definitions from the connective to the non-connective case, demonstrating that the flatness can be reliably tracked through connective covers. 
	
	We first consider the connective case.
	\begin{de}[The connective case]
		Let $R\in \alg(\mc{A}_{\geq 0})$ be a connective $\mbb{E}_1$-algebra. 
		\enu{\item We say a left $R$-module $M$ is flat if the relative tensor product functor $(-)\otimes_R M: \rmodu_R(\mc{A})\to \mc{A}$ is $t$-exact.
			\item We say a left $R$-module $M$ is faithfully flat if the relative tensor product functor $(-)\otimes_R M: \rmodu_R(\mc{A})\to \mc{A}$ is $t$-exact and conservative.
			\item If $R$ is $\mbb{E}_\infty$ and $f: R\to S$ is a morphism in $\calg(\mc{A})$, we say $f$ is (faithfully) flat if $S$ is a (faithfully) flat $R$-module.
		}
	\end{de}
	\begin{rem}
		If $M$ is flat on a connective $\mbb{E}_1$-algebra $R\in \alg(\mc{A}_{\geq 0})$, then $M\simeq R\otimes_R M$ itself is connective.  \end{rem}
	
	\begin{rem}
		If  $\mc{A}$ 
		is Grothendieck, then a connective left $R$-module $M$ for some $R\in \alg(\mc{A}_{\geq0})$ is flat if and only if the tensor product functor $(-)\otimes_R M: \rmodu_R(\mc{A}_{\geq0})\to \mc{A}_{\geq0}$ is left exact by \cite[Proposition C.3.2.1]{sag}.
	\end{rem}

	\begin{prop}\label{l7}
		Assume that $\mc{A}$ is Grothendieck. Let $R\in \alg(\mc{A}_{\geq 0})$ be a connective $\mbb{E}_1$-algebra and $M$ be a connective left $R$-module. Then the following conditions are equivalent:
		\enu{
			\item $M$ is flat.
			\item The tensor product functor $(-)\otimes_R M $ is left $t$-exact (meaning that it sends the coconnective part to the coconnective part).
			\item The tensor product functor $(-)\otimes_R M $ sends discrete objects to discrete objects.	
		}
	\end{prop}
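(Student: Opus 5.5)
The plan is to prove the cycle $(1)\Rightarrow(2)\Rightarrow(3)\Rightarrow(1)$. The first two implications are formal. If $M$ is flat, then $(-)\otimes_R M$ is $t$-exact by definition, so in particular it is left $t$-exact. If it is left $t$-exact, it sends $\rmodu_R(\mca)_{\leq 0}$ into $\mca_{\leq 0}$. Moreover, since $M$ is connective and $R$ is connective, $(-)\otimes_R M$ is always right $t$-exact: the functor preserves colimits, and $\rmodu_R(\mca)_{\geq 0}$ is generated under colimits by free modules $X\otimes R$ with $X\in\mca_{\geq0}$ (by \cref{l2} and \cref{l1}). On these generators we have $(X\otimes R)\otimes_R M\simeq X\otimes M\in\mca_{\geq0}$ by compatibility of the $t$-structure with $\otimes$. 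So a left $t$-exact functor of this form sends discrete objects (lying in both halves) to discrete objects, giving (3).

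The substantive direction is $(3)\Rightarrow(1)$. By the right $t$-exactness just observed, it suffices to prove left $t$-exactness. I would use the remark following the definition of flatness, namely \cite[Proposition C.3.2.1]{sag} in the Grothendieck setting: it reduces to showing that $(-)\otimes_R M:\rmodu_R(\mca_{\geq0})\to\mca_{\geq0}$ is left exact, i.e. preserves fiber sequences of connective objects in which the fiber is again connective. Equivalently, I would prove directly that $N\otimes_R M\in\mca_{\leq 0}$ for every $N\in\rmodu_R(\mca)_{\leq0}$. First treat bounded coconnective $N$, say $N\in\rmodu_R(\mca)_{[-n,0]}$, by induction on $n$ using the fiber sequences $\pi_{-k}N[-k]\to \tau_{\geq -k}N\to\tau_{\geq -k+1}N$ read along the Postnikov filtration. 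Each layer is a shift of a discrete module, so by (3) its image is concentrated in a single degree $\leq 0$, and $\mca_{\leq0}$ is closed under extensions. This gives $N\otimes_R M\in\mca_{\leq0}$ for all bounded-below coconnective $N$.

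For general $N\in\rmodu_R(\mca)_{\leq0}$, write $N\simeq\varinjlim_n\tau_{\geq -n}N$, which is valid because $\rmodu_R(\mca)$ is right complete (\cref{l2}(4), since $\mca$ is Grothendieck). The functor $(-)\otimes_R M$ commutes with this filtered colimit, and $\mca_{\leq0}$ is closed under filtered colimits because the $t$-structure on $\mca$ is compatible with filtered colimits. Hence $N\otimes_R M\in\mca_{\leq 0}$, which completes the proof.

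The main obstacle I expect is the bookkeeping in the bounded induction: making sure the shifted discrete layers land in $\mca_{\leq 0}$ (they do, since $(\pi_{-k}N[-k])\otimes_R M\simeq (\pi_{-k}N\otimes_R M)[-k]$ is discrete shifted down by $k$). The other point to check is that right completeness, rather than left completeness, is the property actually needed; this is where the Grothendieck hypothesis enters, through right completeness together with compatibility with filtered colimits.
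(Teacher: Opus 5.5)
Your proposal is correct and follows the paper's proof essentially verbatim. The substantive direction is handled by induction along the Postnikov filtration of a bounded coconnective module, using (3) and closure of $\mathcal{A}_{\leq 0}$ under extensions; a general coconnective $N$ is then written as $\varinjlim_n \tau_{\geq -n}N$ by right completeness, and one concludes by compatibility of the $t$-structure with filtered colimits. You also make explicit the automatic right $t$-exactness for connective $M$, which the paper leaves implicit.

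One cosmetic slip: the fiber sequence should read $\tau_{\geq -k+1}N\to\tau_{\geq -k}N\to(\pi_{-k}N)[-k]$, not with the outer terms in the order you wrote. This does not affect the argument, since extension-closure only uses the two outer terms.
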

	\begin{proof}
		The direction $(1)\Leftrightarrow  (2)$ and $(2) \Rightarrow (3)$ are obvious. Now we claim that $(3) \Rightarrow (2)$.\\
		Given a coconnective right $R$-module $M\in \rmodu_R(\mc{A})_{\leq 0}$, we wish to show that $N\otimes_R M \in \mc{A}_{\leq 0}$. Since $\mc{A}$ is right complete, we have that $M\simeq \varinjlim \tau_{\geq -i}M$. Now we will prove that $N\otimes_R \tau_{\geq -n}M \in \mc{A}_{[-n,0]}$ inductively. The case $n=0$ is true by the assumption. Now assume that for $n-1\geq 0$ it is true, we need to show that $N\otimes_R \tau_{\geq -n}M \in \mc{A}_{[-n,0]}$, which is by observing that the first and third items in the following exact sequence $$N\otimes_R \tau_{\geq -(n-1)}M\to N\otimes_R \tau_{\geq -n}M \to N\otimes_R \pi_{-n}M$$
		belong to $\mc{A}_{[-n,0]}$. Since the $t$-structure is compatible with filtered colimits, we have that $N\otimes_RM\simeq \varinjlim N\otimes_R\tau_{\geq -i}M$ belong to $\mc{A}_{\leq 0}$.
	\end{proof}
	\begin{prop}\label{l1.3}
		Let $R\in \alg(\mc{A}_{\geq 0})$ be a connective $\mbb{E}_1$-algebra. Then:
		\enu{	\item The full subcategory of flat modules $\lmodu_R(\mc{A})^{fl}\subset \lmodu_R(\mc{A})$ is closed under finite coproducts, retractions, and extensions. If the $t$-structure on $\mc{A}$ is compatible with filtered colimits, then $\lmodu_R(\mc{A})^{fl}\subset \lmodu_R(\mc{A})$ is furthermore closed under filtered colimits.
			\item If $R\in \calg(\mc{A}_{\geq 0})$ is $\mbb{E}_\infty$, then the full subcategory of flat modules $\modu_R(\mc{A})^{fl}\subset \modu_R(\mc{A})$ contains the unit and is closed under tensor product, and hence forms a  symmetric monoidal full subcategory.
			
			\item If $R\in \calg(\mc{A}_{\geq 0})$ is $\mbb{E}_\infty$ and $M\in \modu_R(\mc{A})$ is a dualizable $R$-module, then $M$ is flat if and only if both $M$ and the dual $M^\vee$ are connective $R$-modules.
		}
	\end{prop}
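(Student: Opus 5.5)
For (1), I would argue directly from the definition. A left $R$-module $M$ is flat exactly when the exact functor $(-)\otimes_R M:\rmodu_R(\mc{A})\to\mc{A}$ is $t$-exact. For each right $R$-module $N$, the assignment $M\mapsto N\otimes_R M$ is exact and preserves colimits in $M$.

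\begin{itemize}
\item \emph{Finite coproducts and retracts.} These follow because $\mc{A}_{\geq0}$ and $\mc{A}_{\leq0}$ are closed under finite sums and retracts.
\item \emph{Extensions.} Given a cofiber sequence $M'\to M\to M''$ with $M',M''$ flat, and $N$ in $\rmodu_R(\mc{A})_{\geq0}$ (resp.\ $_{\leq0}$), the sequence $N\otimes_R M'\to N\otimes_R M\to N\otimes_R M''$ has outer terms in $\mc{A}_{\geq0}$ (resp.\ $\mc{A}_{\leq0}$). Both halves of a $t$-structure are closed under extensions, so the middle term lies there too.
\item \emph{Filtered colimits.} $N\otimes_R(\varinjlim M_\alpha)\simeq\varinjlim N\otimes_R M_\alpha$. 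Here $\mc{A}_{\geq0}$ is always closed under colimits, and $\mc{A}_{\leq0}$ is closed under filtered colimits by the compatibility hypothesis.
\end{itemize}

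For (2), I would first note that the unit $R$ is flat, since $N\otimes_R R\simeq N$ with the $t$-structure on modules pulled back along the forgetful functor (\cref{l2}(1)). For flat $M,M'$, there is an identification
\[
N\otimes_R(M\otimes_R M')\simeq (N\otimes_R M)\otimes_R M'.
\]
To use it, I would regard $N\otimes_R M$ as an $R$-module, which makes sense by commutativity. The first factor $N\otimes_R M\in\modu_R(\mc{A})$ is connective (resp.\ coconnective) when $N$ is, because the underlying object is what the $t$-structure sees. Applying flatness of $M'$ then gives the claim. Closure under tensor product, together with containing the unit, yields a symmetric monoidal full subcategory.

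For (3), the key identification is that for dualizable $M$ we have $N\otimes_R M\simeq \underline{\Hom}_R(M^\vee,N)$, the internal mapping object in $\modu_R(\mc{A})$. I expect the main obstacle to be the coconnectivity statement in the ``if'' direction, handled as follows.

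\begin{itemize}
\item \emph{``Only if'', connectivity of $M$.} A flat module is connective, since $M\simeq R\otimes_R M$.
\item \emph{``Only if'', connectivity of $M^\vee$.} I would test $M^\vee$ against coconnective objects. For $Y\in\modu_R(\mc{A})_{\leq -1}$, I would compute
\[
\Map(M^\vee,Y)\simeq\Map(R,M\otimes_R Y)\simeq \Omega^\infty_{\mc{A}}\text{-part of }M\otimes_R Y\in\mc{A}_{\leq -1}.
\]
Flatness puts $M\otimes_R Y$ in $\mc{A}_{\leq-1}$. Since $R$ is connective, maps from $R$ into a $(-1)$-coconnective object vanish, so $\Map(M^\vee,Y)\simeq 0$ for all such $Y$. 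Hence $M^\vee\in\modu_R(\mc{A})_{\geq0}$, by orthogonality of the module $t$-structure from \cref{l2}.
\item \emph{``If'', right $t$-exactness.} If $M$ is connective, then $N\otimes_R M$ is connective for connective $N$. I would check this by the compatibility of the module $t$-structure with $\otimes_R$ (\cref{l3}(1)), after regarding $N$ as an $R$-module.
\item \emph{``If'', left $t$-exactness.} Take $N\in\modu_R(\mc{A})_{\leq0}$. I need $N\otimes_R M\simeq\underline{\Hom}_R(M^\vee,N)$ to be coconnective. The plan is to test against connective $X\in\mc{A}_{\geq1}$:
\[
\Map_{\mc{A}}(X,\underline{\Hom}_R(M^\vee,N))\simeq\Map_R((R\otimes X)\otimes_R M^\vee,N).
\]
The source $(R\otimes X)\otimes_R M^\vee$ is $1$-connective by \cref{l3}(1), using $M^\vee$ connective. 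So this mapping space is contractible by orthogonality, and $N\otimes_R M\in\mc{A}_{\leq0}$.
\end{itemize}
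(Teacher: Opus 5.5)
Your proposal is correct and follows essentially the same route as the paper. Parts (1) and (2) are exactly the definitional checks the paper calls obvious, and for (3) you use the same duality adjunction $\Map(Q\otimes_R M^\vee,N)\simeq\Map(Q,M\otimes_R N)$ combined with orthogonality in the module $t$-structure; the only cosmetic difference is that for left $t$-exactness you test against free modules $R\otimes X$ with $X\in\mc{A}_{\geq1}$ and detect coconnectivity in $\mc{A}$, whereas the paper tests against arbitrary connective $Q$ inside $\modu_R(\mc{A})$.
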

	\begin{proof}
		(1) and (2) are obvious by definition of flatness.\\
		(3) Assume that $M$ is flat, then $M$ is connective by the remark above. Since  $$\mapp_{\modu_R(\mc{A})}(M^\vee,N)\simeq \mapp_{\modu_R(\mc{A})}(R,M\otimes_{R }N)$$ is contractible for any $(-1)$-truncated $N$,  $M^\vee$ is connective too.\\
		Now assume both $M$ and the dual $M^\vee$ are connective. Since $M$ is connective, the tensor product $(-)\otimes_R M $ is right $t$-exact. So it suffices to check the left $t$-exactness of $(-)\otimes_RM$. Let $Q$ be a connective $R$-module and $N$ is be a $(-1)$-truncated $R$-module. Then  $$\mapp_{\modu_R(\mc{A})}(Q\otimes_{R } M^\vee,N)\simeq \mapp_{\modu_R(\mc{A})}(Q,M\otimes_{R }N)$$ is contractible. So the functor $(-)\otimes_RM$ is indeed left $t$-exact.
	\end{proof}
	\begin{prop}\label{l4}
		If $R\to S\in \alg(\mc{A}_{\geq 0})$ be a morphism of connective $\mbb{E}_1$-algebras, then
		\enu{
			\item The relative tensor product $S\otimes_R(-): \lmodu_R(\mc{A})\to \lmodu_S(\mc{A})$ sends (faithfully) flat modules to (faithfully) flat modules.
			\item If $S$ is flat as a left $R$-module, then the forgetful functor $\theta: \lmodu_S(\mc{A})\to \lmodu_R(\mc{A})$ sends flat modules to flat modules. If furthermore $S$ is faithfully flat as a left $R$-module, then the forgetful functor $\theta: \lmodu_S(\mc{A})\to \lmodu_R(\mc{A})$  preserves faithfully flat modules.
			
		}
	\end{prop}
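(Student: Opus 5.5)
The plan is to reduce everything to associativity of relative tensor products and the definitions of flatness and faithful flatness via $t$-exactness and conservativity.

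\textbf{(1) Base change.} Let $M$ be a (faithfully) flat left $R$-module. For any right $S$-module $N$ I will use the canonical equivalence
\[
N\otimes_S(S\otimes_R M)\simeq (N\otimes_S S)\otimes_R M\simeq \theta(N)\otimes_R M,
\]
where $\theta:\rmodu_S(\mc{A})\to\rmodu_R(\mc{A})$ is the forgetful functor. Thus $(-)\otimes_S(S\otimes_R M)$ factors as $(-)\otimes_R M\circ\theta$.

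By \cref{l2}(1), applied to right modules, $\theta$ is $t$-exact, since the $t$-structures on both sides are pulled back from $\mc{A}$. The composite of $t$-exact functors is $t$-exact, so $S\otimes_R M$ is flat. If $M$ is moreover faithfully flat, then $(-)\otimes_R M$ is conservative. The forgetful functor $\theta$ is conservative, so the composite is conservative as well.

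\textbf{(2) Restriction along $R\to S$.} Let $N$ be a flat left $S$-module. For a right $R$-module $P$ I will write
\[
P\otimes_R\theta(N)\simeq (P\otimes_R S)\otimes_S N.
\]
This exhibits $(-)\otimes_R\theta(N)$ as the composite of $(-)\otimes_R S:\rmodu_R(\mc{A})\to\rmodu_S(\mc{A})$ with $(-)\otimes_S N$. The second functor is $t$-exact by hypothesis. For the first, its composite with the conservative, $t$-exact forgetful functor to $\mc{A}$ is $(-)\otimes_R S$, which is $t$-exact because $S$ is flat as a left $R$-module. Hence $(-)\otimes_R S$ into $\rmodu_S(\mc{A})$ is $t$-exact, and so $\theta(N)$ is flat.

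In the faithful case, both factors are conservative. The first is conservative because its composite with the forgetful functor is conservative, and the second is conservative by hypothesis. So the composite is conservative.

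\textbf{Expected obstacle.} The only point needing care is justifying the associativity equivalences for relative tensor products of bimodules in the $\mathbb{E}_1$ setting, for instance via \cite[\S4.4]{ha}. We also need $t$-exactness and conservativity to be detectable after forgetting to $\mc{A}$. This detection follows from \cref{l2}(1) for right modules and bimodules, as noted in the remark after \cref{l3}. Once these are in place, the rest is formal.
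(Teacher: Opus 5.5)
Your proposal is correct and is the same argument as the paper's. Both parts rest on the identifications $(-)\otimes_S(S\otimes_R M)\simeq \theta(-)\otimes_R M$ and $(-)\otimes_R\theta(N)\simeq\bigl((-)\otimes_R S\bigr)\otimes_S N$. You additionally spell out the $t$-exactness and conservativity of the forgetful and base-change functors, detected after forgetting to $\mathcal{A}$ via \cref{l2}(1), which the paper leaves implicit.
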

	\begin{proof}
		(1) Let $M\in \lmodu_R(\mc{A})$. We observe that $(-)\otimes_S(S\otimes_RM)\simeq (-)\otimes_RM$. 
		\\(2) Given $N\in \lmodu_S(\mc{A})$, we observe that $(-)\otimes_R \theta(N)\simeq (-)\otimes_RS\otimes_SN$.
	\end{proof}

	We now start to investigate the non-connective flatness.
	\begin{de}[The non-connective case]\label{ncflat}
		Let $R\in \alg(\mc{A})$ and $\theta: \lmodu_R(\mc{A})\to \lmodu_{\tau_{\geq 0}R}(\mc{A})$ be the forgetful functor. 
		\enu{\item We say a left $R$-module $M$ is flat if the counit map $R\otimes_{\tau_{\geq 0}R}\tau_{\geq 0}\theta(M)\to M$ with respect to the following composite adjunction 
			$$\lmodu_{\tau_{\geq 0}R}(\mc{A})_{\geq 0} \underset{\tau_{\geq 0}}{\stackrel{}{\rightleftarrows}} \lmodu_{\tau_{\geq 0}R}(\mc{A}) \underset{\theta}{\stackrel{}{\rightleftarrows}} \lmodu_R(\mc{A})$$
			is an equivalence and $\tau_{\geq 0}\theta(M)$ is flat over $\tau_{\geq 0}R$.
			\item We say a left $R$-module $M$ is faithfully flat if it is flat over $R$ and $\tau_{\geq 0}\theta(M)$ is faithfully flat over $\tau_{\geq 0}R$.
			\item If $f: R\to S$ is a morphism in $\calg(\mc{A})$, we say that $f$  is a (faithfully) flat morphism if $S$ is a (faithfully) flat $R$-module.
		}
	\end{de}
	
	\begin{rem}Let $R\in \alg(\mc{A})$ be an $\mbb{E}_1$-algebra. Then
		\enu{\item  The full subcategory of flat modules $\lmodu_R(\mc{A})^{fl}\subset \lmodu_R(\mc{A})$ is closed under finite coproducts, retractions.
			Note that, unlike the connective case, it is not closed under extensions in general. 
			\item If the $t$-structure on $\mc{A}$ is compatible with filtered colimits, then $\lmodu_R(\mc{A})^{fl}\subset \lmodu_R(\mc{A})$ is closed under filtered colimits.
			\item If $M$ is a flat left $R$-module, then $M$ is faithfully flat implies that the tensor product functor $(-)\otimes_RM$ is conservative. Note that, unlike the connective case, the converse  does not hold in general.  
		}

	\end{rem}

	\begin{ex}\label{nonconnectivefaithfulcounterexample}
		The converse in the preceding remark (3) fails for non-connective algebras,
		even in \opsp.
		Let $k$ be a field and let
		$
		\mc A=\modu_{Hk}(\opsp)
		$
		with its standard $t$-structure. Set
		\[
		P=Hk\oplus \Sigma Hk,
		\qquad
		R=\underline{\operatorname{End}}_{\mc A}(P).
		\]
		Then $R$ is non-connective; more explicitly,
		\[
		R\simeq
		\begin{pmatrix}
			Hk & \Sigma^{-1}Hk\\
			\Sigma Hk & Hk
		\end{pmatrix}.
		\]
		Let $e\in\pi_0R$ denote the idempotent corresponding to the summand
		$Hk\subset P$. Then
		$
		P\simeq Re
		$
		as a left $R$-module. Writing $R_{\geq0}=\tau_{\geq0}R$, we moreover
		have
		$
		P\simeq R_{\geq0}e
		$
		as a left $R_{\geq0}$-module. Hence $P$ is flat over $R_{\geq0}$ and
		\[
		R\otimes_{R_{\geq0}}P
		\simeq
		R\otimes_{R_{\geq0}}R_{\geq0}e
		\simeq Re
		\simeq P.
		\]
		Thus $P$ is flat over $R$ in the sense of \cref{ncflat}.
		
		On the other hand,
		\[
		\pi_0R_{\geq0}\simeq k\times k,
		\qquad
		\pi_0P\simeq(k,0),
		\]
		so $P$ is not faithfully flat over $R_{\geq0}$. Equivalently, if
		$k_2$ denotes the nonzero discrete right $R_{\geq0}$-module obtained
		from the second projection $k\times k\to k$, then
		\[
		k_2\otimes_{R_{\geq0}}P\simeq0.
		\]
		Consequently, $P$ is not faithfully flat over $R$.
		
		Nevertheless, $P$ is a compact generator of $\mc A$, since $Hk$ is a
		retract of $P$. Therefore Morita theory
		\cite[Theorem 7.1.2.1]{ha} gives an equivalence
		\[
		(-)\otimes_RP:
		\rmodu_R(\mc A)\xrightarrow{\sim}\mc A.
		\]
		In particular, $(-)\otimes_RP$ is conservative. Thus, for a
		non-connective algebra, conservativity of tensoring with a flat module
		does not imply faithful flatness.
	\end{ex}
	
	\begin{prop}\label{l5}
		Let $R\to S$ be a map in $\alg(\mc{A})$.
		\enu{
			\item   If $\tau_{\geq 0}R\to \tau_{\geq 0}S$ is an equivalence, then the relative tensor product $S\otimes_R(-): \lmodu_R(\mc{A})\to \lmodu_S(\mc{A})$ restricts to an equivalence $$\lmodu_R(\mc{A})^{fl}\xrightarrow{\sim} \lmodu_S(\mc{A})^{fl}$$ between the full subcategories of flat modules. \textup{(See \cite[Prop. 7.2.2.16]{ha} for the case of spectra.)}

			\item If $R\in \calg(\mc{A})$, then the full subcategory of flat modules $\modu_R(\mc{A})^{fl}\subset \modu_R(\mc{A})$ contains the unit and is closed under tensor product, and hence forms  a  symmetric monoidal full subcategory.
			\item If $R\to S$ is a morphism in $\calg(\mc{A})$ such that $\tau_{\geq 0}R\to \tau_{\geq 0}S$ is an equivalence, then the base change induces  an equivalence of symmetric monoidal categories $$\modu_R(\mc{A})^{fl,\otimes}\xrightarrow{\sim} \modu_S(\mc{A})^{fl,\otimes}.$$
			
		}
	\end{prop}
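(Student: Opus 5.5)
The plan is to reduce everything to the connective algebra $R_0:=\tau_{\geq0}R\simeq\tau_{\geq0}S$. By \cref{ncflat}, a flat $R$-module $M$ is determined by the connective flat $R_0$-module $M_0:=\tau_{\geq0}\theta(M)$, via the counit equivalence $R\otimes_{R_0}M_0\simeq M$. So both $\lmodu_R(\mc{A})^{fl}$ and $\lmodu_S(\mc{A})^{fl}$ should be compared with $\lmodu_{R_0}(\mc{A})^{fl}$, the flat modules over the common connective cover.

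For (1), I would first show that $R\otimes_{R_0}(-)$ restricts to an equivalence $\lmodu_{R_0}(\mc{A})^{fl}\xrightarrow{\sim}\lmodu_R(\mc{A})^{fl}$. The argument has three steps.

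\begin{enumerate}
\item The key computation: for $N$ flat over $R_0$, the counit $N\to\tau_{\geq0}\theta(R\otimes_{R_0}N)$ is an equivalence. Write $R\otimes_{R_0}N$ via the right $R_0$-module $R$, which sits in a cofiber sequence $R_0\to R\to\tau_{\leq-1}R$ of right $R_0$-modules. Tensoring with $N$ gives
\[
N\longrightarrow R\otimes_{R_0}N\longrightarrow(\tau_{\leq-1}R)\otimes_{R_0}N.
\]
By $t$-exactness of $(-)\otimes_{R_0}N$, the last term lies in $\mc{A}_{\leq-1}$, and $N$ is connective. Hence $N\simeq\tau_{\geq0}(R\otimes_{R_0}N)$.
\item From step 1, $R\otimes_{R_0}N$ is flat over $R$ in the sense of \cref{ncflat}, since the counit condition holds tautologically. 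Conversely, $\tau_{\geq0}\theta$ sends $R$-flat modules to $R_0$-flat modules by definition.
\item The two functors are mutually inverse on these subcategories: one composite is identified by step 1, the other by the definition of flatness. Full faithfulness also follows directly, via
\[
\Map_R(R\otimes_{R_0}N,R\otimes_{R_0}N')\simeq\Map_{R_0}(N,\theta(R\otimes_{R_0}N'))\simeq\Map_{R_0}(N,N'),
\]
where the last equivalence uses that $N$ is connective and step 1.
\end{enumerate}

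The same argument applies to $S$. By transitivity $S\otimes_R(R\otimes_{R_0}N)\simeq S\otimes_{R_0}N$, so $S\otimes_R(-)$ is identified with the composite equivalence $\lmodu_R^{fl}\simeq\lmodu_{R_0}^{fl}\simeq\lmodu_S^{fl}$. This proves (1).

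For (2), the unit $R$ is flat: $\tau_{\geq0}R=R_0$ is flat over itself, and the counit $R\otimes_{R_0}R_0\simeq R$ is an equivalence. For closure under tensor products, take flat $M\simeq R\otimes_{R_0}M_0$ and $M'\simeq R\otimes_{R_0}M'_0$. Since $R\otimes_{R_0}(-)$ is symmetric monoidal for commutative $R_0\to R$, we get
\[
M\otimes_RM'\simeq R\otimes_{R_0}(M_0\otimes_{R_0}M'_0).
\]
The inner term is flat over $R_0$ by \cref{l1.3}(2), so by (1) the whole module is flat over $R$.

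For (3), the functor $S\otimes_R(-)$ is symmetric monoidal on the full module categories. By (2) it restricts to a symmetric monoidal functor between the symmetric monoidal full subcategories of flat modules, and by (1) its underlying functor is an equivalence. A symmetric monoidal functor whose underlying functor is an equivalence is a symmetric monoidal equivalence, which gives (3).

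The main obstacle is step 1 of (1). It needs care with left versus right module structures: $\tau_{\leq-1}R$ must be viewed as a right $R_0$-module, so that the truncation sequence is one of right $R_0$-modules. It also needs the identification of $\tau_{\geq0}$ computed in $\lmodu_{R_0}$ with the truncation in $\mc{A}$, which is supplied by \cref{l2}(1).
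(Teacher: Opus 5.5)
Your proposal is correct and follows essentially the paper's route. The paper also reduces (1) to the case where the source algebra is the connective cover $\tau_{\geq0}S$, shows that $S\otimes_{\tau_{\geq0}S}(-)$ and $\tau_{\geq0}\circ G$ restrict to inverse equivalences on flat modules, and deduces (2) and (3) from (1); your cofiber sequence $N\to R\otimes_{R_0}N\to(\tau_{\leq-1}R)\otimes_{R_0}N$ makes explicit the unit equivalence that the paper only justifies ``by the flatness of $M$''.
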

	
	\begin{proof}
		The (2), (3) are conclusions of (1). So it suffices to prove (1) in the case when $R=\tau_{\geq 0}S$. Since $R$ is connective, the $\infty$-category $\lmodu_R(\mc{A})$ admits a $t$-structure. Let $F^{\prime}$ denote the composite functor
		$$
		\lmodu_R(\mc{A})_{\geq 0} \subset \lmodu_R(\mc{A}) \xrightarrow{F} \lmodu_S(\mc{A})
		$$
		Then $F^{\prime}$ has a right adjoint, given by the composition $G^{\prime}=\tau_{\geq 0} \circ G$. Given $M$ is a flat left $R$-module, we observe that $M\to G'F'(M)$ is equivalent by the flatness of $M$. Now we wish to prove $F'$ preserves flatness, i.e. $F'G'F'(M)\to F'(M)$ is equivalent, which is obvious. Then we wish to prove $G'$ preserves flatness too, which is by definition of flatness in the non-connective case. Consequently, $F^{\prime}$ and $G^{\prime}$ induce adjoint functors
		
		$$
		\lmodu^{fl}_R(\mc{A}) \underset{G^{\prime }}{\stackrel{F^{ \prime}}{\leftrightarrows}} \lmodu^{fl}_S(\mc{A})
		$$
		It now suffices to show that the unit and counit of the adjunction are equivalences. In other words, we must show:\\
		(i) For every flat left $R$-module $M$, the unit map $M\to G'F'(M)$ is an equivalence, which has been done by the argument above.\\
		(ii) For every flat left $S$-module $N$, the counit map $F'G'(N) \rightarrow N$ is an equivalence, which is by definition of flatness in the non-connective case.
	\end{proof}
	\begin{prop}
		\label{l6}
		If $R\to S\in \alg(\mc{A})$ is a morphism of (non-connective) $\mbb{E}_1$-algebras, then
		\enu{
			\item The relative tensor product $S\otimes_R(-): \lmodu_R(\mc{A})\to \lmodu_S(\mc{A})$ sends (faithfully) flat modules to (faithfully) flat modules.
			\item If $S$ is flat as a left $R$-module, then the forgetful functor $\theta: \lmodu_S(\mc{A})\to \lmodu_R(\mc{A})$ sends flat modules to flat modules. If furthermore $S$ is faithfully flat as a left $R$-module, then the forgetful functor $\theta: \lmodu_S(\mc{A})\to \lmodu_R(\mc{A})$  preserves faithfully flat modules.
		}
	\end{prop}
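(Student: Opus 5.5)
The plan is to reduce everything to the connective case (\cref{l4}) using the non-connective definition \cref{ncflat} and the equivalence of \cref{l5}. Write $R_0=\tau_{\geq0}R$ and $S_0=\tau_{\geq0}S$. The map $R\to S$ induces a map of connective algebras $R_0\to S_0$, and we get a commutative square of algebras $R_0\to S_0\to S$, $R_0\to R\to S$.

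For (1), take a flat left $R$-module $M$ and set $M_0=\tau_{\geq0}\theta(M)$. By \cref{ncflat}, $M_0$ is flat over $R_0$ and $M\simeq R\otimes_{R_0}M_0$. Then
\[
S\otimes_R M\simeq S\otimes_{R_0}M_0\simeq S\otimes_{S_0}(S_0\otimes_{R_0}M_0).
\]
By \cref{l4}(1), $N_0:=S_0\otimes_{R_0}M_0$ is a flat (resp.\ faithfully flat) left $S_0$-module. It therefore suffices to show that, for a connective flat $S_0$-module $N_0$, the module $S\otimes_{S_0}N_0$ is flat over $S$ and $\tau_{\geq0}\theta(S\otimes_{S_0}N_0)\simeq N_0$. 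This is exactly the content of the equivalence $\lmodu_{S_0}(\mc A)^{fl}\simeq\lmodu_S(\mc A)^{fl}$ of \cref{l5}(1), whose inverse is $\tau_{\geq0}\theta$. Concretely, the key input is that $\tau_{\geq0}(S\otimes_{S_0}N_0)\simeq\tau_{\geq0}(S)\otimes_{S_0}N_0=N_0$, which holds because $(-)\otimes_{S_0}N_0$ is $t$-exact. Faithful flatness then transfers, since the connective cover is $N_0$, which is faithfully flat.

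For (2), let $N$ be a flat $S$-module with $N_0=\tau_{\geq0}N$. Then $N\simeq S\otimes_{S_0}N_0$, and $S\simeq R\otimes_{R_0}S'$ where $S'=\tau_{\geq0}\theta_R(S)$ is flat over $R_0$. The first step is to compute $\tau_{\geq0}$ of $N$ regarded as an $R$-module; call it $N'$. Applying the connective result \cref{l4}(2) requires an $S'$-module structure, and this is where the argument has to go carefully.

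The obstacle is that $S'$ need not equal $S_0$ as an algebra. The trick is to observe that $R_0$ is connective, so the map $R_0\to S$ factors through $S_0$. Moreover $S'$ is connective with $R\otimes_{R_0}S'\simeq S$, so $S'\simeq\tau_{\geq0}S=S_0$ as $R_0$-modules, by the argument of (1) applied to $t$-exactness of $R_0$-flat tensoring. Hence $S_0$ is flat over $R_0$, and by \cref{l4}(2) $N_0$ is flat over $R_0$.

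It remains to check the counit condition $R\otimes_{R_0}N_0\simeq N$. Here I would use
\[
R\otimes_{R_0}N_0\simeq R\otimes_{R_0}S_0\otimes_{S_0}N_0\simeq S\otimes_{S_0}N_0\simeq N.
\]
One must also verify that $\tau_{\geq0}\theta_R(N)=N_0$, which is immediate because the forgetful functor is $t$-exact. Faithfulness follows similarly from the faithful part of \cref{l4}(2), using that $S_0$ is faithfully flat over $R_0$ when $S$ is faithfully flat over $R$.
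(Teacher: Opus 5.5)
Your proposal is correct and follows essentially the paper's route. Part (1) combines the connective statement \cref{l4} with the equivalence of flat modules in \cref{l5}. In part (2), your identity $R\otimes_{R_0}S_0\otimes_{S_0}(-)\simeq S\otimes_{S_0}(-)$ is exactly the right adjointability of the square of module categories that the paper invokes, using flatness of $S$ over $R$, to reduce to \cref{l4}.

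One point is worth making explicit: $R\otimes_{R_0}S_0\simeq S$ must hold as $R$--$S_0$-bimodules. This follows by checking the bimodule counit on underlying left $R$-modules. Also, the ``obstacle'' about $S'$ versus $S_0$ is not real, since $S'=\tau_{\geq0}\theta_R(S)$ is tautologically $S_0$ as an $R_0$-module.
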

	\begin{proof}
		The (1) is deduced by combination of \cref{l4} and \cref{l5}. \\For (2), we claim the following diagram is right adjointable,
		$$\begin{tikzcd}
			\lmodu_{\tau_{\geq 0}R}(\mc{A}) \arrow[d] \arrow[r, shift left] & \lmodu_{\tau_{\geq 0}S}(\mc{A}) \arrow[d] \arrow[l, shift left] \\
			\lmodu_R(\mc{A}) \arrow[r, shift left]                          & \lmodu_S(\mc{A}) \arrow[l, shift left]                         
		\end{tikzcd}$$
		because $S\otimes_{\tau_{\geq 0}S}(-)\simeq R\otimes_{\tau_{\geq 0}R}\tau_{\geq 0}S\otimes_{\tau_{\geq 0}S}(-)$ by flatness of $S$ over $R$. Then it reduces to the connective case, which is \cref{l4}.
	\end{proof}
	
	\begin{prop}\label{flatpush}
		\begin{enumerate}[label=(\arabic*),font=\normalfont]
			\item If $f: R\to S$ is a morphism in $\calg(\mc{A})$, then $f$ is flat if and only if $f_{\geq0}: \tau_{\geq 0}R\to \tau_{\geq 0}S$ is flat and the following diagram
			$$\begin{tikzcd}
				\tau_{\geq 0}R \arrow[d] \arrow[r] & \tau_{\geq 0}S \arrow[d] \\
				R \arrow[r]                        & S                       
			\end{tikzcd}$$ 
			is a pushout diagram in $\calg(\mc{A})$.
			\item  If $f: R\to S$ is a flat map in $\calg(\mc{A}_{\geq0})$, then the following diagram is a pushout diagram in $\calg(\mc{A}_{\geq0})$.
			$$
			\begin{tikzcd}
				R \arrow[d] \arrow[r]    & S \arrow[d]    \\
				\tau_{\leq n}R \arrow[r] & \tau_{\leq n}S
			\end{tikzcd}
			$$ Hence $\tau_{\leq n}R\to \tau_{\leq n}S$ is also flat for any $n\geq0$.
			\item Let $f: R\to S$ be a (faithful) flat map in $\calg(\mc{A})$ and $R\to A$ be another map in $\calg(\mc{A})$. Then the map $A\to A\otimes_R S$ given by the following pushout diagram is (faithful) flat.
			$$
			\begin{tikzcd}
				R \arrow[d] \arrow[r]    & S \arrow[d]    \\
				A \arrow[r] & A\otimes_R S
			\end{tikzcd}
			$$
		\end{enumerate}

	\end{prop}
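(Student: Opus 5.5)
For (1), I will unwind \cref{ncflat}. Write $R_0=\tau_{\geq0}R$, $S_0=\tau_{\geq0}S$, and let $\theta$ denote restriction of scalars along $R_0\to R$. Since $\theta$ is $t$-exact, $\tau_{\geq0}\theta(S)\simeq S_0$ as $R_0$-modules, with module structure coming from $f_{\geq0}$. By definition, $f$ is flat if and only if $S_0$ is flat over $R_0$ and the counit $R\otimes_{R_0}S_0\to S$ is an equivalence.

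The pushout of the square in $\calg(\mc{A})$ is computed by the relative tensor product $R\otimes_{R_0}S_0$, by \cite[Prop. 3.2.4.7]{ha}. Under this identification, the canonical comparison map to $S$ is exactly the counit. Hence the square is a pushout if and only if the counit is an equivalence, and (1) follows.

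For (2), since $f$ is flat and $\tau_{\leq n}R$ is a connective $R$-module, $\tau_{\leq n}R\otimes_R S$ lies in $\mc{A}_{[0,n]}$. The canonical map $S\to \tau_{\leq n}R\otimes_R S$ should then identify the target with $\tau_{\leq n}S$. To see this, note that the fiber $\tau_{\geq n+1}R\otimes_R S$ is $(n+1)$-connective, because tensoring with the connective module $S$ is right $t$-exact. So the cofiber sequence
$$\tau_{\geq n+1}R\otimes_RS\to S\to \tau_{\leq n}R\otimes_RS$$
exhibits the last term as $\tau_{\leq n}S$. The pushout in $\calg(\mc{A}_{\geq0})$ is again the relative tensor product; connective algebras are closed under pushouts in $\calg(\mc{A})$, so this holds in $\calg(\mc{A}_{\geq0})$ as well. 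Flatness of $\tau_{\leq n}R\to\tau_{\leq n}S$ then follows from the base change statement in \cref{l4}(1).

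For (3), in the connective case the module $A\otimes_RS$ is the base change of the (faithfully) flat $R$-module $S$, so \cref{l4}(1) applies directly. In general the claim is \cref{l6}(1) applied to $R\to A$ and the (faithfully) flat $R$-module $S$. The one point requiring care is that the $A$-module $A\otimes_R S$ coincides with the underlying module of the algebra pushout, which again follows from \cite[Prop. 3.2.4.7]{ha}.

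The main obstacle is the bookkeeping in (1) and (3), namely checking that the comparison maps agree with the counit maps of \cref{ncflat}. I would handle this by writing both as the morphism adjoint to $S_0\to S$ under the free/forgetful adjunction for $R$-modules.
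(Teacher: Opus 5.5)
Your proposal is correct and follows essentially the same route as the paper. Part (1) unwinds \cref{ncflat} once the algebra pushout is identified with $R\otimes_{\tau_{\geq0}R}\tau_{\geq0}S$, part (2) uses $t$-exactness of $(-)\otimes_R S$ to get $\tau_{\leq n}R\otimes_R S\simeq\tau_{\leq n}S$, and part (3) is base change of (faithfully) flat modules via \cref{l6}(1); the only differences are that the paper cites \cref{l6}(1) for the converse in (1) and for the flatness claim in (2), where you instead check the definition directly and use \cref{l4}(1), which is equivalent in the connective case.
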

	\begin{proof}
		(1) If $f$ is flat, then we have $S\simeq R\otimes_{\tau_{\geq 0}R}\tau_{\geq 0}S$ by the flatness of $S$ over $R$. If the converse is true, then $R\to S$ is flat by \cref{l6}(1).\\
		(2) Since $(-)\otimes_R S$ is $t$-exact, the following diagram is a pushout diagram in $\calg(\mc{A})$.
		$$
		\begin{tikzcd}
			R \arrow[d] \arrow[r]    & S \arrow[d]    \\
			\tau_{\leq n}R \arrow[r] & \tau_{\leq n}S
		\end{tikzcd}
		$$
		So $\tau_{\leq n}R\to \tau_{\leq n}S$ is flat by \cref{l6}(1).\\
		(3) It follows immediately from \cref{l6}.
		
	\end{proof}
	\begin{prop}\label{compoflat}
		Given a diagram  $$\begin{tikzcd}
			& A \arrow[ld, "f"] \arrow[rd, "h"'] &   \\
			B \arrow[rr, "g"] &                                    & C
		\end{tikzcd}$$
		in $\calg(\mc{A})$.
		\begin{enumerate}[label=(\arabic*),font=\normalfont]
			
			\item  
			where $f,g$ are flat morphisms, then so is the composition $h$.
			\item If $h$ is flat and $g$ is faithfully flat, then $f$ is flat.
			
		\end{enumerate}
	\end{prop}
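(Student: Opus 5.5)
The plan is to reduce both statements to the connective case via \cref{flatpush}(1), and then work directly with the $t$-exactness definition of flatness.

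\textbf{Part (1).} We first treat $A,B,C$ connective. For $M\in \modu_A(\mc{A})$ we have
\[
M\otimes_A C\simeq (M\otimes_A B)\otimes_B C.
\]
Since $(-)\otimes_A B$ is $t$-exact into $\modu_B(\mc{A})$ and $(-)\otimes_B C$ is $t$-exact, the composite is $t$-exact. Here we use that the $t$-structure on $\modu_B(\mc{A})$ is detected by the forgetful functor, as in \cref{l2}(1).

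For the general case, \cref{flatpush}(1) applies to $f$ and $g$. It says that $\tau_{\geq0}f$ and $\tau_{\geq0}g$ are flat, and that $B\simeq A\otimes_{\tau_{\geq0}A}\tau_{\geq0}B$ and $C\simeq B\otimes_{\tau_{\geq0}B}\tau_{\geq0}C$. The connective case then shows $\tau_{\geq0}h$ is flat. Pasting the two pushout squares gives $C\simeq A\otimes_{\tau_{\geq0}A}\tau_{\geq0}C$, so $h$ is flat by \cref{flatpush}(1) again.

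\textbf{Part (2).} Again we start with the connective case. Let $N\in\modu_A(\mc{A})_{\leq0}$, and set $X=N\otimes_A B$. Then
\[
X\otimes_B C\simeq N\otimes_A C
\]
is coconnective because $h$ is flat. Since $g$ is flat, $\tau_{\geq1}X\otimes_B C\simeq\tau_{\geq1}(X\otimes_B C)=0$. Faithful flatness (conservativity) of $g$ then gives $\tau_{\geq1}X=0$, so $X$ is coconnective. Right $t$-exactness of $(-)\otimes_A B$ is automatic because $B$ is connective. Hence $f$ is flat.

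For the non-connective case we must also produce the pushout $B\simeq A\otimes_{\tau_{\geq0}A}\tau_{\geq0}B$. The connective case first shows that $\tau_{\geq0}f$ is flat; here we use that $\tau_{\geq0}g$ is faithfully flat by \cref{ncflat} and $\tau_{\geq0}h$ is flat by \cref{flatpush}(1). Now set $B'=A\otimes_{\tau_{\geq0}A}\tau_{\geq0}B$, which comes with a canonical map $\phi:B'\to B$, and apply $(-)\otimes_{\tau_{\geq0}B}\tau_{\geq0}C$. On the left this gives
\[
B'\otimes_{\tau_{\geq0}B}\tau_{\geq0}C\simeq A\otimes_{\tau_{\geq0}A}\tau_{\geq0}C\simeq C,
\]
by flatness of $h$. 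On the right it gives $B\otimes_{\tau_{\geq0}B}\tau_{\geq0}C\simeq C$, by flatness of $g$. So $\phi\otimes_{\tau_{\geq0}B}\tau_{\geq0}C$ is an equivalence. Since $\tau_{\geq0}C$ is faithfully flat over $\tau_{\geq0}B$, the functor $(-)\otimes_{\tau_{\geq0}B}\tau_{\geq0}C$ is conservative on all of $\modu_{\tau_{\geq0}B}(\mc{A})$, so $\phi$ is an equivalence.

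\textbf{Main obstacle.} The delicate point is this last step. It needs conservativity on all modules, not only connective ones, and it needs the identifications to be compatible with the canonical maps. Conservativity on all modules holds by definition here, since the definition of faithfully flat requires conservativity of $(-)\otimes_R M$ on all of $\rmodu_R(\mc{A})$. Compatibility of the maps is routine but should be checked.
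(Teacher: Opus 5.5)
Your proposal is correct and follows the paper's route. For part (1), the paper says only that the claim follows from the definition; your composite of $t$-exact base changes, together with pasting the pushout squares from \cref{flatpush}(1), is the expanded version. For part (2), the paper likewise uses conservativity of $(-)\otimes_{\tau_{\geq0}B}\tau_{\geq0}C$ to show $A\otimes_{\tau_{\geq0}A}\tau_{\geq0}B\simeq B$, then in the connective case detects coconnectivity of $N\otimes_A B$ after base change along the faithfully flat $g$. The compatibility of maps you flag follows from the universal property of pushouts in $\calg(\mc{A})$.
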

	\begin{proof}
		(1) It follows immediately from definition.\\
		(2) Considering the following diagram 
		$$
		\begin{tikzcd}
			\tau_{ \geq 0}A \arrow[r, "\tau_{ \geq 0}f"] \arrow[d] & \tau_{ \geq 0}B \arrow[r, "\tau_{ \geq 0}g"] \arrow[d] & \tau_{ \geq 0}C \arrow[d] \\
			A \arrow[r, "f"]                                       & B \arrow[r, "g"]                                       & C                        
		\end{tikzcd}$$
		in $\calg(\mc{A})$. Then the right square and outer square are pushouts by \cref{flatpush}. The faithful flatness of $g$ implies the tensor product functor $(-)\otimes_{\taug B}\taug C$ is conservative, which implies the left square is also a pushout. So we reduce to the case when $A,B,C$ are connective.
		
		Now given a coconnective $A$-module $M\in \modu_{A}(\mc{A})_{\leq 0}$, we wish to show that $N=B\otimes_AM$ is also coconnective. However  $C\otimes_BN$ is coconnective by assumption, so $N$ is coconnective by faithful flatness of $g$.
	\end{proof}
	
	\begin{cor}\label{corB1.4.3}
		Given a pushout diagram in $\calg(\mc{A})$
		$$
		\begin{tikzcd}
			A' \arrow[d, "\phi"] \arrow[r, "\psi"] & A \arrow[d, "\phi'"] \\
			B' \arrow[r, "\psi'"]                  & B                   
		\end{tikzcd}
		$$  where $\psi$ is faithfully flat. If $B$
		is flat over $A$, then $B'$ is flat over $A'$.
	\end{cor}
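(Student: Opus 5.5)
The argument is a descent along the faithfully flat map $\psi'$, which is the base change of $\psi$ along $\phi$. By \cref{flatpush}(3), $\psi'\colon B'\to B$ is faithfully flat. In the triangle $A'\to B'\to B$, the composite is $A'\to A\to B$. It is a composite of the flat map $\psi$ and the flat map $\phi'$, so it is flat by \cref{compoflat}(1). We then apply \cref{compoflat}(2) to the triangle with $f=\phi$, $g=\psi'$ and $h=\psi'\circ\phi\simeq\phi'\circ\psi$. Since $h$ is flat and $g$ is faithfully flat, $f=\phi$ is flat. So the statement reduces to two earlier results, and only the identification of the commuting triangle needs care.

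The key steps, in order, are as follows.
\begin{enumerate}
\item Identify $B\simeq B'\otimes_{A'}A$ from the pushout square, and deduce via \cref{flatpush}(3) that $\psi'$ is faithfully flat.
\item Deduce that $\phi'\circ\psi$ is flat, using \cref{compoflat}(1) together with the flatness of $\psi$ and of $\phi'$.
\item Use commutativity of the square to rewrite $\phi'\circ\psi\simeq\psi'\circ\phi$, and invoke \cref{compoflat}(2).
\end{enumerate}

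The main point to check is step (1) in the non-connective case. There, faithful flatness of a base change requires the connective-cover square to remain a pushout, and this is exactly what \cref{flatpush}(3), via \cref{l6}(1), provides. So no further work is needed beyond citing these results. If one worries about \cref{compoflat}(2) in the non-connective setting, its proof already reduces to connective covers using the conservativity of $(-)\otimes_{\taug B'}\taug B$. That conservativity holds because $\psi'$ is faithfully flat in the sense of \cref{ncflat}.
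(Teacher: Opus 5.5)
Your proof is correct and is essentially the paper's argument. In both, $\psi'$ is faithfully flat by base change (\cref{flatpush}(3)), the composite $\psi'\circ\phi\simeq\phi'\circ\psi$ is flat by \cref{compoflat}(1), and \cref{compoflat}(2) then gives flatness of $\phi$; you also correctly use $\phi'\circ\psi$ and the flatness of $\psi$ and $\phi'$, where the paper's text has the typos $\phi\circ\psi$ and ``$\psi$ and $\phi$''.
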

	\begin{proof}
		Since $\psi$ is faithfully flat, the morphism $\psi'$ is also faithfully flat. By virtue of
		\cref{compoflat}(2), it will suffice to show that the composition $\psi' \circ\phi \simeq \phi\circ
		\psi$ is flat. This also follows from \cref{compoflat}, since $\psi$ and $\phi$ are both flat.
	\end{proof}

	We now study the relation with discrete flatness.
	\begin{prop}\label{pi0flat}
		Let $R\in \alg(\mc{A})$. Then:
		\enu{
			\item Let $M\in \lmodu_R(\mc{A})$. If $M$ is (faithfully) flat over $R$, then $\pi_0M \in \lmodu_{\pi_0R}(\mc{A}^{\heartsuit})$ is (faithfully) 1-flat over $\pi_0R$ in the sense of \cref{f.p.}.
			
			\item Assume that $\mc{A}$ is hypercomplete. Let $f: M\to N$ be a map between flat left $R$-modules. If $\pi_0f:\pi_0M\to \pi_0 N$ is an equivalence, then $f: M\to N$ is an equivalence.
			
			\item Let $M\in \lmodu_R(\mc{A})$ be a flat left $R$-module. Then for any $n\in \mathbb{Z}$, the natural map
			\[
			\pi_n(R)\overline{\otimes}_{\pi_0R}\pi_0M \to \pi_nM
			\]
			is an equivalence in $\lmodu_{\pi_0R}(\mc{A}^\heartsuit)$.
			
			\item Assume that $\mc{A}$ is Grothendieck. If $f: R\to S$ is a faithfully flat morphism in $\calg(\mc{A})$, then $\op{cofib}(f)$ is a flat $R$-module; the converse holds provided further that $\mc{A}$ is hypercomplete.
		}
	\end{prop}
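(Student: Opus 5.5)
First reduce everything to the connective case. By \cref{ncflat}, a flat $M$ over $R$ satisfies $M\simeq R\otimes_{\tau_{\geq0}R}\tau_{\geq0}\theta(M)$ with $\tau_{\geq0}\theta(M)$ flat over $\tau_{\geq0}R$, and $\pi_0$ of $R$, $M$ agrees with $\pi_0$ of $\tau_{\geq0}R$, $\tau_{\geq0}\theta(M)$. So for (1) and (2) we may assume $R$ and $M$ are connective. The exception is (3) for negative $n$, which needs the base-change formula and is treated below.

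For (1), use \cref{l2}(3): the heart of $\rmodu_R(\mc{A})$ is $\rmodu_{\pi_0R}(\mc{A}^\heart)$. For a discrete right module $Q$ we have $Q\otimes_RM\simeq\pi_0(Q\otimes_RM)\simeq Q\overline{\otimes}_{\pi_0R}\pi_0M$, because $(-)\otimes_RM$ is right $t$-exact and $\pi_0$ of a relative tensor product of connective objects is the discrete relative tensor product. Since $(-)\otimes_RM$ is $t$-exact, it sends short exact sequences in the heart (which are fiber sequences) to fiber sequences of discrete objects, i.e.\ short exact sequences. Hence $(-)\overline{\otimes}_{\pi_0R}\pi_0M$ is left exact, which is 1-flatness. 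For the faithful case, if $Q\overline{\otimes}\pi_0M=0$ then $Q\otimes_RM=0$, so $Q=0$ by conservativity; with exactness this gives conservativity on the heart.

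For (3), in the connective case tensor the fiber sequence $\tau_{\geq n+1}R\to\tau_{\geq n}R\to\Sigma^n\pi_nR$ (viewed as right $R$-modules) with $M$. By $t$-exactness, $\pi_n(\tau_{\geq n}R\otimes_RM)\simeq\pi_n(\Sigma^n\pi_nR\otimes_RM)\simeq\pi_nR\overline{\otimes}_{\pi_0R}\pi_0M$, and $\tau_{\geq n}(-)\otimes_RM\simeq\tau_{\geq n}(M)$. For general $R$, write $M\simeq R\otimes_{R_{\geq0}}M_0$ with $M_0$ flat over $R_{\geq0}=\tau_{\geq0}R$. The functor $(-)\otimes_{R_{\geq0}}M_0$ is $t$-exact on right $R_{\geq0}$-modules and $R$ is such a module, so $\pi_nM\simeq\pi_n(\Sigma^n\pi_nR\otimes_{R_{\geq0}}M_0)=\pi_nR\overline{\otimes}\pi_0M_0$ for all $n\in\mathbb{Z}$.

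For (2), combine (3) with the naturality of $\pi_nR\overline{\otimes}_{\pi_0R}\pi_0(-)$: if $\pi_0f$ is an isomorphism then so is every $\pi_nf$. So $\op{cofib}(f)$ has all homotopy objects zero, hence $\tau_{\leq k}\op{cofib}(f)=0$ for all $k$, and hypercompleteness gives $f$ an equivalence.

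For (4), first reduce via \cref{flatpush}(1) to the connective case, where $\op{cofib}(f)\simeq R\otimes_{\tau_{\geq0}R}\op{cofib}(f_{\geq0})$. Assume $R,S$ are connective. By \cref{cokerff}, $\pi_0R\to\pi_0S$ is a monomorphism with 1-flat cokernel, and by (3), $\pi_nR\to\pi_nS$ is $\pi_nR\overline{\otimes}(\pi_0R\to\pi_0S)$, still a monomorphism by 1-flatness of $\pi_0S$. So $C=\op{cofib}(f)$ is connective. For discrete $Q$, the sequence $Q\to Q\otimes_RS\to Q\otimes_RC$ has first map monic on $\pi_0$ by \cref{cokerff}(1) applied to $Q$. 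Hence $Q\otimes_RC$ is discrete, and \cref{l7} (which uses the Grothendieck hypothesis) gives flatness of $C$.

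For the converse, $S$ is an extension of flat modules, so it is flat by \cref{l1.3}(1). For conservativity, suppose $Q\otimes_RS=0$ with $Q$ connective. Then $Q\simeq\Sigma^{-1}Q\otimes_RC$. Iterating and using right $t$-exactness shows $Q$ is $\infty$-connective, so $Q=0$ by hypercompleteness; for general $Q$, apply this to each $\tau_{\geq -k}Q$, using $t$-exactness. The main obstacle is this converse, specifically the non-connective conservativity and keeping track of the shift; I would first try the iteration $Q\simeq\Sigma^{-k}Q\otimes_RC^{\otimes k}$ with the fact that $C$ is connective.
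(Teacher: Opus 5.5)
Your arguments for (1)--(3), for the forward direction of (4), and for flatness of $S$ in the converse (via extension-closure, \cref{l1.3}(1)) are correct and essentially the paper's. The genuine gap is the conservativity step in the converse of (4). From $Q\otimes_RS\simeq0$ you correctly get $Q\simeq\Sigma^{-1}(Q\otimes_RC)$. But with $Q$ and $C$ connective, right $t$-exactness only shows that $Q$ is $(-1)$-connective. Iterating gives $Q\simeq\Sigma^{-k}(Q\otimes_RC^{\otimes_Rk})$, which is only $(-k)$-connective, so the bound gets worse and never yields $\infty$-connectivity. No argument using only the connectivity of $C$ can work. In $\mathcal{D}(\mathbb{Z})$, the map $\mathbb{Z}\to\mathbb{Q}$ has flat target and connective cofiber $\mathbb{Q}/\mathbb{Z}$. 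Yet $Q=\mathbb{Z}/2$ satisfies $Q\otimes\mathbb{Q}=0$ and $Q\simeq\Sigma^{-1}(Q\otimes\mathbb{Q}/\mathbb{Z})$. The missing input is that $C$ is \emph{flat}, i.e.\ that $(-)\otimes_RC$ is left $t$-exact.

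The paper uses this as follows. Since $S$ is already flat, $\pi_iQ\otimes_RS\simeq\pi_i(Q\otimes_RS)=0$, so it suffices to treat a discrete $P$ with $P\otimes_RS=0$. The cofiber sequence $P\to P\otimes_RS\to P\otimes_RC$ gives $P\otimes_RC\simeq\Sigma P$. The left side is discrete because $C$ is flat, and the right side is concentrated in degree $1$, so $P=0$. Hence all homotopy objects of $Q$ vanish, and hypercompleteness (with right completeness for unbounded $Q$) gives $Q=0$.

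Your iteration can also be repaired by running it on the coconnective side. Suppose $Y$ is $n$-truncated with $Y\otimes_RS=0$. Then $Y\otimes_RC$ is $n$-truncated by flatness of $C$, so $Y\simeq\Sigma^{-1}(Y\otimes_RC)$ is $(n-1)$-truncated. Iterating forces $Y\in\bigcap_m\mathcal{A}_{\leq m}=0$, using right completeness. Apply this to $Y=\tau_{\leq n}Q$, which satisfies $\tau_{\leq n}Q\otimes_RS\simeq\tau_{\leq n}(Q\otimes_RS)=0$ by $t$-exactness of $(-)\otimes_RS$, and conclude $Q=0$ by hypercompleteness.

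A minor point in the forward direction: injectivity of $\pi_nR\to\pi_nR\overline{\otimes}_{\pi_0R}\pi_0S$ does not follow from 1-flatness of $\pi_0S$ alone; consider $\mathbb{Z}/2\to\mathbb{Z}/2\otimes\mathbb{Q}$. It does follow from \cref{cokerff}(1). That paragraph is unnecessary anyway, since a cofiber of connective objects is connective.
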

	\begin{proof}
		For (1), we have that $\tau_{\geq 0}M$ is flat over $\tau_{\geq 0}R$, so it suffices to show the case when $R,M$ are connective. Therefore by $t$-exactness we have the following factorization 
		$$\begin{tikzcd}[row sep=large, column sep=7em]
			\rmodu_{\tau_{\geq 0}R}(\mc{A}) \arrow[d, "\tau_{\geq0}"] \arrow[r, "(-)\otimes_{R }M"] & \mc{A} \arrow[d, "\tau_{\geq0}"] \\
			\rmodu_{\tau_{\geq 0}R}(\mc{A})_{\geq0} \arrow[r, "(-)\otimes_{R }M", dashed]           & \mc{A}_{\geq0}                  
		\end{tikzcd}$$
		which implies that $(-)\otimes_{R }M: \rmodu_{R}(\mc{A}_{\geq0})\to\mc{A}_{\geq0}$ is left exact.
		Now since $\pi_0: \mc{A}_{\geq 0}^\otimes\to (\mc{A}^{\heartsuit})^\otimes$ is a symmetric monoidal functor preserving geometric realizations, we have the commutative diagram of relative tensor product functors.
		$$\begin{tikzcd}[row sep=large, column sep=7em]
			\rmodu_{R}(\mc{A}_{\geq0}) \arrow[d] \arrow[r, "(-)\otimes_{R }M"] & \mc{A}_{\geq0} \arrow[d] \\
			\rmodu_{\pi_0R}(\mc{A}^{\heartsuit}) \arrow[r, "(-)\otimes_{\pi_0R }\pi_0M"]                                   & \mc{A}^{\heartsuit}             
		\end{tikzcd}$$
		So we conclude that $(-)\overline{\otimes}_{\pi_0R }\pi_0M$ is left exact since the above horizontal functor preserves discrete objects. For the faithfully flat case, it follows directly from definition.\\
		(2) By definition of non-connective flatness, without loss of generalization we can assume that $R$, $M$ and $N$ are connective. 
		Since both $M$ and $N$ are flat and $\mathcal{A}$ is hypercomplete we may reduce to proving that $$\pi_nM\simeq\pi_nR \otimes_RM\xrightarrow{\pi_nR \otimes_Rf}\pi_nR \otimes_RN\simeq \pi_nN$$ is an equivalence for all $n \geq 0$. However, this map agrees with $$\pi_nR \overline{\otimes}_{\pi_0R} \pi_0M\xrightarrow{\pi_nR \overline{\otimes}_{\pi_0R} \pi_0f}\pi_nR \overline{\otimes}_{\pi_0R} \pi_0N,$$ which is an equivalence by virtue of the fact that $\pi_0(f)$ is an equivalence.\\
		(3) By definition we have that $\tau_{\geq 0}M$ is flat over $\tau_{\geq 0}R$ and $R\otimes_{\tau_{\geq 0}R}\tau_{\geq 0}M\simeq M$. Then it follows by combining the $t$-exactness of $(-)\otimes_{\tau_{\geq 0}R}\tau_{\geq 0}M$ and the natural equivalence $\pi_n(R)\otimes_{\tau_{\geq 0}R}\tau_{\geq 0}M\simeq\pi_n(R)\overline{\otimes}_{\pi_0R}\pi_0M$.\\
		(4) Due to \cref{flatpush}(1), we may reduce to the case where $R$ and $S$ are connective. Let $C$ denote  $\op{cofib}(f)$. 
		
		Now assume that $f: R\to S$ is faithfully flat. By \cref{l7}, it suffices to show that $C\otimes_R M$ is discrete for any discrete $R$-module $M$. Since $R\otimes_R M$ and $S\otimes_R M$ are discrete, we have that  $C\otimes_R M \in \mc{A}_{\leq 1}$. Therefore it suffices to show that $\pi_i(C\otimes_R M)=0$ when $i\neq 0$, which is deduced by combining the \cref{pi0flat}(1), \cref{cokerff} and the long exact sequence associated with $R\otimes_R M\to S\otimes_R M\to C\otimes_R M$.
		
		For the converse implication, to see that $S$ is flat over $R$, it suffices to show that $S\otimes_R M$ is discrete for any discrete $R$-module $M$, which is obvious by the cofiber sequence $R\otimes_R M\to S\otimes_R M\to C\otimes_R M$ with the first and third terms discrete. To see the faithfulness, due to the hypercompleteness, it is reduced to proving $M=0$ if $M$ is discrete and $S\otimes_R M=0 $, which is again by the cofiber sequence above.
	\end{proof}
	\begin{warn}\label{differenceflat}
		Suppose that $R\in\alg(\mca)$ is discrete. Let $M \in \lmodu_{R}(\mca)$ be a discrete left $R$-module. Beware that, if $\pi_0 M$ is 1-flat over $\pi_0R$ in the sense of \cref{f.p.}, this does not imply that $M$ is flat over $R$ in the (derived) sense \cref{ncflat}. However, under the projective rigidity assumption, this implication does hold (see \cref{pi0proj}). 
	\end{warn}

	\section{Projective modules}\label{proj}
	\subsection{Basic properties}\label{projsubsec}
	\begin{nota}
		Throughout \cref{projsubsec}, we denote the following condition  by (*):\\
		(*) $\mc{A}$ is right complete and
		$\mc{A}_{\geq0}$ is projectively generated, meaning $\mc{A}_{\geq0}\simeq\mc{P}_\Sigma(\mc{A}_{\geq 0}^{\op{cproj}})$.
		
	\end{nota}
	\begin{rem}Under the condition (*), the following hold:
		\enu{
			\item  \mca is Grothendieck.
			\item $\mc{A}$ is left complete by combining \cref{comple} and \textup{\cite[Remark C.1.5.9]{sag}.}
			\item For any $R\in \alg(\mc{A}_{\geq 0})$,  $\lmodu_R(\mc{A})_{\geq 0}$ satisfies the condition (*) too. 
			
		}
		
	\end{rem}
	\begin{prop}\label{compmod}
		Suppose that \mca satisfies the condition (*). Let $R\in \alg(\mc{A}_{\geq 0})$, and let $\mathcal{C}$ be the smallest idempotent complete stable subcategory of $\operatorname{LMod}_R(\mc{A})$ which contains all compact projective left modules. Then $\mathcal{C}=\operatorname{LMod}_R(\mc{A})^{\omega}$ is the full subcategory of compact modules.
	\end{prop}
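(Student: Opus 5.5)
We prove the two inclusions $\mathcal{C}\subset\operatorname{LMod}_R(\mc{A})^{\omega}$ and $\operatorname{LMod}_R(\mc{A})^{\omega}\subset\mathcal{C}$. The first step is to show that every compact projective object $P$ of $\operatorname{LMod}_R(\mc{A})_{\geq0}$ is compact in the whole stable category $\operatorname{LMod}_R(\mc{A})$. By \cref{l2}(6) and the remark following condition (*), $\operatorname{LMod}_R(\mc{A})_{\geq0}\simeq\mc{P}_\Sigma(\operatorname{LMod}_R(\mc{A})_{\geq0}^{\op{cproj}})$, and $\operatorname{LMod}_R(\mc{A})$ is Grothendieck and right complete. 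The $t$-structure is compatible with filtered colimits, so $\tau_{\geq0}$ commutes with filtered colimits. Since $P$ is connective, for $M\in\operatorname{LMod}_R(\mc{A})$ we have
\[
\Omega^{\infty}\map(P,M)\simeq\Map(P,\tau_{\geq0}M).
\]
More generally, $\Omega^{\infty-n}\map(P,M)\simeq\Map(P,\tau_{\geq0}\Sigma^nM)$. Each of these commutes with filtered colimits in $M$, because $P$ is compact in $\operatorname{LMod}_R(\mc{A})_{\geq0}$ and filtered colimits there are computed in the ambient category. Hence the mapping spectrum $\map(P,-)$ commutes with filtered colimits, so $P$ is compact. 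Compact objects form a thick (idempotent complete stable) subcategory, which gives $\mathcal{C}\subset\operatorname{LMod}_R(\mc{A})^{\omega}$.

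For the reverse inclusion, we first show that $\mathcal{C}$ generates $\operatorname{LMod}_R(\mc{A})$. Suppose $M$ satisfies $\map(P,M)=0$ for every compact projective $P$. Then $\Map(P,\tau_{\geq0}\Sigma^nM)\simeq*$ for all $n$ and all such $P$. Because $\operatorname{LMod}_R(\mc{A})_{\geq0}$ is projectively generated, compact projectives detect equivalences there, so $\tau_{\geq0}\Sigma^nM\simeq0$ for all $n$, that is, $\tau_{\geq -n}M\simeq 0$ for all $n$. Right completeness gives $M\simeq\varinjlim\tau_{\geq-n}M\simeq0$. Thus $\mathcal{C}$ is a small set of compact generators of the presentable stable category $\operatorname{LMod}_R(\mc{A})$.

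The final step applies the standard Thomason--Neeman type result, e.g.\ \cite[Lemma 5.3.5.8 / Prop.\ 5.3.5.11]{htt} combined with \cite[Prop.\ 1.4.4.1]{ha}: if a stable presentable category is generated by a small thick subcategory $\mathcal{C}$ of compact objects, then $\operatorname{Ind}(\mathcal{C})\to\operatorname{LMod}_R(\mc{A})$ is an equivalence. Its compact objects are the idempotent completion of $\mathcal{C}$, which is $\mathcal{C}$ itself. Concretely, the functor $\operatorname{Ind}(\mathcal{C})\to\operatorname{LMod}_R(\mc{A})$ is fully faithful, because $\mathcal{C}$ consists of compact objects. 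It is essentially surjective, because its image is a localizing subcategory containing the generators, and the right orthogonal to $\mathcal{C}$ vanishes. Then any compact $M$ is a retract of an object of $\mathcal{C}$, hence lies in $\mathcal{C}$.

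The main obstacle is the first step. One must carefully use that $\tau_{\geq0}$ preserves filtered colimits, which needs compatibility of the $t$-structure with filtered colimits (\cref{l2}(5), available since condition (*) makes $\mc{A}$ Grothendieck). One must also check that compactness in $\mc{P}_\Sigma$, which is tested on $\Map$, upgrades to compactness of the mapping spectrum by running over all shifts.
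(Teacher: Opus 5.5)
Your proof is correct and follows the same route as the paper. Both arguments combine right completeness (the connective covers $\tau_{\geq -n}$ are jointly conservative) with projective generation to show that compact projectives generate $\operatorname{LMod}_R(\mathcal{A})$, and then finish with the standard compact-generation argument.

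The differences are minor. The paper takes as generators the free modules $R\otimes\Sigma^{-n}P$ with $P\in\mathcal{A}_{\geq0}^{\mathrm{cproj}}$, whereas you use all compact projective $R$-modules. You also spell out why compact projectives of the connective part are compact in the stable category (because $\tau_{\geq0}$ commutes with filtered colimits), a step the paper leaves implicit under ``compact generation.''
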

	\begin{proof}
		Since $\operatorname{LMod}_R(\mc{A})$ is right complete, the collection of connective cover functors $\{\tau_{\geq -n}|n\geq 0\}$ is jointly conservative. Therefore by \cref{l1} $\operatorname{LMod}_R(\mc{A})$ is generated by $\{R\otimes\Sigma^{-n}P|n\geq0, P\in \mc{A}_{\geq 0}^{\op{cproj}} \}$ under small colimits. Then the compact generation of $\mc{A}$ implies $\mathcal{C}=\operatorname{LMod}_R(\mc{A})^{\omega}$.
	\end{proof}
	
	\begin{rem}
		As we will show in \cref{fullstableproj}, the ``idempotent-complete'' condition above can be removed under the stronger assumption of projective rigidity.
	\end{rem}

	We begin our study of projective modules, by examining their behavior under truncations and geometric realizations. 

	\begin{de}
		Suppose that \mca satisfies the condition (*).	Let $R$ be in $\alg(\mc{A}_{\geq 0})$. We say $P\in \lmodu_R(\mc{A})_{\geq 0}$ is a projective left $R$-module if it is a projective object in $\lmodu_R(\mc{A})_{\geq 0}$, meaning that the corepresentable functor $$\mapp_{\lmodu_R(\mc{A})_{\geq 0}}(P,-):\lmodu_R(\mc{A})_{\geq 0}\to \mc{S}$$ preserves geometric realizations.
	\end{de}
	Now we introduce the following lemma, which is a slight strengthening of \cite[Lemma 1.3.3.11(2)]{ha}\footnote{The statement there assumes that both $\mc{C}$ and $\mc{C}^{'}$ are left complete; we show that the assumption on $\mc{C}$ is unnecessary.}.
	\begin{lem}\label{lemma4.3}
		Let $\mc{C}$ and $\mc{C}'$ be stable $\infty$-categories equipped with $t$-structures. Then:
		\enu{
			
			\item If $F:\mc{C}_{\geq0}\to\mc{C}_{\geq0}^{'}$ is a functor preserves finite colimits, then $\tau_{\leq n}\circ F\xrightarrow{\sim}\tau_{\leq n}\circ F\circ\tau_{\leq n}$ is a natural equivalence in $\op{Fun}(\mc{C}_{\geq0},\mc{C}_{[0,n]}^{'})$ for any $n\geq0$.
			\item If $\mc{C}_{\geq0}$ admits geometric realizations and $\mc{C}'$ is left complete, then a functor $F: \mc{C}_{\geq0}\to\mc{C}_{\geq0}^{'}$ preserves finite colimits if and only if it preserves finite coproducts and geometric realizations.
		}
	\end{lem}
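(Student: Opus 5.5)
For (1), I will compare $\tau_{\leq n}F(X)$ with $\tau_{\leq n}F(\tau_{\leq n}X)$ through the cofiber sequence $\tau_{\geq n+1}X\to X\to\tau_{\leq n}X$ in $\mc{C}_{\geq0}$. Since $F$ preserves finite colimits, it is a cofiber sequence $F(\tau_{\geq n+1}X)\to F(X)\to F(\tau_{\leq n}X)$ in $\mc{C}'_{\geq0}$.

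The key claim is that $F$ sends $(n+1)$-connective objects to $(n+1)$-connective objects. For $Y\in\mc{C}_{\geq n+1}$ we have $Y\simeq\Sigma^{n+1}Y'$ with $Y'\in\mc{C}_{\geq0}$. Here the suspension is computed in $\mc{C}_{\geq0}$ as the pushout $0\leftarrow Y'\to 0$, so $F(Y)\simeq\Sigma^{n+1}F(Y')\in\mc{C}'_{\geq n+1}$. Hence the cofiber of $F(X)\to F(\tau_{\leq n}X)$ is $\Sigma F(\tau_{\geq n+1}X)\in\mc{C}'_{\geq n+2}$, and the fiber lies in $\mc{C}'_{\geq n+1}$. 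It follows that $\tau_{\leq n}$ inverts the map, naturally in $X$, which proves (1).

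For (2), the forward direction is formal: geometric realizations in $\mc{C}_{\geq0}$ are colimits that can be built from finite colimits only after passing to the $n$-truncated level. So I will prove both directions through truncations, using left completeness of $\mc{C}'$.

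\emph{Only if.} Let $X_\bullet$ be a simplicial object of $\mc{C}_{\geq0}$. By (1), $\tau_{\leq n}F|X_\bullet|\simeq\tau_{\leq n}F(\tau_{\leq n}|X_\bullet|)$. In the $(n+1)$-category $\mc{C}_{[0,n]}$ the geometric realization agrees with the colimit over $\bDelta_{\leq n+1}^{\op{op}}$, which is a finite colimit. Also, $\tau_{\leq n}|X_\bullet|$ is the realization of $\tau_{\leq n}X_\bullet$ computed in $\mc{C}_{[0,n]}$, which is $\tau_{\leq n}$ of a finite colimit in $\mc{C}_{\geq0}$. Applying $F$ and using (1) again, we get $\tau_{\leq n}F|X_\bullet|\simeq\tau_{\leq n}|F X_\bullet|$ for each $n$. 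Left completeness of $\mc{C}'$ then gives $F|X_\bullet|\simeq|FX_\bullet|$. The comparison maps are compatible because all identifications are induced by the canonical map $|FX_\bullet|\to F|X_\bullet|$.

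\emph{If.} A cofiber $\op{cofib}(A\to B)$ in $\mc{C}_{\geq0}$ is the geometric realization of the bar construction $B\oplus A^{\oplus k}$ (the simplicial object with $B\oplus A^{\oplus k}$ in degree $k$). This simplicial object is built from finite coproducts, and the prestable structure identifies its realization with the cofiber. Since $F$ preserves finite coproducts and geometric realizations, it preserves cofibers. Together with finite coproducts, this yields all finite colimits (pushouts are cofibers of maps from $A$ to $B\oplus C$).

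The main obstacle is the ``only if'' step: justifying that realizations in $\mc{C}_{[0,n]}$ reduce to finite colimits (the standard fact, HTT 6.5.3-type / HA 1.3.3.10, that $\bDelta_{\leq n+1}^{\op{op}}\to\bDelta^{\op{op}}$ is cofinal for $n$-categories), and checking that (1) lets us apply this without any left completeness of $\mc{C}$.
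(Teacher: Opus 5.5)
Your proof is correct and follows the paper's argument. Part (1) is identical. In part (2) you unpack the two halves of \cite[Lemma 1.3.3.10]{ha} that the paper cites: the bar construction for the ``if'' direction, and for the ``only if'' direction the reduction, via (1) and left completeness of $\mc{C}'$, to $\bDelta^{\op{op}}_{\leq n+1}$-indexed colimits in the $(n+1)$-categories $\mc{C}_{[0,n]}$ and $\mc{C}'_{[0,n]}$.

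The one thing to tidy up is that you use $|FX_\bullet|$ in $\mc{C}'_{\geq0}$ without saying why it exists. It does exist: left completeness presents $\mc{C}'_{\geq0}$ as $\lim_n\mc{C}'_{[0,n]}$ with colimit-preserving transition functors $\tau_{\leq n}$, and each $\mc{C}'_{[0,n]}$ has geometric realizations. Alternatively, you can avoid the issue as the paper does, by concluding directly that the cone $FX_\bullet\to F|X_\bullet|$ is a colimit cone because its image under every $\tau_{\leq n}$ is one.
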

	\begin{proof}
		(1) Since $F$ is right exact, it preserves suspension. Given $X\in \mc{C}_{\geq0}$, then we have that the sequence $$F(\tau_{\geq n+1}X)\to F(X)\to F(\tau_{\leq n}X)$$ is a cofiber sequence in $\mc{C}^{'}_{\geq0}$ and that $F(\tau_{\geq n+1}X)\in \mc{C}^{'}_{\geq n+1}$. Taking $\tau_{\leq n}$, we get the natural equivalence $$\tau_{\leq n}F(X)\xrightarrow{\sim} \tau_{\leq n}F(\tau_{\leq n}X).$$
		(2) If $F$ preserves finite coproducts and geometric realizations of simplicial objects, then $F$ is right exact \cite[Lemma  1.3.3.10]{ha}. Conversely, suppose that $F$ is right exact; we wish to prove that $F$ preserves geometric realizations of simplicial objects. It will suffice to show that each composition
		$$
		\mathcal{C}_{\geq 0} \xrightarrow{F} \mc{C}_{\geq 0}^{\prime} \xrightarrow{\tau_{\leq n}}\left(\mathcal{C}_{\geq 0}^{\prime}\right)_{\leq n}
		$$
		By the (1), in virtue of the right exactness of $F$, this functor is equivalent to the composition
		$$
		\mathcal{C}_{\geq 0} \xrightarrow{\tau_{\leq n}}\left(\mathcal{C}_{\geq 0}\right)_{\leq n} \xrightarrow{\tau_{ \leq n} \circ F}\left(\mathcal{C}_{\geq 0}^{\prime}\right)_{\leq n} .
		$$
		It will therefore suffice to prove that $\tau_{\leq n} \circ F$ preserves geometric realizations of simplicial objects, which follows from \cite[Lemma 1.3.3.10]{ha} since both the source and target are equivalent to $n$-categories.
	\end{proof}
	
	We also introduce
	the following strengthening lemma of  \cite[Prop. 7.2.2.6]{ha}\footnote{The statement there assumes that  $\mc{C}$ is left complete; we show that such assumption is unnecessary.}.
	\begin{prop}\label{proj1}
		Let $\mc{C}$ be a stable $\infty$-category with a $t$-structure such that $\mc{C}_{\geq0}$ admits geometric realizations. Given $P\in \mc{C}_{\geq0}$, then the following conditions are equivalent:
		\enu{
			\item  $P$ is projective in $\mc{C}_{\geq0}$.
			\item For any $Q\in \mc{C}_{\geq0}$, the abelian group $\op{Ext}^1(P,Q)=0$.
			\item For any $Q\in \mc{C}_{\geq0}$, the abelian group $\op{Ext}^i(P,Q)=0$ when $i>0$.
			\item  The mapping spectrum functor $\underline{\mapp}_{\mc{C}}(P,-):\mc{C}\to \op{Sp}$ is $t$-exact.
			
		}
	\end{prop}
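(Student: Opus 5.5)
I will prove the cycle of implications $(1)\Rightarrow(2)\Rightarrow(3)\Rightarrow(4)\Rightarrow(1)$. Here $\op{Ext}^i(P,Q)$ means $\pi_0\mapp_{\mc{C}}(P,\Sigma^iQ)=\pi_{-i}\underline{\mapp}_{\mc{C}}(P,Q)$. The main idea is that projectivity of $P$ in $\mc{C}_{\geq0}$ should be tested against a single kind of geometric realization: the bar/Čech-type presentation of a suspension $\Sigma Q$ as the realization of the simplicial object $[n]\mapsto Q^{\oplus n}$. This object is the nerve of the group object $Q$ in the additive $\infty$-category $\mc{C}_{\geq0}$. The step $(4)\Rightarrow(1)$ is where left completeness was used in \cite[Prop. 7.2.2.6]{ha}, and it is the step I must rearrange.

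\textbf{From projectivity to vanishing of Ext.} For $(1)\Rightarrow(2)$, take $Q\in\mc{C}_{\geq0}$ and write $\Sigma Q\simeq|Q^{\oplus\bullet}|$ in $\mc{C}_{\geq0}$; this holds because geometric realizations in the prestable $\mc{C}_{\geq0}$ are computed in $\mc{C}$ (it is closed under colimits that exist there). Projectivity gives
\[
\mapp(P,\Sigma Q)\simeq|\mapp(P,Q)^{\bullet}|\simeq B\mapp(P,Q),
\]
which is the delooping of a grouplike $E_\infty$-space and hence connected. So $\op{Ext}^1(P,Q)=0$. For $(2)\Rightarrow(3)$, note that $\op{Ext}^i(P,Q)=\op{Ext}^1(P,\Sigma^{i-1}Q)$ and $\Sigma^{i-1}Q\in\mc{C}_{\geq0}$. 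For $(3)\Rightarrow(4)$, the functor $\underline{\mapp}(P,-)$ is exact. It is left $t$-exact because $P$ is connective: maps from $P$ into $\mc{C}_{\leq -1}$ are null, so it sends $\mc{C}_{\leq0}$ to $\op{Sp}_{\leq0}$. It is right $t$-exact because, for $Q\in\mc{C}_{\geq0}$, (3) says $\pi_{-i}=0$ for $i>0$, so $\underline{\mapp}(P,Q)$ is connective. Exactness together with shifting then gives $t$-exactness on all of $\mc{C}$.

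\textbf{The hard direction $(4)\Rightarrow(1)$, without left completeness.} Let $X_\bullet$ be a simplicial object in $\mc{C}_{\geq0}$ with realization $|X_\bullet|$ (computed in $\mc{C}$). I need $\mapp(P,|X_\bullet|)\simeq|\mapp(P,X_\bullet)|$. By (4), the functor $G=\Omega^\infty\underline{\mapp}(P,-)$ restricted to $\mc{C}_{\geq0}$ is the composite $\mc{C}_{\geq0}\to\op{Sp}_{\geq0}\simeq$ grouplike $E_\infty$-spaces. Since $\Omega^\infty:\op{Sp}_{\geq0}\to\mc{S}$ preserves geometric realizations, it suffices that $\underline{\mapp}(P,-):\mc{C}_{\geq0}\to\op{Sp}_{\geq0}$ preserves geometric realizations. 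This functor is exact and $t$-exact, so in particular it preserves finite colimits between connective objects. Now apply \cref{lemma4.3}(2) with target $\op{Sp}$, which \emph{is} left complete. That lemma needs only the target to be left complete, and this is exactly the strengthening over \cite[Lemma 1.3.3.11]{ha}. Hence a finite-colimit-preserving functor $\mc{C}_{\geq0}\to\op{Sp}_{\geq0}$ preserves geometric realizations, and we are done.

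The only point I expect to require care is this last one. I must check that \cref{lemma4.3}(2) applies verbatim: $\mc{C}_{\geq0}$ admits geometric realizations by hypothesis, and right exactness follows from exactness plus connectivity preservation. I must also verify that $\Omega^\infty$ commutes with realizations of connective spectra. Both are standard (the second is \cite[Cor. 1.4.3.9]{ha}-type facts about $\op{Sp}_{\geq0}$).
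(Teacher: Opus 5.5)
Your proof is correct and follows the paper's own argument. You use the Čech nerve of $0\to Q[1]$ for $(1)\Rightarrow(2)$ and shifting for $(2)\Rightarrow(3)$; for the converse you factor $\mapp(P,-)$ through $\Omega^\infty$ and apply \cref{lemma4.3}(2) with the left complete target $\op{Sp}$, exactly as the paper does. One small correction: a realization in $\mc{C}_{\geq0}$ is also a realization in $\mc{C}$ because the inclusion is left adjoint to $\tau_{\geq0}$, not because $\mc{C}_{\geq0}$ is closed under colimits existing in $\mc{C}$ (which need not admit geometric realizations).
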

	\begin{proof}
		The implications $(3) \Rightarrow(2)$ and $(3)\Leftrightarrow (4)$ are obvious. The implication $(2) \Rightarrow(3)$ follows by replacing $Q$ by $Q[i-1]$.
		
		We first show that $(1) \Rightarrow(2)$. Let $f: \mathcal{C} \rightarrow \mathcal{S}$ be the functor corepresented by $P$. Let $M_{\bullet}$ be a Čech nerve for the morphism $0 \rightarrow Q[1]$, so that $M_n \simeq Q^n \in \mathcal{C}_{\geq 0}$. Then $Q[1]$ can be identified with the geometric realization $\left|M_{\bullet}\right|$. Since $P$ is projective, $f(Q[1])$ is equivalent to the geometric realization $\left|f\left(M_{\bullet}\right)\right|$. We have a surjective map $* \simeq \pi_0 f\left(M_0\right) \rightarrow$ $\pi_0\left|f\left(M_{\bullet}\right)\right|$, so that $\pi_0 f(Q[1])=\operatorname{Ext}_{\mathcal{C}}^1(P, Q)=0$.
		
		We now show that $(3) \Rightarrow(1)$. That $\mc{C}$ is stable implies that $f$ is homotopic to a composition
		$$
		\mathcal{C} \xrightarrow{F} \mathrm{Sp} \xrightarrow{\Omega^{\infty}} \mathcal{S},
		$$
		where $F$ is an exact functor. Applying (3), we deduce that $F$ is right $t$-exact (see \cite[ Definition 1.3.3.1]{ha}). The  \cref{lemma4.3} implies that the induced map $\mathcal{C}_{\geq 0} \rightarrow \mathrm{Sp}_{\geq 0}$ preserves geometric realizations of simplicial objects. Applying \cite[ Proposition 1.4.3.9]{ha} that $\op{Sp}\xrightarrow{\Omega^{\infty}} \mathcal{S}$ preserves small sifted colimits, we conclude that $f \mid \mathcal{C}_{\geq 0}$ preserves geometric realizations as well.
	\end{proof}
	Now we show that the $\pi_0$-truncation induces an equivalence between the homotopy categories of the (compact) projective objects in the connective part and the (compact) 1-projective objects in the heart.
	\begin{prop}[See \cite{stefanich2023classification} Proposition 2.4.8]\label{eqproj}
		Let $\mc{C}$ be a projectively generated Grothendieck prestable $\infty$-category\footnote{
			We do not use the notation $\mc{A}$ here, because the proposition does not require a monoidal structure.
		}. Then
		\enu{\item The truncation functor $H_0: \mathcal{C} \rightarrow \mathcal{C}^{\heartsuit}$ sends projective objects to 1-projective objects and compact objects to compact objects.
			\item The $0$-truncations of the compact projective objects of $\mathcal{C}$ provide a family of compact 1-projective generators for $\mathcal{C}^{\heartsuit}$.
			\item The functor $\operatorname{h}(\pi_0): \operatorname{h}(\mathcal{C}) \rightarrow \mathcal{C}^{\heartsuit}$ induced at the level of homotopy categories restricts to an equivalence between the full subcategories of (compact) projective objects and (compact) 1-projective objects.
		}
	\end{prop}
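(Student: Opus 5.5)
I will work throughout with the characterization of projectivity in \cref{proj1}: an object $P\in\mathcal{C}$ is projective if and only if $\op{Ext}^i(P,Q)=0$ for all $Q\in\mathcal{C}$ and $i>0$, equivalently if $\underline{\mapp}(P,-)$ is $t$-exact on $\op{Sp}(\mathcal{C})$. I also use that $\mathcal{C}\simeq\mcp_\Sigma(\mathcal{C}^{\op{cproj}})$. Since $\mathcal{C}$ is presentable prestable and Grothendieck, the heart $\mathcal{C}^\heartsuit$ is a Grothendieck abelian category.

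\textbf{Step (1).} Let $P$ be projective and $M\in\mathcal{C}^\heartsuit$. Since $M$ is discrete,
$$\mapp_{\mathcal{C}^\heartsuit}(\pi_0P,M)\simeq\mapp_{\mathcal{C}}(P,M)\simeq\pi_0\underline{\mapp}(P,M).$$
By $t$-exactness of $\underline{\mapp}(P,-)$, a short exact sequence $0\to M'\to M\to M''\to0$ in the heart, which is a cofiber sequence in $\op{Sp}(\mathcal{C})$, is sent to a cofiber sequence of spectra whose homotopy is concentrated in degree $0$. Hence $\Hom(\pi_0P,-)$ is exact, so $\pi_0P$ is 1-projective.

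For compactness, take $P$ compact and a filtered diagram $M_\alpha$ in the heart. Because $\mathcal{C}$ is Grothendieck, filtered colimits in $\mathcal{C}^\heartsuit$ are computed in $\mathcal{C}$. Then
$$\Hom(\pi_0P,\colim M_\alpha)=\pi_0\mapp(P,\colim M_\alpha)=\colim\pi_0\mapp(P,M_\alpha),$$
so $\pi_0P$ is compact.

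\textbf{Step (2).} Generation follows from the fact that $\pi_0\colon\mathcal{C}\to\mathcal{C}^\heartsuit$ is a colimit-preserving left adjoint which is essentially surjective, and $\mathcal{C}$ is generated under colimits by $\mathcal{C}^{\op{cproj}}$. Concretely, for $M$ in the heart there is an epimorphism $\bigoplus P_i\to M$ in $\mathcal{C}$ with $P_i$ compact projective. Applying $\pi_0$ gives an epimorphism $\bigoplus\pi_0P_i\to M$, since $\pi_0$ preserves cofibers and $\pi_0$ of the cofiber is zero.

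\textbf{Step (3).} First, full faithfulness. For projective $P,Q$ the map
$$\pi_0\mapp_{\mathcal{C}}(P,Q)\to\Hom(\pi_0P,\pi_0Q)=\pi_0\mapp(P,\pi_0Q)$$
is induced by $Q\to\pi_0Q$. Its fiber is $\tau_{\geq1}Q=\Sigma Q'$ with $Q'\in\mathcal{C}$. Since $\op{Ext}^i(P,-)$ vanishes on connective objects for $i\geq1$, we get $\pi_0\mapp(P,\Sigma Q')=\op{Ext}^1(P,Q')=0$ and $\pi_{-1}\mapp(P,\Sigma Q')=\op{Ext}^2(P,Q')=0$. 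The long exact sequence then gives a bijection.

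Next, essential surjectivity, which I expect to be the main obstacle. Given a (compact) 1-projective $X$ in the heart, I choose an epimorphism $\pi_0(F)\to X$ with $F=\bigoplus P_i$ a sum of compact projectives; if $X$ is compact, a finite sum suffices by the usual compactness argument. Since $X$ is 1-projective this epimorphism splits, giving an idempotent $e$ on $\pi_0F$. By full faithfulness, $e$ lifts to an idempotent up to homotopy $\tilde e$ on $F$. I then take $P=\colim(F\xrightarrow{\tilde e}F\xrightarrow{\tilde e}\cdots)$. This sequential colimit splits the homotopy idempotent in the idempotent-complete presentable category $\mathcal{C}$, by \cite[Lemma 4.4.5.14]{htt}-style arguments, so $P$ is a retract of $F$. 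Retracts of projective (compact) objects are projective (compact), and $\pi_0P\cong X$ because $\pi_0$ commutes with colimits and retracts. The compact and non-compact cases are handled uniformly in this way.
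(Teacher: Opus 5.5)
Your proposal is correct and follows essentially the paper's proof: (1) via \cref{proj1}, (2) by pushing projective generation through $\pi_0$, full faithfulness from the vanishing of maps out of a projective into the fiber $\tau_{\geq1}Q$ of $Q\to\pi_0Q$, and essential surjectivity by lifting the splitting idempotent of $\pi_0F$ to $F$ and taking its image. The one justification to fix is the splitting of $\tilde e$: HTT's idempotent completeness of presentable $\infty$-categories only splits coherent idempotents, and a homotopy idempotent need not lift to a coherent one in a general $\infty$-category, so you should use additivity instead. For example, split $\tilde e$ in the stable $\op{Sp}(\mathcal{C})$ (as the paper does via the argument of \cite[Lem.~1.2.4.6]{ha}) and note that $\mathcal{C}\simeq\op{Sp}(\mathcal{C})_{\geq0}$ is closed under retracts there; alternatively, check directly that the telescope splits $\tilde e$, since the relevant Milnor $\lim^1$ vanishes for a tower with idempotent transition maps.
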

	\begin{proof}
		We first prove (1). The fact that $\pi_0$ sends compact objects to compact objects follows directly from the fact that the inclusion $\mathcal{C}^{\heartsuit} \rightarrow \mathcal{C}$ preserves filtered colimits. The fact that $\pi_0$ sends projective objects to 1-projective objects follows from \cref{proj1}.
		
		Item (2) follows directly from (1) together with the fact that $\pi_0$ is a localization. It remains to establish (3). We first prove fully faithfulness. Let $X, Y$ be a pair of projective objects of $\mathcal{C}$. Then the map $\operatorname{Map}_{\mathcal{C}}(X, Y) \rightarrow \operatorname{Map}_{\mathcal{C}^{\heartsuit}}\left(\pi_0(X), \pi_0(Y)\right)$ induced by $\pi_0$ is equivalent to the map $\eta_*: \operatorname{Map}_{\mathcal{C}}(X, Y) \rightarrow \operatorname{Map}_{\mathcal{C}}\left(X, \pi_0(Y)\right)$ of composition with the unit $\eta: Y \rightarrow \pi_0(Y)$. The fact that $X$ is projective and $\eta$ induces an equivalence on $\pi_0$ implies that $\eta_*$ is an effective epimorphism. Its fiber is given by $\operatorname{Map}_{\mathcal{C}}\left(X, \tau_{\geq 1}(Y)\right)$ which is connected since $X$ is projective. We conclude that $\eta_*$ induces an equivalence on $\pi_0$, and therefore $\operatorname{h}(\pi_0)$ is fully faithful when restricts on the full subcategory of projective objects.
		
		It remains to prove the essential surjectivity. In other words, we have to show that every (compact) 1-projective object of $\mathcal{C}^{\heartsuit}$ is the image under $\pi_0$ of a (compact) projective object of $\mathcal{C}$. We will establish the case of compact projective objects, and the proof in the projective case being similar. Let $Y$ be a compact 1-projective object of $\mathcal{C}^{\heartsuit}$. Applying (2) we may find a compact projective object $X$ in $\mathcal{C}$ such that $Y$ is a retract of $\pi_0(X)$. Let $r: \pi_0(X) \rightarrow \pi_0(X)$ be the induced retraction. The fully faithfulness part of (3) allows us to lift $r$ to an idempotent endomorphism $\rho$ of $X$ inside $\operatorname{h}(\mathcal{C})$. Let $X^{\prime}$ be a representative in $\mathcal{C}$ of the image of $\rho$. Then $X^{\prime}$ is a direct summand of $X$ (see similar argument in \cite[Lem. 1.2.4.6]{ha}) and therefore it is compact projective. The proof finishes by observing that $\pi_0(X^{\prime})=\operatorname{Im}(r)=Y$.
	\end{proof}

	\begin{prop}\label{l5.8}
		Suppose that \mca satisfies the condition (*).	Let $R$ be in $\alg(\mc{A}_{\geq 0})$ and $P\in \lmodu_R(\mc{A})_{\geq0}$. Then:
		\enu{ \item  $P$ is a projective $R$-module if and only if every map $X \to P$ in $\lmodu_R(\mc{A})_{\geq0}$ which induces an epimorphism on $\pi_0$ admits a section.
			\item If $P$ is a projective $R$-module, then $\pi_0 P\in \lmodu_{\pi_0R}(\mc{A}^{\heartsuit})$ is a 1-projective discrete $\pi_0R$-module.
			
		}
	\end{prop}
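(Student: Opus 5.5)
Write $\mathcal{C}=\lmodu_R(\mc{A})_{\geq0}$. By the remark following condition (*), $\mathcal{C}$ is projectively generated, Grothendieck prestable and left complete. The plan is to prove (1) by a lifting argument and then deduce (2) from \cref{proj1} and \cref{eqproj}.

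\textbf{(1), ``only if''.} Let $f:X\to P$ be a map in $\mathcal{C}$ with $\pi_0 f$ an epimorphism. Put $F=\op{fib}(f)$, computed in the stable category $\lmodu_R(\mc{A})$. The long exact sequence of homotopy objects, together with the surjectivity of $\pi_0 f$, gives $\pi_{-1}F=0$. Since $X$ and $P$ are connective, $F\in\mc{A}_{\geq -1}$, and therefore $F$ is connective. The obstruction to a section of $f$ is the composite $P\to \Sigma F$. This is a class in $\op{Ext}^1(P,F)$, which vanishes by \cref{proj1}(2) because $F\in\mathcal{C}$. Hence $\op{id}_P$ lifts along $f$.

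\textbf{(1), ``if''.} Since $\mathcal{C}$ is projectively generated, there is a map $\bigoplus_i Q_i\to P$ with each $Q_i$ compact projective that is surjective on $\pi_0$. One way to get it is to index over all maps $Q\to P$ from a set of compact projective generators; \cref{eqproj}(2) shows this is $\pi_0$-surjective. By hypothesis the map has a section, so $P$ is a retract of $\bigoplus_i Q_i$. Coproducts of projectives are projective, since $\mapp(\bigoplus Q_i,-)=\prod\mapp(Q_i,-)$ and products of geometric realizations agree with geometric realizations of products in $\mc{S}$ (this is a sifted colimit). Retracts of projectives are projective. Alternatively, one can check \cref{proj1}(2) directly: $\op{Ext}^1$ is additive and vanishes on each $Q_i$ by \cref{proj1}, hence vanishes on the retract.

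\textbf{(2).} This follows from \cref{eqproj}(1) applied to $\mathcal{C}$, whose heart is $\lmodu_{\pi_0R}(\mc{A}^\heartsuit)$ by \cref{l2}(3). Directly: let $M\to N$ be an epimorphism in the heart and $\pi_0P\to N$ a map. Then $P\to\pi_0P\to N$ lifts to $M$, because $\op{fib}(M\to N)$ is connective (it is discrete) and $\op{Ext}^1(P,-)$ vanishes on it. The lift factors through $\pi_0P$ since $M$ is discrete.

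\textbf{Main obstacle.} The only delicate point is the connectivity of $F$ in the ``only if'' direction. It uses exactly that $\pi_0 f$ is an epimorphism, via the long exact sequence in the $t$-structure of \cref{l2}.
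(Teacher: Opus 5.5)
Your ``only if'' direction of (1) and your treatment of (2) are essentially the paper's. Both rest on \cref{proj1}, i.e.\ on the vanishing of $\operatorname{Ext}^1(P,-)$ on connective objects, together with the fact that maps from $P$ to discrete modules factor uniquely through $\pi_0P$.

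For the ``if'' direction you take a different route. The paper reads it off directly from \cref{proj1}(1)$\Leftrightarrow$(2). Given a class $\alpha\colon P\to\Sigma Q$ with $Q$ connective, its fiber $X\to P$ has $X$ connective and is surjective on $\pi_0$, because $\pi_{-1}Q=0$. So it has a section, which forces $\alpha=0$, and hence $\operatorname{Ext}^1(P,Q)=0$. This uses only the $t$-structure, not condition (*). You instead use projective generation to get a $\pi_0$-surjection $\bigoplus_iQ_i\to P$ from compact projectives, and exhibit $P$ as a retract of it. This is essentially the argument of \cref{cocompinftycoprod}. It costs the hypothesis (*), but it also shows that every projective is a retract of a sum of compact projectives. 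One small point: when you index over all maps $Q\to P$, deducing $\pi_0$-surjectivity from \cref{eqproj}(2) also requires that every map $\pi_0Q\to\pi_0P$ lift to a map $Q\to P$. This holds since $\operatorname{Ext}^1(Q,\tau_{\geq1}P)=0$.

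One justification in that step is false. Geometric realizations in $\mathcal{S}$ commute with finite products (they are sifted colimits), but not with infinite ones. For example, let $I_n$ be the simplicial set consisting of a path of $n$ edges. Every $|I_n|$ is contractible. But $\pi_0|\prod_nI_n|$ is the quotient of $\prod_n\{0,\dots,n\}$ by the equivalence relation generated by the edges of the product. Each such edge moves every coordinate by at most one, so the points $(0,0,0,\dots)$ and $(0,1,2,\dots)$ are not identified. Closure of projectives under infinite coproducts is still true here, because the simplicial mapping spaces underlie simplicial connective spectra, but that is not the reason you gave. Your alternative argument is correct: $\operatorname{Ext}^1(\bigoplus_iQ_i,Q)\simeq\prod_i\operatorname{Ext}^1(Q_i,Q)=0$, then pass to the retract and apply \cref{proj1}. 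Keep that one and drop the first.
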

	\begin{proof}
		(1) This follows from the equivalent characterization between \cref{proj1} (1) and (2).\\
		(2) It follows by combining (1) and the equivalence $\mapp_{\lmodu_R(\mc{A})_{\geq0}}(P,M)\simeq \mapp_{\lmodu_{\pi_0R}(\mc{A}^{\heartsuit})}(\pi_0P,M)$ for a discrete $\pi_0R$-module $M$.
	\end{proof}
	\begin{cor}\label{l5.9}
		Suppose that \mca satisfies the condition (*).	Let $R\in \alg(\mc{A}_{\geq 0})$. Then the heart $\lmodu_{\pi_0R}(\mc{A}^{\heartsuit})$ has enough (1-)projectives.
	\end{cor}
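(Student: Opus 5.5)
Under condition (*), \cref{l2} identifies the heart $\lmodu_R(\mc{A})^{\heartsuit}$ with $\lmodu_{\pi_0R}(\mc{A}^{\heartsuit})$ via $\pi_0$. It therefore suffices to show that every discrete left $\pi_0R$-module $M$ receives an epimorphism in the heart from a 1-projective object. We produce this epimorphism by applying $\pi_0$ to a suitable map of projective $R$-modules, and then use \cref{l5.8}(2) to see that the source is 1-projective.

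\textbf{Step 1 (projective generators).} By the remark following condition (*), $\lmodu_R(\mc{A})_{\geq0}$ again satisfies (*). Alternatively, \cref{l2}(6) says directly that $\lmodu_R(\mc{A})_{\geq0}$ is projectively generated, with compact projective generators of the form $R\otimes P$ for $P\in\mc{A}_{\geq0}^{\op{cproj}}$, by \cref{l1}(2). Regard $M$ as a discrete object of $\lmodu_R(\mc{A})_{\geq0}$. Since $\lmodu_R(\mc{A})_{\geq0}\simeq\mcp_\Sigma(\text{compact projectives})$, every object is a geometric realization of coproducts of compact projectives. In particular there is a map $Q=\bigoplus_i Q_i\to M$, with each $Q_i$ compact projective, such that the induced map $\pi_0Q\to\pi_0M=M$ is an epimorphism in the heart. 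Concretely, take the $0$-th term of a simplicial resolution. The map $\pi_0(Q_0)\to\pi_0|Q_\bullet|$ is an epimorphism because $\pi_0$ preserves colimits and the geometric realization in a $1$-category is the coequalizer of the first two terms.

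\textbf{Step 2 (projectivity of $Q$).} Projective objects in the sense of corepresenting a functor preserving geometric realizations are closed under small coproducts. Indeed, $\mapp(\bigoplus Q_i,-)\simeq\prod_i\mapp(Q_i,-)$, and products of spaces commute with geometric realizations by \cite[Lemma 5.5.8.14-type]{htt}. Alternatively, use \cref{proj1}(2): $\op{Ext}^1(\bigoplus Q_i,N)=\prod\op{Ext}^1(Q_i,N)=0$. So $Q$ is a projective $R$-module, and \cref{l5.8}(2) shows that $\pi_0Q$ is a 1-projective $\pi_0R$-module.

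\textbf{Step 3 (conclusion).} The epimorphism $\pi_0Q\twoheadrightarrow M$ from Step 1 exhibits enough 1-projectives in $\lmodu_{\pi_0R}(\mc{A}^{\heartsuit})$. The only mildly delicate point is the claim in Step 1 that $\pi_0$ of the augmentation of a resolution is epimorphic. This follows because $\pi_0$ is a left adjoint, and epimorphicity can be checked after restricting to the coequalizer of $Q_1\rightrightarrows Q_0$. Alternatively, and more simply, one can avoid resolutions altogether using \cref{eqproj}(2) applied to the Grothendieck prestable category $\lmodu_R(\mc{A})_{\geq0}$. This says that the $\pi_0$ of its compact projectives are 1-projective generators of the heart, so $M$ is a quotient of a coproduct of them.
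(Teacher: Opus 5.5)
Your argument is correct and is essentially the derivation the paper intends (it states the corollary without separate proof): it follows from \cref{l5.8}(2) together with projective generation of $\lmodu_R(\mc{A})_{\geq0}$ (\cref{l2}(6)), and the $\pi_0$-epimorphism from a coproduct of compact projectives is obtained exactly as in the proof of \cref{cocompinftycoprod}. One correction to Step 2: your first justification is false, because infinite products of spaces do not commute with geometric realizations in general (already for countably many copies of the simplicial circle $\Delta^1/\partial\Delta^1$ the comparison map is not an isomorphism on $\pi_1$), so rely instead on your alternative via the $\op{Ext}^1$ criterion of \cref{proj1}, or simply note that $\pi_0$ preserves coproducts and coproducts of $1$-projectives in the abelian heart are $1$-projective.
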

	\begin{prop}\label{cocompinftycoprod}
		Suppose that \mca satisfies the condition (*).	Let $R$ be in $\alg(\mc{A}_{\geq 0})$ and $P\in \lmodu_R(\mc{A})_{\geq0}$. Then $P$ is projective if and only if there exists  a small collection of compact projective modules $\{P_\alpha\}$ in $\lmodu_R(\mc{A})_{\geq0}$ such that $P$ is a retraction of $\oplus_\alpha P_\alpha$.
	\end{prop}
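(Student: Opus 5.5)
The "if" direction is formal. A coproduct of projective objects in $\lmodu_R(\mc{A})_{\geq0}$ is projective: by \cref{proj1}, projectivity of $P$ is equivalent to $\op{Ext}^1(P,Q)=0$ for all connective $Q$, and
\[
\op{Ext}^1\bigl(\oplus_\alpha P_\alpha,Q\bigr)\simeq\prod_\alpha\op{Ext}^1(P_\alpha,Q)=0 .
\]
Retracts of projectives are projective by the same criterion, since $\op{Ext}^1(-,Q)$ of a retract is a retract of an abelian group that vanishes. So any retract of $\oplus_\alpha P_\alpha$ with each $P_\alpha$ compact projective is projective.

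For the "only if" direction I would use the criterion of \cref{l5.8}(1): it suffices to produce a map $\oplus_\alpha P_\alpha\to P$ from a coproduct of compact projective $R$-modules that is an epimorphism on $\pi_0$. The section it then admits exhibits $P$ as a retract.

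To build this map, I would use \cref{l2}(6) (or \cref{l1}(2) applied to $\mc{A}_{\geq0}\rightleftarrows\lmodu_R(\mc{A})_{\geq0}$). This shows that $\lmodu_R(\mc{A})_{\geq0}$ is projectively generated, with compact projectives the retracts of $R\otimes Q$ for $Q\in\mc{A}_{\geq0}^{\op{cproj}}$. By \cref{eqproj}(2), the objects $\pi_0$ of the compact projectives form a family of compact 1-projective generators of the heart $\lmodu_{\pi_0R}(\mc{A}^\heartsuit)$. Hence there is an epimorphism $\bigoplus_\alpha\pi_0P_\alpha\twoheadrightarrow\pi_0P$ in the heart, indexed by a small set of maps $\pi_0P_\alpha\to\pi_0P$, for instance all such maps out of a small set of representatives. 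Each component lifts to a map $P_\alpha\to P$, because
\[
\mapp(P_\alpha,P)\to\mapp(P_\alpha,\pi_0P)\simeq\op{Hom}(\pi_0P_\alpha,\pi_0P)
\]
is surjective on $\pi_0$ by projectivity of $P_\alpha$: its fiber is $\mapp(P_\alpha,\tau_{\geq1}P)$, which is connected, as in the proof of \cref{eqproj}(3). Since $\pi_0$ commutes with coproducts, the resulting map $\oplus_\alpha P_\alpha\to P$ is surjective on $\pi_0$, and \cref{l5.8}(1) supplies the section.

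The only point needing care is that generation under colimits in the heart yields an actual epimorphism from a coproduct of generators. This is standard for a set of generators of a Grothendieck abelian category: take the sum over all maps from generators, and note that the image is all of $\pi_0P$ because otherwise the nonzero cokernel would receive a nonzero map from some generator. I expect no serious obstacle here.
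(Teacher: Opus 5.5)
Your proof is correct and follows the same strategy as the paper. The ``if'' direction is closure of projectives under coproducts and retracts via \cref{proj1}, and the ``only if'' direction splits a $\pi_0$-surjection $\oplus_\alpha P_\alpha\to P$ from compact projectives using projectivity of $P$. The paper obtains that surjection more directly, as the canonical map $\oplus_\alpha M_\alpha\to\op{colim}_\alpha M_\alpha\simeq P$ for a presentation of $P$ by compact projectives, so it needs neither generators of the heart nor lifting. One correction to your lifting step: surjectivity on $\pi_0$ comes from $\op{Ext}^1(P_\alpha,\tau_{\geq1}P)=0$, not from connectedness of the fiber over $0$, which only gives injectivity.
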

	\begin{proof}
		Suppose first that $P$ is projective. By the projective generation there exists an equivalence of left $R$-modules $$ \op{colim}_\alpha M_\alpha \xrightarrow{\sim} P$$ where each $M_\alpha$ is compact projective. Then the induced map $\oplus_\alpha\pi_0 M_\alpha \rightarrow \pi_0 P$ is epimorphic. Invoking  \cref{proj1}, we deduce that $p$ admits a section (up to homotopy), so that $P$ is a retract of $M$. To prove the converse, we observe that the collection of projective left $R$-modules is stable under small coproducts and retracts by  \cref{proj1}. 
	\end{proof}

	\subsection{Projective rigidity and Lazard's theorem}\label{4.2}
	
	We now introduce the notion of \emph{projective rigidity}, a structural condition requiring that the dualizable objects of $\mathcal{A}_{\geq 0}^\otimes$ coincide exactly with its compact projectives. 
	Under such condition, we prove the Lazard's Theorem, which  classifies flat modules as precisely the filtered colimits of compact projective modules as desired.
	\begin{de}\label{projrigid}
		We say that a presentably symmetric monoidal additive \infcat $\mc{C}^\otimes\in \calg(\prlad)$ is  \textbf{projectively rigid} if it satisfies the following:
		\enu{ 
			\item   $\mc{C}$ is projectively generated (this implies that \mcc is prestable).
			\item  $\mc{C}^d=\mc{C}^{\op{cproj}}$, i.e. the dualizable objects coincide with compact projective objects.}
	\end{de}
	\begin{de}
		We say that the $ttt$-\infcat $(\mc{A}^\otimes, \mc{A}_{\geq0}) $ is projectively rigid if  $\mc{A}$ is right complete and $\mc{A}^\otimes_{\geq 0}$ is projectively rigid. 
	\end{de}
	\begin{rem}\label{projcocomplete} \,
		\enu{

			\item \textbf{Warning:} In general,  $(\mc{A}_{\geq 0})^d\subsetneq \mc{A}^d\cap \mc{A}_{\geq 0}$ because the dual of an object $X\in\mc{A}^d\cap \mc{A}_{\geq 0}$ is not necessarily connective! However, that holds exactly when $X$ is flat; see \cref{l1.3}(3), which claims that $(\mc{A}_{\geq 0})^d=\mc{A}^d\cap \mc{A}^{fl}$.
			\item Suppose that $\mc{A}^\otimes_{\geq 0}$ is projectively rigid. Then 
			$\mc{A}_{\geq0}^\otimes$ can be identified with the (symmetric monoidal) sifted cocompletion  $\mc{P}_\Sigma(\mc{A}_{\geq 0}^{\op{cproj}})^\otimes$. And its heart $(\mc{A}^{\heartsuit})^\otimes$ is 1-projectively rigid.
			
		}
	\end{rem}
	\begin{ex}\label{A.6}
		We list several examples of projectively rigid $ttt$-$\infty$-categories:
		\enu{
			\item The \infcat of spectra \opsp with the standard $t$-structure.
			\item $\op{Gr(\op{Sp})}^\otimes=\op{Fun}(\mathbb{Z}^{disc},\op{Sp})^\otimes$ is the $\infty$-category of graded spectra with the pointwise $t$-structure.
			\item The \infcat of filtered spectra $\op{Fil}(\opsp)$ with the pointwise $t$-structure.
			\item The \infcat of filtered spectra $\op{Fil}(\opsp)$ with the homotopy $t$-structure. Its connective part is defined by
			$
			\op{Fil}(\opsp)_{h\geq0}
			\overset{\mathrm{def}}{=}
			\{\, X_* \mid X_n \in \opsp_{\geq n} \text{ for each } n\geq0 \,\}.
			$
			
			\item The \infcat $\op{Sp}_{G,\geq0}^\otimes=\mc{P}_{\Sigma}(\op{Span}(\op{Fin}_{G});\spgeq)$ of connective genuine $G$-spectra for a finite group $G$.
			
			\item For an $\mathbb{E}_\infty$-ring $R$, the $\infty$-category of (connective) synthetic $R$-modules $\mathrm{Syn}_{R,\geq 0} = \mathcal{P}_\Sigma(\mathrm{Mod}_R^{\mathrm{ff}}(\op{Sp}))$ \cite{hesselholt2023dirac}. Here, $\mathrm{Mod}_R^{\mathrm{ff}}(\op{Sp}) \subset \modu_R(\opsp)$ denotes the smallest full subcategory containing $R$ that is closed under finite direct sums, shifts, and retractions. This forms a projectively rigid $ttt$-$\infty$-category whose heart is $\mathrm{Syn}_R^\heartsuit \simeq \mathrm{Mod}_{R_*}(\mathrm{GrAb})$.
			
			\item The universal example: cobordism $\funct(\mb{Cob}_1^{\op{op}},\op{Sp})^\otimes$ with the pointwise $t$-structure.
			
			\item The \infcat $\op{Shv}(X,\op{Sp})^\otimes$ of sheaves on a stone space with the standard $t$-structure.
			\item $\op{QCoh}(X)$ with the standard $t$-structure when $X$ is an affine quotient stack, i.e.\ a stack of the form $\spec(R)/G$ for a linearly reductive group $G$ acting on $\spec(R)$. In this case the compact-projective objects are generated, under retracts, by pullbacks of $G$-representations, and the dual is given by the pullback of the dual representation.
			\item 
			The \infcat $\op{SH}(k)^{\op{A}}$ of Artin motivic spectra over a perfect field $k$ \cite[Definition 1.5]{burklund2020galois} with the very effective $t$-structure. It is defined as the smallest stable full subcategory of the motivic stable homotopy category $\op{SH}(k)^{\op{A}}\subset\mathrm{SH}(k)$ that is closed under small colimits and is generated by the suspension spectra $\Sigma^\infty_+ \mathrm{Spec}(L)$, where $L$ ranges over all finite \'{e}tale $k$-algebras. 
			The connective part $\op{SH}(k)^{\op{A}}_{\geq 0}$ (also referred to very effective Artin spectra) is generated by $\Sigma^\infty_+ \mathrm{Spec}(L)$ for all finite \'{e}tale $k$-algebras $L$. One can see that $\Sigma^\infty_+ \mathrm{Spec}(L)\in\op{SH}(k)^{\op{A}}_{\geq 0}$ is a self-dual compact projective object by infinite loop space recognition principle.

			\item The  \infcat $\op{DM}(k,\mathbb{Z}[1/p])$ of Voevodsky motives over a perfect field $k$ with coefficients in $\mathbb{Z}[1/p]$ \cite[see][Chapter 14]{bachmann2021norms} (where $p$ is the characteristic of $k$, or $p=1$ if $k$ is a $\mathbb{Q}$-algebra) equipped with the Chow $t$-structure  generated by smooth projective varieties and their $\mathbb{P}^1$-desuspensions \cite{bondarko2010weight}. The mapping spectra between smooth projective varieties are connective, so they form compact projective generators, and they are also dualizable within the retract-closed subcategory generated by them.
		}
	\end{ex}
	\begin{ex}
		Here are some examples whose connective parts are projectively generated but not projectively rigid:
		\enu{
			\item The \infcat of light condensed spectra $\op{Cond}^{\op{light}}(\op{Sp})^\otimes=\op{Sp}(\mc{P}_{\Sigma}(\mb{Stonean}^{\op{light}}))^\otimes$.
			\item The \infcat of light Solid spectra $\op{Solid}^{\op{light}}(\op{Sp})^\otimes$.
			\item 
			$\op{Fun}(X,\op{Sp})^\otimes$ the $\infty$-category of parametrized spectra on a small $\infty$-groupoid $X$, with pointwise tensor product.
		}

	\end{ex}
	
	\begin{prop}\label{projrigidproperties}
		Suppose that the $ttt$-\infcat $(\mc{A}^\otimes, \mc{A}_{\geq0}) $ is projectively rigid. Then:
		
		\enu{ 
			\item  $\mc{A}^\otimes$ is compactly-rigidly generated, meaning the compact objects and dualizable objects in it coincide. Particularly we have $\mc{A}^\otimes\in \calg(\pr_{\op{st},\omega}^L)$.
			\item Let $R$ be in $\calg(\mc{A}_{\geq 0})$. Then $\modu_R(\mc{A}_{\geq 0})^\otimes$ is projectively rigid too.
			\item Let $R$ be in $\calg(\mc{A}_{\geq 0})$. Then $\modu_R(\mc{A})^\otimes$ is compactly-rigidly generated.
		} 
	\end{prop}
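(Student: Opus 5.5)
For (1), the plan is to show that compact objects of $\mc{A}$ coincide with dualizable objects of $\mc{A}$, and then check that the unit is compact. By \cref{compmod} applied to $R=\mathbf{1}$, condition (*) holds: right completeness is part of projective rigidity, and $\mc{A}_{\geq0}$ is projectively generated. Hence $\mc{A}^\omega$ is the smallest idempotent complete stable subcategory containing $\mc{A}_{\geq0}^{\op{cproj}}$.

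Each $P\in\mc{A}_{\geq0}^{\op{cproj}}$ is dualizable in $\mc{A}_{\geq0}$, hence in $\mc{A}$, because the inclusion $\mc{A}_{\geq0}\hookrightarrow\mc{A}$ is symmetric monoidal. Dualizable objects in a stable presentably symmetric monoidal category form a thick subcategory: they are closed under shifts, cofibers (the dual of a cofiber is the fiber of the duals) and retracts. Therefore $\mc{A}^\omega\subset\mc{A}^d$.

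The unit $\mathbf{1}$ is dualizable in $\mc{A}_{\geq0}$, so by projective rigidity it is compact projective in $\mc{A}_{\geq0}$. The step where I expect the only real care to be needed is showing that it is compact in $\mc{A}$. I would argue that $\mc{A}_{\geq0}$ is compactly generated by $\mc{A}_{\geq0}^{\op{cproj}}$ and that $\mc{A}$ is its stabilization. Since the $t$-structure is compatible with filtered colimits (the Grothendieck property), $\tau_{\geq0}$ commutes with filtered colimits. Then
\[
\mapp_{\mc{A}}(\Sigma^{n}P,-)\simeq\mapp_{\mc{A}_{\geq0}}(P,\tau_{\geq0}\Sigma^{-n}-)
\]
for $n\geq0$ commutes with filtered colimits, so $\mathbf{1}$ and all the generators are compact in $\mc{A}$. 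Given this, for any dualizable $X$ we have $\mapp(X,-)\simeq\mapp(\mathbf{1},X^\vee\otimes-)$, which preserves filtered colimits, so $\mc{A}^d\subset\mc{A}^\omega$. Since $\mc{A}$ is compactly generated by \cref{compmod}, we get $\mc{A}^\otimes\in\calg(\pr^L_{\op{st},\omega})$.

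For (2), I follow the pattern of the 1-categorical proposition above. Set $\mc{C}=\modu_R(\mc{A}_{\geq0})\simeq\modu_R(\mc{A})_{\geq0}$; it is projectively generated by \cref{l2}(6). By \cref{l1}(2), its compact projectives are exactly the retracts of $R\otimes P$ with $P\in\mc{A}_{\geq0}^{\op{cproj}}$. The free functor $R\otimes-$ is symmetric monoidal, so $R\otimes P$ is dualizable with dual $R\otimes P^\vee$; dualizables are closed under retracts, so $\mc{C}^{\op{cproj}}\subset\mc{C}^d$. Conversely, if $M\in\mc{C}^d$, then $\mapp_{\mc{C}}(M,-)\simeq\mapp_{\mc{C}}(R,M^\vee\otimes_R-)$. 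Here $R\simeq R\otimes\mathbf{1}$ is compact projective, and $M^\vee\otimes_R-$ preserves all colimits. Hence $M$ is compact projective.

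Item (3) then follows by applying (1) to the projectively rigid $ttt$-$\infty$-category $\modu_R(\mc{A})$. Its right completeness comes from \cref{l2}(4), and (2) supplies the projective rigidity of its connective part.
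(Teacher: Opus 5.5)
Your proposal is correct and follows essentially the same route as the paper. In (1) you use generation of $\mathcal{A}$ by desuspensions of compact projectives, which is exactly what \cref{compmod} packages, together with closure of dualizables under shifts, cofibers and retracts. In (2) you use the free modules $R\otimes P$, retracts via \cref{l1}(2), and compact projectivity of the unit $R$. In (3) you apply (1) and (2) to $\operatorname{Mod}_R(\mathcal{A})$.

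The one difference is that you prove explicitly that $\mathbf{1}$ is compact in $\mathcal{A}$, using that the $t$-structure is compatible with filtered colimits. The paper's proof of (1) uses this fact tacitly to obtain $\mathcal{A}^d\subset\mathcal{A}^\omega$.
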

	\begin{proof}
		(1) Since $\mc{A}$ is right complete, the collection of connective cover functors $\{\tau_{\geq -n}|n\geq 0\}$ is jointly conservative. Therefore by \cref{l1}, $\mc{A}$ is generated by $\{\Sigma^{-n}P|n\geq0, P\in \mc{A}_{\geq 0}^{\op{cproj}} \}$ under small colimits. By \cref{du} (4), we have $\{\Sigma^{-n}P|n\geq0, P\in \mc{A}_{\geq 0}^{\op{cproj}} \}\subset \mc{A}^d$ and hence $\mc{A}^c=\mc{A}^d$. 
		\\(2) Since the symmetric monoidal functor $$\mc{A}_{\geq 0}^\otimes\xrightarrow{R\otimes(-)} \modu_R(\mc{A}_{\geq 0})^\otimes$$
		preserves compact projective objects and dualizable objects, we conclude that 
		
		\begin{enumerate}
			\item 	The unit $R$ is dualizable in $\modu_R(\mc{A}_{\geq 0})$.
			\item 		  $R\otimes P$ is dualizable in $\modu_R(\mc{A}_{\geq 0})$ if $P\in\mc{A}_{\geq 0}$ is compact projective.
			
		\end{enumerate}
		So the full subcategory of dualizable objects $\modu_R(\mc{A}_{\geq 0})^d$ contains $\{R\otimes X|X\in \mc{A}_{\geq 0}^{\op{cproj}}\}$. Then combining \cref{l1} (2) and \cref{du} (2)(3), we get $\modu_R(\mc{A}_{\geq 0})^{\op{cproj}}\subset \modu_R(\mc{A}_{\geq 0})^d$. And that the unit $R$ is compact projective implies the equality $\modu_R(\mc{A}_{\geq 0})^{\op{cproj}}= \modu_R(\mc{A}_{\geq 0})^d$.\\
		(3) It follows by applying  (1) and (2) to $\modu_R(\mc{A})^\otimes$.
	\end{proof}

	\begin{prop}\label{l4.2}
		Suppose that the $ttt$-\infcat $(\mc{A}^\otimes, \mc{A}_{\geq0}) $ is projectively rigid. Let $R\in \calg(\mc{A}_{\geq 0})$ and $M\in \modu_R(\mc{A})_{\geq 0}$. Then $M$ is compact projective if and only if $M$ is dualizable in the whole stable \infcat $\modu_R(\mc{A})^\otimes$ and $M$ is a flat $R$-module.
	\end{prop}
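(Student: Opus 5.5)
The argument reduces everything to the connective part, using \cref{projrigidproperties}(2) together with the identification $(\mc{A}_{\geq 0})^d=\mc{A}^d\cap \mc{A}^{fl}$ from \cref{projcocomplete}(1), applied over $R$ instead of $\mathbf{1}$. By \cref{projrigidproperties}(2), $\modu_R(\mc{A}_{\geq 0})^\otimes$ is projectively rigid. So $M\in\modu_R(\mc{A})_{\geq0}$ is compact projective if and only if $M$ is dualizable in the connective symmetric monoidal \infcat $\modu_R(\mc{A}_{\geq 0})^\otimes\simeq \modu_R(\mc{A})_{\geq0}^\otimes$; here we use \cref{l3}(2) to identify the two. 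It therefore suffices to prove
\[
\modu_R(\mc{A})_{\geq0}^d=\modu_R(\mc{A})^d\cap \modu_R(\mc{A})^{fl}.
\]

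\emph{Forward inclusion.} Let $M$ be dualizable in $\modu_R(\mc{A})_{\geq0}$ with dual $M^\vee$. The inclusion $\modu_R(\mc{A})_{\geq 0}\hookrightarrow\modu_R(\mc{A})$ is symmetric monoidal by \cref{l3}(2), so it preserves duality data. Hence $M$ is dualizable in $\modu_R(\mc{A})$, and its dual there is still $M^\vee$, which is connective. Both $M$ and $M^\vee$ are then connective, so \cref{l1.3}(3) shows that $M$ is flat.

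\emph{Reverse inclusion.} Let $M$ be dualizable in $\modu_R(\mc{A})$ and flat over $R$. Flatness forces $M$ to be connective. By \cref{l1.3}(3), the dual $M^\vee$ computed in $\modu_R(\mc{A})$ is also connective. The evaluation and coevaluation maps $R\to M\otimes_R M^\vee$ and $M^\vee\otimes_R M\to R$ are maps between connective objects. Since $\modu_R(\mc{A})_{\geq0}\subset\modu_R(\mc{A})$ is a full symmetric monoidal subcategory, these maps, and the homotopies witnessing the triangle identities, all lie in it. Thus $M$ is dualizable in $\modu_R(\mc{A})_{\geq0}$.

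Combining the two inclusions with projective rigidity of $\modu_R(\mc{A}_{\geq0})$ gives the claim. The only point needing care is the use of \cref{l1.3}(3), which in the reverse direction supplies connectivity of the dual. Everything else is formal, because the duality data transfer along a fully faithful symmetric monoidal inclusion. The main check is therefore that \cref{projrigidproperties}(2) indeed applies to $R$. It does, since $R$ is assumed connective and $\mathbb{E}_\infty$.
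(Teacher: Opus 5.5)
Your proof is correct and follows the paper's route. The paper's proof is the one-line combination of projective rigidity of $\modu_R(\mc{A}_{\geq 0})^\otimes$ (\cref{projrigidproperties}(2)) with \cref{l1.3}(3). Your two inclusions spell out exactly the identification $\modu_R(\mc{A})_{\geq0}^d=\modu_R(\mc{A})^d\cap\modu_R(\mc{A})^{fl}$ that this combination implicitly relies on.
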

	\begin{proof}
		It follows by combining the projective rigidity and \cref{l1.3}(3).
	\end{proof}
	
	\begin{prop}\label{pi0proj}
		Suppose that the $ttt$-\infcat $(\mc{A}^\otimes, \mc{A}_{\geq0}) $ is projectively rigid. Let $R\in \alg(\mc{A}_{\geq0})$ and $M$ be a connective left $R$-module. Then:
		\enu{\item If $M$ is projective, then $M$ is flat.
			\item  $M$ is compact projective if and only if it is left dualizable in $\lmodu_{R}(\ageq)$.
			\item  $M$ is (compact) projective if and only if it is flat and $\pi_0M$ is (compact) 1-projective in $\lmodu_{\pi_0R}(\mc{A}^{\heartsuit})$.
			\item Suppose that $R\in \alg(\mc{A}^\heartsuit)$ is discrete. Then $M$ is flat if and only if $M$ is discrete and 1-flat over $\pi_0R$ in the sense of \cref{f.p.}.
			
		}
	\end{prop}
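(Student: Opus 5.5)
For \emph{(1)}, I would reduce to generators. By \cref{cocompinftycoprod} every projective $M$ is a retract of a direct sum $\bigoplus_\alpha P_\alpha$ of compact projective left $R$-modules. A direct sum is a filtered colimit of finite sums, and $\mc{A}$ is Grothendieck under (*). So by \cref{l1.3}(1) it suffices to treat a single compact projective $P_\alpha$. By \cref{l1}(2) applied to the free--forgetful adjunction $\ageq\rightleftarrows\lmodu_R(\ageq)$, each such $P_\alpha$ is a retract of $R\otimes Q$ with $Q\in\ageq^{\op{cproj}}$, so it suffices to show $R\otimes Q$ is flat. Here $(-)\otimes_R(R\otimes Q)\simeq(-)\otimes Q$. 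By projective rigidity $Q$ is dualizable in $\ageq$, so both $Q$ and $Q^\vee$ are connective. The argument of \cref{l1.3}(3) (adjunction with $Q^\vee$ against $(-1)$-truncated objects) then shows $(-)\otimes Q$ is $t$-exact.

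For \emph{(2)}, the ``only if'' direction is that $R\otimes Q$ is left dualizable in $\lmodu_R(\ageq)$ with dual $Q^\vee\otimes R$, a connective right module, and that left dualizable objects are closed under finite sums and retracts by \cref{rigidlygene}. For the converse, suppose $M$ is left dualizable with connective dual ${}^\vee M$. Then
\[
\mapp_{\lmodu_R(\ageq)}(M,-)\simeq \mapp_{\ageq}(\mathbf{1},{}^\vee M\otimes_R-).
\]
The functor ${}^\vee M\otimes_R-$ preserves small colimits, and $\mathbf{1}$ is compact projective in $\ageq$ (it is dualizable, hence compact projective by rigidity). So the composite preserves filtered colimits and geometric realizations, which is what we need. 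The step I expect to need care here is checking that left duality inside the \emph{connective} part really gives a connective ${}^\vee M$ and the displayed adjunction.

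For \emph{(3)}, the forward direction is (1) together with \cref{l5.8}(2) and \cref{eqproj}(1) for compactness. For the converse, assume $M$ is flat and $\pi_0M$ is (compact) 1-projective. The category $\lmodu_R(\ageq)$ is projectively generated and Grothendieck by \cref{l2}(6), so \cref{eqproj}(3) gives a (compact) projective $P$ with $\pi_0P\simeq\pi_0M$. Projectivity of $P$ makes $\mapp(P,M)\to\mapp(P,\pi_0M)$ surjective on $\pi_0$, so the isomorphism lifts to a map $f\colon P\to M$ with $\pi_0f$ an equivalence. Both $P$ (by (1)) and $M$ are flat, and $\mc{A}$ is left complete, hence hypercomplete. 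Therefore \cref{pi0flat}(2) shows $f$ is an equivalence.

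For \emph{(4)}, suppose first that $M$ is flat. By \cref{pi0flat}(3), $\pi_nM\simeq\pi_nR\overline{\otimes}_{\pi_0R}\pi_0M=0$ for $n\neq0$, so $M$ is discrete, and it is 1-flat by \cref{pi0flat}(1). Conversely, the heart is 1-projectively rigid by \cref{projcocomplete}. Hence by \cref{lazard1} $M\simeq\colim_i Q_i$ is a filtered colimit in the heart of compact 1-projective $\pi_0R$-modules. By (3) and \cref{eqproj}, each $Q_i$ equals $\pi_0P_i$ for a compact projective $P_i$. That $P_i$ is flat, so by the previous argument it is discrete, whence $P_i\simeq Q_i$ and each $Q_i$ is flat over $R$. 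Since the $t$-structure is compatible with filtered colimits, the heart inclusion preserves filtered colimits, so $M$ is a filtered colimit of flat modules in $\lmodu_R(\mc{A})$ and is flat by \cref{l1.3}(1). This avoids lifting the whole diagram, which would otherwise be the main obstacle.
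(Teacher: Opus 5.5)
Your proofs of (1)--(3) follow the paper's essentially verbatim. For (1) you reduce flatness of projectives to $R\otimes Q$ with $Q$ dualizable and having connective dual; the paper packages this step as \cref{l4.2}. For (2) you use the duality adjunction together with compact projectivity of the unit. For (3) you lift $\pi_0M$ to a (compact) projective via \cref{eqproj} and conclude with \cref{pi0flat}(2). Throughout you are simply more explicit about the Grothendieck and hypercompleteness hypotheses.

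The genuine difference is the converse in (4). The paper proves it by dimension shifting. For a discrete right $R$-module $N$ it chooses a projective $P\to N$ that is surjective on $\pi_0$. Since $P$ is flat by (1) and $R$ is discrete, $P$ is discrete, and hence so is the fiber $K$. Tensoring with $M$ and using $1$-flatness of $\pi_0M$ gives $\pi_1(N\otimes_RM)=0$ for every discrete $N$. The isomorphisms $\pi_{n+1}(N\otimes_RM)\simeq\pi_n(K\otimes_RM)$ then kill all higher homotopy by induction, and \cref{l7} finishes.

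You instead do the following:
\begin{itemize}
\item invoke the $1$-categorical Lazard theorem \cref{lazard1}(3) in the $1$-projectively rigid heart;
\item lift each compact $1$-projective $Q_i$ to a compact projective $P_i$;
\item note that $P_i$ is flat and therefore discrete, so $Q_i\simeq P_i$ is flat;
\item pass to the filtered colimit inside $\lmodu_R(\mc{A})$, using that the heart is closed under filtered colimits there.
\end{itemize}

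Both arguments are valid. Yours is shorter and proves more: a discrete $1$-flat module over a discrete $R$ is a filtered colimit of discrete compact projective $R$-modules. The price is that it depends on \cref{lazard1}(3), whose proof the paper only indicates by analogy with \cref{cocompfil}, and on the $1$-projective rigidity of the heart. The paper's argument is self-contained. It uses only (1) and the compatibility of $\pi_0$ with relative tensor products, and it needs no compactness.
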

	\begin{proof}
		(1) Since flat modules are closed under small coproducts and retractions, we reduce to the case $M=R\otimes P$ where $P\in\mc{A}_{\geq0}^{\op{cproj}}$. That is easy because $(-)\otimes_R(R\otimes P)\simeq (-)\otimes P$ reduces to the case $R=\mb{1}$, which is deduced by \cref{l4.2}.\\
		(2) By \cref{rigidlygene}, we see that left dualizable objects are closed under finite coproducts and retracts. We observe that every $R\otimes P$ is left dualizable (given by $P^{\vee}\otimes R$), which proves ``only if'' direction. For the ``if'' direction, if $M$ is left dualizable, then it follows from $$\mapp_{\lmodu_R(\ageq)}(M,-)\simeq \mapp_{\ageq}(\mb{1},\,  ^{\vee}M\otimes_R -)$$ and compact projectivity of the unit.
		\\(3) By the (1) we have that every projective left $R$-module is flat. Secondly, the fact that $\pi_0$ sends projective objects to 1-projective objects was already observed in  \cref{l5.8}. This finishes the proof of the ``only if'' direction.
		
		Assume now that $M$ is flat and $\pi_0M$ is (compact) 1-projective. Applying  \cref{eqproj} we may find a (compact) projective $R$-module $M^{\prime}$ and an isomorphism $\pi_0M^{\prime}=\pi_0M$. The fact that $M^{\prime}$ is projective allows us to lift this isomorphism to a map $f: M^{\prime} \rightarrow M$. We observe that $f$ is an equivalence by \cref{pi0flat}(2).\\
		(4) The ``only if'' direction follows from (1) and $M\simeq R\otimes_RM$. For the ``if'' direction, given a discrete right $R$-module $N$, we wish to show that $N\otimes_RM$ is discrete too. Take a map $f:P\to N$ of right $R$-module such that $P$ is projective and $f$ induces an epimorphism on $\pi_0$. Then we have exact sequence $$0\to\pi_0\op{fib}(f)\to \pi_0P\to\pi_0N\to0$$ and hence $\op{fib}(f)$ is discrete.
		Now tensoring with $M$, we get an exact sequence
		$$0\to\pi_0(\op{fib}(f)\otimes_RM)\to \pi_0(P\otimes_RM)\to\pi_0(N\otimes_RM)\to0 $$ by 1-flatness and the commutative diagram of relative tensor product functors.
		$$\begin{tikzcd}[row sep=large, column sep=7em]
			\rmodu_{R}(\mc{A}_{\geq0}) \arrow[d, "\pi_0"] \arrow[r, "(-)\otimes_{R }M"] & \mc{A}_{\geq0} \arrow[d, "\pi_0"] \\
			\rmodu_{\pi_0R}(\mc{A}^{\heartsuit}) \arrow[r, "(-)\otimes_{\pi_0R }\pi_0M"]                                   & \mc{A}^{\heartsuit}             
		\end{tikzcd}$$
		Because $P$ is also flat by (1), we see that $\pi_1(N\otimes_RM)=0$. We actually have proved for any discrete right $R$-module $N$ has the property $\pi_1(N\otimes_RM)=0$. Then by induction on $n$ we get that $\pi_n(\op{fib}(f)\otimes_RM)=\pi_{n+1}(N\otimes_RM)=0$ for all $n>0$, which implies $N\otimes_RM$ is discrete.
	\end{proof}
	
	We now state the main result of this subsection.
	\begin{thm}[Lazard's Theorem]\label{cocompfil}
		Suppose that the $ttt$-\infcat $(\mc{A}^\otimes, \mc{A}_{\geq0}) $ is projectively rigid. Let $R\in \alg(\mc{A}_{\geq 0})$ and $M\in \lmodu_R(\mc{A})$. Then $M$ is a flat left $R$-module if and only if it is equivalent to a filtered colimit of compact projective left $R$-modules.
		
		In particular, the \infcat of flat $R$-modules can be identified with
		$$\operatorname{LMod}_R(\mathcal{A})^{fl}\simeq \operatorname{Ind}(\operatorname{LMod}_R(\mathcal{A}_{\geq 0})^{\mathrm{cproj}}) .$$
	\end{thm}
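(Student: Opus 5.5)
The "if" direction is immediate. By \cref{pi0proj}(1) every compact projective left $R$-module is flat. Since the $t$-structure is compatible with filtered colimits (condition (*) makes $\mca$ Grothendieck), \cref{l1.3}(1) shows that flat modules are closed under filtered colimits. Note that $R$ is connective here, so the connective notion of flatness applies and flat modules are connective.

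For the ``only if'' direction, let $M$ be flat; in particular $M$ is connective. I would follow the classical Lazard argument in the form used by Lurie for spectra \cite[Thm.~7.2.2.15]{ha} and Stefanich \cite[Prop.~2.2.22]{stefanich2023classification}. Let $\mathcal{C}=\lmodu_R(\mca)_{\geq0}^{\op{cproj}}$, which is small and essentially closed under finite coproducts. Consider the slice $\mathcal{C}_{/M}$. Because $\lmodu_R(\mca)_{\geq0}\simeq\mcp_\Sigma(\mathcal{C})$ by \cref{l2}(6), the canonical map $\colim_{\mathcal{C}_{/M}}P\to M$ is an equivalence. It therefore suffices to show that $\mathcal{C}_{/M}$ is filtered.

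By \cite[Prop.~5.3.1.13]{htt}-type criteria, filteredness amounts to showing that for every finite simplicial set $K$ and every diagram $K\to\mathcal{C}_{/M}$ there is an extension to $K^{\triangleright}$. Concretely, set $Q=\colim_K P_k$, a finite colimit of compact projectives in $\lmodu_R(\mca)$. It is compact, and by \cref{pi0proj}(2) and \cref{rigidlygene} it is left dualizable in the stable category (finite colimits of dualizables are dualizable), together with a map $Q\to M$. The goal is to factor $Q\to M$ through a compact projective $P$ under the diagram. Since $\mathcal{C}$ has finite coproducts, it suffices to handle this factorization problem.

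The key step, and the main obstacle, is this factorization. Let $Q^\vee$ be the right dual of $Q$, a right $R$-module, so that
\[
\mapp_{\lmodu_R}(Q,M)\simeq \Omega^\infty(Q^\vee\otimes_R M).
\]
Writing $M$ as a sifted colimit, by projective generation, of compact projectives $M_\alpha$, flatness of $M$ gives that $Q^\vee\otimes_R M$ is obtained by $t$-exactly tensoring $Q^\vee$ with $M$. I would then compare $\pi_0$ through $\tau_{\geq0}$. The core claim is that every map $Q\to M$ factors through some compact projective. To prove it, I would dualize: a point of $\Omega^\infty(Q^\vee\otimes_R M)$ is a map $\mb{1}\to Q^\vee\otimes_R M$. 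Writing $M\simeq|M_\bullet|$ with each $M_n$ a coproduct of compact projectives, I would use right $t$-exactness together with left $t$-exactness, i.e. flatness, of $Q^\vee\otimes_R(-)$ restricted appropriately. This gives that $\pi_0\Map(Q,M)$ receives a surjection from $\pi_0\Map(Q,M_0)$ after passing to $\tau_{\geq0}$, and the compact projective unit $\mb{1}$ lifts along the effective epimorphism $\Omega^\infty(Q^\vee\otimes M_0)\to\Omega^\infty(Q^\vee\otimes M)$.

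The subtle point is that $Q^\vee$ need not be connective, so I would instead write $Q^\vee\simeq \Sigma^{-n}Q'$ with $Q'$ compact and connective, and use flatness: $Q^\vee\otimes_R M$ has homotopy computed $t$-exactly. I expect this shifting and connectivity bookkeeping to be the delicate part. Once the factorization through $M_0$ is obtained, compactness of $Q$ lets me shrink $M_0$ to a finite subcoproduct, which is compact projective. Higher coherence of the cocone over $K$ is obtained in the same way, by applying the argument to the mapping spaces in the slice, using that their homotopy groups are computed by $\pi_i$ of the same tensor products. Finally, the identification $\lmodu_R(\mca)^{fl}\simeq\Ind(\mathcal{C})$ follows because objects of $\mathcal{C}$ are compact in $\lmodu_R(\mca)$, so the filtered-colimit extension is fully faithful, and by the above its image is exactly the flat modules.
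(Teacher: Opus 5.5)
Your overall reduction is sound. The ``if'' direction is fine, and so is $M\simeq\operatorname{colim}_{\mathcal{C}_{/M}}P$. Extending a finite diagram $K\to\mathcal{C}_{/M}$ to $K^{\triangleright}$ is exactly factoring $q\colon Q=\operatorname{colim}_K P_k\to M$ through some $P\in\mathcal{C}$, so no separate ``higher coherence'' step is needed. The gap is that factorization itself. It is the only place flatness enters, and your sketch does not establish it.

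First, it is $(-)\otimes_R M$ that is $t$-exact, not $Q^\vee\otimes_R(-)$, which has no left exactness. Second, your remedy for the non-connectivity of $Q^\vee$, writing $Q^\vee\simeq\Sigma^{-n}Q'$, fails. After adjunction you would have to lift a map $\Sigma^n\mathbf{1}\to Q'\otimes_R M$ along the $\pi_0$-surjection $Q'\otimes_R M_0\to Q'\otimes_R M$. But $\Sigma^n\mathbf{1}$ is not projective in $\mathcal{A}_{\geq0}$ for $n>0$. With $F=\operatorname{fib}(M_0\to M)$, the obstruction lies in $\pi_0\operatorname{Map}(\mathbf{1},\Sigma^{1-n}Q'\otimes_R F)$, which is in general nonzero. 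So the surjectivity of $\pi_0\operatorname{Map}(Q,M_0)\to\pi_0\operatorname{Map}(Q,M)$ is asserted rather than derived, and what you call ``bookkeeping'' is the actual content of the theorem.

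The missing idea is to truncate rather than shift. Put $N=\tau_{\geq0}(Q^\vee)$.
\begin{itemize}
\item Since $(-)\otimes_R M$ is $t$-exact, it sends $\tau_{\geq0}Q^\vee\to Q^\vee\to\tau_{\leq-1}Q^\vee$ to the truncation sequence of $Q^\vee\otimes_R M$. Because $\mathbf{1}$ is connective, this gives
\[
\operatorname{Map}(\mathbf{1},N\otimes_R M)\xrightarrow{\sim}\operatorname{Map}(\mathbf{1},Q^\vee\otimes_R M)\simeq\operatorname{Map}(Q,M).
\]
\item Choose $M_0\to M$ with $M_0$ a coproduct of compact projectives and $\pi_0M_0\to\pi_0M$ epimorphic. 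Since $N$ is connective, $N\otimes_R M_0\to N\otimes_R M$ has $1$-connective cofiber.
\item Projectivity of $\mathbf{1}$ in $\mathcal{A}_{\geq0}$ (\cref{proj1}) then lifts the point to $\operatorname{Map}(\mathbf{1},N\otimes_R M_0)$.
\item Pushing the lift into $\operatorname{Map}(\mathbf{1},Q^\vee\otimes_R M_0)\simeq\operatorname{Map}(Q,M_0)$ gives $Q\to M_0$ over $M$. Compactness of $Q$ puts it in a finite subsum, which is compact projective.
\end{itemize}

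With this repair, your argument is the left-module mirror of the paper's. The paper dualizes to $\operatorname{RMod}_R(\mathcal{A})_{\geq0}$, where flatness makes $\operatorname{Map}(\mathbf{1},-\otimes_R M)$ left exact, so its category of elements is cofiltered. There the natural cone point of a finite diagram is the limit computed in connective right modules, which is automatically $\tau_{\geq0}Q^\vee$. The truncation is thus built in, and the paper resolves this $N$, rather than $M$, by compact projectives, again using only that $\mathbf{1}$ is compact projective.
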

	\begin{proof}
		We take the strategy in \cite[Prop. 2.2.22]{stefanich2023classification}. The ``if'' direction can be concluded by combining \cref{l4.2} and \cref{l1.3}(1).\\
		For the ``only if'' direction, assume now that $M$ is flat. Let $L\mathcal{M}^{\text {cp }}$ denote the full subcategory of $L\mc{M}=\lmodu_R(\mc{A})_{\geq 0}$ spanned by compact projective objects and consider the functor $F(-):\left(L\mathcal{M}^{\mathrm{cp}}\right)^{\mathrm{op}} \rightarrow \mc{S}$  represented by $M$. We wish to show that this functor defines an ind-object of $L\mathcal{M}^{\mathrm{cp}}$. Let $(-)^{\vee}: R\mathcal{M}^{\mathrm{cp}} \rightarrow\left(L\mathcal{M}^{\mathrm{cp}}\right)^{\mathrm{op}}$ be the dualization equivalence introduced in \cref{ldualeq}. We will prove that $F((-)^{\vee}): R\mathcal{M}^{\mathrm{cp}} \rightarrow\mc{S}$  defines a pro-object of $R\mathcal{M}^{\mathrm{cp}}$.

		Let $p: \mathcal{E} \rightarrow R\mathcal{M}$ be the left fibration associated to the functor $\operatorname{Map}_{\ageq}\left(\mb{1},-\otimes_R M\right): R\mathcal{M}\to\mc{S}$. Then the base change of $p$ to $R\mathcal{M}^{\mathrm{cp}}$ is the left fibration classifying $F((-)^\vee)$. We have to show that every finite diagram $G: \mathcal{I} \rightarrow \mathcal{E} \times_{R\mathcal{M}} R\mathcal{M}^{\text {cp }}$ admits a left cone. The fact that $M$ is flat implies that the functor $$\operatorname{Map}_{\ageq}\left(\mb{1},-\otimes_R M\right): R\mathcal{M}\to\mc{S}$$ is left exact, and therefore $\mce$ is cofiltered and $G$ extends to a left cone $G^{\triangleleft}: \mathcal{I}^{\triangleleft} \rightarrow \mathcal{E}$. Let $\overline{N}=\left(M, \rho: \mb{1} \rightarrow N \otimes_R M\right)$ be the value of $G^{\triangleleft}$ at the cone point. To show that $G$ extends to a left cone in $\mathcal{E} \times_{R\mathcal{M}} R\mathcal{M}^{\mathrm{cp}}$ it is enough to prove that $\overline{N}$ receives a map from an object in $\mathcal{E} \times_{R\mathcal{M}} R\mathcal{M}^{\mathrm{cp}}$. This amounts to showing that there exists a map $N^{\prime} \rightarrow N$ from a compact projective right $R$-module $N^{\prime}$ with the property that $\rho$ factors through $N^{\prime} \otimes_R M$. This follows from the fact that $\mb{1}$ is compact projective in $\ageq$.
	\end{proof}

	\subsection{Modules over discrete algebras}\label{4.3}
	
	Having established Lazard's theorem, we turn our attention to the behavior of discrete algebras within a topological framework. In this subsection, we prove that under the  assumption of projective rigidity, the module category $\lmodu_R(\mca)$ over a discrete algebra $R \in \alg(\mathcal{A}^\heartsuit)$ is naturally (monoidally if $R$ is \ein) equivalent to the (unbounded) derived category $\mathcal{D}(\lmodu_{\pi_0 R}(\mathcal{A}^\heartsuit))$.

	\begin{thm}\label{dicre}
		Suppose that the $ttt$-\infcat $(\mc{A}^\otimes, \mc{A}_{\geq0}) $ is projectively rigid. The following hold: 
		\enu{
			\item For any discrete $R\in \alg(\mc{A}^{\heartsuit})$ there exists a (unique up to contractible choices) equivalence in $\pr_{\op{st}}^{t\text{-}rex}$
			$$\mc{D}(\lmodu_{\pi_0R}(\mc{A}^{\heartsuit}))\xrightarrow{\sim} \lmodu_{R}(\mc{A})$$
			which induces the identity functor on the heart.
			\item  For any discrete commutative algebra  $R\in \calg(\mc{A}^{\heartsuit})$ there exists a (unique up to contractible choices) equivalence in $\calg(\pr_{\op{st}}^{t\text{-}rex})$
			$$\mc{D}(\modu_{\pi_0R}(\mc{A}^{\heartsuit}))^\otimes\xrightarrow{\sim} \modu_{R}(\mc{A})^\otimes$$
			which induces the identity functor on the heart, where the symmetric monoidal structure on left-hand side is induced by the projective model with tensor product of chain complexes.
		}
		
	\end{thm}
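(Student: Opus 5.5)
We compare both sides through their connective parts, which are projectively generated Grothendieck prestable $\infty$-categories, and then pass to stabilizations using \cref{pret}.

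\textbf{Step 1: the connective parts.} Write $\mathbf{B}=\lmodu_{\pi_0R}(\mc{A}^\heartsuit)$. By \cref{lazard1} and \cref{eqproj}, $\mathbf{B}$ is a Grothendieck abelian category generated by its compact 1-projective objects $\mathbf{B}^{1\text{-cproj}}$. Hence the standard description gives
\[
\mc{D}(\mathbf{B})_{\geq 0}\simeq \mc{P}_\Sigma(\mathbf{B}^{1\text{-cproj}}),
\]
where $\mathbf{B}^{1\text{-cproj}}$ is viewed as an ordinary additive $1$-category. On the other side, \cref{l2}(6) shows that $\lmodu_R(\mc{A})_{\geq0}\simeq \mc{P}_\Sigma(\mc{P})$, where $\mc{P}$ is the full subcategory of compact projective left $R$-modules. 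It therefore suffices to show that $\pi_0:\mc{P}\to \mathbf{B}^{1\text{-cproj}}$ is an equivalence of $\infty$-categories, and in particular that $\mc{P}$ is a $1$-category.

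\textbf{Step 2: $\mc{P}$ is discrete.} This is where projective rigidity is essential. By \cref{pi0proj}(1), every $P\in\mc{P}$ is flat. By \cref{pi0flat}(3), $\pi_nP\simeq \pi_nR\,\overline{\otimes}_{\pi_0R}\pi_0P$, which vanishes for $n\neq 0$ because $R$ is discrete. So $P$ is discrete, and for $P,Q\in\mc{P}$ the mapping space $\mapp(P,Q)$ is discrete by the $t$-structure. Combined with \cref{eqproj}(3), which gives an equivalence on homotopy categories, this shows that $\pi_0|_{\mc{P}}$ is an equivalence of $\infty$-categories onto $\mathbf{B}^{1\text{-cproj}}$. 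Applying $\mc{P}_\Sigma$ yields an equivalence
\[
\mc{D}(\mathbf{B})_{\geq0}\simeq \lmodu_R(\mc{A})_{\geq0}.
\]
This equivalence induces the identity on the heart, since both hearts are $\mathbf{B}$ via $\pi_0$ (\cref{l2}(3)).

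\textbf{Step 3: stabilize.} Both $\mc{D}(\mathbf{B})$ and $\lmodu_R(\mc{A})$ are right complete and Grothendieck; for the latter use \cref{l2}(4)(5) together with the remark after condition (*). By \cref{pret}, each is therefore the stabilization of its connective part, so the equivalence of Step 2 extends uniquely to an equivalence in $\pr^{t\text{-}rex}_{\op{st}}$. For uniqueness, \cref{pret}(1) reduces a colimit-preserving right $t$-exact functor to a colimit-preserving functor on connective parts, which in turn is determined by its restriction to $\mathbf{B}^{1\text{-cproj}}$. A right exact functor into $\lmodu_R(\mc{A})_{\geq0}$ that is the identity on the heart sends $\mathbf{B}^{1\text{-cproj}}$ into $\mc{P}$, because the image of a compact projective is compact projective with $\pi_0$ prescribed. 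By Step 2 such a functor is unique up to contractible choice. Equivalently, the relevant space of functors is $\Fun$ from a $1$-category into the same $1$-category lying over the identity, which is contractible.

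\textbf{Step 4: the commutative case.} In (2), the category $\mc{P}$ is closed under $\otimes_R$ and contains $R$, by \cref{projrigidproperties}(2). The equivalence $\pi_0:\mc{P}\to\mathbf{B}^{1\text{-cproj}}$ is symmetric monoidal because $\pi_0$ is symmetric monoidal on connective objects (\cref{l3}(3)). Since $\mc{P}_\Sigma$ carries symmetric monoidal structures to Day-convolution symmetric monoidal structures, we obtain a symmetric monoidal equivalence of connective parts. The projective model structure on $\mc{D}(\mathbf{B})_{\geq0}$ induces the same sifted-cocompleted tensor product. Stabilizing, via the symmetric monoidal embedding of \cref{pret}(3), gives the equivalence in $\calg(\pr^{t\text{-}rex}_{\op{st}})$. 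Uniqueness follows as before, because symmetric monoidal structures on a functor out of a discrete symmetric monoidal $1$-category are property-like once the underlying functor is fixed.

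\textbf{Main obstacle.} I expect the main obstacle to be the precise identification of $\mc{D}(\mathbf{B})_{\geq0}$, together with the symmetric monoidal structure coming from the projective model, with $\mc{P}_\Sigma(\mathbf{B}^{1\text{-cproj}})$ for a general Grothendieck abelian $\mathbf{B}$ with enough compact projectives. I would handle it by the classical comparison of connective chain complexes of projectives with simplicial objects (the Dold–Kan correspondence), in the form used in \cite[\S1.3.2]{ha}.
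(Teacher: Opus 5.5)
Your proposal follows the paper's proof. The steps match: identify both connective parts with $\mc{P}_\Sigma$ of compact projectives; show compact projective $R$-modules are discrete via flatness (\cref{pi0proj}, \cref{pi0flat}(3)), so that \cref{eqproj}(3) upgrades to an equivalence of $1$-categories; stabilize; and in the commutative case use the symmetric monoidal sifted cocompletion, taking the identification of $\mc{D}(\mathbf{B})_{\geq0}^\otimes$ as standard exactly as the paper does.

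One correction to your uniqueness step. A general colimit-preserving right $t$-exact functor inducing the identity on hearts need not send compact projectives to compact projectives. For example, $(-)\otimes_{\mathbb{S}}H\mathbb{Z}$ on $\modu_{H\mathbb{Z}}(\opsp)$ is the identity on the heart but sends $H\mathbb{Z}$ to $H\mathbb{Z}\otimes H\mathbb{Z}$. So argue only for equivalences: these are $t$-exact and hence preserve $\mc{P}$, which is all the statement requires.
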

	\begin{proof}
		(1) Since both are right complete and we have $\mc{P}_{\Sigma}(\lmodu_{R}(\ageq)^{\op{cproj}})\simeq \lmodu_{R}(\ageq)$ and $\mc{D}(\lmodu_{\pi_0R}(\mc{A}^{\heartsuit}))_{\geq0}\simeq\mcp_\Sigma(\lmodu_{\pi_0R}(\mc{A}^{\heartsuit})^{\op{cproj}})$,  it suffices to show that taking $\pi_0$ induces an equivalence $$\lmodu_{R}(\ageq)^{\op{cproj}}\simeq \lmodu_{\pi_0R}(\mc{A}^{\heartsuit})^{\op{cproj}}.$$
		By \cref{eqproj} it is reduced to showing every compact projective $R$-module $P\in\lmodu_{R}(\ageq)^{\op{cproj}}$ is discrete, but that follows from \cref{pi0proj}.\\
		(2) By remark \cref{projcocomplete}(1), it suffices to show that $\mc{D}(\modu_{\pi_0R}(\mc{A}^{\heartsuit}))_{\geq0}^\otimes\simeq\mc{P}_{\Sigma}(\modu_{\pi_0R}(\mc{A}^{\heartsuit})^{\op{cproj}})^\otimes$ is the symmetric monoidal projective sifted cocompletion \textup{\cite[see ][Prop. 4.8.1.10]{ha}} of $\modu_{\pi_0R}(\mc{A}^{\heartsuit})^{\op{cproj}}$. That is to show the following: 
		\begin{enumerate}[label=(\alph*)]
			\item The natural inclusion $\modu_{\pi_0R}(\mc{A}^{\heartsuit})^{\op{cproj}}\hookrightarrow\mc{D}(\modu_{\pi_0R}(\mc{A}^{\heartsuit}))_{\geq0}$ is a symmetric monoidal functor which preserves finite coproducts.
			\item  $\mc{D}(\modu_{\pi_0R}(\mc{A}^{\heartsuit}))_{\geq0}^\otimes$ is presentably symmetric monoidal.
			\item The inclusion induces an equivalence $\mc{P}_{\Sigma}(\modu_{\pi_0R}(\mc{A}^{\heartsuit})^{1\text{-}\op{cproj}})\simeq \mc{D}(\modu_{\pi_0R}(\mc{A}^{\heartsuit}))_{\geq0}$. 
		\end{enumerate}
		The (a) and (c) follow directly from the construction of projective model on derived category. The (b) follows from \cite[Prop. 1.3.5.21]{ha} and the explicit internal hom construction in $\mc{D}(\modu_{\pi_0R}(\mc{A}^{\heartsuit}))$ $$\underline{\operatorname{Map}}_{\mc{D}}\left(M_*, N_*\right)_p=\prod_{n \in \mathbf{Z}}\underline{\operatorname{Hom}}_{\mathcal{M}}\left(M_n, N_{n+p}\right)$$ for each integer $p$, where we denote $\mc{D}=\mc{D}(\modu_{\pi_0R}(\mc{A}^{\heartsuit}))$ and $\mc{M}=\modu_{\pi_0R}(\mc{A}^{\heartsuit})$. We view $\underline{\operatorname{Map}}_{\mc{D}}\left(M_*, N_*\right)_*$
		as a chain complex with values in $\mc{M}$, with differential given by the formula
		$$
		(d f)(x)=d(f(x))-(-1)^p f(d x)
		$$
		for $f \in \operatorname{Map}_{\mc{D}}\left(M_*, N_*\right)_p$.
	\end{proof}
	\begin{rem} In fact, by our argument the uniqueness in above theorem can be promoted as which induces the identity functor on compact 1-projective $\pi_0R$-modules in the heart.
		
	\end{rem}
	\begin{ex}
		Note that \cref{dicre} need not hold if the $ttt$-$\infty$-category
		$(\mc{A}^{\otimes},\mc{A}_{\geq0})$ is not projectively rigid.
		For example, consider
		\[
		\op{Sp}^{BS^1}:=\funct(BS^1,\op{Sp})
		\]
		with the pointwise symmetric monoidal structure. The object
		$\underline{\mathrm{H}\mathbb{Z}}$, equipped with the trivial
		$S^1$-action, is a discrete commutative algebra in
		$\op{Sp}^{BS^1}$. However,
		\[
		\modu_{\underline{\mathrm{H}\mathbb{Z}}}
		(\op{Sp}^{BS^1})
		\neq
		\mc{D}\bigl(
		\modu_{\underline{\mathrm{H}\mathbb{Z}}}
		(\op{Sp}^{BS^1,\heartsuit})
		\bigr)
		\simeq
		\mcd(\mathbb{Z}).
		\]
		In particular, projective objects need not be flat in this setting.
		For instance, the representable presheaf
		\[
		\Sigma_+^\infty \mapp_{BS^1}(-,*)
		\]
		is compact projective in $\op{Sp}^{BS^1}_{\geq0}$, but is not flat.
	\end{ex}
	
	We next record a K\"unneth spectral sequence and a resulting criterion for
	flatness. These also answer a question posed in the original version of this
	paper.
	
	\begin{lem}\label{gradedprojective}
		Suppose that the $ttt$-$\infty$-category
		$(\mc{A}^{\otimes},\mc{A}_{\geq0})$ is projectively rigid.
		Let $R\in\alg(\mc{A}_{\geq0})$ and let
		$P\in\lmodu_R(\mc{A}_{\geq0})$ be projective. Then
		\[
		P_*\in
		\lmodu_{R_*}\bigl(\operatorname{Gr}(\mc{A}^{\heartsuit})\bigr)
		\]
		is $1$-projective. If $P$ is compact projective, then $P_*$ is
		compact $1$-projective.
	\end{lem}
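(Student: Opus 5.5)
Since every projective $P$ is a retract of a direct sum $\bigoplus_\alpha P_\alpha$ of compact projectives (\cref{cocompinftycoprod}), and $P\mapsto P_*=\bigoplus_n \pi_n P$ commutes with direct sums and retracts, while $1$-projectives in $\lmodu_{R_*}(\operatorname{Gr}(\mc{A}^{\heartsuit}))$ are closed under sums and retracts, I reduce to the compact case. Here $R_*=\pi_*R$ is the graded algebra in $\operatorname{Gr}(\mc{A}^\heartsuit)$ obtained from the lax monoidal structure of $\pi_*$. By \cref{l1}(2), a compact projective $P$ is a retract of $R\otimes Q$ for some $Q\in\mc{A}_{\geq0}^{\op{cproj}}$. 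It therefore suffices to treat $P=R\otimes Q$. Again retracts and finite sums are harmless, since compact $1$-projectives are closed under them.

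For $P=R\otimes Q$, the key input is projective rigidity: $Q$ is dualizable in $\mc{A}_{\geq0}$, hence flat by \cref{l4.2} (or \cref{pi0proj}(1) with $R=\mb{1}$). Flatness of $Q$ together with \cref{pi0flat}(3), applied to the $R$-module $R\otimes Q$ (which is flat over $R$ by \cref{l4}), gives a natural isomorphism
\[
\pi_n(R\otimes Q)\simeq \pi_nR\,\overline{\otimes}\,\pi_0Q .
\]
Assembling over $n$, this yields $P_*\simeq R_*\otimes \pi_0Q$ as graded left $R_*$-modules, where $\pi_0Q$ sits in degree $0$. I would check compatibility of this isomorphism with the $R_*$-action via the naturality of the multiplication maps $\pi_mR\otimes\pi_n(-)\to\pi_{m+n}(-)$. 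Thus $P_*$ is the free graded $R_*$-module on the object $\pi_0Q$ placed in degree $0$.

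It remains to see that $\pi_0Q$ is compact $1$-projective in $\operatorname{Gr}(\mc{A}^\heartsuit)$. By \cref{eqproj}(1), $\pi_0Q$ is compact $1$-projective in $\mc{A}^\heartsuit$. The inclusion in degree $0$ is left adjoint to the exact, filtered-colimit-preserving evaluation at degree $0$, so it preserves compact $1$-projectives. The free functor $R_*\otimes(-)$ is left adjoint to the forgetful functor, which is exact and preserves filtered colimits, so it also preserves compact $1$-projectives. Hence $P_*$ is compact $1$-projective.

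The main obstacle is the second step: identifying $P_*$ with the free module $R_*\otimes\pi_0Q$, including its module structure. It is exactly where rigidity (flatness of $Q$) enters; the example $\op{Sp}^{BS^1}$ shows this fails otherwise. If the module compatibility is delicate, I would instead argue via mapping spaces: $\operatorname{Hom}_{R_*}(P_*,N_*)$ should be identified with $\operatorname{Hom}(\pi_0Q,N_0)$ for graded $N_*$.
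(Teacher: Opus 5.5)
Your proposal is correct, and its core mechanism matches the paper's. Flatness together with \cref{pi0flat}(3) identifies $P_*$ with a module extended from degree $0$. Extension of scalars then preserves (compact) $1$-projectives, because its right adjoint is exact and commutes with filtered colimits.

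The difference is where you extend from.

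The paper applies \cref{pi0proj} directly to an arbitrary projective $P$, so $P$ is flat and $\pi_0P$ is $1$-projective over $\pi_0R$. It obtains $P_*\simeq R_*\overline{\otimes}_{\pi_0R}\pi_0P$ and extends scalars along $\pi_0R\to R_*$. This treats the compact and non-compact cases at once, with no retract reductions.

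You instead reduce via \cref{cocompinftycoprod} and \cref{l1}(2) to free modules $R\otimes Q$ with $Q\in\mc{A}_{\geq0}^{\op{cproj}}$, and extend from the unit of $\operatorname{Gr}(\mc{A}^{\heartsuit})$ through the degree-$0$ inclusion.
\begin{itemize}
\item This effectively inlines the proof of \cref{pi0proj}(1). It uses only flatness of compact projectives of $\mc{A}_{\geq0}$ over $\mathbf{1}$ and \cref{eqproj}(1).
\item The price is extra bookkeeping with sums and retracts. That bookkeeping is valid, since $\pi_*$ commutes with direct sums ($\mc{A}$ is Grothendieck under projective rigidity).
\end{itemize}

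The module-compatibility issue you flag is not really an obstacle. Define the comparison map $R_*\otimes\pi_0Q\to(R\otimes Q)_*$ as the $R_*$-linear adjoint of the unit-induced map $\pi_0Q\to\pi_0(R\otimes Q)$. By naturality of the lax structure on $\pi_*$ and the unit law, its degree-$n$ component is the multiplication map $\pi_nR\,\overline{\otimes}\,\pi_0Q\to\pi_n(R\otimes Q)$, which is an isomorphism by flatness of $Q$. So only this degreewise isomorphism needs checking, which you have already established, and the mapping-space fallback is unnecessary.
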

	\begin{proof}
		By \cref{pi0proj}, $P$ is flat and $\pi_0P$ is a
		$1$-projective $\pi_0R$-module. Hence \cref{pi0flat}(3) assembles
		to an equivalence of graded $R_*$-modules
		\[
		P_*
		\simeq
		R_*\overline{\otimes}_{\pi_0R}\pi_0P,
		\]
		where $\pi_0R$ and $\pi_0P$ are regarded as concentrated in degree
		zero.
		
		The extension-of-scalars functor
		\[
		R_*\overline{\otimes}_{\pi_0R}(-):
		\lmodu_{\pi_0R}
		(\mc{A}^{\heartsuit})
		\longrightarrow
		\lmodu_{R_*}
		\bigl(\operatorname{Gr}(\mc{A}^{\heartsuit})\bigr)
		\]
		preserves $1$-projective objects, since its right adjoint is
		restriction of scalars, which preserves epimorphisms. This proves
		the first assertion. The compact case follows similarly, since
		restriction of scalars also preserves filtered colimits.
	\end{proof}
	\begin{prop}[K\"unneth spectral sequence]\label{kunnethss}
		Suppose that the $ttt$-$\infty$-category
		$(\mc{A}^{\otimes},\mc{A}_{\geq0})$ is projectively rigid.
		Let $R\in\alg(\mc{A}_{\geq0})$, and let
		\[
		N\in\rmodu_R(\mc{A})_{\geq0},
		\qquad
		M\in\lmodu_R(\mc{A})_{\geq0}.
		\]
		Then there is a natural convergent spectral sequence in
		$\mc{A}^{\heartsuit}$
		\[
		E^2_{p,q}
		=
		\operatorname{Tor}^{R_*}_{p}(N_*,M_*)_q
		\Longrightarrow
		\pi_{p+q}(N\otimes_RM).
		\]
		Here $\operatorname{Tor}^{R_*}$ denotes the left derived functors
		of the graded relative tensor product in
		$\operatorname{Gr}(\mc{A}^{\heartsuit})$.
	\end{prop}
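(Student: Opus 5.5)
We plan to follow the classical argument for spectra (as in \cite[Proposition 7.2.1.19]{ha}): resolve $N$ by free/projective right $R$-modules, tensor the resolution with $M$, and take the spectral sequence of the resulting simplicial object, i.e.\ of its skeletal filtration.

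First, since $\rmodu_R(\mc{A})_{\geq0}$ is projectively generated (\cref{l2}(6)), we build a simplicial resolution $P_\bullet\to N$ by projective right $R$-modules with $|P_\bullet|\simeq N$. We choose it so that the associated complex of graded homotopy objects
\[
\cdots\to (P_1)_*\to (P_0)_*\to N_*\to 0
\]
is exact in $\rmodu_{R_*}(\operatorname{Gr}(\mc{A}^{\heartsuit}))$. To arrange this, we proceed inductively as in the classical construction. At each stage we pick a projective module mapping onto the relevant kernel on graded homotopy; \cref{eqproj} and \cref{cocompinftycoprod} let us realize any $1$-projective graded $R_*$-module of the form $R_*\overline{\otimes}_{\pi_0R}Q$ by a projective $R$-module. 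By \cref{gradedprojective}, each $(P_n)_*$ is $1$-projective over $R_*$, so this complex is a projective resolution of $N_*$ in the graded heart.

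Next we tensor with $M$ to obtain $|P_\bullet\otimes_RM|\simeq N\otimes_RM$, and then apply the spectral sequence of a simplicial object in a Grothendieck prestable category with geometric realization. This is the spectral sequence of the skeletal filtration, \cite[Proposition 1.2.4.5]{ha}, applied in $\mc{A}$, whose $t$-structure is compatible with filtered colimits. It gives
\[
E^1_{p,q}=\pi_q(P_p\otimes_RM),
\]
with $d^1$ the alternating sum of face maps, converging to $\pi_{p+q}(N\otimes_RM)$. Convergence holds because everything is connective and the filtration is exhaustive. The key identification is
\[
\pi_q(P_p\otimes_RM)\simeq \bigl((P_p)_*\otimes_{R_*}M_*\bigr)_q .
\]
Since $P_p$ is flat by \cref{pi0proj}(1), \cref{pi0flat}(3) (applied to $M$ over $R$, with the roles of the sides swapped) gives $\pi_q(P_p\otimes_RM)\simeq \pi_0P_p\overline{\otimes}_{\pi_0R}\pi_qM$. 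On the other hand, \cref{gradedprojective} identifies $(P_p)_*\simeq \pi_0P_p\overline{\otimes}_{\pi_0R}R_*$, whence
\[
(P_p)_*\otimes_{R_*}M_*\simeq \pi_0P_p\overline{\otimes}_{\pi_0R}M_* .
\]
These identifications are natural in the maps $P_p\to P_{p'}$, because such maps are determined on $\pi_0$ up to homotopy by \cref{eqproj}(3). Hence the $E^1$-page is the complex $(P_\bullet)_*\otimes_{R_*}M_*$, and its homology is $\operatorname{Tor}^{R_*}_p(N_*,M_*)_q$.

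The main obstacle is the first step: producing a simplicial projective resolution whose graded homotopy is a projective resolution of $N_*$, rather than merely an augmented simplicial object with colimit $N$. The standard way to get this is an inductive construction through the skeleta, or equivalently via the $\infty$-categorical Dold--Kan correspondence, lifting a resolution in the heart step by step and using projectivity (\cref{proj1}) to lift the maps. A second point needing care is naturality in $N$ and $M$; we would obtain it by noting that any two such resolutions are related by the comparison of resolutions, which induces isomorphisms from $E^2$ onward.
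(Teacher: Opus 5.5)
\textbf{There is a genuine gap.} Your first step cannot be carried out as stated. Suppose every $P_p$ is an (unshifted) projective right $R$-module. Then by \cref{gradedprojective} the graded module $(P_0)_*\simeq\pi_0P_0\overline{\otimes}_{\pi_0R}R_*$ is generated over $R_*$ in internal degree $0$. So a surjection $(P_0)_*\to N_*$ exists only when $N_*$ is generated in degree $0$.

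This already fails for $\mc{A}=\operatorname{Sp}$, $R=H\mathbb{Z}$, $N=\Sigma H\mathbb{Z}$: projective $H\mathbb{Z}$-modules are discrete, while $N_*$ sits in degree $1$. So exactness of $(P_\bullet)_*\to N_*$ is not something an inductive lifting can arrange. The kernels you must cover have generators in positive internal degrees, and the modules realizing $R_*[n]\overline{\otimes}_{\pi_0R}Q$ are the shifts $P[n]$. These are neither projective in $\rmodu_R(\mc{A})_{\geq0}$ nor flat.

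With an honest simplicial resolution by projectives, the skeletal spectral sequence you describe still converges to $\pi_{p+q}(N\otimes_RM)$. However, its $E^2$-term $H_p(\pi_q(P_\bullet\otimes_RM))$ is a different object. For $R$ discrete and $M=R$, all $P_p$ are discrete by \cref{pi0proj}, so $E^2$ is concentrated in $q=0$ with $E^2_{p,0}\simeq\pi_pN$. By contrast, $\operatorname{Tor}^{R_*}_p(N_*,R_*)_q$ is $\pi_qN$ placed in $p=0$.

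The missing idea, which the paper uses following \cite[Propositions 7.2.1.4 and 7.2.1.19]{ha}, is to resolve by a $\mathcal{P}$-free $\mathcal{P}$-hypercovering for $\mathcal{P}=\{P[n]\mid P\text{ compact projective},\ n\geq0\}$. The terms are then coproducts of shifted compact projectives. (The paper resolves $M$ rather than $N$, which is immaterial.) This works for two reasons:
\begin{itemize}
\item For $P$ projective, $[P[n],X]\simeq\operatorname{Hom}_{\pi_0R}(\pi_0P,\pi_nX)$, so the hypercovering condition amounts to surjectivity on graded homotopy of the matching maps.
\item The graded modules $P[n]_*=P_*[n]$ are compact $1$-projective and can cover any nonnegatively graded $R_*$-module.
\end{itemize}
Together these make the normalized complex of $(P_\bullet)_*$ a projective resolution.

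The $E^1$-identification then cannot rest on flatness of the terms, since $P[n]$ is not flat for $n\geq1$. Instead, use flatness of $P$ together with $N\otimes_RP[n]\simeq(N\otimes_RP)[n]$ to get $\pi_*(N\otimes_RP[n])\simeq N_*\overline{\otimes}_{R_*}P_*[n]$. With these two changes, the rest of your outline goes through: the skeletal filtration, first-quadrant convergence, and independence of the resolution from $E^2$ on.
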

	
	\begin{proof}
		We follow the construction of
		\cite[Construction 7.2.1.18 and Proposition 7.2.1.19]{ha}.
		Consider the collection
		\[
		\mathcal P
		=
		\{P[n]\mid P \text{ is compact projective},\ n\geq0\}
		\subset\lmodu_R(\mc{A})_{\geq0}.
		\]
		By projective generation, every connective left $R$-module admits
		an $\mathcal P$-free $\mathcal P$-hypercovering; compare
		\cite[Proposition 7.2.1.4]{ha}. Thus we may choose an augmented
		simplicial object
		$
		P_\bullet\longrightarrow M
		$
		whose terms are coproducts of objects of $\mathcal P$ and whose
		geometric realization is $M$.
		
		By \cref{gradedprojective}, each $P_*$ with $P\in\mathcal P$ is a
		compact $1$-projective graded $R_*$-module. Moreover,
		\cref{proj1} and \cref{eqproj} imply that these objects and their
		graded shifts form a family of projective generators for
		$
		\lmodu_{R_*}
		\bigl(\operatorname{Gr}(\mc{A}^{\heartsuit})\bigr).
		$
		Consequently, exactly as in the proof of
		\cite[Proposition 7.2.1.19]{ha}, the normalized chain complex
		associated to
		$
		(P_\bullet)_*
		$
		is a $1$-projective resolution of $M_*$ as a graded
		$R_*$-module.
		
		It remains only to identify the $E^1$-page. If $P$ is projective,
		then it is flat by \cref{pi0proj}. Hence
		\[
		(-)\otimes_RP:
		\rmodu_R(\mc{A})\longrightarrow\mc{A}
		\]
		is $t$-exact, and its restriction to the heart is
		\[
		(-)\overline{\otimes}_{\pi_0R}\pi_0P.
		\]
		Therefore, for every right $R$-module $X$,
		\[
		\pi_*(X\otimes_RP)
		\simeq
		X_*\overline{\otimes}_{\pi_0R}\pi_0P
		\simeq
		X_*\overline{\otimes}_{R_*}P_*.
		\]
		The same formula holds for coproducts and shifts of projective
		modules.
		
		Now apply the spectral sequence associated to the simplicial object
		$
		N\otimes_RP_\bullet
		$
		as in \cite[Construction 7.2.1.18]{ha}. Its $E^1$-page is the
		normalized chain complex associated to
		\[
		N_*\overline{\otimes}_{R_*}(P_\bullet)_*.
		\]
		Taking homology therefore gives
		\[
		E^2_{p,q}
		\simeq
		\operatorname{Tor}^{R_*}_{p}(N_*,M_*)_q.
		\]
		Since $N$, $M$, and $R$ are connective, the spectral sequence is
		first-quadrant and converges to
		\[
		\pi_{p+q}
		\left(
		N\otimes_R|P_\bullet|
		\right)
		\simeq
		\pi_{p+q}(N\otimes_RM)
		\]
		by \cite[Proposition 1.2.4.5]{ha}.
	\end{proof}
	\begin{prop}\label{flatgradedcriterion}
		Suppose that the $ttt$-$\infty$-category
		$(\mc{A}^{\otimes},\mc{A}_{\geq0})$ is projectively rigid.
		Let $R\in\alg(\mc{A}_{\geq0})$ and let
		$M\in\lmodu_R(\mc{A})_{\geq0}$. Then the following are equivalent:
		\enu{
			\item $M$ is flat.
			\item $\pi_0M$ is $1$-flat over $\pi_0R$, and the natural map
			\[
			R_*
			\overline{\otimes}_{\pi_0R}
			\pi_0M
			\longrightarrow
			M_*
			\]
			is an equivalence of graded $R_*$-modules.
		}
	\end{prop}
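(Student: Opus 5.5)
The direction (1)$\Rightarrow$(2) is immediate from earlier results. If $M$ is flat, then \cref{pi0flat}(1) shows that $\pi_0M$ is $1$-flat over $\pi_0R$. Moreover, \cref{pi0flat}(3) shows that for each $n$ the map $\pi_nR\overline{\otimes}_{\pi_0R}\pi_0M\to\pi_nM$ is an equivalence. These degreewise equivalences assemble into the graded equivalence $R_*\overline{\otimes}_{\pi_0R}\pi_0M\simeq M_*$, exactly as in the proof of \cref{gradedprojective}.

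For (2)$\Rightarrow$(1) I will use the K\"unneth spectral sequence of \cref{kunnethss}. Because $\mc{A}$ is Grothendieck under projective rigidity, \cref{l7} reduces flatness of $M$ to showing that $N\otimes_RM$ is discrete for every discrete right $R$-module $N$. For such $N$ the spectral sequence reads
\[
E^2_{p,q}=\operatorname{Tor}^{R_*}_p(N_*,M_*)_q\Longrightarrow\pi_{p+q}(N\otimes_RM),
\]
where $N_*$ is $\pi_0N$ concentrated in degree $0$. The key step is a graded base-change computation. Using the hypothesis $M_*\simeq R_*\overline{\otimes}_{\pi_0R}\pi_0M$, I claim
\[
\operatorname{Tor}^{R_*}_p(N_*,R_*\overline{\otimes}_{\pi_0R}\pi_0M)\simeq\operatorname{Tor}^{\pi_0R}_p(N_*,\pi_0M),
\]
where the right-hand side is computed in $\operatorname{Gr}(\mc{A}^\heartsuit)$, with $\pi_0M$ placed in degree $0$.

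To prove the claim, take a $1$-projective resolution $Q_\bullet\to\pi_0M$ over $\pi_0R$; such resolutions exist by \cref{l5.9}. The plan is to show that $R_*\overline{\otimes}_{\pi_0R}Q_\bullet$ is a $1$-projective resolution of $R_*\overline{\otimes}_{\pi_0R}\pi_0M$. Projectivity of the terms follows from the argument of \cref{gradedprojective}. Exactness amounts to the vanishing of $\operatorname{Tor}^{\pi_0R}_{>0}(\pi_nR,\pi_0M)$, which follows from $1$-flatness of $\pi_0M$. The subtlety is that $\operatorname{Tor}$ is derived from $1$-projective resolutions while $1$-flatness is only exactness of $(-)\overline{\otimes}_{\pi_0R}\pi_0M$. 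Since $\mc{A}^\heartsuit$ has enough $1$-projectives and $1$-projectives are $1$-flat by \cref{lazard1}(1), the standard balancing argument identifies these, so $1$-flatness gives the vanishing of higher $\operatorname{Tor}$. Tensoring the resolution with $N_*$ then yields $N_*\overline{\otimes}_{R_*}(R_*\overline{\otimes}_{\pi_0R}Q_\bullet)\simeq N_*\overline{\otimes}_{\pi_0R}Q_\bullet$, whose homology is again computed by $1$-flatness of $\pi_0M$.

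It follows that $E^2_{p,q}=0$ unless $p=0$. Since $N_*$ and $\pi_0M$ are concentrated in internal degree $0$, also $E^2_{0,q}=0$ for $q\neq0$. The spectral sequence is first-quadrant and convergent, so $\pi_k(N\otimes_RM)=0$ for $k\neq0$. Hence $N\otimes_RM$ is discrete, and $M$ is flat by \cref{l7}.

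I expect the main obstacle to be the base-change identification of graded $\operatorname{Tor}$. It requires checking carefully that base change $R_*\overline{\otimes}_{\pi_0R}(-)$ of a $1$-projective resolution remains exact, which is precisely where $1$-flatness of $\pi_0M$ enters. It also requires the comparison between $1$-flatness and vanishing of higher $\operatorname{Tor}$ in $\mc{A}^\heartsuit$. If this becomes messy, a fallback is the induction on Postnikov towers used in \cref{pi0proj}(4): resolve $N$ by a projective $P\to N$, use the exact sequence on $\pi_0$, and bootstrap the vanishing of $\pi_1(N\otimes_RM)$ using the graded identification of $M_*$.
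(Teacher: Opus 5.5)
Your proposal is correct and follows the paper's proof. The paper proves (1)$\Rightarrow$(2) via \cref{pi0flat}(1),(3). It proves (2)$\Rightarrow$(1) via \cref{l7} and the K\"unneth spectral sequence of \cref{kunnethss}, killing the higher graded $\operatorname{Tor}$ with the base-change identity $Q_*\overline{\otimes}_{R_*}(R_*\overline{\otimes}_{\pi_0R}\pi_0M)\simeq Q_*\overline{\otimes}_{\pi_0R}\pi_0M$ together with $1$-flatness of $\pi_0M$. The only differences are cosmetic: the paper phrases this step as ``$M_*$ is $1$-flat over $R_*$'' instead of base-changing an explicit projective resolution, and it leaves implicit the flat-versus-$\operatorname{Tor}$ balancing step that you spell out.
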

	
	\begin{proof}
		The implication $(1)\Rightarrow(2)$ follows from
		\cref{pi0flat}(1),(3).
		
		Conversely, assume (2). Then $M_*$ is $1$-flat as a graded
		$R_*$-module. Indeed, for every graded right $R_*$-module $Q_*$,
		we have
		\[
		Q_*\overline{\otimes}_{R_*}M_*
		\simeq
		Q_*\overline{\otimes}_{\pi_0R}\pi_0M,
		\]
		and the latter functor is exact since $\pi_0M$ is $1$-flat over
		$\pi_0R$. Hence
		\[
		\operatorname{Tor}^{R_*}_{p}(Q_*,M_*)=0
		\qquad (p>0).
		\]
		
		Let now $N$ be a discrete right $R$-module. Applying
		\cref{kunnethss}, the K\"unneth spectral sequence degenerates at
		the $E^2$-page and gives
		\[
		\pi_*(N\otimes_RM)
		\simeq
		N_*\overline{\otimes}_{R_*}M_*.
		\]
		Since $N$ is discrete, $N_*$ is concentrated in degree zero.
		Using the assumed description of $M_*$, we obtain
		\[
		\begin{aligned}
			N_*\overline{\otimes}_{R_*}M_*
			&\simeq
			N_*\overline{\otimes}_{R_*}
			\left(
			R_*\overline{\otimes}_{\pi_0R}\pi_0M
			\right)\\
			&\simeq
			N\overline{\otimes}_{\pi_0R}\pi_0M,
		\end{aligned}
		\]
		which is concentrated in degree zero. Thus
		$N\otimes_RM$ is discrete for every discrete right $R$-module
		$N$. By \cref{l7}, $M$ is flat.
	\end{proof}
	\subsection{Cohn localizations of $\mathbb{E}_\infty$-algebras}
	
	While ordinary commutative localization strictly inverts elements, Cohn localization forces more general morphisms between finitely generated projective modules to become invertible. In this subsection, we generalize Neeman and Schofield's derived Cohn localization \cite{cohn1985free,schofield1985representation} to the $ttt$-$\infty$-categorical setting. We establish the existence and universal property of Cohn localizations for \ein-algebras in a projectively rigid base.
	
	Let us first recall the following classical result of Cohn localizations.
	\begin{thm}[\cite{schofield1985representation} Theorem 4.1]
		Let $A$ be an associative ring. Let $\Sigma$ be a set of morphisms between finitely generated projective right $A$-modules. Then there are a ring $A_{\Sigma}$ and a morphism of rings $f_{\Sigma}: A \longrightarrow A_{\Sigma}$, called the universal localisation of $A$ at $\Sigma$, such that
		\enu{\item  $f_{\Sigma}$ is $\Sigma$-inverting, i.e. if $\alpha: P \longrightarrow Q$ belongs to $\Sigma$, then $\alpha \otimes_A 1_{A_{\Sigma}}: P \otimes_A A_{\Sigma} \longrightarrow Q \otimes_A A_{\Sigma}$ is an isomorphism of right $A_{\Sigma}$-modules, and
			\item $f_{\Sigma}$ is universal $\Sigma$-inverting, i.e. for any $\Sigma$-inverting ring homomorphism $\psi: A \longrightarrow B$, there is a unique ring homomorphism $\bar{\psi}: A_{\Sigma} \longrightarrow B$ such that $\bar{\psi} f_{\Sigma}=\psi$.
			Moreover, the homomorphism $f_{\Sigma}$ is a ring epimorphism and $\operatorname{Tor}_1^A\left(A_{\Sigma}, A_{\Sigma}\right)=0$.}
		
	\end{thm}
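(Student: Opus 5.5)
The plan is to build $A_\Sigma$ directly in the classical way: as a ring presented by generators and relations. I would first reduce to the case where all modules in $\Sigma$ are finitely generated free.

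\textbf{Reduction to free modules.} Given $\alpha:P\to Q$ in $\Sigma$, choose complements $P\oplus P'\cong A^m$ and $Q\oplus Q'\cong A^n$. Replace $\alpha$ by
\[
\alpha\oplus \mathrm{id}_{P'}\oplus \mathrm{id}_{Q'} : P\oplus P'\oplus Q' \longrightarrow Q\oplus P'\oplus Q'.
\]
This is a map between finitely generated free modules, and it is inverted by a ring map exactly when $\alpha$ is. So it is a matrix over $A$. If $P\oplus P'\oplus Q'$ and $Q\oplus P'\oplus Q'$ have different ranks, the matrix is rectangular; that case is allowed.

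\textbf{Construction and universal property.} For each matrix $\alpha\in\Sigma$ of size $n\times m$, adjoin to $A$ the entries of a formal $m\times n$ matrix $\alpha'$. Then impose the matrix relations
\[
\alpha\alpha'=1, \qquad \alpha'\alpha=1.
\]
Define $A_\Sigma$ to be the quotient of the free product $A * \mathbb{Z}\langle\text{entries}\rangle$ by the two-sided ideal these relations generate. By construction, a ring map $A\to B$ factors through $A_\Sigma$ exactly when each $\alpha\otimes B$ is invertible. The factorization is unique because inverses are unique. Since the adjoined elements are forced to be the entries of $\alpha^{-1}$, any two maps out of $A_\Sigma$ that agree on $A$ agree everywhere. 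Hence $f_\Sigma$ is an epimorphism of rings.

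\textbf{The Tor vanishing.} The remaining claim, $\operatorname{Tor}_1^A(A_\Sigma,A_\Sigma)=0$, is the main obstacle. The epimorphism property gives $A_\Sigma\otimes_AA_\Sigma\cong A_\Sigma$, but it gives no control over higher Tor. I would follow Schofield's argument via the criterion of Geigle--Lenzing: a ring epimorphism $A\to B$ has $\operatorname{Tor}_1^A(B,B)=0$ if and only if the restriction functor $\operatorname{Mod}_B\to\operatorname{Mod}_A$ has image closed under extensions.

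To check this criterion, take an extension $0\to M\to E\to N\to 0$ of $A$-modules with $M$ and $N$ both $A_\Sigma$-modules. For each $\alpha\in\Sigma$, the map $\operatorname{Hom}_A(\alpha,-)$ is bijective on $M$ and on $N$. By the five lemma applied to the short exact sequences of Hom-groups, which is valid because $P$ and $Q$ are projective, it is bijective on $E$. So $E$ is a $\Sigma$-inverting module, and hence it is a module over $A_\Sigma$.

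To make that last step precise, I would identify $A_\Sigma$-modules with $\Sigma$-inverting $A$-modules, and I would do this by checking that $\operatorname{End}_{\mathbb{Z}}(E)$ receives a $\Sigma$-inverting ring map from $A$. This identification, and the careful handling of the five-lemma step, is where I expect most of the work to lie.
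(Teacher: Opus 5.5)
\textbf{The gap: the reduction to free modules fails.} The paper gives no proof of this statement; it only cites Schofield. Your overall route is the standard one and is sound: present $A_\Sigma$ by generators and relations, then verify the Geigle--Lenzing criterion via the five lemma applied to $\operatorname{Hom}_A(\alpha,-)$. That criterion says that for a ring epimorphism $A\to B$, $\operatorname{Tor}_1^A(B,B)=0$ if and only if the image of restriction is closed under extensions. The step that fails is the first one. With $P\oplus P'\cong A^m$ and $Q\oplus Q'\cong A^n$, your new source and target are $A^m\oplus Q'$ and $A^n\oplus P'$, and these need not be free.

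No variant of the trick can work. Adding $\mathrm{id}_X$ leaves the class $[P]-[Q]\in K_0(A)$ unchanged, and that class would have to lie in $\mathbb{Z}[A]$ if source and target were free. In fact the reduction is impossible in general, by any method. Take $A=\bigl(\begin{smallmatrix}k&k\\0&k\end{smallmatrix}\bigr)$ and the nonzero map $\alpha\colon e_{22}A\to e_{11}A$, given by left multiplication by $e_{12}$.
\begin{itemize}
\item The inclusion $A\subset M_2(k)$ inverts $\alpha$: the inverse is left multiplication by $e_{21}$.
\item Any matrix over $A$ that becomes invertible over $M_2(k)$ is square, by counting $k$-dimensions.
\item Such a matrix is block upper triangular with invertible diagonal blocks, so it is already invertible over $A$.
\end{itemize}
Hence any set of matrices inverted by every $\alpha$-inverting ring map is also inverted by $\mathrm{id}_A$. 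But $\mathrm{id}_A$ does not invert $\alpha$. So your construction as written only handles maps between free modules, which gives a strictly weaker theorem.

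\textbf{The repair.} It is routine, and nothing else in your argument needs to change.
\begin{itemize}
\item Write $P$ and $Q$ as images of idempotent matrices $e\in M_m(A)$ and $e'\in M_n(A)$, and write $\alpha$ as an $n\times m$ matrix $a$ with $a=e'ae$.
\item Adjoin an $m\times n$ matrix $a'$ subject to $a'=ea'e'$, $aa'=e'$ and $a'a=e$.
\item Over any ring $B$, such a matrix is unique if it exists: $b_1=eb_1=b_2ab_1=b_2e'=b_2$. So the universal property and the epimorphism property follow exactly as you argue.
\item The identification of $A_\Sigma$-modules with right $A$-modules $M$ on which each $\operatorname{Hom}_A(\alpha,M)$ is bijective still goes through with $S=\operatorname{End}_{\mathbb{Z}}(M)$, written acting on the right since $M$ is a right module. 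Here $\operatorname{Hom}_A(\alpha,M)$ is the map $y\mapsto ya$ from $M^ne'$ to $M^me$. Its additive inverses are exactly right multiplications by matrices $b=ebe'$ over $S$ with $ab=e'$ and $ba=e$.
\item Your five-lemma argument for closure under extensions only used projectivity of $P$ and $Q$, not freeness.
\end{itemize}
With this change the proof is complete.
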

	This theorem also works for commutative rings\footnote{See \cite{angeleri2020flat} for a discussion about the relation between Cohn localizations and epimorphisms.}, which is the case we mainly care.
	Neeman constructed the Cohn localization in the derived category of a commutative ring in \cite[\textsection4]{neeman2006non}. That motivates us to give a higher categorical correspondence.
	The Cohn localization is very useful in our abstract framework because most interesting cases are only projectively generated but not freely generated. 
	
	The main result in this subsection is the following.
	\begin{thm}\label{unilocal}
		Suppose that the $ttt$-\infcat $(\mc{A}^\otimes, \mc{A}_{\geq0}) $ is projectively rigid. Let $R\in \calg(\ageq)$ and $$S=\{P_\beta\xrightarrow{f_\beta} Q_\beta\}$$ be a set of morphisms between compact projective $R$-modules. Then there exists a Cohn localization $R\to R[S^{-1}]\in \calg(\ageq)$ satisfying the following universal property:\\
		For any $B\in \calg(\mc{A})$, the induced map $$\mapp_{\calg(\mc{A})}(R[S^{-1}],B)\to \mapp_{\calg(\mc{A})}(R,B)$$ is a $(-1)$-truncated map whose image on $\pi_0$ consists those maps $R\to B$ such that for each $f_\beta\in S$,  $B\otimes_RP_\beta\to B\otimes_RQ_\beta$ is an equivalence of $B$-modules.
	\end{thm}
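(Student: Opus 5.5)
The plan is to realize $R[S^{-1}]$ as an idempotent commutative $R$-algebra, namely a smashing (finite) localization of $\modu_R(\mc{A})$. Then the $(-1)$-truncatedness of the mapping spaces comes for free from \cite[Prop.~4.8.2.9]{ha}, and only connectivity needs real work.

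\emph{Step 1: reduce the inversion condition to a vanishing condition.} For each $\beta$ let $C_\beta=\op{cofib}(f_\beta)\in\modu_R(\mc{A})$. Since $P_\beta,Q_\beta$ are compact projective, \cref{l4.2} shows they are dualizable and flat. Hence $C_\beta$ is dualizable, and it is compact by \cref{projrigidproperties}(3). For $B\in\calg(\mc{A})_{R/}$, the map $B\otimes_RP_\beta\to B\otimes_RQ_\beta$ is an equivalence if and only if $B\otimes_RC_\beta\simeq0$. By dualizability this holds if and only if
\[
\unmap_R(C_\beta^\vee,B)\simeq B\otimes_RC_\beta\simeq 0 .
\]
So the condition says that $B$ is right orthogonal to the localizing tensor ideal $\mc{K}\subset\modu_R(\mc{A})$ generated by the $C_\beta^\vee$. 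Because $\modu_R(\mc{A})$ is compactly-rigidly generated (\cref{projrigidproperties}(3)), $\mc{K}$ is generated by the compact objects $C_\beta^\vee\otimes_RX$, with $X$ compact.

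\emph{Step 2: build the localization.} Since $\mc{K}$ is compactly generated by compact objects of a rigid category, the localization $L$ away from $\mc{K}$ is a finite, hence smashing, localization. We set $R[S^{-1}]:=LR$ with its idempotent $\calg$-structure, given by the fibre sequence
\[
\Gamma R\to R\to LR ,
\]
where $\Gamma R\in\mc{K}$. By \cite[Prop.~4.8.2.9]{ha}, $\mapp_{\calg}(LR,B)\to\mapp_{\calg}(R,B)$ is $(-1)$-truncated, and its image consists of those $B$ with $B\simeq LR\otimes_RB$. This happens exactly when $B$ is $L$-local, i.e.\ $\unmap_R(K,B)=0$ for all $K\in\mc{K}$. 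Since $B$ is a module over itself and $\mc{K}$ is an ideal, this is equivalent to the vanishing in Step 1. This proves the universal property.

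\emph{Step 3: connectivity of $LR$ (main obstacle).} It remains to show $LR\in\calg(\ageq)$, i.e.\ $\Gamma R\in\modu_R(\mc{A})_{\geq-1}$. Note that $C_\beta\in\modu_R(\mc{A})_{\geq0}$, and $C_\beta^\vee=\op{fib}(Q_\beta^\vee\to P_\beta^\vee)\in\modu_R(\mc{A})_{\geq-1}$ because duals of compact projectives are compact projective. My first attempt is to compute $\Gamma R$ by a cellular small-object argument: repeatedly attach cones on sums of $\Sigma^nC_\beta^\vee\otimes_RX$ with $X$ compact projective. The difficulty is that orthogonality must be tested against all shifts $n\in\mathbb{Z}$, so negative shifts threaten to destroy the bound.

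To handle this, I would first check that orthogonality to the nonnegative shifts already suffices. The idea is to use $t$-exactness of $\unmap_R(X,-)$ for $X$ projective (\cref{proj1}) to show that a map $R\to M$ killing the $\Sigma^{n}C_\beta^\vee\otimes X$ for $n\geq0$ kills all of $\mc{K}$ when $M$ is the cofibre of a $\mc{K}$-cellular object. This is the analogue of Neeman's $\op{Tor}$-vanishing for classical Cohn localization. If this step fails, the fallback is to replace $LR$ by $\tau_{\geq0}LR$. For connective $B$ one would then verify the universal property directly, and for general $B$ one would use that $\tau_{\geq0}$ commutes with the idempotent structure by \cref{flatpush}.
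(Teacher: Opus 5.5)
Your Steps 1 and 2 are correct, and they take a different route from the paper. The paper never leaves the connective world. It localizes $\modu_R(\mc{A}_{\geq0})$ at the maps $X\otimes_Rf_\beta$ ($X$ compact projective), shows via duality that the local objects are closed under colimits, identifies the result with $\modu_{R[S^{-1}]}(\mc{A}_{\geq0})$ by \cref{rigidbarr}, and derives the universal property from the Morita embedding and \cref{monlocal}. So in the paper connectivity is automatic and the universal property is the work; in your stable smashing-localization route it is the other way around.

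The gap is that Step 3 carries the whole content of $R[S^{-1}]\in\calg(\mc{A}_{\geq0})$, and it is only announced (``I would first check\dots''), not proved. Your fallback does not work as stated. Truncation does not preserve idempotent algebras in general, and \cref{flatpush} is about flat maps, which $R\to LR$ need not be (cf.\ \cref{exetalenotflt}).

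Your first idea does go through, but it needs a consequence of $C_\beta\in\modu_R(\mc{A})_{\geq0}$ that you did not draw. For connective $M$, the obstruction $C_\beta\otimes_RM$ is connective, hence it is detected by nonnegative shifts. Concretely, for $X$ compact projective and $n\geq0$, duality gives $\pi_n\mapp(X,C_\beta\otimes_RM)\cong\pi_0\mapp(\Sigma^nC_\beta^\vee\otimes_RX,M)$. Since $\modu_R(\mc{A}_{\geq0})\simeq\mc{P}_\Sigma(\modu_R(\mc{A}_{\geq0})^{\op{cproj}})$, the vanishing of all these groups forces $C_\beta\otimes_RM\simeq0$, i.e.\ $M$ is $L$-local.

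Now run the small object argument starting from $R$, attaching cones only on sums of the compact objects $\Sigma^nC_\beta^\vee\otimes_RX$ with $n\geq0$. These cells lie in $\modu_R(\mc{A})_{\geq-1}$, so every stage and the colimit $M_\infty$ are connective. Moreover $\op{fib}(R\to M_\infty)$ lies in $\mc{K}$. By compactness $M_\infty$ is orthogonal to every such cell, hence $L$-local by the previous paragraph. Thus $LR\simeq M_\infty$ is connective, and with this argument inserted your proof is complete.
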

	
	\begin{rem}
		\enu{
			\item See \textup{\cite[\textsection3]{hoyois2020cdh}  or \cite[\textsection3.4]{mantovani2023localizations}} for a discussion in the case where $Q_\beta=\mb{1}$ for each $\beta$, which is related to Moore objects in the general setting.
			\item The Cohn localization with respect to $S$ is unique up to contractible choices by \cref{unilocal}.
		}
		
	\end{rem}
	Before the proof, we recall some useful lemmas.
	\begin{lem}[See \cite{arakawa2025monoidal} B.5]
		Let $\mc{C}\xrightarrow{p}\mc{B}$ be a cocartesian fibration of $\infty$-categories. Let $\{S_b|b\in\mc{B}\}$ be given collections of morphisms such that $S_b\subset \funct(\Delta^1,\mc{C}_b)$ for each $b\in \mc{B}$. We denote $S=\bigcup_b S_b$. If for any morphism $s\to t\in\mc{B}$ the cocartesian transformation $\mc{C}_s\to\mc{C}_t$ sends $S_s$ into $S_t$, then the induced functor $q:\mc{D}=\mc{C}[S^{-1}]\to \mc{B}$ from the localization of $\mc{C}$ at $S$ is a cocartesian fibration and canonical functor  
		$$
		\begin{tikzcd}
			\mc{C} \arrow[rr] \arrow[rd, "p"] &        & \mc{D} \arrow[ld, "q"'] \\
			& \mc{B} &                        
		\end{tikzcd}$$
		preserves cocartesian edges and exhibits $\mc{D}_b\simeq \mc{C}_b[S_b^{-1}]$ for each $b\in\mc{B}$. And for any cocartesian fibration $\mc{E}\to\mc{B}$, the composition induces a fully faithful embedding $$\funct^{\op{coCar}}_{/\mc{B}}(\mc{D},\mc{E})\to \funct^{\op{coCar}}_{/\mc{B}}(\mc{C},\mc{E})$$ whose image consists of those cocartesian functors over $\mc{B}$ sending $S$ to equivalences in $\mc{E}$.
	\end{lem}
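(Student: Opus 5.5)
The plan is to pass through straightening and prove everything fiberwise. Write $F\colon \mc{B}\to\hatcat$ (or $\Cat_\infty$) for the functor classifying $p$, so $F(b)\simeq\mc{C}_b$. Let $\Cat_\infty^{+}$ denote the $\infty$-category of marked (relative) $\infty$-categories. The localization functor $L\colon\Cat_\infty^{+}\to\Cat_\infty$, $(\mc{X},W)\mapsto\mc{X}[W^{-1}]$, is left adjoint to the functor $(-)^\natural$ that marks exactly the equivalences.

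The hypothesis that each cocartesian transformation $\mc{C}_s\to\mc{C}_t$ sends $S_s$ into $S_t$ says that $F$ lifts to a functor $F^+\colon\mc{B}\to\Cat_\infty^{+}$ with $F^+(b)=(\mc{C}_b,S_b)$. I then set $G:=L\circ F^+$, so that $G(b)\simeq\mc{C}_b[S_b^{-1}]$. Unstraightening $G$ gives a cocartesian fibration $q'\colon\mc{D}'\to\mc{B}$. Unstraightening the unit $F^+\to (L F^+)^\natural$ gives a functor $\mc{C}\to\mc{D}'$ over $\mc{B}$ that preserves cocartesian edges and restricts on fibers to the localizations $\mc{C}_b\to\mc{C}_b[S_b^{-1}]$. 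In particular it inverts $S$, so it factors through a functor $\mc{D}=\mc{C}[S^{-1}]\to\mc{D}'$ over $\mc{B}$.

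The main step, and the one I expect to be the real obstacle, is to show that $\mc{D}\to\mc{D}'$ is an equivalence, i.e.\ that localization commutes with unstraightening. I would write the total space as a colimit
\[
\mc{C}\simeq\colim_{(\Delta^n\to\mc{B})}\ \Delta^n\times_{\mc{B}}\mc{C},
\]
and do the same for $\mc{D}'$. Both $\mc{C}[S^{-1}]$ and the localizations $(\Delta^n\times_\mc{B}\mc{C})[S^{-1}]$ are computed by the left adjoint $L$, which commutes with colimits in $\Cat^+_\infty$. This reduces the claim to the case $\mc{B}=\Delta^n$. Over $\Delta^n$, a cocartesian fibration is an iterated mapping cylinder: an iterated pushout built from pieces $\mc{C}_i\times\Delta^k$ glued along the transition functors. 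So it suffices to know two facts:
\begin{enumerate}
\item $L$ preserves these pushouts, which holds since $L$ is a left adjoint;
\item $(\mc{X}\times\Delta^k,\,W\times\{\text{identities}\})$ localizes to $\mc{X}[W^{-1}]\times\Delta^k$.
\end{enumerate}
The second fact is the standard statement that localization commutes with products by a category marked only at identities; it follows from cartesian closedness, since $\Fun(\Delta^k,-)$ preserves the relevant subcategories of functors inverting $W$.

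Granting this, $q\simeq q'$ is a cocartesian fibration, $\mc{C}\to\mc{D}$ preserves cocartesian edges, and $\mc{D}_b\simeq\mc{C}_b[S_b^{-1}]$. For the universal property I straighten again. Let $\mc{E}\to\mc{B}$ be classified by $H$. Then cocartesian functors $\mc{D}\to\mc{E}$ over $\mc{B}$ are natural transformations $G\to H$. By the adjunction $L\dashv(-)^\natural$ applied pointwise, these are the same as natural transformations $F^+\to H^\natural$. Since $(-)^\natural$ is fully faithful and marked maps form a full subcategory (inverting $S$ is a property), natural transformations $F^+\to H^\natural$ form the full subcategory of $\mathrm{Nat}(F,H)\simeq\funct^{\op{coCar}}_{/\mc{B}}(\mc{C},\mc{E})$ consisting of those functors that send $S$ to equivalences. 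This yields the asserted fully faithful embedding with the described image.
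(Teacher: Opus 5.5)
Your argument is correct. The paper itself gives no proof of this lemma; it refers to \cite{arakawa2025monoidal}. So your proposal is a genuinely independent, self-contained route rather than a reconstruction. The structure is sound: lift the straightening $F$ to marked $\infty$-categories, localize pointwise, unstraighten, and identify the result with $\mathcal{C}[S^{-1}]$ by reducing to $\mathcal{B}=\Delta^n$ and using mapping simplices. It also shows exactly where the pushforward-stability hypothesis enters. It is used once to produce $F^{+}$: the fibres of $\mathrm{Cat}_\infty^{+}\to\mathrm{Cat}_\infty$ are posets, so a lift exists precisely when the transition functors preserve the markings. It is used again to see that the marking on the glued pieces $\mathcal{C}_i\times\Delta^{\{i,\dots,n\}}$, and on the colimit over simplices, is exactly $S$.

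The one load-bearing input you leave unjustified is the decomposition $\mathcal{C}\simeq\varinjlim_{\Delta^n\to\mathcal{B}}\Delta^n\times_{\mathcal{B}}\mathcal{C}$, and likewise for $\mathcal{D}'$. Base change does not preserve colimits in $\mathrm{Cat}_\infty$ in general. For example, pull back the pushout $\Delta^{\{0,1\}}\sqcup_{\{1\}}\Delta^{\{1,2\}}\simeq\Delta^2$ along $\Delta^{\{0,2\}}\hookrightarrow\Delta^2$: you get two points, not $\Delta^{\{0,2\}}$. The decomposition holds here only because cocartesian fibrations are flat, equivalently exponentiable (cf.\ \cite[\textsection B.3]{ha}), so base change along them preserves colimits. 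You should state and cite this.

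Two smaller remarks on the last step. First, the statement concerns functor $\infty$-categories, not just mapping spaces. So the adjunction $L\dashv(-)^\natural$ should be used in its enriched form $\operatorname{Fun}(\mathcal{X}[W^{-1}],\mathcal{Y})\simeq\operatorname{Fun}^{W}(\mathcal{X},\mathcal{Y})$, after which you pass to ends. Second, the second straightening is unnecessary. Once $q$ is cocartesian and $\mathcal{C}\to\mathcal{D}$ preserves cocartesian edges, every $q$-cocartesian edge is equivalent to the image of a $p$-cocartesian edge, by essential surjectivity of the localization and uniqueness of cocartesian lifts. Now take the equivalence $\operatorname{Fun}_{/\mathcal{B}}(\mathcal{D},\mathcal{E})\simeq\operatorname{Fun}^{S}_{/\mathcal{B}}(\mathcal{C},\mathcal{E})$, which is just the universal property of $\mathcal{C}[S^{-1}]$ taken in fibres over $q$ and $p$. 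Restricting it to cocartesian functors gives the claimed embedding and its image directly.
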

	\begin{rem}
		\enu{\item Note that a cocartesian functor $\mc{C}\to\mc{E}$  over $\mc{B}$ sends $S$ to equivalences in $\mc{E}$ if and only if the induced functor on each fiber $\mc{C}_b\to\mc{E}_b$ sends $S_b$ to equivalences in $\mc{E}_b$.
			\item The lemma above is a generalization of \cite[Prop. 2.2.1.9]{ha}. The statement there only gives a construction in the case of reflective localization.
		}
	\end{rem}
	\begin{cor}\label{monlocal}
		Let $\mc{C}^\otimes$ be a symmetric monoidal $\infty$-category and $S$ be a collection of morphisms in $\mc{C}$ satisfying that $f\otimes g\in S$ if both $f,g\in S$. Then the localization $\mc{D}=\mc{C}[S^{-1}]$ inherits a natural symmetric monoidal structure and the localization can be promoted to a symmetric monoidal functor $\cotimes\to\mc{D}^\otimes$ satisfying the universal property that for any symmetric monoidal \infcat $\mc{E}^\otimes$ the composition induces a fully faithful embedding $$\funct^{\otimes}_{/\mathrm{N}(\op{Fin_*})}(\mc{D}^\otimes,\mc{E}^\otimes)\to \funct^{\otimes}_{/\mathrm{N}(\op{Fin_*})}(\mc{C}^\otimes,\mc{E}^\otimes)$$ whose image consists of those symmetric monoidal functors sending $S$ to equivalences in $\mc{E}$.
	\end{cor}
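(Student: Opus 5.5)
The plan is to deduce \cref{monlocal} from the preceding lemma (\cite[B.5]{arakawa2025monoidal}), applied to the cocartesian fibration $p:\mc{C}^\otimes\to \mathrm{N}(\op{Fin}_*)$ that encodes the symmetric monoidal structure. For each $\langle n\rangle\in\op{Fin}_*$ the fiber is $\mc{C}^\otimes_{\langle n\rangle}\simeq \mc{C}^{\times n}$. On this fiber we take $S_{\langle n\rangle}$ to be the set of morphisms $(f_1,\dots,f_n)$ with every $f_i\in S\cup\{\text{equivalences}\}$. Adjoining identities is harmless: localizing at $S$ and at $S$ together with all equivalences gives the same result. We then set $S^\otimes=\bigcup_n S_{\langle n\rangle}$.

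The first step is to check the hypothesis of the lemma. Take a morphism $\alpha:\langle n\rangle\to\langle m\rangle$. The cocartesian transport $\mc{C}^{\times n}\to\mc{C}^{\times m}$ sends $(X_i)$ to $(\bigotimes_{\alpha(i)=j}X_i)_j$. An empty tensor product yields the identity on the unit, which is an equivalence. A tensor product of several morphisms from $S\cup\{\text{equiv}\}$ lies in $S\cup\{\text{equiv}\}$, by the closure hypothesis on $S$. One small point needs care here: a tensor of an element of $S$ with an equivalence, such as $f\otimes \id_Y$, need not lie in $S$. I would either enlarge $S$ to its closure under tensoring with identities, noting that this does not change the localization of $\mc{C}$ when the hypothesis is read as saying $S$ is closed under such tensors, or observe that the hypothesis as intended includes identities. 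This is the main point to settle.

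Granting this, the lemma produces a cocartesian fibration $q:\mc{D}^\otimes:=\mc{C}^\otimes[(S^\otimes)^{-1}]\to\mathrm{N}(\op{Fin}_*)$, whose fibers are $\mc{C}^{\times n}[S_{\langle n\rangle}^{-1}]$, together with a functor $\mc{C}^\otimes\to\mc{D}^\otimes$ that preserves cocartesian edges. The next step is to verify that $q$ satisfies the Segal condition. Localization commutes with finite products when each factor's class contains the identities; this is standard (for instance \cite[Prop.~4.1.7.4]{ha}-type arguments, or because $\mc{C}[W^{-1}]$ is computed as a pushout in $\Cat_\infty$ and $\times$ preserves colimits in each variable). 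It gives $\mc{C}^{\times n}[S_{\langle n\rangle}^{-1}]\simeq \mc{D}^{\times n}$ with $\mc{D}=\mc{C}[S^{-1}]$. Hence $\mc{D}^\otimes$ is a symmetric monoidal $\infty$-category with underlying $\mc{D}$, and $\mc{C}^\otimes\to\mc{D}^\otimes$ is symmetric monoidal.

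Finally, the universal property follows from the last clause of the lemma. Symmetric monoidal functors over $\mathrm{N}(\op{Fin}_*)$ are exactly the cocartesian functors, so $\funct^\otimes(\mc{D}^\otimes,\mc{E}^\otimes)\to\funct^\otimes(\mc{C}^\otimes,\mc{E}^\otimes)$ is fully faithful. Its image consists of the functors inverting $S^\otimes$ fiberwise, by the subsequent remark. On the fiber over $\langle n\rangle$ the induced functor is the $n$-fold product of the underlying functor $\mc{C}\to\mc{E}$. It therefore inverts $S_{\langle n\rangle}$ if and only if the underlying functor inverts $S$. This gives the stated image.
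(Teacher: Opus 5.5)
Your route is the intended one. The paper gives no separate argument and treats the corollary as the preceding lemma applied to $\mc{C}^\otimes\to\mathrm{N}(\op{Fin}_*)$. Your three checks are exactly what that application needs: the transport condition, the Segal condition (localization commutes with finite products once the classes contain identities, which is why you put equivalences into $S_{\langle n\rangle}$), and the fiberwise description of the image. However, the point you left open cannot be settled by your first option, because the statement is false as literally written. In $(\mathrm{Ab},\otimes)$, let $S$ be the class of maps $0\to C$ with $C$ infinite cyclic; it is closed under $\otimes$. The functor $\Lambda^2\colon\mathrm{Ab}\to\mathrm{Ab}$ inverts $S$, since $\Lambda^2C=0$, so it factors through $L\colon\mathrm{Ab}\to\mathrm{Ab}[S^{-1}]$. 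But it sends $(0\to\mathbb{Z})\otimes\id_{\mathbb{Z}^2}\cong(0\to\mathbb{Z}^2)$ to $0\to\mathbb{Z}$. Hence $L(f\otimes\id_Y)$ is not an equivalence in $\mathrm{Ab}[S^{-1}]$. If $L$ were symmetric monoidal, it would be equivalent to $L(f)\otimes\id_{LY}$ and hence invertible.

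So enlarging $S$ to its closure $S'$ under $f\mapsto f\otimes\id_Y$ does change the localization. Your first option still proves a true statement: symmetric monoidal functors inverting $S$ automatically invert $S'$, so $\mc{C}[S'^{-1}]^\otimes$ has the asserted universal property. But its underlying $\infty$-category is $\mc{C}[S'^{-1}]$, not $\mc{C}[S^{-1}]$. The correct repair is your second option, stated as an explicit extra hypothesis: $S$ contains all identities. After closing $S$ under equivalence in $\funct(\Delta^1,\mc{C})$, which does not affect the localization, $S$ then contains all equivalences. Consequently $S\cup\{\text{equivalences}\}=S$ is closed under $\otimes$. With this hypothesis your argument goes through unchanged. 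Nothing else in the paper is affected, because the corollary is only applied, in the proof of \cref{unilocal}, to the strongly saturated class $\overline{S}_1$, which contains all equivalences.
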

	\begin{lem}\label{rigidbarr}
		Let $\mc{C}^\otimes \xrightarrow{F^\otimes} \mc{D}^\otimes \in \calg(\prl)$. Let $G^\otimes:\mc{D}^\otimes\to\mc{C}^\otimes$ be the relative right adjoint of $F^\otimes$. If $\mc{C}$ is generated by dualizables under small colimits and $G$ is conservative and small-colimit-preserving, then  $G^\otimes$ is symmetric monoidal monadic, i.e. there exist an $R\in\calg(\mc{C})$ and a symmetric monoidal equivalence $\op{Mod}_R(\mc{C})^\otimes\simeq \mc{D}^\otimes$ such that the following diagram is commutative.
		$$
		\begin{tikzcd}
			\op{Mod}_R(\mc{C})^\otimes \arrow[rr, "\sim"] \arrow[rd] &                & \mc{D}^\otimes \arrow[ld, "G^\otimes"] \\
			& \mc{C}^\otimes &                                       
		\end{tikzcd}$$
	\end{lem}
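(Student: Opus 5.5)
The plan is to set $R := G(\mathbf{1}_{\mc{D}})$ and reduce to the symmetric monoidal Barr--Beck--Lurie theorem. First, since $G^\otimes$ is lax symmetric monoidal (relative right adjoint of a symmetric monoidal functor), $R = G(\mathbf{1}_{\mc{D}})$ inherits a commutative algebra structure. The comparison functor $\Phi:\mc{D}\to \op{Mod}_R(\mc{C})$ then sends $D\mapsto G(D)$, with module structure
\[
R\otimes G(D)=G(\mathbf{1})\otimes G(D)\to G(\mathbf{1}\otimes D)\simeq G(D)
\]
coming from the lax structure. Equivalently, $F$ factors as $\mc{C}\xrightarrow{R\otimes -}\op{Mod}_R(\mc{C})\xrightarrow{\Psi}\mc{D}$, where $\Psi(M)=F(M)\otimes_{F(R)}\mathbf{1}_{\mc{D}}$. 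Here $\Psi$ is symmetric monoidal and colimit-preserving, being the base change along the counit $F(R)=FG(\mathbf{1})\to\mathbf{1}$ of commutative algebras in $\mc{D}$. Its right adjoint is $\Phi$. This produces the commutative triangle, and it remains only to check that $\Psi$ is an equivalence.

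The key step is the projection formula: for every $C\in\mc{C}$ and $D\in\mc{D}$, the canonical map
\[
C\otimes G(D)\to G(F(C)\otimes D)
\]
should be an equivalence. For dualizable $C$ this is formal: both sides corepresent the same functor, since $F(C)$ is dualizable with dual $F(C^\vee)$, giving
\[
\Map(X, G(FC\otimes D))\simeq \Map(F(C^\vee\otimes X),D)\simeq \Map(C^\vee\otimes X,GD)\simeq\Map(X,C\otimes GD).
\]
Both sides preserve small colimits in $C$, because $G$ preserves colimits by hypothesis. Since $\mc{C}$ is generated by dualizables under colimits, the formula holds for all $C$.

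Given the projection formula, $\Phi\Psi\simeq\id$ on free modules $R\otimes C$: indeed $\Phi\Psi(R\otimes C)=G(F(C))\simeq C\otimes G(\mathbf{1})=R\otimes C$, and one must check that this identification is the unit. Both $\Phi$ and $\Psi$ preserve colimits, since $\Phi$ is computed by $G$ and colimits in $\op{Mod}_R$ are computed underlying. Because $\op{Mod}_R(\mc{C})$ is generated under colimits by free modules (\cref{l1}), the unit is an equivalence everywhere. Hence $\Psi$ is fully faithful, and conservativity of $\Phi$ (which follows from that of $G$) implies $\Psi$ is an equivalence. Alternatively, one can invoke Barr--Beck--Lurie directly, using conservativity and colimit preservation of $G$ to get $\mc{D}\simeq\op{LMod}_{GF}(\mc{C})$, and then use the projection formula to identify the monad $GF\simeq R\otimes(-)$.

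The main obstacle I expect is coherence. One must ensure that the identification of $\Phi\Psi$ on free modules really is the unit map, and that the equivalence is symmetric monoidal rather than merely an equivalence over $\mc{C}$. I would handle the second point by constructing $\Psi$ as a symmetric monoidal functor from the start, via the universal property of $\op{Mod}_R(\mc{C})^\otimes$ as the pushout $\mc{C}^\otimes\to\op{Mod}_R(\mc{C})^\otimes$ in $\calg(\prl)$ along $R$ (Lurie, HA 4.8.5), together with the algebra map $F(R)\to\mathbf{1}$. An equivalence of underlying categories that is symmetric monoidal is then an equivalence in $\calg(\prl)$.
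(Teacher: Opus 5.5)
Your proposal is correct and follows essentially the paper's route. The key step in both is the projection formula $C\otimes G(D)\simeq G(F(C)\otimes D)$. It is checked for dualizable $C$ by the same chain of mapping-space equivalences, then extended to all $C$ because $\mathcal{C}$ is generated by dualizables under colimits and $G$ preserves colimits.

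The difference is that the paper stops there and cites \cite[Cor.~4.8.5.21]{ha}. That result says conservativity, colimit-preservation and the projection formula force $F^\otimes$ to be $\mathcal{C}^\otimes\to\operatorname{Mod}_{G(\mathbf{1})}(\mathcal{C})^\otimes$. Its proof is exactly your construction of $\Psi$ plus the unit/conservativity argument, so the coherence points you flag are handled there rather than needing separate treatment.
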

	\begin{proof}
		By \cite[Cor. 4.8.5.21]{ha}, it suffices to show that $G$ satisfies the projection formula, that is, for every object $C\in \mc{C}$ and $D \in\mc{D}$, the canonical map
		$C \otimes G(D)\to G(F(C) \otimes D)$
		is an equivalence. By the assumption, it suffices to verify the case $C$ is dualizable. In this case, for any $M\in\mc{C}$ we have 
		$$
		\begin{aligned}
			\operatorname{Map}_{\mathcal{C}}(M, C \otimes G(D))  \simeq \operatorname{Map}_{\mathcal{C}}\left(C^{\vee} \otimes M, G(D)\right) 
			\simeq \operatorname{Map}_{\mathcal{D}}\left(F\left(C^{\vee} \otimes M\right), D\right) \\
			\simeq \operatorname{Map}_{\mathcal{D}}\left(C^{\vee} \otimes F(M), D\right) 
			\simeq \operatorname{Map}_{\mathcal{D}}(F(M), C \otimes D) 
			\simeq \operatorname{Map}_{\mathcal{C}}(M, G(C \otimes D)) .
		\end{aligned}
		$$
		That indicates the projection formula holds.
	\end{proof}
	\begin{proof}[Proof of  \cref{unilocal}:]
		Let $S_1\subset \funct(\Delta^1,\modu_R(\ageq))$ be the set of morphisms $$\{X_\alpha\otimes_R f_\beta|X_\alpha\in \modu_R(\ageq)^{\op{cproj}}, f_\beta\in S\}.$$ Then $S_1$ is small and thereby generates a strongly saturated class $\overline{S}_1$ of small generation (see \cite[\textsection5.5.4]{htt}). 
		Then  $\overline{S}_1\subset \funct(\Delta^1,\modu_{R}(\ageq))$ satisfies conditions in \cref{monlocal}, thereby it produces a symmetric monoidal localization $\modu_{R}(\ageq)^\otimes\xrightarrow{F^\otimes} \modu_{R}(\ageq)[\overline{S}_1^{\,-1}]^\otimes=D^\otimes$ such that $F^\otimes\in \calg(\prl)$. Since $D\subset \modu_{R}(\ageq)$ closed under finite products, it lies in $\calg(\prlad)$ and $F^\otimes\in \calg(\prlad)$. 
		
		Now we wish to show that $F^\otimes$ satisfies conditions in \cref{rigidbarr}. It suffices to verify that $D\subset \modu_{R}(\ageq)$ closed under small colimits, i.e. $S_1$-local objects are closed under small colimits. Unwinding the definition, a connective $R$-module $M$ is $S_1$-local if and only if $$\mapp_{\modu_R(\ageq)}(X_\alpha\otimes_R Q_\beta,M)\to \mapp_{\modu_R(\ageq)}(X_\alpha\otimes_R P_\beta,M)$$ is equivalent for any $X_\alpha\otimes_R f_\beta\in S_1$. However, this map can be identified with 
		$$\mapp_{\modu_R(\ageq)}(X_\alpha, Q_\beta^\vee\otimes_R M)\to \mapp_{\modu_R(\ageq)}(X_\alpha, P_\beta^\vee\otimes_R M).$$
		So by the projective generation, $M$ is $S_1$-local if and only if $f_\beta^\vee\otimes_R M: Q_\beta^\vee\otimes_R M\to P_\beta^\vee\otimes_R M$ is equivalent for each $f_\beta\in S$. That implies $S_1$-local objects are closed under small colimits. So there exist an $R[S^{-1}]\in\calg(\ageq)_{R/}$ and an equivalence $\op{Mod}_{R[S^{-1}]}(\ageq)^\otimes\simeq \mc{D}^\otimes$ such that the following diagram is commutative.
		$$
		\begin{tikzcd}
			\op{Mod}_{R[S^{-1}]}(\ageq)^\otimes \arrow[rr, "\sim"] \arrow[rd] &                & \mc{D}^\otimes \arrow[ld, "G^\otimes"] \\
			& \op{Mod}_{R}(\ageq)^\otimes &                                       
		\end{tikzcd}$$
		
		Now given $B\in \calg(\mc{A})$, we need to show that the induced map $$\mapp_{\calg(\mc{A})}(R[S^{-1}],B)\to \mapp_{\calg(\mc{A})}(R,B)$$ is a $(-1)$-truncated map whose image on $\pi_0$ consists those maps $R\to B$ such that for any $\beta\in J$,  $B\otimes_RP_\beta\to B\otimes_RQ_\beta$ is an equivalence of $B$-modules. Without loss of generality, we can assume that $B$ is connective. By \cite[Cor. 4.8.5.21]{ha}, we have the following Morita embedding,
		$$\calg(\ageq)\to \calg(\prlad)_{\ageq^\otimes/} $$
		therefore it suffices to show that  $F^\otimes$ induces a fully faithful embedding $$\funct^{\otimes,L}_{/\mathrm{N}(\op{Fin_*})}(\op{Mod}_{R[S^{-1}]}(\ageq)^\otimes,\op{Mod}_{B}(\ageq)^\otimes)\to \funct^{\otimes,L}_{/\mathrm{N}(\op{Fin_*})}(\op{Mod}_{R}(\ageq)^\otimes,\op{Mod}_{B}(\ageq)^\otimes)$$  whose image consists of those functors sending $S$ to equivalences in $\op{Mod}_{B}(\ageq)$, where $\funct^{\otimes,L}_{/\mathrm{N}(\op{Fin_*})}$ denotes symmetric monoidal functors which preserve small colimits. However, that is implied by \cref{monlocal}.
	\end{proof}	
	\begin{rem}\label{invertmapsofdualizables}
		The argument above works for a set $S\subset\funct(\Delta^1,\mc{C}^d)$ of morphisms between dualizables inside an arbitrary presentably symmetric monoidal \infcat $\cotimes$ which is generated by dualizables under small colimits.
	\end{rem}
	We now start to investigate the properties of Cohn localizations.
	\begin{prop}\label{pi0unilocal}
		Let $f_S: R\to R[S^{-1}]\in \calg(\ageq)$ be the Cohn localization at $S$ in \cref{unilocal}. Then the map $\pi_0f_S: \pi_0R\to \pi_0(R[S^{-1}])$ exhibits $\pi_0(R[S^{-1}])\simeq (\pi_0R)[(\pi_0S)^{-1}]$ as the Cohn localization of $\pi_0R$ at $\pi_0S$ in the sense of \cref{unilocal1}, where $\pi_0S=\{\pi_0P_\beta\xrightarrow{\pi_0f_\beta} \pi_0Q_\beta|f_\beta\in S\}$.
	\end{prop}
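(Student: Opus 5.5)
Since \cref{unilocal1} is the discrete analogue of \cref{unilocal} (for the 1-projectively rigid heart $\mc{A}^\heartsuit$, presumably constructed by the same localization argument applied to $\modu_{\pi_0R}(\mc{A}^\heartsuit)$), I would compare universal properties in the 1-category $\calg(\mc{A}^\heartsuit)_{\pi_0R/}$. The goal is to show that for every discrete commutative $\pi_0R$-algebra $B$, the set $\Hom_{\calg(\mc{A}^\heartsuit)_{\pi_0R/}}(\pi_0(R[S^{-1}]),B)$ is a singleton if $\pi_0S$ becomes invertible after base change to $B$, and empty otherwise. Yoneda then identifies $\pi_0(R[S^{-1}])$ with $(\pi_0R)[(\pi_0S)^{-1}]$ compatibly with the maps from $\pi_0R$.

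\textbf{Step 1.} Because $\pi_0=\tau_{\le 0}$ is left adjoint to the inclusion $\calg(\mc{A}^\heartsuit)\hookrightarrow\calg(\ageq)$, for discrete $B$ we have
\[
\Map_{\calg(\ageq)_{R/}}(R[S^{-1}],B)\simeq \Hom_{\calg(\mc{A}^\heartsuit)_{\pi_0R/}}(\pi_0R[S^{-1}],B),
\]
where $B$ is viewed as an $R$-algebra via $R\to\pi_0R\to B$. By \cref{unilocal}, the left side is contractible exactly when $B\otimes_R f_\beta$ is an equivalence for all $\beta$, and empty otherwise. Here we use that the relative mapping space is the fiber of a $(-1)$-truncated map.

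\textbf{Step 2.} Compare $B\otimes_R f_\beta$ with $B\overline{\otimes}_{\pi_0R}\pi_0f_\beta$. By \cref{pi0proj}, the modules $P_\beta$ and $Q_\beta$ are flat, so $B\otimes_R P_\beta$ is discrete. The $\pi_0$-compatibility of relative tensor products used in \cref{pi0flat}(1) then gives $B\otimes_RP_\beta\simeq B\overline{\otimes}_{\pi_0R}\pi_0P_\beta$, naturally in the map. Consequently $B\otimes_R f_\beta$ is an equivalence if and only if $B\overline{\otimes}_{\pi_0R}\pi_0 f_\beta$ is an isomorphism, which is precisely the $\pi_0S$-inverting condition on $\pi_0R\to B$. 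By \cref{eqproj}, the $\pi_0P_\beta$ and $\pi_0Q_\beta$ are compact 1-projective, so $\pi_0S$ is an admissible set for \cref{unilocal1}.

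\textbf{Step 3.} Combining Steps 1 and 2, $\pi_0R\to\pi_0(R[S^{-1}])$ has the universal property of the Cohn localization of $\pi_0R$ at $\pi_0S$, so it agrees with it up to unique isomorphism.

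The main obstacle is Step 2: the identification $B\otimes_RP\simeq B\overline{\otimes}_{\pi_0R}\pi_0P$ must be natural in maps $P\to Q$ of compact projectives, not merely objectwise. I would obtain naturality by noting that both are the restriction to compact projectives of the functor $M\mapsto \pi_0(B\otimes_RM)$. On compact projectives (which are flat) this functor agrees with $B\otimes_R(-)$ because the outputs are discrete, and it agrees with $B\overline{\otimes}_{\pi_0R}\pi_0(-)$ because $\pi_0$ is symmetric monoidal and preserves geometric realizations.
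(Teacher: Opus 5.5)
Your proposal is correct and is essentially the paper's argument. The paper's proof says only that the claim follows immediately from the universal property in \cref{unilocal}. Your Steps 1--3 supply exactly the details left implicit there: reflect discrete algebras via $\pi_0$, then use flatness of compact projectives (\cref{pi0proj}(1)) to identify $B\otimes_R f_\beta$ with $B\overline{\otimes}_{\pi_0R}\pi_0 f_\beta$. The naturality you flag is harmless, since $\pi_0(B\otimes_R -)\simeq B\overline{\otimes}_{\pi_0R}\pi_0(-)$ is a natural equivalence of functors on connective modules.
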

	\begin{proof}
		It follows immediately from the universal property of the Cohn localization.
	\end{proof}
	
	\begin{prop}\label{unilocalproperties}
		Let $f_S: R\to R[S^{-1}]$ be the Cohn localization at $S$ in \cref{unilocal}. Then   $R[S^{-1}]$ is an idempotent  \ein-$R$-algebra.
	\end{prop}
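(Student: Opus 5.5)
We need to show that the multiplication $R[S^{-1}]\otimes_R R[S^{-1}]\to R[S^{-1}]$ is an equivalence. Equivalently, $R\to R[S^{-1}]$ should be an epimorphism in $\calg(\mc{A})$: for every $B$, the diagonal of $\mapp(R[S^{-1}],B)$ over $\mapp(R,B)$ must be an equivalence. The plan is to get this directly from the universal property in \cref{unilocal}, in two steps.

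\textbf{Step 1 (epimorphism).} By \cref{unilocal}, for every $B\in\calg(\mc{A})$ the map
\[
\mapp_{\calg(\mc{A})}(R[S^{-1}],B)\to \mapp_{\calg(\mc{A})}(R,B)
\]
is $(-1)$-truncated. A map of spaces is $(-1)$-truncated exactly when its diagonal is an equivalence. By the coproduct property in $\calg(\mc{A})_{R/}$, the space $\mapp_{R}(R[S^{-1}]\otimes_R R[S^{-1}],B)$ is the fiber product
\[
\mapp_R(R[S^{-1}],B)\times_{\mapp(R,B)}\mapp_R(R[S^{-1}],B).
\]
So the map induced by the codiagonal $\nabla\colon R[S^{-1}]\otimes_R R[S^{-1}]\to R[S^{-1}]$ (the multiplication) is an equivalence on $\mapp_{\calg(\mc{A})_{R/}}(-,B)$ for all $B$.

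\textbf{Step 2 (Yoneda).} By the Yoneda lemma in $\calg(\mc{A})_{R/}$, the multiplication map is an equivalence, so $R[S^{-1}]$ is idempotent. Both algebras are connective, and the argument equally works inside $\calg(\ageq)_{R/}$, where coproducts are also relative tensor products.

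\textbf{Alternative via modules.} One can instead use the construction in the proof of \cref{unilocal}. Since $\modu_{R[S^{-1}]}(\ageq)\simeq D$ is a reflective subcategory of $\modu_R(\ageq)$ whose reflector $F$ is base change $R[S^{-1}]\otimes_R(-)$, the unit of this adjunction evaluated on the local object $R[S^{-1}]$ is an equivalence. That unit is $R[S^{-1}]\to R[S^{-1}]\otimes_R R[S^{-1}]$, a section of the multiplication, so the multiplication is an equivalence.

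\textbf{Main obstacle.} The subtle point is identifying the localization functor with base change and checking that the forgetful functor is fully faithful. The first argument avoids this, so I would present it and keep the second as a remark.
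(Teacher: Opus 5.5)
Your main argument (Steps 1--2) is essentially the paper's proof: the paper observes that $f_S$ is an epimorphism in $\calg(\ageq)$, i.e.\ the codiagonal square is a pushout, because the mapping-space map in \cref{unilocal} is $(-1)$-truncated, and you have simply spelled out the diagonal/Yoneda argument. (Your fiber-product formula is correct for mapping spaces in $\calg(\mc{A})$; in the undercategory $\calg(\mc{A})_{R/}$ it is just a product of fibers.) Your alternative via the reflective subcategory $D\simeq\operatorname{Mod}_{R[S^{-1}]}(\ageq)$ is also valid, since the proof of \cref{unilocal} already identifies the forgetful functor with the fully faithful inclusion of $S_1$-local objects.
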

	\begin{proof}
		It suffices to show that the following diagram is a pushout in $\calg(\ageq)$,
		$$\begin{tikzcd}
			R \arrow[d] \arrow[r] & {R[S^{-1}]} \arrow[d, double, no head] \\ 
			{R[S^{-1}]} \arrow[r, double, no head] & {R[S^{-1}]} 
		\end{tikzcd}$$
		i.e. to show that $f_S$ is an ($\infty$-categorical) epimorphism in $\calg(\ageq)$. That is implied by the description of mapping spaces in \cref{unilocal}.
	\end{proof}
	
	\begin{de}
		Suppose that the $ttt$-\infcat $(\mc{A}^\otimes, \mc{A}_{\geq0}) $ is projectively rigid. We say  a map $A\to B\in \calg(\ageq)$ is a (finitary) Cohn localization if there exists a (finite) set $S$ of morphisms between compact projective $R$-modules such that $B\simeq A[S^{-1}]$.
	\end{de}
	\begin{rem}
		Note that if $S=\{P_i\xrightarrow{f_i} Q_i\}$ is finite, then $A[S^{-1}]\simeq A[f^{-1}]$ is equivalent to the Cohn localization at the single element $f=\bigoplus_i f_i$.
	\end{rem}
	\begin{prop}\label{unilocalfp}
		Suppose that the $ttt$-\infcat $(\mc{A}^\otimes, \mc{A}_{\geq0}) $ is projectively rigid.	Let $ A\to B\in \calg(\ageq)$ be a finitary Cohn localization. Then $B$ is finitely presented over $A$.
	\end{prop}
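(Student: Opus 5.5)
To prove that $B=A[S^{-1}]$ is compact in $\calg(\ageq)_{A/}$, I would argue directly from the universal property of \cref{unilocal}, rather than building an explicit presentation. By the preceding remark, we may assume $S=\{f\colon P\to Q\}$ is a single morphism between compact projective $A$-modules. Compactness means that for every filtered diagram $\{C_i\}$ in $\calg(\ageq)_{A/}$ with colimit $C$, the map
\[
\colim_i \mapp_{A}(B,C_i)\to \mapp_A(B,C)
\]
is an equivalence, where $\mapp_A$ denotes mapping spaces in $\calg(\ageq)_{A/}$. By \cref{unilocal}, each $\mapp_A(B,C_i)$ is $(-1)$-truncated, so it is either empty or contractible. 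It is contractible exactly when $C_i\otimes_A f$ is an equivalence. Filtered colimits of $(-1)$-truncated spaces are $(-1)$-truncated, and they are computed as a union. So the claim reduces to the following: if $C\otimes_A f$ is an equivalence, then $C_i\otimes_A f$ is an equivalence for some $i$.

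For the key step, I would use dualizability. Since $\modu_A(\ageq)^\otimes$ is projectively rigid by \cref{projrigidproperties}(2), the modules $P$ and $Q$ are dualizable. Hence $C\otimes_A f$ is a morphism between the dualizable $C$-modules $C\otimes_A P$ and $C\otimes_A Q$, which are compact in $\modu_C(\mc{A})$. Suppose it is an equivalence, and let $g\colon C\otimes_AQ\to C\otimes_AP$ be its inverse. By adjunction, $g$ corresponds to a point of
\[
\mapp_{\modu_A(\mc{A})}(Q,C\otimes_AP)\simeq \Omega^\infty(Q^\vee\otimes_A P\otimes_A C).
\]
This functor commutes with filtered colimits in $C$, because $Q^\vee\otimes_AP\otimes_A(-)$ and $\Omega^\infty$ both preserve filtered colimits, and filtered colimits in $\calg(\ageq)_{A/}$ are computed underlying. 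So $g$ descends to some $g_i\colon C_i\otimes_AQ\to C_i\otimes_AP$.

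The homotopies $g\circ(C\otimes f)\simeq\id$ and $(C\otimes f)\circ g\simeq \id$ are paths in the analogous spaces $\Omega^\infty(P^\vee\otimes P\otimes C)$ and $\Omega^\infty(Q^\vee\otimes Q\otimes C)$. These spaces are again filtered colimits, and $\pi_0$ commutes with filtered colimits of spaces. Therefore, after enlarging $i$, both homotopies already hold at stage $i$, and $C_i\otimes_A f$ is an equivalence with inverse $g_i$.

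Essentially all of the work lies in this compactness argument for $\mapp(P\otimes C,\,Q\otimes C)$. It rests on compactness of $P$ and $Q$ in $\modu_A(\ageq)$, or equivalently on dualizability, and on filtered colimits in $\calg(\ageq)_{A/}$ being created by the forgetful functor. Both facts are standard, so I do not expect a serious obstacle. The one point needing care is that \cref{unilocal} states the universal property in $\calg(\mc{A})$, whereas we work in the undercategory. Since connective targets suffice and the mapping spaces in the undercategory are fibers over the fixed map $A\to C$, the $(-1)$-truncatedness and the criterion transfer directly.
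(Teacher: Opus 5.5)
Your argument is correct, and its outer structure is exactly the paper's: reduce to a single $f\colon P\to Q$, use \cref{unilocal} to see that all the mapping spaces are empty or contractible, and reduce to showing that if $C\otimes_A f$ is an equivalence then so is some $C_i\otimes_A f$. The difference lies only in this last step.

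The paper uses stability. $C\otimes_A f$ is an equivalence iff $\operatorname{cofib}(f)\otimes_A C\simeq 0$. Since $\operatorname{cofib}(f)$ is a compact $A$-module and $\operatorname{cofib}(f)\otimes_A C\simeq\varinjlim_i\operatorname{cofib}(f)\otimes_A C_i$, the unit map $\operatorname{cofib}(f)\to\operatorname{cofib}(f)\otimes_A C_i$ is nullhomotopic for some $i$. Its $C_i$-linear adjoint is the identity of $\operatorname{cofib}(f)\otimes_A C_i$, so that module vanishes. This needs a single compactness argument.

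You instead descend an inverse $g$ together with the two homotopies. That costs three compactness arguments, but it never uses cofibers, so it works verbatim in a non-stable setting. For example, it proves the analogous statement for a $1$-projectively rigid abelian base directly. There the paper, because kernels are not preserved by base change, has to detour through $\mathcal{D}(\mathbf{A})$, the derived Cohn localization and $\pi_0$.

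Dualizability of $P,Q$ plays no real role in your step. $\operatorname{Map}_{\operatorname{Mod}_A(\mathcal{A})}(Q,C\otimes_A P)$ commutes with filtered colimits in $C$ simply because $Q$ is compact and $(-)\otimes_A P$ preserves colimits. Your ``$\Omega^\infty(Q^\vee\otimes_A P\otimes_A C)$'' should be read as $\operatorname{Map}_{\operatorname{Mod}_A(\mathcal{A})}(A,Q^\vee\otimes_A P\otimes_A C)$, which commutes with filtered colimits because the unit is compact.
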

	\begin{proof}
		By the remark above, we can assume that $S=\{f\}$ consists of a single element. Now given a filtered colimit of connective \ein-$A$-algebras $\varinjlim_\alpha C_\alpha=C$ we need to show that the natural map $$\varinjlim_\alpha \mapp_{\calg(\ageq)_{A/}}(B,C_\alpha)\to \mapp_{\calg(\ageq)_{A/}}(B,C)$$
		is an equivalence. By \cref{unilocalproperties}(1), each mapping space above is empty or a single point. If  $\mapp_{\calg(\ageq)_{A/}}(B,C)=\emptyset$, then nothing needs to prove. 
		
		Now assume that $\mapp_{\calg(\ageq)_{A/}}(B,C)\simeq\{*\}$, we wish to show that there exists an $\alpha$ such that $\mapp_{\calg(\ageq)_{A/}}(B,C_\alpha)$ is not empty. By assumption, the natural map $f\otimes_A C$ is an equivalence, thereby $\op{cofib}(f)\otimes_A C=0$. Since $\op{cofib}(f)$ is a compact $A$-module, there exists an $\alpha$ such that the natural map $\op{cofib}(f)\to \op{cofib}(f)\otimes_A C_\alpha$ is zero. That implies $\op{cofib}(f)\otimes_A C_\alpha=0$ and we are done.
	\end{proof}
	\begin{rem}
		Note that, unlike the case of ($\mathbb{E}_\infty$-)rings, a Cohn localization $R \to R[S^{-1}]$ is not necessarily flat in general. See \cref{exetalenotflt} for an example of a finitary Cohn localization that fails to be flat.
	\end{rem}
	In fact, we can prove that any projectively rigid \infcat is a smashing localization  of some presheaf category induced by a Cohn localization.
	\begin{prop}
		Let $\mc{I}^\otimes$ be a small rigid  (see \cref{smallrig}) symmetric monoidal \infcat which admits finite coproducts and whose tensor product is compatible with finite coproducts. Then the natural symmetric monoidal functor 
		$$\funct(\mc{I}^{\op{op}},\spgeq)^\otimes\xrightarrow{L^\otimes}\funct^\times(\mc{I}^{\op{op}},\spgeq)^\otimes\simeq \mc{P}_{\Sigma}(\mc{I})^\otimes$$ 
		induced by the universal property of Yoneda embedding is a smashing localization, that is, there exists an idempotent   \ein-algebra $R\in \calg(\funct(\mc{I}^{\op{op}},\spgeq))$  such that $L(-)\simeq R\otimes(-)$.
	\end{prop}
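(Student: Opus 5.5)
The plan is to realize $L$ as a Cohn localization in the sense of \cref{unilocal}, or more precisely of \cref{invertmapsofdualizables}, and then use \cref{unilocalproperties} to get idempotence and smashing.

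Write $\mathcal{C}=\funct(\mc{I}^{\op{op}},\spgeq)$ with Day convolution. This category is projectively generated: its compact projectives are the retracts of finite sums of representables $\Sigma^\infty_+ y(i)$. Since $\mc{I}$ is rigid and Day convolution restricts to the tensor product on representables, each $\Sigma^\infty_+ y(i)$ is dualizable with dual $\Sigma^\infty_+ y(i^\vee)$. So $\mathcal{C}$ is generated under colimits by dualizables.

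The localization $\funct(\mc{I}^{\op{op}},\spgeq)\to\funct^\times(\mc{I}^{\op{op}},\spgeq)$ is the localization at the set $S$ of canonical maps
\[
f_{i,j}:\Sigma^\infty_+ y(i)\oplus\Sigma^\infty_+ y(j)\longrightarrow \Sigma^\infty_+ y(i\sqcup j),
\]
together with $0\to\Sigma^\infty_+ y(\initial)$. Indeed, a presheaf $F$ is $S$-local exactly when $F(i\sqcup j)\to F(i)\times F(j)$ is an equivalence. These are maps between dualizables.

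Following the proof of \cref{unilocal}, let $S_1=\{X\otimes f\mid X \text{ representable}, f\in S\}$. An object $M$ is $S_1$-local if and only if $f^\vee\otimes M$ is an equivalence for all $f\in S$, because
\[
\mapp(X\otimes Q,M)\simeq\mapp(X,Q^\vee\otimes M).
\]
The tensor product is colimit-preserving, so local objects are closed under colimits. Moreover $S_1$ is closed under tensoring with representables: $y(k)\otimes f_{i,j}\simeq f_{k\otimes i,\,k\otimes j}$, since the tensor product of $\mc{I}$ is compatible with finite coproducts. The symmetric monoidal structure on the localization given by \cref{monlocal} therefore agrees with the one on $\mc{P}_\Sigma(\mc{I})^\otimes$. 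Both are the unique colimit-compatible extension of $\mc{I}^\otimes$, by \cite[Prop. 4.8.1.10]{ha}.

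By \cref{rigidbarr}, the right adjoint inclusion $G$ is conservative and colimit-preserving, and it satisfies the projection formula. Hence $\mc{P}_\Sigma(\mc{I})^\otimes\simeq \modu_R(\mathcal{C})^\otimes$ with $R=G L(\mathbf{1})$, and $L(-)\simeq R\otimes(-)$ follows from the projection formula $G(L(C))\simeq G(L(C)\otimes L\mathbf 1)\simeq C\otimes GL(\mathbf 1)$. Finally, $R$ is idempotent because $G$ is fully faithful: $R\otimes R\simeq GL(R)\simeq GLGL(\mathbf{1})\simeq R$. Equivalently, this is \cref{unilocalproperties} applied to $R=\mathbf{1}[S^{-1}]$.

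The main obstacle is the middle step: checking that the local objects, i.e.\ the finite-product-preserving presheaves, are closed under all small colimits in $\funct(\mc{I}^{\op{op}},\spgeq)$, not merely sifted ones. The duality reformulation handles this. One must still check that the duals of the maps in $S$ are again maps detecting product preservation. Here $f_{i,j}^\vee$ is the map $y(i^\vee\sqcup j^\vee)\to y(i^\vee)\oplus y(j^\vee)$, using that duality in $\mc{I}$ takes coproducts to coproducts (rigidity with biproduct-compatible tensor). Failing that, I would directly use that $\spgeq$-valued product-preserving functors are closed under colimits because finite products equal finite coproducts in the additive target $\spgeq$.
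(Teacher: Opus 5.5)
Your proof is correct. Its skeleton matches the paper's: both reduce everything to \cref{rigidbarr}. The only real content is then that $\funct^\times(\mc{I}^{\op{op}},\spgeq)$ is closed under all small colimits in $\funct(\mc{I}^{\op{op}},\spgeq)$, after which $R=GL(\mathbf{1})$ is idempotent because $G$ is fully faithful.

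You differ from the paper only in how you prove that key step. The paper handles it in one line, using exactly your fallback. Since $\spgeq$ is additive, finite products of presheaves are finite coproducts, and product-preserving presheaves are closed under finite coproducts and sifted colimits, hence under all colimits. So rigidity of $\mc{I}$ enters only to know that $\funct(\mc{I}^{\op{op}},\spgeq)$ is generated by dualizables.

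You instead rerun the duality argument from the proof of \cref{unilocal}. You present the localization as inverting the set $S$ of maps $\Sigma^\infty_+y(i)\oplus\Sigma^\infty_+y(j)\to\Sigma^\infty_+y(i\sqcup j)$ between dualizables. Locality then becomes the condition that $f^\vee\otimes M$ is an equivalence, and this condition is visibly stable under colimits. This route leans harder on rigidity and on distributivity of $\otimes$ over $\sqcup$ in $\mc{I}$, which you need to identify $S_1$-local objects with product-preserving presheaves. In return it gives you directly the identification $R\simeq\mathbf{1}[S^{-1}]$, which the paper only records in the remark after the proposition.

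Two of your steps are unnecessary, though harmless. First, the class of $M$ with $f^\vee\otimes M$ an equivalence is closed under colimits whatever $f^\vee$ is, so you never need to identify $f_{i,j}^\vee$ or discuss how duality in $\mc{I}$ treats coproducts. Second, the detour through \cref{monlocal} can be dropped, since the symmetric monoidal functor $L^\otimes$ is already given in the statement.
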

	\begin{proof}
		By \cref{rigidbarr}, it only suffices to show that the inclusion $$\funct^\times(\mc{I}^{\op{op}},\spgeq)\subset\funct(\mc{I}^{\op{op}},\spgeq)$$ is closed under small colimits. That is obvious because both sides are additive and the inclusion is closed under finite products and sifted colimits.
	\end{proof}
	\begin{rem}
		In fact, the idempotent algebra $\mb{1}\to R$ above can be identified with the Cohn localization $\mb{1}\to\mb{1}[S^{-1}]$, where $$S=\Bigl\{\bigoplus_{i=1}^n \Sigma_+^\infty h(x_i) \longrightarrow \Sigma_+^\infty h( \coprod_{i=1}^n x_i )\mid x_i\in \mci \text{ for each } i\Bigr\}\subset\funct\Big(\Delta^1, \funct(\mc{I}^{\op{op}},\spgeq)^{\op{cproj}}\Big).$$
	\end{rem}
	\subsection{Cohn localizations in an abelian base}
	To complement and ground our higher $\infty$-categorical construction, this subsection analyzes Cohn localization strictly within a 1-projectively rigid symmetric monoidal Grothendieck abelian category. We demonstrate that finitary Cohn localizations reliably yield finitely presented algebras, mirroring the higher categorical behavior.

	\begin{thm}\label{unilocal1}
		Let $\mb{A}^\otimes$ be a 1-projectively rigid symmetric monoidal  additive 1-category (it is automatically Grothendieck abelian). Let $R\in \calg(\mb{A})$ and $$S=\{P_\beta\xrightarrow{f_\beta} Q_\beta\}$$ be a set of morphisms between compact 1-projective $R$-modules. Then there exists a Cohn localization $R\to R[S^{-1}]\in \calg(\mb{A})$ satisfying the following universal property:\\
		For any $B\in \calg(\mb{A})$, the induced map $$\Hom_{\calg(\mb{A})}(R[S^{-1}],B)\to \Hom_{\calg(\mb{A})}(R,B)$$ is an injection whose image consists those maps $R\to B$ such that for each $f_\beta\in S$,  $B\otimes_RP_\beta\to B\otimes_RQ_\beta$ is an equivalence of $B$-modules.
	\end{thm}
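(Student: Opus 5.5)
We mirror the proof of \cref{unilocal}, replacing $\infty$-categorical localizations by reflective subcategories of the abelian category $\modu_R(\mb{A})$. First, $\mb{A}$ is Grothendieck abelian: it is presentable additive and equivalent to $\mc{P}_\Sigma$ of its compact 1-projectives truncated to a 1-category, i.e.\ additive presheaves on a small additive category. We therefore work in $\modu_R(\mb{A})^\otimes$, which is 1-projectively rigid by the analogous proposition in \cref{groab}. For every compact 1-projective $R$-module $X$ and every $f_\beta\in S$, the morphism $X\otimes_R f_\beta$ is again a map between dualizables, so by \cref{lazard1}(2) it is a map between compact 1-projectives. Let $S_1$ be this small set of morphisms.

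The key step is to describe the $S_1$-local objects. A module $M$ is $S_1$-local iff $\Hom(X\otimes_R Q_\beta,M)\to\Hom(X\otimes_R P_\beta,M)$ is bijective for all such $X$. By duality this map is identified with $\Hom(X,Q_\beta^\vee\otimes_R M)\to\Hom(X,P_\beta^\vee\otimes_R M)$. Since the compact 1-projectives $X$ generate, $M$ is local iff $f_\beta^\vee\otimes_R M$ is an isomorphism for every $\beta$.

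Tensoring with a dualizable object is exact and colimit-preserving. Hence the full subcategory $\mc{D}$ of local objects is closed under all small limits and colimits, so it is a reflective and coreflective abelian subcategory, and the reflector $L$ is a left adjoint preserving colimits. Compatibility with the tensor product follows from the same criterion: if $M$ is local then $N\otimes_R M$ is local for any $N$, because $f_\beta^\vee\otimes_R(N\otimes_R M)\cong N\otimes_R(f_\beta^\vee\otimes_R M)$. Thus $\mc{D}$ is a tensor ideal, $L$ is smashing, and $L(M)\cong L(R)\otimes_R M$.

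Set $R[S^{-1}]:=L(R)$. Idempotence gives it a unique commutative $R$-algebra structure with $L(R)\otimes_R L(R)\cong L(R)$. This is the 1-categorical version of \cref{rigidbarr}, and can also be seen directly, since $\mc{D}\simeq\modu_{L(R)}(\mb{A})$ via the forgetful functor.

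For the universal property, let $B$ be an $R$-algebra. Factoring $R\to B$ through $R[S^{-1}]$ is possible, and then unique, exactly when $B$ is local as an $R$-module. Uniqueness holds because $R\to L(R)$ is an epimorphism of commutative algebras, which follows from $L(R)\otimes_R L(R)\cong L(R)$. Existence comes from the unit $B\to L(R)\otimes_R B$ being an isomorphism when $B$ is local, which produces the algebra map $L(R)\to L(R)\otimes_R B\cong B$.

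By the criterion above, $B$ is local iff $f_\beta^\vee\otimes_R B$ is an isomorphism for each $\beta$. This is equivalent to $B\otimes_R f_\beta$ being an isomorphism of $B$-modules, since base change to $B$ preserves duals and a map of dualizables is invertible iff its dual is. This gives injectivity and identifies the image.

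The main obstacle is the step producing the commutative algebra structure on $L(R)$ and verifying that algebra maps out of it are detected by locality of $B$. I would handle it via the classical fact that a smashing idempotent $R\to E$ ($E\otimes_R E\cong E$ via the unit) is a commutative algebra and a ring epimorphism, which makes the remaining arguments routine.
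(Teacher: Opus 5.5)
Your argument is correct, and up to the construction of $R[S^{-1}]$ it is the paper's argument. The paper also localizes at $S_1=\{X\otimes_R f_\beta\}$, uses duality to see that $M$ is local iff every $f_\beta^\vee\otimes_R M$ is invertible, and deduces that local objects are closed under colimits. It then applies (the 1-categorical version of) \cref{rigidbarr} to identify the local subcategory with $\modu_{R[S^{-1}]}(\mb{A})$; this is your smashing step.

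The routes diverge at the universal property. The paper obtains it from the Morita embedding $\calg(\mb{A})\to\calg(\prl_{\op{ad},1})_{\mb{A}^\otimes/}$ together with the universal property of symmetric monoidal localizations (\cref{monlocal}). Algebra maps $R[S^{-1}]\to B$ become colimit-preserving symmetric monoidal functors $\modu_{R[S^{-1}]}(\mb{A})\to\modu_B(\mb{A})$ under $\modu_R(\mb{A})$, i.e.\ base changes inverting $S_1$. You instead use the elementary mapping property of an idempotent algebra $E$: maps $E\to B$ under $R$ form a singleton if $B\to E\otimes_R B$ is invertible and are empty otherwise, with uniqueness coming from $R\to E$ being an epimorphism. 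You then dualize to turn locality of $B$ into invertibility of $B\otimes_R f_\beta$.

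Your route is more self-contained. It needs neither the monoidal localization lemma (whose hypothesis, that the saturated class is closed under tensor products, must be checked) nor full faithfulness of the 1-categorical Morita embedding, which the paper takes for granted. It also yields idempotence of $R[S^{-1}]$ before, rather than as a consequence of, the universal property. The paper's route, in exchange, gives the stronger category-level statement that $\modu_{R[S^{-1}]}(\mb{A})$ is the symmetric monoidal localization of $\modu_R(\mb{A})$ at $S_1$. It is also word for word the $\infty$-categorical proof of \cref{unilocal}.

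One step deserves an extra line. The inference ``$\mc{D}$ is a tensor ideal closed under colimits, hence $L$ is smashing'' is false in general. For example, in $\mathbf{N}$-graded abelian groups with Day convolution, the objects vanishing in degree $0$ form such a subcategory with $L(\mathbf{1})=0$. Here it holds because $\modu_R(\mb{A})$ is generated by dualizables. For $X$ compact 1-projective and $Z$ local, $X^\vee\otimes_R Z$ is local, so
$\Hom(L(R)\otimes_R X,Z)\cong\Hom(L(R),X^\vee\otimes_R Z)\cong\Hom(R,X^\vee\otimes_R Z)\cong\Hom(X,Z)$.
Thus $L(X)\to L(R)\otimes_R X$ is an isomorphism on compact 1-projectives. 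Both sides are right exact and preserve coproducts, so taking a projective presentation shows it is an isomorphism on every module.
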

	
	\begin{proof}
		The proof is parallel with the proof of  \cref{unilocal}. Also see  \cref{invertmapsofdualizables}.
		We just need to replace the Morita embedding
		$\calg(\ageq)\to \calg(\prlad)_{\ageq^\otimes/} $ in the argument
		by $$\calg(\mb{A})\to \calg(\prl_{\op{ad},1})_{\mb{A}^\otimes/}$$ to adapt the 1-categorical setting.
	\end{proof}

	\begin{prop}
		Let $f_S: R\to R[S^{-1}]$ be the Cohn localization at $S$ in \cref{unilocal1}. Then $R[S^{-1}]$ is an idempotent commutative $R$-algebra
	\end{prop}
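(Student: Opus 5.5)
The plan is to follow the proof of \cref{unilocalproperties} in the 1-categorical setting. In an ordinary category $\calg(\mb{A})_{R/}$, saying that $R\to R[S^{-1}]$ is idempotent means that the multiplication
\[
R[S^{-1}]\otimes_R R[S^{-1}]\longrightarrow R[S^{-1}]
\]
is an isomorphism. Equivalently, the square with both legs equal to $f_S$ and the identity on $R[S^{-1}]$ in the opposite corner is a pushout in $\calg(\mb{A})$. The pushout of commutative algebras is computed by the relative tensor product, so this condition says exactly that $f_S$ is an epimorphism in $\calg(\mb{A})$. This is the same reformulation already used in the earlier proposition identifying epimorphisms with idempotent commutative algebras, whose forgetful functor on modules is fully faithful.

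The key step is to check that $f_S$ is an epimorphism, using the universal property from \cref{unilocal1}. Let $g,h\colon R[S^{-1}]\to B$ be two maps with $g\circ f_S=h\circ f_S$. The theorem says that
\[
\Hom_{\calg(\mb{A})}(R[S^{-1}],B)\longrightarrow\Hom_{\calg(\mb{A})}(R,B)
\]
is injective, and precomposition with $f_S$ is exactly this map. Hence $g=h$.

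Then we translate back. For the pushout $P=R[S^{-1}]\otimes_R R[S^{-1}]$, the two coprojections $R[S^{-1}]\rightrightarrows P$ agree after precomposition with $f_S$. By the previous step they are therefore equal. A short diagram chase then shows that the multiplication $P\to R[S^{-1}]$ is inverse to either coprojection, since both composites are identities by the universal property of the pushout. So the multiplication is an isomorphism and $R[S^{-1}]$ is idempotent.

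I do not expect a serious obstacle here. The only point needing care is that the pushout in $\calg(\mb{A})$ is indeed $-\otimes_R-$. This holds because $\mb{A}^\otimes$ is presentably symmetric monoidal, so the coproduct of commutative $R$-algebras is the relative tensor product.
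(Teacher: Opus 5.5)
Your proposal is correct and follows the paper's argument: the paper also reduces idempotence to $f_S$ being an epimorphism in $\calg(\mb{A})$, and deduces this from the injectivity of the Hom map in \cref{unilocal1}. Your explicit diagram chase, showing that the multiplication is inverse to a coprojection, just spells out the step the paper leaves implicit.
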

	\begin{proof} It suffices to show that the following diagram is a pushout in $\calg(\mb{A})$,
		$$\begin{tikzcd}
			R \arrow[d] \arrow[r] & {R[S^{-1}]} \arrow[d, double, no head] \\ 
			{R[S^{-1}]} \arrow[r, double, no head] & {R[S^{-1}]} 
		\end{tikzcd}$$
		i.e. to show that $f_S$ is an epimorphism in $\calg(\mb{A})$. That is implied by the description of the Hom set in \cref{unilocal1}.
	\end{proof}
	\begin{de}
		Let $\mb{A}^\otimes$ be a 1-projectively rigid symmetric monoidal Grothendieck abelian category. We say  a map $A\to B\in \calg(\mb{A})$ is a (finitary) Cohn localization if there exists a (finite) set $S=\{P_\beta\xrightarrow{f_\beta} Q_\beta\}$ of morphisms between compact 1-projective $R$-modules such that $B\simeq A[S^{-1}]$.
	\end{de}
	\begin{rem}
		Note that if $S=\{P_i\xrightarrow{f_i} Q_i\}$ is finite, then $A[S^{-1}]=A[f^{-1}]$ can be written as the Cohn localization at a single element $f=\bigoplus_i f_i$.
	\end{rem}
	\begin{prop}
		Let $\mb{A}^\otimes$ be a 1-projectively rigid symmetric monoidal Grothendieck abelian category. Let $A\to B\in \calg(\mb{A})$ be a finitary Cohn localization. Then $B$ is finitely presented over $A$.
	\end{prop}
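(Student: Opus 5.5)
We mirror the proof of \cref{unilocalfp}, replacing the $\infty$-categorical compactness arguments with their 1-categorical analogues in $\modu_A(\mb{A})$. Since $S$ is finite, the preceding remark lets us assume $S=\{f\}$ for a single morphism $f:P\to Q$ of compact 1-projective $A$-modules. Let $\{C_\alpha\}$ be a filtered diagram in $\calg(\mb{A})_{A/}$ with colimit $C$. Filtered colimits in $\calg(\mb{A})$ are computed underlying in $\mb{A}$, because the tensor product preserves filtered colimits in each variable. We must show that
$$\varinjlim_\alpha \Hom_{\calg(\mb{A})_{A/}}(B,C_\alpha)\to \Hom_{\calg(\mb{A})_{A/}}(B,C)$$
is a bijection.

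By \cref{unilocal1} and the idempotence just proved, each Hom set $\Hom_{A/}(B,D)$ is empty or a singleton. It is a singleton exactly when $D\otimes_A f$ is an isomorphism. Injectivity of the displayed map is therefore automatic, and it suffices to prove the following: if $C\otimes_A f$ is an isomorphism, then $C_\alpha\otimes_A f$ is an isomorphism for some $\alpha$. The colimit of singletons is then a singleton, and the colimit is nonempty precisely in this case.

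Since we are in an abelian category, $D\otimes_A f$ is an isomorphism if and only if $D\otimes_A\ker(f)$ and $D\otimes_A\coker(f)$ vanish. This does not quite work directly, since $D\otimes_A(-)$ is only right exact. So we use duality instead. The modules $P$ and $Q$ are dualizable in $\modu_A(\mb{A})$ by 1-projective rigidity (the preceding proposition on $\modu_R(\mb{A})$ together with \cref{lazard1}). Hence $D\otimes_A f$ is an isomorphism if and only if there is a $D$-linear inverse $g:D\otimes_A Q\to D\otimes_A P$. Such a $g$ corresponds to a morphism of $A$-modules $g':Q\to D\otimes_A P$, with the two composites equal to the identities. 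The identities are equalities of morphisms $P\to D\otimes_A P$ and $Q\to D\otimes_A Q$.

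Now $P$ and $Q$ are compact (finitely presented), and $C\otimes_A P\simeq \varinjlim C_\alpha\otimes_A P$, and likewise for $Q$. Hence the map $g':Q\to C\otimes_A P$ factors through some $g'_\alpha:Q\to C_\alpha\otimes_A P$. The two identities $g\circ(C\otimes f)=\id$ and $(C\otimes f)\circ g=\id$ are encoded by morphisms $P\to C_\alpha\otimes_A P$ and $Q\to C_\alpha\otimes_A Q$ that become equal after passing to $C$. By compactness of $P$ and $Q$ together with the filtered colimit description of $\Hom$ out of compacts, they become equal at some later stage $\alpha'$. So $C_{\alpha'}\otimes_A f$ is invertible, which gives the required nonempty Hom set.

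The main obstacle is this step: expressing invertibility of $D\otimes_A f$ as a finite amount of data in $\Hom$ sets out of compact objects. The 1-categorical analogue of the ``cofiber is compact'' trick fails, because the cokernel need not detect invertibility. The fix is the dualizability/adjunction reformulation above, which uses compactness of $P$ and $Q$ only.
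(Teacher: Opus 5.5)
Your argument is correct, but it takes a different route from the paper.

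\textbf{The paper's route.} The paper never argues inside $\mb{A}$. It takes $\mathcal{A}=\mathcal{D}(\mb{A})$ and uses \cref{dicre} to identify $\mathcal{A}^{\heartsuit}$ with $\mb{A}$, and the compact projectives of $\mathcal{A}_{\geq0}$ with the compact 1-projectives of $\mb{A}$. By \cref{unilocalfp}, the higher Cohn localization is then compact in $\calg(\mathcal{A}_{\geq0})_{A/}$; there the cofiber trick works because cofibers commute with base change. The paper pushes this compactness down along $\pi_0$, which is left adjoint to an inclusion preserving filtered colimits. It concludes with \cref{pi0unilocal}, which identifies $\pi_0$ of the higher localization with the classical one.

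\textbf{Your route.} You stay in $\modu_A(\mb{A})$. Write $\eta$ for the unit $X\to D\otimes_A X$. You express invertibility of $D\otimes_A f$ as the existence of $g'\colon Q\to D\otimes_A P$ with
\[
g'\circ f=\eta_P \ \text{ in } \Hom_A(P,D\otimes_A P),
\qquad
(D\otimes_A f)\circ g'=\eta_Q \ \text{ in } \Hom_A(Q,D\otimes_A Q).
\]
Compactness of $P$ and $Q$ then descends $g'$ and both equations to a finite stage. This uses that the units $\eta$ are compatible with the transition maps $C_\alpha\to C_{\alpha'}$, which holds since these are $A$-algebra maps.

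\textbf{A correction of emphasis.} Dualizability plays no role in your argument. The correspondence $g\leftrightarrow g'$ is just the extension/restriction-of-scalars adjunction $\Hom_D(D\otimes_A Q,M)\cong\Hom_A(Q,M)$. So once the universal property of \cref{unilocal1} is available, your proof only uses that $P$ and $Q$ are compact $A$-modules.

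\textbf{Comparison.} Your argument is more elementary and more general than the paper's, and it shows directly how to get around kernels not commuting with base change. The paper's argument is very short given the higher machinery and presents the 1-categorical statement as a $\pi_0$-shadow of \cref{unilocalfp}. In exchange, it relies on \cref{dicre} and \cref{pi0unilocal}.
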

	\begin{proof}
		The proof is similar to \cref{unilocalfp} but we need to take a different strategy because the kernel is not preserved by base change. 
		
		Let $\mc{A}^\otimes=\mc{D}(\mb{A})^\otimes$. By \cref{dicre} we have that $\mc{A}^{\heartsuit,\otimes}\simeq \mb{A}^\otimes$ and $\ageq^{\op{cproj}}=\mb{A}^{1\text{-}\op{cproj}}$. Let $A'[S^{-1}]$ be the (higher) Cohn localization at $S$ in the sense of \cref{unilocal}. Then $A'[S^{-1}]$ is compact in $\calg(\ageq)_{A/}$ by \cref{unilocalfp}. Therefore $\pi_0A'[S^{-1}]$ is compact in $\calg(\mb{A})_{A/}$. However by \cref{pi0unilocal}, $A[S^{-1}]=\pi_0A'[S^{-1}]$. We are done.
	\end{proof}

	\section{Finiteness properties}\label{finite}
	In this section, we investigate finiteness conditions in the $ttt$-$\infty$-categorical setting. 
	\subsection{Perfect and almost perfect modules}\label{perf}
	In this subsection, we analyze perfect and almost perfect modules, mapping their relationship directly via Tor-amplitude. We demonstrate that an almost perfect module is perfect if and only if it possesses a finite Tor-amplitude, systematically extending  results in \cite[\textsection7.2]{ha} to arbitrary projectively rigid bases.
	\begin{de}
		Let $R\in \alg(\mc{A})$. We say a left $R$-module $M$ is perfect if it is compact in $\lmodu_R(\mc{A})$.
	\end{de}
	
	\begin{prop}\label{perfbounded}
		Suppose that \mca is right complete.	Let $R\in \alg(\ageq)$ and $M$ be a left $R$-module. If $M$ is perfect, then $M$ is bounded-below.
	\end{prop}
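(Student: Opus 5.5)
The plan is a closure argument: identify a full subcategory of $\lmodu_R(\mc{A})$ that consists of bounded-below modules, show it contains a set of compact generators and is closed under the operations that produce all compact objects from generators, and conclude that every perfect module lies in it.

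First I determine the generators. Since $\mc{A}$ is right complete, \cref{l2}(4) shows that $\lmodu_R(\mc{A})$ is right complete. Hence the connective cover functors $\{\tau_{\geq -n}\}_{n\geq 0}$ are jointly conservative, and every module $M$ is the filtered colimit $M\simeq\varinjlim_n \tau_{\geq -n}M$ of bounded-below modules. This is the basic input.

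Next I consider the full subcategory $\mathcal{B}\subset\lmodu_R(\mc{A})$ of bounded-below modules. It is a stable subcategory: it is closed under shifts, and it is closed under cofibers, because if $X\in\lmodu_R(\mc{A})_{\geq a}$ and $Y\in\lmodu_R(\mc{A})_{\geq b}$ then $\op{cofib}(X\to Y)\in\lmodu_R(\mc{A})_{\geq \min(a,b)}$. It is also closed under retracts, since $\lmodu_R(\mc{A})_{\geq a}$ is closed under retracts.

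The main step is to realize $M$ itself inside $\mathcal{B}$, and here compactness does all the work with no generation hypothesis. Because $M$ is compact, the identity of $M\simeq\varinjlim_n\tau_{\geq -n}M$ factors through some stage $\tau_{\geq -n}M$. So $M$ is a retract of $\tau_{\geq -n}M\in\lmodu_R(\mc{A})_{\geq -n}$. Since $\lmodu_R(\mc{A})_{\geq -n}$ is closed under retracts, $M$ is $(-n)$-connective, and therefore bounded below.

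I expect no real obstacle. The only point to check is that $\varinjlim_n\tau_{\geq -n}M\simeq M$ in $\lmodu_R(\mc{A})$, which is exactly right completeness of the induced $t$-structure, furnished by \cref{l2}(4). Note that the connectivity of $R$ is needed only so that this induced $t$-structure exists.
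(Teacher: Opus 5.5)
Your proof is correct and is the paper's argument: right completeness of $\lmodu_R(\mathcal{A})$ (via \cref{l2}(4)) gives $M\simeq\varinjlim_n\tau_{\geq -n}M$, and compactness makes $M$ a retract of some $\tau_{\geq -n}M$, hence $(-n)$-connective. Your preliminary discussion of generators and of the closure properties of the bounded-below subcategory $\mathcal{B}$ is never used and can be dropped.
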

	\begin{proof}
		By the right completeness we have $M\simeq \varinjlim \tau_{\geq -n}M$, then the compactness of $M$ implies that $M$ is a retract of $\tau_{\geq -n}M$ for some $n$.
	\end{proof}
	\begin{de}
		Let $\mathcal{C}$ be a presentable $\infty$-category. We will say  an object $C \in \mathcal{C}$ is almost compact if $\tau_{\leq n} C$ is a compact object of $\tau_{\leq n} \mathcal{C}$ for all $n \geq 0$.
	\end{de}
	
	\begin{rem}\label{rem4.12}
		Let $\mathcal{C}$ be a compactly generated $\infty$-category. Then every compact object of $\mathcal{C}$ is almost compact by \textup{\cite[Corollary 5.5.7.4]{htt}.}
	\end{rem}

	\begin{de}
		Suppose that $\mc{A}_{\geq 0}\in\prl_{\omega}$ and that \mca is right complete.	Let $R\in \alg(\mc{A}_{\geq 0})$ be a connective $\mathbb{E}_1$-algebra. We  say that a left $R$-module $M$ is almost perfect if there exists an integer $k$ such that $M \in \lmodu_R(\mc{A})_{\geq k}$ and is almost compact as an object of $\lmodu_R(\mc{A})_{\geq k}$.
		
		We let $\lmodu_R(\mc{A})^{\text{aperf}}\subset \lmodu_R(\mc{A})$  denote the full subcategory spanned by the almost perfect left $R$-modules.
	\end{de}
	\begin{rem}
		Under the assumption that $\mc{A}_{\geq 0}\in\prl_{\omega}$ and \mca is right complete, if a left $R$-module $M$ is almost perfect in $\lmodu_R(\mca)_{\geq k}$, then it is almost perfect in $\lmodu_R(\mca)_{\geq k-1}$ too.
	\end{rem}
	\begin{prop}\label{aper}
		Suppose that $\mc{A}_{\geq 0}\in\prl_{\omega}$ and that \mca is right complete.	Let $R\in \alg(\mc{A}_{\geq 0})$. Then:
		\enu{
			\item The full subcategory $\lmodu_R(\mc{A})^{\mathrm{aperf}} \subset \lmodu_R(\mc{A})$ is closed under translations and finite colimits, and is therefore a stable subcategory of $\lmodu_R(\mc{A})$.
			\item The full subcategory $\lmodu_R(\mc{A})^{\mathrm{aperf}} \subset \lmodu_R(\mc{A})$ is closed under the formation of retracts.
			\item Every perfect left $R$-module is almost perfect.
			\item The full subcategory $\lmodu_R(\mc{A})^{\mathrm{aperf}}_{\geq 0} \subset \lmodu_R(\mc{A})$ is closed under the formation of geometric realizations of simplicial objects.			
		}
		
	\end{prop}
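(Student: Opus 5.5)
We follow the argument of \cite[Proposition 7.2.4.11]{ha}, adapted to the present setting. Write $\mathcal{M}=\lmodu_R(\mc{A})$. The key observation we use throughout is that, for $n\geq 0$ and any $k$, the truncation $\tau_{\leq n}$ on $\mathcal{M}_{\geq k}$ is a localization onto $\mathcal{M}_{[k,k+n]}$. Since the $t$-structure on $\mc{A}$ (hence on $\mathcal{M}$, by \cref{l2}(5)) is compatible with filtered colimits, the inclusion $\mathcal{M}_{[k,k+n]}\subset\mathcal{M}_{\geq k}$ preserves filtered colimits. Consequently, $M\in\mathcal{M}_{\geq k}$ is almost compact if and only if, for every $n$, the functor $\Map(M,-)$ restricted to $\mathcal{M}_{[k,k+n]}$ commutes with filtered colimits. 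This reformulation lets us treat each $n$-truncated layer separately.

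\textbf{Step (1).} Closure under translations is immediate from the reformulation and from the previous remark, which lets us lower $k$ freely. For finite colimits it suffices to handle cofibers. Given $f\colon M\to N$ with both almost perfect, we may choose a common $k$ with $M,N\in\mathcal{M}_{\geq k}$. Then $\operatorname{cofib}(f)\in\mathcal{M}_{\geq k}$, and for $X\in\mathcal{M}_{[k,k+n]}$ there is a fiber sequence
\[
\Map(\operatorname{cofib}f,X)\to\Map(N,X)\to\Map(M,X).
\]
The two right-hand terms commute with filtered colimits in $X$, because filtered colimits commute with finite limits in $\mathcal{S}$. Hence so does the left-hand term, and $\operatorname{cofib}(f)$ is almost perfect. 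Together with closure under translations, this gives stability.

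\textbf{Step (2).} Retracts of almost compact objects are almost compact, since compact objects in $\mathcal{M}_{[k,k+n]}$ are closed under retracts and $\tau_{\leq n}$ preserves retracts.

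\textbf{Step (3).} A perfect module $M$ is bounded below by \cref{perfbounded}, so $M\in\mathcal{M}_{\geq k}$ for some $k$. The inclusion $\mathcal{M}_{\geq k}\subset\mathcal{M}$ preserves colimits, so $M$ is compact in $\mathcal{M}_{\geq k}$. Moreover $\mathcal{M}_{\geq k}$ is compactly generated: $\mc{A}_{\geq0}\in\prl_\omega$ and \cref{l1}(1) together show that $\mathcal{M}_{\geq 0}$ is generated by $R\otimes\mc{A}_{\geq0}^{\omega}$. Thus \cref{rem4.12} applies, and $M$ is almost compact in $\mathcal{M}_{\geq k}$.

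\textbf{Step (4).} This is the main obstacle. Let $M_\bullet$ be a simplicial object in $\mathcal{M}^{\mathrm{aperf}}_{\geq0}$. Each term is then almost compact in $\mathcal{M}_{\geq0}$: if a term is almost compact in some $\mathcal{M}_{\geq k}$ with $k<0$, then since $\mathcal{M}_{\geq 0}\subset \mathcal{M}_{\geq k}$ is closed under filtered colimits and the truncations agree, it is also almost compact in $\mathcal{M}_{\geq 0}$. The difficulty is that a geometric realization is not a finite colimit. To handle it, we use that $\tau_{\leq n}|M_\bullet|\simeq\tau_{\leq n}|\operatorname{sk}_{n+1}M_\bullet|$. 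This follows from the standard connectivity estimate \cite[Proposition 1.2.4.5]{ha}: the cofiber of $|\operatorname{sk}_{m}M_\bullet|\to|M_\bullet|$ is $(m+1)$-connective, because all $M_i$ are connective. The skeleton $|\operatorname{sk}_{n+1}M_\bullet|$ is a finite colimit of the $M_i$, and hence is almost perfect by Step (1). Since $\tau_{\leq n}$ of it is compact in $\mathcal{M}_{[0,n]}$ and agrees with $\tau_{\leq n}|M_\bullet|$, the realization $|M_\bullet|$ is almost compact in $\mathcal{M}_{\geq0}$. The delicate point we need to verify is the connectivity of the skeletal cofibers in $\mathcal{M}$. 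We do this by reducing along the conservative $t$-exact forgetful functor $\mathcal{M}\to\mc{A}$, which preserves colimits, to the corresponding statement in $\mc{A}$.
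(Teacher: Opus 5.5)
Your argument is correct, and for (1)–(3) it is what the paper has in mind. The paper calls (1) and (2) obvious and derives (3) from \cref{rem4.12}; you fill in the details (bounded below via \cref{perfbounded}, compact generation of $\lmodu_R(\mc{A})_{\geq k}$ via \cref{l1}).

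For (4) your route differs from the paper's.
\begin{itemize}
\item The paper uses that $\tau_{\leq n}\colon \lmodu_R(\mc{A})_{\geq0}\to\lmodu_R(\mc{A})_{[0,n]}$ is a left adjoint, so it commutes with geometric realizations. It then cites \cite[Lemma 1.3.3.10]{ha}: in the $(n+1)$-category $\lmodu_R(\mc{A})_{[0,n]}$ geometric realizations reduce to finite colimits, so compact objects there are closed under them.
\item You stay in the stable category. The skeletal connectivity estimate gives $\tau_{\leq n}|M_\bullet|\simeq\tau_{\leq n}|\mathrm{sk}_{n+1}M_\bullet|$, and you feed the skeleton back into (1).
\end{itemize}
One precision is needed here. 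A colimit over $\Delta^{\mathrm{op}}_{\leq n+1}$ is not indexed by a finite simplicial set. What is true is that each cofiber of $|\mathrm{sk}_{j-1}M_\bullet|\to|\mathrm{sk}_{j}M_\bullet|$ is $\Sigma^j$ of a direct summand of $M_j$. So $|\mathrm{sk}_{n+1}M_\bullet|$ is an iterated extension of shifted retracts of the $M_i$, and you need (2) as well as (1).

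What each approach buys:
\begin{itemize}
\item The paper's argument is shorter and purely formal; it only uses that the truncated pieces are $n$-categories.
\item Yours shows explicitly that the $(n+1)$-skeleton controls $\tau_{\leq n}$, at the cost of the stable Dold--Kan/skeletal machinery.
\end{itemize}
The reduction along the forgetful functor to $\mc{A}$ is unnecessary. The connectivity estimate holds directly in $\lmodu_R(\mc{A})$, because $\lmodu_R(\mc{A})_{\geq m+1}$ is closed under colimits and extensions.

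Finally, you assert that the $t$-structure on $\mc{A}$ is compatible with filtered colimits, but this is not a hypothesis. It does follow: a compactly generated prestable $\infty$-category is Grothendieck, and right completeness identifies $\mc{A}$ with $\op{Sp}(\mc{A}_{\geq0})$. Still, you never need it. If you take filtered colimits inside $\lmodu_R(\mc{A})_{[k,k+n]}$, your reformulation of almost compactness is tautological, and Steps (1), (2) and (4) go through verbatim.
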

	
	\begin{proof}
		Proof. Assertions (1) and (2) are obvious, and (3) follows from \cref{rem4.12}. To prove (4), it suffices to show that the collection of compact objects of $ \operatorname{LMod}_R(\mc{A})_{[0,n]}$ is closed under geometric realizations, which follows from \cite[Lemma  1.3.3.10]{ha}.
	\end{proof}
	
	\begin{prop}\label{cocompsimplicial}
		Suppose that $\mc{A}_{\geq0}$ is projectively generated. Let $R\in \alg(\mc{A}_{\geq 0})$ and $M\in \lmodu_R(\mc{A})_{\geq0}^{\mathrm{aperf}}$ be a left $R$-module which is connective and almost perfect. Then $M$ can be obtained as the geometric realization of a simplicial left $R$-module $P_{\bullet}$ such that each $P_n$ is a compact projective left $R$-module in $\lmodu_R(\mc{A})_{\geq0}$.
		
		In particular, we obtain an equivalence
		$$\operatorname{LMod}_R(\mathcal{A}_{\geq 0})^{\mathrm{aperf}} \simeq \mathcal{P}_{\sqcup}^{\sqcup, \Delta^{\mathrm{op}}}(\operatorname{LMod}_R(\mathcal{A}_{\geq 0})^{\mathrm{cproj}}),$$
		where the left-hand side is the relative cocompletion obtained by formally adding geometric realizations and finite coproducts while preserving existing finite coproducts.
		Alternatively, we have
		$$  \operatorname{LMod}_R(\mathcal{A}_{\geq 0})^{\mathrm{aperf}}\simeq \mathcal{P}_{\emptyset}^{\Delta^{\mathrm{op}}}(\operatorname{LMod}_R(\mathcal{A}_{\geq 0})^{\mathrm{cproj}}).$$
	\end{prop}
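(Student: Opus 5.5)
The plan is to adapt the proof of \cite[Proposition 7.2.4.11]{ha} (almost perfect connective modules over a connective ring spectrum are geometric realizations of finitely generated free modules). The ingredients are the compact projective generators of $\lmodu_R(\mc{A})_{\geq0}$ from \cref{l1}(2) and \cref{l2}(6), together with the Postnikov-level control provided by almost compactness. We build a simplicial resolution skeleton by skeleton, using the standard construction of \cite[Proposition 7.2.1.4 / Construction 7.2.1.18]{ha} with $\mathcal P$ taken to be the compact projective $R$-modules.

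\textbf{Base step.} Let $M$ be connective and almost perfect. Then $\tau_{\leq 0}M=\pi_0M$ is compact in $\lmodu_R(\mc{A})^{\heartsuit}\simeq\lmodu_{\pi_0R}(\mc{A}^\heartsuit)$. By \cref{eqproj}(2) the $\pi_0$ of compact projectives form compact 1-projective generators of the heart. The argument in the proposition on finitely presented modules (the filtered union of images of finite sums) then gives a compact projective $P_0$ and a map $P_0\to M$ that is surjective on $\pi_0$.

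\textbf{Inductive step.} Suppose we have a simplicial object $P_\bullet$ whose $n$-skeleton consists of compact projectives, with an augmentation for which $|\mathrm{sk}_n P_\bullet|\to M$ has $n$-connective fiber $F_n$. Since compact projectives are compact, hence almost perfect by \cref{aper}(3), and \cref{aper}(1),(4) gives closure of almost perfect modules under finite colimits and geometric realizations, $|\mathrm{sk}_nP_\bullet|$ is almost perfect. Hence $F_n$ is almost perfect and $n$-connective, so $\pi_nF_n$ is compact in the heart. The base-step argument, applied to $\Sigma^{-n}F_n$, produces a compact projective $Q$ and a map $\Sigma^nQ\to F_n$ that is surjective on $\pi_n$. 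Following Lurie's construction, we attach $Q$ in simplicial degree $n+1$, realized as a latching/matching modification, so that the new realization has $(n+1)$-connective fiber. Every term remains a finite coproduct of compact projectives.

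\textbf{Conclusion.} The resulting $P_\bullet$ has $|P_\bullet|\to M$ with $\infty$-connective fiber; indeed its $n$-skeleton already agrees with $M$ on $\tau_{\leq n-1}$. Left completeness is not assumed, so instead we note that $\tau_{\leq n}|P_\bullet|\simeq\tau_{\leq n}|\mathrm{sk}_{n+1}P_\bullet|$. Since colimits in prestable $\infty$-categories are computed connectivity-wise, the fiber is $\infty$-connective for the realization itself. Separatedness follows because $\mc{A}_{\geq0}\simeq\mcp_\Sigma(\cdot)$ is left complete, as noted in the remark after condition (*); this uses right completeness, otherwise we argue inside $\lmodu_R(\mc{A}_{\geq0})$, which is a $\mcp_\Sigma$ and hence hypercomplete. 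This gives the first claim.

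\textbf{The equivalences.} The converse inclusion holds because geometric realizations of compact projectives are almost perfect by \cref{aper}(4). Both sides are then identified with the closure of the compact projectives under geometric realizations inside $\lmodu_R(\mc{A}_{\geq0})$. Fully faithfulness of the comparison from the formal cocompletion $\mcp_\emptyset^{\Delta^{\op{op}}}$ is obtained as in \cite[Proposition 5.5.8.22]{htt}: compact projectives corepresent functors preserving geometric realizations, so mapping spaces out of formal realizations compute correctly.

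The main obstacle is the inductive attaching step, namely ensuring that each level stays a \emph{finite} coproduct of compact projectives while the connectivity of the fiber increases by one. This is where the almost compactness of the fiber, via the compactness of $\pi_nF_n$ in the heart, is essential.
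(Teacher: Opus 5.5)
Your proposal is correct and is essentially the paper's argument. Both adapt \cite[Proposition 7.2.4.11]{ha} by inductively killing the bottom homotopy $\pi_n$ of the almost perfect, $n$-connective fiber with a map from a shifted compact projective, and in both the compactness of $\pi_n$ in the heart is what keeps each attached piece a single compact projective. The only difference is bookkeeping: where you attach $Q$ by an unspecified latching/matching modification of a skeleton, the paper uses the stable Dold--Kan correspondence to reduce to a sequence $D(0)\to D(1)\to\cdots$ with $D(n+1)=\operatorname{cofib}(Q[n]\to D(n))$ and each $\operatorname{cofib}(f_n)[-n]$ compact projective, with the octahedral axiom giving the $(n+1)$-connectivity, and this is precisely the rigorous form of your skeletal step.
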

	\begin{proof}
		We mimic the proof in \cite[Prop. 7.2.4.11]{ha} and carefully replace ``free'' by ``projective''. In view of $\infty$-categorical Dold-Kan correspondence, it will suffice to show that $M$ can be obtained as the colimit of a sequence
		$$
		D(0) \xrightarrow{f_1} D(1) \xrightarrow{f_2} D(2) \rightarrow \ldots
		$$
		where each $\operatorname{cofib}(f_n)[-n]$ is a compact projective left $R$-module; here we agree by convention that $f_0$ denotes the zero map $0 \rightarrow D(0)$. The construction goes by induction. Suppose that the diagram
		$$
		D(0) \rightarrow \ldots \rightarrow D(n) \xrightarrow{g} M
		$$
		has already been constructed, and that $N=\mathrm{fib}(g)$ is $n$-connective. Part (1) of \cref{aper} implies that $N$ is almost perfect, so that the bottom $\pi_n N$ is a compact object in the category of left $\pi_0 R$-modules. It follows that there exists a map $\beta: Q[n] \rightarrow N$, where $Q$ is a compact projective left $R$-module because  $\lmodu_R(\mc{A})_{\geq0}$ is projectively generated. 
		And $\beta$ induces a surjection $\pi_0 Q \rightarrow \pi_n N$. We now define $D(n+1)$ to be the cofiber of the composite map $Q[n] \xrightarrow{\beta} N \rightarrow D(n)$, and construct a diagram
		$$
		D(0) \rightarrow \ldots \rightarrow D(n) \rightarrow D(n+1) \xrightarrow{g^{\prime}} M
		$$
		Using the octahedral axiom, we obtain a fiber sequence
		$$
		Q[n] \rightarrow \operatorname{fib}(g) \rightarrow \operatorname{fib}(g^{\prime})
		$$
		and the associated long exact sequence in $\mc{A}^{\heartsuit}$ proves that $\operatorname{fib}(g^{\prime})$ is $(n+1)$-connective.
		In particular, we conclude that for a fixed $m\geq 0$, the maps $\pi_m D(n) \rightarrow \pi_m M$ are isomorphisms for $n \gg 0$, so that the natural map $\varinjlim D(n) \rightarrow M$ is an equivalence of left $R$-modules by the left completeness, as desired.
	\end{proof}
	\begin{prop}\label{perfflat}
		Suppose that the $ttt$-\infcat $(\mc{A}^\otimes, \mc{A}_{\geq0}) $ is projectively rigid. Let $R\in \alg(\mc{A}_{\geq 0})$ and let $M$ be a connective left $R$-module. Then the following are equivalent:
		\enu{
			\item  $M$ is a compact projective left $R$-module.
			\item  $M$ is a perfect and flat left $R$-module.
			\item  $M$ is an almost perfect and flat left $R$-module.
			\item  $M$ is a flat left $R$-module and $\pi_0M$ is a finitely presented  $\pi_0R$-module in the sense of \cref{f.p.}.
		}
	\end{prop}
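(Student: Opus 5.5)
We will prove the cycle $(1)\Rightarrow(2)\Rightarrow(3)\Rightarrow(4)\Rightarrow(1)$.

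For $(1)\Rightarrow(2)$: a compact projective object of $\lmodu_R(\mc{A})_{\geq0}$ is flat by \cref{pi0proj}(1). It is also compact in the whole stable category $\lmodu_R(\mc{A})$. To see this, reduce via \cref{l1}(2) to retracts of $R\otimes P$ with $P\in\mc{A}_{\geq0}^{\op{cproj}}$. Such a $P$ is dualizable in $\mc{A}$, hence compact by \cref{projrigidproperties}(1), and $R\otimes(-)$ preserves compact objects because its right adjoint preserves filtered colimits.

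$(2)\Rightarrow(3)$ is \cref{aper}(3).

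For $(3)\Rightarrow(4)$, use that $M$ is connective and almost compact in $\lmodu_R(\mc{A})_{\geq0}$ (after the usual reindexing, since connective almost perfect modules are almost compact in degree $\geq0$). Then $\pi_0M=\tau_{\leq0}M$ is compact in $\lmodu_R(\mc{A})_{\leq 0}\cap\lmodu_R(\mc{A})_{\geq0}=\lmodu_R(\mc{A})^\heartsuit$. By \cref{l2}(3) this heart is identified with $\lmodu_{\pi_0R}(\mc{A}^\heartsuit)$, so $\pi_0M$ is finitely presented in the sense of \cref{f.p.}. Alternatively, one can read this off from the simplicial resolution of \cref{cocompsimplicial}: $\pi_0M$ is the cokernel of $\pi_0P_1\to\pi_0P_0$, and \cref{eqproj}(1) shows these terms are compact.

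The main step is $(4)\Rightarrow(1)$. Since $M$ is flat, $\pi_0M$ is $1$-flat over $\pi_0R$ by \cref{pi0flat}(1). The heart $\lmodu_{\pi_0R}(\mc{A}^\heartsuit)$ is $1$-projectively rigid in the relevant sense: it is $1$-projectively generated by \cref{eqproj}(2), and Lazard's theorem \cref{lazard1} applies to the discrete algebra $\pi_0R$ in the $1$-projectively rigid $\mc{A}^\heartsuit$ of \cref{projcocomplete}(2). By \cref{f.p.flat}, a finitely presented $1$-flat module is compact $1$-projective, so $\pi_0M$ is compact $1$-projective. Now \cref{pi0proj}(3) says that a flat $M$ with compact $1$-projective $\pi_0M$ is compact projective, which gives (1).

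The only point needing care is applying \cref{f.p.flat} to a possibly non-commutative $\pi_0R$. That corollary is stated for $R\in\alg(\mb{A})$ with $\mb{A}=\mc{A}^\heartsuit$ $1$-projectively rigid, so it applies directly. As a fallback, we would argue directly with \cref{cocompfil}. Write $M\simeq\varinjlim M_\alpha$ with the $M_\alpha$ compact projective. Apply $\pi_0$, which commutes with filtered colimits. Compactness of $\pi_0M$ then makes it a retract of some $\pi_0M_\alpha$, hence compact $1$-projective, and we conclude again by \cref{pi0proj}(3).
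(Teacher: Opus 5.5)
Your proposal is correct and follows the paper's proof. The paper treats $(1)\Rightarrow(2)\Rightarrow(3)\Rightarrow(4)$ as obvious, with you supplying the routine details, and proves $(4)\Rightarrow(1)$ exactly as you do: \cref{f.p.flat} in the heart shows $\pi_0M$ is compact $1$-projective (your explicit use of \cref{pi0flat}(1) for the $1$-flatness of $\pi_0M$ fills a step the paper leaves implicit), and then \cref{pi0proj}(3) concludes.
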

	\begin{proof}
		The directions $(1)\Rightarrow (2)$,  $(2)\Rightarrow (3)$ and $(3)\Rightarrow (4)$ are obvious. For $(4)\Rightarrow (1)$, by \cref{f.p.flat}, we conclude that $\pi_0M$ is compact 1-projective over $\pi_0R$. Then by \cref{pi0proj}, we get that $M$ is a compact projective left $R$-module.
	\end{proof}

	\begin{de}
		Let $R\in \alg(\mc{A}_{\geq 0})$. We will say  a left $R$-module $M$ has Tor-amplitude $\leq n$ if, for every discrete right $R$-module $N$,  $\pi_i(N \otimes_R M)$ vanish for $i>n$. We will say  $M$ is of finite Tor-amplitude if it has Tor-amplitude $\leq n$ for some integer $n$.
	\end{de}
	\begin{rem}\label{rem7.2.4.22} Suppose that \mca is Grothendieck. In view of \cref{l7}, a connective left $R$-module $M$ has Tor-amplitude $\leq 0$ if and only if $M$ is flat.
		
	\end{rem}
	\begin{prop}
		Suppose that $\mc{A}$ is hypercomplete. Then a connective left $R$-module $M$ has Tor-amplitude $\leq -1$ if and only if $M=0$.
	\end{prop}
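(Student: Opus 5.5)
The ``if'' direction is immediate: $0\otimes_R N=0$ has no nonzero homotopy, so the zero module has Tor-amplitude $\leq -1$. For the converse, I would test the Tor-amplitude condition against the single discrete right $R$-module $N=\pi_0R$, regarded as a right $R$-module through the truncation map $R\to\pi_0R$, and show that this forces $\pi_0M=0$.

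The key computation is
\[
\pi_0(\pi_0R\otimes_R M)\simeq \pi_0R\,\overline{\otimes}_{\pi_0R}\,\pi_0M\simeq \pi_0M,
\]
valid because $M$ is connective. It uses the commutative square of relative tensor products along $\pi_0$, as in the proof of \cref{pi0flat}(1), which holds because $\pi_0:\ageq^\otimes\to(\mc{A}^\heartsuit)^\otimes$ is symmetric monoidal and preserves geometric realizations. Alternatively, one can apply \cref{lemma4.3}(1) to the right exact functor $(-)\otimes_R M:\rmodu_R(\ageq)\to\ageq$, obtaining $\tau_{\leq0}(R\otimes_RM)\simeq\tau_{\leq0}(\pi_0R\otimes_RM)$, that is, $\pi_0M\simeq\pi_0(\pi_0R\otimes_RM)$. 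Tor-amplitude $\leq -1$ says that $\pi_i(\pi_0R\otimes_RM)=0$ for all $i>-1$, and in particular for $i=0$. Hence $\pi_0M=0$.

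Next I would bootstrap this to all homotopy objects. Since $\pi_0M=0$ and $M$ is connective, $M$ is $1$-connective, so $\Sigma^{-1}M$ is connective. Its Tor-amplitude is $\leq -2\leq-1$, because $N\otimes_R\Sigma^{-1}M\simeq\Sigma^{-1}(N\otimes_RM)$. Applying the previous step to $\Sigma^{-1}M$ gives $\pi_1M=0$. By induction on $n$, $M$ is $n$-connective for every $n$, that is, $\tau_{\leq n}M=0$ for all $n$. Since $\lmodu_R(\mc{A})$ is hypercomplete whenever $\mc{A}$ is (\cref{l2}(4)), we conclude $M=0$.

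The only step requiring care is the identification $\pi_0(\pi_0R\otimes_RM)\simeq\pi_0M$ in this generality, with no Grothendieck or projective hypotheses. Either the right-exactness argument through \cref{lemma4.3}(1) or the commuting square from \cref{pi0flat}(1) suffices, and both need only connectivity of $R$ and $M$.
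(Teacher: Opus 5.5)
Your proof is correct and follows essentially the same route as the paper. Both test the hypothesis against the discrete right $R$-module $\pi_0R$, bootstrap the connectivity of $M$, and conclude by hypercompleteness. The only difference is how the bootstrap runs: the paper first shows $\pi_0R\otimes_RM\simeq0$ outright (already invoking hypercompleteness) and then iterates $M\simeq(\tau_{\geq1}R)\otimes_RM$, whereas you extract only $\pi_0M=0$ and iterate by desuspending $M$ and reapplying the Tor-amplitude bound, so you use hypercompleteness just once at the end.
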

	
	\begin{proof}
		Assume \(M\) is connective and has Tor-amplitude \(\leq-1\); we wish to show \(M\simeq0\).
		Since \(R\) is connective, its \(0\)-th truncation
		\[
		\pi_0R=\tau_{\leq0}R
		\]
		is a discrete right \(R\)-module. By the definition of Tor-amplitude \(\leq-1\), we have
		\[
		\pi_i(\tau_{\leq0}R\otimes_R M)=0\quad\text{for all }i>-1\ (\text{i.e. }i\geq0).
		\]
		Because the \(t\)-structure is compatible with the symmetric monoidal structure, the tensor of two connective objects is connective. Since \(\pi_0R\in\mathcal{A}_{\geq0}\) and \(M\in\mathcal{A}_{\geq0}\), their tensor product is connective:
		\[
		\pi_0R\otimes_R M\in\mathcal{A}_{\geq0}.
		\]
		Hence \(\pi_i(\pi_0R\otimes_R M)=0\) for all \(i<0\). This shows all homotopy groups of \(\pi_0R\otimes_R M\) vanish, so by hypercompleteness
		\[
		\pi_0R\otimes_R M\simeq0.
		\]
		Consider the standard truncation fiber sequence in right \(R\)-modules
		\[
		\tau_{\geq 1}R\longrightarrow R\longrightarrow\pi_0R.
		\]
		Tensoring on the right with \(M\) yields a fiber sequence
		\[
		(\tau_{\geq 1}R)\otimes_R M\longrightarrow R\otimes_R M\longrightarrow\pi_0R\otimes_R M.
		\]
		By step 2 the right-hand term vanishes, and \(R\otimes_R M\simeq M\), so the left-hand map is an equivalence:
		\[
		M\simeq(\tau_{\geq 1}R)\otimes_R M.
		\]
		Now use connectivity estimates for tensor products: since \(M\in\mathcal{A}_{\geq0}\) and \(\tau_{\geq 1}R\in\mathcal{A}_{\geq1}\), their tensor lies in \(\mathcal{A}_{\geq1+0}=\mathcal{A}_{\geq1}\), so \(M\in\mathcal{A}_{\geq1}\). Iterating the same argument gives
		\(M\simeq(\tau_{\geq 1}R)\otimes_R M\in\mathcal{A}_{\geq2}\), and so on. By induction
		\(M\in\mathcal{A}_{\geq n}\) for every \(n\geq0\), hence \(\pi_i(M)=0\) for all \(i\in\mathbb{Z}\).
		The hypercompleteness assumption on \(\mathcal{A}\) implies
		\[
		M\simeq0.
		\]
		
	\end{proof}
	\begin{prop}\label{toram}
		Suppose that the $ttt$-\infcat $(\mc{A}^\otimes, \mc{A}_{\geq0}) $ is projectively rigid. Let $R\in \alg(\mc{A}_{\geq 0})$. Then:
		\enu{
			\item  If $M$ is a left $R$-module of Tor-amplitude $\leq n$, then $M[k]$ has Tor-amplitude $\leq n+k$.
			\item Let
			$$
			M^{\prime} \rightarrow M \rightarrow M^{\prime \prime}
			$$
			be a fiber sequence of left $R$-modules. If $M^{\prime}$ and $M^{\prime \prime}$ have Tor-amplitude $\leq n$, then so does M.
			\item Let $M$ be a left $R$-module of Tor-amplitude $\leq n$. Then any retract of $M$ has Tor-amplitude $\leq n$.
			\item Let $M$ be an almost perfect left module over $R$. Then $M$ is perfect if and only if $M$ has finite Tor-amplitude.
			\item Let $M$ be a left module over $R$ having Tor-amplitude $\leq n$. Then for every $N \in\operatorname{RMod}_R(\mc{A})_{\leq 0}$, $\pi_i\left(N \otimes_R M\right)$ vanishes for each $i>n$.
		}
		
	\end{prop}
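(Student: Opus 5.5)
Items (1)--(3) are formal and I will dispatch them first. For (1), $\pi_i(N\otimes_R M[k])\simeq\pi_{i-k}(N\otimes_R M)$, which vanishes for $i-k>n$. For (2), tensor the fiber sequence with a discrete right module $N$ and use the long exact sequence. The resulting segment $\pi_i(N\otimes_R M')\to\pi_i(N\otimes_R M)\to\pi_i(N\otimes_R M'')$ has both outer terms zero for $i>n$. For (3), homotopy objects of a retract are retracts of homotopy objects.

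For (5) I will reduce to the discrete case by a dévissage on $N$. First suppose $N$ is bounded, $N\in\rmodu_R(\mc{A})_{[-m,0]}$. Induct on $m$ using the fiber sequences $\tau_{\geq -m+1}N\to N\to \pi_{-m}N[-m]$. By (1), tensoring with the shifted discrete piece contributes only in degrees $\leq n-m\leq n$. The long exact sequence then gives vanishing of $\pi_i(N\otimes_R M)$ for $i>n$. For general $N\in\rmodu_R(\mc{A})_{\leq0}$, right completeness gives $N\simeq\varinjlim\tau_{\geq -m}N$. Since $(-)\otimes_R M$ preserves colimits, and since $\mc{A}$ is Grothendieck (so $\pi_i$ commutes with filtered colimits, the $t$-structure being compatible with filtered colimits), the vanishing passes to the colimit.

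The substantive item is (4). For ``only if'', note that the perfect modules are the compact objects. By \cref{compmod} these form the thick subcategory generated by compact projectives. Compact projectives are flat by \cref{pi0proj}, hence have Tor-amplitude $\leq 0$ by \cref{rem7.2.4.22}. Then (1)--(3) show that Tor-amplitude is bounded on the thick closure: in a cofiber sequence $M'\to M\to M''$ with $M,M''$ of bounded amplitude, $M'$ has amplitude at most one higher, by rotating and applying (1) and (2).

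For ``if'', I follow \cite[Prop. 7.2.4.23]{ha} and induct on the Tor-amplitude. Shift so that $M$ is connective (almost perfect implies bounded below) with Tor-amplitude $\leq n$. If $n=0$, then $M$ is flat by \cref{rem7.2.4.22} and almost perfect, so it is compact projective by \cref{perfflat}, hence perfect. If $n>0$, $\pi_0M$ is compact in the heart, because almost perfectness makes $\tau_{\leq0}M$ compact. Choose a compact projective $P$ with $P\to M$ surjective on $\pi_0$, using \cref{eqproj} and projective generation. The fiber $F$ is connective, and it is almost perfect by \cref{aper}. For discrete $N$, the long exact sequence of $N\otimes_R F\to N\otimes_R P\to N\otimes_R M$, with $N\otimes_R P$ discrete since $P$ is flat, gives $\pi_i(N\otimes_R F)\simeq\pi_{i+1}(N\otimes_R M)$ for $i\geq1$. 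Hence $F$ has Tor-amplitude $\leq \max(n-1,0)$.

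This step is the main obstacle. The naive bound only yields $\leq n-1$ provided one also controls $\pi_0$, and in the case $n=1$ one needs $F$ flat. This holds because $\pi_0(N\otimes_R F)\to\pi_0(N\otimes_R P)$ is injective, as $\pi_1(N\otimes_R M)=0$ when $n=1$, so $F$ is indeed of amplitude $\leq n-1$. By induction $F$ is perfect, and hence so is $M=\op{cofib}(F\to P)$. Finally, I need to check that almost perfect modules are bounded below with $k\leq 0$ shift handled by (1), so the reduction to the connective case is legitimate.
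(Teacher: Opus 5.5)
Your proof is correct and is essentially the paper's argument. Both follow Lurie's HA 7.2.4.23 with ``free'' replaced by ``projective'': finite Tor-amplitude is closed under the operations that generate perfect modules via \cref{compmod}; the converse is an induction on Tor-amplitude using a $\pi_0$-surjection $P\to M$ from a compact projective, with \cref{perfflat} as the base case; and (5) is d\'evissage plus right completeness.

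The one thing to change is your ``main obstacle'' paragraph, which should be deleted. For $n\geq1$, Tor-amplitude $\leq n-1$ only requires $\pi_i(N\otimes_R F)=0$ for $i\geq n\geq1$. Your isomorphism $\pi_i(N\otimes_R F)\simeq\pi_{i+1}(N\otimes_R M)$ for $i\geq1$ already gives this, so no control of $\pi_0$ is needed. Moreover, the justification you offer there is false. Tor-amplitude $\leq1$ does not force $\pi_1(N\otimes_R M)=0$: take $R=H\mathbb{Z}$ and $M=N=H(\mathbb{Z}/p)$ in $\mathrm{Sp}$. Likewise, $\pi_0(N\otimes_R F)\to\pi_0(N\otimes_R P)$ need not be injective.
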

	\begin{proof}
		We mimic the proof in \cite[Prop. 7.2.4.23]{ha} but carefully replace ``free'' by ``projective''. The first three assertions follow immediately from the exactness of the functor $N \mapsto N \otimes_R M$. It follows that the collection left $R$-modules of finite Tor-amplitude is stable under retracts and finite colimits and desuspensions, and contains all compact projective left $R$-modules. This proves the ``only if'' direction of (4) by \cref{compmod}. For the converse, let us suppose that $M$ is almost perfect and of finite Tor-amplitude. We wish to show that $M$ is perfect. We first apply (1) to reduce to the case where $M$ is connective. The proof now goes by induction on the Tor-amplitude $n$ of $M$. If $n=0$, then $M$ is flat and we may conclude by applying  \cref{perfflat}. We may therefore assume $n>0$.
		
		Since $M$ is almost perfect, there exists a compact projective left $R$-module $P$ and a fiber sequence
		$$
		M^{\prime} \rightarrow P \xrightarrow{f} M
		$$
		where $f$ induces an epimorphism on $\pi_0$. To prove that $M$ is perfect, it will suffice to show that $P$ and $M^{\prime}$ are perfect. It is clear that $P$ is perfect, and it follows from  \cref{aper} that $M^{\prime}$ is almost perfect. Moreover, since $\pi_0f$ is surjective, $M^{\prime}$ is connective. We will show that $M^{\prime}$ is of Tor-amplitude $\leq n-1$; the inductive hypothesis will then imply that $M$ is perfect, and the proof will be complete.
		
		Let $N$ be a discrete right $R$-module. We wish to prove that $\pi_k(N \otimes_R M^{\prime}) \simeq 0$ for $k \geq n$. Since the functor $N \otimes_R -$ is exact, we obtain for each $k$ an exact sequence
		$$
		\pi_{k+1}(N \otimes_R M) \rightarrow \pi_k(N \otimes_R M^{\prime}) \rightarrow \pi_k(N \otimes_R P)
		$$
		The left entry vanishes in virtue of our assumption that $M$ has Tor-amplitude $\leq n$. We now complete the proof of (4) by observing that $\pi_k\left(N \otimes_R P\right)$ vanishes because $N$ is discrete and $P$ is flat and $k\geq n>0$.
		
		We now prove (5). Assume that $M$ has Tor-amplitude $\leq m$. Let $N \in\operatorname{RMod}_R(\mc{A})_{\leq 0}$; we wish to prove that $\pi_i(N \otimes_R M) \simeq 0$ for $i>n$. Since $N \simeq \colimit \tau_{\geq-m} N$, it will suffice to prove the vanishing after replacing $N$ by $\tau_{\geq-m} N$ for every integer $m$. We may therefore assume that $N \in\operatorname{RMod}_R(\mc{A})_{[-m,0]}$ for some $m \geq 0$. We proceed by induction on $m$. When $m=0$, the desired result follows immediately from our assumption on $M$. If $m>0$, we have a fiber sequence
		
		$$
		\tau_{\geq 1-m} N \rightarrow N \rightarrow\left(\pi_{-m} N\right)[-m]
		$$
		hence an exact sequence
		$$
		\pi_i\left(\left(\tau_{\geq 1-m} N\right) \otimes_R M\right) \rightarrow \pi_i\left(N \otimes_R M\right) \rightarrow \pi_{i+m}\left(\pi_{-m} N \otimes_R M\right)
		$$
		If $i>n$, then the first group vanishes by the inductive hypothesis, and the third by virtue of our assumption that $M$ has Tor-amplitude $\leq n$.
	\end{proof}
	\begin{prop}\label{fullstableproj}
		Suppose that the $ttt$-\infcat $(\mc{A}^\otimes, \mc{A}_{\geq0}) $ is projectively rigid. Let $R\in\alg(\ageq)$, and let $\mathcal{C}$ be the smallest stable subcategory\footnote{We don't need to assume that \mcc is idempotent-complete. See \cite[Remark 7.2.4.24]{ha}  for the case of spectra.} of $\operatorname{LMod}_R(\aaa)$ which contains all compact projective modules. Then $\mathcal{C}=\operatorname{LMod}_R^{\text {perf}}(\aaa)$. 
	\end{prop}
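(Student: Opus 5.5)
By \cref{compmod}, the idempotent completion of $\mathcal{C}$ is $\operatorname{LMod}_R(\aaa)^{\omega}=\operatorname{LMod}_R^{\mathrm{perf}}(\aaa)$, and clearly $\mathcal{C}\subset\operatorname{LMod}_R^{\mathrm{perf}}(\aaa)$. So the only thing to show is that $\mathcal{C}$ is already closed under retracts. Following \cite[Remark 7.2.4.24]{ha}, I would prove this by a direct induction on Tor-amplitude, using projective resolutions in place of free ones.

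\textbf{Reduction to the connective case.} Let $M$ be perfect. By \cref{perfbounded}, $M$ is bounded below, so some shift $M[k]$ is connective. Since $\mathcal{C}$ is stable under shifts, we may assume $M$ is connective. By \cref{toram}(4), $M$ has finite Tor-amplitude, say $\leq n$, and by \cref{aper}(3) it is almost perfect.

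\textbf{Induction on $n$.} I would prove that every connective perfect $M$ of Tor-amplitude $\leq n$ lies in $\mathcal{C}$.

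For $n=0$, $M$ is flat by \cref{rem7.2.4.22}. Then \cref{perfflat} shows that $M$ is compact projective, so $M\in\mathcal{C}$. This base case is where projective rigidity is essential. Compact projectives are retract-closed by definition, so the retract problem that affects general perfect modules disappears at the flat level.

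For $n>0$, I would copy the proof of \cref{toram}(4). Choose a compact projective $P$ and a map $f:P\to M$ that is surjective on $\pi_0$; this is possible because $\pi_0M$ is compact over $\pi_0R$ and $\lmodu_R(\aaa)_{\geq0}$ is projectively generated. The fiber $M'=\operatorname{fib}(f)$ is connective and perfect, since perfect modules form a stable subcategory. As shown there, $M'$ has Tor-amplitude $\leq n-1$, so $M'\in\mathcal{C}$ by induction. Then $M\simeq\operatorname{cofib}(M'\to P)$ also lies in $\mathcal{C}$, because $\mathcal{C}$ is stable.

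\textbf{Main obstacle.} The delicate point is the $n=0$ step. One has to make sure that "perfect and flat" really lands in compact projectives, not merely in retracts of objects of $\mathcal{C}$. This is exactly \cref{perfflat}, which rests on \cref{f.p.flat} and \cref{pi0proj}. The Tor-amplitude estimate for $M'$ is routine, since it is identical to the computation in \cref{toram}, using flatness of $P$ from \cref{pi0proj}(1).
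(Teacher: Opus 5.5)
Your proposal is correct and takes essentially the same approach as the paper. The paper also reduces to connective $M$ via \cref{perfbounded}, inducts on Tor-amplitude with the flat base case handled by \cref{perfflat}, and in the inductive step takes the fiber of a $\pi_0$-surjection $P\to M$ from a compact projective, which has smaller Tor-amplitude by the computation in \cref{toram}(4).
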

	\begin{proof}
		The inclusion $\mathcal{C} \subset \operatorname{LMod}_R^{perf }(\aaa)$ is obvious. To prove the converse, we must show that every object $M \in \operatorname{LMod}_R^{perf }(\aaa)$ belongs to $\mathcal{C}$. Invoking \cref{perfbounded}, we may reduce to the case where $M$ is connective. We then work by induction on the (necessarily finite) Tor-amplitude of $M$. If $M$ is of Tor-amplitude $\leq 0$, then $M$ is flat and the desired result follows from \cref{perfflat}. In the general case, we choose a compact projective $R$-module $P$ and a map $f: P \rightarrow M$ which induces a surjection $\pi_0 P \rightarrow \pi_0 M$. We may conclude that that fiber $K$ of $f$ is a connective perfect module of smaller Tor-amplitude than that of $M$, so that $K \in \mathcal{C}$ by the inductive hypothesis. Since $P \in \mathcal{C}$ and $\mathcal{C}$ is stable under the formation of cofibers, we conclude that $M \in \mathcal{C}$ as desired.
	\end{proof}

	\subsection{Finite presentation and almost of finite presentation}\label{chap7.1}
	We now transition from the  finiteness of modules to the  finiteness of algebras. 
	We prove that finitely presented algebras can be iteratively assembled via finite pushouts of symmetric algebras evaluated on compact projectives.
	\begin{de}\label{einftyfp}
		Let $f:A\to B$ be a map in $\calg(\mc{A}_{\geq0})$.
		\enu{
			\item We say  $f:A\to B$ is locally of finite presentation (or finitely presented) if  $B$ is a compact object in $$\calg(\mc{A}_{\geq0})_{A/}\simeq \calg(\modu_A(\ageq)).$$
			\item We say  $f:A\to B$ is almost of finite presentation if for each $n\geq0$, $\tau_{ \leq n}B$ is a compact object in $$\calg(\mc{A}_{[0,n]})_{\tau_{ \leq n}A/}\simeq \calg(\modu_A(\ageq))_{\leq n}.$$
		}
	\end{de}
	\begin{warn}
		Suppose that $A\in\calg(\mca^\heartsuit)$ is discrete. Let $B \in \calg(\mca^\heartsuit)_{A/}$ be a discrete  \ein-$A$-algebra. Beware that, if $B$ is finitely presented over $A$ in the sense of \cref{ringfp}, this does not imply that $B$ is finitely presented over $A$ in the (derived) sense \cref{einftyfp}; see \cref{fpcounterex} for a counterexample.
	\end{warn}
	\begin{rem}\label{compof.p}
		Suppose that $\ageq\in\prl_{\omega}$ (then so is $\calg(\ageq)$). Given a commutative diagram in $\calg(\mc{A}_{\geq0})$
		$$\begin{tikzcd}
			& A \arrow[ld] \arrow[rd] &   \\
			B \arrow[rr] &                         & C
		\end{tikzcd}$$ where $B$ is of locally of finite presentation over $A$. Then $C$ is locally of finite presentation over $B$ if and only if $C$ is locally of finite presentation over $A$. This follows immediately from \textup{\cite[Proposition 5.4.5.15]{htt}}.
	\end{rem}
	\begin{prop}\label{appro}
		Suppose  that $\mc{A}_{\geq0}$ is projectively generated. Let $f:A\to B\in \calg(\mc{A}_{\geq0})$ be a map such that $\pi_0A\to\pi_0B$ is finitely presented in the sense of \cref{ringfp}. Then there exists compact projective $A$-modules $M,N$ and a diagram 
		$$\begin{tikzcd}
			\op{Sym}^*_A(N) \arrow[r, "\alpha"] \arrow[d, "\phi"] & A \arrow[d] \\
			\op{Sym}^*_A(M) \arrow[r]                & B          
		\end{tikzcd}$$
		such that the map $B'\to B$ induces an isomorphism on $\pi_0$, where $B'$ is the pushout of above diagram in $ \calg(\mc{A}_{\geq0})$ and $\alpha$ is the natural augmentation. (Note that $\phi$  is not necessarily induced by a map of modules $N\to M$).
	\end{prop}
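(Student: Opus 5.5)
We first use the finite presentation of $\pi_0A\to\pi_0B$ in the heart, via the discrete analogue of the statement proved in \cref{groab}. Since $\mc{A}_{\geq0}$ is projectively generated, its heart is 1-projectively generated by \cref{eqproj}(2), and so is $\modu_{\pi_0A}(\mc{A}^\heartsuit)$. Hence there are compact 1-projective $\pi_0A$-modules $\overline{M},\overline{N}$ and a pushout in $\calg(\mc{A}^\heartsuit)$ exhibiting $\pi_0B$ as $\op{Sym}^*_{\pi_0A}(\overline{M})\otimes_{\op{Sym}^*_{\pi_0A}(\overline{N})}\pi_0A$, where $\overline{N}\to\op{Sym}^*_{\pi_0A}(\overline{N})\to\op{Sym}^*_{\pi_0A}(\overline{M})$ picks out generators of the relations. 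Concretely, there is a map $\overline{M}\to\pi_0B$ of $\pi_0A$-modules whose induced algebra map $\op{Sym}^*(\overline{M})\to\pi_0B$ is surjective, and a map $\overline{N}\to\op{Sym}^*(\overline{M})$ whose image generates its kernel as an ideal.

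Next we lift this data. By \cref{eqproj}(3), applied to $\modu_A(\mc{A})_{\geq0}$, there are compact projective $A$-modules $M,N$ with $\pi_0M\simeq\overline{M}$ and $\pi_0N\simeq\overline{N}$. By projectivity of $M$ (\cref{proj1}), the composite $M\to\pi_0M\to\pi_0B$ lifts to a map $M\to B$ of $A$-modules, and hence to an $\ein$-$A$-algebra map $\op{Sym}^*_A(M)\to B$. We then need a map $\phi:N\to\op{Sym}^*_A(M)$ lifting $\overline{N}\to\pi_0\op{Sym}^*_A(M)$. Here we use that $\pi_0$ of the free algebra is the discrete free algebra, $\pi_0\op{Sym}^*_A(M)\simeq\op{Sym}^*_{\pi_0A}(\pi_0M)$; this holds because $\pi_0:\calg(\mc{A}_{\geq0})\to\calg(\mc{A}^\heartsuit)$ is left adjoint to the inclusion and commutes with forgetting to modules. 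Projectivity of $N$ then gives the lift $\phi$. The composite $N\to\op{Sym}^*_A(M)\to B$ has image zero on $\pi_0$, but it need not be nullhomotopic.

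The main obstacle is to arrange that this composite is actually null, so that the square commutes with $\alpha$ as the augmentation. We plan to fix this by modifying $\phi$. Let $\psi:N\to B$ be the composite. Since $\pi_0\psi=0$, $\psi$ factors through $\tau_{\geq1}$ of the fiber of $\op{Sym}^*_A(M)\to B$ only after adjusting. An alternative, which we prefer, is to replace $M$ by $M'=M\oplus N[1]$ or to enlarge the relations. The cleanest route is this: since $N$ is projective and $\pi_0$ of $F=\op{fib}(\op{Sym}^*_A(M)\to B)$ surjects onto the kernel ideal (by the long exact sequence, as the algebra map is $\pi_0$-surjective), we lift $\overline{N}\to\ker$ directly to a map $N\to F$. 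Composing with $F\to\op{Sym}^*_A(M)$ gives $\phi$ together with a canonical nullhomotopy of its image in $B$, and this is exactly the data of a commuting square of $\ein$-algebras with $\op{Sym}^*_A(N)\to A$ the augmentation.

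Finally we compute $\pi_0B'$. Since $\pi_0$ preserves pushouts of connective $\ein$-algebras, being a left adjoint, $\pi_0B'$ is the discrete pushout $\op{Sym}^*(\overline{M})\otimes_{\op{Sym}^*(\overline{N})}\pi_0A$. This is $\op{Sym}^*(\overline{M})$ modulo the ideal generated by the image of $\overline{N}$, which is $\pi_0B$, and so $\pi_0B'\to\pi_0B$ is an isomorphism.
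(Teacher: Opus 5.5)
Your proof is correct, but it is organized differently from the paper's.

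\textbf{Your route.} You first get a finite presentation of $\pi_0B$ entirely in the heart, from the discrete proposition in \cref{groab}: generators $\overline{M}$ and relations $\overline{N}$, both compact $1$-projective. You then lift it.
\begin{itemize}
\item $\overline{M}$ and $\overline{N}$ lift to compact projective $A$-modules by \cref{eqproj}(3).
\item The generators lift to a map $M\to B$ by projectivity.
\item The relations lift to a map $N\to F=\operatorname{fib}(\operatorname{Sym}^*_A(M)\to B)$. This single step produces both $\phi$ and the nullhomotopy that makes the square commute.
\end{itemize}
Finally you compute $\pi_0B'$, using that $\pi_0$ is a left adjoint on connective commutative algebras and that $\pi_0\operatorname{Sym}^*_A(M)\simeq\operatorname{Sym}^*_{\pi_0A}(\pi_0M)$, since the right adjoints commute.

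\textbf{The paper's route.} The paper never passes through a discrete presentation.
\begin{itemize}
\item It covers $\pi_0B$ by a possibly infinite sum of compact projectives. Compactness of $\pi_0B$ over $\pi_0A$ then cuts this down to a finite $M$.
\item It covers the connective fiber $N'$ by compact projectives and forms $B_E$ for finite $E$. The maps $B_E\to B$ come from the same factorization through the fiber that you use.
\item Compactness of $\pi_0B$ over $\pi_0\operatorname{Sym}^*_A(M)$, together with $\operatorname{colim}_E\pi_0B_E\simeq\pi_0B$, cuts this down to a finite $N$.
\end{itemize}

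\textbf{Comparison.} Your version is more modular, and it shows slightly more: any presentation of $\pi_0B$ in the heart by compact $1$-projectives lifts to a derived one.

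However, the only nontrivial input, the compactness argument, is hidden inside the discrete proposition. In the paper that proposition is proved only by the remark that it is ``parallel with the proof of \cref{appro}''. So citing it here is circular within the paper's logical order.

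To make your proof stand alone, you must prove the discrete statement directly. That means running the paper's two compactness steps in $\calg(\mc{A}^\heartsuit)_{\pi_0A/}$:
\begin{itemize}
\item take the filtered union of the images of $\operatorname{Sym}^*_{\pi_0A}$ of finite subsums;
\item take the filtered union of the ideals generated by finite families of relations;
\item use that $\pi_0B$ is compact over $\operatorname{Sym}^*_{\pi_0A}(\overline{M})$ because both are compact over $\pi_0A$.
\end{itemize}
The paper's proof is self-contained and performs the compactness and lifting steps at once.
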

	\begin{proof}
		By \cref{l5.9}, there exists a collection of compact projective
		$A$-modules $\{P_\alpha\}_{\alpha\in I}$ and a map
		\[
		P=\bigoplus_{\alpha\in I}P_\alpha\longrightarrow \pi_0B
		\]
		which is epimorphic on $\pi_0$. By \cref{l5.8}, this map lifts to an
		$A$-module map $P\to B$, and hence induces an $A$-algebra map
		\[
		\operatorname{Sym}_A^*(P)\longrightarrow B
		\]
		which is epimorphic on $\pi_0$.
		For finite subsets $F\subset I$, the images of
		\[
		\pi_0\operatorname{Sym}_A^*
		\left(\bigoplus_{\alpha\in F}P_\alpha\right)
		\longrightarrow \pi_0B
		\]
		form a filtered family of subalgebras whose colimit is $\pi_0B$.
		Since $\pi_0B$ is compact as a commutative $\pi_0A$-algebra, the
		identity of $\pi_0B$ factors through one of these subalgebras. Since
		this subalgebra is contained in $\pi_0B$, it must therefore be equal
		to $\pi_0B$. Thus, for some finite $F_0\subset I$, the map
		\[
		\pi_0\operatorname{Sym}_A^*
		\left(\bigoplus_{\alpha\in F_0}P_\alpha\right)
		\longrightarrow \pi_0B
		\]
		is epimorphic. Set
		$
		M=\bigoplus_{\alpha\in F_0}P_\alpha.
		$
		
		Let
		\[
		N'=\operatorname{fib}\left(\operatorname{Sym}_A^*(M)\to B\right).
		\]
		Since the above map is epimorphic on $\pi_0$, the module $N'$ is
		connective. Again by projective generation, there exists an
		epimorphism on $\pi_0$
		\[
		\bigoplus_{\beta\in J}Q_\beta\longrightarrow N',
		\]
		where every $Q_\beta$ is compact projective.
		For every finite subset $E\subset J$, put
		$
		N_E=\bigoplus_{\beta\in E}Q_\beta.
		$
		The composite $N_E\to N'\to\operatorname{Sym}_A^*(M)$ induces a map
		\[
		\operatorname{Sym}_A^*(N_E)\longrightarrow
		\operatorname{Sym}_A^*(M),
		\]
		and we let
		\[
		B_E=
		A\otimes_{\operatorname{Sym}_A^*(N_E)}
		\operatorname{Sym}_A^*(M).
		\]
		There are natural maps $B_E\to B$, and, after applying $\pi_0$,
		\[
		\colim_E\pi_0B_E\simeq\pi_0B
		\]
		as commutative $\pi_0\operatorname{Sym}_A^*(M)$-algebras.
		
		We claim that $\pi_0B$ is compact as a commutative
		$\pi_0\operatorname{Sym}_A^*(M)$-algebra. Indeed,
		$\pi_0\operatorname{Sym}_A^*(M)$ is finitely presented over $\pi_0A$, since
		$\pi_0M$ is a compact $\pi_0A$-module, while $\pi_0B$ is finitely presented over $\pi_0A$ by
		assumption. Hence the claim follows from the description of mapping
		spaces in the corresponding undercategory.
		
		Therefore the identity of $\pi_0B$ factors through some finite stage
		as a map of commutative
		$\pi_0\operatorname{Sym}_A^*(M)$-algebras:
		\[
		\pi_0B\longrightarrow\pi_0B_{E_0}\longrightarrow\pi_0B.
		\]
		The second map is induced from the quotient map
		\[
		\pi_0\operatorname{Sym}_A^*(M)\longrightarrow\pi_0B.
		\]
		Since the above section is a map over
		$\pi_0\operatorname{Sym}_A^*(M)$, the two quotient maps
		\[
		\pi_0\operatorname{Sym}_A^*(M)\longrightarrow\pi_0B_{E_0},
		\qquad
		\pi_0\operatorname{Sym}_A^*(M)\longrightarrow\pi_0B
		\]
		have the same kernel. Hence
		\[
		\pi_0B_{E_0}\xrightarrow{\sim}\pi_0B.
		\]
		Taking $N=N_{E_0}$, we obtain the desired diagram
		\[
		\begin{tikzcd}
			\operatorname{Sym}_A^*(N) \arrow[r, "\alpha"] \arrow[d, "\phi"]
			& A \arrow[d] \\
			\operatorname{Sym}_A^*(M) \arrow[r]
			& B ,
		\end{tikzcd}
		\]
		and its pushout $B'$ satisfies
		\[
		\pi_0B'\xrightarrow{\sim}\pi_0B.
		\]
	\end{proof}
	
	\begin{rem}
		A natural question arises: given a projective object $P \in \ageq$, is $\pi_0 \operatorname{Sym}^*(P)$ necessarily 1-projective in the heart $\mca^\heartsuit$? 
		However, the answer is negative. Consider the projectively rigid $ttt$-\infcat $$\ageq =\operatorname{Syn}(\mathbb{Z})_{\geq 0}:=  \mathcal{P}_\Sigma(\operatorname{Gr}(\operatorname{Ab})^{1\text{-cproj}})$$ arising in the context of Dirac geometry as introduced by \cite{hesselholt2023dirac}. Let $P = \mathbb{Z}[1] \in \operatorname{Syn}(\mathbb{Z})_{\geq 0}^{\text{cproj}}$, which is a compact projective object. However, its symmetric algebra satisfies $\pi_0 \operatorname{Sym}^*(\mathbb{Z}[1]) \cong \mathbb{Z}[x] / (2x^2)$, which is not a 1-projective object in the heart $\operatorname{Gr}(\operatorname{Ab})$.
	\end{rem}

	\section{Descendable algebras and idempotent algebras}
	
	The goal of this section it to investigate descendable algebras and idempotent algebras in the $ttt$-$\infty$-categorical setting
	\subsection{Faithful algebras}
	
	Descent theory fundamentally requires a rigorous notion of faithful morphisms to ensure that relative tensor products do not obscure or annihilate non-trivial modules. In this subsection, we define faithful and boundedly faithful maps, proving a generalized Nakayama lemma for hypercomplete $ttt$-$\infty$-categories: any nilpotent thickening is automatically boundedly faithful.
	\begin{de}
		Let $f: R\to S \in \alg(\mc{A})$. 
		\enu{
			\item We say  $f$ is left faithful if the base change functor $f_{!}=S\otimes_{R }(-): \lmodu_R(\mc{A})\to \lmodu_S(\mc{A})$ is conservative.
			\item We say  $f$ is  boundedly left  faithful if $f \in \alg(\ageq)$ and the tensor product functor $f_{!}=S\otimes_{R }(-): \lmodu_{R}(\mc{A})^{-}\to \lmodu_{S}(\mc{A})^{-}$ is conservative when restricting on bounded below modules.
		}
	\end{de}
	\begin{cov}
		When $R \to S$ is a map of $\mathbb{E}_1$-algebras, we simply refer to \emph{left faithful} as \emph{faithful}.
	\end{cov}

	\begin{de}
		Let $\alpha:\tilde{A}\to A$ be a map in $\alg(\mc{A}_{\geq0})$. We say  $\alpha$ is a nilpotent thickening if $\pi_0\alpha$ is an epimorphism in the heart and $I=\op{Ker}(\pi_0\alpha)$ is a nilpotent ideal of $\pi_0\tilde{A}$ (i.e. $I^n=0$ for some $n\geq 1$).
	\end{de}
	\begin{prop}[Nakayama lemma]\label{naka}
		Assume that $\mc{A}$ is hypercomplete. Let $\alpha:\tilde{A}\to A$ be a nilpotent thickening in $\alg(\mc{A}_{\geq0})$. Then $\alpha$ is both boundedly left and right  faithful.
	\end{prop}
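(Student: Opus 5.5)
We treat left faithfulness in detail; right faithfulness is the mirror-image argument. Since $f_!=A\otimes_{\tilde A}(-)$ is exact, it suffices to show: if $M\in\lmodu_{\tilde A}(\mc{A})$ is bounded below and $A\otimes_{\tilde A}M\simeq 0$, then $M\simeq 0$. After a shift we may assume $M$ is connective. Because $\mc{A}$ is hypercomplete, so is $\lmodu_{\tilde A}(\mc{A})$ by \cref{l2}(4). It is therefore enough to show that $\pi_k M=0$ for every $k$. Suppose not, and let $k$ be minimal with $\pi_kM\neq 0$; replacing $M$ by $\Sigma^{-k}M$ we may take $k=0$. We will derive a contradiction from $\pi_0(A\otimes_{\tilde A}M)=0$.

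First we compute $\pi_0$ of the base change. Both $A$ and $M$ are connective, so by \cref{l3}(3) (the symmetric monoidal compatibility of $\pi_0$ with relative tensor products, as used in the proof of \cref{pi0flat}(1)) we obtain
\[
\pi_0(A\otimes_{\tilde A}M)\simeq \pi_0A\,\overline{\otimes}_{\pi_0\tilde A}\,\pi_0M\simeq (\pi_0\tilde A/I)\,\overline{\otimes}_{\pi_0\tilde A}\,\pi_0M\simeq \pi_0M/I\pi_0M .
\]
The second identification uses that $\pi_0\alpha$ is an epimorphism with kernel $I$. The third uses right exactness of $(-)\overline{\otimes}_{\pi_0\tilde A}\pi_0M$ in the heart, applied to $0\to I\to\pi_0\tilde A\to\pi_0A\to0$. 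Here $I\pi_0M$ denotes the image of $I\overline{\otimes}_{\pi_0\tilde A}\pi_0M\to\pi_0M$.

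Next we run the discrete Nakayama argument in the heart. The vanishing above gives $I\pi_0M=\pi_0M$. Iterating, $\pi_0M=I\cdot(I\pi_0M)=I^2\pi_0M=\dots=I^n\pi_0M$. To justify this we need that the image of $I^{\overline{\otimes}n}\overline{\otimes}\pi_0M\to\pi_0M$ factors through the image of $I^n\overline{\otimes}\pi_0M$. This holds because the multiplication map $I^{\overline{\otimes} n}\to\pi_0\tilde A$ has image $I^n$ by definition of products of ideals. Since $I^n=0$, we conclude $\pi_0M=0$, contradicting minimality. Hence every $\pi_kM$ vanishes, and hypercompleteness gives $M\simeq0$.

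The main obstacle is the bookkeeping of ideal products in an abstract Grothendieck abelian category. One must fix the definition of $I^n$ as the image of $I^{\overline{\otimes}_{\pi_0\tilde A}n}\to\pi_0\tilde A$. One must also verify that "$IN=N$ implies $I^nN=N$" holds via images of the action maps $I\overline{\otimes}_{\pi_0\tilde A}N\to N$, using associativity of the action and right exactness of $\overline{\otimes}$, which preserves epimorphisms. The non-commutative setting requires care about sides: for left modules the relevant quotient is $N/IN$ with $I$ acting on the left, and the right-faithfulness case uses $NI$.
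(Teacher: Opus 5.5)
Your proposal is correct and follows essentially the same route as the paper: use hypercompleteness and a shift to reduce to a connective $M$ with $\pi_0M\neq 0$, identify $\pi_0(A\otimes_{\tilde A}M)\simeq \pi_0A\,\overline{\otimes}_{\pi_0\tilde A}\,\pi_0M\simeq \pi_0M/I\pi_0M$, and derive a contradiction from $I\pi_0M=\pi_0M$ together with the nilpotence of $I$. The only difference is that you spell out the heart-level bookkeeping (the image of $I^{\overline{\otimes} n}\,\overline{\otimes}_{\pi_0\tilde A}\,\pi_0M\to\pi_0M$) that the paper leaves implicit.
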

	\begin{proof}
		We only prove the left bounded faithfulness, as the argument for the right one is similar. Let $I$ denote the 	$\op{Ker}(\pi_0\alpha)$. Suppose that $M\in \lmodu_{\tilde{A}}(\mc{A})^{-}$ is a bounded below module such that $A\otimes_{\tilde{A}}M=0$. We wish to show $M=0$. Now suppose that $M\neq 0$, without loss of generalization, we can assume that $M$ is connective and $\pi_0M\neq 0$. Then $\pi_0A \overline{\otimes}_{\pi_0\tilde{A}}\pi_0M\simeq\tau_{\leq 0}(A\otimes_{\tilde{A}}M)=0 $ where $\overline{\otimes}$ denotes the tensor product in the heart. Therefore $I\cdot \pi_0M=\pi_0M$. However $I$ is nilpotent so $\pi_0M=0$, which leads to a contradiction.
	\end{proof}
	\begin{rem}
		In general, a nilpotent thickening is not faithful, and hence not descendable. A basic counter-example is the truncation map $\mathbb{S}\to H\mathbb{Z}$ of $\mathbb{E}_\infty$-ring spectra, which is a nilpotent thickening but not faithful. Indeed, given a prime $p$ and a natural number $n$, consider the spectrum $K(n)$ of the corresponding Morava K-theory. Then we have $H_{*}(K(n)) =0$, while $\pi_*(K(n)) \neq 0$ \textup{(see \cite[Chap. IX.7.27]{rudyak1998thom}).} That means the base change functor $$\op{Sp}\simeq \modu_{\mathbb{S}}(\op{Sp})\to \modu_{H\mathbb{Z}}(\op{Sp})\simeq \mc{D}(\mathbb{Z})$$ is not conservative.
	\end{rem}

	\subsection{Descendable algebras}
	Building conceptually on the condition of faithfulness, we explore descendable algebras, which structurally generate the entire module category as a thick ideal. By rigorously analyzing the augmented \v{C}ech nerve, we demonstrate that a faithfully flat map whose $\pi_0$-truncation is an $\aleph_n$-compact module automatically is descendable, generalizing a result in \cite{mathew2016galois}. 
	
	\begin{de}
		Let $\mc{C}^\otimes\in \calg(\prlst)$ and $\mc{I}\subset \mc{C}$ be a full subcategory. We say  $\mc{I}$ is a thick ideal of $\mc{C}$ if $\mc{I}\subset\mc{C}$ is a stable subcategory, closed under retractions and the following condition holds:
		For any $x\in \mc{C}$ and $y\in \mc{I}$ we have $x\otimes y\in \mc{I}$.
	\end{de}
	\begin{de}
		Let $\mc{C}^\otimes\in \calg(\prlst)$ and let $f: A\to B \in \calg(\mc{C})$. We say  $f$ is descendable if the smallest thick ideal of $\modu_{A}(\mc{C})$ such that contains $B$ is $\modu_{A}(\mc{C})$ itself.
	\end{de}
	\begin{prop}[See \cite{mathew2016galois} 3.19]\label{descendableimplyfaith}
		Let $\mc{C}^\otimes\in \calg(\prlst)$ and let $f: A\to B \in \calg(\mc{C})$. If $f$ is descendable, then $f$ is faithful.
	\end{prop}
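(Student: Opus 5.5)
The plan is to show that the class of $A$-modules on which base change along $f$ detects vanishing contains every $A$-module. The approach is to pass through the thick ideal generated by $B$. Write $f_! = B\otimes_A(-)\colon \modu_A(\mc{C})\to\modu_B(\mc{C})$. Since $f_!$ is exact between stable $\infty$-categories, conservativity is equivalent to the statement that $f_!M\simeq 0$ implies $M\simeq 0$. Indeed, a map is an equivalence if and only if its cofiber vanishes, and $f_!$ commutes with cofibers.

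So fix an $A$-module $M$ with $B\otimes_A M\simeq 0$. Let $\mc{I}_M\subset\modu_A(\mc{C})$ be the full subcategory of those $X$ with $X\otimes_A M\simeq 0$. The key steps are as follows.

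\begin{enumerate}
\item $\mc{I}_M$ is a thick ideal. It is a stable subcategory because $(-)\otimes_A M$ is exact, so it preserves zero objects, shifts and cofibers. It is closed under retracts because a retract of a zero object is zero. It is closed under tensoring with arbitrary $Y\in\modu_A(\mc{C})$, since $(Y\otimes_A X)\otimes_A M\simeq Y\otimes_A(X\otimes_A M)\simeq 0$ by associativity of the symmetric monoidal structure on $\modu_A(\mc{C})$.
\item The hypothesis $f_!M = B\otimes_A M\simeq 0$ says exactly that $B\in\mc{I}_M$.
\item Descendability of $f$ says the smallest thick ideal containing $B$ is all of $\modu_A(\mc{C})$. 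Since $\mc{I}_M$ is a thick ideal containing $B$, we get $\mc{I}_M=\modu_A(\mc{C})$. In particular $A\in\mc{I}_M$, so $M\simeq A\otimes_A M\simeq 0$.
\end{enumerate}

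This gives conservativity of $f_!$, i.e.\ faithfulness.

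There is no serious obstacle. The only point needing care is that the definition of thick ideal used here includes closure under retracts. This is what makes the argument apply verbatim, with no need to pass to idempotent completions or to use a finite-stage bound on the descendability exponent as in \cite[Prop.~3.20]{mathew2016galois}.
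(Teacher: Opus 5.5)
Your proof is correct and is essentially the argument the paper relies on: the paper gives no proof of its own and simply cites \cite[Prop.~3.19]{mathew2016galois}, whose proof is exactly that the $A$-modules $N$ with $N\otimes_A M\simeq 0$ form a thick tensor ideal containing $B$, hence containing the unit $A$, which forces $M\simeq 0$. Your preliminary reduction of conservativity to detecting zero objects, using exactness of $B\otimes_A(-)$, is the only additional step, and it is sound.
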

	\begin{de}
		Let $\mc{C}^\otimes\in \calg(\prlst)$ and let $f: A\to B \in \calg(\mc{C})$. We define the augmented cosimplical object 
		$$B^{\bullet+1}: \Delta^+\to \calg(\mc{C})_{A/} $$ be the Cech nerve in $(\calg(\mc{C})_{A/})^{\op{op}}$.
	\end{de}
	\begin{prop}[See \cite{mathew2016galois} 3.20]\label{descendabletotalization}
		Let $\mc{C}^\otimes\in \calg(\prlst)$ and let $f: A\to B \in \calg(\mc{C})$. Then the following conditions are equivalent:
		\enu{
			\item  $A\to B$ is descendable.
			\item $B^{\bullet+1}$ is a $\Delta$-limit diagram in $\op{Pro}(\modu_A(\mc{C}))$.
		}
	\end{prop}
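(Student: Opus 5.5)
The plan is to follow Mathew's argument \cite[Prop.~3.20]{mathew2016galois}, which uses only the formal structure of $\modu_A(\mc{C})$ and so transfers verbatim. Write $I=\op{fib}(A\to B)$. The totalization tower of the augmented cosimplicial object $B^{\bullet+1}$ is computed by the partial totalizations $\Tot_n(B^{\bullet+1})$. The standard computation (the ``Adams tower'') identifies the fibre of $A\to \Tot_n(B^{\bullet+1})$ with $I^{\otimes_A (n+1)}$. Concretely, $\Tot_n$ of the cobar construction is the cofibre of $I^{\otimes(n+1)}\to A$. This follows by induction on $n$ from the fibre sequence $I^{\otimes n+1}\to I^{\otimes n}\to I^{\otimes n}\otimes_A B$ and the fact that the $n$-th layer of the $\Tot$ tower is the $n$-fold cosimplicial normalization. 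So condition (2), that $A\to\{\Tot_n\}$ is an equivalence in $\Pro(\modu_A(\mc{C}))$, is equivalent to the pro-object $\{I^{\otimes_A n}\}_n$ (with transition maps induced by $I\to A$) being pro-zero. That in turn is equivalent to some transition map $I^{\otimes m}\to A$, or equivalently $I^{\otimes m}\to I^{\otimes 1}$, being nullhomotopic.

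\textbf{(2)$\Rightarrow$(1).} Suppose $I^{\otimes m}\to A$ is null. Then the cofibre $\Tot_{m-1}$ of this map splits, and $A$ is a retract of $\Tot_{m-1}(B^{\bullet+1})$. This object is a finite limit of modules of the form $B^{\otimes k}$, each of which lies in the thick ideal generated by $B$, since these are $B$-modules and hence of the form $x\otimes_A B$. Therefore $A$ lies in that thick ideal, and so does everything, because the ideal is closed under tensoring with arbitrary objects.

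\textbf{(1)$\Rightarrow$(2).} Let $\mc{J}$ be the full subcategory of $M\in\modu_A(\mc{C})$ for which $\{I^{\otimes n}\otimes_A M\}_n$ is pro-zero, i.e.\ some map $I^{\otimes k}\otimes M\to M$ is null. The checks are as follows:
\begin{enumerate}
\item $\mc{J}$ contains $B$, since $I\otimes_A B\to B$ is null: it is split by the multiplication on $B$.
\item $\mc{J}$ is closed under tensoring with arbitrary objects.
\item $\mc{J}$ is closed under retracts.
\item $\mc{J}$ is closed under cofibres. Given $M'\to M\to M''$ with nilpotence exponents $a$ and $b$, the map $I^{\otimes(a+b)}\otimes M\to M$ is null by the standard argument: composing two ``phantom-like'' maps that each factor through zero in the outer terms.
\end{enumerate}
Hence $\mc{J}$ is a thick ideal containing $B$, so $A\in\mc{J}$. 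This gives pro-nullity, which is (2).

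The main obstacle is the precise identification of the totalization tower of the Čech conerve with the tower $A/I^{\otimes(n+1)}$, done coherently enough to compare pro-objects. I would handle it as Mathew does: observe that both towers are obtained from the map $I\to A$ by the same natural construction, since the cobar complex is the ``$A\to B$-adic'' resolution. I would also invoke the general fact that a map of towers which is levelwise equivalent induces an equivalence of pro-objects, and that pro-nullity is detected by a single nullhomotopic transition map.
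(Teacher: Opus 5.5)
Your proof is correct, but it is organized differently from the paper's. The paper never identifies the totalization tower. For (1)$\Rightarrow$(2) it takes the full subcategory of those $M$ for which $M\to M\otimes_A B^{\bullet+1}$ is a limit diagram in $\Pro(\modu_A(\mc{C}))$. It shows this is a thick ideal using the symmetric monoidal structure on $\Pro$ compatible with limits (\cref{moncomp}), and it contains $B$ because $B\otimes_A B^{\bullet+1}$ is split. For (2)$\Rightarrow$(1) the paper uses that $A$ is cocompact in $\Pro(\modu_A(\mc{C}))$, so $A$ is a retract of some $\Tot_n(B^{\bullet+1})$.

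You instead convert (2) into the nilpotence condition that $I^{\otimes_A m}\to A$ is null for some $m$. You then run the thick-ideal argument inside $\modu_A(\mc{C})$ on that condition.

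The cost is the identification $\Tot_n(B^{\bullet+1})\simeq\op{cofib}(I^{\otimes_A(n+1)}\to A)$ \emph{as towers}, compatibly with the maps from $A$. This is exactly what you need to compare pro-objects. As you note, your layer-by-layer induction does not supply it on its own: matching the fibres of $\Tot_n\to\Tot_{n-1}$ with $I^{\otimes n}\otimes_A B$ does not identify the connecting maps. The clean fix is the cube model. By cofinality of nonempty subsets of $[n]$ in $\Delta_{\leq n}$, $\Tot_n$ is the limit of $S\mapsto B^{\otimes S}$ over nonempty $S\subseteq[n]$. The total fibre of the full $(n+1)$-cube is the $(n+1)$-fold tensor power of the $1$-cube $A\to B$, hence equals $I^{\otimes(n+1)}$, naturally in $n$.

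In exchange, your route has real advantages:
\begin{itemize}
\item It needs no monoidal structure on the large pro-category, and none of the size bookkeeping of the Pro appendix.
\item The remaining steps are elementary. Your splitting argument showing $I\otimes_A B\to B$ is null is right, and so is the exponent bound $a+b$ under cofibres.
\item You get for free the stronger, quantitative characterization of descendability by nilpotence of $I\to A$.
\end{itemize}
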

	\begin{proof}
		We first prove that $(1) \Rightarrow(2)$. Let $\mathcal{B}$ denote the full subcategory of $\operatorname{Mod}_A(\mc{C})$ spanned by those objects $M$ for which the canonical map $\theta_M:M \otimes_A A \rightarrow M \otimes_A B^{\bullet+1} $ is an equivalence in $\operatorname{Pro}(\operatorname{Mod}_A(\mc{C}))$. By \cref{moncomp}, we see that $\operatorname{Pro}(\operatorname{Mod}_A(\mc{C}))$ inherits a symmetric monoidal structure that is compatible with limits, and hence $\mathcal{B}$ is a thick ideal of $\operatorname{Mod}_A(\mc{C})$. It will suffice to show that $B \in \mathcal{B}$. This is clear, since $B \otimes_A B^{\bullet+1} $ can be identified with the split cosimplicial object $B^{\bullet+1}$.
		
		Now suppose that (2) is satisfied. Let $\mathcal{D}$ denote the smallest thick ideal of $\operatorname{Mod}_A(\mc{A})$ which contains $B$. Then $B^{\bullet}$ is a cosimplicial object of $\mathcal{D}$ and each term $\operatorname{Tot}^{n}(B / A)$ in the tower $\operatorname{Tot}^{\bullet}(B / A)$ is in $\mc{D}$ too. Assumption (2) implies that $A\simeq\varprojlim_n\operatorname{Tot}^{n}(B / A)$ in $\op{Pro}(\operatorname{Mod}_A(\mc{A}))$. However, $A$ is cocompact in $\op{Pro}(\operatorname{Mod}_A(\mc{A}))$, so $A$ is equivalent to a retract of $\operatorname{Tot}^n(B / A)$ for some integer $n$, so that $A \in \mathcal{D}$. That implies $\mathcal{D}=\operatorname{Mod}_A(\mc{A})$.
	\end{proof}
	\begin{rem}
		From \cref{descendabletotalization}, we see that the descendable condition is stronger than that $B^{\bullet+1}$ is a $\Delta$-limit merely in $\modu_{A}(\mc{A})$.
	\end{rem}
	\begin{lem}\label{finlimst}
		Let $\mathcal{C}$ be a stable $\infty$-category and let $K$ be a finite simplicial set. Given an arbitrary diagram $F: K \to \mathcal{C}$. The limit $\lim_K F$ can be expressed (canonically) as a finite colimit of shifted finite coproducts of the objects $F(k)$. Similarly, the colimit $\colim_K F$ can be expressed (canonically) as a finite limit of shifted finite products of the objects $F(k)$.
	\end{lem}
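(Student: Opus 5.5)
We argue by induction on the dimension of the finite simplicial set $K$, building $K$ from its skeleta by pushouts. Throughout we use that in a stable $\infty$-category finite limits and finite colimits agree up to shift: pullback squares coincide with pushout squares, $\operatorname{fib}(X\to Y)\simeq \Sigma^{-1}\operatorname{cofib}(X\to Y)$, and finite products coincide with finite coproducts. It therefore suffices to express $\lim_K F$ through finite coproducts, shifts and cofibers of maps between objects built from the values $F(k)$. The colimit statement is the dual argument, applied in $\mathcal{C}^{\mathrm{op}}$, which is again stable.

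\textbf{Reduction to a pushout decomposition.} We may replace $K$ by a finite simplicial set presenting the same $\infty$-category (via \cite[Prop. 4.2.3.4 / 4.4.2.2]{htt}). We write $K$ as an iterated pushout
\[
K_{n}=K_{n-1}\textstyle\coprod_{\partial\Delta^{m}}\Delta^{m},
\]
attaching one nondegenerate simplex at a time. Limits over a pushout of simplicial sets (along a monomorphism, a homotopy pushout in the Joyal model structure) are computed as the pullback
\[
\lim_{K_n}F\simeq \lim_{K_{n-1}}F\times_{\lim_{\partial\Delta^m}F}\lim_{\Delta^m}F .
\]
In the stable setting this pullback equals $\Sigma^{-1}\operatorname{cofib}\bigl(\lim_{K_{n-1}}F\oplus\lim_{\Delta^m}F\to\lim_{\partial\Delta^m}F\bigr)$. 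Moreover $\lim_{\Delta^m}F\simeq F(0)$, since $0$ is initial in $\Delta^m$.

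\textbf{Induction.} The induction is on the number of nondegenerate simplices, with $\partial\Delta^m$ handled by a nested induction on $m$, since $\partial\Delta^m$ has fewer simplices than $\Delta^m$. The base cases are:
\begin{itemize}
\item the empty set, whose limit is $0$, an empty coproduct;
\item a point, whose limit is $F(k)$.
\end{itemize}
At each step the limit is a shifted cofiber of a map between finite direct sums of previously obtained expressions, so by induction it is an iterated finite colimit (cofibers) of shifted finite coproducts of the $F(k)$. Collapsing the iterated cofibers into a single finite colimit diagram is formal, since iterated pushouts assemble into one finite colimit.

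\textbf{Canonicity and the main obstacle.} Canonicity follows once the decomposition of $K$ is fixed. Alternatively, the expression can be written directly as the totalization of the cosimplicial object
\[
[n]\mapsto\prod_{\sigma\in K_n^{\mathrm{nd}}}F(\sigma(0)),
\]
which is finite because $K$ is finite. In the stable setting this totalization is the finite colimit of its skeletal filtration with shifted layers, by the dual of the Dold–Kan correspondence (the dual of \cite[Lemma 1.2.4.17]{ha}); this gives a canonical description.

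The main technical obstacle is justifying that limits over $K$ turn the (homotopy) pushout decomposition of $K$ into pullbacks. This follows from \cite[Prop. 4.4.2.2]{htt} together with the fact that the inclusion of skeleta is a cofibration. Everything else is stable bookkeeping.
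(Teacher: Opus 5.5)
Your argument takes a genuinely different route from the paper's. The paper uses the Bousfield--Kan formula $\lim_K F\simeq\operatorname{Tot}(C^\bullet)$ with $C^n=\prod_{\sigma\in K_n}F(\sigma(n))$. It notes that the normalized pieces $\prod_{\sigma\in K_n^{\mathrm{nd}}}F(\sigma(n))$ vanish above $\dim K$, and then uses stable Dold--Kan to present the totalization as a finite tower with shifted layers.

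You instead attach one nondegenerate simplex at a time and use the pasting formula $\lim_{K_n}F\simeq\lim_{K_{n-1}}F\times_{\lim_{\partial\Delta^m}F}F(0)$. This formula is valid: a pushout along the monomorphism $\partial\Delta^m\subset\Delta^m$ is a homotopy pushout for the Joyal structure, so \cite[Proposition 4.4.2.2]{htt} applies. Your induction must run on dimension first, since $\partial\Delta^m$ can have more cells than $K$ when the attaching map is not injective; organized that way it does cover $\partial\Delta^m$.

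What each approach buys:
\begin{itemize}
\item Yours is more elementary: it needs no $\infty$-categorical Bousfield--Kan or Dold--Kan input.
\item It depends on the order in which cells are attached, whereas the paper's description is canonical and identifies the layers explicitly.
\end{itemize}
Your ``alternative'' canonical formula, once corrected, is exactly the paper's argument. Two fixes are needed. First, for a limit the Bousfield--Kan object uses the last vertex $F(\sigma(n))$; with $F(\sigma(0))$ the coface $d^0$ would require maps $F(\tau(1))\to F(\tau(0))$, which go the wrong way. Second, it must be indexed by all $n$-simplices. Nondegenerate simplices are not closed under cofaces and codegeneracies; they only give the normalized pieces.

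The step that does not go through as written is the final collapse to a single diagram. Your induction shows that $\lim_K F$ is built from shifted finite sums of the $F(k)$ by iterated fibres, i.e.\ it lies in their closure under finite colimits. Assembling iterated pushouts into one colimit (the converse of \cite[Proposition 4.4.2.2]{htt}) requires every map to be induced by a map of the presenting diagrams. A map into $\operatorname{colim}_L G$ need not factor through any $G(l)$. In your recursion, the map $\lim_{K_{n-1}}F\oplus F(0)\to\lim_{\partial\Delta^m}F$ lands in an object you only know as such a colimit, so this problem arises at every step.

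The conclusion is still true, but it needs stability, not formal pasting. If $Y=\operatorname{cofib}(B\to Y')$ and $f\colon X\to Y$, the octahedral axiom gives $\operatorname{cofib}(f)\simeq\operatorname{cofib}\bigl(\operatorname{cofib}(\Sigma^{-1}X\to B)\to Y'\bigr)$, where the outer map now goes out of a colimit. Inducting on the shape presenting the target then shows that finite colimits of diagrams valued in a class closed under shifts and finite sums are closed under cofibres. The paper is equally brief at this point: its displayed colimit is just the iterated cofibres of the Tot tower.
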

	\begin{proof}
		We only prove the first statement, because the proof of the second one is totally parallel.
		We compute the limit of $F$ via its Bousfield-Kan cosimplicial replacement (see \cite{ramzi-BousfieldKan}). Consider the cosimplicial object $C^\bullet \in \operatorname{Fun}(\Delta, \mathcal{C})$ whose $n$-th degree term is given by the product over all $n$-simplices of $K$:
		$$ C^n = \prod_{\sigma \in N(K)_n} F(\sigma(n)), $$
		where $N(K)_n$ denotes the set of $n$-simplices of the nerve of $K$, and $\sigma(n) \in K$ is the final vertex of the simplex $\sigma$.
		Since $K$ is a finite diagram, the set of non-degenerate simplices in $N(K)$ is finite, meaning there exists a maximum dimension $d \geq 0$ such that all simplices in $N(K)_n$ for $n > d$ are degenerate. Furthermore, because $\mathcal{C}$ is a stable $\infty$-category, finite products and finite coproducts canonically coincide. Thus, we can rewrite each term as a finite direct sum:
		$$ C^n \simeq \bigoplus_{\sigma \in N(K)_n} F(\sigma(n)). $$
		By the general theory of $\infty$-categorical limits, the limit of the diagram $F$ is canonically equivalent to the limit (the totalization) of the cosimplicial object $C^\bullet$:
		$$ \varprojlim_{K} F \simeq \operatorname{Tot}(C^\bullet) = \varprojlim_{\Delta} C^\bullet. $$
		Because the normalized chain complex associated to the cosimplicial object $C^\bullet$ vanishes in degrees strictly greater than $d$, $C^\bullet$ is a bounded cosimplicial object. In a stable $\infty$-category, the totalization of a bounded cosimplicial object reduces to a finite limit, which by stability is canonically equivalent to a finite colimit of its shifted terms. Specifically, it can be computed as the geometric realization of the associated shifted complex via iterated cofibers:
		$$ \varprojlim_{K} F \simeq \operatorname{colim} \Big( C^d[-d] \to C^{d-1}[-(d-1)] \to \dots \to C^1[-1] \to C^0 \Big), $$
		where the connecting maps are constructed from the alternating sums of the associated coface maps.
		
		Consequently, the limit $\varprojlim_K F$ is obtained purely as a finite colimit of the objects $C^n[-n]$, each of which is a shifted finite direct sum of the original objects evaluated by $F$ in $K$.
	\end{proof}
	\begin{prop}\label{criteriondescendable}
		Let $\mc{C}^\otimes\in \calg(\prlst)$ and let $f: A\to B \in \calg(\mc{C})$. Then the following are equivalent:
		\enu{
			\item $A\to B$ is descendable.
			\item Let $\mcc\subset \modu_A(\mcc)$ be the smallest thick subcategory which contains the essential image of the forgetful functor $\modu_B(\mcc)\to \modu_A(\mca)$. Then
			$\mcc=\modu_A(\mcc)$.
			\item Let $\mcc\subset \modu_A(\mcc)$ be the smallest thick ideal which contains the essential image of the forgetful functor $\modu_B(\mcc)\to \modu_A(\mcc)$. Then
			$\mcc=\modu_A(\mcc)$.
			\item 
			$A$, regarded as an $A$-module, can be obtained as a retract of a finite colimit of a diagram of $A$-modules, each of which can be lifted to a $B$-module.
			\item 
			$A$, regarded as an $A$-module, can be obtained as a retract of a finite limit of a diagram of $A$-modules, each of which can be lifted to a $B$-module.
		}
	\end{prop}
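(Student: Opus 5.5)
We will prove the cycle of implications $(1)\Rightarrow(3)\Rightarrow(2)\Rightarrow(4)\Rightarrow(5)\Rightarrow(1)$, with the thick-ideal and thick-subcategory conditions serving as bridges. Throughout, write $\mathcal{T}\subset\modu_A(\mcc)$ for the essential image of the forgetful functor $\modu_B(\mcc)\to\modu_A(\mcc)$.

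\textbf{The implication $(1)\Rightarrow(3)$.} Since $B\in\mathcal{T}$, the smallest thick ideal containing $\mathcal{T}$ contains the smallest thick ideal containing $B$. The latter is $\modu_A(\mcc)$ by descendability.

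\textbf{The implication $(3)\Rightarrow(2)$.} The key observation is that $\mathcal{T}$ is already an ideal. If $N$ carries a $B$-module structure and $X\in\modu_A(\mcc)$, then $X\otimes_A N\simeq (B\otimes_A X)\otimes_B N$ is again a $B$-module. Now consider the full subcategory of $M$ such that $X\otimes_A M$ lies in the smallest thick subcategory $\mathcal{T}'$ generated by $\mathcal{T}$, for all $X$. This subcategory is thick, because $X\otimes_A-$ is exact and preserves retracts, and it contains $\mathcal{T}$. Hence it contains $\mathcal{T}'$, so $\mathcal{T}'$ is a thick ideal. It therefore coincides with the thick ideal generated by $\mathcal{T}$, which is everything by (3).

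\textbf{The implication $(2)\Rightarrow(4)$.} The smallest thick subcategory generated by $\mathcal{T}$ can be described explicitly. It is the union of the iterated closures of $\mathcal{T}$ under shifts, cofibers and retracts. Equivalently, its objects are retracts of objects built in finitely many steps by cofibers, with retracts pushed to the end using the standard argument that a retract of $X$ becomes a summand of $X\oplus \Sigma X$-type constructions. Concretely, I would check that the class of retracts of finite colimits of diagrams valued in $\mathcal{T}$ is itself thick. It is closed under cofibers: given retracts $X\to Y$, one builds the cofiber of $X'\to Y'$ as a retract of the cofiber of a map $X'\oplus\ldots$, using the idempotents. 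It is clearly closed under shifts, since $\Sigma$ is a finite colimit of $0$ and the object, and $\mathcal{T}$ is shift-closed. Once thickness is established, $A$ belongs to this class.

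\textbf{The implication $(4)\Rightarrow(5)$.} Apply \cref{finlimst}. A finite colimit of objects of $\mathcal{T}$ is a finite limit of shifted finite products of those objects. These shifted products again lie in $\mathcal{T}$, because $\mathcal{T}$ is closed under shifts and finite products.

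\textbf{The implication $(5)\Rightarrow(1)$.} Every object of $\mathcal{T}$ lies in the thick ideal generated by $B$, since a $B$-module $N$ is a retract of $B\otimes_A N$ via the unit and multiplication. Thick ideals are closed under finite limits and retracts, so $A$ lies in this thick ideal, hence so does everything.

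The main obstacle is the closure of "retracts of finite colimits" under cofibers in $(2)\Rightarrow(4)$. If this bookkeeping gets messy, I would instead prove $(2)\Rightarrow(5)$ directly via the analogous description using finite limits and \cref{finlimst}.
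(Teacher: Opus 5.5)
Your implications $(1)\Rightarrow(3)$, $(3)\Rightarrow(2)$, $(4)\Rightarrow(5)$ and $(5)\Rightarrow(1)$ are correct. The observation behind the last one (a $B$-module $N$ is a retract of $B\otimes_A N$, so $\mathcal{T}$ lies in the thick ideal generated by $B$) also gives $(2)\Rightarrow(1)$ directly. That is a self-contained substitute for the citation the paper uses for $(1)\Leftrightarrow(2)$.

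The gap is in $(2)\Rightarrow(4)$, exactly where you suspected. Your claim that retracts of finite colimits of $\mathcal{T}$-valued diagrams form a thick subcategory is true, but your argument does not establish it. The idempotent trick only reduces closure under cofibers to the following statement. Let $F\colon K\to\mathcal{T}$ and $G\colon L\to\mathcal{T}$ be finite diagrams, and let $\phi\colon\operatorname{colim}_K F\to\operatorname{colim}_L G$ be an arbitrary map; then $\operatorname{cofib}(\phi)$ is again the colimit of a finite $\mathcal{T}$-valued diagram. The difficulty is that $\phi$ is not induced by a map of diagrams: its components $F(k)\to\operatorname{colim}_L G$ need not factor coherently through the $G(l)$. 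So there is no evident diagram computing $\operatorname{cofib}(\phi)$. This is the real content of the implication. Your fallback through finite limits meets the dual obstruction, namely maps out of a finite limit.

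To repair it, only ever use maps \emph{out of} a colimit \emph{into} an object of $\mathcal{T}$.
\begin{itemize}
\item A map $\operatorname{colim}_K F\to T$ with $T\in\mathcal{T}$ is a cocone. So its cofiber is the colimit of a $\mathcal{T}$-valued diagram on $K^{\triangleright}\amalg_K K^{\triangleright}$, with cone points $T$ and $0$.
\item Since $\mathcal{T}$ is shift-closed, its stable closure equals its extension closure. So any $X$ in the stable closure has a filtration $0=X_0\to\cdots\to X_n=X$ with $X_i/X_{i-1}\simeq T_i\in\mathcal{T}$.
\item For each $i$ one has $X/X_{i-1}\simeq\operatorname{fib}(X/X_i\to\Sigma T_i)$, which is $\Sigma^{-1}$ of a cofiber of the above type. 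Downward induction on $i$ then puts $X=X/X_0$ in the desired class.
\item Finally, the thick closure of $\mathcal{T}$ is the retract-closure of its stable closure.
\end{itemize}

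Alternatively, follow the paper, which never describes the thick closure at all.
\begin{itemize}
\item It proves $(1)\Rightarrow(5)$ from the proof of \cref{descendabletotalization}. Descendability makes $B^{\bullet+1}$ a limit diagram in $\mathrm{Pro}(\modu_A(\mcc))$. Since $A$ is cocompact there, $A$ is a retract of some $\operatorname{Tot}^n(B/A)$, which is a finite limit of the $B$-modules $B^{\otimes_A(k+1)}$.
\item It gets $(4)\Leftrightarrow(5)$ from \cref{finlimst}, and notes $(4)\Rightarrow(3)$ and $(3)\Rightarrow(2)$ as you do.
\end{itemize}
Together with your $(2)\Rightarrow(1)$, this closes the cycle without needing $(2)\Rightarrow(4)$.
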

	\begin{proof}
		The implications $(2)\Rightarrow (3)$ and $(4)\Rightarrow (3)$ are clear. By \cref{finlimst}, we obtain $(4)\Leftrightarrow(5)$. The equivalence $(1)\Leftrightarrow (2)$ follows from the same argument as in \cite[Variant D.3.2.3]{sag}. It therefore suffices to prove $(1)\Rightarrow(5)$ and $(3)\Rightarrow(2)$.
		
		$(1)\Rightarrow(5)$ Suppose that $f: A \to B$ is descendable. By the proof of \cref{descendabletotalization}, $A$ is equivalent to a retract of $\operatorname{Tot}^n(B / A)$ for some integer $n\geq0$.  Since the finite totalization $\operatorname{Tot}^n(B / A)$ can be built as a finite limit of the individual cosimplicial terms $B^{k+1} = B \otimes_A \dots \otimes_A B$ ($0 \leq k \leq n$). Since each $B^{k+1}$ naturally admits the structure of a $B$-module, $A$ is indeed a retract of a finite limit of objects that can be lifted to $B$-modules.
		
		$(3)\Rightarrow(2)$ Let $\mcb\subset \modu_A(\mcc)$ be the smallest thick subcategory which contains the essential image of the forgetful functor $\modu_B(\mcc)\to \modu_A(\mcc)$. We show that $\mcb$ has actually been a thick ideal. Let $\mcb'\subset\mcb$ denote the full subcategory spanned by those objects $M$ such that $N\otimes_A M \in \mcb$ for any $A$-module $N$. Since $\mcb'$ is a thick subcategory too, it suffices to show the case when $M$ admits a $B$-module structure $_BM$. In this case $N\otimes_A M\simeq {}_A((N\otimes_A B) \otimes_{B} M)$ admits a $B$-module structure, hence $M\in \mcb'$. Consequently, $\mcb=\mcb'$ is a thick ideal.
	\end{proof}
	
	\begin{lem}[See \cite{mathew2016galois} 3.21, 3.24]\label{descendable321}
		Let $\mc{C}^\otimes\in \calg(\prlst)$. 
		\begin{enumerate}
			\item Let $f: R\to S$ be a descendable morphism in $\calg(\mc{C})$ and $R\to A$ be another map in $\calg(\mc{C})$. Then the map $A\to A\otimes_R S$ given by the following pushout diagram is descendable.
			$$
			\begin{tikzcd}
				R \arrow[d] \arrow[r]    & S \arrow[d]    \\
				A \arrow[r] & A\otimes_R S
			\end{tikzcd}
			$$
			\item 
			Let $A \rightarrow B \rightarrow C$ be maps in $\operatorname{CAlg}(\mathcal{C})$. Then the following hold:
			\begin{enumerate}
				\item  If $A \rightarrow B$ and $B \rightarrow C$ are descendable, so is $A \rightarrow C$.
				\item  If $A \rightarrow C$ is descendable, so does $A \rightarrow B$.
			\end{enumerate}
		\end{enumerate}

	\end{lem}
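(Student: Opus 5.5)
Both parts will be proved with the characterization in \cref{criteriondescendable}: $A\to B$ is descendable if and only if $A$ lies in the smallest thick ideal of $\modu_A(\mc{C})$ generated by $B$, equivalently by all $A$-modules admitting a $B$-module structure. Throughout I will use that symmetric monoidal exact colimit-preserving functors send thick ideals into thick ideals after taking the thick-ideal closure of the image.

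For (1), consider the base change functor $F=A\otimes_R(-)\colon \modu_R(\mc{C})\to\modu_A(\mc{C})$. It is exact and symmetric monoidal, and $F(S)\simeq A\otimes_R S$. Let $\mc{I}\subset\modu_R(\mc{C})$ be the collection of $M$ such that $F(M)$ lies in the thick ideal of $\modu_A(\mc{C})$ generated by $A\otimes_RS$. This $\mc{I}$ is a stable subcategory closed under retracts, since $F$ is exact and preserves retracts. It is also an ideal: for $N\in\modu_R(\mc{C})$ and $M\in\mc{I}$ we have $F(N\otimes_RM)\simeq F(N)\otimes_AF(M)$, which lies in the target ideal. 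Since $S\in\mc{I}$ and $S$ generates $\modu_R(\mc{C})$ as a thick ideal, we get $R\in\mc{I}$, so $A=F(R)$ lies in the thick ideal generated by $A\otimes_RS$.

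For (2)(a), let $\mc{J}_B\subset\modu_A(\mc{C})$ be the thick ideal generated by $B$. This is all of $\modu_A(\mc{C})$, so $A\in\mc{J}_B$. It therefore suffices to show $B\in\mc{J}_C$, the thick ideal generated by $C$. Since $B\to C$ is descendable, $B$ lies in the thick ideal of $\modu_B(\mc{C})$ generated by $C$. Now apply the forgetful functor $U\colon\modu_B(\mc{C})\to\modu_A(\mc{C})$. The collection of $B$-modules $M$ with $U(M)\in\mc{J}_C$ is thick, and it contains $C$. It is an ideal because $U(N\otimes_BM)\simeq N\otimes_B M$ is a retract of $N\otimes_A M$: the multiplication $N\otimes_AM\to N\otimes_BM$ has a section given by $N\otimes_B M\simeq (N\otimes_A M)\otimes_{B\otimes_AB}B$. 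This retract claim is the point I would have to check most carefully. A cleaner alternative that avoids it is to use \cref{criteriondescendable}(2), which works with thick subcategories instead of ideals. Every $B$-module lying in the thick subcategory generated by $C$-modules maps under $U$ into the thick subcategory generated by $C$-modules regarded as $A$-modules. Hence $B\in\modu_B(\mc{C})$, whose thick subcategory generated by $C$-modules is everything, becomes an $A$-module in the thick subcategory generated by $C$-modules. So every $A$-module underlying a $B$-module, hence $A$ itself, lies there, and I would use this alternative route.

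For (2)(b), any $C$-module is a $B$-module by restriction along $B\to C$. So the thick subcategory generated by $A$-modules liftable to $C$-modules is contained in the one generated by modules liftable to $B$-modules. By \cref{criteriondescendable}(2) the former is all of $\modu_A(\mc{C})$, and hence so is the latter. The only genuine subtlety in the whole argument is the ideal or subcategory bookkeeping in (2)(a), which the switch to criterion (2) removes.
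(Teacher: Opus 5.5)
Your final argument is correct and is essentially Mathew's proof, which the paper cites here without reproducing. For (1), base change is an exact symmetric monoidal functor and so carries the thick ideal generated by $S$ into the one generated by $A\otimes_R S$; for (2), restriction of scalars carries the thick subcategory generated by $C$-modules into the corresponding one in $\modu_A(\mc{C})$, which you route through \cref{criteriondescendable}(2).

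You were right to discard your first attempt at the ideal property in (2)(a), because the retract claim there is false. In $\mc{D}(\mathbb{Z})$ take $A=\mathbb{Z}$, $B=\mathbb{Z}[x]$, $N=\mathbb{Z}[x]/(x-p)$ and $M=\mathbb{Z}[x]/(x)$. Then $N\otimes_AM\simeq\mathbb{Z}$ while $N\otimes_BM\simeq\mathbb{Z}/p$, and $\mathbb{Z}/p$ is not a retract of $\mathbb{Z}$. So the switch to \cref{criteriondescendable}(2) is necessary, not merely cleaner.
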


	\begin{lem}\label{D.3.3.7}
		Suppose that the $ttt$-\infcat $(\mc{A}^\otimes, \mc{A}_{\geq0}) $ is projectively rigid.  Let $A\in\alg(\ageq)$, let $M$ be a flat left $A$-module, and let $N$ be a connective left $A$-module. Assume that $\pi_0 M$ is an $\aleph_n$-compact object of the category of discrete $\pi_0 A$-modules for some $n\geq0$. Then $\operatorname{Ext}_A^m(M, N) \simeq 0$ for $m>n$.
	\end{lem}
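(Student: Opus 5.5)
This is the analogue of \cite[Lemma D.3.3.7]{sag}, with ``free'' replaced by ``compact projective''. The plan has four steps: write $M$ as an $\aleph_n$-indexed filtered colimit of compact projectives, reduce $\op{Ext}$ to a derived limit, kill the error terms using projectivity, and then bound the derived limit.

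\textbf{Step 1: a good presentation of $M$.} By Lazard's theorem (\cref{cocompfil}) we have $M\simeq\varinjlim_{i\in I}P_i$ with $I$ filtered and each $P_i$ compact projective. I want to replace $I$ by a filtered diagram of cardinality at most $\aleph_n$. Since $\pi_0M\simeq\varinjlim\pi_0P_i$ is $\aleph_n$-compact in $\lmodu_{\pi_0A}(\mc{A}^\heartsuit)$, I would proceed as in the classical Lazard/Kaplansky argument. Write $\lmodu_A(\mc{A})^{fl}\simeq\op{Ind}(\lmodu_A(\ageq)^{\op{cproj}})$. By \cref{eqproj} the homotopy category of compact projectives is equivalent to that of compact 1-projective $\pi_0A$-modules. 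Hence $\pi_0$ identifies the homotopy category of flat $A$-modules with that of 1-flat $\pi_0A$-modules, via \cref{pi0proj} and \cref{pi0flat}(2). Under this identification, the $\aleph_n$-compactness of $\pi_0M$ lets one refine the diagram to a filtered poset of cardinality $\leq\aleph_{n-1}$ when $n\geq1$, or a countable chain when $n=0$ (interpreting $\aleph_0$-compact as finitely presented). The $n=0$ case is easier: $\pi_0M$ is then finitely presented and 1-flat, so $M$ is compact projective by \cref{perfflat}, and $\op{Ext}^m_A(M,N)=0$ for $m>0$ by \cref{proj1}.

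\textbf{Step 2: reduction to a derived limit.} With $M\simeq\varinjlim_{I}P_i$, we get
\[\underline{\mapp}_A(M,N)\simeq\varprojlim_{I^{\op{op}}}\underline{\mapp}_A(P_i,N).\]
Each $\underline{\mapp}_A(P_i,N)$ is connective as a spectrum by \cref{proj1}(4), since $N$ is connective and $P_i$ is projective. The Bousfield–Kan (Milnor-type) spectral sequence then gives
\[\op{Ext}^m_A(M,N)=\pi_{-m}\varprojlim \underline{\mapp}_A(P_i,N)\]
with $E_2$-terms $\varprojlim^{s}_{I^{\op{op}}}\pi_t\underline{\mapp}_A(P_i,N)$, where $t\geq0$ and $m=s-t$. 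So everything reduces to the vanishing $\varprojlim^s=0$ for $s>n$ over a filtered category of size $\aleph_{n-1}$. This is Mitchell's theorem (or Goblot's bound): $\varprojlim^s$ vanishes for $s>k+1$ over a filtered poset of cardinality $\aleph_k$. It applies because these are limits of abelian groups, so no structure of $\mc{A}$ is needed at this point.

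\textbf{Step 3: conclusion.} Given Steps 1 and 2, the degree count $m=s-t$ with $t\geq0$ and $s\leq n$ gives $\op{Ext}^m_A(M,N)=0$ for $m>n$.

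\textbf{Main obstacle.} The hardest part is Step 1: producing an indexing category of cardinality $\aleph_{n-1}$ from the compactness of $\pi_0M$ in the heart, rather than from compactness of $M$ itself. I would first try to transport the statement to the heart. Use \cref{eqproj} and \cref{pi0proj} to show that $\pi_0$ induces an equivalence from the $\infty$-category of flat $A$-modules onto the homotopy-level ind-category of compact 1-projective $\pi_0A$-modules. Then apply the classical result that an $\aleph_n$-presented flat module is an $\aleph_{n-1}$-directed colimit of finitely generated projectives, and lift the resulting diagram back along $\op{Ind}$. If lifting diagrams is problematic, I would instead use the fact that $\op{Ind}(\mcc)^{\aleph_n}$-objects are $\aleph_n$-small filtered colimits in $\op{Ind}(\mcc)$. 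To get there, I would show that $\aleph_n$-compactness of $\pi_0M$ among 1-flat modules implies $\aleph_n$-compactness of $M$ among flat modules, via the fully faithful identification on mapping sets combined with the connectivity of the relevant mapping spaces.
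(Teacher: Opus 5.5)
\textbf{Gap in Step 1.} Steps 2--3 and your $n=0$ case are fine. The problem is Step 1 for $n\geq1$. You need $M$ itself, as an object of $\operatorname{Ind}(\operatorname{LMod}_A(\mathcal{A}_{\geq0})^{\mathrm{cproj}})$, to be a retract of an $\aleph_n$-small filtered colimit of compact projectives. The hypothesis only concerns $\pi_0M$, and neither bridge you propose works.

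First, $\pi_0$ cannot give an equivalence between flat $A$-modules and an ind-category of compact $1$-projective $\pi_0A$-modules. The target is a $1$-category, whereas for $\mathcal{A}=\opsp$, $A=\mathbb{S}$ one has $\operatorname{Map}(\mathbb{S},\mathbb{S})\simeq\Omega^\infty\mathbb{S}$.

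Second, $\pi_0$ is not even fully faithful on homotopy categories of flat modules. \cref{eqproj} is only about projectives, and \cref{pi0flat}(2) only gives conservativity. Concretely, suppose you lift a section $\pi_0M\to\pi_0M_J$ to a map $M\to M_J$. The obstruction lies in $\operatorname{Ext}^1_A(M,\tau_{\geq1}M_J)$, which is assembled from the groups $\operatorname{Ext}^{k+1}_A(M,\pi_kM_J)$ for $k\geq1$. These are exactly the groups the lemma is supposed to control, and nothing forces them to vanish when $k+1\leq n$. So this route is circular. The same obstruction appears if you try to lift a homotopy-level diagram.

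\textbf{The missing idea.} Arrange that only $\pi_0M$ ever enters; this is what the paper does.
\begin{enumerate}
\item Reduce to discrete $N$ via the Postnikov tower of $N$. Left completeness holds under projective rigidity. One has $\operatorname{Ext}^m_A(M,\pi_kN[k])\simeq\operatorname{Ext}^{m+k}_A(M,\pi_kN)$, and a Mittag-Leffler argument disposes of the $\operatorname{lim}^1$-term.
\item For discrete $N$, use $\operatorname{Map}_A(M,N)\simeq\operatorname{Map}_{\pi_0A}(\pi_0A\otimes_AM,N)$ and $\pi_0A\otimes_AM\simeq\pi_0M$ by flatness. So you may assume $A$ and $M$ are discrete.
\item Now \cref{lazard1} presents $M$ as an honest $1$-categorical filtered colimit in the heart of compact $1$-projectives. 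These are compact projective in $\operatorname{LMod}_{\pi_0A}(\mathcal{A})_{\geq0}$ by \cref{pi0proj}. Since $\mathcal{A}$ is Grothendieck, this colimit is also the colimit in $\operatorname{LMod}_{\pi_0A}(\mathcal{A})$.
\item $\aleph_n$-compactness of $M$ in the heart then directly exhibits $M$ as a retract of an $\aleph_n$-small filtered colimit $\varinjlim_\alpha M_\alpha$, with no lifting problem.
\end{enumerate}

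From there your Step 2 finishes the proof. Each $\underline{\operatorname{Map}}(M_\alpha,N)$ is now discrete, so $\operatorname{Ext}^m_A(M,N)$ is a retract of $\operatorname{lim}^m_\alpha\operatorname{Hom}(M_\alpha,N)$, which vanishes for $m>n$ by Mitchell's theorem. This is the same content as the paper's appeal to \cite[Lemma D.3.3.6]{sag}.

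Note the trade-off: your spectral-sequence formulation would handle connective $N$ directly and skip the Postnikov reduction. It can only do so if you first had the strong presentation of $M$ itself, which is exactly what is unavailable.
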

	\begin{proof}
		The Postnikov-tower argument in
		\cite[Lemma D.3.3.7]{sag} applies verbatim and reduces us to the
		case where $N$ is discrete. By base change along
		$A\to\pi_0A$, we may further reduce to the case where $A$ and $M$
		are discrete.
		
		By \cref{lazard1}, $M$ is a filtered colimit of compact
		1-projective $A$-modules. Since $M$ is $\aleph_n$-compact, it is a
		retract of an $\aleph_n$-small filtered colimit
		\[
		\colim_{\alpha\in P}M_\alpha
		\]
		of compact 1-projective modules. Hence
		$\operatorname{Map}_A(M,N[m])$ is a retract of
		\[
		\varprojlim_{\alpha\in P}
		\operatorname{Map}_A(M_\alpha,N[m]).
		\]
		For $m>n$, each mapping space on the right is $n$-connective by
		projectivity. Therefore
		\cite[Lemma D.3.3.6]{sag} implies that the inverse limit is
		connected. It follows that
		\[
		\operatorname{Ext}^m_A(M,N)=0
		\]
		for $m>n$.
	\end{proof}
	\begin{thm}\label{ffdescendable}
		Suppose that the $ttt$-\infcat $(\mc{A}^\otimes, \mc{A}_{\geq0}) $ is projectively rigid.  Let $\phi: A\to B \in \calg(\mc{A})$ be a faithfully flat map such that $\pi_0B$ is a $\aleph_n$-compact $\pi_0A$-module\footnote{It means $\pi_0B\in\modu_{\pi_0A}(\mca^\heartsuit)^{\aleph_n}$.} for some $n\geq 0$. Then $\phi$ is descendable.
	\end{thm}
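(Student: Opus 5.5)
We follow the strategy of \cite[Proposition D.3.3.1]{sag} (Mathew's argument for countably presented faithfully flat maps), replacing ``free'' by ``projective'' and using \cref{D.3.3.7} as the key Ext-vanishing input. The first step is to reduce to the connective case. By \cref{flatpush}(1), $B\simeq A\otimes_{\tau_{\geq0}A}\tau_{\geq0}B$, and $\tau_{\geq0}A\to\tau_{\geq0}B$ is faithfully flat. Descendability is stable under base change by \cref{descendable321}(1). Moreover, $\pi_0$ of the connective covers is unchanged. Hence we may assume $A,B\in\calg(\ageq)$.

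Next we consider the cofiber $C=\op{cofib}(\phi)$. By \cref{pi0flat}(4) (projective rigidity gives the Grothendieck and hypercompleteness hypotheses), $C$ is a flat $A$-module. Since $\pi_0B$ is $\aleph_n$-compact, we want $\pi_0C=\op{Coker}(\pi_0\phi)$ to be $\aleph_n$-compact as well. This holds because $\pi_0A$ is compact and $\aleph_n$-compact objects are closed under finite colimits in the heart. By \cref{cokerff}, $\pi_0A\to\pi_0B$ is monic, so the long exact sequence gives $\pi_0C\simeq\op{Coker}(\pi_0\phi)$. Then \cref{D.3.3.7} implies $\operatorname{Ext}^m_A(C^{\otimes k},N)=0$ for $m>n$ and connective $N$, for any $k$. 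Here we use that $C^{\otimes_A k}$ is flat by \cref{l1.3}(2), and that $\pi_0(C^{\otimes k})=(\pi_0C)^{\overline{\otimes}k}$ remains $\aleph_n$-compact.

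Now we run the standard nilpotence argument. Let $\delta\colon C[-1]\to A$ be the boundary map, and take $I=\op{fib}(\phi)\simeq C[-1]$. The composite $B\otimes_A I\to B$ is null, since $\phi$ splits after base change to $B$. The $A$-module map $\delta^{\otimes (n+1)}\colon C^{\otimes(n+1)}[-(n+1)]\to A$ corresponds to a class in $\operatorname{Ext}^{n+1}_A(C^{\otimes(n+1)},A)$. This class vanishes by the previous step applied with $N=A$ connective. Hence $I^{\otimes(n+1)}\to A$ is null. By the standard criterion (\cite[Proposition 3.27]{mathew2016galois}, whose proof is a formal manipulation in any $\mc{C}^\otimes\in\calg(\prlst)$), nilpotence of $I\to A$ implies that $A$ is a retract of $\operatorname{Tot}^{n}(B/A)$, or equivalently lies in the thick ideal generated by $B$. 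Thus $\phi$ is descendable by \cref{criteriondescendable}.

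The main obstacle is the Ext-vanishing step: one must ensure that $\pi_0$ of the tensor powers of $C$ stays $\aleph_n$-compact and flat, and that \cref{D.3.3.7} applies with $N=A$. Here $A$ is connective but not discrete; this is handled by the Postnikov reduction inside \cref{D.3.3.7}. If the tensor-power compactness fails, we would instead iterate: show each map $I\otimes_A-$ applied once lowers a bounded Ext degree, following \cite[D.3.3]{sag} directly.
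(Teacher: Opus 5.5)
Your proposal is correct and follows the paper's proof essentially step for step. You reduce to connective $A,B$ via \cref{flatpush}(1) and base change, note that $C=\op{cofib}(\phi)$ and its tensor powers are flat with $\aleph_n$-compact $\pi_0$, and apply \cref{D.3.3.7} with $N=A$ to kill the class of $I^{\otimes m}\to A$ in $\operatorname{Ext}^m_A(C^{\otimes m},A)$ for $m>n$.

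The differences are minor. The paper deduces descendability from this nullhomotopy directly, by induction on the cofiber sequences $I^{\otimes m}\otimes_A B\to\op{cofib}(\rho(m+1))\to\op{cofib}(\rho(m))$ with $\rho(m)\colon I^{\otimes m}\to A$, rather than citing Mathew. It also settles the point you flagged (that $(\pi_0C)^{\otimes m}$ stays $\aleph_n$-compact): projective rigidity places $\modu_{\pi_0A}(\mca^\heartsuit)^{\otimes}$ in $\calg(\prl_{\omega})$, hence in $\calg(\prl_{\aleph_n})$, so your fallback iteration is unnecessary.
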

	\begin{proof}
		The argument is parallel to the proof of
		\cite[Proposition D.3.3.1]{sag}. Since $B$ is flat over $A$,
		\cref{flatpush}(1) gives a canonical equivalence
		$
		B\simeq
		A\otimes_{\tau_{\geq0}A}\tau_{\geq0}B.
		$
		By the stability of descendability under base change
		(\cref{descendable321}), it is therefore enough to prove that
		\[
		\tau_{\geq0}A\longrightarrow\tau_{\geq0}B
		\]
		is descendable. Replacing $\phi$ by this map, we may assume that
		$A$ and $B$ are connective.
		
		Let $\mathcal{C}\subset\modu_A(\mc{A})$ be the smallest thick
		subcategory which contains every object of the
		form $M\otimes_A B$. It suffices to prove that $A\in\mathcal{C}$.
		
		Let
		$
		K=\operatorname{fib}(\phi)
		$
		and denote by $\rho:K\to A$ the canonical map. For every $m\geq0$,
		let
		\[
		\rho(m):K^{\otimes m}\longrightarrow A
		\]
		be the $m$th tensor power of $\rho$ in the symmetric monoidal \infcat
		$\modu_A(\mc{A})$, with $\rho(0)=\operatorname{id}_A$. The map
		$\rho(m+1)$ factors as
		\[
		K^{\otimes(m+1)}
		\simeq K^{\otimes m}\otimes_AK
		\xrightarrow{\operatorname{id}_{K^{\otimes m}}\otimes\rho}
		K^{\otimes m}
		\xrightarrow{\rho(m)}
		A.
		\]
		Since $\operatorname{cofib}(\rho)\simeq B$, the octahedral axiom gives
		a cofiber sequence
		\[
		K^{\otimes m}\otimes_A B
		\longrightarrow
		\operatorname{cofib}(\rho(m+1))
		\longrightarrow
		\operatorname{cofib}(\rho(m)).
		\]
		The first term belongs to $\mathcal{C}$, and
		$\operatorname{cofib}(\rho(0))\simeq0$. It follows by induction that
		\[
		\operatorname{cofib}(\rho(m))\in\mathcal{C}
		\qquad
		\text{for every }m\geq0.
		\]
		Consequently, to prove that $A \in \mathcal{C}$, it suffices to show that $A$ is a retract of $\operatorname{cofib}(\rho(m))$ for some $m \geq 0$. This holds whenever the homotopy class of $\rho(m)$ vanishes, regarded as an element of
		\[
		\operatorname{Ext}_A^0\left(K^{\otimes m}, A\right)
		\simeq
		\operatorname{Ext}_A^m\left((\Sigma K)^{\otimes m}, A\right).
		\]
		By \cref{pi0flat}(4), $\Sigma K \simeq \operatorname{cofib}(\phi)$ is a flat $A$-module. Since flat $A$-modules are closed under tensor products by \cref{l1.3}(2), $(\Sigma K)^{\otimes m}$ is flat for every $m>0$. Moreover, $\pi_0\Sigma K$ is an $\aleph_n$-compact $\pi_0A$-module, since it is the cokernel of a map between $\aleph_n$-compact $\pi_0A$-modules. Since $\aotimes$ is projectively rigid, we have
		\[
		\modu_{\pi_0A}(\mca^\heartsuit)^{\otimes}\in\calg(\prl_{\omega}),
		\]
		and hence, in particular,
		\[
		\modu_{\pi_0A}(\mca^\heartsuit)^{\otimes}\in\calg(\prl_{\aleph_n}).
		\]
		It follows that $\pi_0\bigl((\Sigma K)^{\otimes m}\bigr)\simeq(\pi_0\Sigma K)^{\otimes m}$ is an $\aleph_n$-compact $\pi_0A$-module for every $m>0$. Therefore, by \cref{D.3.3.7},
		\[
		\operatorname{Ext}_A^m\left((\Sigma K)^{\otimes m}, A\right)=0
		\]
		for every $m>n$. Choosing an $m>n$, the homotopy class of $\rho(m)$ therefore vanishes.
		Hence $A$ is a retract of $\operatorname{cofib}(\rho(m))\in\mathcal C$,
		so $A\in\mathcal C$. Thus $\phi$ is descendable.
	\end{proof}
	
	\subsection{Almost algebra theory}

	We present an application of our framework by generalizing the higher almost ring theory introduced by Hebestreit and Scholze \cite{hebestreit2024note}. In this subsection, we completely classify $\pi_0$-epimorphic idempotent  \ein-algebras via idempotent ideals.
	We now introduce the main result of this subsection. 
	\begin{thm}\label{almostalg}
		Assume that $\mca$ is both right complete and left complete. Let $R \in \calg(\ageq)$. Consider the full subcategory $\mathrm{LQ}_R$ of $\mathrm{CAlg}(\ageq)_{R/}$ spanned by the maps $\varphi: R \rightarrow S$ for which
		\enu{ \item the multiplication $S \otimes_R S \rightarrow S$ is an equivalence, i.e. $\varphi$ is idempotent,
			\item  $\pi_0(\varphi): \pi_0 R \rightarrow \pi_0 S$ is epimorphic in $\mca^\heartsuit$.}
		Then the functor
		$$
		\mathrm{LQ}_R \longrightarrow\left\{I \subset \pi_0 R \mid I^2=I\right\}, \quad \varphi \longmapsto \operatorname{Ker}(\pi_0 \varphi)
		$$
		is an equivalence of categories, where we again regard the target as a poset via the inclusion ordering. The inverse image of some $I \subset \pi_0(R)$ can be described more directly as $R / I^{\infty}$, where
		$$
		I^{\infty}=\lim _{n \in \mathbb{N}^{\mathrm{op}}} J_I^{\otimes_R n}
		$$
		with $J_I \rightarrow R$ the fibre of the canonical map $R \rightarrow H(\pi_0(R) / I)$. Furthermore, this inverse system stabilises on $\pi_i$ for $n>i+1$. 
		
		Moreover, the image of the fully faithful restriction functor $\operatorname{Mod}_{R / I^{\infty}}(\mca)\rightarrow \operatorname{Mod}_R(\mca)$ consists exactly of those modules whose homotopy is killed by $I$.
	\end{thm}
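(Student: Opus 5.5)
Following Hebestreit--Scholze, the plan is to build the inverse $I\mapsto R/I^\infty$ by hand, check that it lands in $\mathrm{LQ}_R$, and then get fully faithfulness and essential surjectivity from the module-theoretic characterization in the last paragraph.

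\textbf{Step 1: the tower.} Fix an idempotent ideal $I\subset\pi_0R$. Let $J=J_I=\operatorname{fib}(R\to H(\pi_0R/I))$. Then $J$ is connective with $\pi_0J=I$ and $\pi_iJ=\pi_iR$ for $i>0$.

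The key computation is the homotopy of $J^{\otimes_R n}$ and of the transition maps $J^{\otimes_R(n+1)}\to J^{\otimes_R n}$. The cofiber of each transition map is $J^{\otimes_R n}\otimes_R H(\pi_0R/I)$. I will show it is $(n-1)$-connective, modulo a $\pi_0$ term that vanishes because $I^2=I$. More precisely:
\begin{itemize}
\item on $\pi_0$, the map $\pi_0(J^{\otimes n+1})\to\pi_0(J^{\otimes n})$ has image $I^{n+1}=I^n$;
\item the relevant Tor-type terms in low degrees vanish, using $I\overline{\otimes}_{\pi_0R}\pi_0R/I\simeq I/I^2=0$ in the heart;
\item iterating, $J\otimes_R H(\pi_0R/I)$ is $1$-connective, and by induction the cofiber of the $n$-th transition map is $n$-connective.
\end{itemize}
This gives the claim that the system stabilises on $\pi_i$ for $n>i+1$, so the pretower is highly connected. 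Left completeness then lets me apply \cref{limtower}: $I^\infty$ is computed levelwise in low degrees, and tensor products commute with the limit. Set $R/I^\infty=\operatorname{cofib}(I^\infty\to R)$.

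\textbf{Step 2: the module characterization.} Let $\mathcal{M}_I\subset\modu_R(\mca)$ be the modules whose homotopy is killed by $I$. This subcategory is stable; using right completeness it is generated under colimits and limits by $\pi_0R/I$-modules in the heart.

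For $M\in\mathcal{M}_I$ bounded and discrete, I will show $J^{\otimes n}\otimes_RM\to M$ is null for large $n$; then $I^\infty\otimes_RM=0$, using \cref{limtower}(2) to commute $\otimes_RM$ with the limit. Hence $M\simeq R/I^\infty\otimes_RM$. The general case follows by extensions, Postnikov limits (left completeness) and colimits (right completeness).

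Conversely, $\pi_*(R/I^\infty)$ is killed by $I$ by Step 1. This shows that $R\to R/I^\infty$ is idempotent, since $I^\infty\otimes_R R/I^\infty=0$, with $\pi_0=\pi_0R/I$. By \cite[Prop. 4.8.2.10]{ha} the restriction functor is then fully faithful with image $\mathcal{M}_I$. The commutative algebra structure comes from the idempotent-algebra theory of \cite[\S4.8.2]{ha}.

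\textbf{Step 3: the equivalence.} Both sides are posets: $\mathrm{LQ}_R$ is a poset because idempotent algebras have $(-1)$-truncated mapping spaces.

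For $\varphi\colon R\to S$ in $\mathrm{LQ}_R$, set $I=\operatorname{Ker}\pi_0\varphi$. Idempotence in the heart gives $\pi_0S\overline{\otimes}_{\pi_0R}\pi_0S\simeq\pi_0S$, so $\pi_0R/I\otimes\pi_0R/I=\pi_0R/I$, i.e. $I=I^2$.

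Next, $S\in\mathcal{M}_I$: $\pi_*S$ is a $\pi_0S$-module, so it is killed by $I$. Hence $S$ is an $R/I^\infty$-algebra. Also $R/I^\infty\otimes_RS\simeq S$, and $\operatorname{Mod}_S\subset\mathcal{M}_I$. For the reverse comparison, $R/I^\infty$ lies in $\operatorname{Mod}_S$ as soon as $S\otimes_RR/I^\infty\to R/I^\infty$ is an equivalence. I expect to get this by Nakayama-type reasoning: $\pi_0$ agrees, and the cofiber is an idempotent-killed module on which both idempotents act. Order-preservation in both directions is then formal.

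\textbf{Main obstacle.} I expect the hardest part to be the connectivity estimate in Step 1, namely the computation of $\pi_*(J^{\otimes n}\otimes_RH(\pi_0R/I))$ in a general $ttt$-base without Tor spectral sequences or flatness of $I$. I would first attempt it using only \cref{l2}(3) (tensor products in the heart), right $t$-exactness, and $I\overline{\otimes}\pi_0R/I=0$, arguing degree by degree. Only if that fails would I fall back on assuming a projectively generated base to get the Künneth spectral sequence \cref{kunnethss}.
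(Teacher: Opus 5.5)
There is a genuine gap in Step 3, where you show that $I=\operatorname{Ker}(\pi_0\varphi)$ satisfies $I^2=I$. You deduce this from $\pi_0S\overline{\otimes}_{\pi_0R}\pi_0S\simeq\pi_0S$, that is, from $\pi_0R/I\,\overline{\otimes}_{\pi_0R}\,\pi_0R/I\simeq\pi_0R/I$. But this isomorphism holds for \emph{every} ideal $I$. Apply $-\overline{\otimes}_{\pi_0R}\pi_0R/I$ to $I\to\pi_0R\to\pi_0R/I\to 0$: by right exactness, the tensor square is the cokernel of $I\overline{\otimes}_{\pi_0R}\pi_0R/I\to\pi_0R/I$, and this map is zero because $I$ acts trivially on $\pi_0R/I$. (Classically, $A\to A/I$ is always an epimorphism of rings.) So the $\pi_0$-shadow of idempotence carries no information about $I^2$. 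Classically the obstruction is $\operatorname{Tor}_1^{A}(A/I,A/I)\cong I/I^2$, which is only visible in higher homotopy. Everything downstream uses $I^2=I$: the connectivity estimate of Step 1, the construction of $R/I^\infty$, and the comparison $R/I^\infty\to S$. So your argument does not yet show that $\varphi\mapsto\operatorname{Ker}(\pi_0\varphi)$ lands in idempotent ideals.

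The missing idea is to work with the fibre $F=\operatorname{fib}(\varphi)$, as the paper does.
\begin{itemize}
\item Tensoring $F\to R\to S$ with $S$, the map $S\simeq R\otimes_RS\to S\otimes_RS$ is a section of the multiplication, hence an equivalence. So $F\otimes_RS\simeq0$.
\item Tensoring the same sequence with $F$ then shows that $F\otimes_RF\to F$ is an equivalence.
\item $F$ is connective and $\pi_0F\to I$ is surjective. Since $\pi_0$ is monoidal on connective modules, the surjection $\pi_0F\overline{\otimes}_{\pi_0R}\pi_0F\to\pi_0F\to I$ factors through the multiplication $I\overline{\otimes}_{\pi_0R}I\to I$. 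Hence $I=I^2$.
\end{itemize}

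Apart from this, your outline follows the paper.
\begin{itemize}
\item Step 1 uses the same tower $J_I^{\otimes_Rn}$, with the connectivity induction driven by $I\overline{\otimes}_{\pi_0R}\pi_0R/I\simeq I/I^2=0$. The bottom homotopy of $H(\pi_0R/I)\otimes_RJ_I^{\otimes_Rn}$ is a $\pi_0R/I$-module, so no K\"unneth spectral sequence is needed.
\item You use \cref{limtower} to commute limits with tensor products, and the same d\'evissage for the module characterization.
\item The last step uses \cref{naka}. Make it precise: $C=\operatorname{cofib}(R/I^\infty\to S)$ satisfies $S\otimes_{R/I^\infty}C\simeq\operatorname{cofib}(S\to S\otimes_RS)\simeq0$, and $R/I^\infty\to S$ is a $\pi_0$-isomorphism, so $C\simeq0$.
\end{itemize}

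You reorder Step 2, deducing idempotence of $R\to R/I^\infty$ from $R/I^\infty\in\mathcal{M}_I$ rather than from $I^\infty\otimes_RJ_I\simeq I^\infty$. That is fine, but then the discrete case must not use idempotence. Also, nullity of each $J^{\otimes n}\otimes_RM\to M$ does not by itself give $I^\infty\otimes_RM\simeq0$. What you need is that the transition maps $J^{\otimes(n+1)}\otimes_RM\to J^{\otimes n}\otimes_RM$, i.e.\ $J^{\otimes n}\otimes_R(J\otimes_RM\to M)$, are null. This holds because $M$ is discrete and $IM=0$, so the tower is pro-zero and its limit vanishes.
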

	\begin{proof}
		
		We only do slight modifications of original proof in \cite{hebestreit2024note} to fit into our framework. Firstly we observe that $\mathrm{LQ}_R$ is indeed equivalent to a poset \cite[see][Proposition 4.8.2.9]{ha}. Let us also immediately verify that $\operatorname{Ker}(\pi_0 \varphi)$ is indeed idempotent for $\varphi: R \rightarrow S\in \op{LQ}_R$. Tensoring the fibre sequence $F \rightarrow R \rightarrow S$ with $F$ gives
		$$
		F \otimes_R F \longrightarrow F \longrightarrow F \otimes_R S
		$$
		and the right hand term vanishes since one has a fibre sequence
		$$
		F \otimes_R S \longrightarrow R \otimes_R S \longrightarrow S \otimes_R S
		$$
		whose right hand map (after identifying $R \otimes_R S \simeq S$) is a section of the multiplication $S \otimes_R S \rightarrow S$ and thus an equivalence. But $F$ is connective and the map $\pi_0(F) \rightarrow \operatorname{Ker}(\pi_0 \varphi)$ surjective by the long exact sequence of $\varphi$, whence a chase in the diagram
		shows that the multiplication $$\operatorname{Ker}\left(\pi_0 \varphi\right) \otimes_{\pi_0 R} \operatorname{Ker}\left(\pi_0 \varphi\right) \rightarrow \operatorname{Ker}\left(\pi_0 \varphi\right)$$ is surjective as desired.
		
		Next, we verify that the inverse system $J_I^{\otimes_R n}$ stabilises degreewise. In fact we show slightly more, namely that the cofibre $R / J_I \otimes_R J_I^{\otimes{ }_R n}$ of the canonical map $J_I^{\otimes_R n+1} \rightarrow J_I^{\otimes_R n}$ is $n$-connective. Since $R / J_I=\mathrm{H}(\pi_0(R) / I)$ is annihilated by $I$, we immediately deduce that the homotopy groups of this cofibre are annihilated by $I$ (from both sides). Now, for $n=0$, the connectivity claim is clear, and if we inductively assume that $R / J_I \otimes_R J_I^{\otimes{ }_R n}$ is $n$-connective, then
		$$
		R / J_I \otimes_R J_I^{\otimes_R n+1}=\left(R / J_I \otimes_R J_I^{\otimes_R n}\right) \otimes_R J_I
		$$
		is clearly also $n$-connective and its $n$th homotopy group is $\pi_n(R / J_I \otimes_R J_I^{\otimes_R^n}) \otimes_{\pi_0 R} I$. Since the left hand term is annihilated by $I$, we compute
		$$
		\begin{gathered}
			\pi_n\left(R / J_I \otimes_R J_I^{\otimes_R n}\right) \otimes_{\pi_0 R} I=\pi_n\left(R / J_I \otimes_R J_I^{\otimes_R n}\right) \otimes_{\pi_0(R) / I} \pi_0(R) / I \otimes_{\pi_0 R} I \\
			=\pi_n\left(R / J_I \otimes_R J_I^{\otimes_R n}\right) \otimes_{\pi_0(R) / I} I / I^2=0 
		\end{gathered}
		$$ which complete the induction.
		
		Now we claim
		$$
		\left(\lim _{n \in \mathbb{N}^{\text {op }}} J_I^{\otimes_R n}\right) \otimes_R J_I \longrightarrow \lim _{n \in \mathbb{N}^{\text {op }}} J_I^{\otimes_R n}
		$$
		is an equivalence. Since the limit stabilises degreewise and $J_I$ is connective, we can move the limit out of the tensor product by \cref{limtower} (the cofibre of the interchange map is a limit of terms with growing connectivity), and then the statement follows from finality. 	By the same argument, for any $n\geq 1$ the canonical map  $$I^\infty \otimes_R J_I^{\otimes_R n}\to I^\infty$$ is equivalent too and hence $I^\infty \otimes_RI^\infty \to I^\infty$ is an equivalence---that is to say $R\to R/I^\infty$ is idempotent and hence produces an element in $\calg(\ageq)^{\op{idem}}_{R/}$.

		As the next step, we show that the tautological map $M=R \otimes_R M \rightarrow R / I^{\infty} \otimes_R M$ is an equivalence if and only if the homotopy of $M$ is annihilated by $I$, or in other words that $I^{\infty} \otimes_R M \simeq 0$. For the ``only if'' direction, it suffices to observe that $
		\pi_0(R/I^\infty)=\pi_0R/I$. For ``if'' direction, 
		we start with the simplest case $M=R / J_I$, where the claim was proved above. For an arbitrary $R$-module $M$ concentrated in degree 0 and killed by the action of $I$, it naturally inherits a $\op{H}(\pi_0R/I)$-module structure  and hence we get a retraction of $R$-modules $$I^\infty\otimes_RM\to I^\infty\otimes_RM\otimes_R H(\pi_0R/I)\to I^\infty\otimes_RM .$$ However the middle one is zero  by commutativity of $\otimes_R$ (note $\op{H}(\pi_0R/I)=R/J_I$), so $I^\infty\otimes_RM=0$.

		For bounded below $M$, we have
		$$
		I^{\infty} \otimes_R M \simeq I^{\infty} \otimes_R\left(\lim _{k \in \mathbb{N}^{\mathrm{op}}} \tau_{\leq k} M\right) \simeq \lim _{k \in \mathbb{N}^{\circ \mathrm{p}}} I^{\infty} \otimes_R \tau_{\leq k} M \simeq 0
		$$
		by commuting the limit out using the same argument as above. Finally, for arbitrary $M$ whose homotopy groups are killed by $I$, we find
		$$
		I^{\infty} \otimes_R M \simeq I^{\infty} \otimes_R\left(\operatorname{colim}_{k \in \mathbb{N}} \tau_{\geq-k} M\right) \simeq \operatorname{colim}_{k \in \mathbb{N}} I^{\infty} \otimes_R \tau_{\geq-k} M \simeq 0 .
		$$

		So combined with the idempotent property of $R\to R/I^\infty$, we learn that the image of the fully faithful restriction functor $\operatorname{Mod}_{R / I^{\infty}}(\mca)\rightarrow \operatorname{Mod}_R(\mca)$ consists exactly of those modules whose homotopy is killed by $I$, as desired.

		Finally, we are ready to verify that the construction $I \mapsto\left(R \rightarrow R / I^{\infty}\right)$ induces an inverse to taking kernels. The composition starting with an ideal is clearly the identity. So we are left to show that for every $\varphi: R \rightarrow S$  in $\mathrm{LQ}_R$ with $I=\operatorname{Ker}(\pi_0 \varphi)$, the canonical map $\psi: R / I^{\infty} \rightarrow S$, arising from the homotopy of $S$ being annihilated by $I$, is an equivalence. Per construction it induces an equivalence on $\pi_0$. By \cref{naka}, the functor $\psi_{!}=S \otimes_{R / I^{\infty}}-: \operatorname{Mod}_{R / I^{\infty}}(\mca) \rightarrow \operatorname{Mod}_S(\mca)$ is thus conservative when restricted to bounded below modules. But the map
		$$
		S \simeq \psi_{!}\left(R / I^{\infty}\right) \xrightarrow{\psi_{!}(\varphi)} \psi_{!}(S)=S \otimes_{R / I^{\infty}} S \simeq S \otimes_R S
		$$
		is induced by the unit and thus an equivalence since $\varphi$ is idempotent.
	\end{proof}

	We can also prove the non-commutative version of \cref{almostalg}, which involves some different techniques about associative non-unital algebras.
	\begin{lem}\label{initialnualg}
		Let $\cotimes\in\alg(\Cat_\infty^\emptyset)$ be a monoidal \infcat compatible with the empty colimit. Then the forgetful functor $G:\alg^{\op{nu}}(\mcc)\to\mcc$ creates the empty colimit.
	\end{lem}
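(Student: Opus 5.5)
I will show that the initial object of $\mcc$ (when it exists) carries a unique non-unital algebra structure, and that with this structure it is initial in $\alg^{\op{nu}}(\mcc)$. I will also check the converse: if $\alg^{\op{nu}}(\mcc)$ has an initial object, then $G$ of it is initial in $\mcc$. Here ``compatible with the empty colimit'' means that $\mcc$ has an initial object $\emptyset$, and that $X\otimes\emptyset\simeq\emptyset\simeq\emptyset\otimes X$ for all $X$.

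The key tool is the operadic description of free non-unital algebras. The forgetful functor $G$ admits a left adjoint on the full subcategory where the relevant colimits exist, namely
\[
\op{Free}^{\op{nu}}(X)\simeq\coprod_{n\geq1}X^{\otimes n}
\]
(\cite[Prop. 3.1.3.13 / Construction 4.1.1]{ha}). For $X=\emptyset$ only finitely many things are needed: each term $\emptyset^{\otimes n}$ with $n\geq1$ is $\emptyset$ by compatibility, and a coproduct of initial objects is initial. So $\op{Free}^{\op{nu}}(\emptyset)$ exists and equals $\emptyset$. To be careful, I would not rely on the global existence of free algebras. Instead I will verify the defining universal property directly: an operadic colimit diagram in the sense of \cite[\S3.1.1]{ha} whose only input is the empty diagram produces a free algebra on $\emptyset$. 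This requires exactly that $\emptyset^{\otimes n}$ be initial for $n\geq1$, which is where the hypothesis enters. Since $\emptyset$ is initial in $\mcc$, the free algebra on it is initial in $\alg^{\op{nu}}(\mcc)$, because
\[
\mapp_{\alg^{\op{nu}}}(\op{Free}(\emptyset),A)\simeq\mapp_{\mcc}(\emptyset,GA)\simeq *.
\]
Its underlying object is $\emptyset$, so $G$ preserves the empty colimit.

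For the ``creates'' part, I will show that $G$ is conservative and that any initial object of $\alg^{\op{nu}}(\mcc)$ maps to an initial object. Conservativity holds because equivalences of algebras are detected on underlying objects, the fibration $\alg^{\op{nu}}(\mcc)\to\mcc$ being an inner fibration of $\infty$-operad maps. Any initial object of $\alg^{\op{nu}}(\mcc)$ is equivalent to the one constructed above, so its image is $\emptyset$. Conversely, if $A$ is a non-unital algebra with $GA$ initial, then the unique map $\op{Free}(\emptyset)\to A$ is sent to an equivalence by $G$, so it is an equivalence by conservativity.

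The main obstacle is the existence of the free algebra without assuming that $\otimes$ preserves all colimits. The point is that \cite[Prop. 3.1.3.13]{ha} only needs the specific operadic colimits that appear, and here these reduce to the tensor powers $\emptyset^{\otimes n}$ with $n\geq1$ together with their coproduct, the latter being formed of initial objects. I expect to handle this by checking the hypothesis of \cite[Cor. 3.1.3.5]{ha} pointwise on the non-unital associative operad, whose multimorphism spaces are nonempty only in arity at least one. The absence of an arity-zero term is exactly why no unit appears, and why no colimit beyond the empty one is needed.
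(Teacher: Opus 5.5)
Your proposal is correct and is essentially the paper's argument: the paper likewise realizes the initial non-unital algebra as the free one on $\emptyset$ (phrased as the relative left Kan extension of $\emptyset$ along the inclusion of the trivial operad into the non-unital associative operad), observes that it takes the value $\emptyset$ in every arity $n\geq 1$ by compatibility with the empty colimit, and concludes from the conservativity of $G$. The only soft spot is your stated reason for conservativity: it has nothing to do with $G$ being an inner fibration, but rather with the fact that a map of algebras is an equivalence iff it is one on underlying objects, since by the Segal condition its components over $\langle n\rangle$ are determined by the underlying one (cf.\ \cite[\textsection3.2.2]{ha}).
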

	
	\begin{proof}
		We can compute the relative left Kan extension of $\emptyset\in \mcc^\otimes$ along the inclusion of the trivial operad into the operad of non-unital algebras $*\simeq\{[1]\}\hookrightarrow \Delta_{s,\geq 1}^\opp$, as follows:
		$$\begin{tikzcd}
			* \arrow[d, hook] \arrow[r, "\emptyset"]              & \mcc^\otimes \arrow[d] \\
			{\Delta_{s,\geq 1}^\opp} \arrow[r] \arrow[ru, dashed, "A"] & \Delta^\opp           
		\end{tikzcd}$$
		Because $\cotimes$ is compatible with the empty colimit, this relative left Kan extension evaluates to the initial object, giving $A(\left<n\right>) = \emptyset$ for each $n\geq1$. By the property of the relative Kan extension, $A$ is the initial object of $\funct_{/\Delta^\opp}^{\op{inert}}(\Delta_{s,\geq 1}^\opp,\cotimes)$. Since $A$ lies in $\alg^{\op{nu}}(\mcc)\subset \funct_{/\Delta^\opp}^{\op{inert}}(\Delta_{s,\geq 1}^\opp,\cotimes)$, we see that $A$ is initial in
		$\alg^{\op{nu}}(\mcc)$, whose underlying object is $\emptyset \in \mcc$. Consequently $G$ creates the empty colimit, because $G$ is conservative and preserves the empty colimit.
	\end{proof}
	\begin{lem}\label{nualgebraker}
		Let $\cotimes\in\alg(\Cat_\infty^\emptyset)$ be a monoidal \infcat compatible with the empty colimit. Suppose that \mcc is pointed and admits finite limits. Let $S\in\alg(\mcc)$ be an $\mathbb{E}_1$-algebra. Then the kernel $$I=\ker(\mb{1}\to S)\xrightarrow{i} \mb{1}$$ is a non-unital $\mathbb{E}_1$-algebra satisfying that $\mu_I\simeq 1\otimes i\simeq i\otimes 1$.
	\end{lem}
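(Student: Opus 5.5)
The plan is to realize $I$ as a fiber product in the $\infty$-category of non-unital $\mathbb{E}_1$-algebras, and then read off the multiplication from the non-unital algebra structure on $\mb{1}$ restricted along $i$. First, regard $\mb{1}$ and $S$ as non-unital $\mathbb{E}_1$-algebras via the forgetful functor $\alg(\mcc)\to\alg^{\op{nu}}(\mcc)$, so that the unit map $\eta:\mb{1}\to S$ becomes a map of non-unital algebras. By \cref{initialnualg}, the zero object $0\in\mcc$ (initial, since $\mcc$ is pointed) underlies the initial object of $\alg^{\op{nu}}(\mcc)$. It is also final in $\alg^{\op{nu}}(\mcc)$, because the forgetful functor creates limits (in particular the final object, using only that $\mcc$ admits finite limits). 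This gives a zero map $0\to S$ of non-unital algebras, and I form the pullback $I=\mb{1}\times_S 0$ in $\alg^{\op{nu}}(\mcc)$. Since $G:\alg^{\op{nu}}(\mcc)\to\mcc$ preserves limits (limits of algebras over an operad are computed underlying, \cite[Prop.~3.2.2.1]{ha}), the underlying object is $\ker(\mb{1}\to S)$, and $i:I\to\mb{1}$ is a map of non-unital algebras. This gives the first assertion.

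For the identification $\mu_I\simeq 1\otimes i$, I would use that $i$ is a non-unital algebra map. Thus $i\circ\mu_I\simeq\mu_{\mb{1}}\circ(i\otimes i)$, where $\mu_{\mb{1}}$ is the canonical equivalence $\mb{1}\otimes\mb{1}\simeq\mb{1}$. Writing $i\otimes i=(1\otimes i)\circ(i\otimes 1_I)$ and identifying $\mb{1}\otimes I\simeq I$, the right-hand side becomes $i\circ(i\otimes 1_I)$, read as a map $I\otimes I\to\mb{1}\otimes I\simeq I\to\mb{1}$. So both $\mu_I$ and the map $I\otimes I\to I$ induced by $i$ on the left factor agree after postcomposition with $i$.

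The main obstacle is removing the postcomposition with $i$, since $i$ is not a monomorphism in general. I would avoid cancellation by working at the level of the fiber square. The multiplication on the fiber product $\mb{1}\times_S 0$ is the induced map on fibers of the multiplications of $\mb{1}$, $S$ and $0$. Tensoring the square on the left with $I$ gives a fiber square $I\otimes I\to I\otimes\mb{1}\to I\otimes S$, because $I\otimes-$ preserves the relevant finite limit. This uses the compatibility hypothesis on $\mcc$, or stability when applied in the paper's setting. The left $I$-action on $\mb{1}$ through $i$ is the unit-type action $i\otimes 1$, and the action on $S$ factors through $\eta$. Comparing the two cones then identifies $\mu_I$ with $i\otimes 1$ (under $I\otimes I\to\mb{1}\otimes I\simeq I$) as maps into the fiber, not merely into $\mb{1}$. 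The same argument on the right gives $\mu_I\simeq 1\otimes i$.
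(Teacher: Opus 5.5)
Your proposal follows essentially the same route as the paper. You realize $I$ as the fiber product $\mb{1}\times_S 0$ in $\alg^{\op{nu}}(\mcc)$ using \cref{initialnualg} and the creation of limits, then identify $\mu_I$ with $i\otimes 1$ (resp.\ $1\otimes i$) as maps into the fiber: you factor it through a partially tensored diagram and use the unit law of $S$ along $\eta$. This is exactly what the paper's cube diagram does (middle layer $I\otimes\mb{1}$, $0\otimes S$), with the roles of the two tensor factors mirrored.

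One remark is inaccurate: the claim that $I\otimes-$ preserves the fiber. The lemma only assumes compatibility with the empty colimit, so this is not available in general. You also do not need it, since comparing the two cones uses only the universal property of the target fiber $I$ together with the canonical comparison map $I\otimes I\to(I\otimes\mb{1})\times_{I\otimes S}(I\otimes 0)$.
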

	\begin{proof}
		Note that  $\alg^{\op{nu}}(\mcc)$ creates any limit that exists in $\mcc$ (see \cite[\textsection3.2.2]{ha}). By \cref{initialnualg}, $0$ is the initial object in $\alg^{\op{nu}}(\mcc)$ and hence a zero object in it. Also, $I$ admits a natural non-unital algebra structure and can be identified with the fiber of $\mb{1}\to S$.  Considering the following commutative diagram:
		$$
		\begin{tikzcd}
			I\otimes I \arrow[rd] \arrow[rr] \arrow[dd] \arrow[dddd, "\mu_I"', dashed, bend right] &                                                                & 0\otimes 0 \arrow[rd] \arrow[dd] &                                            \\
			& \mb{1}\otimes\mb{1} \arrow[rr] \arrow[dd, Rightarrow, no head] &                                  & S\otimes S \arrow[dd, Rightarrow, no head] \\
			I\otimes \mb{1} \arrow[rr] \arrow[rd] \arrow[dd]                                       &                                                                & 0\otimes S \arrow[rd] \arrow[dd] &                                            \\
			& \mb{1}\otimes\mb{1} \arrow[rr] \arrow[dd, "\mu_{\mb{1}}"]      &                                  & S\otimes S \arrow[dd, "\mu_S"]             \\
			I \arrow[rd] \arrow[rr]                                                                &                                                                & 0 \arrow[rd]                     &                                            \\
			& \mb{1} \arrow[rr]                                              &                                  & S                                         
		\end{tikzcd}$$
		The back face represents the limits defining the zero object over $S$, while the top and front faces involve the limit definition of $I$. Because the target of $I \otimes I$ over $S$ is zero, the universal property of the limit induces the dashed map $\mu_I: I \otimes I \to I$. Furthermore, the commutativity of the left and bottom prisms forces this multiplication to factor through the left and right unit maps. Therefore, we naturally obtain the equivalences $\mu_I \simeq 1 \otimes i \simeq i \otimes 1$, making $I$ a non-unital $\mathbb{E}_1$-algebra with this trivialized multiplication.
	\end{proof}
	\begin{lem}
		Let $\phi:R\to S\in\alg(\mca)$. Then the base change functor $\lmodu_{R}(\mca)\to \lmodu_{S}(\mca)$ is a localization if and only if the multiplication $S \otimes_R S \rightarrow S$ is an equivalence (as $R$-$R$ bimodules).
	\end{lem}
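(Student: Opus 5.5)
The plan is to reduce the statement to the general fact that a colimit-preserving functor with a fully faithful right adjoint is a localization. Let $F=S\otimes_R(-):\lmodu_R(\mca)\to\lmodu_S(\mca)$ and let $G$ be its right adjoint, the restriction of scalars. Both are colimit-preserving, since $G$ is computed on underlying objects. Hence $F$ is a (Bousfield) localization if and only if $G$ is fully faithful, which in turn holds if and only if the counit $FG\to\id$ is an equivalence. Unwinding, the counit at a left $S$-module $N$ is the action map
\[
S\otimes_R N\longrightarrow N .
\]

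For the direction ``multiplication is an equivalence $\Rightarrow$ localization'', I note that for any left $S$-module $N$ there is a natural identification
\[
S\otimes_R N\simeq (S\otimes_R S)\otimes_S N,
\]
obtained from the bar resolution, i.e. $N\simeq S\otimes_S N$ together with associativity of relative tensor products of bimodules. Under this identification the counit becomes $\mu\otimes_S N$, where $\mu:S\otimes_R S\to S$ is the multiplication regarded as a map of $S$-$S$ bimodules. If $\mu$ is an equivalence of $R$-$R$ bimodules, it is an equivalence of underlying objects. It is therefore also an equivalence of $S$-$S$ bimodules, because the forgetful functor from bimodules is conservative. Hence the counit is an equivalence for every $N$.

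For the converse, assume $G$ is fully faithful, so that the counit is an equivalence for all $N$. Taking $N=S$ (with its left module structure) shows that $S\otimes_R S\to S$ is an equivalence of left $S$-modules. This is precisely the multiplication, viewed as the counit at $S$, and it is $R$-$R$ bilinear. Its underlying map is therefore an equivalence, and so it is an equivalence of $R$-$R$ bimodules by conservativity of the forgetful functor.

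The main obstacle is the coherent identification of the counit with $\mu\otimes_S N$, which requires bimodule associativity of relative tensor products in $\mca$. I would take this from \cite[\S4.4]{ha}, namely the associativity of relative tensor products and the identification of restriction of scalars with tensoring by the bimodule ${}_RS_S$. Alternatively, I would cite \cite[Prop.~4.8.2.10]{ha}, which treats idempotent algebras, adapted to the $\mathbb{E}_1$ setting. A secondary check is that ``localization'' is meant as having a fully faithful right adjoint; this holds since $G$ is a colimit-preserving right adjoint between presentable $\infty$-categories.
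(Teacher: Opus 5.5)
Your proposal is correct and follows essentially the same route as the paper. Both reduce to the counit $S\otimes_R N\to N$ of the base change/restriction adjunction being an equivalence, evaluate it at $N=S$ for one direction, and use $S\otimes_R N\simeq (S\otimes_R S)\otimes_S N$ for the other; your remarks on conservativity of the forgetful functor (passing between $R$-$R$ and $S$-$S$ bimodule equivalences) only make explicit a point the paper leaves implicit.
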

	\begin{proof}
		Let $F: \lmodu_R(\mca) \to \lmodu_S(\mca)$ denote the base change functor given by $M \mapsto S \otimes_R M$, and let $G: \lmodu_S(\mca) \to \lmodu_R(\mca)$ be its right adjoint, the restriction of scalars functor. The left adjoint $F$ is a localization if and only if its right adjoint $G$ is fully faithful, which is equivalent to the condition that the counit of the adjunction, $\epsilon_M: F(G(M)) \to M$, is an equivalence for all $S$-modules $M$.
		
		$(\Longrightarrow)$ If $F$ is a localization, $\epsilon_M$ is an equivalence for all $M$. Evaluating the counit at the free module $M = S$, we obtain the multiplication map:
		$$ S \otimes_R S \simeq F(G(S)) \xrightarrow{\epsilon_S} S $$
		which must therefore be an equivalence.
		
		$(\Longleftarrow)$ Conversely, suppose the multiplication map $S \otimes_R S \to S$ is an equivalence. For any $S$-module $M$, we can express $M$ canonically as the relative tensor product $S \otimes_S M$. The counit map applied to $M$ corresponds to the action map $S \otimes_R M \to M$. We can rewrite the domain as:
		$$ S \otimes_R M \simeq S \otimes_R (S \otimes_S M) \simeq (S \otimes_R S) \otimes_S M $$
		Since $S \otimes_R S \xrightarrow{\simeq} S$ by assumption, this reduces canonically to $S \otimes_S M \simeq M$. Thus, the counit is an equivalence for all $S$-modules $M$, meaning $G$ is fully faithful and $F$ is a localization.
	\end{proof}
	\begin{thm}\label{almostalg2}
		Assume that $\mca$ is both right complete and left complete. Let $R \in \alg(\ageq)$. Consider the full subcategory $\mathrm{LQ}_R$ of $\mathrm{Alg}(\ageq)_{R/}$ spanned by $\mathbb{E}_1$-maps $\varphi: R \rightarrow S$ for which
		\enu{ \item the multiplication $S \otimes_R S \rightarrow S$ is an equivalence, i.e. $\varphi$ is idempotent,
			\item  $\pi_0(\varphi): \pi_0 R \rightarrow \pi_0 S$ is epimorphic in $\mca^\heartsuit$.}
		
		Then the functor
		$$
		\mathrm{LQ}_R \longrightarrow\left\{\text{two-sided ideals } I \subset \pi_0 R \mid I^2=I\right\}, \quad \varphi \longmapsto \operatorname{Ker}(\pi_0 \varphi)
		$$
		is an equivalence of categories, where we again regard the target as a poset via the inclusion ordering. The inverse image of some $I \subset \pi_0(R)$ can be described more directly as $R / I^{\infty}$, where
		$$
		I^{\infty}=\lim _{n \in \mathbb{N}^{\mathrm{op}}} J_I^{\otimes_R n}
		$$
		with $J_I \rightarrow R$ the fibre of the canonical map $R \rightarrow H(\pi_0(R) / I)$. Furthermore, this inverse system stabilises on $\pi_i$ for $n>i+1$. 
		
		The image of the fully faithful restriction functor $\operatorname{LMod}_{R / I^{\infty}}(\mca)\rightarrow \operatorname{LMod}_R(\mca)$ consists exactly of those modules whose homotopy is killed by $I$, as desired.
	\end{thm}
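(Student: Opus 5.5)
The plan is to follow the proof of \cref{almostalg} step by step, replacing every use of commutativity by the non-unital algebra structures from \cref{initialnualg} and \cref{nualgebraker}. Work in the monoidal \infcat ${}_R\mathrm{BMod}_R(\mca)$ of $R$-$R$-bimodules, with tensor product $\otimes_R$ and unit $R$. By \cref{l2} and the remark after \cref{l3}, it inherits a $t$-structure that is right and left complete and compatible with $\otimes_R$.

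\textbf{Step 1: reduce to a poset and show the kernel is idempotent.} An idempotent $\varphi\colon R\to S$ is the same as an idempotent algebra in bimodules, so $\mathrm{LQ}_R$ is a poset by \cite[Proposition 4.8.2.9]{ha}. Let $F=\operatorname{fib}(\varphi)$, viewed as a bimodule. Tensoring $F\to R\to S$ with $S$ on the right gives $F\otimes_R S\simeq 0$, because $S\to S\otimes_R S$ is a section of the multiplication and hence an equivalence. It follows that $F\otimes_R F\to F$ is an equivalence. By \cref{nualgebraker}, this map is the non-unital multiplication, and it is compatible with $i\otimes 1$ and $1\otimes i$. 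Apply $\pi_0$ and use that $\pi_0F\to \operatorname{Ker}(\pi_0\varphi)$ is surjective. Then the multiplication map $I\otimes_{\pi_0R}I\to I$ is surjective, where $I=\operatorname{Ker}(\pi_0\varphi)$ is a two-sided ideal. Hence $I^2=I$.

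\textbf{Step 2: build $R/I^\infty$ from a two-sided ideal $I$.} Let $J_I=\operatorname{fib}(R\to H(\pi_0R/I))$. Because $I$ is two-sided, $H(\pi_0R/I)$ is an $\mathbb{E}_1$-algebra under $R$, so $J_I$ is a non-unital algebra in bimodules by \cref{nualgebraker}, and the tower $J_I^{\otimes_R n}$ is defined. The connectivity induction of \cref{almostalg} carries over unchanged. The cofibre $R/J_I\otimes_R J_I^{\otimes_R n}$ has homotopy annihilated by $I$ on both sides, and its bottom group dies because $\pi_0R/I\otimes_{\pi_0R} I = I/I^2=0$; here we use $I^2=I$ for a two-sided ideal. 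Then \cref{limtower}, applied on both sides, gives that $I^\infty\otimes_R J_I\to I^\infty$ and $J_I\otimes_R I^\infty\to I^\infty$ are equivalences. Hence $I^\infty\otimes_R I^\infty\simeq I^\infty$.

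\textbf{Main obstacle: making $R/I^\infty$ an $\mathbb{E}_1$-algebra and proving idempotence.} We need $R\to R/I^\infty$ to be an $\mathbb{E}_1$-algebra map with $R/I^\infty\otimes_R R/I^\infty\simeq R/I^\infty$. I would try the following. $I^\infty$ is a limit of non-unital algebras, so it is a non-unital algebra, since limits are created by \cite[\textsection3.2.2]{ha}. Its multiplication is an equivalence, so it is an idempotent non-unital algebra mapping to $R$. A bimodule map $I^\infty\to R$ with $I^\infty\otimes_R I^\infty\simeq I^\infty$ via both $i\otimes1$ and $1\otimes i$ is a "left idempotent" in the sense dual to idempotent algebras. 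Therefore $R/I^\infty=\operatorname{cofib}(i)$ satisfies $R/I^\infty\otimes_R I^\infty\simeq 0$ and is an idempotent object under $R$ in the monoidal \infcat of bimodules. By \cite[Proposition 4.8.2.9]{ha} it acquires a unique $\mathbb{E}_1$-algebra structure under $R$. The lemma preceding the theorem then identifies $\operatorname{LMod}_{R/I^\infty}(\mca)$ with a localization of $\operatorname{LMod}_R(\mca)$.

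\textbf{Step 3: characterize the essential image.} A left module $M$ lies in the image iff $I^\infty\otimes_R M\simeq 0$. For the "if" direction, treat discrete $M$ killed by $I$ first. Such $M$ is an $H(\pi_0R/I)$-module, so $I^\infty\otimes_R M$ is a retract of $I^\infty\otimes_R R/J_I\otimes_{R/J_I} M$. The factor $I^\infty\otimes_R R/J_I$ vanishes by Step 2, using right multiplication, which replaces the commutativity trick. Then pass to bounded below $M$ by Postnikov limits and \cref{limtower}, and to arbitrary $M$ by right completeness. For the "only if" direction, note that $\pi_0(R/I^\infty)=\pi_0R/I$.

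\textbf{Step 4: the inverse.} Given $\varphi$ in $\mathrm{LQ}_R$, the homotopy of $S$ is killed by $I$, so there is a map $\psi\colon R/I^\infty\to S$ that is a $\pi_0$-isomorphism. Hence $\psi$ is a nilpotent thickening, with zero kernel, and is boundedly faithful by \cref{naka}. Applying $S\otimes_{R/I^\infty}-$ to $\psi$ gives the unit $S\to S\otimes_R S$, which is an equivalence. Therefore $\psi$ is an equivalence, exactly as in \cref{almostalg}.
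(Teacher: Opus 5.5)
Your outline follows the paper's proof step for step. Your explicit construction of the $\mathbb{E}_1$-structure on $R/I^\infty$, as an idempotent object under $R$ in ${}_R\operatorname{BMod}_R(\mca)$ via \cite[Proposition 4.8.2.9]{ha}, spells out what the paper asserts in one line. The gap is in Step 2, where you say the connectivity induction of \cref{almostalg} ``carries over unchanged''. This is exactly the step where commutativity was used, and the paper's footnote flags it as the main difference.

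Put $M_n=R/J_I\otimes_R J_I^{\otimes_R n}$. To kill the bottom group $\pi_n(M_n\otimes_R J_I)\simeq\pi_n(M_n)\otimes_{\pi_0R}I$, you rewrite it as $\pi_n(M_n)\otimes_{\pi_0R/I}I/I^2$. That rewriting requires the \emph{right} $\pi_0R$-action on $\pi_n(M_n)$ to factor through $\pi_0R/I$. But the right action comes from the factor $J_I^{\otimes_R n}$, not from $R/J_I$; only the left action is visibly killed by $I$. In the commutative case the two actions agree, so ``annihilated by $I$ on both sides'' came for free. Here you assert it without proof.

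The same one-sidedness affects ``\cref{limtower}, applied on both sides''. Only one of $I^\infty\otimes_R J_I\to I^\infty$ and $J_I\otimes_R I^\infty\to I^\infty$ is a limit of the tower's own transition maps, so only that one follows by finality. The other needs the connectivity estimate for the mirrored cofibres ($J_I^{\otimes_R n}\otimes_R R/J_I$ versus $R/J_I\otimes_R J_I^{\otimes_R n}$), which has the same problem. You need both sides for $R/I^\infty$ to be an idempotent object.

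The missing idea is the one the paper extracts from \cref{nualgebraker}. The non-unital multiplication on $J_I$ is homotopic to both $i\otimes 1$ and $1\otimes i$. Hence $R/J_I\otimes_R J_I\simeq J_I\otimes_R R/J_I$ as bimodules, since both are the cofibre of that multiplication. Iterating gives $M_n\simeq J_I^{\otimes_R n}\otimes_R R/J_I$, a right $R/J_I$-module, so its homotopy is killed by $I$ on the right; this also handles the mirrored tower. You cite \cref{nualgebraker} only to make the tower $J_I^{\otimes_R n}$ defined, which needs no algebra structure at all.

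An elementary alternative also works. Use the cofibre sequences $M_{n+1}\to M_n\to M_n\otimes_R R/J_I$, whose third term has homotopy given by right $\pi_0R/I$-modules. Induction then shows each $\pi_k(M_{n+1})$ is an extension of two bimodules killed on the right by $I$, hence is killed by $I^2=I$.

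Separately, your claim that $I^\infty$ is a limit of non-unital algebras is unsupported. Tensor powers of a non-unital algebra in the merely monoidal $\infty$-category ${}_R\operatorname{BMod}_R(\mca)$ need not be algebras. The claim is also unnecessary: your idempotent-object argument only uses that $i\otimes 1$ and $1\otimes i$ are equivalences $I^\infty\otimes_R I^\infty\to I^\infty$.
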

	\begin{proof}
		
		The non-commutative case is a bit more tricky. Firstly we observe that $\mathrm{LQ}_R$ is indeed equivalent to a poset \cite[see][Proposition 4.8.2.9]{ha}. Let us also immediately verify that $\operatorname{Ker}(\pi_0 \varphi)$ is indeed idempotent for $\varphi: R \rightarrow S\in \op{LQ}_R$. Relatively tensoring the cofiber sequence (of $R$-$R$-bimodules) $F \rightarrow R \rightarrow S$ with $F$ gives
		$$
		F \otimes_R F \longrightarrow F \longrightarrow F \otimes_R S
		$$
		and the right hand term vanishes since one has a cofiber sequence (of $R$-$R$-bimodules)
		$$
		F \otimes_R S \longrightarrow R \otimes_R S \longrightarrow S \otimes_R S
		$$
		whose right hand map (after identifying $R \otimes_R S \simeq S$) is a section of the multiplication $S \otimes_R S \rightarrow S$ and thus an equivalence. But $F$ is connective and the map $\pi_0(F) \rightarrow \operatorname{Ker}(\pi_0 \varphi)$ surjective by the long exact sequence from $\varphi$, whence a chase in the diagram
		shows that the multiplication $$\operatorname{Ker}\left(\pi_0 \varphi\right) \otimes_{\pi_0 R} \operatorname{Ker}\left(\pi_0 \varphi\right) \rightarrow \operatorname{Ker}\left(\pi_0 \varphi\right)$$ is surjective as desired.
		
		\footnote{This paragraph is the main difference from the argument of \cref{almostalg}.}Next, we verify that the inverse system $J_I^{\otimes_R n}$ (with the relative tensor product $\otimes_R$) stabilises degreewise. In fact we show slightly more, namely that the cofibre $R / J_I \otimes_R J_I^{\otimes{ }_R n}$ of the canonical map $J_I^{\otimes_R n+1} \rightarrow J_I^{\otimes_R n}$ is $n$-connective. Now, for $n=0$, the connectivity claim is clear, and if we inductively assume that $R / J_I \otimes_R J_I^{\otimes{ }_R n}$ is $n$-connective, then
		$$
		R / J_I \otimes_R J_I^{\otimes_R n+1}=\left(R / J_I \otimes_R J_I^{\otimes_R n}\right) \otimes_R J_I
		$$
		is clearly also $n$-connective and its $n$th homotopy group is $\pi_n(R / J_I \otimes_R J_I^{\otimes_R^n}) \otimes_{\pi_0 R} I$. By \cref{nualgebraker}, $J_I\in \alg^{\op{nu}}(_R\op{BMod}_R(\mca))$ and we have the following commutative diagram
		$$
		\begin{tikzcd}
			J_I\otimes_R J_I \arrow[d, Rightarrow, no head] \arrow[r, "i\otimes1"] & R\otimes_R J_I \arrow[rd, "\sim"] &     \\
			J_I\otimes_R J_I \arrow[r, "1\otimes i"]                               & J_I\otimes_R R \arrow[r, "\sim"]  & J_I
		\end{tikzcd}$$
		hence
		$R/J_I\otimes_R J_I\simeq J_I\otimes_R R/J_I$ in $_R\op{BMod}_R(\mca)$.  Therefore the left hand term $R / J_I \otimes_R J_I^{\otimes_R n}$ is annihilated by $I$ from both sides, we compute
		$$
		\begin{gathered}
			\pi_n\left(R / J_I \otimes_R J_I^{\otimes_R n}\right) \otimes_{\pi_0 R} I=\pi_n\left(R / J_I \otimes_R J_I^{\otimes_R n}\right) \otimes_{\pi_0(R) / I} \pi_0(R) / I \otimes_{\pi_0 R} I \\
			=\pi_n\left(R / J_I \otimes_R J_I^{\otimes_R n}\right) \otimes_{\pi_0(R) / I} I / I^2=0 
		\end{gathered}
		$$ which complete the induction.
		
		Now we claim
		$$
		\left(\lim _{n \in \mathbb{N}^{\text {op }}} J_I^{\otimes_R n}\right) \otimes_R J_I \longrightarrow \lim _{n \in \mathbb{N}^{\text {op }}} J_I^{\otimes_R n}
		$$
		is an equivalence. Since the limit stabilises degreewise and $J_I$ is connective, we can move the limit out of the tensor product by \cref{limtower} (the cofibre of the interchange map is a limit of terms with growing connectivity), and then the statement follows from finality. 	By the same argument, for any $n\geq 1$ the canonical map  $$I^\infty \otimes_R J_I^{\otimes_R n}\to I^\infty$$ is equivalent too and hence $I^\infty \otimes_RI^\infty \to I^\infty$ is an equivalence---that is to say $R\to R/I^\infty$ is idempotent and hence produces an element in $\alg(\ageq)^{\op{idem}}_{R/}$.

		As the next step, we show that for a left $R$-module $M$, the tautological map $M=R \otimes_R M \rightarrow R / I^{\infty} \otimes_R M$ is an equivalence (in other words that $I^{\infty} \otimes_R M \simeq 0$) if and only if the homotopy of $M$ is annihilated by $I$. For the ``only if'' direction, it suffices to observe that $
		\pi_0(R/I^\infty)=\pi_0R/I$. For ``if'' direction, 
		we start with the simplest case $M=R / J_I$, where the claim was proved above. For an arbitrary left $R$-module $M$ concentrated in degree 0 and killed by the action of $I$, it naturally inherits a $\op{H}(\pi_0R/I)$-module structure  and hence we have equivalences of left $R$-modules $$I^\infty\otimes_RM\simeq  I^\infty\otimes_R \op{H}(\pi_0R/I)\otimes_{\op{H}(\pi_0R/I)}M .$$ However $ I^\infty\otimes_R \op{H}(\pi_0R/I)$ is zero  by  $\op{H}(\pi_0R/I)=R/J_I$, so $I^\infty\otimes_RM=0$.

		For bounded below $M$, by left completeness we have
		$$
		I^{\infty} \otimes_R M \simeq I^{\infty} \otimes_R\left(\lim _{k \in \mathbb{N}^{\mathrm{op}}} \tau_{\leq k} M\right) \simeq \lim _{k \in \mathbb{N}^{\circ \mathrm{p}}} I^{\infty} \otimes_R \tau_{\leq k} M \simeq 0
		$$
		by commuting the limit out using the same argument as above. Finally, for arbitrary $M$ whose homotopy groups are killed by $I$, we find
		$$
		I^{\infty} \otimes_R M \simeq I^{\infty} \otimes_R\left(\operatorname{colim}_{k \in \mathbb{N}} \tau_{\geq-k} M\right) \simeq \operatorname{colim}_{k \in \mathbb{N}} I^{\infty} \otimes_R \tau_{\geq-k} M \simeq 0 .
		$$

		So combined with the idempotent property of $R\to R/I^\infty$, we learn that the image of the fully faithful restriction functor $\operatorname{LMod}_{R / I^{\infty}}(\mca)\rightarrow \operatorname{LMod}_R(\mca)$ consists exactly of those left modules whose homotopy is killed by $I$, as desired.

		Finally, we are ready to verify that the construction $I \mapsto\left(R \rightarrow R / I^{\infty}\right)$ induces an inverse to taking kernels. The composition starting with an ideal is clearly the identity. So we are left to show that for every $\varphi: R \rightarrow S$  in $\mathrm{LQ}_R$ with $I=\operatorname{Ker}(\pi_0 \varphi)$, the canonical map $\psi: R / I^{\infty} \rightarrow S$, arising from the homotopy of $S$ being annihilated by $I$, is an equivalence. By construction it induces an equivalence on $\pi_0$. By \cref{naka}, the functor $\psi_{!}=S \otimes_{R / I^{\infty}}-: \operatorname{LMod}_{R / I^{\infty}}(\mca) \rightarrow \operatorname{LMod}_S(\mca)$ is thus conservative when restricted to bounded below modules. But the map
		$$
		S \simeq \psi_{!}\left(R / I^{\infty}\right) \xrightarrow{\psi_{!}(\varphi)} \psi_{!}(S)=S \otimes_{R / I^{\infty}} S \simeq S \otimes_R S
		$$
		is induced by the unit and thus an equivalence since $\varphi$ is idempotent.
	\end{proof}
	\begin{rem}
		The argument above also applies to $\mathbb{E}_k$-algebras. However, since we do not treat $\mathbb{E}_k$-algebras in detail in this paper, we choose to omit it.
	\end{rem}
	\section{Deformation theory and étale rigidity}\label{5}
	
	To effectively study étale maps, we must first port the machinery of derivations and cotangent complexes into our generalized categorical setting. Fortunately, most of them has been developed in \cite[\textsection7.3-7.5]{ha}. 
	\subsection{The cotangent complex formalism}
	
	We use throughout the cotangent-complex and derivation formalism of
	\cite[\S\S7.3--7.4]{ha}. We briefly recall the notation that will be
	used below.
	
	For a presentable $\infty$-category $\mathcal C$, let
	\[
	T_{\mathcal C}\longrightarrow \mathcal C
	\]
	denote its tangent bundle and let $\mathcal M^T(\mathcal C)$ denote
	the associated tangent correspondence; see
	\cite[\S7.3.2 and Definition 7.3.6.9]{ha}. The cotangent complex
	functor is denoted by
	\[
	L:\mathcal C\longrightarrow T_{\mathcal C}.
	\]
	
	We will only use this formalism for
	$\mathcal C=\operatorname{CAlg}(\mathcal A)$. In this case, the fiber
	of $T_{\mathcal C}$ over $A$ is canonically identified with
	$\operatorname{Mod}_A(\mathcal A)$, and we denote by $L_A$ the
	cotangent complex of $A$ and by $L_{B/A}$ the relative cotangent
	complex of a map $A\to B$.
	
	We let
	\[
	\operatorname{Der}
	=
	\operatorname{Der}(\operatorname{CAlg}(\mathcal A))
	\]
	be the $\infty$-category of derivations
	\cite[Definition 7.4.1.1]{ha}. Thus, a derivation of $A$ with values
	in an $A$-module $M$ will be written
	\[
	\eta:A\longrightarrow M[1],
	\]
	or equivalently as a map $L_A\to M[1]$ of $A$-modules. We denote by
	\[
	A^\eta\longrightarrow A
	\]
	the square-zero extension classified by $\eta$.
	
	We also use Lurie's $\infty$-category
	$\widetilde{\operatorname{Der}}$ of extended derivations and the
	natural functor
	\[
	\Phi:
	\widetilde{\operatorname{Der}}
	\longrightarrow
	\operatorname{Fun}(\Delta^1,\operatorname{CAlg}(\mathcal A)),
	\qquad
	\eta\longmapsto(A^\eta\to A),
	\]
	as in \cite[Definition 7.4.1.3]{ha}.
	
	We will use the following connective variants of these categories.
	Let $\operatorname{Der}^{+}\subset\operatorname{Der}$ be the
	subcategory whose objects are derivations
	\[
	\eta:A\longrightarrow M[1]
	\]
	with $A$ and $M$ connective, and whose morphisms
	\[
	(\eta:A\to M[1])\longrightarrow
	(\eta':B\to N[1])
	\]
	are those inducing an equivalence
	\[
	B\otimes_A M\xrightarrow{\sim}N.
	\]
	Equivalently, an object belongs to $\operatorname{Der}^{+}$ precisely
	when $A$ and $A^\eta$ are connective and
	$\pi_0A^\eta\to\pi_0A$ is epimorphic.
	
	Similarly, let
	\[
	\operatorname{Fun}^{+}
	(\Delta^1,\operatorname{CAlg}(\mathcal A))
	\subset
	\operatorname{Fun}
	(\Delta^1,\operatorname{CAlg}(\mathcal A))
	\]
	be the subcategory whose objects are maps
	$\widetilde A\to A$ between connective algebras which are epimorphic
	on $\pi_0$, and whose morphisms are pushout squares.
	We use the analogous notation
	$\widetilde{\operatorname{Der}}^{+}$ for extended derivations.
	
	\begin{rem}
		We will repeatedly use the following two standard properties of
		cotangent complexes. For composable maps
		\[
		A\longrightarrow B\longrightarrow C
		\]
		there is a canonical cofiber sequence
		$$
		C\otimes_B L_{B/A}
		\longrightarrow L_{C/A}
		\longrightarrow L_{C/B}.
		$$
		Moreover, for a pushout square
		\[
		\begin{tikzcd}
			A' \arrow[r] \arrow[d] & A \arrow[d] \\
			B' \arrow[r] & B ,
		\end{tikzcd}
		\]
		there is a canonical equivalence
		$$
		L_{B/A}\simeq B\otimes_{B'}L_{B'/A'}.
		$$
		See \cite[Propositions 7.3.3.6 and 7.3.3.7]{ha}.
	\end{rem}

	\begin{lem}\label{5.6}
		Assume that \mca is hypercomplete. Let $f:(\eta: A \rightarrow M[1]) \rightarrow(\eta^{\prime}: B \rightarrow N[1])$ be a morphism in $\operatorname{Der}^{+}$. If the induced map $A^\eta \rightarrow B^{\eta'}$ is an equivalence in $\operatorname{CAlg}(\mc{A})$, then $f$ is an equivalence. \textup{(See \cite[Lem. 7.4.2.9]{ha} for the case of spectra.)}
	\end{lem}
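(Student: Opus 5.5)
We follow the argument of \cite[Lemma 7.4.2.9]{ha}, with hypercompleteness of $\mc{A}$ (and hence of the module categories, by \cref{l2}(4)) replacing the fact that spectra are separated. The morphism $f$ consists of a map of connective algebras $g:A\to B$ and an equivalence $B\otimes_A M\xrightarrow{\sim} N$, the latter being part of the definition of morphisms in $\operatorname{Der}^{+}$. It therefore suffices to show that $g$ is an equivalence. Once that is known, $N\simeq B\otimes_A M\simeq M$, so $f$ is an equivalence.

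\textbf{Step 1: reduction to a single vanishing statement.} The map $g$ sits in a commutative square with the maps $A^\eta\to A$ and $B^{\eta'}\to B$. By hypothesis $A^\eta\simeq B^{\eta'}$, so $g$ is the induced map between the targets. We have fiber sequences of $A^\eta$-modules $M\to A^\eta\to A$ and $N\to B^{\eta'}\to B$, and $g$ is compatible with them. Taking cofibers, we see that $g$ is an equivalence if and only if the induced map $M\to N$ of $A^\eta$-modules is an equivalence.

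\textbf{Step 2: identify $N$ with a base change of $M$.} Since $N\simeq B\otimes_A M$, the map $M\to N$ is the unit $M\to B\otimes_A M$. Its cofiber is $\operatorname{cofib}(g)\otimes_A M$. Let $C=\operatorname{cofib}(g)$, regarded as an $A$-module. The cofiber of $A\to B$ (computed in $A^\eta$-modules) equals the cofiber of $M\to N$, shifted by one. Hence
\[
C\simeq \Sigma\operatorname{cofib}(M\to N)\simeq \Sigma\bigl(C\otimes_A M\bigr).
\]

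\textbf{Step 3: connectivity bootstrap.} Here we use that $A$ and $M$ are connective. Since $\pi_0A^\eta\to\pi_0A$ and $\pi_0B^{\eta'}\to\pi_0B$ are epimorphisms, the map $\pi_0A\to\pi_0B$ is surjective, so $C$ is connective. Suppose $C$ is $k$-connective. Then $C\otimes_A M$ is $k$-connective, because $\otimes_A$ with a connective module preserves $k$-connectivity; this follows from the compatibility of the $t$-structure with the tensor product, via \cref{l3}. So $C\simeq\Sigma(C\otimes_A M)$ is $(k+1)$-connective. By induction $C$ is $\infty$-connective, and hypercompleteness of $\modu_A(\mc{A})$ gives $C\simeq0$. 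Thus $g$ is an equivalence, and so is $f$.

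The main obstacle is Step 2: the identification $\operatorname{cofib}(A\to B)\simeq\Sigma(\operatorname{cofib}(g)\otimes_A M)$ as modules must be made compatibly. Over spectra, Lurie argues via the forgetful map to $A^\eta$-modules. I would carry out the same argument in $\modu_{A^\eta}(\mc{A})$, where all the fiber sequences involved live. The connectivity bootstrap can then be done after restricting to $A^\eta$-modules, since this restriction is $t$-exact and conservative.
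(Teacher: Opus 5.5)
Your proposal is correct and follows essentially the same route as the paper: both identify $\operatorname{cofib}(A\to B)$ with $\operatorname{cofib}(A\to B)\otimes_A M[1]$, using the map of fiber sequences and the equivalence $B\otimes_A M\simeq N$, and then bootstrap connectivity with hypercompleteness (the paper argues by contradiction from a minimal nonvanishing $\pi_n$, you by upward induction). One minor remark: $C$ is connective simply because it is the cofiber of a map between connective objects, so the $\pi_0$-surjectivity you invoke in Step 3 is not needed.
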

	\begin{proof}
		The morphism $f$ determines a map of fiber sequences
		$$\begin{tikzcd}
			A^\eta \arrow[d] \arrow[r] & A \arrow[d, "f_0"'] \arrow[r] & {M[1]} \arrow[d, "f_1"'] \\
			B^{\eta'} \arrow[r]        & B \arrow[r]                   & {N[1]}                  
		\end{tikzcd}$$
		Since the left vertical map is an equivalence, we obtain an equivalence $\alpha: \operatorname{cofib}(f_0) \simeq \operatorname{cofib}(f_1)$. To complete the proof, it will suffice to show that $\operatorname{cofib}(f_0)$ vanishes. Suppose otherwise. Since $\operatorname{cofib}(f_0)$ is connective and $\mc{A}$ is hypercomplete, there exists some smallest integer $n$ such that $\pi_n \operatorname{cofib}(f_0) \neq 0$. In particular, $\operatorname{cofib}(f_0)$ is $n$-connective.
		
		Since $f$ induces an equivalence $B \otimes_A M \rightarrow N$, $\op{cofib}(f_1)$ can be identified with $\operatorname{cofib}(f_0) \otimes_A M[1]$. Since $M$ is connective, we deduce that $\operatorname{cofib}(f_1)$ is $(n+1)$-connective. Using the equivalence $\alpha$, we conclude that $\operatorname{cofib}(f_0)$ is $(n+1)$-connective, which contradicts our assumption that $\pi_n \operatorname{cofib}(f_0) \neq 0$.
	\end{proof}
	\begin{de}
		We define a subcategory $\op{Fun}^{+}(\Delta^1, \mathrm{CAlg}(\mc{A}))$ as follows:
		\begin{enumerate}
			\item  An object $f: \widetilde{A} \rightarrow A$ of $\op{Fun}(\Delta^1, \mathrm{CAlg}(\mc{A}))$ belongs to $\op{Fun}^{+}(\Delta^1, \mathrm{CAlg}(\mc{A}))$ if and only if both $A$ and $\widetilde{A}$ are connective, and $f$ induces a surjection $\pi_0 \widetilde{A} \rightarrow \pi_0 A$.
			\item 	 Let $f, g \in \op{Fun}^{+}(\Delta^1, \mathrm{CAlg}(\mc{A}))$, and let $\alpha: f \rightarrow g$ be a morphism in  $\op{Fun}(\Delta^1, \mathrm{CAlg}(\mc{A}))$. Then $\alpha$ belongs to $\op{Fun}^{+}(\Delta^1, \mathrm{CAlg}(\mc{A}))$ if and only if it classifies a pushout square in the $\infty$-category $\mathrm{CAlg}(\mc{A})$.
		\end{enumerate}
		
	\end{de}
	\begin{prop}\label{5.8}
		Assume that \mca is hypercomplete.
		Let $\Phi: \widetilde{\operatorname{Der}} \rightarrow \operatorname{Fun}(\Delta^1, \operatorname{CAlg}(\mc{A}))$ be the functor given by $(\eta: A \rightarrow M[1]) \mapsto (A^\eta\to A)$. Then $\Phi$ induces a functor $\Phi^{+}: \widetilde{\operatorname{Der}}^{+} \rightarrow \operatorname{Fun}^{+}(\Delta^1, \operatorname{CAlg}(\mc{A}))$. Moreover, the functor $\Phi^{+}$ is a left fibration.
		
	\end{prop}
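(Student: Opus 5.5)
This follows the argument of \cite[Proposition 7.4.2.5]{ha}, with hypercompleteness of \mca playing the role that Postnikov arguments play for spectra. The proof has two parts: checking that $\Phi$ restricts to the stated subcategories, and checking the left fibration property.

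\emph{Restriction to the subcategories.} Let $\eta:A\to M[1]$ be an object of $\widetilde{\operatorname{Der}}^{+}$, so $A$ and $M$ are connective. There is a fiber sequence $M\to A^\eta\to A$ in $\mca$. Hence $A^\eta$ is connective, and $\pi_0A^\eta\to\pi_0A$ is an epimorphism because $\pi_{-1}M=0$. So $\Phi$ sends objects of $\widetilde{\operatorname{Der}}^{+}$ to objects of $\operatorname{Fun}^{+}$. Now take a morphism $(\eta:A\to M[1])\to(\eta':B\to N[1])$ inducing $B\otimes_AM\simeq N$. I would show the square
\[
\begin{tikzcd}
A^\eta \arrow[r] \arrow[d] & A \arrow[d] \\
B^{\eta'} \arrow[r] & B
\end{tikzcd}
\]
is a pushout. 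Put $P=B\otimes_{A^\eta}A^{\eta}$-base change, i.e.\ $P=A\otimes_{A^\eta}B^{\eta'}$, and consider the comparison map $P\to B$. The general formalism of \cite[\S7.4.1]{ha}, which uses only stability and presentability, identifies $P\to B$ as a square-zero extension of $B$ by the module $\operatorname{cofib}(B\otimes_AM\to N)$. Exactly as in \cite[Prop.~7.4.2.5]{ha}, we compute the fiber of $B\otimes_{A^\eta}A\to B$ via the fiber sequence $M\to A^\eta\to A$. This gives an equivalence $B\otimes_{A^\eta}A\simeq B^{\eta''}$ for a derivation $\eta''$ with values in $B\otimes_AM$. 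The hypothesis $B\otimes_AM\simeq N$ then identifies this with $B^{\eta'}$. Connectivity of all terms, hypercompleteness, and a Lemma~\ref{5.6}-style argument (comparing the smallest nonvanishing homotopy of the cofiber) would close any remaining gap.

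\emph{Left fibration.} I would use the standard criterion: $\Phi^{+}$ is a left fibration if every morphism in $\widetilde{\operatorname{Der}}^{+}$ is $\Phi^{+}$-cocartesian and $\Phi^{+}$ has the unique lifting property for morphisms out of a given object. Concretely, fix $\eta:A\to M[1]$ and a pushout square from $A^\eta\to A$ to $\widetilde B\to B$, with $B=\widetilde B\otimes_{A^\eta}A$. The canonical lift is the derivation $\eta_B:B\to (B\otimes_AM)[1]$ obtained by base change of $\eta$, equivalently the composite $L_B\to B\otimes_AL_A\to B\otimes_AM[1]$. The previous paragraph shows $B^{\eta_B}\simeq\widetilde B$ over the square. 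The essential uniqueness of this lift, and contractibility of the space of lifts, follows the argument of \cite[Prop.~7.4.2.5]{ha}: the lifts form a fiber of a map between mapping spaces of derivations. That fiber is computed by the universal property of $L_A$ together with the pushout formula $L_{B/\widetilde B}\simeq B\otimes_AL_{A/A^\eta}$.

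The main obstacle will be the pushout claim in the first step, specifically identifying $B\otimes_{A^\eta}A$ with a square-zero extension. Lurie's proof uses the connectivity estimate \cite[Thm.~7.4.3.1]{ha} on $L_{A/A^\eta}$, which in turn rests on the Hurewicz-type estimate for cotangent complexes. I would first try to show that Lurie's proof of that estimate goes through for $\operatorname{CAlg}(\mca)$ using only compatibility of the $t$-structure with $\otimes$ and hypercompleteness. If it does not transfer, I would instead argue purely with $t$-exactness of tensoring with connective objects, in the style of Lemma~\ref{5.6}.
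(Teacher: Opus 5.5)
Your overall plan, running Lurie's argument with hypercompleteness entering through a \cref{5.6}-type estimate, is the paper's plan. The paper says the proof is parallel to \cite[Lemma 7.4.2.7]{ha} and that hypercompleteness is used only via \cref{5.6}. However, several concrete steps you wrote down fail.

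\emph{The canonical lift is misdefined.} There is no map $L_B\to B\otimes_A L_A$; the natural map is $B\otimes_A L_A\to L_B$. A derivation of $A$ does not push forward along an arbitrary $A\to B$, because the obstruction lives in $L_{B/A}$. What makes the lift exist is the pushout square. The restriction of $\eta$ to $A^\eta$ is canonically trivialized, so $\eta$ factors as $L_A\to L_{A/A^\eta}\to M[1]$. One then sets $\eta_B\colon L_B\to L_{B/\widetilde B}\simeq B\otimes_A L_{A/A^\eta}\to B\otimes_A M[1]$.

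\emph{The identification $B^{\eta_B}\simeq\widetilde B$ is not justified.} Saying ``the previous paragraph shows $B^{\eta_B}\simeq\widetilde B$'' is a non sequitur, since a pushout statement only gives $A\otimes_{A^\eta}B^{\eta_B}\simeq B$. You need the comparison map $\widetilde B\to B^{\eta_B}$ over $B$, which is adjoint to the trivialization of $\eta_B|_{\widetilde B}$. You also need a reason it is an equivalence. On fibers over $B$ it is $M\otimes_A B\to B\otimes_A M$. This is the base change of the composite $\operatorname{fib}(A^\eta\to A)[1]\to L_{A/A^\eta}\to M[1]$, which is the identity of $M[1]$. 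This step needs no hypercompleteness.

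\emph{The pushout step is garbled.} \cite[\textsection 7.4.1]{ha} does not identify $P=A\otimes_{A^\eta}B^{\eta'}\to B$ as a square-zero extension by $\operatorname{cofib}(B\otimes_A M\to N)$; its fiber is $\operatorname{cofib}(M\otimes_A P\to N)$. Also, ``$B\otimes_{A^\eta}A\simeq B^{\eta''}$'' confuses the two legs of the square. There are two correct fixes.
\begin{enumerate}
\item Directly: $F=\operatorname{fib}(P\to B)$ is connective. After identifying $M\otimes_{A^\eta}B^{\eta'}\simeq M\otimes_A P$ and $N\simeq M\otimes_A B$, it satisfies $F\simeq M\otimes_A F[1]$. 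So $F$ is $\infty$-connective and vanishes by hypercompleteness.
\item Closer to Lurie: equip $P$ with its canonical lift $\eta_P$, so $P^{\eta_P}\simeq B^{\eta'}$ by the previous point. Then $\eta'$ receives a morphism of $\operatorname{Der}^{+}$ from $\eta_P$ over $P\to B$ that induces the identity on square-zero extensions. Applying \cref{5.6} makes $P\to B$ an equivalence.
\end{enumerate}
The same comparison in the second fix gives essential uniqueness of lifts, hence the left fibration property. This is exactly where hypercompleteness is used. The connectivity estimate \cite[Theorem 7.4.3.1]{ha} that you single out as the main obstacle is not needed.
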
 
	\begin{proof}
		It is a parallel proof of \cite[Lem.  7.4.2.7]{ha}.
	\end{proof}
	\begin{rem}
		The hypercomplete condition in \cref{5.8} can not be removed. However, the only part involving the hypercompleteness in the argument of \textup{\cite[Lem.  7.4.2.7]{ha}} is \cref{5.6}.
	\end{rem}
	\begin{cor}\label{cor8.12}
		Assume that \mca is hypercomplete.	Let $A \in \calg(\mc{A}_{\geq0})$, $M$ a connective $A$-module, and $\eta: A \rightarrow M[1]$ a derivation. Then the functor $\Phi$ induces an equivalence of $\infty$-categories
		
		$$
		\operatorname{Der}_{\eta /}^{+} \rightarrow \mathrm{CAlg}_{A^\eta}^{\mathrm{cn}}
		$$
		
		given on objects by $\left(\eta^{\prime}: B \rightarrow N[1]\right) \mapsto B^{\eta^{\prime}}$.
	\end{cor}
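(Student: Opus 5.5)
This is the analogue of \cite[Corollary 7.4.2.8]{ha}, and I would derive it from \cref{5.8} in the same way. Fix the object $\eta\in\widetilde{\operatorname{Der}}^{+}$ and consider the undercategory $\widetilde{\operatorname{Der}}^{+}_{\eta/}$. Since $\Phi^{+}$ is a left fibration by \cref{5.8}, the induced map
\[
\widetilde{\operatorname{Der}}^{+}_{\eta/}\longrightarrow \operatorname{Fun}^{+}(\Delta^1,\operatorname{CAlg}(\mc{A}))_{\Phi(\eta)/}
\]
is a trivial Kan fibration, by the standard fact that left fibrations induce trivial fibrations on undercategories of objects \cite[Corollary 2.1.2.2]{htt}. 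Next I would identify both sides with the two categories in the statement.

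For the source, a morphism in $\widetilde{\operatorname{Der}}$ out of a genuine derivation $\eta$ whose target lies in $\widetilde{\operatorname{Der}}^{+}$ must land in $\operatorname{Der}^{+}$. This is because the morphisms of $\widetilde{\operatorname{Der}}^{+}$ satisfy the base-change condition $B\otimes_A M\simeq N$, and the extended-derivation datum is then forced to be a derivation. So the inclusion $\operatorname{Der}^{+}_{\eta/}\subset\widetilde{\operatorname{Der}}^{+}_{\eta/}$ is an equivalence; I would copy the corresponding step in Lurie's proof verbatim.

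For the target, morphisms in $\operatorname{Fun}^{+}$ out of $A^\eta\to A$ are pushout squares. An object of the undercategory is therefore determined, up to contractible choice, by the map $A^\eta\to B^{\eta'}$ on sources: the target is recovered as $B^{\eta'}\otimes_{A^\eta}A$, and evaluation at $0$ exhibits the undercategory as equivalent to $\operatorname{CAlg}(\mc{A})_{A^\eta/}$ restricted to suitable objects. The membership conditions must be checked: for connective $C$ under $A^\eta$, the pushout $C\otimes_{A^\eta}A$ is connective, and $\pi_0C\to\pi_0(C\otimes_{A^\eta}A)$ is epimorphic because $\pi_0$ of the pushout is $\pi_0C\otimes_{\pi_0A^\eta}\pi_0A$ and $\pi_0A^\eta\to\pi_0A$ is epimorphic. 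So every connective $A^\eta$-algebra arises, and this gives $\operatorname{CAlg}^{\mathrm{cn}}_{A^\eta}$. The identification of the undercategory with the category of sources is a left Kan extension / pushout argument as in \cite[Lemma 7.4.2.8 proof]{ha}.

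Composing these identifications gives the functor $\eta'\mapsto B^{\eta'}$ as an equivalence. The main obstacle is the source identification, namely checking that objects of $\widetilde{\operatorname{Der}}^{+}$ under an honest derivation are honest derivations. Here hypercompleteness enters only through \cref{5.8}, and I expect the rest to be formal.
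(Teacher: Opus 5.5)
Your argument is the one the paper intends. The corollary is stated as an immediate consequence of \cref{5.8}, exactly as Lurie's corollary in \cite[\S7.4.2]{ha} follows from his left-fibration lemma: the left fibration gives a trivial fibration on undercategories, and evaluation at $0$ identifies the undercategory of pushout squares under $A^\eta\to A$ with $\mathrm{CAlg}^{\mathrm{cn}}_{A^\eta/}$. You should add that morphisms in that undercategory are automatically pushout squares by the pasting lemma, which is needed for evaluation at $0$ to be an equivalence onto the full undercategory.

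The one thing to fix is your ``main obstacle'', which does not exist. There is no inclusion $\operatorname{Der}^{+}_{\eta/}\subset\widetilde{\operatorname{Der}}^{+}_{\eta/}$, and nothing has to be ``forced'' to be a derivation. The forgetful map $\widetilde{\operatorname{Der}}\to\operatorname{Der}$ is a trivial Kan fibration, since the extra datum is only the contractible choice of the pullback square defining $A^\eta$, and $\widetilde{\operatorname{Der}}^{+}$ is its preimage over $\operatorname{Der}^{+}$. It therefore induces a trivial fibration $\widetilde{\operatorname{Der}}^{+}_{\tilde\eta/}\to\operatorname{Der}^{+}_{\eta/}$, so the source identification is purely formal.
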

	We end with a surprising result that any square zero extension is descendable\footnote{We thank Germán Stefanich for pointing this out to the author.}.
	\begin{prop}\label{sq0descendable}
		Let $\eta: A \rightarrow M \in \operatorname{Der}$ and $\alpha:\tilde{A}\to A$ be the induced square-zero extension  in $\calg(\mc{A})$. Then $\alpha$ is descendable.
	\end{prop}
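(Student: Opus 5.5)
Since a derivation $\eta:A\to M[1]$ defines the square-zero extension $\tilde A=A^\eta$ as the fiber product $A\times_{A\oplus M[1]}A$ (along $\eta$ and the zero section $d_0$), there is a fiber sequence of $\tilde A$-modules
\[
M\longrightarrow \tilde A\xrightarrow{\ \alpha\ } A .
\]
The plan is to use the criterion \cref{criteriondescendable}(4)/(5): it suffices to exhibit $\tilde A$, as a $\tilde A$-module, as a retract of a finite limit of $\tilde A$-modules each of which lifts to an $A$-module. The fiber sequence does exactly this, provided $M$ itself admits an $A$-module structure compatible with the $\tilde A$-module structure it acquires as the fiber.

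\textbf{Step 1: both outer terms lift to $A$-modules.} For $A$ this is clear, since its $\tilde A$-module structure is restricted along $\alpha$. For $M$, I would use the pullback description: $\tilde A\to A\oplus M[1]$ factors through either copy of $A$. Hence the fiber $\operatorname{fib}(\tilde A\to A)$ identifies, as a $\tilde A$-module, with $\operatorname{fib}(A\xrightarrow{d_0}A\oplus M[1])\simeq M$, where $A\oplus M[1]$ is viewed as an $A$-module via $d_0$ and then restricted along the other map $\tilde A\to A$. In other words, the pullback square of algebras yields a pullback square of $\tilde A$-modules in which the fiber of the right vertical map $A\to A\oplus M[1]$ is a fiber of a map of $A$-modules restricted along $\alpha$. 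So $M$ with its induced $\tilde A$-structure is the restriction of the $A$-module $M$. This identification, namely making the $\tilde A$-module structure on the fiber agree with a restricted $A$-module structure, is the main point to check. I would phrase it entirely via pullbacks of modules along the pullback of algebras, so that no coherence has to be built by hand.

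\textbf{Step 2: conclude.} We now have $\tilde A\simeq\operatorname{fib}(A\to M[1])$ in $\modu_{\tilde A}(\mc{A})$. The map $A\to M[1]$ here is $\eta$; it need not be $A$-linear, but the diagram consists of $\tilde A$-modules, which is all that (5) requires. Thus $\tilde A$ is a finite limit (a fiber) of $\tilde A$-modules that lift to $A$-modules, and \cref{criteriondescendable} gives that $\alpha$ is descendable.

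A cleaner alternative for Step 1 is to avoid identifying $M$ explicitly: $\tilde A=A\times_{A\oplus M[1]}A$ as $\tilde A$-modules, and each of the three corners is the restriction of an $A$-module (via one of the two maps to $A$, both of which equal $\alpha$ after composition). Then (5) applies directly to this pullback diagram, with no retract needed.
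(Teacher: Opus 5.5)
Your proof is correct and follows the same route as the paper. Both realize $\tilde A$ in $\modu_{\tilde A}(\mc{A})$ as a finite limit, via the fiber sequence $M\to\tilde A\to A$, of $\tilde A$-modules that lift to $A$-modules, and then invoke \cref{criteriondescendable}. Your Step 1, or equivalently the pullback variant $\tilde A\simeq A\times_{A\oplus M[1]}A$, justifies a point the paper only asserts: that the $\tilde A$-module structure on the fiber is restricted from an $A$-module structure.
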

	\begin{proof}
		By the definition of a square-zero extension, the derivation $\eta: A \to M$ induces a fiber sequence in $\mathrm{Mod}_{\tilde{A}}(\mca)$:
		$$ M \longrightarrow \tilde{A} \xrightarrow{\alpha} A .$$
		To apply \cref{criteriondescendable}, we must verify that both $A$ and $M$ (viewed as $\tilde{A}$-modules) admit $A$-module structures:
		\begin{itemize}
			\item The object $M$ is naturally an $A$-module by the definition of the derivation $\eta \in \mathrm{Der}(A, M)$. Its $\tilde{A}$-module structure is simply obtained via restriction of scalars along $\alpha$.
			\item The object $A$ is trivially an $A$-module. 
		\end{itemize}
		Thus, $\tilde{A}$ is obtained as a finite limit of a diagram of $\tilde{A}$-modules, each of which admits the structure of a module over $A$. Taking $f = \alpha$, \cref{criteriondescendable} directly implies that the morphism $\alpha: \tilde{A} \to A$ is descendable.
	\end{proof}
	\begin{cor}\label{sq0faith}
		Let $\eta: A \rightarrow M \in \operatorname{Der}$ and $\alpha:\tilde{A}\to A$ be the induced square-zero extension  in $\calg(\mc{A})$. Then $\alpha$ is faithful.
	\end{cor}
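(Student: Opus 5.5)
The plan is to deduce \cref{sq0faith} directly from \cref{sq0descendable} together with \cref{descendableimplyfaith}. By \cref{sq0descendable}, the square-zero extension $\alpha:\tilde{A}\to A$ is descendable in $\calg(\mc{A})$, and \cref{descendableimplyfaith}, applied with $\mc{C}^\otimes=\mc{A}^\otimes\in\calg(\prlst)$, states that descendable maps are faithful. Concretely, this means the base change functor $A\otimes_{\tilde{A}}(-):\modu_{\tilde{A}}(\mc{A})\to\modu_A(\mc{A})$ is conservative. No connectivity, completeness or projective rigidity hypothesis enters, which matches the absence of such hypotheses in the statement.

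If one prefers a self-contained argument instead of citing \cite{mathew2016galois}, I would argue as follows. Let $N\in\modu_{\tilde{A}}(\mc{A})$ satisfy $A\otimes_{\tilde{A}}N\simeq0$. Consider the full subcategory of those $X\in\modu_{\tilde{A}}(\mc{A})$ with $X\otimes_{\tilde{A}}N\simeq0$. It is a thick ideal: it is stable and closed under retracts because $(-)\otimes_{\tilde{A}}N$ is exact, and it is closed under tensoring with arbitrary modules by associativity of $\otimes_{\tilde{A}}$. It contains $A$ by hypothesis, so descendability forces it to be all of $\modu_{\tilde{A}}(\mc{A})$. In particular it contains $\tilde{A}$, which gives $N\simeq\tilde{A}\otimes_{\tilde{A}}N\simeq0$. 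Since base change is exact, conservativity follows by applying this to cofibers.

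One could even bypass descendability: the fiber sequence $M\to\tilde{A}\to A$ of $\tilde{A}$-modules, tensored with $N$, gives $M\otimes_{\tilde{A}}N\to N\to A\otimes_{\tilde{A}}N$. Here $M\otimes_{\tilde{A}}N\simeq M\otimes_A(A\otimes_{\tilde{A}}N)\simeq0$, because $M$ is an $A$-module. Hence $N\simeq0$.

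There is no real obstacle. The only point that needs care is the identification $M\otimes_{\tilde{A}}N\simeq M\otimes_A(A\otimes_{\tilde{A}}N)$, which uses that the $\tilde{A}$-module structure on $M$ is restricted from an $A$-module structure, as noted in the proof of \cref{sq0descendable}.
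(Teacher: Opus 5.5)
Your proposal is correct, and its main route is exactly the paper's first proof (\cref{sq0descendable} combined with \cref{descendableimplyfaith}). Your descent-free variant is essentially the paper's second proof, which tensors the pullback square $\tilde{A}\simeq A\times_{A\oplus M}A$ with $N$ and uses that the other three corners are restricted from $A$-modules (strictly speaking, the fiber of $\alpha$ is $M[-1]$ rather than $M$, but this changes nothing).
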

	\begin{proof}[First proof]
		It follows by combining \cref{descendableimplyfaith} and \cref{sq0descendable}.
	\end{proof}
	We can also check it directly:
	\begin{proof}[Second proof]
		We consider the following pullback diagram in $\calg(\mc{A})$, and hence also a pullback diagram in $\modu_{\tilde{A}}(\mc{A})$.
		$$
		\begin{tikzcd}
			\tilde{A} \arrow[d, "\alpha"] \arrow[r, "\alpha"] & A \arrow[d, "\delta"] \\
			A \arrow[r, "\delta_0"]                           & A\oplus M            
		\end{tikzcd}
		$$
		Now given an $\tilde{A}$-module $X$ such that $X\otimes_{\tilde{A}}A=0$, we wish to show that $X=0$ too. Because the following diagram is pullback in $\modu_{\tilde{A}}(\mc{A})$,
		$$
		\begin{tikzcd}
			X=X\otimes_{\tilde{A}}\tilde{A} \arrow[d] \arrow[r] & X\otimes_{\tilde{A}}A=0 \arrow[d] \\
			0=X\otimes_{\tilde{A}}A \arrow[r]                   & X\otimes_{\tilde{A}}(A\oplus M)=0
		\end{tikzcd}$$
		we get $X=0$.
	\end{proof}
	\subsection{L-étale algebras}
	With the cotangent complex established, we formalize the notion of L-étale (formally étale) morphisms as those explicitly possessing a vanishing relative cotangent complex. We demonstrate that L-étale morphisms correspond precisely to cocartesian edges within the tangent correspondence, providing a robust, operational geometric framework for deformation theory in arbitrary $ttt$-$\infty$-categories.
	\begin{de}
		We say  a map of \ein-algebras $A\to B  \in \calg(\mc{A})$ is L-étale (or formally étale)\footnote{In deformation theory, the L-étale condition seems only interesting in the connective case.} if the relative cotangent complex $L_{B/A}$ vanishes.
	\end{de}

	\begin{lem}\label{lem8.12}
		Let $$\begin{tikzcd}
			& A \arrow[ld, "f"] \arrow[rd, "h"'] &   \\
			B \arrow[rr, "g"] &                                    & C
		\end{tikzcd}$$
		be morphisms in $\calg(\mc{A})$. Then:
		\enu{\item Suppose that $f$ is L-étale. Then $g$ is L-étale if and only if $h$ is L-étale.
			\item If $g$ is L-étale and faithfully flat. Then $f$ is L-étale if and only if $h$ is L-étale.
		}
	\end{lem}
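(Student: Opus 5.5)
Both parts follow from the transitivity cofiber sequence for cotangent complexes recalled in the remark above. For the composite $A\xrightarrow{f}B\xrightarrow{g}C$ with $h=g\circ f$, that remark gives a cofiber sequence of $C$-modules
\[
C\otimes_B L_{B/A}\longrightarrow L_{C/A}\longrightarrow L_{C/B},
\]
where $L_{B/A}=L_f$, $L_{C/A}=L_h$ and $L_{C/B}=L_g$. Everything is read off from which terms of this sequence vanish.

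For (1), assume $L_{B/A}\simeq 0$. Then the first term $C\otimes_B L_{B/A}$ is zero, so the second map $L_{C/A}\to L_{C/B}$ is an equivalence. Hence $L_{C/A}\simeq 0$ if and only if $L_{C/B}\simeq 0$, i.e.\ $h$ is L-étale if and only if $g$ is. No hypotheses on $\mathcal A$ are needed.

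For (2), assume $L_{C/B}\simeq 0$. Now the first map $C\otimes_B L_{B/A}\to L_{C/A}$ is an equivalence. If $f$ is L-étale, then $L_{C/A}\simeq 0$ and $h$ is L-étale; faithful flatness is not used here. Conversely, if $h$ is L-étale, we get $C\otimes_B L_{B/A}\simeq 0$, and we must deduce $L_{B/A}\simeq 0$. This is the only point that uses faithful flatness of $g$. The point to be careful about is that in \cref{ncflat}, faithful flatness of a possibly non-connective map does not immediately say that $C\otimes_B(-)$ is conservative; \cref{nonconnectivefaithfulcounterexample} shows these notions can differ for a general flat module.

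We therefore argue as in the remark following \cref{ncflat}, item (3): faithful flatness of $g$ does imply that $C\otimes_B(-)$ is conservative. By \cref{flatpush}(1) the square
\[
\begin{tikzcd}
\tau_{\geq0}B \arrow[r] \arrow[d] & \tau_{\geq0}C \arrow[d] \\
B \arrow[r] & C
\end{tikzcd}
\]
is a pushout, so $C\otimes_B N\simeq \tau_{\geq0}C\otimes_{\tau_{\geq0}B}N$ for every $B$-module $N$ viewed as a $\tau_{\geq0}B$-module by restriction. Restriction $\modu_B(\mathcal A)\to\modu_{\tau_{\geq0}B}(\mathcal A)$ is conservative, and $\tau_{\geq0}C\otimes_{\tau_{\geq0}B}(-)$ is conservative by the definition of faithful flatness in the connective case. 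Hence $C\otimes_B(-)$ is conservative.

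Applying this to $N=L_{B/A}$, the vanishing of $C\otimes_B L_{B/A}$ gives $L_{B/A}\simeq 0$, so $f$ is L-étale. The only place needing care is this conservativity claim; the rest is formal manipulation of the cofiber sequence.
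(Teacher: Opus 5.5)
Your proposal is correct and follows the paper's proof, which is exactly the transitivity cofiber sequence $C\otimes_B L_{B/A}\to L_{C/A}\to L_{C/B}$. In part (2) you additionally spell out why faithful flatness of $g$ makes $C\otimes_B(-)$ conservative, via the pushout of \cref{flatpush}(1). This is precisely the content of item (3) of the remark following \cref{ncflat}, which the paper's one-line proof uses implicitly.
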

	\begin{proof}
		It follows from the cofiber sequence $$C\otimes_BL_{B / A}\to L_{C / A}\to L_{C / B} $$ in $\modu_C(\mc{A})$.
	\end{proof}
	\begin{prop}
		Let	$f: R\to S \in \calg(\mc{A})$ be map such that $S$ is an idempotent  \ein-$R$-algebra. Then $f$ is L-étale.
	\end{prop}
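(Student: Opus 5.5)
The plan is to show $L_{S/R}\simeq 0$ by applying the base change formula for cotangent complexes from the Remark above, and then exploiting that the multiplication $S\otimes_R S\to S$ is an equivalence.

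Since $S$ is idempotent over $R$, the square
\[
\begin{tikzcd}
R \arrow[r, "f"] \arrow[d, "f"'] & S \arrow[d, "\iota_1"] \\
S \arrow[r, "\iota_2"] & S\otimes_R S
\end{tikzcd}
\]
is a pushout in $\calg(\mc{A})$, and both $\iota_1,\iota_2$ are equivalences: each is a section of the multiplication $\mu\colon S\otimes_R S\to S$, and $\mu$ is an equivalence. By the base change equivalence for pushouts we get
\[
L_{(S\otimes_RS)/S}\simeq (S\otimes_RS)\otimes_S L_{S/R}.
\]
The left-hand side is the relative cotangent complex of the equivalence $\iota_2$ (or $\iota_1$, depending on orientation), so it vanishes, since the relative cotangent complex of an equivalence is zero.

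It follows that $(S\otimes_R S)\otimes_S L_{S/R}\simeq 0$. Because $\iota_1\colon S\to S\otimes_RS$ is an equivalence of $\mathbb{E}_\infty$-algebras, base change along it is an equivalence of module categories. Hence $L_{S/R}\simeq 0$, and $f$ is L-étale.

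An alternative route uses the transitivity sequence for $R\to S\xrightarrow{\iota_1} S\otimes_RS$. It gives $L_{S\otimes_RS/R}\simeq L_{S/R}\oplus L_{S/R}$ after base change, while $\iota_1$ being an equivalence identifies this with $L_{S/R}$, forcing the diagonal inclusion to be an equivalence and hence $L_{S/R}=0$. The first argument is cleaner.

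The main point needing care is orientation: one must make sure the base change formula is applied to the correct leg, so that the module being base-changed really is $L_{S/R}$ and the map whose cotangent complex vanishes is genuinely an equivalence. This follows from the idempotence hypothesis, since both coprojections are inverse to $\mu$.
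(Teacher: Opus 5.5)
Your proposal is correct and is essentially the paper's own argument. The paper applies the base-change formula \cite[Proposition 7.3.3.7]{ha} to the pushout square computing $S\otimes_R S$ and uses idempotence to get $L_{S/R}\simeq S\otimes_S L_{S/R}\simeq L_{S/S}\simeq 0$; this is your first argument written more tersely, and your orientation bookkeeping is harmless since, as you note, both coprojections are inverse to the multiplication.
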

	\begin{proof}
		Since $S\otimes_R S\simeq S$, by \cite[Proposition 7.3.3.7]{ha} we have $L_{S/R}\simeq S\otimes_S L_{S/R}\simeq L_{S/S}=0$.
	\end{proof}
	\begin{cor}
		Let $f:R\to S \in \calg(\mc{A})$ be a flat map between discrete \ein-algebras such that $f$ is idempotent in the heart $\mca^\heartsuit$. Then $f:R\to S$ is L-étale.
	\end{cor}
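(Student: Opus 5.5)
The plan is to reduce the claim to the preceding proposition: it suffices to show that $S\otimes_R S\to S$ is an equivalence in $\calg(\mc{A})$, i.e.\ that $S$ is an idempotent \ein-$R$-algebra in the derived sense. Once this is known, the preceding proposition immediately gives $L_{S/R}\simeq 0$.

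First, identify $S\otimes_R S$ using flatness. Since $f$ is flat and $R$, $S$ are discrete (hence connective), the functor $(-)\otimes_R S$ is $t$-exact. Applied to the discrete $R$-module $S$, it shows that $S\otimes_R S$ is discrete. It follows that $S\otimes_R S\simeq \pi_0(S\otimes_R S)$. By the compatibility of relative tensor products with $\pi_0$ (the commutative square of relative tensor product functors used in \cref{pi0flat}(1)), this is $S\overline{\otimes}_R S$, the relative tensor product computed in the heart $\mca^\heartsuit$.

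Second, compare the multiplication maps. The multiplication $S\otimes_R S\to S$ is a map between discrete objects. Under the identification above, its $\pi_0$ is the multiplication $S\overline{\otimes}_R S\to S$ in the heart, which is an isomorphism by the hypothesis that $f$ is idempotent in $\mca^\heartsuit$. A map between discrete objects that is an isomorphism on $\pi_0$ is an equivalence. Hence $S\otimes_R S\xrightarrow{\sim} S$, so $S$ is an idempotent \ein-$R$-algebra, and the preceding proposition yields that $f$ is L-étale.

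The only point needing care is the identification $\pi_0(S\otimes_R S)\simeq S\overline{\otimes}_R S$ compatibly with the multiplication. This follows because $\pi_0:\ageq^\otimes\to(\mca^\heartsuit)^\otimes$ is symmetric monoidal and preserves geometric realizations, so it commutes with bar constructions and carries the \ein-multiplication to the heart multiplication. No completeness hypotheses are needed, since discreteness of $S\otimes_R S$ comes directly from $t$-exactness.
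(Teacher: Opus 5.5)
Your proposal is correct and follows the paper's intended route: flatness makes $S\otimes_R S$ discrete and identifies it with $S\overline{\otimes}_R S$, so idempotence in the heart upgrades to idempotence in $\calg(\mca)$, and the preceding proposition then gives $L_{S/R}\simeq 0$. The paper does not write out a proof of this corollary; the same deduction appears explicitly later, in the proof of the corollary on flat idempotent algebras. Your remark that no completeness hypothesis is needed is accurate, since $t$-exactness alone places $S\otimes_R S$ in the heart.
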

	\begin{rem}
		Let $f:A\to B$ be a flat morphism in
		$\operatorname{CAlg}(\mathcal A)$. By \cref{flatpush}(1) and base
		change for cotangent complexes, there is a natural equivalence
		\[
		L_{B/A}\simeq
		B\otimes_{\tau_{\geq0}B}
		L_{\tau_{\geq0}B/\tau_{\geq0}A}.
		\]
		Consequently, if $\tau_{\geq0}f$ is $L$-étale, then $f$ is
		$L$-étale. However, the converse fails, even for spectra.
		
		Indeed, let $C_2$ be the cyclic group of order $2$ and consider
		\[
		f:KU\longrightarrow KU[C_2].
		\]
		Its connective cover is
		\[
		ku\longrightarrow ku[C_2],
		\]
		and $f$ is flat. By the computation of the cotangent complex of
		Thom spectra \cite{MR4217953}, we have
		\[
		L_{R[C_2]/R}\simeq R[C_2]\otimes H\mathbb F_2.
		\]
		Hence
		$
		L_{ku[C_2]/ku}\not\simeq0,
		$
		whereas
		\[
		L_{KU[C_2]/KU}\simeq0,
		\]
		since $KU\otimes H\mathbb F_2\simeq0$.
	\end{rem}
	\begin{lem}
		Given a diagram of $\infty$-categories 
		$$
		\begin{tikzcd}
			\mc{C} \arrow[rr, "F"] \arrow[rd, "p"] &        & \mc{D} \arrow[ld, "q"'] \\
			& \mc{E} &                        
		\end{tikzcd}
		$$
		where $p,q$ are cocartesian fibrations and $F$ preserves cocartesian edges. Let $s\in \mc{E}$ and $\theta:K^{\triangleright}\to \mc{C}_s$ be a diagram. If for any morphism $f:s\to t\in \mc{E}$ the edge $f_!\circ\theta:K^{\triangleright}\to\mc{C}_t$ is an $F_s$-colimit, then $\theta$ is an $F$-colimit.
	\end{lem}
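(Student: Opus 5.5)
The idea is to check the defining property of an $F$-colimit (cf. \cite[Definition 4.3.1.1]{htt}) directly. For every object $d\in\mc{D}$ we must show that the square
\[
\begin{tikzcd}
\mc{C}_{\theta/}\arrow[r]\arrow[d] & \mc{C}_{\theta|_K/}\arrow[d]\\
\mc{D}_{F\theta/}\arrow[r] & \mc{D}_{F\theta|_K/}
\end{tikzcd}
\]
is a pullback. Equivalently, for every $c\in\mc{C}$ the square of mapping spaces
\[
\mapp_{\mc{C}}(\theta(\infty),c)\to \lim_{k\in K}\mapp_{\mc{C}}(\theta(k),c)\times_{\lim_{k}\mapp_{\mc{D}}(F\theta(k),Fc)}\mapp_{\mc{D}}(F\theta(\infty),Fc)
\]
must be an equivalence, where $\infty$ denotes the cone point.

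We check this fibrewise over $\mapp_{\mc{E}}(s,p(c))$. All the spaces involved map compatibly to $\mapp_{\mc{E}}(s,t)$ with $t=p(c)=q(Fc)$, because $p=q\circ F$ and $\theta$ lands in the fibre $\mc{C}_s$. A map of spaces over a common base is an equivalence if and only if it is an equivalence on fibres over each point $f:s\to t$. Fix such an $f$. Since $p$ is a cocartesian fibration, the fibre of $\mapp_{\mc{C}}(x,c)\to\mapp_{\mc{E}}(s,t)$ over $f$ is $\mapp_{\mc{C}_t}(f_!x,c)$ for any $x\in\mc{C}_s$ \cite[Proposition 2.4.4.2]{htt}. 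Likewise, for $q$ it is $\mapp_{\mc{D}_t}(f_!y,d)$. Fibres commute with the limit over $K$ and with the fibre product. Since $F$ preserves cocartesian edges, $F_t\circ f_!\simeq f_!\circ F_s$ naturally, so the fibrewise identifications are compatible.

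After these identifications, the fibre over $f$ of the comparison map is exactly
\[
\mapp_{\mc{C}_t}(f_!\theta(\infty),c)\to \lim_k\mapp_{\mc{C}_t}(f_!\theta(k),c)\times_{\lim_k\mapp_{\mc{D}_t}(F_tf_!\theta(k),F_tc)}\mapp_{\mc{D}_t}(F_tf_!\theta(\infty),F_tc).
\]
This map is an equivalence precisely because $f_!\circ\theta$ is an $F_t$-colimit in $\mc{C}_t$. Ranging over all $f$ and all $c$ then gives the claim.

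The main obstacle is coherence rather than any hard computation. One has to make sure the fibrewise description of mapping spaces is functorial in the diagram variable, so that it commutes with $\lim_K$, and that it is compatible with $F$. To handle this, I would first replace $\mapp_{\mc{C}}(-,c)$ by the functor $\mc{C}_s^{\op{op}}\to\mc{S}_{/\mapp_{\mc{E}}(s,t)}$, then unstraighten over $\mapp_{\mc{E}}(s,t)$ using the functorial identification $(f_!)$ coming from a choice of cocartesian lifts. Alternatively, one can invoke \cite[Proposition 4.3.1.10]{htt}, which states this exact statement for $F=$ the projection to $\mc{E}$ (relative colimits in cocartesian fibrations computed fibrewise), and apply it to both $p$ and $q$. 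Then we pass to the relative statement using \cite[Proposition 4.3.1.5]{htt} on the transitivity of relative colimits along $\mc{C}\to\mc{D}\to\mc{E}$.
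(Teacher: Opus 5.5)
Your main argument is correct, and it is what the paper's one-line proof (``the relative version of \cite[Prop.~4.3.1.10]{htt}'') has in mind. You check the $F$-colimit condition on mapping spaces against each $c\in\mathcal{C}$, split over $\mathrm{Map}_{\mathcal{E}}(s,p(c))$ using cocartesian lifts, and identify the fibre over $f$ with the $F_t$-colimit condition for $f_!\circ\theta$. (The ``$F_s$'' in the statement should indeed read ``$F_t$'', as you tacitly assumed.)

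The coherence you worry about is supplied by the transformation $\theta\to f_!\circ\theta$ in $\mathrm{Fun}(K^{\triangleright},\mathcal{C})$ whose components are $p$-cocartesian over $f$. Since $F$ preserves cocartesian edges, its image under $F$ exhibits $F\circ f_!\circ\theta\simeq f_!\circ F\circ\theta$ compatibly.

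One small correction: not every space in your argument lives over $\mathrm{Map}_{\mathcal{E}}(s,t)$.
\begin{itemize}
\item The term $\lim_{K}\mathrm{Map}_{\mathcal{C}}(\theta(k),c)$ lies over $\lim_K\mathrm{Map}_{\mathcal{E}}(s,t)\simeq\mathrm{Map}_{\mathcal{E}}(s,t)^{|K|}$, not over $\mathrm{Map}_{\mathcal{E}}(s,t)$.
\item Only the source and target of the comparison map live over $\mathrm{Map}_{\mathcal{E}}(s,t)$; the target does so through its factor $\mathrm{Map}_{\mathcal{D}}(F\theta(\infty),Fc)$.
\item The fibre of the target over $f$ is obtained by taking fibres of the two limit terms over the constant family at $f$.
\end{itemize}
This yields exactly the fibre formula you display, so the conclusion stands.

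You should, however, discard the ``alternatively'' route. \cite[Proposition 4.3.1.5]{htt} lets you pass between $p$-colimits and $F$-colimits only once you know that $F\circ\theta$ is a $q$-colimit. By \cite[Proposition 4.3.1.10]{htt}, that means every $F_t(f_!\circ\theta)$ is a colimit in $\mathcal{D}_t$. The hypotheses give neither this nor that $\theta$ is a $p$-colimit.

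A concrete counterexample: take $\mathcal{E}=\ast$ and $F=\mathrm{id}$. Every $\theta$ satisfies the hypothesis, and the lemma holds, yet $\theta$ need not be a colimit, so the transitivity route cannot even start. That shortcut only covers the special case where the $f_!\circ\theta$ are genuine colimits preserved by $F_t$. Your fibrewise mapping-space argument is what proves the lemma as stated.
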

	\begin{proof}
		This is the relative version of \cite[Prop.  4.3.1.10]{htt}.
	\end{proof}
	\begin{cor}\label{cor8.13}
		Given a diagram of $\infty$-categories 
		$$
		\begin{tikzcd}
			\mc{C} \arrow[rr, "F"] \arrow[rd, "p"] &        & \mc{D} \arrow[ld, "q"'] \\
			& \mc{E} &                        
		\end{tikzcd}
		$$
		where $p,q$ are cocartesian fibrations and $F$ preserves cocartesian edges. Let $s\in \mc{E}$ and $\theta:x\to y$ be a morphism in the fiber $\mc{C}_s$. If for any morphism $f:s\to t\in \mc{E}$, the edge $f_!(\theta):f_!(x)\to f_!(y)$ is $F_s$-cocartesian, then $\theta$ is an $F$-cocartesian.
	\end{cor}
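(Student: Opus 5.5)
The plan is to deduce this directly from the preceding lemma by specializing to $K=\Delta^0$. Then $K^{\triangleright}\simeq\Delta^1$, and a diagram $\theta:K^{\triangleright}\to\mc{C}_s$ is the same thing as a morphism $\theta:x\to y$ in the fiber $\mc{C}_s$.

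The key input is the standard identification of relative colimits indexed by a point with cocartesian edges \cite[Example 4.3.1.4]{htt}. For any functor $G:\mc{X}\to\mc{Y}$, an edge $\Delta^1\simeq(\Delta^0)^{\triangleright}\to\mc{X}$ is a $G$-colimit diagram if and only if it is $G$-cocartesian. Both conditions say that the square
\[
\mc{X}_{y/}\longrightarrow \mc{X}_{x/}\times_{\mc{Y}_{G(x)/}}\mc{Y}_{G(y)/}
\]
is a trivial fibration, or equivalently that the corresponding mapping-space square is a pullback.

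The steps, in order, are as follows.

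First, fix $s\in\mc{E}$, a morphism $\theta:x\to y$ in $\mc{C}_s$, and a morphism $f:s\to t$ in $\mc{E}$. Since $F$ preserves cocartesian edges, it restricts to fiber functors $F_t:\mc{C}_t\to\mc{D}_t$ compatible with the transport functors $f_!$. The hypothesis says that $f_!(\theta):f_!(x)\to f_!(y)$ is cocartesian for the fiber functor $F_t:\mc{C}_t\to\mc{D}_t$; this is what ``$F_s$-cocartesian'' must mean, since $f_!\theta$ lives over $t$.

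Second, by the identification above, applied to $G=F_t$, the edge $f_!\circ\theta:(\Delta^0)^{\triangleright}\to\mc{C}_t$ is an $F_t$-colimit diagram for every $f:s\to t$.

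Third, the preceding lemma, applied with $K=\Delta^0$, shows that $\theta$ is an $F$-colimit diagram in $\mc{C}$. Applying the identification once more, now to $G=F$, we conclude that $\theta$ is $F$-cocartesian.

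The main point to check is that the identification of $(\Delta^0)^{\triangleright}$-indexed relative colimits with cocartesian edges is valid for an arbitrary functor, not just for inner fibrations. If one works with a fixed categorical-fibration model for $F$ (which may be assumed after replacing $F$ by an equivalent categorical fibration over $\mc{D}$), then \cite[Example 4.3.1.4]{htt} applies verbatim. Since both notions are invariant under such a replacement, this completes the argument.
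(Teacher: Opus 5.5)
Your proposal is correct and is the argument the paper intends: the statement is recorded, without a separate proof, as an immediate corollary of the preceding lemma (the relative version of \cite[Prop.~4.3.1.10]{htt}) with $K=\Delta^0$, using that $(\Delta^0)^{\triangleright}$-indexed relative colimits are exactly cocartesian edges. Reading ``$F_s$-cocartesian'' as cocartesian for the fiber functor $F_t$ over the target is the right correction of the paper's typo; strictly, the map in your display should have source $\mc{X}_{\theta/}$ rather than $\mc{X}_{y/}$, which is harmless since the two are equivalent.
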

	
	\begin{prop}\label{prop8.14}
		Let $f:A\to B \in \calg(\mc{A})\subset \mc{M}^T(\calg(\mc{A}))$ be a morphism of $\mathbb{E}_\infty$-algebras. Then $f$ is $F$-cocartesian if and only if $f$ is L-étale, where $F: \mc{M}^T(\calg(\mc{A}))\to \Delta^1\times \calg(\mc{A})$ is the natural projection.
	\end{prop}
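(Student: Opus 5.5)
The plan is to follow \cite[Prop. 7.3.3.14 / Lemma 7.4.3.?]{ha}, where Lurie identifies cocartesian edges of the tangent correspondence with maps having vanishing relative cotangent complex. The characterization of $F$-cocartesian edges from a morphism of $\calg(\mc{A})$ in the tangent correspondence $\mc{M}^T(\calg(\mc{A}))$ is purely formal: the fiber of $F$ over $1\in\Delta^1$ is $T_{\calg(\mc{A})}$, and by construction of $\mc{M}^T$ the $F$-cocartesian edges starting at an object $A$ of the $0$-fiber are exactly the edges $A\to L_A$ (the cotangent complex, regarded as an object of $\modu_A(\mc{A})\subset T_{\calg(\mc{A})}$). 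So, for a morphism $f:A\to B$ lying entirely in the $0$-fiber $\calg(\mc{A})$, I will first test cocartesianness against the composite $A\to B\to L_B$. The second edge is cocartesian over $0\to1$, while $f$ lies over $\id_0$.

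Concretely, I will use the standard criterion (\cite[Prop. 2.4.1.7]{htt}, together with \cref{cor8.13} to reduce to fiberwise checks). The claim is that $f$ is $F$-cocartesian if and only if, for every object $N\in T_{\calg(\mc{A})}$ lying over some $C\in\calg(\mc{A})$, precomposition with $f$ induces an equivalence
\[
\mapp_{\mc{M}^T}(B,N)\times_{\mapp(B,C)}\{g\}\to \mapp_{\mc{M}^T}(A,N)\times_{\mapp(A,C)}\{gf\}.
\]
Using the universal property of the cotangent complex, these fibers are $\mapp_{\modu_B}(L_B, g^*N)$ and $\mapp_{\modu_A}(L_A,(gf)^*N)\simeq\mapp_{\modu_B}(B\otimes_A L_A,g^*N)$. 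The comparison map is induced by the canonical map $B\otimes_A L_A\to L_B$. Ranging over all $C$, $g$ and $N$ (in particular taking $C=B$, $g=\id$), the map is an equivalence for all test objects if and only if $B\otimes_A L_A\to L_B$ is an equivalence, by Yoneda in $\modu_B(\mc{A})$.

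Finally, the cofiber sequence $B\otimes_A L_A\to L_B\to L_{B/A}$ (the transitivity sequence recalled earlier, applied to $\mb{1}\to A\to B$) shows that this map is an equivalence exactly when $L_{B/A}\simeq0$, i.e.\ when $f$ is L-étale.

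The main obstacle is the first identification. Mapping spaces in $\mc{M}^T(\calg(\mc{A}))$ from an object of the $0$-fiber to an object of the $1$-fiber must be corepresented by the cotangent complex, naturally in both variables, so that precomposition with $f$ corresponds to the map $B\otimes_A L_A\to L_B$. I would take this directly from \cite[Definition 7.3.6.9 and Prop. 7.3.6.?]{ha}. That construction is valid for any presentable $\infty$-category, so $\calg(\mc{A})$ qualifies, and no $t$-structure hypotheses are needed. One must also handle edges lying over $\id_0$ rather than over $0\to1$. Here, cocartesianness within the $0$-fiber is automatic, since $F$ restricted there is essentially the identity over $\calg(\mc{A})$. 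So only the test objects in the $1$-fiber carry content, which is exactly the computation above.
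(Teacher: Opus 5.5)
Your proposal is correct and is essentially the paper's argument. The paper applies \cref{cor8.13} to $\mc{M}^T(\calg(\mc{A}))\to\Delta^1\times\calg(\mc{A})$ over $\Delta^1$, so the only nontrivial check is that $L_A\to L_B$ is cocartesian over $f$ in $T_{\calg(\mc{A})}$, i.e.\ that $B\otimes_A L_A\to L_B$ is an equivalence; your mapping-space computation, Yoneda argument and cofiber sequence unwind exactly this, and they also make the ``only if'' direction explicit, whereas \cref{cor8.13} is stated only as a sufficient condition. One wording fix: the edges $A\to L_A$ are the cocartesian lifts of $(0\to1,\id_A)$, not all the $F$-cocartesian edges out of $A$, since the proposition itself exhibits others (the L-étale maps).
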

	\begin{proof}
		Applying \cref{cor8.13} to the following diagram, we win.
		$$
		\begin{tikzcd}
			\mc{M}^T(\calg(\mc{A})) \arrow[rr, "F"] \arrow[rd, "p"] &          & \Delta^1\times \calg(\mc{A}) \arrow[ld, "q"'] \\
			& \Delta^1 &                                             
		\end{tikzcd}
		$$
	\end{proof}
	Let $\operatorname{Der}^{L\text{-}et}\subset\operatorname{Der}$
	denote the subcategory whose objects are connective derivations and
	whose morphisms
	\[
	(\eta:A\to M[1])\longrightarrow(\eta':B\to N[1])
	\]
	are those for which $A\to B$ is $L$-étale and
	\[
	B\otimes_A M\xrightarrow{\sim}N.
	\]
	We similarly write
	$\operatorname{CAlg}(\mathcal A_{\geq0})^{L\text{-}et}$ for the
	subcategory of connective commutative algebras and $L$-étale
	morphisms.
	\begin{prop}\label{leftfib2}
		Let $f:\operatorname{Der} \rightarrow \calg(\mc{A})$ denote the forgetful functor $(\eta: A \rightarrow M) \mapsto A$. Then $f$ induces a left fibration $\operatorname{Der}^{L\text{-}et}  \rightarrow \calg(\mc{A}_{\geq0})^{L\text{-}et}$.
	\end{prop}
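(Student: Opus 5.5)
Observe first that this is the analogue of \cite[Proposition 7.4.1.?/Lemma 7.5.?]{ha} in the spectral setting: I will deduce it from the cocartesian characterization of L-étale morphisms in \cref{prop8.14}. The key point is that the forgetful functor $\operatorname{Der}\to\calg(\mc{A})$ factors through the tangent correspondence: a derivation $\eta:A\to M[1]$ is a morphism $L_A\to M[1]$ in $T_{\calg(\mc{A})}$, equivalently a morphism in $\mc{M}^T(\calg(\mc{A}))$ from $A$ (over $0\in\Delta^1$) to $M[1]$ (over $1$). Thus $\operatorname{Der}$ is the full subcategory of $\funct_{/\Delta^1}(\Delta^1,\mc{M}^T(\calg(\mc{A})))$ of sections, and the forgetful functor is evaluation at the vertex $0$.

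The plan is to verify the lifting criterion for a left fibration: for every L-étale map $g:A\to B$ between connective algebras and every connective derivation $\eta:A\to M[1]$, the space of lifts of $g$ starting at $\eta$ inside $\operatorname{Der}^{L\text{-}et}$ should be contractible, and every morphism of $\operatorname{Der}^{L\text{-}et}$ should be $f$-cocartesian (in fact any morphism over an equivalence is an equivalence). Concretely, a lift consists of $N$ together with $\eta':B\to N[1]$, a map $M\to N$ and a commuting square in $\mc{M}^T$; the condition $B\otimes_AM\simeq N$ pins down $N$, since the $\mc{M}^T$-cocartesian edge over $0\to 1$ out of $A$ is $A\to L_A$ and the cocartesian transport over $\operatorname{id}_1$ from $\modu_A$ to $\modu_B$ is base change. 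So I will show: given $g$ L-étale, the square
$$\begin{tikzcd} A \arrow[r,"g"]\arrow[d,"\eta"'] & B\arrow[d,"\eta'"]\\ M[1]\arrow[r] & B\otimes_AM[1]\end{tikzcd}$$
exists uniquely. By \cref{prop8.14}, $g$ is $F$-cocartesian for $F:\mc{M}^T\to\Delta^1\times\calg(\mc{A})$, so maps out of $B$ to $B\otimes_A M[1]$ lying over $(0\to1, \operatorname{id}_B)$ are equivalent to maps out of $A$ lying over $(0\to 1,g)$, i.e. to $A$-module maps $L_A\to B\otimes_AM[1]$ over... ; composing $\eta$ with the unit $M[1]\to B\otimes_AM[1]$ gives such a map, hence a contractible space of $\eta'$ making the square commute. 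This uses that the tangent correspondence edge $M[1]\to B\otimes_AM[1]$ is the $q$-cocartesian edge over $g$ (base change), which is the standard description of $T_{\calg(\mc{A})}\to\calg(\mc{A})$ as the cocartesian fibration $A\mapsto\modu_A(\mc{A})$.

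Finally I check closure: connectivity of $N=B\otimes_AM$ is automatic since $B,M$ connective; composites of L-étale maps are L-étale by \cref{lem8.12}(1), and the base-change condition is transitive, so $\operatorname{Der}^{L\text{-}et}$ is a subcategory and the lifted edges lie in it. Then the general criterion \cite[Proposition 2.4.2.4 / 2.1.?]{htt} (a functor whose morphisms are all cocartesian and that is an inner fibration with unique lifts is a left fibration) finishes. The main obstacle is bookkeeping the relative cocartesian statement: turning \cref{prop8.14} (cocartesian with respect to $F$, i.e. relative to $\Delta^1\times\calg$) into the contractibility of the lifting space in the section category; I would handle this via \cite[Proposition 4.3.1.10]{htt}-type relative (co)limit arguments as in \cref{cor8.13}, reducing to a pointwise statement in the fibers $\modu_B(\mc{A})$.
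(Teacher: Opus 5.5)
Your proposal is correct and follows essentially the same route as the paper. In both arguments, the existence of lifts comes from \cref{prop8.14} (L-\'etale maps are $F$-cocartesian in $\mc{M}^T(\calg(\mc{A}))$), together with the base-change edge $M[1]\to B\otimes_A M[1]$ being cocartesian in $T_{\calg(\mc{A})}$; closure of $\operatorname{Der}^{L\text{-}et}$ under composition comes from \cref{lem8.12}(1). The only difference is packaging: the paper fills horns directly in $\funct(\Delta^1,\mc{M}^T(\calg(\mc{A})))$ over $\funct(\Delta^1,\Delta^1\times\calg(\mc{A}))$, whereas you check the equivalent criterion of cocartesian edge-lifts with groupoid fibres. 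Note that the groupoid-fibre observation is exactly what upgrades your contractible space of lifts to the statement that every morphism is cocartesian, so it should be stated explicitly.
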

	\begin{proof}
		Fix $0\leq i < n$; we must show that every lifting problem of the form
		$$\begin{tikzcd}
			\Lambda^n_i \arrow[r] \arrow[d]            & \operatorname{Der}^{L\text{-}et} \arrow[d] \\
			\Delta^n \arrow[r] \arrow[ru, "l", dashed] & \calg(\mc{A}_{\geq0})^{L\text{-}et}        
		\end{tikzcd}$$
		admits a solution $l$. Considering the following diagram,
		
		$$
		\begin{tikzcd}
			\Lambda^n_i \arrow[r] \arrow[d]                                      & \operatorname{Der}^{L\text{-}et} \arrow[d] \arrow[r] & \op{Der} \arrow[d] \arrow[r] \arrow[rd, "\ulcorner", phantom, very near start] & {\op{Fun}(\Delta^1,\mc{M}^T(\calg(\mc{A})))} \arrow[d] \\
			\Delta^n \arrow[r] \arrow[ru, "l", dashed] \arrow[rru, "l'" pos=0.7, dashed] \arrow[rrru, "l^{''}" pos=0.7, dashed]& \calg(\mc{A}_{\geq0})^{L\text{-}et} \arrow[r]         & \calg(\mc{A}) \arrow[r]                           & {\op{Fun}(\Delta^1,\Delta^1\times \calg(\mc{A}))}     
		\end{tikzcd}
		$$
		then there exists a lifting $l^{''}$ by \cref{prop8.14}, and hence there exists a lifting $l^{'}$. We observe that $l'$ actually lies in $\operatorname{Der}^{L\text{-}et} $ by \cref{lem8.12}, hence we find a solution $l$.
	\end{proof}

	\subsection{Étale rigidity}
	
	We arrive at the main deformation theorem of this section: Étale Rigidity. We formally prove that in a projectively rigid $ttt$-$\infty$-category, the $\infty$-category of étale algebras  over a connective \ein-algebra is entirely equivalent to the ordinary category of discrete étale extensions over its $\pi_0$-truncation. This demonstrates that étale morphisms are topologically rigid and admit no higher homotopical deformations along the Postnikov tower.
	\begin{de}
		We say that a map $f:A\to B$ in $\calg(\mc{A})$ is étale if $f$ is flat and the map $\tau_{\geq 0}f:\tau_{\geq 0}A\to \tau_{\geq 0}B$ is L-étale and finitely presented.
		
	\end{de}
	\begin{rem}\label{exetalenotflt}
		One may naturally ask whether a finitely presented  L-étale morphism $f: A \to B\in\calg(\ageq)$ of connective \ein-algebras  is necessarily flat, which holds in the case of spectra (see \cite[Lemma B.1.3.3]{sag}). In other words, can the flatness condition in the definition of an étale morphism be omitted? 
		
		However, the answer is negative, even in the projectively rigid case. For example, consider the smashing localization$$ \mca = \opsp_{C_p} \to \opsp_{C_p}/\opsp^{BC_p} \simeq \opsp, $$where $\opsp_{C_p}$ denotes the \infcat of (genuine) $C_p$-equivariant spectra over the cyclic group of order $p$ for a prime $p$. The corresponding idempotent algebra $\mathbb{S}_{C_p} \to \mathbb{S}$ in $\calg(\opsp_{C_p,\geq0})$ is finitely presented and L-étale, but it is not flat.
	\end{rem}
	
	The main result in this subsection is the following theorem (see \cite[\textsection 7.5]{ha}  for the case of spectra).
	\begin{thm}[Étale rigidity]\label{etrig}
		Let $A\in \operatorname{CAlg}(\mathcal{A})$ be an \ein-algebra. Then:
		\begin{enumerate}[label=(\arabic*),font=\normalfont]
			
			\item Suppose that $\mc{A}$ is Grothendieck and left complete.  Let $\operatorname{CAlg}(\mathcal{A})_{A /}^{\text{fl},L\text{-}et}$ denote the full subcategory of $\operatorname{CAlg}(\mathcal{A})_{A /}$ spanned by the flat L-étale maps $A \rightarrow B$. If  $A$ is connective\footnote{The connectivity of $A$ can be removed if we replace the $L$-étale condition on $A\to B$ by the $L$-étale condition on the connective cover $\tau_{ \geq 0}A\to\tau_{ \geq 0} B$.}, then the functor $\pi_0$ induces an equivalence $$\operatorname{CAlg}(\mathcal{A})_{A /}^{\text{fl},L\text{-}et}\xrightarrow{\sim} \operatorname{CAlg}(\mathcal{A}^{\heartsuit})_{\pi_0A /}^{\text{fl},L\text{-}et}$$ with the (1-)category of the discrete flat L-étale commutative $\pi_0 A$-algebras.
			\item Suppose that the $ttt$-\infcat $(\mc{A}^\otimes, \mc{A}_{\geq0}) $ is projectively rigid. Let $\operatorname{CAlg}(\mathcal{A})_{A /}^{et}$ denote the full subcategory of $\operatorname{CAlg}(\mathcal{A})_{A /}$ spanned by the étale maps $A \rightarrow B$. Then the functor $\pi_0 $ induces an equivalence $$\operatorname{CAlg}(\mathcal{A})_{A /}^{et}\xrightarrow{\sim} \operatorname{CAlg}(\mathcal{A}^{\heartsuit})_{\pi_0A /}^{et}$$ with the (1-)category of the discrete étale commutative  $\pi_0 A$-algebras.
			
		\end{enumerate}
	\end{thm}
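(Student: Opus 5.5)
We follow Lurie's strategy in \cite[\S7.5]{ha}, replacing free modules by projective ones and using the deformation-theoretic results just established.

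For (1), first reduce to the case where $A$ is connective; then every flat $B$ over $A$ is connective. The functor $\pi_0$ factors through the Postnikov tower. Since $\mathcal{A}$ is left complete, $\operatorname{CAlg}(\mathcal{A}_{\geq0})\simeq\lim_n \operatorname{CAlg}(\mathcal{A}_{[0,n]})$. By \cref{flatpush}(2), flat maps are compatible with truncation: $\tau_{\leq n}B\simeq \tau_{\leq n}A\otimes_A B$. It therefore suffices to prove that for each $n\geq1$ the restriction
\[
\operatorname{CAlg}(\mathcal{A})^{\mathrm{fl},L\text{-}et}_{\tau_{\leq n}A/}\longrightarrow \operatorname{CAlg}(\mathcal{A})^{\mathrm{fl},L\text{-}et}_{\tau_{\leq n-1}A/},\qquad B\mapsto \tau_{\leq n-1}A\otimes_{\tau_{\leq n}A}B,
\]
is an equivalence.

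The key input is that $\tau_{\leq n}A\to\tau_{\leq n-1}A$ is a square-zero extension $(\tau_{\leq n-1}A)^\eta$ for a derivation $\eta:\tau_{\leq n-1}A\to \pi_nA[n+1]$; this follows from the Postnikov-tower result \cite[Cor.~7.4.1.28]{ha}, whose proof only uses the cotangent-complex formalism and connectivity estimates. Given a flat L-étale $B_0$ over $\tau_{\leq n-1}A$, \cref{leftfib2} gives a left fibration $\operatorname{Der}^{L\text{-}et}\to\operatorname{CAlg}(\mathcal{A}_{\geq0})^{L\text{-}et}$. This lets us transport $\eta$ uniquely (up to contractible choice) to a derivation $\eta_B:B_0\to B_0\otimes \pi_nA[n+1]$. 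By \cref{cor8.12}, which uses hypercompleteness (a consequence of left completeness), this identifies the relevant slice categories of square-zero extensions. So $B_0\mapsto B_0^{\eta_B}$ provides the inverse, provided $B_0^{\eta_B}$ is flat and L-étale over $\tau_{\leq n}A$ and every such $B$ arises this way.

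L-étaleness follows by base change of the cotangent complex. For flatness, we use \cref{corB1.4.3}-type arguments together with \cref{l7}: tensoring with a discrete module factors through $\tau_{\leq n-1}A$ or $\pi_nA$, and on those pieces the module is flat. Full faithfulness follows because mapping spaces of $L$-étale algebras are computed via the left fibration, and are discrete at the bottom stage because maps between flat connective algebras over $\pi_0A$ are determined by $\pi_0$ (\cref{pi0flat}(2)). The step I expect to be the main obstacle is showing that every flat L-étale $\tau_{\leq n}A$-algebra $B$ is the square-zero extension of its truncation, i.e.\ that the square formed by $B\to\tau_{\leq n-1}B$ over $\tau_{\leq n}A\to\tau_{\leq n-1}A$ is a pushout. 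I would use flatness ($\tau_{\leq n-1}B\simeq \tau_{\leq n-1}A\otimes B$) together with \cref{5.6}/\cref{5.8} to identify $B$ with $(\tau_{\leq n-1}B)^{\eta_B}$.

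For (2), first note that projective rigidity implies Grothendieck and left completeness (remark after condition (*)). Reduce to connective $A$ via \cref{flatpush}(1), since étaleness is defined through $\tau_{\geq0}$ and base change along $\tau_{\geq0}A\to A$ is an equivalence on flat modules (\cref{l5}). Part (1) then gives the equivalence on flat L-étale algebras, so it remains to match finite presentation. If $B$ is flat and L-étale with $\pi_0B$ finitely presented over $\pi_0A$, then by \cref{appro} choose $B'$ finitely presented with $\pi_0B'\simeq\pi_0B$. Then compare via the almost-finite-presentation criterion: for each $n$, $\tau_{\leq n}B$ is obtained from $\pi_0B$ by unique square-zero lifts, so it is compact in the truncated categories. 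Finally, $L_{B/A}=0$ upgrades almost finite presentation to finite presentation, as in \cite[Thm.~7.4.3.18]{ha}, using \cref{toram} to see that a vanishing cotangent complex is perfect. The converse direction is immediate, since $\pi_0$ of a compact algebra is compact.
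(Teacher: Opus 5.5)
Your part (1) follows the paper's argument. You induct along the Postnikov tower of $A$ and treat each step $\tau_{\leq n}A\to\tau_{\leq n-1}A$ as a square-zero extension via \cref{leftfib2} and \cref{cor8.12}, transferring flatness and L-étaleness; this is \cref{sqet}, \cref{prop8.22} and \cref{truneq}. The step you flag as the main obstacle is exactly what \cref{cor8.12} together with \cref{flatpush}(2) provides.

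Two ingredients are missing, however. First, your ``it therefore suffices'' uses only that truncations of a flat L-étale $B$ are flat, L-étale and compatible. For essential surjectivity onto the limit of the tower you also need the converse: $B=\lim_n B_n$ is flat and L-étale over $A$ when each $B_n\simeq\tau_{\leq n}B$ is so over $\tau_{\leq n}A$. This is \cref{trun1}, and it is not formal; it needs the connectivity estimate \cref{ha74} and hypercompleteness. Second, base change of cotangent complexes only yields $B_0\otimes_{B}L_{B/\tau_{\leq n}A}\simeq 0$ for the lift $B=B_0^{\eta_B}$. To conclude $L_{B/\tau_{\leq n}A}\simeq0$ you must invoke faithfulness of the square-zero extension (\cref{sq0faith}, or \cref{naka} since this module is connective), as in \cref{prop8.22}(1). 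Also, the bottom stage is a $1$-category simply because flat algebras over the discrete $\pi_0A$ are discrete; \cref{pi0flat}(2) is not the relevant input.

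For (2) you take a different route. The paper reruns the tower argument with ``étale'' in place of ``flat L-étale''. That requires \cref{trun2} (hence \cref{lfinite}) and the lifting of finite presentation along square-zero extensions in \cref{prop8.22}(3). You instead restrict the equivalence of (1), checking that for flat L-étale $A\to B$ finite presentation is detected on $\pi_0$. This is correct and more economical, but your sketch of the hard direction is muddled: the $B'$ from \cref{appro} is never used, and ``unique square-zero lifts'' has to be turned into a statement about mapping spaces.

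The clean version uses \cref{mappetale}, which gives $\operatorname{Map}_{\operatorname{CAlg}(\mathcal{A})_{A/}}(B,C)\simeq\operatorname{Hom}_{\operatorname{CAlg}(\mathcal{A}^{\heartsuit})_{\pi_0A/}}(\pi_0B,\pi_0C)$ naturally in connective $C$. Since $\pi_0$ is left adjoint to the inclusion of discrete algebras, it preserves filtered colimits. Hence $B$ is compact in $\operatorname{CAlg}(\mathcal{A}_{\geq0})_{A/}$ if and only if $\pi_0B$ is compact among discrete $\pi_0A$-algebras. The same argument for $\pi_0A\to\pi_0B$ matches this with finite presentation in the derived sense on the heart side. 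This proves compactness of $B$ directly, so neither almost finite presentation nor the $L_{B/A}=0$ upgrade is needed; the converse of \cref{lfinite}(1) with $L_{B/A}=0$ would also do it in one line.

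What this buys over the paper's route is that it bypasses \cref{lfinite}, which is where projective rigidity is really used. Done this way, (2) follows from (1) using only the Grothendieck and left-completeness hypotheses.
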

	\begin{rem}
		The flat condition in the heart $\calg(\mca^\heartsuit)$ above should be understood as the (derived) flatness in the sense of \cref{ncflat}, which is generally a stronger condition than the 1-flatness (see \cref{differenceflat}).
	\end{rem}
	\begin{ex}[Dirac geometry]
		For any \einfring $R\in \calg(\opsp)$, we have the following commutative diagram:
		$$\begin{tikzcd}
			\operatorname{CAlg}\left(\operatorname{Mod}_R(\mathrm{Sp})\right)^{\op{D}\text{-ét}} \arrow[r] \arrow[d, "\sim", "\nu"'] & \mathrm{CAlg}\left(\operatorname{Mod}_{\pi_*R}(\mathrm{Ab})\right)^{\op{D}\text{-ét}} \arrow[d, "\sim"]         \\
			\operatorname{CAlg}(\syn_R)^{\text{ét}} \arrow[r, "\sim"]    & \operatorname{CAlg}(\syn_R^\heartsuit)^{\text{ét}}
		\end{tikzcd}$$
		where the bottom equivalence follows from \cref{etrig}(2). That recovers the étale rigidity theorem in Dirac geometry \cite[Theorem 4.33]{hesselholt2023dirac}. A more detailed study of synthetic higher algebra will appear in \cite{synha}.
	\end{ex}
	One consequence is the following identification between flat idempotent algebras.
	\begin{cor}
		Suppose that $\mc{A}$ is Grothendieck and left complete. Let $A\in \operatorname{CAlg}(\mathcal{A})$. Then the functor $\pi_0$ induces an equivalence 
		$$ \operatorname{CAlg}(\mathcal{A})_{A /}^{\text{fl},\op{idem}} \xrightarrow{\sim} \operatorname{CAlg}(\mathcal{A}^{\heartsuit})_{\pi_0 A /}^{\text{fl},\op{idem}} $$ 
		of categories of flat idempotent \ein-algebras.
	\end{cor}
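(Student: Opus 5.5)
The corollary should follow from \cref{etrig}(1) once we show that both sides are full subcategories of the étale-rigidity categories, and that $\pi_0$ matches them up. The plan has three steps.

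\textbf{Step 1 (both sides are flat L-étale).} By the Proposition just before \cref{exetalenotflt}, every idempotent map $A\to B$ is L-étale. Hence $\operatorname{CAlg}(\mathcal{A})_{A/}^{\text{fl},\op{idem}}$ sits as a full subcategory of $\operatorname{CAlg}(\mathcal{A})_{A/}^{\text{fl},L\text{-}et}$. The same holds for the idempotent flat algebras over $\pi_0A$ in the heart, read inside $\mathcal{A}$.

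\textbf{Step 2 (reduce to connective $A$).} By \cref{flatpush}(1), a flat $A\to B$ is the base change of the flat map $\tau_{\geq0}A\to\tau_{\geq0}B$. Moreover, $B\otimes_AB\simeq A\otimes_{\tau_{\geq0}A}(\tau_{\geq0}B\otimes_{\tau_{\geq0}A}\tau_{\geq0}B)$. By \cref{l5}(3), base change is an equivalence on flat modules. Therefore $B$ is idempotent over $A$ if and only if $\tau_{\geq0}B$ is idempotent over $\tau_{\geq0}A$. This lets us replace $A$ by $\tau_{\geq0}A$ and assume $A$ connective, so that \cref{etrig}(1) applies.

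\textbf{Step 3 (idempotence is detected on $\pi_0$).} We must show that for flat $A\to B$ with $A$ connective, $B$ is idempotent if and only if $\pi_0B$ is idempotent over $\pi_0A$ in the derived sense. Since $B$ is flat, $B\otimes_AB$ is flat over $A$, so the multiplication $B\otimes_AB\to B$ is a map between flat $A$-modules.

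Left completeness implies hypercompleteness. So by \cref{pi0flat}(2), the multiplication is an equivalence exactly when its $\pi_0$ is an isomorphism. By \cref{pi0flat}(3), or flat base change, that $\pi_0$ identifies with $\pi_0B\otimes_{\pi_0A}\pi_0B\to\pi_0B$. This is exactly the idempotence of $\pi_0B$ in the heart, computed with the derived tensor product over $\pi_0A$; the two agree because $\pi_0B$ is flat over $\pi_0A$ by \cref{flatpush}(2).

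\textbf{Conclusion.} Restricting the equivalence of \cref{etrig}(1) to these full subcategories, which correspond to each other by Step 3, gives the claim. Essential surjectivity comes from lifting a discrete flat idempotent algebra, which is L-étale, and then noting that the lift is idempotent by Step 3.

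The main obstacle is Step 3: justifying that the derived tensor $\pi_0B\otimes_{\pi_0A}\pi_0B$ is discrete and equals $\pi_0(B\otimes_AB)$. This rests on flatness being preserved by the truncation $\tau_{\leq0}$, which is \cref{flatpush}(2).
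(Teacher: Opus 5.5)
Your proposal is correct and follows essentially the same route as the paper. Both arguments reduce to connective $A$ via \cref{l5}(3), observe that idempotent maps are L-étale so that both sides sit as full subcategories of the categories in \cref{etrig}(1), and detect idempotence on $\pi_0$ via \cref{pi0flat}(2). Your Step 3 additionally spells out, via \cref{pi0flat}(3) and \cref{flatpush}(2), why the heart and derived notions of idempotence agree under flatness, a point the paper leaves implicit.
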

	
	\begin{proof}
		Note that flat idempotent $\mathbb{E}_\infty$-algebras over $A$ can be identified with flat idempotent $\mathbb{E}_\infty$-algebras over $\tau_{\geq 0}A$, because by \cref{l5}(3), there is a natural equivalence of symmetric monoidal \infcats $\mathrm{Mod}_{\tau_{\geq 0}A}(\mathcal{A})^{fl,\otimes} \xrightarrow{\sim} \mathrm{Mod}_{A}(\mathcal{A})^{fl,\otimes}$. It implies that their connective covers are idempotent, and hence L-étale over $\tau_{\geq 0}A$. Furthermore, under the flatness assumption, an idempotent algebra in the heart $\mathrm{CAlg}(\mathcal{A}^\heartsuit)$ is idempotent in $\mathrm{CAlg}(\mathcal{A})$ and is therefore also L-étale. By \cref{etrig}(1), it suffices to show that for any flat morphism $A \to B$ in $\mathrm{CAlg}(\mathcal{A})$ such that $\tau_{\geq 0}A \to \tau_{\geq 0}B$ is L-étale and $\pi_0 B \otimes_{\pi_0 A} \pi_0 B \simeq \pi_0 B$, we have $B \otimes_A B \simeq B$. This, however, follows directly from \cref{pi0flat}(2).
	\end{proof}

	The proof of \cref{etrig} will occupy the remainder of this section.

	\begin{prop}\label{prop8.22}
		Let $\eta: A \rightarrow M \in \operatorname{Der}^+$ and $\alpha:\tilde{A}\to A$ be the induced square-zero extension in $\calg(\mc{A}_{\geq0})$. Now given a pushout diagram in $\calg(\mc{A}_{\geq0})$.
		$$
		\begin{tikzcd}
			\tilde{A} \arrow[d, "f_0'"] \arrow[r, "\alpha"] & A \arrow[d, "f_0"] \\
			\tilde{B} \arrow[r]                   & B                 
		\end{tikzcd}
		$$
		Then:
		\enu{\item $f_0'$ is L-étale if and only if $f_0$ is L-étale. 
			\item Assume that $\mc{A}$ is Grothendieck. Then $f_0'$ is flat if and only if $f_0$ is flat. 
			\item If $f_0'$ is L-étale, then $f_0'$ is locally of finite presentation if and only if $f_0$ is locally of finite presentation.
			
		}
		
	\end{prop}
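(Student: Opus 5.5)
For each of the three parts we compare the two sides of the pushout square. The guiding principle is that $\alpha$ is a square-zero extension. By \cref{sq0faith} it is faithful, and by \cref{naka} it is boundedly faithful, because its kernel on $\pi_0$ is the image of $\pi_0 M$ and has square zero. Since the square is a pushout, $B\simeq A\otimes_{\tilde A}\tilde B$, and base change along $\alpha$ converts statements about $f_0'$ into statements about $f_0$.

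For (1), the base change formula for cotangent complexes gives $L_{B/A}\simeq B\otimes_{\tilde B}L_{\tilde B/\tilde A}$. If $f_0'$ is L-étale, this formula shows immediately that $f_0$ is L-étale. For the converse, we observe that $B\otimes_{\tilde B}(-)$ is base change along $\tilde B\to B$, which is itself the square-zero extension obtained by pulling back $\eta$ (of $\tilde B$ by $B\otimes_A M$). By \cref{sq0faith} this base change is conservative, so $L_{\tilde B/\tilde A}=0$.

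For (2), the direction "$f_0'$ flat implies $f_0$ flat" is \cref{flatpush}(3). For the converse, we take a discrete $\tilde A$-module $N$ and show that $N\otimes_{\tilde A}\tilde B$ is discrete; by \cref{l7} this suffices. We use the fiber sequence $M\to\tilde A\to A$ of $\tilde A$-modules. Tensoring with $N$ gives a filtration of $N$ whose graded pieces admit $A$-module structures: $N$ is an extension of $\pi_0(A\otimes_{\tilde A}N)$ by the image of $M$, and each of these is killed by the square-zero ideal. We then reduce to discrete $N$ that are $\pi_0A$-modules, that is, discrete $A$-modules. For such $N$ we have $N\otimes_{\tilde A}\tilde B\simeq N\otimes_A B$, which is discrete because $f_0$ is flat. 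The argument then runs as follows. Since $\pi_0\tilde A\to\pi_0A$ is surjective with kernel $I$ satisfying $I^2=0$, every discrete $\tilde A$-module $N$ sits in an extension $IN\to N\to N/IN$ in which both ends are discrete $\pi_0 A$-modules. Using exactness of $(-)\otimes_{\tilde A}\tilde B$ and the long exact sequence, $N\otimes_{\tilde A}\tilde B$ is discrete. Here we also need connectivity of $\tilde B$, which holds because both $\tilde B$ and the relevant tensor products are connective.

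For (3), the direction "$f_0'$ finitely presented implies $f_0$ finitely presented" is clear, since compactness is preserved by the base change functor $A\otimes_{\tilde A}(-)$, whose right adjoint preserves filtered colimits. The converse is the main obstacle. We plan to follow \cite[Lemma 7.4.3.14]{ha}-style deformation arguments, using \cref{leftfib2} together with \cref{cor8.12}. L-étale maps out of $A$ lift uniquely along the square-zero extension, because the relevant fibration is a left fibration. Concretely, \cref{cor8.12} identifies connective $\tilde A$-algebras with derivations under $\eta$, and L-étaleness makes $\mapp_{\tilde A}(\tilde B,\tilde C)$ equivalent to $\mapp_A(B,A\otimes_{\tilde A}\tilde C)$ times a space of compatible derivation data that is contractible, because $L_{\tilde B/\tilde A}=0$. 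We then write $\tilde C$ as a pullback $\tilde C\simeq C\times_{C\oplus C\otimes M[1]}C$. Since $L_{\tilde B/\tilde A}=0$, the mapping spaces from $\tilde B$ into the two trivial square-zero extensions agree with maps into $C$. This expresses $\mapp_{\tilde A}(\tilde B,-)$ as a finite limit of functors of the form $\mapp_A(B,A\otimes_{\tilde A}-)$. Each of these commutes with filtered colimits by compactness of $B$, and finite limits commute with filtered colimits in spaces, so $\tilde B$ is compact.
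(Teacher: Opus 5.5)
Your proposal is correct. For (1) and (2) it is essentially the paper's argument; the genuine difference is in (3).

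In (1), the paper does not need $\tilde B\to B$ to be a square-zero extension. It writes $L_{B/A}\simeq B\otimes_{\tilde B}L_{\tilde B/\tilde A}\simeq A\otimes_{\tilde A}L_{\tilde B/\tilde A}$ and applies \cref{sq0faith} to $\alpha$ itself, which is slightly more economical than your detour. Your claim is true, but it needs a one-line proof, as does the decomposition of $\tilde C$ you use in (3). Apply $-\otimes_{\tilde A}\tilde C$, which is exact on underlying modules, to the pullback square $\tilde A\simeq A\times_{A\oplus M[1]}A$, and use that the forgetful functor out of $\calg(\mc{A})$ preserves and detects limits. This gives $\tilde C\simeq C\times_E C$ with $C=A\otimes_{\tilde A}\tilde C$ and $E=(A\oplus M[1])\otimes_{\tilde A}\tilde C$, for every $\tilde C$, without any hypercompleteness. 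Part (2) is exactly the paper's reduction via $IN\subset N$ combined with \cref{l7}.

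For the converse in (3), the paper gives no argument; it only says the proof is parallel to \cite[Lem.~2.5.4]{dag13}. You instead give a self-contained mapping-space computation. Write $\mathrm{Map}_{\tilde A}$ and $\mathrm{Map}_A$ for mapping spaces in $\calg(\mc{A}_{\geq0})_{\tilde A/}$ and $\calg(\mc{A}_{\geq0})_{A/}$. Since $C$ and $E$ are restricted from $A$-algebras (via $d_0$ for $E$), each vertex of
$\mathrm{Map}_{\tilde A}(\tilde B,C)\times_{\mathrm{Map}_{\tilde A}(\tilde B,E)}\mathrm{Map}_{\tilde A}(\tilde B,C)$
has the form $\mathrm{Map}_A(B,F(\tilde C))$, where $F$ preserves filtered colimits. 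The vanishing $L_{\tilde B/\tilde A}=0$ then collapses the pullback to $\mathrm{Map}_A(B,A\otimes_{\tilde A}\tilde C)$.

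This route stays inside the paper's setting and uses none of the Grothendieck, hypercompleteness or rigidity hypotheses. In fact, every vertex already commutes with filtered colimits, so the L-\'etale hypothesis only simplifies the formula and is not needed for the compactness conclusion.

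You should drop the appeal to \cref{cor8.12} and \cref{leftfib2}. \cref{cor8.12} assumes $\mc{A}$ is hypercomplete, which the proposition does not, and your concrete pullback argument never uses either result.
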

	\begin{proof}
		(1) The ``only if'' direction is obvious. The ``if'' direction follows from the equivalences $$L_{B/A}\simeq B\otimes_{\tilde{B}}L_{\tilde{B}/\tilde{A}}\simeq A\otimes_{\tilde{A}}L_{\tilde{B}/\tilde{A}}$$ and  \cref{sq0faith}.
		\\(2) The ``only if'' direction is obvious. For the converse, suppose that $B$ is flat over $A$ : it suffices to show that for every discrete $\widetilde{A}$-module $N$, the relative tensor product $\widetilde{B} \otimes_{\widetilde{A}} N$ is discrete by \cref{l7}(3). To prove this, let $I \subset \pi_0 \widetilde{A}$ be the kernel of the surjective map $\pi_0 \widetilde{A} \rightarrow \pi_0 A$, so that we have a short exact sequence of modules over $\pi_0 \widetilde{A}$ :
		$$
		0 \rightarrow I N \rightarrow N \rightarrow N / I N \rightarrow 0
		$$
		It will therefore suffice to show that the tensor products $\widetilde{B} \otimes_{\widetilde{A}} I N$ and $\widetilde{B} \otimes_{\widetilde{A}} N / I N$ are discrete. Replacing $N$ by $I N$ or $N / I N$, we can reduce to the case where $I N=0$, so that $N$ has the structure of an $A$-module. Then $\widetilde{B} \otimes_{\widetilde{A}} N \simeq B \otimes_A N$ is discrete by virtue of the assumption that $B$ is flat over $A$.\\
		(3) The proof is parallel to that of \cite[Lem.~2.5.4]{dag13}.
	\end{proof}
	
	\begin{prop}\label{sqet}
		Assume that \mca is hypercomplete.	Let $A \in \calg(\mc{A}_{\geq0})$, $M$ be a connective $A$-module, and $\eta: A \rightarrow M[1]$ be a derivation. Then the square-zero extension $\tilde{A}\to A$ induces an equivalence $$(-)\otimes_{\tilde{A}}A: \calg(\mc{A}_{\geq0})_{\tilde{A}/}^{L\text{-}et}\xrightarrow{\sim} \calg(\mc{A}_{\geq0})_{A/}^{L\text{-}et}$$
		between $\infty$-categories of connective L-étale commutative  $\tilde{A}$-algebras and  $A$-algebras.
	\end{prop}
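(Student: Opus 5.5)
We will follow the strategy of \cite[Prop.~7.5.0.6 / Thm.~7.5.1.11]{ha}, replacing the spectral input with the left fibration \cref{leftfib2} and the description of square-zero extensions in \cref{cor8.12}. The functor $(-)\otimes_{\tilde{A}}A$ is well defined, because L-étale maps are stable under base change: the cotangent complex satisfies $L_{B/A}\simeq B\otimes_{\tilde B}L_{\tilde B/\tilde A}$. It also preserves connectivity. We will show it is fully faithful and essentially surjective.

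\emph{Essential surjectivity.} Let $f_0:A\to B$ be a connective L-étale map. By \cref{leftfib2}, the forgetful functor $\operatorname{Der}^{L\text{-}et}\to\calg(\mc{A}_{\geq0})^{L\text{-}et}$ is a left fibration. We may therefore lift $f_0$ to a morphism $(\eta:A\to M[1])\to(\eta':B\to (B\otimes_A M)[1])$ in $\operatorname{Der}^{L\text{-}et}$. Concretely, $\eta'$ is the composite $L_B\simeq B\otimes_AL_A\to B\otimes_AM[1]$, where the first equivalence uses $L_{B/A}=0$. Set $\tilde{B}=B^{\eta'}$. The remaining work is to check that the induced square
$$\begin{tikzcd}\tilde A\arrow[r]\arrow[d]&A\arrow[d]\\ \tilde B\arrow[r]&B\end{tikzcd}$$
is a pushout. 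For this we use \cref{5.8}: $\Phi^+$ is a left fibration, and a morphism in $\operatorname{Der}^+$ of this form is sent to a morphism of $\operatorname{Fun}^+$, that is, to a pushout square. We also need $\tilde B$ to be connective and the map $\pi_0\tilde B\to\pi_0 B$ to be epimorphic. Both hold because $B\otimes_AM$ is connective. Finally, \cref{prop8.22}(1) shows that $\tilde A\to\tilde B$ is L-étale.

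\emph{Full faithfulness.} Take L-étale $\tilde A$-algebras $\tilde B,\tilde C$ and write $B,C$ for their base changes. We must show
$$\mapp_{\tilde A/}(\tilde B,\tilde C)\to\mapp_{A/}(B,C)\simeq\mapp_{\tilde A/}(\tilde B,C)$$
is an equivalence. By \cref{cor8.12}, $\tilde C\simeq C^{\eta_C}$ is a square-zero extension of $C$ by $C\otimes_A M$. A lift of a map $\tilde B\to C$ along $\tilde C\to C$ is then a nullhomotopy of a derivation, namely a point of the fiber of
$$\mapp_{\tilde B}(L_{\tilde B},C\otimes_AM[1])\to\mapp_{\tilde A}(L_{\tilde A},C\otimes_AM[1]).$$
Since $L_{\tilde B/\tilde A}=0$, this map is an equivalence, so the space of lifts compatible with the $\tilde A$-structure is contractible. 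Alternatively, the same conclusion follows formally from \cref{leftfib2}: both the $\tilde A$-structure and the derivation on $C$ are pulled back, and the fibers of a left fibration are rigid along L-étale edges.

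The main obstacle is essential surjectivity, specifically identifying the square as a pushout. That step is exactly where hypercompleteness enters, through \cref{5.6} and \cref{5.8}. The care needed is in matching the lift in $\operatorname{Der}^{L\text{-}et}$ with a morphism in $\widetilde{\operatorname{Der}}^+$, so that $\Phi^+$ applies.
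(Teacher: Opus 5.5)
Your argument is correct. The essential surjectivity step is exactly the paper's argument, unpacked. The paper considers the span
$\calg(\mc{A}_{\geq0})_{\tilde{A}/}^{L\text{-}et}\xleftarrow{\Phi_0'}\operatorname{Der}^{L\text{-}et}_{\eta/}\xrightarrow{\Phi_1'}\calg(\mc{A}_{\geq0})_{A/}^{L\text{-}et}$.
It notes that $\Phi$ identifies $\Phi_1'$ with $(-)\otimes_{\tilde A}A\circ\Phi_0'$, which is your pushout-square observation via \cref{5.8}. It then proves that both legs are equivalences:
\begin{itemize}
\item $\Phi_0'$, by \cref{cor8.12} together with \cref{prop8.22}(1);
\item $\Phi_1'$, because it is the map on undercategories induced by the left fibration of \cref{leftfib2}, hence a trivial fibration.
\end{itemize}
Full faithfulness then follows for free by two-out-of-three.

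You instead prove full faithfulness by hand. You use \cref{cor8.12} only to present $\tilde C$ as the square-zero extension $C^{\eta_C}$. You then compute the space of $\tilde A$-algebra lifts of $g:\tilde B\to C$ along $\tilde C\to C$ through cotangent complexes, as in the proof of \cref{mappetale}. The paper's route is shorter and purely formal once the two fibration statements are available. Yours makes the deformation-theoretic content visible and shows directly why L-étaleness of $\tilde A\to\tilde B$ kills all lifting data, at the cost of some bookkeeping.

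On that bookkeeping, write $N=C\otimes_AM$. The space of lifts is not literally a point in the fiber of $\mapp_{\tilde B}(L_{\tilde B},N[1])\to\mapp_{\tilde A}(L_{\tilde A},N[1])$. It is the space of paths from $\eta_C\circ g$ to the zero derivation lying over the path prescribed by the structure map $\tilde A\to\tilde C$. Since that map is an equivalence when $L_{\tilde B/\tilde A}\simeq0$, this path space is contractible, so your conclusion stands.

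Two smaller remarks:
\begin{itemize}
\item Hypercompleteness also enters your full faithfulness step, through \cref{cor8.12}, not only the pushout identification.
\item Your ``alternatively'' sentence is too vague to stand on its own. Its precise form is the paper's observation that $\operatorname{Der}^{L\text{-}et}_{\eta/}\to\calg(\mc{A}_{\geq0})^{L\text{-}et}_{A/}$ is a trivial fibration, combined with \cref{cor8.12} restricted to the full subcategory $\operatorname{Der}^{L\text{-}et}_{\eta/}\subset\operatorname{Der}^{+}_{\eta/}$.
\end{itemize}
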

	\begin{proof}
		Any square-zero extension $\widetilde{A} \rightarrow A$ is associated to some derivation $(\eta: A \rightarrow$ $M) \in \operatorname{Der}^{L\text{-}et}$. Let $\Phi: \operatorname{Der} \simeq \widetilde{\operatorname{Der}} \rightarrow \operatorname{Fun}(\Delta^1, \calg(\mc{A}))$ be the functor defined in \cref{5.8}. Let $\Phi_0, \Phi_1:$ $\operatorname{Der} \rightarrow \calg(\mc{A})$ denote the composition of $\Phi$ with evaluation at the vertices $\{0\},\{1\} \in \Delta^1$. The functors $\Phi_0$ and $\Phi_1$ induce maps
		$$
		\calg(\mc{A}_{\geq0})_{\tilde{A}/}^{L\text{-}et} \xleftarrow{\Phi_0^{\prime}} \operatorname{Der}^{L\text{-}et} \xrightarrow{\Phi_1^{\prime}}\calg(\mc{A}_{\geq0})_{A/}^{L\text{-}et}
		$$
		Moreover, the functor $\Phi$ exhibits $\Phi_1^{\prime}$ as equivalent to the composition of $\Phi_0^{\prime}$ with the relative tensor product $\otimes_{\widetilde{A}} A$. Consequently, it will suffice to prove the following:
		\enu{
			\item  The functor $\Phi_0^{\prime}$ is fully faithful, and its essential image consists precisely of the connective L-étale commutative $\widetilde{A}$-algebras.
			\item  The functor $\Phi_1^{\prime}$ is fully faithful, and its essential image consists precisely of the connective L-étale commutative $A$-algebras.
		}
		The (1) follows from \cref{cor8.12} and \cref{prop8.22} (1). And the (2) follows from \cref{leftfib2}.
	\end{proof}
	We will also use Lurie's classification of small extensions
	\cite[Theorem 7.4.1.26]{ha}. In particular, every $n$-small
	extension of connective commutative algebras is a square-zero
	extension. Since the successive maps in the Postnikov tower are
	small extensions, we obtain:
	
	\begin{lem}[{\cite[Corollary 7.4.1.28]{ha}}]\label{postnikovsquarezero}
		Let $A\in\operatorname{CAlg}(\mathcal A_{\geq0})$. Then every
		map in the Postnikov tower
		\[
		\cdots\longrightarrow\tau_{\leq2}A
		\longrightarrow\tau_{\leq1}A
		\longrightarrow\tau_{\leq0}A
		\]
		is a square-zero extension.
	\end{lem}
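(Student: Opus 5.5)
The plan is to run Lurie's argument for \cite[Corollary 7.4.1.28]{ha} inside $\calg(\ageq)$, checking that each input survives in the $ttt$-setting. Fix $n\geq1$ and let $\phi:\tau_{\leq n}A\to\tau_{\leq n-1}A$ be the truncation map. Its fiber is $I=(\pi_nA)[n]$, regarded as a $\tau_{\leq n}A$-module.

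\textbf{Step 1: $\phi$ is an $n$-small extension.} Since $I$ is $n$-connective and $t$-compatibility of $\otimes$ gives that $I\otimes_{\tau_{\leq n}A}I$ is $2n$-connective, the multiplication $I\otimes I\to I$ is nullhomotopic. Indeed, $I$ is $n$-truncated and $2n>n$ for $n\geq1$. Also $\pi_0\phi$ is an isomorphism. So the square-zero condition holds up to homotopy, and $\phi$ is an $n$-small extension in the sense of \cite[Definition 7.4.1.18]{ha}. This step is only a connectivity count.

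\textbf{Step 2: connectivity estimate for the cotangent complex.} For a map $f:\tilde A\to B$ of connective algebras with $\operatorname{cofib}(f)$ $m$-connective ($m\geq 1$), the comparison map
\[
B\otimes_{\tilde A}\operatorname{cofib}(f)\longrightarrow L_{B/\tilde A}
\]
should have $(m+2)$-connective cofiber; for $\phi$ itself, $\operatorname{cofib}(\phi)=\Sigma I$ and $m=n+1$. Following \cite[Theorem 7.4.3.1]{ha}, I would first treat free algebras $\operatorname{Sym}^*(X)$ with $X$ $m$-connective. The only categorical input needed there is that $\operatorname{Sym}^k(X)=(X^{\otimes k})_{h\Sigma_k}$ is $km$-connective. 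This holds because $X^{\otimes k}$ is $km$-connective by $t$-compatibility and $\ageq$ is closed under colimits. The general case then follows by resolving $B$ by cell attachments along free maps, which is available since $\calg(\ageq)$ is generated under sifted colimits by free algebras. Left completeness is not needed because all the estimates are finite-degree.

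\textbf{Step 3: build the derivation and identify its square-zero extension.} Truncating the composite
\[
L_{\tau_{\leq n-1}A}\to L_{\tau_{\leq n-1}A/\tau_{\leq nA}}\to\tau_{\leq n+1}L_{\tau_{\leq n-1}A/\tau_{\leq n}A},
\]
Step 2 identifies the target with $\pi_0(\tau_{\leq n-1}A)\otimes\Sigma I\simeq(\pi_nA)[n+1]$. This gives a derivation $\eta:\tau_{\leq n-1}A\to(\pi_nA)[n+1]$. The canonical map $\tau_{\leq n}A\to(\tau_{\leq n-1}A)^\eta$ over $\tau_{\leq n-1}A$ induces an equivalence on fibers $(\pi_nA)[n]$, by the construction of $\eta$ together with Step 2. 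Hence it is an equivalence, using only the long exact sequence in $\mca^\heartsuit$ and the stability of $\mca$. This recovers the proof of \cite[Theorem 7.4.1.26]{ha}.

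I expect Step 2 to be the main obstacle. It is where Lurie uses spectrum-specific facts, namely Hurewicz-type arguments and explicit cell structures. The approach I would try first is to reduce everything to the connectivity of $\operatorname{Sym}^k$ and of relative tensor products, which the convention on $ttt$-$\infty$-categories supplies directly.
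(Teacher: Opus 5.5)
Your Step 1 is correct, and it is exactly the observation the paper uses. The successive Postnikov maps $\tau_{\leq n}A\to\tau_{\leq n-1}A$ are $n$-small extensions, after which the paper simply invokes Lurie's classification of small extensions \cite[Theorem 7.4.1.26]{ha}. Step 3 is a fair outline of Lurie's proof of that theorem. One caveat: the identification of the induced map on fibers $(\pi_nA)[n]\to(\pi_nA)[n]$ with the desuspension of your comparison map is a compatibility from Lurie's argument, not something that holds ``by construction''.

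The genuine gap is Step 2. You set out to reprove it because you believe Lurie's connectivity estimate is spectrum-specific. It is not. \cite[Theorem 7.4.3.1]{ha}, like \cite[Theorem 7.4.1.26]{ha}, is stated for a general presentable stable symmetric monoidal $\infty$-category with a $t$-structure compatible with $\otimes$. That is why the paper can cite \cite[Corollary 7.4.1.28]{ha} as is.

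Your replacement argument does not work as written.

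\begin{itemize}
\item The free case is fine. But generation of $\operatorname{CAlg}(\mathcal{A}_{\geq0})$ under sifted colimits by free algebras does not propagate the estimate.
\item The free algebras in such a resolution, e.g.\ the monadic bar resolution of $B$ over $\tilde A$, are free on $\tilde A$-modules $X$ that are merely connective. For those, your computation only shows that $\operatorname{cofib}(\epsilon)\simeq\Sigma\bigl(B\otimes_{\tilde A}\operatorname{Sym}^{\geq2}_{\tilde A}X\bigr)$ is $1$-connective. Geometric realization then gives nothing better.
\item To reach $(m+2)$-connectivity you would need a cell structure built only from cells of dimension $\geq m$. Constructing it requires controlling how each cell attachment raises the connectivity of the fiber to $B$. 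That is a Hurewicz-type statement of the same kind as the estimate you are proving.
\item Concluding that the cell complex recovers $B$ (or its cotangent complex in a range) needs one of two further inputs. Either $\infty$-connective objects vanish, which is hypercompleteness. Or you need the weak form of the very estimate at issue: an $N$-connective cofiber forces an $N$-connective relative cotangent complex. So your remark that no completeness enters is also unjustified on this route.
\end{itemize}

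The fix is to cite \cite[Theorem 7.4.3.1]{ha} in Step 2. More economically, cite \cite[Theorem 7.4.1.26]{ha} right after Step 1, as the paper does.
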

	
	\begin{thm}\label{mappetale}
		Assume that \mca is left complete.	Let $f: A \rightarrow B \in\operatorname{CAlg}(\mathcal{A})$ be a flat map such that $\tau_{\geq 0}f: \tau_{\geq 0}A \to \tau_{\geq 0}B$ is L-étale. Given an arbitrary $C\in \operatorname{CAlg}(\mathcal{A})$. The canonical map
		$$
		\operatorname{Map}_{\calg(\mc{A})_{A /}}(B, C) \rightarrow \operatorname{Map}_{\calg(\mc{A})_{\pi_0 A /}}\left(\pi_0 B, \pi_0 C\right)
		$$
		is a homotopy equivalence. In particular, $\operatorname{Map}_{\calg(\mc{A})_{A /}}(B, C)$ is homotopy equivalent to a discrete space.
	\end{thm}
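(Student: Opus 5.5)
We first reduce to the connective case. Since $f$ is flat, \cref{flatpush}(1) gives $B\simeq A\otimes_{\tau_{\geq0}A}\tau_{\geq0}B$, so
\[
\operatorname{Map}_{\calg(\mc{A})_{A/}}(B,C)\simeq\operatorname{Map}_{\calg(\mc{A})_{\tau_{\geq0}A/}}(\tau_{\geq0}B,C)\simeq \operatorname{Map}_{\calg(\mc{A})_{\tau_{\geq0}A/}}(\tau_{\geq0}B,\tau_{\geq0}C).
\]
The second equivalence holds because $\tau_{\geq0}$ is right adjoint to the inclusion $\calg(\ageq)\hookrightarrow\calg(\mc{A})$. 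The target of the map in the statement only depends on $\pi_0$, so it is unchanged. We may therefore assume that $A$, $B$ and $C$ are connective and that $f$ is flat and L-étale.

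Next we write $C\simeq\varprojlim_n\tau_{\leq n}C$, using left completeness. Limits in $\calg(\ageq)$ are computed underlying, and \cref{l2}(4) gives the left completeness we need there. This yields
\[
\operatorname{Map}_{A/}(B,C)\simeq\varprojlim_n\operatorname{Map}_{A/}(B,\tau_{\leq n}C).
\]
It then suffices to show that each transition map $\operatorname{Map}_{A/}(B,\tau_{\leq n}C)\to\operatorname{Map}_{A/}(B,\tau_{\leq n-1}C)$ is an equivalence, and that the bottom stage $\operatorname{Map}_{A/}(B,\pi_0C)$ is equivalent to $\operatorname{Map}_{\pi_0A/}(\pi_0B,\pi_0C)$.

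For the transition maps we use \cref{postnikovsquarezero}, which exhibits $\tau_{\leq n}C\to\tau_{\leq n-1}C$ as a square-zero extension classified by a derivation $\eta:\tau_{\leq n-1}C\to(\pi_nC)[n+1]$. Then $\operatorname{Map}_{A/}(B,\tau_{\leq n}C)$ is the fiber product of $\operatorname{Map}_{A/}(B,\tau_{\leq n-1}C)$ with the space of nullhomotopies of the induced derivation. Concretely, the fiber over a point $g$ is the space of paths between $\eta\circ g$ and the zero derivation in $\operatorname{Map}(L_{B/A},g^*(\pi_nC)[n+1])$, relative to the given derivation on $A$. This space is controlled by $\operatorname{Map}_{\modu_B}(L_{B/A},-)$, which is contractible because $L_{B/A}\simeq0$. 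A cleaner route is the formal-étaleness characterization of \cref{prop8.14}: since L-étale maps are cocartesian for $\mc{M}^T(\calg(\mc{A}))\to\Delta^1\times\calg(\mc{A})$, maps from $B$ into square-zero extensions are determined by maps from $A$. In either case each transition map is an equivalence, so $\operatorname{Map}_{A/}(B,C)\simeq\operatorname{Map}_{A/}(B,\pi_0C)$.

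For the bottom stage, $\pi_0C$ is discrete, so
\[
\operatorname{Map}_{A/}(B,\pi_0C)\simeq\operatorname{Map}_{\pi_0A/}(\pi_0A\otimes_AB,\pi_0C).
\]
Flatness of $f$ gives $\pi_0A\otimes_AB\simeq\pi_0B$ via \cref{flatpush}(2) with $n=0$. Mapping spaces out of $\pi_0B$ into the discrete object $\pi_0C$ inside $\calg(\ageq)$ agree with those in $\calg(\mc{A}^\heartsuit)$, because $\tau_{\leq0}$ is a left adjoint; in particular they are discrete. The main obstacle is the square-zero step. There one must check that the Postnikov pieces give extensions by connective modules with the correct shift, and that the relative-derivation fiber sequence in Lurie's deformation theory holds verbatim in $\mc{A}$. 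We expect to transport it from \cite[\S7.4]{ha} via \cref{prop8.14}.
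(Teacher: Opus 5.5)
Your proposal is correct and follows essentially the same route as the paper. Both arguments reduce to the connective case via \cref{flatpush}(1), write $C$ as the limit of its Postnikov tower using left completeness, and observe that each stage is a square-zero extension on which the mapping-space fibers are $\operatorname{Map}(L_{B/A}\otimes_B D, M)\simeq *$. The only difference is at the bottom stage, where the paper simply notes that maps from the connective $B$ into a discrete algebra factor through $\pi_0B$. Your detour through \cref{flatpush}(2) is therefore unnecessary, though harmless.
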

	\begin{proof}
		The argument is the same as that of
		\cite[Proposition 3.4.13]{dag4}; we recall the only point needed in
		our setting.
		
		By \cref{flatpush}(1), after replacing $A$ and $B$ by their
		connective covers, we may assume that $A$ and $B$ are connective.
		Since $B$ is connective, maps from $B$ to a discrete algebra factor
		uniquely through $\pi_0B$, so it remains to prove that
		\[
		\operatorname{Map}_{\calg(\mca)_{A/}}(B,C)
		\longrightarrow
		\operatorname{Map}_{\calg(\mca)_{A/}}(B,\pi_0C)
		\]
		is an equivalence for every connective $A$-algebra $C$.
		
		For a trivial square-zero extension $D\oplus M\to D$, the fiber of
		\[
		\operatorname{Map}_{\calg(\mca)_{A/}}(B,D\oplus M)
		\longrightarrow
		\operatorname{Map}_{\calg(\mca)_{A/}}(B,D)
		\]
		over a map $B\to D$ is naturally equivalent to
		\[
		\operatorname{Map}_{\modu_D(\mca)}
		(L_{B/A}\otimes_BD,M),
		\]
		which is contractible since $A\to B$ is $L$-étale. By base change,
		the same conclusion holds for every square-zero extension.
		
		The transition maps in the Postnikov tower of $C$ are square-zero
		extensions. Since representable functors preserve limits and $\mca$
		is left complete, we therefore obtain
		\[
		\operatorname{Map}_{\calg(\mca)_{A/}}(B,C)
		\simeq
		\operatorname{Map}_{\calg(\mca)_{A/}}(B,\pi_0C).
		\]
		This proves the result.
	\end{proof}
	\begin{rem}
		From the argument of \cref{mappetale}, we see that the flat condition on $f:A\to B$ can be removed if both $A, B$ are connective.
	\end{rem}
	\begin{prop}\label{truneq}
		Assume that $\mc{A}$ is Grothendieck and hypercomplete. Let $A\in \operatorname{CAlg}(\mathcal{A}_{\geq0})_{\leq n+1}$ be $(n+1)$-truncated connective. Then the truncation functor $\tau_{\leq n}: \operatorname{CAlg}(\mathcal{A})_{A /} \rightarrow \operatorname{CAlg}(\mathcal{A})_{\tau_{\leq n} A /}$ restricts to:
		\enu{\item An equivalence $\operatorname{CAlg}(\mathcal{A})_{A /}^{\text{fl},L\text{-}et}\xrightarrow{\sim} \operatorname{CAlg}(\mathcal{A})_{\tau_{\leq n} A /}^{\text{fl},L\text{-}et}$ from the $\infty$-category of flat L-étale  commutative $A$-algebras to the $\infty$-category of flat L-étale  commutative $\tau_{\leq n} A$-algebras.
			\item An equivalence $\operatorname{CAlg}(\mathcal{A})_{A /}^{et}\xrightarrow{\sim} \operatorname{CAlg}(\mathcal{A})_{\tau_{\leq n} A /}^{et}$ from the $\infty$-category of étale  commutative $A$-algebras to the $\infty$-category of étale  commutative $\tau_{\leq n} A$-algebras.
		}
	\end{prop}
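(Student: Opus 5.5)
We follow the strategy of \cite[Theorem 7.5.0.6]{ha}. By \cref{postnikovsquarezero}, the map $A\to\tau_{\leq n}A$ is a square-zero extension. Since $A$ is $(n+1)$-truncated and connective, its fiber is $\Sigma^{n+1}\pi_{n+1}A$, so it is classified by a derivation $\eta:\tau_{\leq n}A\to M[1]$ with $M=\Sigma^{n+1}\pi_{n+1}A$ connective; thus $(\tau_{\leq n}A)^\eta\simeq A$. The comparison functor is therefore $(-)\otimes_A\tau_{\leq n}A$ on flat algebras. For flat $A\to B$, \cref{flatpush}(2) gives $\tau_{\leq n}A\otimes_AB\simeq\tau_{\leq n}B$, which identifies it with $\tau_{\leq n}$. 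Flat maps out of connective algebras have connective targets, so everything takes place inside $\calg(\ageq)$.

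For (1), we start from \cref{sqet}. Because $\mca$ is hypercomplete, $(-)\otimes_A\tau_{\leq n}A$ is an equivalence $\calg(\ageq)^{L\text{-}et}_{A/}\simeq\calg(\ageq)^{L\text{-}et}_{\tau_{\leq n}A/}$. We then restrict to flat objects. Given a connective L-étale $\tilde B$ over $A$, set $B=\tilde B\otimes_A\tau_{\leq n}A$; this is a pushout square along the square-zero extension. Since $\mca$ is Grothendieck, \cref{prop8.22}(2) says $\tilde B$ is flat over $A$ if and only if $B$ is flat over $\tau_{\leq n}A$. So the equivalence of \cref{sqet} restricts to an equivalence between the full subcategories of flat L-étale algebras. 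Both sides are full subcategories of the respective undercategories, so no further checking of mapping spaces is needed.

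For (2), all algebras are connective here, so étale means flat, L-étale, and locally of finite presentation. We restrict the equivalence of (1) further using \cref{prop8.22}(3). For an L-étale $\tilde B$ over $A$, it says $\tilde B$ is locally of finite presentation over $A$ if and only if $B=\tilde B\otimes_A\tau_{\leq n}A$ is locally of finite presentation over $\tau_{\leq n}A$. Together with (1), this shows the equivalence identifies the étale objects on both sides.

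The main obstacle is verifying that \cref{prop8.22} and \cref{sqet} apply. Both need $M$ connective, the pushout squares must lie in $\calg(\ageq)$, and the objects must be connective so that the "$+$" categories of \cref{cor8.12} are the right ones. These requirements are met because flat algebras over connective bases are connective and $M=\Sigma^{n+1}\pi_{n+1}A$. \cref{prop8.22}(3) is imported from \cite{dag13}. If it needs care, its key input is that a finitely presented L-étale algebra over $\tau_{\leq n}A$ deforms uniquely (by \cref{sqet}), and compactness can be checked on Postnikov pieces via \cref{mappetale}-type mapping-space rigidity.
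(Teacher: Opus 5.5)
Your proposal is correct and follows the paper's proof. The paper also combines \cref{flatpush}(2), which identifies $\tau_{\leq n}$ with base change along the square-zero extension $A\to\tau_{\leq n}A$, with \cref{sqet} and \cref{prop8.22}(2); your explicit use of \cref{prop8.22}(3) for the étale case supplies the finite-presentation step that the paper's one-line proof leaves implicit.
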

	\begin{proof}
		It follows by combining \cref{flatpush}(2), \cref{prop8.22}(2) and \cref{sqet}.
	\end{proof}
	\begin{prop}[See \cite{ha} 7.4.3.17]\label{ha74}
		Let $f: A \rightarrow B$ be a morphism in $\operatorname{CAlg}(\mathcal{A}_{\geq0})$. Suppose that $n \geq 0$ and that $f$ induces an equivalence $\tau_{\leq n} A \rightarrow \tau_{\leq n} B$. Then $\tau_{\leq n} L_{B / A} \simeq0$ .
	\end{prop}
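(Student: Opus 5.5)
We follow Lurie's argument for \cite[Proposition 7.4.3.17]{ha}, adapted to $\mc{A}$. Everything stays connective: $L_{B/A}$ is a connective $B$-module, and by construction it is built out of $\mc{A}_{\geq0}$. The claim is that $\tau_{\leq n}L_{B/A}\simeq 0$, i.e.\ that $L_{B/A}$ is $(n+1)$-connective. The key input is the comparison between the cotangent complex and the fiber of the multiplication map. For a map $A\to B$ of connective algebras, let $I=\op{fib}(B\otimes_A B\to B)$. There is a natural map $\epsilon\colon B\otimes_{B\otimes_A B}I\to L_{B/A}$, and we expect a connectivity estimate of the form ``if $\op{cofib}(A\to B)$ is $(n+1)$-connective, then $\op{cofib}(\epsilon)$ is $(n+3)$-connective'' (the analogue of \cite[Theorem 7.4.3.1]{ha}).

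Since our $L$ is defined abstractly in $\calg(\mc{A})$, we would not reprove that estimate in general. Instead we would argue directly by the representability of derivations. For any discrete-free test object it suffices to show the following. For every $B$-module $N\in\modu_B(\mc{A})_{\leq n}$, i.e.\ $N$ is $n$-truncated, and in fact we may even take $N$ connective and $n$-truncated, the space
\[
\mapp_{\modu_B}(L_{B/A},N)\simeq \mapp_{\calg_{A//B}}(B,B\oplus N)
\]
is contractible. This is enough, because $\tau_{\leq n}L_{B/A}$ is itself such an $N$, and the identity map would then vanish.

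\textbf{Step 1.} Reduce to the truncated world. The algebra $B\oplus N$ is connective and, being a trivial square-zero extension with $N$ $n$-truncated and $B\to\tau_{\leq n}B$, maps to $\tau_{\leq n}B\oplus N$. We use that $\tau_{\leq n}$ on connective algebras is left adjoint to the inclusion of $n$-truncated connective algebras, as in \cref{l3}(3) and \cref{lemma4.3}, since $\tau_{\leq n}^\otimes$ is symmetric monoidal. Since $N$ is a $\tau_{\leq n}B$-module (via \cref{l2}(3)), we get
\[
\mapp_{\calg_{A//B}}(B,B\oplus N)\to\mapp_{\calg_{\tau_{\le n}A//\tau_{\le n}B}}(\tau_{\leq n}B,\tau_{\leq n}B\oplus N).
\]
Strictly, one should first identify derivations of $B$ into $N$ with derivations of $\tau_{\le n}B$ into $N$, using $L_{\tau_{\le n}B}$-type base change. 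The cleaner route is to note that maps $B\to B\oplus N$ over $B$ factor through $\tau_{\le n}$ whenever the target is $n$-truncated. The target $B\oplus N$ is not $n$-truncated, so we instead replace $B$ by $\tau_{\le n}B$ in the square-zero extension, working with $\tau_{\le n}B\oplus N$ over $\tau_{\le n}B$. This identifies $\mapp(L_{B/A},N)\simeq\mapp(\tau_{\le n}B\otimes_B L_{B/A},N)$.

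\textbf{Step 2.} Use the hypothesis $\tau_{\leq n}A\simeq\tau_{\leq n}B$. The relative derivation space over $A$ then becomes derivations of $\tau_{\le n}A$ relative to the image of $A$. By the truncation adjunction, maps $A\to\tau_{\le n}B\oplus N$ correspond to maps $\tau_{\le n}A\to\tau_{\le n}B\oplus N$, since the target is $n$-truncated when $N$ is. The space of $A$-algebra sections is therefore the space of $\tau_{\le n}A$-algebra sections of $\tau_{\le n}A\oplus N\to\tau_{\le n}A$ that restrict to the given one. This is contractible, because the source of the relative problem is the base itself.

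\textbf{Main obstacle.} The obstacle is Step 1: rigorously identifying the relative derivation space with one computed entirely in $n$-truncated algebras. This uses that $\tau_{\le n}$ on $\calg(\mc{A}_{\ge0})$ is symmetric monoidal and a localization, and that $L$ is compatible with it via \cite[7.3.3.7]{ha}. We would first try to phrase everything via the tangent correspondence $\mc{M}^T$ and the truncation localization, citing \cref{l3}.
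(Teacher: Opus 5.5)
Your argument is correct, but it does not follow the cited route. The paper gives no proof beyond the pointer to Lurie, and the standard deduction there goes through the connectivity theory of cotangent complexes:
\begin{itemize}
\item the hypothesis says exactly that $\pi_iA\to\pi_iB$ is an isomorphism for $i\le n$, so $\operatorname{cofib}(f)$ is $(n+1)$-connective;
\item the estimate you mention (the analogue of HA 7.4.3.1, recorded here as \cref{ha7432}) then makes $L_{B/A}$ $(n+1)$-connective.
\end{itemize}

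You bypass that estimate. You use only the universal property of $L_{B/A}$ and the fact that $\tau_{\le n}$ is a localization of $\operatorname{CAlg}(\mathcal{A}_{\geq0})$. For $N$ connective and $n$-truncated, hence a $\tau_{\le n}B$-module by \cref{l2}(3), this gives
\[
\operatorname{Map}_{\operatorname{Mod}_B}(L_{B/A},N)\simeq\operatorname{Map}_{\operatorname{CAlg}_{A//\tau_{\le n}B}}(B,\tau_{\le n}B\oplus N)\simeq\operatorname{Map}_{\operatorname{CAlg}_{\tau_{\le n}A//\tau_{\le n}B}}(\tau_{\le n}B,\tau_{\le n}B\oplus N)\simeq * .
\]

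The step you single out as the main obstacle is actually immediate. The first equivalence is just the universal property of the relative cotangent complex against trivial square-zero extensions $C\oplus N\to C$ of an arbitrary $B$-algebra $C$, here $C=\tau_{\le n}B$. Equivalently, $B\oplus N\simeq B\times_{\tau_{\le n}B}(\tau_{\le n}B\oplus N)$. No compatibility of $L$ with truncation (HA 7.3.3.7) is needed. The equivalence $\operatorname{Map}(L_{B/A},N)\simeq\operatorname{Map}(\tau_{\le n}B\otimes_B L_{B/A},N)$ that you wrote is only extension of scalars; it has content only once its right side is read as this derivation space.

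You should also record why $L_{B/A}$ is connective, for instance by resolving $B$ by free algebras on connective objects. Otherwise $\tau_{\le n}L_{B/A}$ is not an admissible test object.

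On the trade-off:
\begin{itemize}
\item Your route is elementary and entirely internal to the $ttt$ setting, so you never have to check that Lurie's connectivity theorem transports.
\item The cited route proves more: $L_{B/A}$ is $(n+1)$-connective as soon as $\operatorname{cofib}(f)$ is. That needs only surjectivity on $\pi_n$ rather than an isomorphism on $\tau_{\le n}$, and it is what the paper needs anyway for \cref{ha7432} and \cref{lfinite}.
\end{itemize}
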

	\begin{cor}\label{ha7432}
		Let $f: A \rightarrow B$ be a map in $\operatorname{CAlg}(\mathcal{A}_{\geq0})$. Assume that  $\op{cofib}(f)$ is $n$-connective, for $n \geq 0$. Then the relative cotangent complex $L_{B / A}$ is $n$-connective. The converse holds provided that $f$ induces an isomorphism $\pi_0 A \rightarrow \pi_0 B$.
	\end{cor}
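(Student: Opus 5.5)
We first prove the forward implication. Let $C=\op{cofib}(f)$ be $n$-connective with $n\geq 0$. The plan is to reduce to \cref{ha74} by truncation. Because $C$ is $n$-connective, the long exact sequence shows that $\pi_i A\to\pi_i B$ is an isomorphism for $i<n-1$ and a surjection for $i=n-1$. For $n=0$ there is nothing to prove: $L_{B/A}$ is connective since both algebras are connective. For $n\geq1$ we will not use \cref{ha74} verbatim, because $\tau_{\leq n-1}A\to\tau_{\leq n-1}B$ need only be surjective in the top degree. Instead we copy the argument of \cite[Corollary 7.4.3.2]{ha}.

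That argument compares $L_{B/A}$ with $B\otimes_A C$ through the natural map $B\otimes_A\op{cofib}(f)\to L_{B/A}$, which comes from the canonical derivation (the ``Hurewicz'' comparison of \cite[Theorem 7.4.3.1]{ha}). The key step is to show that this map has $(n+1)$-connective cofiber, or at least $n$-connective cofiber. Both sides are functorial in $f$ and are built from symmetric powers, so Lurie's proof carries over. It only uses that $\op{Sym}^k$ of an $n$-connective object is $(kn)$-connective, and this holds in our setting because the $t$-structure is compatible with $\otimes$ and the connective part is closed under colimits. Once this comparison is in place, $B\otimes_A C$ is $n$-connective (since $B$ is connective and $C$ is $n$-connective), and therefore $L_{B/A}$ is $n$-connective as well.

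For the converse, assume $\pi_0A\simeq\pi_0B$ and that $L_{B/A}$ is $n$-connective. We argue by induction on $m\leq n$ that $C$ is $m$-connective. The base case $m=1$ holds because $C$ is connective and $\pi_0$ is an isomorphism, which gives $\pi_0C=0$. For the inductive step, suppose $C$ is $m$-connective with $1\leq m<n$. The connectivity estimate above for the cofiber of the comparison map gives $\pi_m(B\otimes_A C)\simeq\pi_m L_{B/A}=0$. Since $C$ is $m$-connective and $\pi_0B\simeq\pi_0A$, we have $\pi_m(B\otimes_A C)\simeq\pi_0B\overline{\otimes}_{\pi_0A}\pi_mC\simeq\pi_mC$. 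Hence $\pi_mC=0$, so $C$ is $(m+1)$-connective. Hypercompleteness is not needed here, because we only read off homotopy objects one at a time.

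We expect the main obstacle to be transporting the connectivity estimate for $B\otimes_A\op{cofib}(f)\to L_{B/A}$ from \cite[Theorem 7.4.3.1]{ha} to a general $ttt$-base. The plan is to follow Lurie's reduction. First reduce to free algebras $\op{Sym}^*_A(M)$, where $L$ is explicit. Then pass to general $B$ via simplicial resolutions, which is legitimate because both sides commute with geometric realizations and the relevant connectivity is preserved under them. If that proves too heavy, a fallback is to deduce only the forward direction from \cref{ha74} after first replacing $f$ by a map that is an isomorphism on $\tau_{\leq n-1}$; this would be done through the Postnikov tower and \cref{postnikovsquarezero}, but it still needs the same estimate in the top degree.
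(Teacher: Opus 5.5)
Your overall strategy is the right one. The paper states this corollary without proof, as the analogue of \cite[Corollary 7.4.3.2]{ha}, which Lurie deduces from the comparison map $\epsilon\colon B\otimes_A C\to L_{B/A}$ of \cite[Theorem 7.4.3.1]{ha}. You are also right that \cref{ha74} alone cannot reach the top degree.

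\textbf{The estimate you aim for does not give the converse.} If $\operatorname{cofib}(\epsilon)$ is only $(m+1)$-connective, the exact sequence $\pi_{m+1}\operatorname{cofib}(\epsilon)\to\pi_m(B\otimes_A C)\to\pi_m L_{B/A}\to 0$ shows only that $\pi_m(B\otimes_AC)\to\pi_mL_{B/A}$ is surjective. So $\pi_mL_{B/A}=0$ does not force $\pi_mC=0$; you need $\operatorname{fib}(\epsilon)$ to be $(m+1)$-connective. The bound that gives this is quadratic: if $C$ is $m$-connective, then $\operatorname{fib}(\epsilon)$ is $2m$-connective, because every error term comes from some $\operatorname{Sym}^j$ with $j\geq 2$. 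Since $2m\geq m+1$ exactly when $m\geq 1$, this is where the hypothesis $\pi_0A\cong\pi_0B$ really enters: it lets the induction start at $m=1$. A linear bound such as ``$(n+1)$-connective cofiber'' cannot capture this.

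\textbf{Your sketch for transporting the estimate also fails as stated.} A simplicial resolution of $B$ by free $A$-algebras $\operatorname{Sym}^*_A(M_k)$ does not preserve the hypothesis. For example, if $f$ is surjective but not injective on $\pi_0$ (so $n=1$), then $B$ cannot be a geometric realization of free $A$-algebras on $1$-connective modules, since any such realization has $\pi_0$ equal to $\pi_0A$. With merely connective $M_k$ you get no useful bound.

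The standard repair is to pass to the augmented $B$-algebra $B\to B\otimes_AB\xrightarrow{\mu}B$:
\begin{itemize}
\item Its augmentation ideal is $\operatorname{cofib}(B\to B\otimes_AB)\simeq B\otimes_AC$.
\item By base change, its cotangent fiber $B\otimes_{B\otimes_AB}L_{(B\otimes_AB)/B}$ is $L_{B/A}$.
\item An augmented algebra with $n$-connective augmentation ideal has a monadic bar resolution by algebras $\operatorname{Sym}^*_B(M_k)$ with every $M_k$ $n$-connective.
\item For each such term, the comparison map is the projection onto $M_k$. Its fiber is $\bigoplus_{j\geq2}\operatorname{Sym}^j_B(M_k)$, which is $2n$-connective.
\item Both the augmentation-ideal functor and the cotangent-fiber functor commute with geometric realizations.
\end{itemize}
This uses only that each $\mathcal{A}_{\geq k}$ is closed under colimits and that $\otimes$ adds connectivity, so it holds in any $ttt$-$\infty$-category. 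With this $2n$ bound on the fiber, both your forward argument and your induction go through, and, as you say, no hypercompleteness is needed.
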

	\begin{prop}\label{trun1}
		Assume that \mca is hypercomplete.	Let $f:A\to B\in \calg(\mc{A}_{\geq0})$. Then:
		\enu{
			\item $f:A\to B$ is L-étale if $\tau_{\leq n}f: \tau_{\leq n}A\to \tau_{\leq n}B$ is L-étale for any $n\geq0$.
			\item Assume further that $\mc{A}$ is Grothendieck. Then $f:A\to B$ is flat if and only if $\tau_{\leq n}f: \tau_{\leq n}A\to \tau_{\leq n}B$ is flat for any $n\geq0$.
			
		}
		
	\end{prop}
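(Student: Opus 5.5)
For (1), I would show that $L_{B/A}$ vanishes by showing it is $\infty$-connective, and then use hypercompleteness. Since $A$ and $B$ are connective, so is $L_{B/A}$. Fix $n\geq0$ and look at the square
\[
\begin{tikzcd}
A \arrow[r] \arrow[d] & B \arrow[d] \\
\tau_{\leq n}A \arrow[r] & \tau_{\leq n}B .
\end{tikzcd}
\]
The cofiber sequences for the composites $A\to B\to\tau_{\leq n}B$ and $A\to\tau_{\leq n}A\to\tau_{\leq n}B$ give
\[
\tau_{\leq n}B\otimes_BL_{B/A}\to L_{\tau_{\leq n}B/A}\to L_{\tau_{\leq n}B/B},
\qquad
\tau_{\leq n}B\otimes_{\tau_{\leq n}A}L_{\tau_{\leq n}A/A}\to L_{\tau_{\leq n}B/A}\to L_{\tau_{\leq n}B/\tau_{\leq n}A}.
\]
The last term of the second sequence vanishes by hypothesis. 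The cofibers of $A\to\tau_{\leq n}A$ and $B\to\tau_{\leq n}B$ are $(n+2)$-connective, and these maps are isomorphisms on $\pi_0$, so \cref{ha7432} makes $L_{\tau_{\leq n}A/A}$ and $L_{\tau_{\leq n}B/B}$ $(n+2)$-connective. Base change along a connective algebra preserves this. Hence $L_{\tau_{\leq n}B/A}$ is $(n+2)$-connective, and so is $\tau_{\leq n}B\otimes_BL_{B/A}$.

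Next I compare $\tau_{\leq n}B\otimes_BL_{B/A}$ with $L_{B/A}$. The fiber of $L_{B/A}\to\tau_{\leq n}B\otimes_BL_{B/A}$ is $\tau_{\geq n+1}B\otimes_BL_{B/A}$, which is $(n+1)$-connective. So $\tau_{\leq n}L_{B/A}\simeq\tau_{\leq n}(\tau_{\leq n}B\otimes_BL_{B/A})=0$. As $n$ is arbitrary, $L_{B/A}$ is $\infty$-connective and therefore zero by hypercompleteness. The only care needed is the connectivity bookkeeping; there is no real obstacle here.

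For (2), the ``only if'' direction is \cref{flatpush}(2). For ``if'', I use \cref{l7}: since $\mca$ is Grothendieck, it is enough to show that $B\otimes_AN$ is discrete for every discrete $A$-module $N$. Such an $N$ is a $\tau_{\leq n}A$-module for every $n$ (indeed for $n=0$), so
\[
B\otimes_AN\simeq(B\otimes_A\tau_{\leq n}A)\otimes_{\tau_{\leq n}A}N.
\]
The step I expect to be the main obstacle is comparing $B\otimes_A\tau_{\leq n}A$ with $\tau_{\leq n}B$ without already knowing that $B$ is flat. The shortcut is to avoid this comparison by tracking truncations directly. The map $B\otimes_A\tau_{\leq n}A\to\tau_{\leq n}B$ induces an equivalence on $\tau_{\leq n}$, because both sides are the $n$-truncation of $B$ after base change: apply \cref{lemma4.3}(1) to the right exact functor $B\otimes_A(-)$, which gives $\tau_{\leq n}(B\otimes_A\tau_{\leq n}A)\simeq\tau_{\leq n}B$. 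The same lemma applied to $(-)\otimes_{\tau_{\leq n}A}N$ then gives
\[
\tau_{\leq n}(B\otimes_AN)\simeq\tau_{\leq n}(\tau_{\leq n}B\otimes_{\tau_{\leq n}A}N),
\]
and the right-hand side is discrete by flatness of $\tau_{\leq n}A\to\tau_{\leq n}B$. Thus $\pi_i(B\otimes_AN)=0$ for $0<i\leq n$, for every $n$. Since $B\otimes_AN$ is connective, it is discrete by hypercompleteness, which proves flatness.
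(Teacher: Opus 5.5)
Your proposal is correct and follows essentially the same route as the paper: for (1), the same two cofiber sequences for $A\to B\to\tau_{\le n}B$ and $A\to\tau_{\le n}A\to\tau_{\le n}B$, then hypercompleteness; for (2), \cref{lemma4.3}(1) together with flatness of $\tau_{\le n}f$, then hypercompleteness. One harmless slip: the first cofiber sequence only shows that $\tau_{\le n}B\otimes_B L_{B/A}$ is $(n+1)$-connective, since it is the fiber of a map between $(n+2)$-connective objects, not $(n+2)$-connective as you claim; your next step only needs $(n+1)$-connectivity, so the argument stands.
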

	
	\begin{proof}
		(1) For any $n\geq0$, from the composition $A\to B \to \tau_{ \leq n}B$ we have the cofiber sequence $$\tau_{\leq n}B\otimes_BL_{B / A}\to L_{\tau_{\leq n}B / A}\to L_{\tau_{\leq n}B / B} .$$ 
		Since $\tau_{\leq n}L_{\tau_{\leq n}B / B}=0$ by \cref{ha74}, we get that $\tau_{\leq n-1}(\tau_{\leq n}B\otimes_BL_{B / A})\simeq \tau_{\leq n-1}L_{\tau_{\leq n}B / A}$. Now consider another cofiber sequence induced by the composition $A\to \tau_{ \leq n}A \to \tau_{ \leq n}B$ 
		$$\tau_{\leq n}B\otimes_{\tau_{\leq n}A}L_{\tau_{\leq n}A / A}\to L_{\tau_{\leq n}B / A}\to L_{\tau_{\leq n}B / \tau_{\leq n}A} .$$
		The cotangent complex $L_{\tau_{\leq n}B / \tau_{\leq n}A}$ above vanishes by  assumption and $\tau_{\leq n}L_{\tau_{\leq n}A / A}=0$ by \cref{ha74}, so $\tau_{\leq n} L_{\tau_{\leq n}B / A}=\tau_{\leq n}(\tau_{\leq n}B\otimes_{\tau_{\leq n}A}L_{\tau_{\leq n}A / A})=0$. Then combining \cref{lemma4.3}(1) and equations above we get $$\tau_{\leq n-1}(L_{B / A})\simeq\tau_{\leq n-1}(\tau_{\leq n}B\otimes_BL_{B / A})\simeq \tau_{\leq n-1} L_{\tau_{\leq n}B / A}=0.$$
		By the hypercompleteness, we get $L_{B / A}=0$.
		\\(2) The ``only if'' direction can be deduced by \cref{flatpush}. Now suppose $\tau_{\leq n}f: \tau_{\leq n}A\to \tau_{\leq n}B$ is flat for any $n\geq0$. Since $B$ is connective, it suffices to show that given any discrete $M\in \modu_A(\mc{A})^{\heartsuit}$ we have $B\otimes_A M\in \modu_B(\mc{A})^{\heartsuit}$ is discrete too. Now by \cref{lemma4.3}(1), we have $$\tau_{\leq n}(B\otimes_A M)\simeq \tau_{\leq n}(\tau_{\leq n}B\otimes_A M). $$
		Also we have 
		$$\tau_{\leq n}(\tau_{\leq n}B\otimes_A M)\simeq \tau_{\leq n}(\tau_{\leq n}B\otimes_{\tau_{\leq n}A}\tau_{\leq n}A\otimes_{A} M)\simeq \tau_{\leq n}B\otimes_{\tau_{\leq n}A}\tau_{\leq n}(\tau_{\leq n}A\otimes_{A}M)$$
		where the second equality comes from the flatness of $\tau_{\leq n}f$. Note that  $$\tau_{\leq n}B\otimes_{\tau_{\leq n}A}\tau_{\leq n}(\tau_{\leq n}A\otimes_{A}M)\simeq \tau_{\leq n}B\otimes_{\tau_{\leq n}A}\tau_{\leq n}M. $$
		Combining these we get an equivalence $\tau_{\leq n}(B\otimes_A M)\simeq\tau_{\leq n}B\otimes_{\tau_{\leq n}A}\tau_{\leq n}M$, then by the flatness of $\tau_{\leq n}f$ again we conclude that for any $n\geq 0$, $\tau_{\leq n}(B\otimes_A M)$ is discrete. Hence $B\otimes_A M$ is discrete by the hypercompleteness.
	\end{proof}

	We mimic the proof of \cite[Theorem 7.4.3.18]{ha} with light modification to get the following statements.
	\begin{prop}\label{lfinite}
		Suppose that $\mc{A}$ is right complete and that $\mc{A}_{\geq0}$ is compactly generated. Let $A\in \calg(\mc{A}_{\geq0})$, and let $B$ be a connective $\mathbb{E}_{\infty}$-algebra over $A$. Then:
		\enu{
			\item  If $B$ is locally of finite presentation over $A$, then $L_{B / A}$ is perfect as a $B$-module. The converse holds provided that $\mc{A}_{\geq0}^\otimes$ is projectively rigid and that $\pi_0 B$ is finitely presented as a  commutative $\pi_0 A$-algebra in the sense of \cref{ringfp}.
			\item  If $B$ is almost of finite presentation over $A$, then $L_{B / A}$ is almost perfect as a $B$-module. The converse holds provided that $\mc{A}_{\geq0}$ is  projectively generated\footnote{projective rigidity is not needed here} and that $\pi_0 B$ is finitely presented as a  commutative $\pi_0 A$-algebra in the sense of \cref{ringfp}.
		}
		
	\end{prop}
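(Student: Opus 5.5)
We follow the proof of \cite[Theorem 7.4.3.18]{ha}, substituting compact projective modules for free ones wherever the argument builds cells.

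\textbf{Forward implications.} Suppose $B$ is locally of finite presentation over $A$. The functor
\[
\operatorname{Map}_{\modu_B(\mca)}(L_{B/A},-)
\]
restricted to connective $B$-modules $N$ agrees with
\[
\operatorname{Map}_{\calg(\mca_{\geq0})_{A//B}}(B,\,B\oplus N).
\]
The construction $N\mapsto B\oplus N$ preserves filtered colimits, and $B$ is compact in $\calg(\mca_{\geq0})_{A/}$. Hence $L_{B/A}$ is compact as an object of $\modu_B(\mca)_{\geq0}$. Since $\mca_{\geq0}$ is compactly generated and $\mca$ is right complete, \cref{l1} and \cref{compmod} show that compact connective $B$-modules are perfect. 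This gives the first half of (1).

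For (2), apply the same argument to $\tau_{\leq n}B$ over $\tau_{\leq n}A$ inside $\mca_{[0,n]}$. This shows $\tau_{\leq n-1}L_{B/A}$ is compact in $\modu_B(\mca)_{[0,n-1]}$, using \cref{lemma4.3}(1) and \cref{ha74} to compare $L_{\tau_{\leq n}B/\tau_{\leq n}A}$ with $L_{B/A}$ in low degrees. Since $n$ is arbitrary, $L_{B/A}$ is almost perfect.

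\textbf{Converse in (2).} We use \cref{appro}, available because $\mca_{\geq0}$ is projectively generated and $\pi_0A\to\pi_0B$ is finitely presented. It produces a finitely presented $B_0$, namely a pushout of $\operatorname{Sym}^*_A$ of compact projective modules, with a map $B_0\to B$ inducing an isomorphism on $\pi_0$. We then build inductively finitely presented $B_k\to B$ whose cofiber is $(k+1)$-connective.

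Given $B_k$, \cref{ha7432} shows $L_{B/B_k}$ is $(k+1)$-connective. The cofiber sequence
\[
B\otimes_{B_k}L_{B_k/A}\to L_{B/A}\to L_{B/B_k}
\]
together with \cref{aper} shows $L_{B/B_k}$ is almost perfect. Hence its bottom group $\pi_{k+1}L_{B/B_k}$, which equals $\pi_{k+1}\operatorname{cofib}(B_k\to B)$ up to the Hurewicz-type identification of \cref{ha7432}, is a compact $\pi_0B$-module. By projective generation and \cref{l5.8}, choose a compact projective $B_k$-module $Q$ with a map $Q[k]\to\operatorname{fib}(B_k\to B)$ surjective on $\pi_k$. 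Form
\[
B_{k+1}=B_k\otimes_{\operatorname{Sym}^*_{B_k}(Q[k])}B_k .
\]
This is finitely presented by \cref{compof.p}. We expect this induction to be the main obstacle: one must check that $\pi_{k+1}$ of the new cofiber vanishes, which requires the connectivity estimate for $\operatorname{Sym}$ of a $k$-connective projective. The long exact sequences carry over from the spectra case, but we must use $\pi_0Q$ rather than a free module. Finally, $\tau_{\leq n}B\simeq\tau_{\leq n}B_{n+1}$, and truncations of finitely presented algebras are compact in the truncated categories, so $B$ is almost of finite presentation.

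\textbf{Converse in (1).} Now assume projective rigidity and that $L_{B/A}$ is perfect. By (2), $B$ is almost of finite presentation, so the $B_k$ above exist. For $k$ larger than the Tor-amplitude of $L_{B/A}$ (finite by \cref{toram}(4)), the module $L_{B/B_k}$ is perfect and $(k+1)$-connective. Under projective rigidity, \cref{fullstableproj} and \cref{perfflat} apply. As in \cite[7.4.3.18]{ha}, the $\operatorname{Ext}$ class of
\[
L_{B/B_k}\to\pi_{k+1}(L_{B/B_k})[k+1]
\]
then forces $\pi_{k+1}L_{B/B_k}$ to come from a compact projective $P$. Attaching a single cell along $P$ produces a finitely presented $B'$ with $L_{B/B'}$ of strictly smaller Tor-amplitude in high degree. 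Iterating finitely often yields $L_{B/B'}=0$ with $\pi_0$ an isomorphism. By \cref{ha7432} and hypercompleteness, $B'\simeq B$, so $B$ is a retract of a finitely presented algebra and hence finitely presented.
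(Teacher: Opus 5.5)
Your forward implications are correct and take a somewhat different route from the paper. You use the universal property $\operatorname{Map}(L_{B/A},N)\simeq\operatorname{Map}_{\operatorname{CAlg}_{A//B}}(B,B\oplus N)$ together with compactness of $B$ (resp.\ of $\tau_{\leq n}B$ among $n$-truncated algebras) directly. The paper instead reduces $(1')$, via colimit-preservation of $\sigma\mapsto L_{B/A}\otimes_BC$, to free algebras $\operatorname{Sym}^*_AM$, and obtains $(2')$ from \cite[Corollary 5.5.7.4]{htt} by exhibiting $\tau_{\leq n}B$ as a retract of $\tau_{\leq n}B'$ with $B'$ finitely presented. Citing \cref{compmod} there is not quite right, since it assumes projective generation. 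What you actually need is that $\tau_{\geq0}$ commutes with filtered colimits, which holds because compact objects of $\mathcal{A}_{\geq0}$ remain compact in $\mathcal{A}\simeq\operatorname{Sp}(\mathcal{A}_{\geq0})$.

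The converses are where the gaps are. In the converse of (2) you leave the inductive step unverified and suggest a connectivity estimate for $\operatorname{Sym}$. That cannot help for the first cell $Q[0]$, where every $\operatorname{Sym}^j(Q)$ is merely connective. The missing input is the cotangent-complex computation $L_{B_{k+1}/B_k}\simeq Q[k+1]\otimes_{B_k}B_{k+1}$ (\cite[Proposition 7.4.3.14]{ha} plus base change). It gives a cofiber sequence $Q[k+1]\otimes_{B_k}B\to L_{B/B_k}\to L_{B/B_{k+1}}$ whose first map is surjective on $\pi_{k+1}$, by your choice of $Q$ and the Hurewicz isomorphism $\pi_k\operatorname{fib}(B_k\to B)\cong\pi_{k+1}L_{B/B_k}$. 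So $L_{B/B_{k+1}}$ is $(k+2)$-connective. Since $\pi_0B_k\to\pi_0B_{k+1}$ is an epimorphism and the composite to $\pi_0B$ is an isomorphism, $\pi_0B_{k+1}\cong\pi_0B$. The converse half of \cref{ha7432} then gives that $\operatorname{cofib}(B_{k+1}\to B)$ is $(k+2)$-connective.

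The converse of (1), as written, fails. Attaching a cell along a map that is merely surjective on the bottom homotopy group raises connectivity but does not in general lower Tor-amplitude. The new cotangent complex is a cofiber of $P[k+1]\otimes B\to L_{B/B_k}$, so its Tor-amplitude is only bounded by the maximum of the old bound and $k+2$. Iterating such attachments never makes $L_{B/B'}$ vanish, and no $\operatorname{Ext}$-class argument is involved. Two ideas are missing.

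First, the Tor-amplitude of $L_{B/B_k}$ stays uniformly bounded. Bound $L_{B/B_0}$ once (it is perfect, so \cref{toram}(4) applies) and propagate through the cofiber sequences $L_{B/B_{k-1}}\to L_{B/B_k}\to Q_{k-1}[k+1]\otimes_{B_{k-1}}B$. Hence for $k$ large, $L_{B/B_k}[-k-1]$ is connective of Tor-amplitude $\leq0$, so it is flat by \cref{rem7.2.4.22}. Being also perfect, it is compact projective by \cref{perfflat}; this is exactly where projective rigidity is used.

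Second, you must choose the next cell so that it maps isomorphically onto $L_{B/B_k}$. By \cref{eqproj} and $\pi_0B_k\cong\pi_0B$, there is a compact projective $B_k$-module $Q$ with $Q\otimes_{B_k}B\simeq L_{B/B_k}[-k-1]$. Projectivity lets you choose $Q[k]\to\operatorname{fib}(B_k\to B)$ inducing this identification on the bottom group. The resulting map $Q[k+1]\otimes_{B_k}B\to L_{B/B_k}$ is a map between shifts of flat modules that is an isomorphism on the bottom homotopy group, hence an equivalence by \cref{pi0flat}(2). So a single cell gives $L_{B/B_{k+1}}\simeq0$, and $B_{k+1}\simeq B$ by \cref{ha7432} and hypercompleteness; no retract argument is needed.
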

	\begin{proof}
		We first prove the forward implications. It will be convenient to phrase these results in a slightly more general form. Suppose given a commutative diagram $\sigma$: $$\begin{tikzcd}
			& A \arrow[ld, "f"] \arrow[rd, "h"'] &   \\
			B \arrow[rr, "g"] &                                    & C
		\end{tikzcd}$$
		in $\calg(\mc{A}_{\geq0})$, and let $F(\sigma)=L_{B / A} \otimes_B C$. We will show:\\
		$\left(1^{\prime}\right)$ If $B$ is locally of finite presentation as an $\mathbb{E}_{\infty}$-algebra over $A$, then $F(\sigma)$ is perfect as a $C$-module.\\
		$\left(2^{\prime}\right)$ If $B$ is almost of finite presentation as an $\mathbb{E}_{\infty}$-algebra over $A$, then $F(\sigma)$ is almost perfect as a $C$-module.
		
		We will obtain the forward implications of (1) and (2) by applying these results in the case $B=C$.
		We first observe that the construction $\sigma \mapsto F(\sigma)$ defines a functor $\mathrm{CAlg}(\mc{A})_{A / / C}\to \operatorname{Mod}_C(\mc{A})$. Note that the functor $F$ can be identified with the fiber of the relative adjunction
		$$\begin{tikzcd}
			{\op{Fun}(\Delta^1,\calg(\mc{A})_{A/}) } \arrow[rd] \arrow[r, shift left] & T_{\calg(\mc{A})_{A/}} \arrow[d] \arrow[r, "\sim"] \arrow[l, shift left] & \operatorname{Mod}(\modu_A(\mc{A})) \arrow[ld] \\
			& \calg(\mc{A})_{A/}                                                       &                                      
		\end{tikzcd}$$
		on $C\in \calg(\mc{A})_{A/}$, we deduce that this functor preserves colimits. Since the collection of finitely presented $C$-modules is closed under finite colimits and retracts, it will suffice to prove $(1^{\prime})$ in the case where $B=\operatorname{Sym}_A^* M$ for some connective perfect $A$-module $M$. Using Proposition \cite[Proposition 7.4.3.14]{ha}, we deduce that $F(\sigma) \simeq M \otimes_A C$ is a perfect $C$-module, as desired.
		
		We now prove $\left(2^{\prime}\right)$. By \cite[Corollary 5.5.7.4]{htt}, for any $n\geq 2$ there exists a finitely presented  \ein-$A$-algebra $B'\in \calg(\mc{A}_{\geq0})_{A/}$ such that $\tau_{ \leq n}B$ is a retraction of $\tau_{ \leq n}B'$ as  commutative $A$-algebras. Note that the retraction can be lifted in $\calg(\mc{A}_{\geq0})_{A//\tau_{ \leq n}C}$ by \cite[ \href{https://kerodon.net/tag/04KB}{04KB}]{lurie2025kerodon}, as the following. $$\begin{tikzcd}
			\tau_{ \leq n}B' \arrow[r, "r", shift left] \arrow[rd] & \tau_{ \leq n}B \arrow[d] \arrow[l, "i", shift left] \\
			& \tau_{ \leq n}C                                     
		\end{tikzcd}$$ Now consider the diagram
		$$
		\begin{tikzcd}
			B' \arrow[r] & \tau_{ \leq n}B' \arrow[r, "r"]  & \tau_{ \leq n}B \arrow[r] & \tau_{ \leq n}C \\
			& A \arrow[r] \arrow[u] \arrow[lu] & B \arrow[r] \arrow[u]     & C \arrow[u]    
		\end{tikzcd}
		$$
		We claim that $\tau_{\leq  n-2}(L_{B / A} \otimes_B C)$ is a retraction of
		$\tau_{\leq n-2}(L_{B^{\prime} / A} \otimes_{B^{\prime}} C) 
		$. However, assertion $(1^{\prime})$ implies that $L_{B^{\prime} / A} \otimes_{B^{\prime}} C$ will be perfect because $B^{\prime}$ is locally of finitely presentation as a   \ein-$A$-algebra. Then $L_{B^{\prime} / A} \otimes_{B^{\prime}} C$ is perfect as a retraction of a perfect module and $L_{B / A} \otimes_{B} C$ is almost perfect.
		Now using \cref{ha74}, we see that $L_{\tau_{ \leq n}B/B}$ and $L_{\tau_{ \leq n}B'/B'}$ are $n$-connective, thus we have the natural equivalences $$\tau_{ \leq n-2}(L_{B/A}\otimes_B \tau_{ \leq n}B)\xrightarrow{\sim} \tau_{ \leq n-2}L_{\tau_{ \leq n}B/A} \,\,\,,\,\,\, \tau_{ \leq n-2}(L_{B'/A}\otimes_{B'} \tau_{ \leq n}B')\xrightarrow{\sim} \tau_{ \leq n-2}L_{\tau_{ \leq n}B'/A}.$$ So $$\tau_{ \leq n-2}(L_{B/A}\otimes_B C)\xrightarrow{\sim}\tau_{ \leq n-2}(L_{B/A}\otimes_B \tau_{ \leq n}C)\xrightarrow{\sim}\tau_{ \leq n-2}(L_{\tau_{ \leq n}B/A}\otimes_{\tau_{ \leq n}B}\tau_{ \leq n}C)$$ are equivalences by \cref{lemma4.3} (1). By assumption we have that $\tau_{ \leq n-2}(L_{\tau_{ \leq n}B/A}\otimes_{\tau_{ \leq n}B}\tau_{ \leq n}C)$ is a retraction of $\tau_{ \leq n-2}(L_{\tau_{ \leq n}B'/A}\otimes_{\tau_{ \leq n}B'}\tau_{ \leq n}C)$. Again by \cref{lemma4.3} (1), we get the equivalences $$\tau_{ \leq n-2}(L_{B'/A}\otimes_{B'} C)\xrightarrow{\sim}\tau_{ \leq n-2}(L_{B'/A}\otimes_{B'} \tau_{ \leq n}C)\xrightarrow{\sim}\tau_{ \leq n-2}(L_{\tau_{ \leq n}B'/A}\otimes_{\tau_{ \leq n}B'}\tau_{ \leq n}C).$$ Combining these, we in fact conclude that $\tau_{\leq  n-2}(L_{B / A} \otimes_B C)$ is a retraction of
		$\tau_{\leq n-2}(L_{B^{\prime} / A} \otimes_{B^{\prime}} C) 
		$.
		
		We now prove the reverse implication of (2). Assume that $L_{B / A}$ is almost perfect and that $\pi_0 B$ is a finitely presented as a  commutative $\pi_0 A$-algebra. To prove (2), it will suffice to construct a sequence of maps
		$$
		A \rightarrow B(-1) \rightarrow B(0) \rightarrow B(1) \rightarrow \ldots \rightarrow B
		$$
		such that each $B(n)$ is locally of finite presentation as a commutative  $A$-algebra, and each map $f_n: B(n) \rightarrow B$ is $(n+1)$-connective. We begin by constructing $B(-1)$ with an even stronger property: the map $f_{-1}$ induces an isomorphism $\pi_0 B(-1) \rightarrow \pi_0 B$. By \cref{appro}, there exists compact projective $A$-modules $M,N$ and a diagram 
		
		$$\begin{tikzcd}
			\op{Sym}^*_A(N) \arrow[r, "\alpha"] \arrow[d, "\phi"] & A \arrow[d] \\
			\op{Sym}^*_A(M) \arrow[r]                & B          
		\end{tikzcd}$$
		such that the map $B(-1)\to B$ induces an equivalence on $\pi_0 B$, where we take $B(-1)$ as the pushout of above diagram.
		
		We now proceed in an inductive fashion. Assume that we have already constructed a connective commutative  $A$-algebra $B(n)$ which is of finite presentation over $A$, and an $(n+1)$-connective morphism $f_n$ : $B(n) \rightarrow B$ of commutative $A$-algebras. Moreover, we assume that the induced map $\pi_0 B(n) \rightarrow \pi_0 B$ is an isomorphism (if $n \geq 0$ this is automatic; for $n=-1$ it follows from the specific construction given above). We have a fiber sequence of $B$-modules
		$$
		L_{B(n) / A} \otimes_{B(n)} B \rightarrow L_{B / A} \rightarrow L_{B / B(n)}
		$$
		By assumption, $L_{B / A}$ is almost perfect. Assertion $(1')$ implies that $L_{B(n) / A} \otimes_{B(n)} B$ is perfect. Using \cref{aper}, we deduce that the relative cotangent complex $L_{B / B(n)}$ is almost perfect. Moreover, \cref{ha74} ensures that $L_{B / B(n)}$ is $(n+2)$-connective. It follows that $\pi_{n+2} L_{B / B(n)}$ is a compact module over $\pi_0 B$. Using \cite[ Theorem 7.4.3.12]{ha} and the isomorphism $\pi_0 B(n) \rightarrow \pi_0 B$, we deduce that the canonical map
		$$
		\pi_{n+1} \mathrm{fib}(f_n) \rightarrow \pi_{n+2} L_{B / B(n)}
		$$
		is an isomorphism. Choose a compact projective $B(n)$-module $M$ and a map $M[n+1] \rightarrow \operatorname{fib}(f_n)$ such that the composition
		$$
		\pi_0 M \simeq \pi_{n+1} M[n+1] \rightarrow \pi_{n+1} \mathrm{fib}(f) \simeq \pi_{n+2} L_{B / B(n)}
		$$
		is epimorphic. By construction, we have a commutative diagram of $B(n)$-modules
		$$\begin{tikzcd}
			{M[n+1]} \arrow[d] \arrow[r] & 0 \arrow[d] \\
			B(n) \arrow[r]               & B          
		\end{tikzcd}$$
		Adjoint to this, we obtain a diagram
		in $\calg(\mc{A}_{\geq0})_{A/}$.
		$$\begin{tikzcd}
			{\operatorname{Sym}_{B(n)}^*M[n+1]} \arrow[d] \arrow[r] & B(n) \arrow[d] \\
			B(n) \arrow[r]                                          & B             
		\end{tikzcd}$$
		
		We now define $B(n+1)$ to be the pushout
		$$
		B(n) \otimes_{\operatorname{Sym}_{B(n)}^* M[n+1]} B(n),
		$$
		and $f_{n+1}: B(n+1) \rightarrow B$ to be the induced map. It is clear that $B(n+1)$ is locally of finite presentation over $B(n)$, and therefore locally of finite presentation over $A$ (\cref{compof.p}). To complete the proof of (2), it will suffice to show that the fiber of $f_{n+1}$ is $(n+2)$-connective.
		
		By construction, we have a commutative diagram
		$$
		\begin{tikzcd}
			& \pi_0B(n+1) \arrow[rd, "e''"] &        \\
			\pi_0B(n) \arrow[ru, "e'"] \arrow[rr, "e"] &                               & \pi_0B
		\end{tikzcd}$$
		where the map $e^{\prime}$ is epimorphic and $e$ is isomorphic. It follows that $e^{\prime}$ and $e^{\prime \prime}$ are also isomorphic. In view of \cref{ha7432}, it will now suffice to show $L_{B / B(n+1)}$ is $(n+3)$-connective. We have a fiber sequence of $B$-modules
		$$
		L_{B(n+1) / B(n)} \otimes_{B(n+1)} B \rightarrow L_{B / B(n)} \rightarrow L_{B / B(n+1)}
		$$
		Using \cite[Proposition 7.4.3.14]{ha}  and  \cite[Proposition 7.3.3.7]{ha}, we conclude that $L_{B(n+1) / B(n)}$ is canonically equivalent to $M[n+2] \otimes_{B(n)} B(n+1)$. We may therefore rewrite our fiber sequence as
		$$
		M[n+2] \otimes_{B(n)} B \rightarrow L_{B / B(n)} \rightarrow L_{B / B(n+1)} .
		$$
		The inductive hypothesis and  \cref{ha7432} guarantee that $L_{B / B(n)}$ is $(n+2)$-connective. The $(n+3)$-connectiveness of $L_{B / B(n+1)}$ is therefore equivalent to the surjectivity of the map
		$$
		\pi_0 M \simeq \pi_{n+2}\left(M[n+2] \otimes_{B(n)} B\right) \rightarrow \pi_{n+2} L_{B / B(n)}
		$$
		which is evident from our construction. This completes the proof of (2).
		
		To complete the converse of (1), we use the same strategy but make a more careful choice of $M$. Let us assume that $L_{B / A}$ is perfect. It follows from the above construction that each cotangent complex $L_{B / B(n)}$ is likewise perfect. Using  \cref{toram}, we may assume $L_{B / B(-1)}$ is of Tor-amplitude $\leq k+2$ for some $k \geq 0$. Moreover, for each $n \geq 0$ we have a fiber sequence of $B$-modules
		$$
		L_{B / B(n-1)} \rightarrow L_{B / B(n)} \rightarrow P[n+2] \otimes_{B(n)} B,
		$$
		where $P$ is compact projective by our construction, and therefore of Tor-amplitude $\leq 0$. Using  \cref{toram} and induction on $n$, we deduce that the Tor-amplitude of $L_{B / B(n)}$ is $\leq k+2$ for $n \leq k$. In particular, the $B$-module $\overline{M}=L_{B / B(k)}[-k-2]$ is connective and has Tor-amplitude $\leq 0$. It follows from  \cref{rem7.2.4.22} that $\overline{M}$ is a flat $B$-module. Invoking  \cref{perfflat}\footnote{This is a key fact which requires projective rigidity.}, we conclude that $\overline{M}$ is a compact projective $B$-module. Using  \cref{eqproj}, we can choose a compact projective $B(k)$-module $M$ and an equivalance $M[k+2] \otimes_{B(k)} B \simeq L_{B / B(k)}$. Using this map in the construction outlined above, we guarantee that the relative cotangent complex $L_{B / B(k+1)}$ vanishes. It follows from Corollary 7.4.3.4 (which also works in our general setting) that the map $f_{k+1}: B(k+1) \rightarrow B$ is an equivalence, so that $B$ is locally of finite presentation as an $\mathbb{E}_{\infty}$-algebra over $A$, as desired.
	\end{proof}
	
	\begin{cor}\label{trun2}
		Suppose that the $ttt$-\infcat $(\mc{A}^\otimes, \mc{A}_{\geq0}) $ is projectively rigid.  Let $f: A\to B \in \calg(\ageq)$. Then $f$ is étale if and only if $\tau_{\leq n}f: \tau_{\leq n}A\to \tau_{\leq n}B$ is étale for every $n\geq0$.
	\end{cor}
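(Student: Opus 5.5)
The plan is to treat the three ingredients of étaleness (flatness, L-étaleness, finite presentation) separately, using the results on truncations already proved. Recall that for connective $A,B$ the map $f$ is étale iff it is flat, L-étale and locally of finite presentation. For the forward direction, suppose $f$ is étale. By \cref{flatpush}(2) the square with vertices $A,B,\tau_{\leq n}A,\tau_{\leq n}B$ is a pushout in $\calg(\ageq)$, so $\tau_{\leq n}f$ is a base change of $f$. Flatness is stable under base change by \cref{flatpush}(3). L-étaleness is stable because $L_{\tau_{\leq n}B/\tau_{\leq n}A}\simeq \tau_{\leq n}B\otimes_B L_{B/A}=0$. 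Finite presentation is stable because compact objects of $\calg(\ageq)_{A/}$ are sent to compact objects of $\calg(\ageq)_{\tau_{\leq n}A/}$ by the pushout functor, whose right adjoint (restriction) preserves filtered colimits.

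For the converse, assume each $\tau_{\leq n}f$ is étale. Since a projectively rigid $\mca$ is right complete, Grothendieck, left complete (hence hypercomplete), \cref{trun1}(2) shows that $f$ is flat, and \cref{trun1}(1) shows that $f$ is L-étale. It remains to show that $f$ is locally of finite presentation, which is the main obstacle, because compactness is not obviously detected on truncations. Here I would use \cref{lfinite}(1) in its converse form, which requires projective rigidity. Since $f$ is L-étale, $L_{B/A}=0$ is trivially perfect. So it suffices to know that $\pi_0B$ is finitely presented as a commutative $\pi_0A$-algebra in the sense of \cref{ringfp}.

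For this I would use the case $n=0$. The map $\pi_0A\to\pi_0B$ is étale, hence locally of finite presentation in the derived sense: $\pi_0B$ is compact in $\calg(\ageq)_{\pi_0A/}$. Then, via the adjunction with $\pi_0$, for filtered colimits of discrete $\pi_0A$-algebras we get
\[
\Hom_{\calg(\mca^\heartsuit)_{\pi_0A/}}(\pi_0B,-)\simeq \mapp_{\calg(\ageq)_{\pi_0A/}}(\pi_0B,-).
\]
The inclusion of the heart preserves filtered colimits, since the $t$-structure is compatible with filtered colimits, and $\tau_{\leq0}$ of a filtered colimit of discrete objects is itself. Hence $\pi_0B$ is compact in $\calg(\mca^\heartsuit)_{\pi_0A/}$, i.e.\ finitely presented in the sense of \cref{ringfp}.

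Applying \cref{lfinite}(1) then gives that $B$ is locally of finite presentation over $A$, so $f$ is étale. The only delicate point is the compactness transfer from the derived category to the heart in the last step; if the identification of mapping spaces there needs more care, I would instead apply \cref{appro} together with \cref{lfinite}(2) to first get almost finite presentation and then upgrade to finite presentation via perfectness of $L_{B/A}$.
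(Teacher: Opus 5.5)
Your proposal is correct and follows the same route as the paper: \cref{trun1} gives flatness and L-étaleness, and the converse of \cref{lfinite}(1) gives finite presentation, with the $n=0$ case supplying that $\pi_0B$ is finitely presented over $\pi_0A$. Your compactness transfer is sound, since the inclusion of discrete algebras is right adjoint to $\pi_0$ and preserves filtered colimits, and your base-change argument for the forward direction spells out what the paper's one-line proof leaves implicit.
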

	\begin{proof}
		It follows immediately by combining \cref{trun1} and \cref{lfinite}.
		
	\end{proof}
	\begin{cor}\label{compafp}
		Suppose that the $ttt$-$\infty$-category $(\mathcal{A}^\otimes, \mathcal{A}_{\geq0})$ is projectively rigid. Let $f: A \to B$ be a morphism of discrete commutative algebras in $\operatorname{CAlg}(\mathcal{A}^\heartsuit)$. If $f$ is $L$-étale, then $f$ is finitely presented in the sense of \cref{ringfp} if and only if it is finitely presented in the sense of \cref{einftyfp}.
	\end{cor}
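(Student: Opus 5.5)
The equivalence will be deduced from the characterisation of finite presentation via the cotangent complex in \cref{lfinite}(1). The point is that for an $L$-étale map $L_{B/A}\simeq 0$, which is perfect. So in the presence of $L$-étaleness both notions of finite presentation should reduce to ``$\pi_0$ is finitely presented''.

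For ``derived $\Rightarrow$ classical'', suppose $B$ is compact in $\calg(\ageq)_{A/}$. I would show that $B=\pi_0B$ is compact in $\calg(\mca^\heartsuit)_{A/}$ using the identification
\[
\operatorname{Map}_{\calg(\mca^\heartsuit)_{A/}}(B,C)\simeq \operatorname{Map}_{\calg(\ageq)_{A/}}(B,C)
\]
for discrete $C$. Filtered colimits in $\calg(\mca^\heartsuit)_{A/}$ are computed underlying, because the heart is closed under filtered colimits in the Grothendieck prestable category $\ageq$; projective rigidity gives the Grothendieck property. The discrete inclusion therefore preserves filtered colimits, and compactness descends. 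Here the $L$-étale hypothesis is not even needed. Alternatively, one can note directly that $\pi_0$ of a derived-compact algebra is compact, since $\pi_0$ is left adjoint to an inclusion that preserves filtered colimits.

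For ``classical $\Rightarrow$ derived'', suppose $\pi_0B=B$ is finitely presented as a commutative $\pi_0A=A$-algebra. Since $f$ is $L$-étale, $L_{B/A}=0$ is a perfect $B$-module. The converse direction of \cref{lfinite}(1) then gives that $B$ is locally of finite presentation over $A$. That direction requires $\ageq$ projectively rigid, $\mca$ right complete and $\ageq$ compactly generated, all of which hold under projective rigidity. We also need the $\pi_0$-finite-presentation hypothesis, which is exactly our assumption.

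The only subtle point I anticipate is making sure \cref{lfinite} is applied in $\calg(\ageq)$ with discrete $A,B$ regarded as connective algebras, and that the cotangent complex appearing there is the derived one. The $L$-étale hypothesis in the corollary refers to that derived cotangent complex by definition, so the argument needs nothing beyond these two reductions.
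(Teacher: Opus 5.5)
Your proof is correct and is essentially the paper's argument: the paper just cites \cref{lfinite}(1). Its converse direction, applied with $L_{B/A}\simeq 0$ perfect and $\pi_0B=B$ finitely presented over $\pi_0A=A$, gives the implication from classical to derived finite presentation. For the other implication you make explicit what the paper leaves tacit (and also uses without comment in the finitary Cohn localization argument): $\pi_0$ preserves compact algebras because the inclusion of discrete algebras preserves filtered colimits, and you rightly note that this direction needs no $L$-étale hypothesis.
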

	\begin{proof}
		This is an immediate consequence of \cref{lfinite}(1).
	\end{proof}
	\begin{rem}\label{fpcounterex}
		Note that the $L$-étale condition in \cref{compafp} can not be removed. 
		For example, let \(A=k\) where \(k\) is any field, and let
		\[
		B = k[x,y]/(x^2,xy,y^2).
		\]
		The \(k\)-algebra \(B\) is finite-dimensional, hence in particular is a finitely presented map of ordinary commutative rings.
		Let \(I=(x^2,xy,y^2)\subset k[x,y]\). At the origin the embedding dimension is \(2\) (since \(\mathfrak m/\mathfrak m^2\) is spanned by \(x,y\)), but the ideal \(I\) requires at least \(3\) generators. Therefore \(k\to B\) is not a local complete intersection (lci) map.
		Consequently, the relative cotangent complex \(L_{B/k}\) is not a perfect \(B\)-module by Avramov theorem \cite{avramov1999locally}, and hence $k\to B$ is \emph{not} finitely presented as a map of \ein-rings by \cref{lfinite}(1). 
		
	\end{rem}
	Now we can give the proof of our étale rigidity. Our proof is parallel with the proof of \cite[Theorem 3.4.1]{dag4}.
	\begin{proof}[Proof of  \cref{etrig}:]
		(1) First, using \cref{l5}(3), we may reduce to the case where $A$ is connective. For each $0 \leq n \leq \infty$, let $\mathcal{C}_n$ denote the full subcategory of $\operatorname{Fun}(\Delta^1, \calg(\ageq))$ spanned by those morphisms $f: B \rightarrow B^{\prime}$ such that $B$ and $B^{\prime}$ are connective and $n$-truncated, and let $\mathcal{C}_n^{{fl},L\text{-}et}$ denote the full
		subcategory of $\mathcal{C}_n$ spanned by those morphisms which are also flat and L-étale. Using the left completeness, we deduce that $\mathcal{C}_{\infty}$ is the homotopy inverse limit of the tower
		$$
		\ldots \rightarrow \mc{C}_2 \xrightarrow{\tau_{ \leq 1}} \mc{C}_1 \xrightarrow{\tau_{ \leq 0}} \mc{C}_0 .
		$$
		Using  \cref{trun1}, we deduce that $\mathcal{C}_{\infty}^{{fl},L\text{-}et}$ is the homotopy inverse limit of the restricted tower
		$$
		\ldots \rightarrow \mathcal{C}_2^{{fl},L\text{-}et} \rightarrow \mathcal{C}_1^{{fl},L\text{-}et} \rightarrow \mathcal{C}_0^{{fl},L\text{-}et}
		$$
		Choose a Postnikov tower
		$$
		A \rightarrow \ldots \rightarrow \tau_{\leq 2} A \rightarrow \tau_{\leq 1} A \rightarrow \tau_{\leq 0} A
		$$
		For $0 \leq n \leq \infty$, let $\mathcal{D}_n$ denote the fiber product $\mathcal{C}_n^{{fl},L\text{-}et} \times_{\calg(\ageq)}\left\{\tau_{\leq n} A\right\}$, so that we can identify $\mathcal{D}_n$ with the full subcategory $\calg(\ageq)_{\tau_{\leq n}A /}^{{fl},L\text{-}et}\subset\calg(\ageq)_{\tau_{\leq n}A /} $ spanned by the flat L-étale morphisms $f: \tau_{\leq n} A \rightarrow B$. It follows from \cref{trun1} that $\mathcal{D}_{\infty}$ is the homotopy inverse limit of the tower
		$$
		\ldots \rightarrow \mathcal{D}_2 \xrightarrow{g_1} \mathcal{D}_1 \xrightarrow{g_0} \mathcal{D}_0
		$$
		We wish to prove that the truncation functor induces an equivalence $\mathcal{D}_{\infty} \rightarrow \mathcal{D}_0$. For this, it will suffice to show that each of the functors $g_i$ is an equivalence. Consequently, it follows from \cref{truneq}.\\
		(2) The proof is totally parallel with (1). We only need to replace  ``flat L-étale'' by ``étale'' and replace ``\cref{trun1}'' by ``\cref{trun2}''.
	\end{proof}

	\section{The \infcat of projectively rigid $ttt$-$\infty$-categories}\label{appendix}
	
	Having exhaustively explored the internal higher algebraic theory in a $ttt$-$\infty$-category, we step back to analyze the moduli of such categories themselves.

	\subsection{The universal example via 1-dimensional cobordism}
	In this subsection, we prove that the $\infty$-category of projectively rigid $ttt$-$\infty$-categories is compactly generated. Astonishingly, the universal (compact) generator is precisely the presheaf category on the 1-dimensional framed cobordism category, connecting our abstract algebraic framework directly to topological field theories.
	
	
	\begin{nota}
		Let $\mc{V}\in\calg(\prl)$. We denote $\calg^{\op{rig,at}}_{\mc{V}}$ to be the full subcategory of $\calg(\prlv)$ spanned by rigid and atomically generated commutative $\mc{V}$-algebras. We refer the reader to \cite{ramzi2024dualizable,ramzi2024locally} for more details about the atomic generation and the rigidity.
	\end{nota}
	\begin{rem}
		By \cite[Lemma 4.50]{ramzi2024locally}, a projectively rigid symmetric monoidal Grothendieck prestable \infcat in the sense of \cref{projrigid} can be identified with a $\spgeq$-atomically generated rigid commutative algebra $\mcw\in\calg^{\op{rig,at}}_{\spgeq}$.
	\end{rem}
	\begin{rem}\label{chap8.3}
		By \cref{pret}, a right complete $ttt$-\infcat $(\mc{B}^\otimes,\mc{B}_{\geq0})$ can be totally recovered from the connective part $\mc{B}_{\geq0}^\otimes$ via the stabilization. So there is an equivalence between the \infcat  of projectively rigid $ttt$-\infcats  and the \infcat  of projectively rigid symmetric monoidal Grothendieck prestable \infcats $$\calg^{\op{alg}}(\pr^{t\text{-}rex}_{\op{st}})\xrightarrow{\sim}\calg^{\op{rig,at}}_{\spgeq}$$ given by $(\mc{B}^\otimes,\mc{B}_{\geq0})\mapsto \mc{B}_{\geq0}^\otimes .$ And the inverse is given by $\mathcal{C}^\otimes \mapsto(\operatorname{Sp}(\mathcal{C})^\otimes, \operatorname{Sp}(\mathcal{C})_{\geq 0})$, see \cref{pret}.
	\end{rem}

	We can see the presentability from the following result.
	\begin{prop}
		For any $\mcv\in\calg(\prl)$, $\calg^{\op{rig,at}}_{\mc{V}}$ is presentable.
	\end{prop}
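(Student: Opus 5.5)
We will realize $\calg^{\op{rig,at}}_{\mc{V}}$ as an accessible localization, or at least an accessibly embedded full subcategory closed under colimits, of the presentable $\infty$-category $\calg(\prl_{\mc{V}})$. The latter is presentable only up to the usual size issues, so it is cleaner to work inside $\calg(\prl_{\mc{V},\kappa})$, or directly with small data. The concrete plan is to identify $\calg^{\op{rig,at}}_{\mc{V}}$ with an $\infty$-category of small rigid symmetric monoidal $\mc{V}$-enriched $\infty$-categories, and then show that this is presentable.

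First, we use the structure theory of \cite{ramzi2024locally}: a rigid atomically generated $\mc{V}$-algebra $\mc{W}$ is recovered as $\mc{P}_{\mc{V}}(\mc{W}^{\op{at}})$, the $\mc{V}$-enriched presheaves on its full subcategory of atomic objects. Rigidity makes the atomics coincide with the dualizables, so $\mc{W}^{\op{at}}$ is a small rigid symmetric monoidal $\mc{V}$-category closed under retracts. This gives an equivalence between $\calg^{\op{rig,at}}_{\mc{V}}$ and the $\infty$-category $\calg^{\op{rig}}(\Cat^{\mc{V},\op{idem}})$ of small idempotent-complete rigid symmetric monoidal $\mc{V}$-categories. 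Functors are matched via restriction to atomics: symmetric monoidal colimit-preserving functors between rigid categories preserve dualizables, hence atomics, because atomics are the dualizables.

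Second, we show $\calg(\Cat^{\mc{V},\op{idem}})$ is presentable. For this we use that $\Cat^{\mc{V}}$ is presentable (Gepner–Haugseng) and that idempotent completion is an accessible localization, together with the fact that $\calg$ of a presentably symmetric monoidal $\infty$-category is presentable. We then show that the rigid objects form a full subcategory closed under small colimits and filtered colimits. Closure under filtered colimits and pushouts follows because rigidity (every object dualizable) can be checked on generators, and colimits of small categories are generated by the images of the pieces, whose images consist of dualizable objects. Equivalently, the inclusion admits a right adjoint $\mc{C}\mapsto\mc{C}^{d}$, the full subcategory of dualizables, which is again idempotent complete. A coreflective full subcategory that is closed under colimits and accessible is presentable, so it remains to check accessibility of $\mc{C}\mapsto\mc{C}^{d}$, which holds because dualizability is a condition described by finite data.

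The main obstacle is the first step: making precise the equivalence with small rigid $\mc{V}$-categories, including the morphism level and the requirement that every object of $\mc{W}^{\op{at}}$ is dualizable, for a general $\mc{V}$ rather than just $\spgeq$. If that route is too costly, we will fall back on showing directly that $\calg^{\op{rig,at}}_{\mc{V}}\subset\calg(\prl_{\mc{V}})$ is closed under colimits, which are computed as in $\prl$ via limits along right adjoints. Then $\kappa$-compactly generated presentability follows by an accessibility argument on atomic generators.
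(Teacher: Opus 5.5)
Your route is viable and differs from the paper's, but Step 1 needs a correction.

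\textbf{The flaw in Step 1.} The atomic objects of $\mathcal P_{\mathcal V}(C)$ form the \emph{Cauchy} completion of $C$, i.e.\ its closure under $\mathcal V$-absolute weighted colimits. This is not merely the idempotent completion. The two agree for $\mathcal V=\mathcal S$, but not in general.

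For $\mathcal V=\spgeq$, take the full subcategory $\{0,\mathbb S\}\subset\spgeq$. It is a small, idempotent-complete, rigid symmetric monoidal $\spgeq$-category. Its enriched presheaf category is $\spgeq$ itself, whose atomics are all the compact projectives, e.g.\ $\mathbb S\oplus\mathbb S$.

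So $\mathcal W\mapsto\mathcal W^{\mathrm{at}}$ is fully faithful, but not essentially surjective onto idempotent-complete rigid $\mathcal V$-categories. This is exactly why \cref{A.4} uses \emph{additive} idempotent-complete categories. Once you replace ``idempotent complete'' by ``Cauchy complete'' throughout, you must also check two things:
\begin{enumerate}
\item Cauchy completion is an accessible localization of $\mathrm{Cat}^{\mathcal V}$ compatible with its tensor product.
\item $\mathcal C^{d}$ is closed under absolute colimits (the analogue of closure of dualizables under finite sums and retracts), so that $\mathcal C\mapsto\mathcal C^{d}$ really is a coreflector onto the correct category.
\end{enumerate}
With the coreflector in hand, closure of the rigid objects under colimits is automatic, so that part of your Step 2 is redundant.

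\textbf{Comparison with the paper.} The paper never passes to small data. It quotes Ramzi's results to see that $\calg^{\op{rig,at}}_{\mcv}=\calg^{\op{rig}}_{\mcv}\times_{\prv^{\op{dbl}}}\prv^{\op{at}}$ is accessible, as a pullback of accessible categories. It then gets cocompleteness from closure of $\calg^{\op{rig,at}}_{\mcv}$ under small colimits in $\calg(\prlv)$. That is two lines, but it rests entirely on black-box accessibility and colimit-closure statements about large categories.

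Your argument moves the weight to enriched Morita theory:
\begin{itemize}
\item atomically generated equals enriched presheaves on a Cauchy-complete small category;
\item the morphism-level comparison you describe;
\item rigidity of $\mathcal P_{\mathcal V}(C)$ with Day convolution when $C$ is rigid.
\end{itemize}
After that, presentability is elementary. You use that $\calg$ of a presentably symmetric monoidal category is presentable, plus an accessible coreflection. The coreflection is accessible because duality data is finite and $\mathbf 1\in\mathcal V$ is $\kappa$-compact for some $\kappa$.

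This is the general-$\mathcal V$ version of what the paper does only for $\spgeq$ in \cref{A.4} and \cref{uniexamp}. It has the advantage of giving a concrete small model from which generators can be read off.

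Your fallback is shakier as stated. Colimits in $\calg(\prlv)$ are not computed in $\prl$, since coproducts are $\otimes_{\mathcal V}$. Closure of rigid atomically generated algebras under these colimits is precisely the nontrivial input the paper imports from Ramzi.
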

	\begin{proof}
		To see that, firstly we have that $\calg^{\op{rig,at}}_{\mc{V}}= \calg_{\mc{V}}^{\op{rig}} \times_{\prv^{\op{dbl}}} \prv^{\op{at}}$ is accessible by combining  \textup{\cite[ Corollary 3.15]{ramzi2024dualizable} and \cite[Corollary 5.13, 5.14]{ramzi2024locally}}. Then the presentability follows from that $\calg^{\op{rig,at}}_{\mc{V}}$ admits small colimits, which is obtained by observing the inclusion $\calg^{\op{rig,at}}_{\mc{V}}\subset \calg_{\mc{V}}$ is closed under small colimits.
	\end{proof}
	
	Alternatively, we can give a more straightforward proof of the presentability of $\calg^{\op{rig,at}}_{\mc{V}}$ in the case $\mc{V}^\otimes=\spgeq^\otimes$, and even further give a compact generator which is linked to cobordism hypothesis. 
	
	Before that, let us recall a lemma, which we learned from Germán Stefanich.
	\begin{lem}\label{coreconserv}
		Let $\op{Cat}_{\infty,\op{ad}}^\times$ denote the \infcat of small additive \infcats with finite product preserving functors. Then the core functor $(-)^{\simeq}: \op{Cat}_{\infty,\op{ad}}^\times\to \mc{S}$ is conservative.
	\end{lem}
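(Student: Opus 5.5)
To prove \cref{coreconserv}, let $F:\mathcal{C}\to\mathcal{D}$ be a finite-product-preserving functor between small additive $\infty$-categories such that $F^{\simeq}:\mathcal{C}^{\simeq}\to\mathcal{D}^{\simeq}$ is an equivalence of spaces. The goal is to show that $F$ is an equivalence, which breaks into essential surjectivity and full faithfulness.

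Essential surjectivity is immediate: $\pi_0(F^{\simeq})$ is a bijection on equivalence classes of objects.

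For full faithfulness, the idea is to recover mapping spaces from cores using additivity. Fix $X,Y\in\mathcal{C}$. Consider the shear maps
\[
\begin{pmatrix}1&f\\0&1\end{pmatrix}:X\oplus Y\to X\oplus Y, \qquad f\in\operatorname{Map}_{\mathcal{C}}(Y,X).
\]
Each of these is an automorphism, with inverse the shear by $-f$. This gives a map
\[
\operatorname{Map}_{\mathcal{C}}(Y,X)\longrightarrow \operatorname{Aut}_{\mathcal{C}}(X\oplus Y).
\]
I will identify $\operatorname{Map}(Y,X)$ with the fiber of a map of spaces built only from cores. Concretely, consider the space of automorphisms $\phi$ of $X\oplus Y$ together with homotopies $\phi\circ\iota_X\simeq\iota_X$ and $p_Y\circ\phi\simeq p_Y$. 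Here $\iota_X$ and $p_Y$ are the biproduct inclusion and projection, so these are compatibilities with the splitting $X\to X\oplus Y\to Y$.

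The cleanest way to make this intrinsic is through the core of the arrow category. Since $\mathcal{C}$ is additive, $\operatorname{Map}(Y,X)$ is the fiber over $(X,Y)$ of the map
\[
\operatorname{Fun}(\Delta^1,\mathcal{C})^{\simeq}\to \mathcal{C}^{\simeq}\times\mathcal{C}^{\simeq}.
\]
The difficulty is that $\operatorname{Fun}(\Delta^1,\mathcal{C})^{\simeq}$ is not itself a core of $\mathcal{C}$, so we must express it via the core of $\mathcal{C}$ alone.

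To do this, send $(f:Y\to X)$ to the triple consisting of $X\oplus Y$, its shear automorphism, and the direct-sum data. The key claim is that this exhibits $\operatorname{Map}(Y,X)$ as the space of \emph{unipotent upper-triangular automorphisms} of $X\oplus Y$. This space is a fiber of maps between spaces built from $\mathcal{C}^{\simeq}$, its loop spaces $\operatorname{Aut}$, and the finite-product structure $\oplus:\mathcal{C}^{\simeq}\times\mathcal{C}^{\simeq}\to\mathcal{C}^{\simeq}$.

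Because $F$ preserves finite products, it commutes with $\oplus$, the inclusions, and the projections. Since $F^{\simeq}$ is an equivalence, it induces equivalences on $\operatorname{Aut}(X\oplus Y)\simeq\Omega_{X\oplus Y}\mathcal{C}^{\simeq}$. It also induces equivalences on the spaces of automorphisms compatible with $\iota_X$ and $p_Y$, since these are fibers of maps between such automorphism spaces and mapping data to $X$ and $Y$.

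The hardest step is this last point: the compatibility conditions $\phi\iota_X\simeq\iota_X$ and $p_Y\phi\simeq p_Y$ involve maps that are not invertible, so they do not obviously live in the core. I would handle this by instead characterizing the shear automorphisms via the automorphism $\phi\oplus\mathrm{id}$ of the triple $X\oplus Y$, using the additive structure. Alternatively, I would use the fiber of the restriction map
\[
\operatorname{Aut}(X\oplus Y)\to \operatorname{Aut}(X)\times\operatorname{Aut}(Y)\times\operatorname{Map}(X,Y),
\]
and argue by a two-out-of-three property for $\pi_0$-bijections.

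If this proves intractable, there is a fallback using \cref{coreconserv}'s hypothesis in an enhanced form. The symmetric monoidal structure $\oplus$ makes $\mathcal{C}^{\simeq}$ an $\mathbb{E}_\infty$-space, and $\operatorname{Map}(Y,X)$ is then recovered as the fiber of $\operatorname{Aut}(X\oplus Y)\to\operatorname{Aut}(X)\times\operatorname{Aut}(Y)$ restricted to the unipotent component.
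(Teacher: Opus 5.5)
Your essential surjectivity step is fine, but the full-faithfulness argument is never completed, and each route you sketch fails. You try to characterize the image of the shear map $\operatorname{Map}(Y,X)\to\operatorname{Aut}(X\oplus Y)$ intrinsically, as a fiber of maps between spaces built from cores. There are three problems.
\begin{itemize}
\item There is no restriction map $\operatorname{Aut}(X\oplus Y)\to\operatorname{Aut}(X)\times\operatorname{Aut}(Y)$, because the diagonal entries of an automorphism need not be invertible (consider the swap on $X\oplus X$). So there is no ``unipotent component'' to restrict to.
\item Putting $\operatorname{Map}(X,Y)$ into the base of the restriction map is circular: that is precisely a mapping space you do not yet control.
\item The space of $\phi$ with homotopies $\phi\iota_X\simeq\iota_X$ and $p_Y\phi\simeq p_Y$ is not $\operatorname{Map}(Y,X)$. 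It is $\operatorname{Map}(Y,X)\times\Omega\operatorname{Map}(X,Y)$, because the lower-left entry of $\phi$ receives two independent nullhomotopies.
\end{itemize}

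The missing idea is that you never need to identify the image; a retraction suffices. The off-diagonal extraction $\phi\mapsto p_X\circ\phi\circ\iota_Y$ is defined on all of $\operatorname{Aut}(X\oplus Y)\subset\operatorname{End}(X\oplus Y)$. Composed with the shear $f\mapsto\begin{pmatrix}1&f\\0&1\end{pmatrix}$, it gives the identity of $\operatorname{Map}(Y,X)$. Both maps are compatible with $F$ because $F$ preserves biproducts. Hence $F\colon\operatorname{Map}(Y,X)\to\operatorname{Map}(FY,FX)$ is a retract of $F\colon\operatorname{Aut}(X\oplus Y)\to\operatorname{Aut}(F(X\oplus Y))$. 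The latter map is $\Omega_{X\oplus Y}F^{\simeq}$, so it is an equivalence, and a retract of an equivalence is an equivalence. This is exactly the paper's argument: your shear map was the right ingredient, and you only needed the projection back.
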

	
	\begin{proof}
		For any $x,y\in \mcc$, we have a natural map of retractions:
		\[
		\begin{tikzcd}[row sep=2em, column sep=5em]
			\mathrm{Map}_{\mathcal C}(x,y)
			\arrow[r, "\matone"]
			\arrow[d, "F"']
			&
			\mathrm{Iso}_{\mathcal C}(x \oplus y, x \oplus y)
			\arrow[r, "p_{12}"]
			\arrow[d, "F"]
			&
			\mathrm{Map}_{\mathcal C}(x,y)
			\arrow[d, "F"]
			\\
			\mathrm{Map}_{\mathcal D}(Fx,Fy)
			\arrow[r, "\matoneF"]
			&
			\mathrm{Iso}_{\mathcal D}(Fx \oplus Fy, Fx \oplus Fy)
			\arrow[r, "p_{12}"]
			&
			\mathrm{Map}_{\mathcal D}(Fx,Fy)
		\end{tikzcd}
		\]
		Since  $F^\simeq:\mcc^\simeq\to \mcd^\simeq$ (regarded as a functor) is fully faithful, and since $\op{Iso}_\mcc(x\oplus y,x\oplus y)$ can be identified with the mapping anima 
		$\mathrm{Map}_{\mathcal C^\simeq}(x \oplus y, x \oplus y)$, we conclude that $F$ is fully faithful.
	\end{proof}
	\begin{rem}
		Note that the additive condition in the above lemma can not be weakened to the semi-additive, otherwise the endmorphism $\bigl(\begin{smallmatrix}
			1 & f \\
			0& 1 \\
		\end{smallmatrix}\bigr)$ is not necessarily an automorphism.
	\end{rem}
	\begin{de}\label{smallrig}
		We say  a symmetric monoidal \infcat is small rigid if it is small and every object in it is dualizable.
	\end{de}
	\begin{prop}\label{A.4}	 We have a natural equivalence $$\calg^{\op{rig,at}}_{\spgeq}\xrightarrow{\sim}\calg^{\op{rig}}(\op{Cat}^{\op{idem}}_{\infty,\op{ad}})$$ given by $\mc{C}^\otimes\mapsto(\mc{C}^{\op{cproj}})^{\otimes}$, where  $\calg^{\op{rig}}(\op{Cat}^{\op{idem}}_{\infty,\op{ad}})$ denotes the full subcategory of $\calg(\op{Cat}^{\op{idem}}_{\infty,\op{ad}})$ spanned by small rigid idempotent-complete additively symmetric monoidal $\infty$-categories. 	
		
	\end{prop}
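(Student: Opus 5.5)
The plan is to construct the functor and an inverse explicitly, using symmetric monoidal sifted cocompletion. Given $\mc{C}^\otimes\in\calg^{\op{rig,at}}_{\spgeq}$, i.e.\ a projectively rigid symmetric monoidal Grothendieck prestable \infcat (by the remark identifying the two notions via \cite[Lemma 4.50]{ramzi2024locally}), the full subcategory $\mc{C}^{\op{cproj}}=\mc{C}^d$ contains the unit and is closed under tensor products, since dualizables are. It is also closed under finite coproducts and retracts, and it is small. Every object in it is dualizable with dual again in $\mc{C}^d=\mc{C}^{\op{cproj}}$. Hence $(\mc{C}^{\op{cproj}})^\otimes$ is a small rigid, idempotent-complete, additive symmetric monoidal \infcat; additivity comes from prestability of $\mc{C}$. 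A morphism in $\calg^{\op{rig,at}}_{\spgeq}$ is a symmetric monoidal left adjoint, so it preserves dualizables and therefore restricts to a symmetric monoidal finite-coproduct-preserving functor on $\mc{C}^{\op{cproj}}$. This defines the functor.

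In the other direction, send $\mc{I}^\otimes$ to $\mc{P}_\Sigma(\mc{I})^\otimes$ with the Day convolution structure extending $\otimes$ and preserving sifted colimits \cite[Prop.~4.8.1.10]{ha}. This is presentably symmetric monoidal, and because $\mc{I}$ is additive it is additive and prestable, and projectively generated with $\mc{P}_\Sigma(\mc{I})^{\op{cproj}}\simeq\mc{I}$ by idempotent completeness.

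The key step, and the one I expect to be the main obstacle, is showing $\mc{P}_\Sigma(\mc{I})^d=\mc{P}_\Sigma(\mc{I})^{\op{cproj}}$. The inclusion $\supseteq$ holds because the Yoneda image consists of dualizables. For $\subseteq$, first note that the unit $y(\mb{1})$ is compact projective. Then, for a dualizable $X$,
$$\mapp(X,-)\simeq\mapp(\mb{1},X^\vee\otimes-),$$
and $X^\vee\otimes-$ preserves all colimits. Hence $\mapp(X,-)$ preserves filtered colimits and geometric realizations, so $X$ is compact projective, mirroring \cref{pi0proj}(2).

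Next comes full faithfulness on morphisms. By the universal property of $\mc{P}_\Sigma$, colimit-preserving symmetric monoidal functors $\mc{P}_\Sigma(\mc{I})\to\mc{D}$ correspond to symmetric monoidal finite-coproduct-preserving functors $\mc{I}\to\mc{D}$. Such a functor automatically lands in $\mc{D}^d=\mc{D}^{\op{cproj}}$, giving
$$\Map(\mc{P}_\Sigma(\mc{I}),\mc{D})\simeq\Map(\mc{I},\mc{D}^{\op{cproj}}).$$
The remaining point is that a symmetric monoidal left adjoint between rigid atomically generated algebras has a colimit-preserving right adjoint, so that it is indeed a morphism of the relevant type. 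This is where I would invoke the rigidity machinery of \cite{ramzi2024locally} as a black box.

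Essential surjectivity then follows from $\mc{C}\simeq\mc{P}_\Sigma(\mc{C}^{\op{cproj}})$ symmetric monoidally, by \cref{projcocomplete}(2). In the other direction, $\mc{P}_\Sigma(\mc{I})^{\op{cproj}}\simeq\mc{I}$. Together these show the two constructions are mutually inverse.
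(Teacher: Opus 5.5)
Your proposal is correct and follows essentially the paper's route. The paper's one-line proof reduces everything, implicitly via the standard $\mathcal{P}_\Sigma$/compact-projective correspondence, to the observation you use for functoriality and full faithfulness: colimit-preserving symmetric monoidal functors preserve dualizables, hence compact projectives. Your verification that $\mathcal{P}_\Sigma(\mathcal{I})$ is again projectively rigid spells out a step the paper leaves implicit. Your final black-box step about colimit-preserving right adjoints is unnecessary, because $\calg^{\op{rig,at}}_{\spgeq}$ is by definition a full subcategory of $\calg(\mathcal{P}\mathrm{r}^L_{\spgeq})$, so the functors produced by the universal property of $\mathcal{P}_\Sigma$ are already morphisms there.
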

	\begin{proof}
		It suffices to observe that any colimit-preserving symmetric monoidal functor between projectively rigid commutative algebras $\botimes_{\geq0}\to \cotimes_{\geq0}$ preserves compact projective objects.
	\end{proof}
	
	Now let us recall the (1-dimensional) cobordism hypothesis, which was originally formulated in \cite{baez1995higher} and was proved by Hopkins--Lurie in \cite{lurie2008classification}.
	\begin{thm}[\cite{lurie2008classification}\label{cobhypo} Cobordism hypothesis of dim=1] Let $\mb{Cob}_1^\otimes$ denote the oriented 1-dimensional cobordism $(\infty,1)$-category with the symmetric monoidal structure given by disjoint union. Then $\mb{Cob}_1^\otimes$ is small rigid and satisfies the following universal property:\\
		Let $\mathcal{C}$ be a symmetric monoidal $(\infty, 1)$-category. Then the evaluation functor $Z \mapsto Z(*)$ induces an equivalence of \infcats
		$$
		\operatorname{Fun}^{\otimes}(\mb{Cob}_1, \mathcal{C}) \rightarrow (\mathcal{C}^d)^{\simeq}  
		$$
		where $\operatorname{Fun}^{\otimes}$ denotes the \infcat of symmetric monoidal functors.
	\end{thm}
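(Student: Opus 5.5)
We do not reprove this result from scratch; we follow the strategy of \cite{lurie2008classification}, in the streamlined form of Harpaz's proof of the one-dimensional case. The argument has three steps.

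\textbf{Step 1: $\mb{Cob}_1$ is small rigid.} Objects of $\mb{Cob}_1$ are finite oriented $0$-manifolds, so every object is a finite disjoint union of the points $+$ and $-$. We exhibit $-$ as the dual of $+$. The evaluation and coevaluation are the two half-circle bordisms $+\sqcup -\to\emptyset$ and $\emptyset\to -\sqcup+$. The two zig-zag composites are bordisms diffeomorphic, relative to the boundary, to the identity interval, which gives the triangle identities. Smallness is clear, since the mapping spaces are classifying spaces of diffeomorphism groups of $1$-manifolds.

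\textbf{Step 2: reduction to the rigid case.} Let $Z:\mb{Cob}_1\to\mc{C}$ be symmetric monoidal. Since symmetric monoidal functors preserve duality data, $Z$ lands in $\mc{C}^d$. Moreover, any symmetric monoidal natural transformation between two functors out of a rigid category is invertible: the inverse at $x$ is the mate of the component at $x^\vee$. Hence $\operatorname{Fun}^{\otimes}(\mb{Cob}_1,\mc{C})$ is an $\infty$-groupoid equal to $\operatorname{Fun}^{\otimes}(\mb{Cob}_1,\mc{C}^d)^{\simeq}$, and we may assume that $\mc{C}$ is itself rigid. Since $+$ generates $\mb{Cob}_1$ under $\otimes$ and duals, the theorem then becomes the claim that $\mb{Cob}_1$ is the free rigid symmetric monoidal $\infty$-category on one object. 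Equivalently, the canonical functor $\mathcal{F}(*)\to\mb{Cob}_1$ from the free rigid category on a point, sending the generator to $+$, is an equivalence.

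\textbf{Step 3: the free rigid category.} The functor $\mathcal{F}(*)\to\mb{Cob}_1$ is essentially surjective because $+$ and $-$ generate. The real content, and the expected main obstacle, is full faithfulness: the mapping spaces of $\mathcal{F}(*)$ must agree with those of $\mb{Cob}_1$. By duality it suffices to treat maps out of $\emptyset$. Such a mapping space in $\mb{Cob}_1$ is a moduli space of $1$-manifolds with labelled boundary. Its arc components are contractible. Its circle components contribute the free $\mathbb{E}_\infty$-space on the classifying space of the diffeomorphism group of an oriented circle, that is, on $BSO(2)$.

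I would handle this in two parts.
\begin{enumerate}
\item On the algebraic side, compute the mapping spaces of $\mathcal{F}(*)$ via Harpaz's description of the free rigid category. This means adjoining evaluation and coevaluation maps with triangle-identity homotopies to the free symmetric monoidal category on $\{+,-\}$. The key output is the endomorphism space of the unit: the dimension $\dim(+)$ carries a canonical $S^1$-action (the cyclic trace), so $\Omega\operatorname{Map}(\emptyset,\emptyset)$ matches the free $\mathbb{E}_\infty$-space on $BS^1$.
\item On the geometric side, use Morse theory and Cerf theory to show that the space of $1$-bordisms is generated by elementary handle attachments, namely cups and caps, subject only to the birth--death and zig-zag relations.
\end{enumerate}
These relations are exactly the ones imposed in $\mathcal{F}(*)$, so the two mapping spaces coincide.

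Step 3 is where the genuine topology enters; Steps 1 and 2 are formal.
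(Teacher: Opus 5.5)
The paper does not prove this statement; it imports it from Lurie. Your Steps 1 and 2 are correct and purely formal. The gap is Step 3, which is the entire content of the theorem, and as you describe it, it does not go through.

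First, the algebraic model is wrong. Freely adjoining $\mathrm{ev}$, $\mathrm{coev}$ and homotopies for \emph{both} triangle identities does not produce the free rigid category. A symmetric monoidal functor out of that object is a tuple $(X,Y,e,c,\alpha,\beta)$. Over a fixed dualizable $X$, the space of $(Y,e,c,\alpha)$ is already contractible, so the extra choices of $\beta$ form a torsor under $\Omega_{\mathrm{id}}\operatorname{Map}(Y,Y)$. This torsor has two components already for $Y=\mathbb{S}\in\mathrm{Sp}$. To fix this you would need the swallowtail coherence, or a single homotopy with the other identity imposed as a property, and the latter is not a cell attachment.

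Second, and more seriously, even with a correct model you give no way to compute its mapping spaces. The $S^1$-action on $\dim(+)$ only produces a map from the free $\mathbb{E}_\infty$-space on $BS^1$ into $\operatorname{Map}_{\mathcal F(*)}(\mathbf{1},\mathbf{1})$. That mapping space itself, not its loop space, is what must be identified, and the claim that this map is an equivalence is essentially the theorem.

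The geometric side has the same problem. Cerf theory presents bordisms by cups and caps modulo birth--death and zig-zag moves only up to diffeomorphism, i.e.\ on $\pi_0$ of mapping spaces. There both sides give $\mathbf{N}$ for $\pi_0\operatorname{Map}(\emptyset,\emptyset)$, while the factors $BSO(2)$ coming from rotating circles are invisible to such a presentation. So ``the relations match, hence the mapping spaces coincide'' is not a valid inference for $(\infty,1)$-categories. Moreover, you must show that the specific map induced by $\mathcal F(*)\to\mathbf{Cob}_1$ is an equivalence, not merely that the two spaces are abstractly equivalent.

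Lurie's outline, completed by Harpaz, avoids any cell presentation involving triangle homotopies. One uses the subcategory $\mathrm{Bord}_1^{\mathrm{ev}}\subset\mathbf{Cob}_1$ of bordisms with no closed components and no coevaluation arcs. Its mapping spaces are discrete, and it is the free symmetric monoidal $\infty$-category on objects $+,-$ and a pairing $+\otimes -\to\mathbf{1}$.

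The geometric input is a Morse-theoretic contractibility statement, which Harpaz makes rigorous using quasi-unital $\infty$-categories. It says that restriction along $\mathrm{Bord}_1^{\mathrm{ev}}\hookrightarrow\mathbf{Cob}_1$ identifies $\operatorname{Fun}^{\otimes}(\mathbf{Cob}_1,\mathcal{C})$ with the components of $\operatorname{Fun}^{\otimes}(\mathrm{Bord}_1^{\mathrm{ev}},\mathcal{C})^{\simeq}$ on which the pairing is nondegenerate. Nondegeneracy is a property, and the space of pairs $(Y,e)$ exhibiting $Y$ as a dual of a given $X$ is contractible, so this space is $(\mathcal{C}^d)^{\simeq}$. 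No triangle homotopies and no independent computation of $BS^1$ are needed. Without an argument of this kind, your Step 3 is a heuristic rather than a proof.
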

	\begin{rem}
		Note that the  1-dimensional oriented and framed cobordism \infcats are equivalent $\mb{Cob}_1 \simeq \mb{Bord}_1^{\mathrm{fr}}$ \cite[see][\textsection4.2]{lurie2008classification}, but that does not hold in higher dimensional cases.
	\end{rem}
	We now prove the main theorem of this subsection.
	\begin{thm}[Universal example]\label{uniexamp}
		The \infcat $\calg^{\op{rig,at}}_{\spgeq}$ is compactly generated by a single element $$\funct(\mb{Cob}_1^{\op{op}},\spgeq)^\otimes\in\calg^{\op{rig,at}}_{\spgeq} $$where the symmetric monoidal structure  $\funct(\mb{Cob}_1^{\op{op}},\spgeq)^\otimes$ is given by Day convolution.
		
	\end{thm}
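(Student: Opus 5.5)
The plan is to transport the problem along \cref{A.4} to small rigid idempotent-complete additive symmetric monoidal $\infty$-categories, and there show that $\funct(\mb{Cob}_1^{\op{op}},\spgeq)^\otimes$ corepresents the core functor $\mc{C}^\otimes\mapsto(\mc{C}^{\op{cproj}})^{\simeq}$. Conservativity of the core (\cref{coreconserv}) then yields generation, and preservation of filtered colimits yields compactness.

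First I check that $\mc{W}:=\funct(\mb{Cob}_1^{\op{op}},\spgeq)^\otimes$ lies in $\calg^{\op{rig,at}}_{\spgeq}$. Since $\mb{Cob}_1$ is small rigid by \cref{cobhypo}, the Day convolution presheaf category is projectively generated. Its compact projectives are the retracts of finite sums of $\Sigma^\infty_+ h(x)$, and these are dualizable because the Yoneda embedding is symmetric monoidal and $\mb{Cob}_1$ is rigid. Conversely, dualizables are compact projective because the unit $\Sigma^\infty_+h(\emptyset)$ is compact projective, by the argument of \cref{pi0proj}(2). So $\mc{W}$ is projectively rigid.

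Next I compute maps out of $\mc{W}$. For $\mc{C}\in\calg^{\op{rig,at}}_{\spgeq}$, the universal property of Day convolution as a symmetric monoidal cocompletion identifies colimit-preserving symmetric monoidal functors $\mc{W}\to\mc{C}$ with symmetric monoidal functors $\mb{Cob}_1\to\mc{C}$. Every such functor lands in $\mc{C}^d=\mc{C}^{\op{cproj}}$ because $\mb{Cob}_1$ is rigid. By \cref{cobhypo} this space is $(\mc{C}^{\op{cproj}})^{\simeq}$, naturally in $\mc{C}$. One must still check that this functor category is $\infty$-groupoidal, which holds since natural transformations between functors out of a rigid category are invertible.

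For generation, suppose $F:\mc{B}\to\mc{C}$ is a morphism in $\calg^{\op{rig,at}}_{\spgeq}$ such that $\Map(\mc{W},-)$ inverts it. Then $F^{\op{cproj}}$ is an equivalence on cores. By \cref{coreconserv} applied to additive categories, $F^{\op{cproj}}$ is an equivalence, and hence $F$ is an equivalence by \cref{A.4}. Since $\calg^{\op{rig,at}}_{\spgeq}$ is presentable, a single corepresenting object for a conservative functor generates under colimits.

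For compactness, I need $\mc{C}\mapsto(\mc{C}^{\op{cproj}})^{\simeq}$ to preserve filtered colimits. Through \cref{A.4}, filtered colimits in $\calg^{\op{rig}}(\op{Cat}^{\op{idem}}_{\infty,\op{ad}})$ are computed as idempotent completions of filtered colimits of underlying small categories. The core commutes with filtered colimits of categories, but idempotent completion may add objects. I expect this to be the main obstacle, and I would try the following. In a filtered colimit of rigid categories, an idempotent $e$ on an object $x$ with dual $x^\vee$ is defined at some finite stage, and so splits after passing further along the diagram; in the rigid setting the splitting can be realized at a later stage. If that argument fails, I would fall back on showing that the forgetful functor from $\calg^{\op{rig,at}}_{\spgeq}$ to $\calg(\prl)$ preserves filtered colimits, and then using $\Map(\mc{W},-)\simeq\Map_{\calg}(\mb{Cob}_1,-)$ together with the finiteness properties of $\mb{Cob}_1$.
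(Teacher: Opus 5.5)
Your outline is the paper's proof. You transport along \cref{A.4}, identify the functor corepresented by $\funct(\mb{Cob}_1^{\op{op}},\spgeq)^\otimes$ with $\mathcal{C}\mapsto(\mathcal{C}^{\op{cproj}})^{\simeq}$ via Day convolution and \cref{cobhypo}, get generation from \cref{coreconserv}, and get compactness from commutation of this core functor with filtered colimits. Your explicit check that $\funct(\mb{Cob}_1^{\op{op}},\spgeq)^\otimes$ is projectively rigid is a useful detail the paper leaves implicit.

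The one real gap is the compactness step, which you leave open, and the mechanism you propose for it is not the right one. A \emph{coherent} idempotent is not finite data, since the walking idempotent $\mathrm{Idem}$ is not a compact $\infty$-category, so it need not be defined at any finite stage. On the other hand, every stage is already idempotent complete, so ``passing further along the diagram'' is not what is missing. Rigidity plays no role here; additivity does. Your fallback through $\calg(\prl)$ does not avoid the issue either: filtered colimits in $\prl$ are not computed on underlying $\infty$-categories, and on dualizable (compact) objects you are led back to the same idempotent-completion question.

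The paper obtains compactness from two facts. First, \cite[Proposition 4.53]{ramzi2024locally} says rigid objects are closed under colimits in $\calg(\op{Cat}^{\op{idem}}_{\infty,\op{ad}})$; for filtered colimits this is clear anyway, since dualizability is finite data. Second, the paper asserts that $(-)^{\simeq}$ commutes with filtered colimits on $\calg(\op{Cat}^{\op{idem}}_{\infty,\op{ad}})$. That assertion amounts to saying that a filtered colimit $\mathcal{C}=\colim_i\mathcal{C}_i$ of idempotent-complete additive $\infty$-categories, computed in $\op{Cat}_\infty$ (where it is again additive), is already idempotent complete. This holds for three reasons.
\begin{enumerate}
\item[(a)] For any $\infty$-category $\mathcal{C}$, if every idempotent of $h\mathcal{C}$ splits, then $\mathcal{C}$ is idempotent complete. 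To see this, split a coherent idempotent in presheaves and compare it with a splitting of the underlying idempotent in $h\mathcal{C}$; two splittings of the same idempotent in a $1$-category are isomorphic.
\item[(b)] $h(-)$ commutes with filtered colimits, so an idempotent $e=e^2$ of $h\mathcal{C}$ is already an idempotent of some $h\mathcal{C}_j$.
\item[(c)] In an idempotent-complete \emph{additive} $\mathcal{C}_j$, every idempotent of $h\mathcal{C}_j$ splits. Embed $\mathcal{C}_j$ fully faithfully into the stable presentable $\funct^{\times}(\mathcal{C}_j^{\op{op}},\op{Sp})$, whose homotopy category has split idempotents by the B\"okstedt--Neeman telescope argument. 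A retract there of an object $X\in\mathcal{C}_j$ is the splitting of a coherent idempotent on $X$, and hence lies in $\mathcal{C}_j$. For non-additive $\infty$-categories, homotopy idempotents need not split.
\end{enumerate}
The image of the splitting at stage $j$ splits $e$ in $\mathcal{C}$. So idempotent completion adds no objects, the core commutes with the filtered colimit, and your compactness argument goes through.
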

	\begin{proof}
		By \cref{A.4}, we have an equivalence $$\calg^{\op{rig,at}}_{\spgeq}\xrightarrow{\sim}\calg^{\op{rig}}(\op{Cat}^{\op{idem}}_{\infty,\op{ad}}).$$
		We first observe that by \cref{cobhypo}, the composite $$\calg^{\op{rig,at}}_{\spgeq}\xrightarrow{\sim}\calg^{\op{rig}}(\op{Cat}^{\op{idem}}_{\infty,\op{ad}})\hookrightarrow \calg(\op{Cat}^{\op{idem}}_{\infty,\op{ad}})\xrightarrow{(-)^\simeq} \mcs$$ is represented by $\funct(\mb{Cob}_1^{\op{op}},\spgeq)^\otimes\in\calg^{\op{rig,at}}_{\spgeq}$. 
		Since the second functor preserves small colimits by \cite[Proposition 4.53]{ramzi2024locally} and the third functor preserves filtered colimits, we see that $\funct(\mb{Cob}_1^{\op{op}},\spgeq)^\otimes$ is compact in $\calg^{\op{rig,at}}_{\spgeq}$. Moreover, the representable functor by $\funct(\mb{Cob}_1^{\op{op}},\spgeq)^\otimes$ is conservative by \cref{coreconserv}. Consequently, it is a compact generator of $\calg^{\op{rig,at}}_{\spgeq}$.
	\end{proof}
	
	\subsection{Algebraic functors}
	We  formalize the morphisms between projectively rigid $ttt$-$\infty$-categories, termed algebraic functors. In this subsection, we demonstrate that an algebraic functor preserve all the essential algebraic properties we have defined: flatness, projective modules, finite presentation, and étale maps.
	\begin{de}\label{algttt}
		We call a right $t$-exact colimit-preserving symmetric monoidal functor $(\mc{B}^\otimes,\mc{B}_{\geq0})\to(\mc{C}^\otimes,\mc{C}_{\geq0})$ between projectively rigid $ttt$-\infcats by an algebraic functor.
	\end{de}
	\begin{prop}\label{modulepropertiespreser}
		Let $F:(\mc{B}^\otimes,\mc{B}_{\geq0})\to(\mc{C}^\otimes,\mc{C}_{\geq0})$ be an algebraic functor between projectively rigid $ttt$-$\infty$-categories, and let $R\in\alg(\mc{B}_{\geq0})$. Then the functor $\lmodu_{R}(\mc{B}_{\geq0})\to\lmodu_{F(R)}(\mc{C}_{\geq0})$ preserves 
		\enu{
			\item compact projectives;
			\item compacts; 
			\item projectives; 
			\item flats; 
			\item almost perfects. }
	\end{prop}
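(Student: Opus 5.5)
The functor in question is the base change $F_R:\lmodu_R(\mc{B}_{\geq0})\to\lmodu_{F(R)}(\mc{C}_{\geq0})$, $M\mapsto F(M)$, which lands in connective modules because $F$ is right $t$-exact. Since $F$ is symmetric monoidal and preserves colimits, it preserves relative tensor products: $F(N\otimes_R M)\simeq F(N)\otimes_{F(R)}F(M)$, as these are computed by geometric realizations of bar constructions. Free modules go to free modules, $F_R(R\otimes P)\simeq F(R)\otimes F(P)$, and $F_R$ preserves small colimits. I would reduce everything to the characterizations established earlier under projective rigidity.

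For (1), \cref{pi0proj}(2) says that $M$ is compact projective if and only if it is left dualizable in $\lmodu_R(\mc{B}_{\geq0})$. The functor $F$ carries the duality data $(\,^{\vee}M, \mathrm{ev}, \mathrm{coev})$ over $R$ to duality data over $F(R)$: the dual $^{\vee}M$ is a connective right $R$-module, so its image is connective, and the evaluation and coevaluation maps are given by relative tensor products, which $F$ preserves. Hence $F_R(M)$ is left dualizable in $\lmodu_{F(R)}(\mc{C}_{\geq0})$, and \cref{pi0proj}(2) again makes it compact projective. Alternatively, one can argue via \cref{l1}(2): a compact projective is a retract of $R\otimes P$ with $P\in\mc{B}_{\geq0}^{\op{cproj}}=\mc{B}_{\geq0}^d$; then $F(P)$ is dualizable in $\mc{C}_{\geq0}$, hence compact projective, so $F(R)\otimes F(P)$ is compact projective.

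The remaining parts follow from (1).
\begin{itemize}
\item[(3)] A projective module is a retract of a direct sum of compact projectives (\cref{cocompinftycoprod}), and $F_R$ preserves direct sums and retracts.
\item[(4)] A flat module is a filtered colimit of compact projectives (\cref{cocompfil}). Apply (1), the fact that $F_R$ preserves filtered colimits, and the converse direction of \cref{cocompfil} over $F(R)$.
\item[(2)] Compact connective modules are retracts of finite colimits of compact projectives. This is what projective generation gives in $\mc{P}_\Sigma$ when the relevant finite colimits are taken in the stable category; via \cref{fullstableproj} this holds stably as well. Since $F_R$ preserves finite colimits and retracts, (2) follows from (1).
\item[(5)] By \cref{cocompsimplicial}, a connective almost perfect module is the geometric realization of a simplicial object whose terms are compact projective. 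Its image is then the geometric realization of compact projectives, which is almost perfect by \cref{aper}(4), or directly by \cref{cocompsimplicial} over $F(R)$. Shifts handle the non-connective bounded-below case if one works in the stable category.
\end{itemize}

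The main obstacle is making sure that $F$ really commutes with relative tensor products and with the duality data in the $\mathbb{E}_1$ (non-commutative) setting. This needs $F$ to be symmetric monoidal and to preserve geometric realizations, so that the bar construction is preserved, and it needs the connectivity of duals to survive. If the dualizability route gets technically heavy, I would fall back on the free-module argument above, which uses only that $F$ preserves dualizable objects of $\mc{B}_{\geq0}$ and colimits.
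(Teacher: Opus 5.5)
Your proposal is correct and follows essentially the same route as the paper. The paper treats (1)--(3) as immediate, and your dualizability or free-module argument for (1), together with your reductions of (2) and (3) to (1), makes this explicit; it then deduces (4) and (5) from \cref{cocompfil} and \cref{cocompsimplicial} exactly as you do.
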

	\begin{proof}
		(1)-(3) are obvious. (4) and (5) follow by \cref{cocompfil} and \cref{cocompsimplicial}.
	\end{proof}
	\begin{lem}\label{functorialcotangent}
		Let $F:\botimes\to\cotimes\in\calg(\prlst)$. Then for any $A\to B\in \calg(\mc{B})$, we have a natural equivalence $$F(L_{B/A})\simeq L_{F(B)/F(A)}$$ in $\modu_{F(B)}(\mcc)$.
	\end{lem}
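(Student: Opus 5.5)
The plan is to use the universal property of the cotangent complex together with the fact that $F$, being a morphism in $\calg(\prlst)$, is colimit-preserving and symmetric monoidal. The functor $F$ induces $F:\calg(\mc{B})\to\calg(\mcc)$, and for each $A\in\calg(\mc{B})$ a symmetric monoidal, colimit-preserving functor $F_A:\modu_A(\mc{B})\to\modu_{F(A)}(\mcc)$. Its right adjoint $G$ is lax monoidal, which gives a right adjoint $G_A:\modu_{F(A)}(\mcc)\to\modu_A(\mc{B})$. The aim is to identify both sides of the claimed equivalence as corepresenting the same functor on $\modu_{F(B)}(\mcc)$, namely relative derivations.

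First I reduce to the absolute case. By the cofiber sequence $B\otimes_A L_A\to L_B\to L_{B/A}$, it suffices to construct a natural map $F(L_B)\to L_{F(B)}$ compatible with these cofiber sequences and to show it is an equivalence; the relative statement then follows since $F$ is exact and $F(B\otimes_A L_A)\simeq F(B)\otimes_{F(A)}F(L_A)$. Alternatively I can work relatively throughout, replacing $\mc{B}$ by $\modu_A(\mc{B})$ and $\mcc$ by $\modu_{F(A)}(\mcc)$. This is cleaner, so I will take the relative case with $A$ as the unit.

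Next I show that $F$ commutes with trivial square-zero extensions. For $M\in\modu_B(\mc{B})$, the algebra $B\oplus M$ is built from $B$ and $M$ by biproducts and the symmetric monoidal structure, which $F$ preserves. Hence $F(B\oplus M)\simeq F(B)\oplus F_B(M)$ over $F(B)$. Lurie's construction is $B\oplus M=\Omega^\infty$ of a spectrum object in $\calg(\mc{B})_{/B}$, and $F$ preserves the finite limits involved because it is exact on underlying objects and the forgetful functor creates limits. For the converse direction I need the right adjoint: $G_B(N)$ for $N\in\modu_{F(B)}(\mcc)$, and an equivalence $G(F(B)\oplus N)\times_{GF(B)}B\simeq B\oplus G_B(N)$. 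This holds because the right adjoint on algebras, followed by pullback along the unit $B\to GF(B)$, preserves limits and hence spectrum objects in the tangent bundle.

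Then I chase mapping spaces. For $N\in\modu_{F(B)}(\mcc)$ we have
$$\mapp_{\modu_{F(B)}}(F_B(L_{B}),N)\simeq\mapp_{\modu_B}(L_{B},G_BN)\simeq\mapp_{\calg_{/B}}(B,B\oplus G_BN)\simeq\mapp_{\calg(\mcc)_{/F(B)}}(F(B),F(B)\oplus N),$$
and the last space is $\mapp(L_{F(B)},N)$. By Yoneda this gives the equivalence. Naturality in $A\to B$ follows by carrying out the argument fiberwise on the tangent bundles, using that $F$ induces a map of tangent correspondences $\mc{M}^T(\calg(\mc{B}))\to\mc{M}^T(\calg(\mcc))$ over $\Delta^1$ which preserves cocartesian edges. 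I expect the main obstacle to be the third equivalence in the chain, i.e.\ checking that the algebra-level adjunction between $\calg(\mc{B})_{/B}$ and $\calg(\mcc)_{/F(B)}$ restricts compatibly to square-zero extensions. I would handle this by the spectrum-object description rather than by explicit formulas.
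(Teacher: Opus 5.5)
Your proposal is correct and is essentially the paper's argument. The paper observes that the equivalence $T_{\mathrm{CAlg}(\mathcal{C})}\simeq\mathrm{Mod}(\mathcal{C})$ is compatible with the functors induced by the right adjoint $G$, and then passes to left adjoints to get $F(L_A)\simeq L_{F(A)}$; this is the global form of your Yoneda chase through $F_B\dashv G_B$ and $B\oplus G_BN\simeq G(F(B)\oplus N)\times_{GF(B)}B$. That compatibility is the step you flag as the main obstacle, and the paper simply takes it as an observation rather than proving it.

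The only real difference is the relative step. The paper writes $L_{B/A}$ as a relative pushout in $\mathrm{Mod}(\mathcal{B})$ over $\mathrm{CAlg}(\mathcal{B})$ and uses that $\mathrm{Mod}(F)$ preserves cocartesian edges, hence relative colimits. You instead base change to $\mathrm{Mod}_A(\mathcal{B})$ or use the cofiber sequence $B\otimes_A L_A\to L_B\to L_{B/A}$.
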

	\begin{proof}
		We first  observe that the equivalence in \cite[Theorem 7.3.4.18]{ha} is natural, i.e., we have the following diagram:
		$$\begin{tikzcd}[row sep=1.5em, column sep=1.5em]
			& {\funct(\Delta^1,\calg(\mcc))} \arrow[ld, "{\funct(\Delta^1,\calg(G))}"'] \arrow[rdd] &                                                                  & T_{\calg(\mcc)} \arrow[ld] \arrow[rr, "\sim"] \arrow[ll, "p_2"] \arrow[ldd] &                           & \modu(\mcc) \arrow[ld, "\modu(G)"] \arrow[llldd] \\
			{\funct(\Delta^1,\calg(\mcb))} \arrow[rdd] &                                                                                       & T_{\calg(\mcb)} \arrow[rr, "\sim"] \arrow[ll, "p_1"] \arrow[ldd] &                                                                             & \modu(\mcb) \arrow[llldd] &                                                  \\
			&                                                                                       & \calg(\mcc) \arrow[ld]                                           &                                                                             &                           &                                                  \\
			& \calg(\mcb)                                                                           &                                                                  &                                                                             &                           &                                                 
		\end{tikzcd}$$
		By taking the left adjoints of the diagram, we obtain the following diagram:
		$$\begin{tikzcd}
			\calg(\mcb) \arrow[r, "\delta"] \arrow[d, "\calg(F)"] & {\funct(\Delta^1,\calg(\mcb))} \arrow[d, "{\funct(\Delta^1,\calg(F))}"] \arrow[r, "p_1^L"] & T_{\calg(\mcb)} \arrow[d, "T_{\calg(G)}^L"] \arrow[r, "\sim"] & \modu(\mcb) \arrow[d, "\modu(F)"] \\
			\calg(\mcc) \arrow[r, "\delta"]                       & {\funct(\Delta^1,\calg(\mcc))} \arrow[r, "p_2^L"]                                          & T_{\calg(\mcc)} \arrow[r, "\sim"]                             & \modu(\mcc)                      
		\end{tikzcd}$$
		which exactly means $F(L_A)\simeq L_{F(A)}$ for any $A\in \calg(\mcb)$. Since the following functor preserves cocartesian edges,
		$$\begin{tikzcd}
			\modu(\mcb)  \arrow[r, "\modu(F) "] \arrow[d] & \modu(\mcc)  \arrow[d] \\
			\calg(\mcb)  \arrow[r, "\calg(F) "]           & \calg(\mcc)           
		\end{tikzcd}$$
		we see that $\modu(F)$ preserves relative colimits by \cite[Proposition 4.3.1.10]{htt}. Since $L_{B/A}$ is defined to be the relative pushout as the following diagram,
		$$\begin{tikzcd}
			& L_A \arrow[ld] \arrow[rr] \arrow[dd, maps to] &                             & L_B \arrow[ld] \arrow[dd, maps to] \\
			0 \arrow[rr] \arrow[dd, maps to] &                                               & L_{B/A} \arrow[dd, maps to] &                                    \\
			& A \arrow[ld] \arrow[rr]                       &                             & B \arrow[ld]                       \\
			A \arrow[rr]                     &                                               & B                           &                                   
		\end{tikzcd}$$
		we conclude that $F(L_{B/A})\simeq L_{F(B)/F(A)}$.
	\end{proof}
	\begin{prop}
		Let $F:(\mc{B}^\otimes,\mc{B}_{\geq0})\to(\mc{C}^\otimes,\mc{C}_{\geq0})$ be an algebraic functor between projectively rigid $ttt$-$\infty$-categories, and let $R\in\calg(\mc{B}_{\geq0})$. Then the functor $\calg(\mc{B}_{\geq0})_{R/}\to\calg(\mc{C}_{\geq0})_{F(R)/}$ preserves 
		\enu{
			\item finitely presented algebras;
			\item almost finitely presented algebras;
			\item flat algebras; 
			\item L-étale algebras;
			\item  étale algebras.}
	\end{prop}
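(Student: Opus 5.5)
The plan is to check each of the five properties separately, using two facts about the algebraic functor $F$. First, $F$ restricts to a colimit-preserving symmetric monoidal functor $F_{\geq0}\colon\mc{B}_{\geq0}\to\mc{C}_{\geq0}$. Second, on module categories it is compatible with relative tensor products: $F(M\otimes_R N)\simeq F(M)\otimes_{F(R)}F(N)$.

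\textbf{Finite presentation (1).} Let $G$ be the right adjoint of $\calg(\mc{B}_{\geq0})_{R/}\to\calg(\mc{C}_{\geq0})_{F(R)/}$. It is the restriction along $F_{\geq0}$'s right adjoint. In the projectively rigid case that right adjoint preserves filtered colimits, because $F_{\geq0}$ sends the compact projective generators to compact projectives (\cref{modulepropertiespreser}(1)). The forgetful functors from commutative algebras create filtered colimits, so $G$ commutes with filtered colimits. Hence the left adjoint preserves compact objects, which gives (1).

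\textbf{Almost finite presentation (2).} Here we use \cref{lemma4.3}(1) in the form $\tau_{\leq n}F(B)\simeq\tau_{\leq n}F(\tau_{\leq n}B)$, together with the fact that $\tau_{\leq n}\circ F$ induces a functor $\calg(\mc{B})_{\leq n}\to\calg(\mc{C})_{\leq n}$. This functor is again left adjoint to a filtered-colimit-preserving functor, by the same argument applied to the truncated categories. So compactness of $\tau_{\leq n}B$ over $\tau_{\leq n}R$ transfers to compactness of $\tau_{\leq n}F(B)$ over $\tau_{\leq n}F(R)$.

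\textbf{Flatness (3) and L-étaleness (4).} For (3), \cref{modulepropertiespreser}(4) applies directly to $B$ regarded as an $R$-module, since its image is $F(B)$ as an $F(R)$-module. For (4), \cref{functorialcotangent} gives $L_{F(B)/F(R)}\simeq F(L_{B/R})\simeq0$.

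\textbf{Étaleness (5) and the main obstacle.} Since $R$ and $B$ are connective, étale means flat plus L-étale plus finitely presented, so (5) follows by combining (1), (3) and (4). The main obstacle is step (1): verifying that the right adjoint of $F_{\geq0}$ preserves filtered (indeed sifted) colimits, so that compactness in the algebra categories is preserved. I would handle this by noting that $\mc{B}_{\geq0}\simeq\mcp_\Sigma(\mc{B}_{\geq0}^{\op{cproj}})$ and that $F$ preserves compact projectives. Then for any compact projective $P$, mapping out of $P$ into $G(-)$ amounts to mapping out of $F(P)$, which is compact projective and therefore commutes with sifted colimits. An alternative route for (1) is to use \cref{lfinite}(1) together with \cref{functorialcotangent}: this reduces (1) to perfectness of the cotangent complex plus finite presentation of $\pi_0$. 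The catch is that this route needs a separate argument that $\pi_0F$ preserves finitely presented discrete algebras.
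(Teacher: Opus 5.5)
Your proposal is correct and follows the paper's proof. Items (3), (4) and (5) are argued exactly as there: via \cref{modulepropertiespreser}(4), via \cref{functorialcotangent}, and by combining (1), (3) and (4), respectively. For (1) and (2) you show compactness is preserved because the right adjoint commutes with filtered colimits, whereas the paper notes that $\calg(F_{\geq0})$ commutes with $\operatorname{Sym}^*$, so it sends the compact generators (free algebras on compact objects) to compact objects, and then passes to $n$-truncations. These are adjoint formulations of the same input, namely that $F_{\geq0}$ preserves compact (projective) objects.
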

	\begin{proof}
		Let $G$ denote the right adjoint to $F$.
		
		For (1), it follows from the following diagram
		$$\begin{tikzcd}[column sep=large, row sep=large]
			\mcb_{\geq0} \arrow[d, "\op{Sym}^{*}"] \arrow[r, "F_{\geq0}"] & \mcc_{\geq0} \arrow[d, "\op{Sym}^{*}"] \\
			\calg(\mcb_{\geq0}) \arrow[r, "\calg(F_{\geq0})"]             & \calg(\mcc_{\geq0})                   
		\end{tikzcd}$$
		that $\calg(F_{\geq0})$ sends free  \ein-algebras of compact objects in $\mcb_{\geq0}$ to free  \ein-algebras of compact objects in $\mcc_{\geq0}$, which are compact generators of $\calg(\mcb_{\geq0})$.
		
		For (2), it suffices to observe that for any $n\geq 0$, the functor $$\calg(\mcb_{\geq0})_{\leq n}\xrightarrow{\calg(F_{\geq0})\otimes \mcs_{\leq n}}\calg(\mcb_{\geq0})_{\leq n}$$ preserves compact objects, because $\calg(\mcb_{\geq0}) \xrightarrow{\calg(F_{\geq0})  }     \calg(\mcc_{\geq0}) $ does.
		
		For (3), it follows from \cref{modulepropertiespreser}(4).
		
		For (4), it suffices to show that for any $A\to B\in \calg(\mc{B})$, we have $F(L_{B/A})\simeq L_{F(B)/F(A)}$ in $\modu_{F(B)}(\mcc)$, but that follows from \cref{functorialcotangent}.
		
		For (5), it follows from (1), (3) and (4).
		
	\end{proof}

	\section{Questions and future directions}
	
	In this final section, we catalogue several open questions regarding étaleness. We then conclude by outlining a broader vision for future research building upon the framework established in this paper.
	
	\begin{qu}
		Assume that the $ttt$-$\infty$-category
		$(\mathcal{A}^\otimes,\mathcal{A}_{\geq0})$ is projectively rigid.
		Consider a commutative diagram in
		$\operatorname{CAlg}(\mathcal{A}_{\geq0})$
		\[
		\begin{tikzcd}
			& A \arrow[ld, "f"'] \arrow[rd, "h"] & \\
			B \arrow[rr, "g"'] & & C .
		\end{tikzcd}
		\]
		\begin{enumerate}[label=(\arabic*),font=\normalfont]
			\item
			If $f$ and $h$ are étale, is $g$ necessarily étale?
			
			By \cref{lem8.12}(1) and \cref{compof.p}, the map $g$ is
			automatically $L$-étale and finitely presented, so the only
			remaining issue is flatness. Equivalently, one may ask whether
			the diagonal
			\[
			B\otimes_A B\longrightarrow B
			\]
			of every étale morphism $A\to B$ is flat: this diagonal is
			automatically finitely presented and $L$-étale. When
			$\mathcal A=\operatorname{Sp}$, the answer is affirmative, since
			every finitely presented $L$-étale map of $\mathbb E_\infty$-rings
			is flat. In general, however, finitely presented $L$-étale maps
			need not be étale; see \cref{exetalenotflt}.
			
			\item A closely related structural question is whether every étale
			commutative $A$-algebra is separable over $A$, that is, whether the
			multiplication map
			\[
			B\otimes_A B\longrightarrow B
			\]
			admits a section as a map of $B$-bimodules. In the projectively rigid
			setting, such a separability result would provide a natural route to the
			missing flatness of the diagonal.

			\item
			Suppose that $g$ is faithfully flat and étale. Is $f$ étale if
			and only if $h$ is étale?
			
			The implication from $f$ to $h$ follows from the stability of
			étale morphisms under composition. Conversely, if $h$ is étale,
			then \cref{compoflat}(2) and \cref{lem8.12}(2) imply that $f$ is
			flat and $L$-étale. Thus the remaining question is whether finite
			presentation descends along the faithfully flat étale morphism
			$g$.
			
			\item
			Let
			\[
			B\simeq\varinjlim_i B_i
			\]
			be a filtered colimit of finitely presented commutative $A$-algebras,
			and let $B\to C$ be étale. Does there exist some $i$ and an étale
			morphism $B_i\to C_i$ such that
			\[
			C\simeq C_i\otimes_{B_i}B?
			\]
			
			Finite presentation and the $L$-étale condition can be descended to a
			sufficiently large finite stage. Thus the remaining issue is whether
			flatness of a finitely presented morphism can be detected at a finite
			stage: if $B_i\to C_i$ is finitely presented and
			\[
			B\to C_i\otimes_{B_i}B
			\]
			is flat, must
			\[
			B_j\to C_i\otimes_{B_i}B_j
			\]
			be flat for some $j\geq i$?
		\end{enumerate}
	\end{qu}

	\subsection*{Future directions: Derived geometry internal to $\mathcal{A}$}
	
	With the foundational framework of higher algebra over an arbitrary projectively rigid $ttt$-$\infty$-category $\mathcal{A}$ now firmly established, a natural and compelling next step is to systematically develop derived algebraic geometry within this setting. Just as classical derived algebraic geometry is built upon $\mathbb{E}_\infty$-rings in spectra, our scaffolding allows one to define and study derived schemes, stacks, and moduli problems modeled locally on $\mathbb{E}_\infty$-algebras internal to $\mathcal{A}$. This opens the door to importing deep geometric intuitions and intersection theory into new, exotic environments, ranging from equivariant and motivic domains to specialized analytic settings, ultimately broadening the scope and applicability of modern geometric methods.
	
	\appendix
	
	\section{Duality}
	
	For the reader's convenience, we review the general theory of dualizable objects within monoidal $\infty$-categories. 
	\subsection{Dualizable objects}\label{3.1}
	\begin{cov}
		Throughout \cref{3.1}, we fix a symmetric monoidal $\infty$-category $\mathcal{C}^\otimes \to \mathrm{N}(\op{Fin_*})$.
	\end{cov}
	
	\begin{de}
		We say an object $X\in \mathcal{C}$ is dualizable if there exists an object $X^\vee$ and a pair of morphisms
		$$
		c: \mathbf{1} \rightarrow X \otimes X^{\vee} \quad e: X^{\vee} \otimes X \rightarrow \mathbf{1}
		$$
		where $\mathbf{1}$ denotes the unit object of $\mathcal{C}$. These morphisms are required to satisfy the following conditions: The composite maps
		$$
		\begin{gathered}
			X \xrightarrow{c \otimes \mathrm{id}} X \otimes X^{\vee} \otimes X\xrightarrow{\mathrm{id } \otimes e} X \\
			X^{\vee} \xrightarrow{\mathrm{id} \otimes c} X^{\vee} \otimes X \otimes X^{\vee} \xrightarrow{e \otimes \mathrm{id}} X^{\vee}
		\end{gathered}
		$$
		are homotopic to the identity on $X$ and $X^{\vee}$, respectively.
	\end{de}
	
	We call $X\in\mathcal C$ \emph{cotensorable} if
	$(-)\otimes X$ admits a right adjoint, which we denote by
	$\underline{\operatorname{Map}}_{\mathcal C}(X,-)$. Every object of a
	presentably symmetric monoidal $\infty$-category is cotensorable.
	
	\begin{prop}\label{cri}
		Let $X\in\mathcal{C}$ be an object. Then $X$ is dualizable if and only if $X$ is cotensorable and for any $Y\in\mathcal{C}$, the natural map $\underline{\mapp}_{\mathcal{C}}(X,\mathbf{1})\otimes Y\to\underline{\mapp}_{\mathcal{C}}(X,Y)$ is an equivalence in $\mathcal{C}$.
	\end{prop}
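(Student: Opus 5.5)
I will prove the two implications separately, writing $D=\underline{\mapp}_{\mathcal C}(X,\mathbf 1)$ for the candidate dual and letting $\varepsilon:D\otimes X\to\mathbf 1$ denote the counit of the adjunction $(-)\otimes X\dashv\underline{\mapp}_{\mathcal C}(X,-)$ evaluated at $\mathbf 1$. The natural map in the statement is the one adjoint to
$$D\otimes Y\otimes X\simeq D\otimes X\otimes Y\xrightarrow{\varepsilon\otimes\mathrm{id}}Y.$$

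Suppose first that $X$ is dualizable with data $(X^\vee,c,e)$. The triangle identities show that $(-)\otimes X^\vee$ is right adjoint to $(-)\otimes X$: the unit is $\mathrm{id}\otimes c$, the counit is $\mathrm{id}\otimes e$, and the zig-zag identities are exactly the two composites in the definition. Hence $X$ is cotensorable, with $\underline{\mapp}_{\mathcal C}(X,Y)\simeq Y\otimes X^\vee$ naturally in $Y$. Taking $Y=\mathbf 1$ gives $D\simeq X^\vee$. Under these identifications the comparison map $D\otimes Y\to\underline{\mapp}_{\mathcal C}(X,Y)$ becomes the symmetry equivalence $X^\vee\otimes Y\simeq Y\otimes X^\vee$. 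To check this, compose both maps with the counit $(-)\otimes X\to Y$ and compare, using the universal property of the adjunction.

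Conversely, assume $X$ is cotensorable and that the comparison maps are equivalences. Set $X^\vee:=D$ and $e:=\varepsilon$, composed with the symmetry as needed to match the convention $X^\vee\otimes X\to\mathbf 1$. To build $c$, take $Y=X$. The identity of $X$ corresponds under the adjunction to a map $\mathbf 1\to\underline{\mapp}_{\mathcal C}(X,X)$. Composing with the inverse of the equivalence $D\otimes X\to\underline{\mapp}_{\mathcal C}(X,X)$ and then with the symmetry gives
$$c:\mathbf 1\to X\otimes X^\vee.$$

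The main work, and the step I expect to be the obstacle, is verifying the two triangle identities. I plan to argue by Yoneda through the adjunction rather than chasing diagrams directly.

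For the first composite $X\to X\otimes X^\vee\otimes X\to X$, its adjoint corresponds to the point $\mathrm{id}_X\in\operatorname{Map}(\mathbf 1,\underline{\mapp}_{\mathcal C}(X,X))$, essentially by construction of $c$. This is because the comparison map $D\otimes X\to\underline{\mapp}_{\mathcal C}(X,X)$ is adjoint to the evaluation $\varepsilon\otimes\mathrm{id}$. So the composite is homotopic to $\mathrm{id}_X$.

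For the second identity I would avoid a direct computation. Instead, I show that $(-)\otimes X^\vee$ is right adjoint to $(-)\otimes X$ with counit induced by $e$: the hypothesis identifies $\underline{\mapp}_{\mathcal C}(X,-)\simeq(-)\otimes D$ compatibly with counits. By the first identity, the map $\mathrm{id}\otimes c$ then serves as a unit for this adjunction. The second zig-zag then follows from uniqueness of adjunction data: a unit satisfying one triangle identity with a given counit automatically satisfies the other, since $\eta$ is determined by $\varepsilon$. Naturality in $Y$, which the hypothesis requires, is what makes this identification of functors legitimate; checking that it is compatible with the monoidal symmetry is the fiddly point.
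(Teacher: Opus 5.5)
Your proposal is correct and follows essentially the same route as the paper. In the forward direction, the paper shows that the composites $\phi$ and $\psi$ built from $e$ and $c$ are mutually inverse (equivalently, $(-)\otimes X \dashv (-)\otimes X^\vee$) and then identifies the comparison map by Yoneda. In the converse, it uses the same $c$ (the preimage of the identity under $D\otimes X\simeq \underline{\mapp}_{\mathcal C}(X,X)$) and $e=\varepsilon$. Your verification of the triangle identities supplies the check the paper calls straightforward, and it is valid. The first identity holds by construction of $c$. The second follows by transporting the adjunction along the natural comparison equivalence and noting that the unit is determined by the counit.
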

	
	\begin{proof}
		Assume that $X$ is dualizable. Then $X$ is cotensorable since we have $\underline{\mapp}_{\mathcal{C}}(X,-)\simeq X^\vee\otimes(-)$. Particularly, $\underline{\mapp}_{\mathcal{C}}(X,\mathbf{1})\simeq X^\vee$. Now let $Y \in \mathcal{C}$. We wish to show that the composite map
		$$
		\phi: \operatorname{Map}_{\mathcal{C}}\left(-, Y \otimes X^{\vee}\right) \rightarrow \operatorname{Map}_{\mathcal{C}}\left(- \otimes X, Y \otimes X^{\vee} \otimes X\right) \xrightarrow{e} \operatorname{Map}_{\mathcal{C}}(- \otimes X, Y)
		$$
		is a homotopy equivalence. Let $\psi$ denote the composition
		$$
		\operatorname{Map}_{\mathcal{C}}(- \otimes X, Y) \rightarrow \operatorname{Map}_{\mathcal{C}}\left(- \otimes X \otimes X^{\vee}, Y \otimes X^{\vee}\right) \xrightarrow{c} \operatorname{Map}_{\mathcal{C}}\left(-, Y \otimes X^{\vee}\right) .
		$$
		Using the compatibility of $e$ and $c$, we deduce that $\phi$ and $\psi$ are homotopy inverses to one another. By the Yoneda lemma, $\phi$ can be identified with the map $\underline{\mapp}_{\mathcal{C}}(X,\mathbf{1})\otimes Y\to\underline{\mapp}_{\mathcal{C}}(X,Y)$.
		
		Assume that $X$ is cotensorable and that for any $Y\in\mathcal{C}$, the natural map $$\underline{\mapp}_{\mathcal{C}}(X,\mathbf{1})\otimes Y\to\underline{\mapp}_{\mathcal{C}}(X,Y)$$ is an equivalence in $\mathcal{C}$. Particularly, we have an equivalence $\underline{\mapp}_{\mathcal{C}}(X,\mathbf{1})\otimes X\xrightarrow{\sim}\underline{\mapp}_{\mathcal{C}}(X,X)$. Let $c:\mathbf{1}\to \underline{\mapp}_{\mathcal{C}}(X,\mathbf{1})\otimes X$ be the inverse image of the identity map $\mathrm{id}:\underline{\mapp}_{\mathcal{C}}(X,X) \to \underline{\mapp}_{\mathcal{C}}(X,X)$. Then it is straightforward to check that the counit $e: \underline{\mapp}_{\mathcal{C}}(X,\mathbf{1})\otimes X\to \mathbf{1}$ and $c:\mathbf{1}\to \underline{\mapp}_{\mathcal{C}}(X,\mathbf{1})\otimes X$ form a duality datum.
	\end{proof}
	
	\begin{prop}
		Let $\mathcal{C}^d\subset\mathcal{C}$ be the full subcategory consisting of dualizable objects. Then the profunctor $\mathcal{C}^d\times\mathcal{C}^d\to\mathcal{S}$ given by $\operatorname{Map}_{\mathcal{C}}\left(\mathbf{1}, - \otimes -\right)$ is a balanced profunctor \textup{(see \cite[\href{https://kerodon.net/tag/03MM}{03MM}]{lurie2025kerodon})}, which induces a natural equivalence of $\infty$-categories $(-)^\vee=:\underline{\mapp}_{\mathcal{C}}(-,\mathbf{1}): (\mathcal{C}^d)^{\op{op}}\xrightarrow{\sim}\mathcal{C}^d$. Furthermore, $(-)^{\vee\vee}\simeq\op{Id}_{\mathcal{C}^d}$ is equivalent to the identity functor.
	\end{prop}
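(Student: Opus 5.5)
The plan is to build the functor $(-)^\vee$ directly from the duality data and then show it is an equivalence by producing an inverse. I will use the characterization in \cref{cri}: for dualizable $X$, the internal mapping object $\underline{\mapp}_{\mathcal{C}}(X,-)$ exists and is $X^\vee\otimes(-)$, and $X^\vee=\underline{\mapp}_{\mathcal{C}}(X,\mathbf{1})$.

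\emph{Step 1 (the profunctor is balanced).} Write $P(X,Y)=\operatorname{Map}_{\mathcal{C}}(\mathbf{1},X\otimes Y)$ on $\mathcal{C}^d\times\mathcal{C}^d$. For fixed $X$, the proof of \cref{cri} gives natural equivalences
\[
\operatorname{Map}_{\mathcal{C}}(\mathbf{1},X\otimes Y)\simeq\operatorname{Map}_{\mathcal{C}}(\mathbf{1}\otimes X^\vee,Y)\simeq\operatorname{Map}_{\mathcal{C}}(X^\vee,Y),
\]
since $X\otimes(-)$ is right adjoint to $X^\vee\otimes(-)$. Hence $P(X,-)$ is corepresented by $X^\vee$. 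By the symmetry $X\otimes Y\simeq Y\otimes X$, the functor $P(-,Y)$ is likewise corepresented by $Y^\vee$. This is the ``balanced'' condition in the sense of \cite[\href{https://kerodon.net/tag/03MM}{03MM}]{lurie2025kerodon}. The one thing to check is that $X^\vee$ is again dualizable, with dual $X$; this holds because the triangle identities are symmetric in $X$ and $X^\vee$ once we pass through the symmetry isomorphism.

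\emph{Step 2 (apply the Kerodon result).} A balanced profunctor in which both variables are corepresentable yields functors $D_1,D_2:(\mathcal{C}^d)^{\op{op}}\to\mathcal{C}^d$ that are mutually inverse equivalences (more precisely, $D_1$ and $D_2^{\op{op}}$ are inverse). Because $P$ is symmetric under the swap of factors, we get $D_1\simeq D_2\simeq(-)^\vee$. It follows that $(-)^\vee$ is an equivalence, with inverse $((-)^\vee)^{\op{op}}$. Composing the two gives $(-)^{\vee\vee}\simeq\op{Id}_{\mathcal{C}^d}$.

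\emph{Step 3 (identify $D_1$ with $\underline{\mapp}_{\mathcal{C}}(-,\mathbf{1})$).} Apply Yoneda to the equivalence $\operatorname{Map}(\mathbf{1},X\otimes Y)\simeq\operatorname{Map}(Y^\vee,X)$. Taking $Y=\mathbf{1}$-type test objects, together with the formula $\operatorname{Map}(Z,\underline{\mapp}(X,\mathbf{1}))\simeq\operatorname{Map}(Z\otimes X,\mathbf{1})\simeq\operatorname{Map}(Z,X^\vee)$, shows that the corepresenting object is naturally $\underline{\mapp}_{\mathcal{C}}(X,\mathbf{1})$, and this identification is functorial in $X$.

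The main obstacle is coherence: making the symmetry of $P$ precise at the $\infty$-categorical level so that $D_1\simeq D_2$ holds as functors rather than only objectwise. To handle this, I would encode the swap as an equivalence of profunctors $P\simeq P\circ\tau$, where $\tau$ is the swap on $\mathcal{C}^d\times\mathcal{C}^d$. This equivalence comes from the symmetric monoidal structure. Corepresenting functors are determined up to contractible choice by their profunctor, so the equivalence $P\simeq P\circ\tau$ transports $D_1$ to $D_2$.
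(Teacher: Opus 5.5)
\textbf{There is a genuine gap in Steps~1--2.} Step~1 checks only that $P(X,-)$ and $P(-,Y)$ are each corepresentable, and calls this balancedness. In the Kerodon sense, however, a profunctor is balanced only when, for each object, a \emph{single} element $\eta\in P(X,Y)$ is universal in both variables at once.

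Separate corepresentability in each variable yields only an adjunction. Writing $D=(-)^\vee$, you get
$\operatorname{Map}_{\mathcal{C}^d}(DX,Y)\simeq P(X,Y)\simeq\operatorname{Map}_{\mathcal{C}^d}(DY,X)$.
This says $D$ is adjoint to $D^{\mathrm{op}}$, with unit $X\to X^{\vee\vee}$. Symmetry of $P$ does not make this unit invertible. For a counterexample, take a Galois connection $S\mapsto S^{\perp}$ for a symmetric relation on a power set: it has exactly these formal properties and is not an involution. So Step~2's conclusion that $D_1$ and $D_2$ are mutually inverse equivalences does not follow from what you checked. The coherence of $D_1\simeq D_2$, which you flag as the main obstacle, is not the real issue.

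\textbf{What is missing, and how to fix it.} The missing input is exactly the paper's one-line proof. Suppose $c:\mathbf{1}\to X\otimes Y$ is the coevaluation of a duality datum exhibiting $Y$ as a dual of $X$. Then, after composing with the swap, the same $c$ and the same evaluation form a duality datum exhibiting $X$ as a dual of $Y$. Hence this one element $c\in P(X,Y)$ corepresents $P(X,-)$ and also corepresents $P(-,Y)$. That is precisely balancedness, and it is equivalent to the unit $X\to X^{\vee\vee}$ being an equivalence.

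You already state this symmetry of the triangle identities, but you use it only to see that $X^\vee\in\mathcal{C}^d$. Use it instead to show that the coevaluation is universal in both variables. Then Kerodon's result gives the equivalence $(\mathcal{C}^d)^{\mathrm{op}}\simeq\mathcal{C}^d$ directly. Your swap argument $P\simeq P\circ\tau$ then correctly identifies the inverse with $((-)^\vee)^{\mathrm{op}}$, which gives $(-)^{\vee\vee}\simeq\mathrm{Id}_{\mathcal{C}^d}$.
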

	
	\begin{proof}
		It suffices to observe that if $c:\mathbf{1}\to X\otimes Y$ is part of a duality datum for $X$, then it is also part of a duality datum for $Y$.
	\end{proof}
	
	\begin{rem}
		In fact, this perfect pairing can be enhanced to a symmetric monoidal perfect pairing and hence induces an equivalence of symmetric monoidal $\infty$-categories $(-)^\vee=:\underline{\mapp}_{\mathcal{C}}(-,\mathbf{1}): (\mathcal{C}_d^{\op{op}})^\otimes\xrightarrow{\sim}(\mathcal{C}^d)^\otimes$, \textup{see \cite[Proposition 3.2.4]{ec1}}.
	\end{rem}
	
	\begin{prop}
		The full subcategory $\mathcal{C}^d\subset\mathcal{C}$ is closed under tensor product, hence it forms a symmetric monoidal full subcategory.
	\end{prop}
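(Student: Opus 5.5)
We need to show the unit is dualizable, and that if $X$ and $Y$ are dualizable then so is $X\otimes Y$. The unit is dualizable with $\mathbf{1}^\vee\simeq\mathbf{1}$: take $c$ and $e$ to be the unit isomorphisms $\mathbf{1}\simeq\mathbf{1}\otimes\mathbf{1}$. The zig-zag identities then reduce to the unitality coherences of $\mathcal{C}^\otimes$.

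For $X,Y\in\mathcal{C}^d$ with duality data $(X^\vee,c_X,e_X)$ and $(Y^\vee,c_Y,e_Y)$, we propose $(X\otimes Y)^\vee:=Y^\vee\otimes X^\vee$. The coevaluation is
\[
c:\mathbf{1}\simeq\mathbf{1}\otimes\mathbf{1}\xrightarrow{c_X\otimes\mathrm{id}}X\otimes X^\vee\otimes\mathbf{1}\xrightarrow{\mathrm{id}\otimes c_Y\otimes\mathrm{id}}X\otimes Y\otimes Y^\vee\otimes X^\vee,
\]
which is simply the nested composite $(\mathrm{id}_X\otimes c_Y\otimes\mathrm{id}_{X^\vee})\circ c_X$. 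The evaluation is the nested composite $e_Y\circ(\mathrm{id}_{Y^\vee}\otimes e_X\otimes\mathrm{id}_Y)$.

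The triangle identities only need to hold up to homotopy, so we may check them in the homotopy category $\operatorname{h}\mathcal{C}$, which is an ordinary symmetric monoidal category. For the first identity we use interchange (bifunctoriality of $\otimes$) to rewrite the composite $X\otimes Y\to X\otimes Y$. Inside it, the $Y$-factor undergoes the zig-zag for $(c_Y,e_Y)$, and the remaining $X$-factor undergoes the zig-zag for $(c_X,e_X)$. Each is the identity, so the composite is $\mathrm{id}_{X\otimes Y}$. The second identity is the mirror argument. In the nested form we never need the symmetry isomorphisms, so the string-diagram cancellation is literally planar.

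Alternatively, one could argue via \cref{cri}. Cotensorability is inherited because $(-)\otimes X\otimes Y$ has right adjoint $\underline{\mapp}_{\mathcal{C}}(Y,\underline{\mapp}_{\mathcal{C}}(X,-))$. The required map then factors through the equivalences for $X$ and $Y$ separately, using $\underline{\mapp}_{\mathcal{C}}(X,-)\simeq X^\vee\otimes(-)$.

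The only real obstacle is the bookkeeping of associators and unitors in the interchange argument. Passing to $\operatorname{h}\mathcal{C}$ and invoking Mac Lane coherence there disposes of it, since dualizability is a homotopy-category condition by definition. Closure under tensor, together with containing the unit, then shows $\mathcal{C}^d$ is a symmetric monoidal full subcategory, by the standard fact that such a full subcategory inherits a symmetric monoidal structure (the sub-$\infty$-operad of $\mathcal{C}^\otimes$ spanned by tuples of objects of $\mathcal{C}^d$).
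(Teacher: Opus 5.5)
Your proposal is correct and is essentially the paper's argument. The paper also takes $Y^\vee\otimes X^\vee$ as the dual of $X\otimes Y$, with coevaluation $c_X\otimes c_Y$ followed by the reshuffling $(X\otimes X^\vee)\otimes(Y\otimes Y^\vee)\simeq(X\otimes Y)\otimes(Y^\vee\otimes X^\vee)$, which agrees in $\operatorname{h}\mathcal{C}$ with your nested composite by naturality of the symmetry; the paper just asserts the triangle identities you verify and leaves implicit the dualizability of the unit, which you check.
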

	
	\begin{proof}
		Let $X,Y\in \mathcal{C}^d$. Choosing $c=c_X\otimes c_Y:\mathbf{1}\simeq \mathbf{1}\otimes\mathbf{1}\to (X\otimes X^\vee)\otimes (Y\otimes Y^\vee)\simeq (X\otimes Y)\otimes (Y^\vee \otimes X^\vee)$, we see that $c$ exhibits $Y^\vee \otimes X^\vee$ as a dual of $X\otimes Y$.
	\end{proof}
	
	\begin{de}
		Let $\mathcal{C}^{cot}\subset\mathcal{C}$ be the full subcategory consisting of cotensorable objects. We define the functor $$\underline{\mapp}_{\mathcal{C}}(-,-): (\mathcal{C}^{cot})^{\op{op}}\times\mathcal{C}\to \op{Fun}'(\mathcal{C}^{\op{op}},\mathcal{S})\simeq \mathcal{C}$$ given by $(X,Y)\mapsto \op{Map}_{\mathcal{C}}(-\otimes X,Y)$, where $\op{Fun}'(\mathcal{C}^{\op{op}},\mathcal{S})\simeq \mathcal{C}$ denotes the full subcategory of representable functors.
	\end{de}
	
	\begin{lem}\label{l3.7}
		Let $\mathcal{K}$ be a collection of simplicial sets. If $\mathcal{C}$ is $\mathcal{K}$-cocomplete and the monoidal structure on it is compatible with $K$-colimits for any $K\in\mathcal{K}$ (meaning $-\otimes-$ preserves $K$-colimits separately), then for any $K\in\mathcal{K}$, the full subcategory $\mathcal{C}^{cot}\subset\mathcal{C}$ is closed under $K$-colimits and for any diagram $X_{(-)}:K\to \mathcal{C}^{cot}$, the natural map $\varprojlim_{\alpha\in K} \underline{\mapp}_{\mathcal{C}}(X_\alpha,-)\simeq \underline{\mapp}_{\mathcal{C}}(\varinjlim_{\alpha\in K}X_\alpha,-)$ is an equivalence in $\op{Fun}(\mathcal{C},\mathcal{C})$.
	\end{lem}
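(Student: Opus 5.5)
The argument is Yoneda plus the universal property of colimits, using only that $(-)\otimes X_\alpha$ has a right adjoint for each $\alpha$ and that $-\otimes-$ preserves $K$-colimits separately.

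Fix a diagram $X_{(-)}:K\to\mathcal{C}^{cot}$ and let $X=\varinjlim_{\alpha\in K}X_\alpha$, which exists by $\mathcal{K}$-cocompleteness. For each $Y\in\mathcal{C}$ we want to show that the presheaf $\op{Map}_{\mathcal{C}}(-\otimes X,Y)$ is representable. Since $-\otimes X$ preserves $K$-colimits in its second variable, for every $Z\in\mathcal{C}$ we have
$$\op{Map}_{\mathcal{C}}(Z\otimes X,Y)\simeq\op{Map}_{\mathcal{C}}\bigl(\varinjlim_\alpha Z\otimes X_\alpha,Y\bigr)\simeq\varprojlim_\alpha\op{Map}_{\mathcal{C}}(Z\otimes X_\alpha,Y)\simeq\varprojlim_\alpha\op{Map}_{\mathcal{C}}\bigl(Z,\underline{\mapp}_{\mathcal{C}}(X_\alpha,Y)\bigr).$$
These equivalences are natural in $Z$. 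The limit $\varprojlim_\alpha\underline{\mapp}_{\mathcal{C}}(X_\alpha,Y)$ is taken over $K^{\op{op}}$, using the contravariant functoriality $(X,Y)\mapsto\underline{\mapp}_{\mathcal{C}}(X,Y)$ of the preceding definition.

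\textbf{Main obstacle.} The hypotheses give cocompleteness, not completeness, so this limit need not exist in $\mathcal{C}$ a priori. I handle this by working in the presheaf category. The preceding definition lands $\underline{\mapp}_{\mathcal{C}}$ in $\op{Fun}'(\mathcal{C}^{\op{op}},\mathcal{S})\subset\op{Fun}(\mathcal{C}^{\op{op}},\mathcal{S})$, and the presheaf category has all limits, computed pointwise. The display shows that the presheaf $\op{Map}_{\mathcal{C}}(-\otimes X,Y)$ is the pointwise limit of the representable presheaves $\op{Map}_{\mathcal{C}}(-,\underline{\mapp}_{\mathcal{C}}(X_\alpha,Y))$. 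So the real content of the lemma is that $\op{Map}_{\mathcal{C}}(-\otimes X,Y)$ is representable for every $Y$, i.e.\ that $-\otimes X$ admits a right adjoint.

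I plan to obtain this representability as follows. When $\mathcal{C}$ is presentable, $-\otimes X$ preserves all colimits, so the adjoint functor theorem gives the right adjoint. Otherwise I argue directly: $\op{Map}_{\mathcal{C}}(-\otimes X,Y)$ is a limit of representables, and this limit exists in $\mathcal{C}$ whenever $\mathcal{C}$ admits $K^{\op{op}}$-limits. If neither condition is available, I would restate the identification $\varprojlim_\alpha\underline{\mapp}_{\mathcal{C}}(X_\alpha,-)\simeq\underline{\mapp}_{\mathcal{C}}(\varinjlim_\alpha X_\alpha,-)$ as an identity of presheaves, where both sides make sense, and conclude that $X$ is cotensorable exactly when the left-hand side is representable. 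I expect the intended reading, with $\mathcal{C}$ presentably monoidal as in all applications in the paper, to make this step routine.

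Once representability holds, Yoneda upgrades the display to an equivalence $\underline{\mapp}_{\mathcal{C}}(X,Y)\simeq\varprojlim_\alpha\underline{\mapp}_{\mathcal{C}}(X_\alpha,Y)$. This equivalence is natural in $Y$, because every step above is functorial in $Y$. That gives the asserted equivalence in $\op{Fun}(\mathcal{C},\mathcal{C})$, and shows that $\mathcal{C}^{cot}$ is closed under $K$-colimits.
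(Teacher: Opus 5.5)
Your argument is the same as the paper's. The paper embeds $\op{Fun}(\mathcal{C},\mathcal{C})$ into $\op{Fun}(\mathcal{C}^{\op{op}}\times\mathcal{C},\mathcal{S})$ via $i_R\colon G\mapsto\mapp_{\mathcal{C}}(-,G(-))$, which also handles naturality in $Y$. It notes that $\phi\colon X\mapsto\mapp_{\mathcal{C}}(-\otimes X,-)$ turns $K$-colimits into $K^{\op{op}}$-limits, which is your displayed computation. It then concludes by asserting that the image of $i_R$ is closed under $K^{\op{op}}$-limits. That assertion is exactly your ``main obstacle'', and you were right not to accept it. It amounts to saying that $\varprojlim_\alpha\underline{\mapp}_{\mathcal{C}}(X_\alpha,Y)$ exists in $\mathcal{C}$ for every $Y$.

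Under $\mathcal{K}$-cocompleteness alone this can fail, and the lemma fails with it. Let $\mathcal{C}$ be the category of finitely generated free $\mathbb{N}$-sets ($\mathbb{N}$ additive), with $\otimes=\otimes_{\mathbb{N}}$ and unit $\mathbf{1}=\mathbb{N}$. It has finite coproducts (disjoint unions), $\otimes$ preserves them in each variable, and $\mathbf{1}$ is cotensorable. However, $\mapp_{\mathcal{C}}(Z\otimes(\mathbf{1}\sqcup\mathbf{1}),\mathbf{1})\simeq\mapp_{\mathcal{C}}(Z,\mathbf{1})^2$ is not representable. Evaluating at $Z=\mathbf{1}$ shows that a representing object would be isomorphic, as an $\mathbb{N}$-set, to $\mathbb{N}\times\mathbb{N}$ with the diagonal action. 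That $\mathbb{N}$-set is free on the infinite set $\{(a,b)\mid\min(a,b)=0\}$, so it is not in $\mathcal{C}$. Hence $\mathbf{1}\sqcup\mathbf{1}\notin\mathcal{C}^{cot}$.

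So no argument can prove the lemma as literally stated. Your fallback is the correct general statement: the identification holds in presheaves, and $\varinjlim_\alpha X_\alpha$ is cotensorable iff the left-hand side is representable for each $Y$.

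Of your two repairs, the second is the right one: assume $\mathcal{C}$ admits $K^{\op{op}}$-indexed limits. Then limits in $\op{Fun}(\mathcal{C},\mathcal{C})$ are pointwise and your Yoneda step goes through. Presentability is stronger than needed, and it is not assumed in \cref{du}, which is where the lemma is actually used. There the required limits exist in every case:
\begin{itemize}
\item $K=\varnothing$: the zero object is terminal;
\item $\op{N}(\op{Idem})$: idempotent completeness is self-dual;
\item finite discrete $K$: coproducts are products in a semiadditive category;
\item finite $K$: stable categories have finite limits.
\end{itemize}
In the presentable case every object is already cotensorable, so the closure half of the lemma is vacuous there.
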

	
	\begin{proof}
		Consider the following diagram:
		$$
		\begin{tikzcd}
			(\mathcal{C}^{cot})^{\op{op}} \arrow[r] \arrow[rd, "{X\mapsto\underline{\mapp}_{\mathcal{C}}(X,-)}"', dashed] & \mathcal{C}^{\op{op}} \arrow[r, "X\mapsto (-)\otimes X"] \arrow[rd, "\phi"] & {\op{Fun}(\mathcal{C},\mathcal{C})}^{\op{op}} \arrow[d, "i_L", hook] \\
			& {\op{Fun}(\mathcal{C},\mathcal{C})} \arrow[r, "i_R", hook] & {\op{Fun}(\mathcal{C}^{\op{op}}\times\mathcal{C},\mathcal{S})}
		\end{tikzcd}
		$$
		where $i_L$ is given by $F\mapsto \op{Map}_{\mathcal{C}}(F(-),-)$ and $i_R$ is given by $G\mapsto \op{Map}_{\mathcal{C}}(-,G(-))$. An object $X\in\mathcal{C}$ is cotensorable if and only if $\phi(X)$ lies in the image of $i_R$. Now given a diagram $K\in \mathcal{K}$, it suffices to observe that:
		\begin{enumerate}
			\item $(\mathcal{C}^{cot})^{\op{op}}=\phi^{-1}(\op{Im}(i_R))$.
			\item $\phi$ preserves $K^{\op{op}}$-limits and the inclusion $i_R$ is closed under $K^{\op{op}}$-limits.
		\end{enumerate}
	\end{proof}
	
	\begin{prop}\label{du}\,
		\enu{
			\item If $\mathcal{C}^\otimes$ is pointedly symmetric monoidal (meaning that $\mathcal{C}$ is pointed and the tensor product of the zero object with any object is zero), then the zero object $*$ is dualizable.
			\item If $\mathcal{C}$ is idempotent complete, then $\mathcal{C}^d\subset\mathcal{C}$ is closed under retractions.
			\item If $\mathcal{C}^\otimes$ is semiadditively symmetric monoidal, then $\mathcal{C}^d\subset\mathcal{C}$ is closed under finite coproducts and hence forms a full semiadditive subcategory.
			\item If $\mathcal{C}^\otimes$ is stably symmetric monoidal, then $\mathcal{C}^d\subset\mathcal{C}$ is closed under finite colimits and finite limits and hence forms a full stable subcategory.
		}
	\end{prop}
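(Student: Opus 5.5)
The plan is to prove the four items in order, each by writing down explicit duality data, or by reducing to \cref{cri} and \cref{l3.7}.

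For (1), take $X=*$ with $X^\vee=*$. The maps $c:\mathbf{1}\to *\otimes *\simeq *$ and $e:*\otimes *\simeq *\to\mathbf{1}$ are the unique maps into and out of the zero object. Both triangle composites are endomorphisms of $*$, so they are homotopic to the identity automatically.

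For (2), let $Y$ be a retract of a dualizable $X$, with $i:Y\to X$, $r:X\to Y$ and $ri\simeq\mathrm{id}$. The idempotent $p=ir$ has a dual idempotent $p^\vee$ on $X^\vee$. Since $\mathcal C$ is idempotent complete, $p^\vee$ splits, and we let $Y'$ be the resulting retract of $X^\vee$. Set
$$c_Y=(r\otimes r')\circ c_X:\mathbf{1}\to Y\otimes Y',\qquad e_Y=e_X\circ(i'\otimes i):Y'\otimes Y\to\mathbf{1},$$
where $i',r'$ are the splitting maps for $p^\vee$. The zig-zag identities for $Y$ then follow from those for $X$, together with the identity $p^\vee$-naturality, i.e. $(\mathrm{id}\otimes e_X)\circ(c_X\otimes\mathrm{id})$ intertwines $p$ and $p^\vee$ (which is the defining property of the dual map).

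Alternatively, one can apply \cref{cri} directly: retracts of equivalences are equivalences, and $\underline{\mapp}(Y,-)$ is a retract of $\underline{\mapp}(X,-)$. I will use whichever phrasing is cleaner, but cotensorability of $Y$ needs care if one takes the \cref{cri} route, so the explicit-data route is preferable.

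For (3), let $X,Y$ be dualizable and take $X^\vee\oplus Y^\vee$ as the dual of $X\oplus Y$. Using semiadditivity, $(X\oplus Y)\otimes(X^\vee\oplus Y^\vee)$ splits as a biproduct of four terms. Define $c=(c_X,c_Y,0,0)$ and define $e$ as the sum of $e_X$ and $e_Y$ on the diagonal terms and zero on the off-diagonal terms. The zig-zag identities reduce componentwise to those for $X$ and $Y$, once one uses matrix calculus for maps between biproducts, which is available because $\otimes$ is compatible with finite biproducts. Combined with (1), which gives the empty coproduct, this shows $\mathcal C^d$ is closed under finite coproducts.

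For (4), we already have finite coproducts, and closure under $\Sigma$ and $\Omega$ holds since $\Sigma X\simeq \Sigma\mathbf{1}\otimes X$ and $\Sigma^{\pm1}\mathbf{1}$ are invertible. It therefore suffices to show closure under cofibers.

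The cofiber step is the main obstacle, and I would handle it with \cref{cri} and \cref{l3.7}. Let $f:X\to Y$ be a map of dualizables with cofiber $Z$. Every dualizable object is cotensorable, and by \cref{l3.7} (with $\mathcal K$ the pushout shapes) $Z$ is cotensorable with
$$\underline{\mapp}(Z,-)\simeq \Fib\bigl(\underline{\mapp}(Y,-)\to\underline{\mapp}(X,-)\bigr).$$
In a stable category $-\otimes W$ is exact, so the comparison map $\underline{\mapp}(Z,\mathbf{1})\otimes W\to\underline{\mapp}(Z,W)$ is the induced map on fibers of a map of fiber sequences whose other two components are equivalences by \cref{cri}. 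Hence it is an equivalence, and \cref{cri} gives that $Z$ is dualizable. Finite limits follow because, in the stable setting, fibers are shifted cofibers.
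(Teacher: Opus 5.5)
Your proposal is correct, but for (1)--(3) it takes a different route from the paper.

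\textbf{The paper's route.} The paper proves all four items with one mechanism. By \cref{cri}, $X$ is dualizable iff it is cotensorable and $\underline{\mapp}(X,\mathbf{1})\otimes W\to\underline{\mapp}(X,W)$ is an equivalence for all $W$. By \cref{l3.7}, both conditions pass to $K$-indexed colimits. The reason is that $\underline{\mapp}(-,-)$ turns such colimits in the first variable into limits, and $-\otimes W$ commutes with the resulting limits in each case:
\begin{itemize}
\item the terminal object $0$ in the pointed case;
\item the absolute limit of an idempotent;
\item finite products in the semiadditive case;
\item finite limits in the stable case.
\end{itemize}
The paper takes $\mathcal{K}=\{\varnothing\},\{\mathrm{N}(\mathrm{Idem})\},\{\text{finite discrete}\},\{\text{finite}\}$ respectively.

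\textbf{Comparison with your route.} You worried that cotensorability of a retract makes the \cref{cri} route delicate in (2). That is exactly what \cref{l3.7} with $\mathcal{K}=\{\mathrm{N}(\mathrm{Idem})\}$ supplies, and it is where idempotent completeness enters, so that route is in fact clean.

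Your cofiber step in (4) is the paper's argument specialized to pushouts. For desuspensions, the paper uses $(\Sigma X)^\vee\simeq\Sigma^{-1}X^\vee$, so $\Sigma^{-1}X$ is the dual of the dualizable object $\Sigma X^\vee$. You instead use invertibility of $\Sigma^{\pm1}\mathbf{1}$; both work.

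Your explicit-data approach identifies the duals concretely, namely $X^\vee\oplus Y^\vee$ and a splitting of $p^\vee$. All its verifications happen in the homotopy category. The paper's approach buys uniformity and brevity, with no zig-zag or matrix bookkeeping.

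\textbf{A detail to make explicit in (2).} Idempotent completeness of an $\infty$-category only guarantees that \emph{coherent} idempotents split; a mere homotopy idempotent need not split. So you should say why $p^\vee$ is coherent. The retraction diagram makes $p$ a coherent idempotent $\mathrm{N}(\mathrm{Idem})\to\mathcal{C}^d$. The duality functor $(\mathcal{C}^d)^{\mathrm{op}}\to\mathcal{C}^d$ from the preceding proposition carries it to a coherent idempotent on $X^\vee$ lifting $p^\vee$, and that idempotent splits. With this remark, your zig-zag computation for $c_Y=(r\otimes r')\circ c_X$ and $e_Y=e_X\circ(i'\otimes i)$ goes through as you describe.
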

	
	\begin{proof}
		Applying  \cref{cri} and  \cref{l3.7} to $\mathcal{K}=\{\varnothing\}, \{\op{N}(\text{Idem})\}, \{\text{finite discrete diagrams}\}, \{\text{finite diagrams}\}$ respectively, we proved (1), (2), (3) and the ``closed under finite colimits'' part of (4). For the ``closed under finite limits'' part of (4), it suffices to show that $\mathcal{C}^d\subset\mathcal{C}$ is closed under desuspension. This follows from $\Sigma^{-1} X^\vee=(\Sigma X)^{\vee}$ for a dualizable object $X\in\mathcal{C}^d$.
	\end{proof}
	
	\subsection{Duality of Bimodules}\label{3.2}
	
	Extending beyond symmetric monoidal categories, we detail the higher duality theory specifically adapted for bimodules over $\mathbb{E}_1$-algebras. 
	
	\begin{cov}
		Throughout  \cref{3.2}, we fix a monoidal $\infty$-category $\mathcal{C}^\otimes \to \op{Ass}^\otimes$ which admits geometric realizations of simplicial objects and such that the tensor product $\otimes: \mathcal{C} \times \mathcal{C} \rightarrow \mathcal{C}$ preserves geometric realizations of simplicial objects.
	\end{cov}
	
	\begin{de}
		Let $X \in { }_A \operatorname{BMod}_B(\mathcal{C})$ and $Y \in { }_B \operatorname{BMod}_A(\mathcal{C})$. Let $c: B \rightarrow Y \otimes_A X$ be a map in ${ }_B \operatorname{BMod}_B(\mathcal{C})$. We say $c$ exhibits $X$ as the right dual of $Y$, or $c$ exhibits $Y$ as the left dual of $X$, if there exists a map $e: X \otimes_B Y \rightarrow A$ in ${ }_A \operatorname{BMod}_A(\mathcal{C})$ such that
		$$
		\begin{aligned}
			& X \simeq X \otimes_B B \xrightarrow{\mathrm{id} \otimes c} X \otimes_B Y \otimes_A X \xrightarrow{e \otimes \mathrm{id}} A \otimes_A X \simeq X \\
			& Y \simeq B \otimes_B Y \xrightarrow{c \otimes \mathrm{id}} Y \otimes_A X \otimes_B Y \xrightarrow{\mathrm{id} \otimes e} Y \otimes_A A \simeq Y
		\end{aligned}
		$$
		are homotopic to $\mathrm{id}_X$ and $\mathrm{id}_Y$, respectively.
	\end{de}
	
	\begin{prop}[See \cite{ha} 4.6.2.18]
		Let $A \in \operatorname{Alg}(\mathcal{C})$, let $X \in \operatorname{LMod}_A(\mathcal{C})$, let $Y \in \operatorname{RMod}_A(\mathcal{C})$, and let $c: \mathbf{1} \rightarrow Y \otimes_A X$ be a map in $\mathcal{C}$. Then the following are equivalent:
		\enu{
			\item The map $c: \mathbf{1} \rightarrow Y \otimes_A X$ exhibits $Y$ as a left dual of $X$.
			\item For each $C \in \mathcal{C}$ and each $M \in \operatorname{RMod}_A(\mathcal{C})$, the composite map
			$$
			\operatorname{Map}_{\operatorname{RMod}_A(\mathcal{C})}(C \otimes Y, M) \rightarrow \operatorname{Map}_{\mathcal{C}}\left(C \otimes Y \otimes_A X, M \otimes_A X\right) \xrightarrow{\circ c} \operatorname{Map}_{\mathcal{C}}\left(C, M \otimes_A X\right)
			$$
			is a homotopy equivalence.
			\item For each $C \in \mathcal{C}$ and each $N \in \operatorname{LMod}_A(\mathcal{C})$, the composite map
			$$
			\operatorname{Map}_{\operatorname{LMod}_A(\mathcal{C})}(X \otimes C, N) \rightarrow \operatorname{Map}_{\mathcal{C}}\left(Y \otimes_A X \otimes C, Y \otimes_A N\right) \xrightarrow{\circ c} \operatorname{Map}_{\mathcal{C}}\left(C, Y \otimes_A N\right)
			$$
			is a homotopy equivalence.
		}
	\end{prop}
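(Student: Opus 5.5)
I would follow Lurie's proof of \cite[Proposition 4.6.2.18]{ha} and work entirely in the $\infty$-category of bimodules. The idea is to treat the three conditions as three ways of saying that certain functors form an adjunction, with the unit supplied by $c$.

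First I show (1)$\Rightarrow$(2) and (1)$\Rightarrow$(3) by writing down explicit inverses. Suppose $c$ comes with a counit $e\colon X\otimes Y\to A$ in ${}_A\operatorname{BMod}_A(\mathcal{C})$ satisfying the triangle identities.
\begin{itemize}
\item For (2), an inverse to the map $\operatorname{Map}_{\operatorname{RMod}_A}(C\otimes Y,M)\to\operatorname{Map}_{\mathcal{C}}(C,M\otimes_A X)$ sends $g\colon C\to M\otimes_A X$ to
\[
C\otimes Y\xrightarrow{g\otimes\mathrm{id}}M\otimes_A X\otimes Y\xrightarrow{\mathrm{id}\otimes e}M\otimes_A A\simeq M .
\]
That both composites are homotopic to the identity follows from the two zig-zag identities, using associativity of relative tensor products. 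Associativity holds here because $\otimes$ preserves geometric realizations \cite[\S4.4]{ha}.
\item Condition (3) is handled by the mirror-image formula.
\end{itemize}

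Next I show (2)$\Rightarrow$(1). Condition (2) says that the functor $M\mapsto M\otimes_A X$ from $\operatorname{RMod}_A(\mathcal{C})$ to $\mathcal{C}$ is right adjoint to $C\mapsto C\otimes Y$, with unit $c\otimes\mathrm{id}$. The counit at $M=A$ is a map $e'\colon Y\otimes_A\ldots$; more precisely, applying (2) with $C=X$ and $M=A$, the identity of $X\simeq A\otimes_A X$ has a preimage $e\colon X\otimes Y\to A$ in $\operatorname{RMod}_A$. The main obstacle is to promote $e$ to a map of $A$-$A$-bimodules. The standard device is to run the same adjunction argument in families: (2) with $C$ ranging over left $A$-modules, i.e.\ over ${}_A\operatorname{BMod}$ objects, gives the corresponding equivalence of mapping spaces in bimodules, since bimodules are modules over $\operatorname{LMod}_A$ in $\operatorname{RMod}$ and the functors are $\mathcal{C}$-linear. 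From this $e$ acquires its bimodule structure.

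Once $e$ is a bimodule map, one triangle identity holds by construction of $e$ as a preimage of the identity. For the other, observe that the composite $Y\to Y\otimes_A X\otimes Y\to Y$ corresponds under (2), with $C=\mathbf{1}$ and $M=Y$, to the same map $\mathbf{1}\to Y\otimes_A X$ as $\mathrm{id}_Y$, namely $c$. Since (2) is a bijection on mapping spaces, the composite is homotopic to $\mathrm{id}_Y$.

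Finally, (3)$\Rightarrow$(1) is symmetric: interchange the roles of left and right modules, or equivalently pass to the reversed monoidal structure $\mathcal{C}^{\mathrm{rev}}$, under which (3) becomes (2).
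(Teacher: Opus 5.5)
Your arguments for (1)$\Rightarrow$(2) and (1)$\Rightarrow$(3), by explicit inverses built from the triangle identities, are fine, as is (3)$\Rightarrow$(1) by passing to $\mathcal{C}^{\mathrm{rev}}$. The paper gives no proof of its own here; it only cites Lurie.

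The gap is in (2)$\Rightarrow$(1), exactly at the step you call the main obstacle. Condition (2) only concerns $C\in\mathcal{C}$ and right $A$-modules $M$. It says nothing about ``$C$ ranging over left $A$-modules''. What you actually need is that for $C\in\operatorname{LMod}_A(\mathcal{C})$ and $M\in{}_A\operatorname{BMod}_A(\mathcal{C})$ the map
\[
\Phi_{C,M}\colon \operatorname{Map}_{{}_A\operatorname{BMod}_A(\mathcal{C})}(C\otimes Y,M)\longrightarrow \operatorname{Map}_{\operatorname{LMod}_A(\mathcal{C})}(C,M\otimes_A X),\qquad f\longmapsto (f\otimes_A\mathrm{id}_X)\circ(\mathrm{id}_C\otimes c)
\]
is an equivalence. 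That is the entire content of the implication, and ``the functors are $\mathcal{C}$-linear'' does not prove it. If you want to get it from a general lifting of $\mathcal{C}$-linear adjunctions to $\operatorname{LMod}_A(-)$, you must still identify the induced lax linear structure on the right adjoint with the canonical $D\otimes(M\otimes_AX)\simeq(D\otimes M)\otimes_AX$, and the lifted unit with $\mathrm{id}\otimes c$.

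The order of construction is also wrong. If you first choose $e$ in $\operatorname{RMod}_A(\mathcal{C})$ as a preimage of $\mathrm{id}_X$ under (2) and then ``promote'' it, the first zig-zag $z_X\colon X\to X$ is known to be the identity only in $\mathcal{C}$. A left $A$-module endomorphism homotopic to $\mathrm{id}_X$ in $\mathcal{C}$ need not be so in $\operatorname{LMod}_A(\mathcal{C})$. For example, over $A=k[\epsilon]/\epsilon^2$ in $\mathcal{D}(k)$, take $\mathrm{id}$ plus a nonzero class of $\operatorname{Ext}^1_A(k,k)$ acting on $k\oplus k[1]$. You need $z_X\simeq\mathrm{id}_X$ in $\operatorname{LMod}_A(\mathcal{C})$, not least because your argument for the second triangle identity applies $\mathrm{id}_Y\otimes_A(-)$ to it.

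The missing step can be supplied directly, as follows.
\begin{itemize}
\item Both $C\mapsto\operatorname{Map}_{{}_A\operatorname{BMod}_A}(C\otimes Y,M)$ and $C\mapsto\operatorname{Map}_{\operatorname{LMod}_A}(C,M\otimes_AX)$ send geometric realizations in $\operatorname{LMod}_A(\mathcal{C})$ to totalizations. For the first, this holds because $(-)\otimes Y$ preserves geometric realizations.
\item $\Phi_{-,M}$ is natural in $C$, and every left $A$-module is the geometric realization of its bar resolution $A^{\otimes\bullet+1}\otimes C$. So it suffices to treat free modules $C=A\otimes D$ with $D\in\mathcal{C}$.
\item For $C=A\otimes D$, the free--forgetful adjunctions identify the source and target of $\Phi_{A\otimes D,M}$ with $\operatorname{Map}_{\operatorname{RMod}_A}(D\otimes Y,M)$ and $\operatorname{Map}_{\mathcal{C}}(D,M\otimes_AX)$. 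By naturality of $\mathrm{id}\otimes c$ along the unit $D\to A\otimes D$, $\Phi_{A\otimes D,M}$ becomes the map in (2) for the pair $(D,M)$, hence an equivalence.
\item Now set $e:=\Phi_{X,A}^{-1}(\mathrm{id}_X)$. This is a bimodule map with $z_X\simeq\mathrm{id}_X$ in $\operatorname{LMod}_A(\mathcal{C})$.
\item Your argument for the second zig-zag $z_Y$ then goes through once spelled out. Under (2) with $C=\mathbf{1}$ and $M=Y$, $z_Y$ is sent to
\[
(\mathrm{id}_Y\otimes_A e\otimes_A\mathrm{id}_X)\circ(c\otimes\mathrm{id})\circ c\simeq(\mathrm{id}_Y\otimes_A z_X)\circ c\simeq c .
\]
This uses $(c\otimes\mathrm{id})\circ c\simeq(\mathrm{id}\otimes c)\circ c$ and $z_X\simeq\mathrm{id}_X$ in $\operatorname{LMod}_A(\mathcal{C})$. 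Since $\mathrm{id}_Y$ is also sent to $c$ and (2) is an equivalence, $z_Y\simeq\mathrm{id}_Y$.
\end{itemize}
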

	
	\begin{cor}\label{ldualeq}
		Let $A \in \operatorname{Alg}(\mathcal{C})$. Let $\operatorname{LMod}_A(\mathcal{C})^{ld}\subset \operatorname{LMod}_A(\mathcal{C})$ denote the full subcategory of left dualizable left $A$-modules, and let $\operatorname{RMod}_A(\mathcal{C})^{rd}\subset \operatorname{RMod}_A(\mathcal{C})$ denote the full subcategory of right dualizable right $A$-modules. Then the profunctor $\operatorname{RMod}_A(\mathcal{C})^{rd}\times\operatorname{LMod}_A(\mathcal{C})^{ld}\to\mathcal{S}$ given by $\operatorname{Map}_{\mathcal{C}}\left(\mathbf{1}, - \otimes_A -\right)$ is a balanced profunctor \textup{(see \cite[\href{https://kerodon.net/tag/03MM}{03MM}]{lurie2025kerodon})}, which induces a natural equivalence of $\infty$-categories $$^\vee(-):\operatorname{LMod}_A(\mathcal{C})^{ld}\underset{}{\stackrel{\sim}{\rightleftarrows}}(\operatorname{RMod}_A(\mathcal{C})^{rd})^{\op{op}}: (-)^\vee .$$
	\end{cor}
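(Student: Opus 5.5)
The statement to prove is \cref{ldualeq}. I would construct the balanced profunctor explicitly and then read off the equivalence from it, mirroring the symmetric monoidal case treated just before (the proposition producing $(-)^\vee:(\mathcal{C}^d)^{\op{op}}\simeq\mathcal{C}^d$).

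\textbf{Step 1: identify the profunctor and its representability.} Consider
\[
\Phi=\operatorname{Map}_{\mathcal{C}}(\mathbf{1},-\otimes_A-):\operatorname{RMod}_A(\mathcal{C})^{rd}\times\operatorname{LMod}_A(\mathcal{C})^{ld}\to\mathcal{S}.
\]
Fix $X\in\operatorname{LMod}_A(\mathcal{C})^{ld}$ with left dual $Y$ and coevaluation $c:\mathbf{1}\to Y\otimes_A X$. Taking $C=\mathbf{1}$ in the criterion (2) of the preceding proposition (\cite[4.6.2.18]{ha}), the composite
\[
\operatorname{Map}_{\operatorname{RMod}_A}(Y,M)\to\operatorname{Map}_{\mathcal{C}}(\mathbf{1},M\otimes_A X)
\]
is an equivalence for every right $A$-module $M$. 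In particular this holds for $M$ ranging over $\operatorname{RMod}_A(\mathcal{C})^{rd}$, so $\Phi(-,X)$ is corepresented by $Y$.

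\textbf{Step 2: the dual lands in the right subcategory.} The object $Y$ must lie in $\operatorname{RMod}_A(\mathcal{C})^{rd}$. This holds because the definition of duality datum is symmetric: the same $(c,e)$ that exhibits $Y$ as a left dual of $X$ exhibits $X$ as a right dual of $Y$. Symmetrically, using criterion (3), $\Phi(Y,-)$ is corepresented by $X$ for each right dualizable $Y$, and $X$ lies in $\operatorname{LMod}_A(\mathcal{C})^{ld}$.

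\textbf{Step 3: conclude.} By Steps 1 and 2, $\Phi$ is representable separately in each variable, with corepresenting objects staying inside the dualizable subcategories. This is exactly the definition of a balanced (perfect) profunctor in \cite[03MM]{lurie2025kerodon}. The Kerodon result then yields mutually inverse equivalences $^\vee(-)$ and $(-)^\vee$ between $\operatorname{LMod}_A(\mathcal{C})^{ld}$ and $(\operatorname{RMod}_A(\mathcal{C})^{rd})^{\op{op}}$, given on objects by taking left and right duals respectively.

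\textbf{Main obstacle.} The only subtle point is Step 1: checking that the Yoneda-type map built from $c$ agrees with the profunctor's functoriality, so that the corepresentation is natural rather than merely objectwise. I would handle this by noting that the map in \cite[4.6.2.18]{ha}(2) is literally $f\mapsto (f\otimes_A X)\circ c$, which is the action of $\Phi$ on the element $c\in\Phi(Y,X)$. The corepresentation is therefore the one induced by the universal element $c$, and this is natural.
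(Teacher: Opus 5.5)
Your proposal is correct and follows the paper's argument. The paper's entire proof is the observation in your Step 2 that $c$ exhibits $Y$ as a left dual of $X$ exactly when it exhibits $X$ as a right dual of $Y$; corepresentability then comes from the preceding criterion, as in your Step 1.

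One small precision in Step 3: balancedness is not merely representability in each variable, which alone only gives an adjunction. You also need every element that is universal in one variable to be universal in the other. Your Step 2 supplies exactly this, since a single coevaluation $c$ satisfies both criteria (2) and (3) of the preceding proposition, and any universal element is equivalent to such a $c$ by uniqueness of corepresenting objects.
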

	
	\begin{proof}
		It suffices to observe that $c: \mathbf{1} \rightarrow Y \otimes_A X$ exhibits $Y$ as a left dual of $X$ if and only if it exhibits $X$ as a right dual of $Y$.
	\end{proof}
	
	\begin{cor}\label{rigidlygene}
		Suppose that $\mathcal{C}^\otimes$ is a cocompletely symmetric monoidal (potentially large) $\infty$-category, i.e., $\mathcal{C}$ admits small colimits and the tensor product $\otimes: \mathcal{C} \times \mathcal{C} \rightarrow \mathcal{C}$ preserves small colimits separately.
		Let $A \in \operatorname{Alg}(\mathcal{C})$, let $X \in \operatorname{LMod}_A(\mathcal{C})$, let $Y \in \operatorname{RMod}_A(\mathcal{C})$, and let $c: \mathbf{1} \rightarrow Y \otimes_A X$ be a map in $\mathcal{C}$. If $\mathcal{C}$ is generated by dualizable objects under small colimits, then the following are equivalent:
		\enu{
			\item The map $c: \mathbf{1} \rightarrow Y \otimes_A X$ in $\mathcal{C}$ exhibits $Y$ as a left dual of $X$.
			\item The functor
			$$
			\operatorname{Map}_{\mathcal{C}}\left(\mathbf{1}, Y \otimes_A -\right): \operatorname{LMod}_A(\mathcal{C})\to \widehat{\mathcal{S}}
			$$
			is corepresented by $X$ with the element $c: \mathbf{1} \rightarrow Y \otimes_A X$.
			\item The functor
			$$
			\operatorname{Map}_{\mathcal{C}}\left(\mathbf{1}, - \otimes_A X \right): \operatorname{RMod}_A(\mathcal{C})\to \widehat{\mathcal{S}}
			$$
			is corepresented by $Y$ with the element $c: \mathbf{1} \rightarrow Y \otimes_A X$.
		}
		(Note that we use the notation $\widehat{\mathcal{S}}$ above because $\mathcal{C}$ is not necessarily small here.)
	\end{cor}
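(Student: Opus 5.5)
The statement to prove is \cref{rigidlygene}: for a cocompletely symmetric monoidal $\mathcal{C}$ generated under small colimits by dualizables, the three conditions on $c:\mathbf{1}\to Y\otimes_A X$ are equivalent. The plan is to reduce to the characterization in \cite[4.6.2.18]{ha} recalled just above. It says that (1) is equivalent to: for every $C\in\mathcal{C}$ and $N\in\operatorname{LMod}_A(\mathcal{C})$, the composite
\[
\operatorname{Map}_{\operatorname{LMod}_A(\mathcal{C})}(X\otimes C,N)\to\operatorname{Map}_{\mathcal{C}}(C,Y\otimes_A N)
\]
is an equivalence (and symmetrically for right modules). Condition (2) is exactly the special case $C=\mathbf{1}$ of this, and (3) is the case $C=\mathbf{1}$ of the right-module version. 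So (1)$\Rightarrow$(2) and (1)$\Rightarrow$(3) are immediate. The work is in the converses, where the generation hypothesis is used to pass from $C=\mathbf{1}$ to arbitrary $C$.

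For (2)$\Rightarrow$(1), fix $N$ and let $\mathcal{D}\subset\mathcal{C}$ be the full subcategory of those $C$ for which the above map is an equivalence. Both sides send colimits in $C$ to limits. On the left this holds because $X\otimes(-)$ preserves colimits, and colimits in $\operatorname{LMod}_A(\mathcal{C})$ of modules of the form $X\otimes C$ are computed underlying, or more simply because $X\otimes -$ is a left adjoint at the level of $\operatorname{LMod}_A$. On the right it is just the mapping space out of $C$. Hence $\mathcal{D}$ is closed under small colimits, and by the generation hypothesis it suffices to show every dualizable $C$ lies in $\mathcal{D}$.

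For dualizable $C$ with dual $C^\vee$, rewrite
\[
\operatorname{Map}_{\operatorname{LMod}_A}(X\otimes C,N)\simeq\operatorname{Map}_{\operatorname{LMod}_A}(X,N\otimes C^\vee)
\]
and
\[
\operatorname{Map}_{\mathcal{C}}(C,Y\otimes_A N)\simeq\operatorname{Map}_{\mathcal{C}}(\mathbf{1},Y\otimes_A N\otimes C^\vee),
\]
using that $Y\otimes_A(-)$ commutes with $(-)\otimes C^\vee$ since the relative tensor product is a geometric realization and $\otimes$ commutes with it. The map for $(C,N)$ is thereby identified with the map for $(\mathbf{1},N\otimes C^\vee)$, which is an equivalence by (2).

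The step that needs care is checking that these identifications are compatible with the map induced by $c$. Both are obtained by composing with $c$ and the duality datum of $C$, so this should be a naturality check, tracking the evaluation and coevaluation through the composite. The argument for (3)$\Rightarrow$(1) is symmetric, using the right-module form of the criterion from \cite[4.6.2.18]{ha} and tensoring on the left with $C^\vee$ (harmless since $\mathcal{C}$ is symmetric).
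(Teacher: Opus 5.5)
Your argument is correct, and it is exactly the deduction the paper leaves implicit (the corollary is stated without a written proof, as a consequence of the form of \cite[4.6.2.18]{ha} recalled just before it): the case $C=\mathbf{1}$ gives (1)$\Rightarrow$(2),(3), and for the converses the locus of good $C$ is closed under small colimits, while the twist $N\mapsto N\otimes C^\vee$ reduces dualizable $C$ to $C=\mathbf{1}$. Drop your parenthetical claim that $X\otimes(-)$ is a left adjoint into $\operatorname{LMod}_A(\mathcal{C})$, since that needs presentability, which is not assumed here; your first justification, that colimits in $\operatorname{LMod}_A(\mathcal{C})$ are computed on underlying objects, is the correct one.
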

	\section{Ind(Pro)-completion of large $\infty$-categories}
	The $\infty$-categorical theory frequently encounters set-theoretic size issues. In this appendix, we resolve the Ind- and Pro-completions of large $\infty$-categories relative to large regular cardinals. 
	\begin{cov}
		We work relative to a chain of strongly inaccessible cardinals \(\delta_{0}<\delta_{1}<\delta_{2}\). Then \(V_{\delta_i}\) is a Grothendieck universe, and elements of \(V_{\delta_0},V_{\delta_1},V_{\delta_2}\) are called small, large and very large, respectively. 
	\end{cov}
	\begin{thm}[See \cite{htt} 5.3.6.10]\label{httcocompletion}
		Let $\mathcal{K} \subset \mathcal{K}^{\prime}$ be $\delta_1$-small collections of simplicial sets. Let $\widehat{\operatorname{Cat}}_{\infty}^{\mathcal{K}}$ denote the subcategory spanned by those $\infty$-categories which admit $\mathcal{K}$-indexed colimits and those functors which preserve $\mathcal{K}$-indexed colimits, and let $\widehat{\operatorname{Cat}}_{\infty}^{\mathcal{K}^{\prime}}$ be defined likewise. Then the inclusion
		$$
		\widehat{\operatorname{Cat}}_{\infty}^{\mathcal{K}^{\prime}} \subset \widehat{\operatorname{Cat}}_{\infty}^{\mathcal{K}}
		$$
		admits a left adjoint given by $\mathcal{C} \mapsto \mathcal{P}_{\mathcal{K}}^{\mathcal{K}^{\prime}}(\mathcal{C})$.
	\end{thm}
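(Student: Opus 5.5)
The plan follows the standard construction of \cite[\S5.3.6]{htt}, carried out one universe higher; the size bookkeeping is the only new ingredient. Fix $\mathcal{C}\in\widehat{\operatorname{Cat}}_{\infty}^{\mathcal{K}}$, a large $\infty$-category admitting $\mathcal{K}$-indexed colimits.

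First I form the very large presheaf category $\widehat{\mathcal{P}}(\mathcal{C})=\operatorname{Fun}(\mathcal{C}^{\op{op}},\widehat{\mathcal{S}})$, where $\widehat{\mathcal{S}}$ is the $\infty$-category of $\delta_1$-small spaces. By \cite[Theorem 5.1.5.6]{htt} applied in the universe $V_{\delta_2}$, this category admits all $\delta_1$-small colimits, in particular all $\mathcal{K}'$-indexed ones. It also contains $\mathcal{C}$ via the Yoneda embedding $j$, which is locally small because $\mathcal{C}$ is locally $\delta_1$-small.

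Next I define $\mathcal{P}_{\mathcal{K}}^{\mathcal{K}'}(\mathcal{C})$ as the smallest full subcategory of $\operatorname{Fun}(\mathcal{C}^{\op{op}},\widehat{\mathcal{S}})^{\mathcal{K}}$ (presheaves carrying $\mathcal{K}$-colimits of $\mathcal{C}$ to limits) that contains $j(\mathcal{C})$ and is closed under $\mathcal{K}'$-colimits. This is a transfinite closure of length bounded by a $\delta_1$-regular cardinal dominating the sizes of the simplicial sets in $\mathcal{K}'$. Because $\mathcal{K}'$ is $\delta_1$-small, the result is essentially large rather than very large, so it lies in $\widehat{\operatorname{Cat}}_{\infty}^{\mathcal{K}'}$.

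I then verify the two facts required by \cite[Proposition 5.3.6.2]{htt}. First, $j$ preserves $\mathcal{K}$-colimits into this subcategory. Second, for every $\mathcal{D}\in\widehat{\operatorname{Cat}}_{\infty}^{\mathcal{K}'}$, restriction along $j$ induces an equivalence between $\mathcal{K}'$-colimit-preserving functors $\mathcal{P}_{\mathcal{K}}^{\mathcal{K}'}(\mathcal{C})\to\mathcal{D}$ and $\mathcal{K}$-colimit-preserving functors $\mathcal{C}\to\mathcal{D}$. To prove the second fact I embed $\mathcal{D}$ into its own very large presheaf category so that left Kan extensions exist, and then invoke the uniqueness argument of \cite[5.3.6.2]{htt}.

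This equivalence is exactly the unit of the desired adjunction. Naturality in $\mathcal{C}$ then follows from \cite[Proposition 5.2.2.12]{htt} or its relative version.

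The main obstacle is the size estimate: I must show that the $\mathcal{K}'$-closure of $j(\mathcal{C})$ is essentially large and not merely very large. I would handle this by bounding the cardinality of objects at each stage of the transfinite induction, using the assumption that $\mathcal{K}'$ is $\delta_1$-small.
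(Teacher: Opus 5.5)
Your proposal is correct and matches the paper's approach: the paper gives no separate proof and simply invokes \cite[5.3.6.10]{htt}, which amounts to the \S5.3.6 construction run in the universe $V_{\delta_2}$, with the $\delta_1$-smallness of $\mathcal{K}'$ making $\mathcal{P}^{\mathcal{K}'}_{\mathcal{K}}(\mathcal{C})$ essentially large, exactly as you arrange. One precision for the universal-property step: embed $\mathcal{D}$ not into its full presheaf category but into the full subcategory of $\operatorname{Fun}(\mathcal{D}^{\op{op}},\widehat{\mathcal{S}})$ of presheaves carrying $\mathcal{K}'$-colimits to limits, since only there does the Yoneda embedding preserve $\mathcal{K}'$-colimits, and that is what forces the extended functor to land back in $\mathcal{D}$.
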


	\begin{prop}[See \cite{hesselholt2024dirac} A.2]\label{diracA2}
		Let $\mathcal{C}$ be a coaccessible $\infty$-category (i.e. $\mathcal{C}^{\mathrm{op}}$ is accessible). For a functor $X: \mathcal{C}^{\mathrm{op}} \rightarrow \mathcal{S}$, the following conditions are equivalent:
		\enu{	\item The functor $X: \mathcal{C}^{\mathrm{op}} \rightarrow \mathcal{S}$ is accessible.
			\item The functor $X: \mathcal{C}^{\mathrm{op}} \rightarrow \mathcal{S}$ is the left Kan extension of a functor $Y:\left(\mathcal{C}_\kappa\right)^{\mathrm{op}} \rightarrow \mathcal{S}$ along the canonical inclusion $i:\left(\mathcal{C}_\kappa\right)^{\mathrm{op}} \rightarrow \mathcal{C}^{\mathrm{op}}$ for some small regular cardinal $\kappa$, where $\mc{C}_\kappa\subset\mc{C}$ denotes the full subcategory of $\kappa$-cocompact objects.
			\item The functor $X$ : $\mathcal{C}^{\mathrm{op}} \rightarrow \mathcal{S}$ is a colimit in $\operatorname{Fun}\left(\mathcal{C}^{\mathrm{op}}, \mathcal{S}\right)$ of a small diagram of representable functors.
			
		}
		
	\end{prop}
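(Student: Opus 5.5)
The plan is to prove the three conditions equivalent by running the cycle $(1)\Rightarrow(2)\Rightarrow(3)\Rightarrow(1)$. Throughout, we use that accessibility of $X$ means: $X$ preserves $\kappa$-filtered colimits of $\mathcal{C}^{\mathrm{op}}$ (equivalently, $\kappa$-cofiltered limits of $\mathcal{C}$) for some small regular $\kappa$.

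\emph{$(1)\Rightarrow(2)$.} Since $\mathcal{C}^{\mathrm{op}}$ is accessible, we choose a small regular cardinal $\kappa$ such that two things hold. First, $\mathcal{C}^{\mathrm{op}}$ is $\kappa$-accessible, so that $\mathcal{C}^{\mathrm{op}}\simeq \operatorname{Ind}_\kappa((\mathcal{C}_\kappa)^{\mathrm{op}})$ with $(\mathcal{C}_\kappa)^{\mathrm{op}}$ essentially small. Second, $X$ preserves $\kappa$-filtered colimits. Raising $\kappa$ is harmless, because accessibility is stable under enlarging the cardinal (using sharply larger cardinals, \cite[\S5.4.2]{htt}). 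By \cite[Proposition 5.3.5.10]{htt}, a functor out of $\operatorname{Ind}_\kappa$ that preserves $\kappa$-filtered colimits is the left Kan extension of its restriction. Hence $X\simeq i_!(X\circ i)$ with $Y=X\circ i$.

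\emph{$(2)\Rightarrow(3)$.} The functor $Y\colon (\mathcal{C}_\kappa)^{\mathrm{op}}\to\mathcal{S}$ has essentially small source. By the co-Yoneda lemma, $Y$ is a small colimit of representables $h_c$ with $c\in\mathcal{C}_\kappa$; concretely, the indexing diagram is the (small) category of elements. The functor $i_!$ preserves colimits, and it sends the representable $h_c$ to the representable $\operatorname{Map}_{\mathcal{C}}(-,c)$, since $i$ is fully faithful. Therefore $X$ is a small colimit of representables.

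\emph{$(3)\Rightarrow(1)$.} First, each representable $\operatorname{Map}_{\mathcal{C}}(-,c)$ is accessible. Since $\mathcal{C}^{\mathrm{op}}$ is accessible, every object $c$ is $\lambda_c$-cocompact for some small $\lambda_c$, which means exactly that $\operatorname{Map}(-,c)$ preserves $\lambda_c$-filtered colimits in $\mathcal{C}^{\mathrm{op}}$. Next, given a small diagram $\{c_j\}_{j\in J}$, we take $\lambda$ to be a regular cardinal exceeding every $\lambda_{c_j}$. Such a $\lambda$ exists because $J$ is small. Then every term preserves $\lambda$-filtered colimits, and colimits commute with colimits, so the colimit $X$ preserves $\lambda$-filtered colimits as well. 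This is computed pointwise in $\operatorname{Fun}(\mathcal{C}^{\mathrm{op}},\mathcal{S})$, so $X$ is accessible.

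The main obstacle is the bookkeeping of cardinals in $(1)\Rightarrow(2)$. The cardinal $\kappa$ must be chosen so that it simultaneously witnesses the accessibility of $\mathcal{C}^{\mathrm{op}}$ and the filtered-colimit preservation of $X$; this needs the existence of arbitrarily large $\kappa$ with $\mathcal{C}^{\mathrm{op}}$ $\kappa$-accessible. We also have to make sure that the Kan extension is along the inclusion of $\kappa$-compact objects of $\mathcal{C}^{\mathrm{op}}$, which are the $\kappa$-cocompact objects of $\mathcal{C}$, so that the variance matches the statement.
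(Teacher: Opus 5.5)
Your proof is correct. The paper gives no argument of its own for this statement; it imports it from \cite[Proposition A.2]{hesselholt2024dirac}. Your cycle is the standard proof: restriction to $\kappa$-compact objects via \cite[Proposition 5.3.5.10]{htt}, then co-Yoneda together with the fact that $i_!$ preserves colimits and representables, then closure of $\lambda$-continuous functors under small colimits.

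One detail needs an extra line. In $(2)\Rightarrow(3)$ the cardinal $\kappa$ is arbitrary rather than the one you chose in $(1)\Rightarrow(2)$, so the essential smallness of $\mathcal{C}_\kappa$, which you need for $Y$ to be a small colimit of representables, is not yet justified. It follows because $\mathcal{C}_\kappa\subset\mathcal{C}_\mu$ for every regular $\mu\geq\kappa$. You may take such a $\mu$ with $\mathcal{C}^{\mathrm{op}}$ $\mu$-accessible, and then $\mathcal{C}_\mu$ is essentially small.
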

	\begin{cor}[See \cite{hesselholt2024dirac} A.4]
		Let $\mc{C}$ be a coaccessible $\infty$-category. Then the Yoneda embedding $$\mc{C}\to \funct^{\op{ac}}(\mc{C}^{\mathrm{op}},\mc{S})$$ exhibits $\funct^{\op{ac}}(\mc{C}^{\op{op}},\mc{S})\simeq \mc{P}^{\op{small}}_\emptyset(\mc{C})$, where $\funct^{\op{ac}}(\mc{C}^{\op{op}},\mc{S})$ denotes the full subcategory of accessible functors.
	\end{cor}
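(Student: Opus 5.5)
The plan is to identify the accessible presheaves with the free small-colimit completion, i.e. to show that the Yoneda embedding $j:\mathcal{C}\to\funct^{\op{ac}}(\mathcal{C}^{\op{op}},\mathcal{S})$ satisfies the universal property of \cref{httcocompletion}. Here we take $\mathcal{K}=\emptyset$ and $\mathcal{K}'$ the collection of small simplicial sets. Throughout, $\mathcal{C}$ is a large $\infty$-category, so the relevant instance of \cref{httcocompletion} lives in $\widehat{\op{Cat}}_\infty$. There, $\mathcal{P}^{\op{small}}_\emptyset(\mathcal{C})$ is characterized as the smallest full subcategory of $\funct(\mathcal{C}^{\op{op}},\mathcal{S})$ that contains the representables and is closed under small colimits.

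The first step is to show that $\funct^{\op{ac}}(\mathcal{C}^{\op{op}},\mathcal{S})$ is exactly this smallest full subcategory. This is where we use \cref{diracA2}, via the equivalence (1)$\Leftrightarrow$(3).
\begin{itemize}
\item Every accessible functor is a small colimit of representables, so it lies in the closure.
\item Conversely, a small colimit of representables is accessible by (3)$\Rightarrow$(1).
\item For closure under small colimits, we argue that a small colimit of functors that are each small colimits of representables is again a small colimit of representables. The point is that a small colimit of small colimits can be rewritten as a single small colimit, for instance via the Grothendieck construction over the indexing diagram. So (3) is preserved.
\item Representables are trivially in the subcategory.
\end{itemize}

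The second step is to identify this closure with $\mathcal{P}^{\op{small}}_\emptyset(\mathcal{C})$, using the standard construction in \cite[5.3.6]{htt}. There, $\mathcal{P}^{\mathcal{K}'}_{\mathcal{K}}(\mathcal{C})$ is built inside the (very large) presheaf category as the smallest subcategory containing the representables and closed under $\mathcal{K}'$-colimits, with $\mathcal{K}$-colimits in $\mathcal{C}$ imposed. For $\mathcal{K}=\emptyset$ no colimits are imposed, so it is precisely our closure. The inclusion into $\funct(\mathcal{C}^{\op{op}},\mathcal{S})$ preserves small colimits because these are computed pointwise, and the subcategory is closed under them. The universal property then follows from \cite[Prop.~5.3.6.2]{htt} applied in the large universe: for any $\mathcal{D}$ admitting small colimits, restriction along $j$ gives an equivalence between small-colimit-preserving functors out of $\mathcal{P}^{\op{small}}_\emptyset(\mathcal{C})$ and all functors $\mathcal{C}\to\mathcal{D}$, given by left Kan extension.

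The main obstacle I expect is set-theoretic bookkeeping. We must check that $\funct^{\op{ac}}(\mathcal{C}^{\op{op}},\mathcal{S})$ is locally small, so that it genuinely sits in $\widehat{\op{Cat}}_\infty$ rather than only in very large categories, and that the left Kan extensions involved exist. Local smallness follows because each accessible $X$ is a left Kan extension from the essentially small $(\mathcal{C}_\kappa)^{\op{op}}$, by (2) of \cref{diracA2}. Hence, for $X$ and $Y$ both extended from level $\kappa$ (choosing a common $\kappa$), we get
\[
\op{Map}(X,Y)\simeq\op{Map}(X|_{\mathcal{C}_\kappa},Y|_{\mathcal{C}_\kappa}),
\]
which is a mapping space between functors on an essentially small category and therefore small. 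I would verify this carefully first, and then cite \cite[5.3.6.2]{htt} relative to the universe $V_{\delta_1}$ to conclude.
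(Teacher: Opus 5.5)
Your route is the one the paper has in mind. You use \cref{diracA2}, (1)$\Leftrightarrow$(3), to identify accessible presheaves with small colimits of representables, and you check that this class is closed under small colimits. It is then HTT's model of $\mathcal{P}^{\mathrm{small}}_\emptyset(\mathcal{C})$ inside the very large presheaf category, which gives the universal property of \cref{httcocompletion}.

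The one step that does not work as written is closure under small colimits. Take $F=\operatorname{colim}_{i\in I}F_i$ with $F_i\simeq\operatorname{colim}_{J_i}h_{c_{ij}}$. To rewrite $F$ as a single colimit via a Grothendieck construction, the presentations must be functorial in $i$: you need a diagram $I\to\operatorname{Cat}_\infty$, $i\mapsto J_i$, over $\mathcal{C}$ that induces the maps $F_i\to F_{i'}$. Condition (3) gives you no such thing, since a map out of a colimit of representables need not lift to a map of indexing diagrams.

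Argue at the level of (1) instead. If each $F_i$ preserves $\kappa_i$-filtered colimits, pick a regular $\kappa\geq\sup_i\kappa_i$, which is small because $I$ is. Then every $F_i$ preserves $\kappa$-filtered colimits. Colimits of presheaves are computed pointwise and colimits commute with colimits, so $F$ preserves $\kappa$-filtered colimits too. This is the colimit half of \cref{diracA9}, and (1)$\Rightarrow$(3) then finishes the step.

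Your own idea can also be rescued. Each $F_i$ is left Kan extended from the small full subcategory on the $c_{ij}$. Hence all $F_i$, and therefore $F$, are left Kan extended from the small union $\mathcal{C}_0$ of these subcategories. So $F$ is the colimit of representables over the small category of elements of $F|_{\mathcal{C}_0}$.

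Your local-smallness verification is correct, but it is not what membership in $\widehat{\operatorname{Cat}}_\infty$ requires. Objects there are large $\infty$-categories, not necessarily locally small, and $\operatorname{Fun}(\mathcal{C}^{\mathrm{op}},\mathcal{S})$ is already one. The size point that actually matters is different: the closure formed in $\operatorname{Fun}(\mathcal{C}^{\mathrm{op}},\widehat{\mathcal{S}})$ must consist of $\mathcal{S}$-valued presheaves. This holds because $\mathcal{S}\subset\widehat{\mathcal{S}}$ is closed under small colimits.
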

	\begin{prop}[See \cite{hesselholt2024dirac} A.9]\label{diracA9}
		Let $\mathcal{C}$ be a coaccessible $\infty$-category. The $\infty$-category $\mathcal{P}^{\op{ac}}(\mathcal{C})$ of accessible presheaves of anima on $\mathcal{C}$ admits all small limits and colimits, and both are calculated pointwise.
	\end{prop}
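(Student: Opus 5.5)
The plan is to prove that the Yoneda embedding $\mathcal{C}\to\mathcal{P}^{\op{ac}}(\mathcal{C})$, restricted to accessible presheaves, has small limits and colimits computed pointwise. Concretely, we show that the full subcategory $\funct^{\op{ac}}(\mathcal{C}^{\op{op}},\mathcal{S})\subset\funct(\mathcal{C}^{\op{op}},\widehat{\mathcal{S}})$ is closed under small limits and small colimits. Since the ambient very large functor category has all small (indeed large) limits and colimits computed pointwise, closure immediately gives the statement.

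Colimits are the easy half. By \cref{diracA2}(3), an accessible presheaf is a small colimit of representables. A small colimit of small colimits of representables is again a small colimit of representables: index over the Grothendieck construction of the diagram of diagrams, which is small. Hence $\funct^{\op{ac}}$ is closed under small colimits in the ambient category.

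Limits need more care. First, finite limits: given a finite diagram $X_i$, use \cref{diracA2}(2) to choose one small regular cardinal $\kappa$ that works for all $i$. This is possible because if $X$ is a left Kan extension from $\mathcal{C}_\kappa$, it is also one from $\mathcal{C}_{\kappa'}$ for $\kappa'\geq\kappa$ with $\mathcal{C}_\kappa\subset\mathcal{C}_{\kappa'}$, by transitivity of Kan extensions. By \cref{diracA2}(1), accessible means "preserves $\kappa$-cofiltered limits in $\mathcal{C}$" (i.e.\ $\kappa$-filtered colimits in $\mathcal{C}^{\op{op}}$) for some $\kappa$. In $\mathcal{S}$, $\kappa$-filtered colimits commute with $\kappa$-small limits. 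So for any $\kappa$-small diagram of accessible functors, choose a common $\kappa'\geq\kappa$ large enough that every $X_i$ preserves $\kappa'$-filtered colimits; the pointwise limit then also preserves them, and is therefore accessible. A general small limit is $\kappa$-small for some small $\kappa$, which handles all small limits at once, provided a common accessibility rank exists for a small family. That holds since a small family has a supremum of ranks, and we may enlarge to a regular cardinal.

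The main obstacle is justifying the equivalence in \cref{diracA2} between accessibility of $X$ and preservation of $\kappa$-filtered colimits of $\mathcal{C}^{\op{op}}$. In particular we need that $\mathcal{C}^{\op{op}}$ being accessible allows increasing $\kappa$ freely, so that raising ranks is harmless. I would invoke the standard fact (\cite[5.4.2]{htt}) that an accessible category is $\kappa'$-accessible for arbitrarily large regular $\kappa'\gg\kappa$ (the sharply larger relation). For such $\kappa'$, an accessible functor preserving $\kappa$-filtered colimits preserves $\kappa'$-filtered colimits. Then "preserves $\kappa'$-filtered colimits" combined with $\kappa'$-accessibility of $\mathcal{C}^{\op{op}}$ yields the Kan-extension description of \cref{diracA2}(2), closing the argument.
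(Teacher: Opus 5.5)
Your overall strategy is right, and it is the standard proof of the cited result; the paper gives no argument of its own, only the reference to \cite{hesselholt2024dirac}. You regard $\mathcal{P}^{\op{ac}}(\mathcal{C})$ as a full subcategory of the pointwise functor category and show it is closed there under small limits and colimits. Your limit argument is correct. For a diagram indexed by a small $I$, choose a regular $\kappa'$ exceeding the size of $I$ and the continuity ranks of all the $X_i$. Then use that $\kappa'$-filtered colimits commute with $\kappa'$-small limits in $\mathcal{S}$ \cite[Proposition 5.3.3.3]{htt}.

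The colimit step, however, is not justified as written. To ``index over the Grothendieck construction of the diagram of diagrams'' you need the presentations $X_i\simeq\colim_{J_i}h_{c}$ to be chosen functorially in $i$. That means a functor $i\mapsto J_i$ over $\mathcal{C}$ that induces the transition maps $X_i\to X_{i'}$. An arbitrary choice of small presentations gives no such functor, and the canonical functorial presentation (the category of elements over all of $\mathcal{C}$) is large. The repair is the same argument you already use for limits, and it is easier. Choose one regular $\kappa$ for which every $X_i$ is $\kappa$-continuous. Then for any $\kappa$-filtered diagram $(c_j)$ in $\mathcal{C}^{\op{op}}$,
\[
(\colim_i X_i)(\colim_j c_j)\simeq\colim_i\colim_j X_i(c_j)\simeq\colim_j\colim_i X_i(c_j),
\]
so the pointwise colimit is again $\kappa$-continuous.

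Separately, the ``main obstacle'' in your last paragraph is not one. Accessibility of $X$ is by definition $\kappa$-continuity for some $\kappa$; this is condition (1) of \cref{diracA2}, not something derived from it. Moreover, $\kappa$-continuity implies $\kappa'$-continuity for every $\kappa'\geq\kappa$, simply because every $\kappa'$-filtered $\infty$-category is $\kappa$-filtered. The sharply-larger relation matters only for the Kan-extension description (2) at a new rank, which your argument never uses. So the detour through (2) and transitivity of Kan extensions in your finite-limit paragraph can be dropped.
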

	\begin{prop}
		Let $\mc{C}$ be a copresentable $\infty$-category and $\kappa$ be a small regular cardinal. Then the Yoneda embedding $$\mc{C}\to \funct^{\op{ac}}_{\kappa\op{-lex}}(\mc{C}^{\mathrm{op}},\mc{S})$$ exhibits $\funct^{\op{ac}}_{\kappa\op{-lex}}(\mc{C}^{\op{op}},\mc{S})\simeq \mc{P}^{\op{small}\, \kappa\op{-fil}}_\emptyset(\mc{C})=\op{Ind}_\kappa(\mc{C})$.
	\end{prop}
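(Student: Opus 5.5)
We argue exactly as in the small case, using \cref{diracA2} and \cref{diracA9}. Write $\mathcal{P}^{\op{ac}}(\mathcal{C})=\funct^{\op{ac}}(\mathcal{C}^{\op{op}},\mathcal{S})$. By the preceding corollary this is $\mathcal{P}^{\op{small}}_\emptyset(\mathcal{C})$, the free small-colimit completion of $\mathcal{C}$. Inside it, $\op{Ind}_\kappa(\mathcal{C})=\mathcal{P}^{\op{small}\,\kappa\op{-fil}}_\emptyset(\mathcal{C})$ is, by the usual description of relative cocompletions \cite[5.3.6.10]{htt} as in \cref{httcocompletion}, the smallest full subcategory containing the representables and closed under small $\kappa$-filtered colimits. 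The goal is therefore to show that this subcategory coincides with the accessible functors that preserve $\kappa$-small limits, i.e.\ that send $\kappa$-small colimits in $\mathcal{C}$ to limits in $\mathcal{S}$.

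First we show that $\op{Ind}_\kappa(\mathcal{C})$ is contained in $\funct^{\op{ac}}_{\kappa\op{-lex}}$. Representables are accessible, because $\mathcal{C}$ is coaccessible, and they are $\kappa$-left exact. By \cref{diracA9}, colimits in $\mathcal{P}^{\op{ac}}(\mathcal{C})$ are computed pointwise. Since $\kappa$-filtered colimits commute with $\kappa$-small limits in $\mathcal{S}$, the class of $\kappa$-left exact accessible functors is closed under small $\kappa$-filtered colimits. This gives the inclusion.

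Conversely, let $X$ be accessible and $\kappa$-left exact. By \cref{diracA2}, $X$ is a small colimit of representables. Equivalently, after enlarging to some regular cardinal $\lambda\geq\kappa$, $X$ is left Kan extended from the $\lambda$-cocompact objects $\mathcal{C}_\lambda$. Consider the category of elements $\mathcal{C}_{/X}$ restricted to $\mathcal{C}_\lambda$; this is small. Its representables have colimit $X$, computed pointwise.

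The key step, and the main obstacle, is to show that this indexing category is $\kappa$-filtered. For that it suffices that it admits $\kappa$-small colimits. Since $\mathcal{C}$ is copresentable it has all small colimits, and $\mathcal{C}_\lambda$ is closed under $\kappa$-small colimits once $\lambda\gg\kappa$ (the colimits are formed in $\mathcal{C}$). Given a $\kappa$-small diagram $c_i$ with points $x_i\in X(c_i)$, $\kappa$-left exactness gives $X(\varinjlim c_i)\simeq\varprojlim X(c_i)$. Hence the compatible family of points lifts to a point of $X(\varinjlim c_i)$, which supplies the cocone.

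The subtlety is replacing "$X$ is a small colimit of representables" by "$X$ is the colimit over the full category of elements restricted to a small cofinal piece." We handle this by choosing $\lambda$ large enough that $X$ is left Kan extended from $\mathcal{C}_\lambda$ and $\mathcal{C}_\lambda$ is closed under $\kappa$-small colimits; a standard cardinal enlargement, as in \cite[5.4.2]{htt}, achieves both. With that, $X$ is a $\kappa$-filtered colimit of representables, so $X\in\op{Ind}_\kappa(\mathcal{C})$. The Yoneda embedding then exhibits the stated equivalence. Full faithfulness is automatic, since both sides are full subcategories of $\mathcal{P}^{\op{ac}}(\mathcal{C})$ containing the image of Yoneda.
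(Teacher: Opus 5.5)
Your proof is correct. The first half matches the paper: representables are accessible and $\kappa$-left exact, and pointwise $\kappa$-filtered colimits commute with $\kappa$-small limits, so $\mathrm{Ind}_\kappa(\mathcal{C})$ lands in the accessible $\kappa$-left exact functors.

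The converse is where you take a genuinely different route.

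\textbf{The paper's converse.} It regards $F$ as an object of the large completion $\widehat{\mathrm{Ind}}_\kappa(\mathcal{C})=\mathrm{Fun}_{\kappa\text{-lex}}(\mathcal{C}^{\mathrm{op}},\widehat{\mathcal{S}})$. There $F$ is a large $\kappa$-filtered colimit of representables. Using Kerodon Tag 0620, this colimit is rewritten as a $\delta_0$-filtered colimit of the small sub-colimits $F_{I'}$, with $I'$ ranging over small $\kappa$-filtered full subcategories. Since $F$ is $\delta_0$-compact by \cref{diracA2}, it is a retract of some $F_{I'}$. Idempotent completeness then finishes.

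\textbf{Your converse.} You cut down to a small index category at the outset. You choose $\lambda$ so that $X$ is left Kan extended from $\mathcal{C}_\lambda$ and $\mathcal{C}_\lambda$ is closed under $\kappa$-small colimits. Then the category of elements of $X|_{\mathcal{C}_\lambda}$ is small and has $\kappa$-small colimits. Hence $X$ is, on the nose, a small $\kappa$-filtered colimit of representables; this is the small-case argument of \cite[\S5.3.5]{htt} transplanted.

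\textbf{Trade-off.} Your route is more elementary and gives a canonical presentation of $X$ rather than a retract. It avoids the large Ind-category, Tag 0620 and the compactness trick. The price is a cardinal-arithmetic input that the paper never needs.

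\textbf{A correction on that input.} Your parenthetical ``once $\lambda\gg\kappa$ (the colimits are formed in $\mathcal{C}$)'' is not a justification.
\begin{itemize}
\item Closure of $\mathcal{C}_\lambda$ under $\kappa$-small colimits means $(\mathcal{C}^{\mathrm{op}})^{\lambda}$ is closed under $\kappa$-small limits.
\item This can fail when $\lambda$ is small relative to the accessibility rank of $\mathcal{C}^{\mathrm{op}}$. For example, in $\prod_{I}\mathcal{S}$ with $|I|\geq\lambda$ the terminal object is not $\lambda$-compact.
\item It does hold for suitably large $\lambda$. The limit functors $\mathrm{Fun}(K,\mathcal{C}^{\mathrm{op}})\to\mathcal{C}^{\mathrm{op}}$, for the small set of $\kappa$-small $K$, are accessible. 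So they preserve $\lambda$-compact objects for arbitrarily large $\lambda$.
\end{itemize}
You should also say why both requirements on $\lambda$ can be met at once. If $X$ is left Kan extended from $\mathcal{C}_{\lambda_0}$, then $X\simeq j_!j^*X$ for every inclusion $j\colon\mathcal{C}_\lambda^{\mathrm{op}}\to\mathcal{C}^{\mathrm{op}}$ with $\lambda\geq\lambda_0$, by transitivity of Kan extension along fully faithful functors. So enlarging $\lambda$ is harmless.
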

	\begin{proof}
		First we observe that $\op{Ind}_\kappa(\mc{C})=\funct^{\op{ac}}_{\kappa\op{-lex}}(\mc{C}^{\op{op}},\mc{S})\subset \funct_{\kappa\op{-lex}}(\mc{C}^{\op{op}},\widehat{\mc{S}})=\widehat{\op{Ind}}_\kappa(\mc{C})$ is closed under small $\kappa$-filtered colimits. We claim that any object in $\funct^{\op{ac}}_{\kappa\op{-lex}}(\mc{C}^{\op{op}},\mc{S})$ can be written as the retraction of a small $\kappa$-filtered colimit of representable functors. Then the result immediately follows. 
		
		Now let $F\in \funct^{\op{ac}}_{\kappa\op{-lex}}(\mc{C}^{\op{op}},\mc{S})$. Since $F\in \widehat{\op{Ind}}_\kappa(\mc{C})$, it can be written as a large $\kappa$-filtered colimit of representable functors $F\simeq \varinjlim_{i\in I}h_{X_i}$. For each small $\kappa$-filtered full subcategory $I^{\prime} \subset I$, let $F_{I^{\prime}}$ denote the colimit $\varinjlim_{\alpha \in I^{\prime}} h_{X_\alpha}$. Then by \cite[\href{https://kerodon.net/tag/0620}{0620}]{lurie2025kerodon}, $F$ can be written as a large $\delta_0$-filtered colimit of the diagram $\left\{F_{I^{\prime}}\right\}$, where $I^{\prime}$ ranges over all small $\kappa$-filtered full subcategories of $I$. However, by \cref{diracA2}, $F$ is largely $\delta_0$-compact in $\widehat{\op{Ind}}_\kappa(\mc{C})$, so $F$ is a retraction of some $F_{I'}$.
	\end{proof}
	\begin{prop}
		Let $\mc{C}$ be a copresentable $\infty$-category and $\kappa$ be a small regular cardinal. Then $\op{Ind}_\kappa(\mc{C})\simeq \mc{P}^{\op{small}}_{\kappa\op{-small}}(\mc{C})$.
	\end{prop}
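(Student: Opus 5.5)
The plan is to show that $\op{Ind}_\kappa(\mc{C})$ satisfies the universal property of $\mc{P}^{\op{small}}_{\kappa\op{-small}}(\mc{C})$: it is the cocompletion of $\mc{C}$ under small colimits that preserves the $\kappa$-small colimits already existing in $\mc{C}$. By the preceding proposition, $\op{Ind}_\kappa(\mc{C})\simeq\funct^{\op{ac}}_{\kappa\op{-lex}}(\mc{C}^{\op{op}},\mc{S})$, the accessible presheaves that send $\kappa$-small colimits in $\mc{C}$ to $\kappa$-small limits in $\mc{S}$. On the other side, \cref{httcocompletion} together with the characterization of $\mc{P}^{\op{small}}_\emptyset(\mc{C})\simeq\funct^{\op{ac}}(\mc{C}^{\op{op}},\mc{S})$ identifies $\mc{P}^{\op{small}}_{\kappa\op{-small}}(\mc{C})$ with the smallest full subcategory of $\funct^{\op{ac}}(\mc{C}^{\op{op}},\mc{S})$ that is closed under small colimits and consists of objects local for the maps $\varinjlim_{K} h_{X_k}\to h_{\varinjlim_K X_k}$, where $K$ runs over $\kappa$-small diagrams. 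More precisely, it is the localization of $\funct^{\op{ac}}(\mc{C}^{\op{op}},\mc{S})$ at these comparison maps, in the style of \cite[5.3.6]{htt}.

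The first step is to check that $\funct^{\op{ac}}_{\kappa\op{-lex}}(\mc{C}^{\op{op}},\mc{S})$ admits all small colimits. Filtered ($\kappa$-filtered) colimits are computed pointwise by \cref{diracA9}, and they preserve $\kappa$-lex functors because $\kappa$-filtered colimits commute with $\kappa$-small limits in $\mc{S}$. Finite coproducts and pushouts are handled by observing that $\mc{C}$ is copresentable, so $\mc{C}$ admits $\kappa$-small colimits. Then $\op{Ind}_\kappa(\mc{C})$ is the $\op{Ind}_\kappa$-completion of an $\infty$-category with $\kappa$-small colimits, and it is cocomplete with colimits built from $\kappa$-small colimits in $\mc{C}$ together with $\kappa$-filtered colimits, as in \cite[5.3.5.14]{htt}. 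For the Yoneda embedding $\mc{C}\to\op{Ind}_\kappa(\mc{C})$, preservation of $\kappa$-small colimits follows from the same description.

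The second step is the universal property. Given a cocomplete $\mc{D}$ and a functor $f:\mc{C}\to\mc{D}$ preserving $\kappa$-small colimits, $f$ extends uniquely to a $\kappa$-filtered-colimit-preserving functor $\op{Ind}_\kappa(\mc{C})\to\mc{D}$ by \cite[5.3.5.10]{htt}. By \cite[5.5.1.9]{htt}-type arguments, this extension preserves all small colimits, since every small colimit decomposes into $\kappa$-small colimits and $\kappa$-filtered colimits. Comparing the two universal properties inside $\widehat{\op{Cat}}_\infty^{\mc{K}}$ gives the equivalence.

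The main obstacle is size. $\mc{C}$ is large, so \cite{htt} does not apply verbatim, and one must ensure that the extension along $\op{Ind}_\kappa$ really agrees with the relative cocompletion built from small colimits only. For this I would use the retract description established in the preceding proposition: every object is a retract of a small $\kappa$-filtered colimit of representables. This keeps all colimits in the argument small, so the large-$\kappa$ versions of \cite[5.3.5.14, 5.3.6.2]{htt} apply inside the universe $V_{\delta_1}$.
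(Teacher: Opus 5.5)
There is a genuine gap. The two facts that carry all of the content for large $\mathcal{C}$ are asserted, not proved:
\begin{itemize}
\item Step 1: $\mathrm{Ind}_\kappa(\mathcal{C})$ admits small colimits.
\item Step 2: the $\kappa$-filtered extension $\mathrm{Ind}_\kappa(\mathcal{C})\to\mathcal{D}$ of $f$ preserves $\kappa$-small colimits.
\end{itemize}

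The results you lean on, \cite[5.3.5.14]{htt} and the companion statements behind it, are proved for small $\mathcal{C}$ by exhibiting $\mathrm{Ind}_\kappa(\mathcal{C})$ as an accessible localization of $\mathcal{P}(\mathcal{C})$. Run in the universe $V_{\delta_1}$, they describe $\widehat{\mathrm{Ind}}_\kappa(\mathcal{C})=\mathrm{Fun}_{\kappa\text{-lex}}(\mathcal{C}^{\mathrm{op}},\widehat{\mathcal{S}})$, not the subcategory of accessible functors. In that category only $\kappa$-filtered colimits are pointwise. Nothing you wrote shows that, for example, a pushout of accessible $\kappa$-lex presheaves formed there is again accessible.

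The remark that $\mathcal{C}$ has $\kappa$-small colimits only handles diagrams of representables. Also, finite coproducts and pushouts together with $\kappa$-filtered colimits do not give all small colimits once $\kappa>\omega$; you need all $\kappa$-small colimits. Likewise, decomposing small colimits into $\kappa$-small and $\kappa$-filtered ones only reduces Step 2 to preserving $\kappa$-small colimits of non-representable objects, which is the actual issue. Your last paragraph names the retract description from the preceding proposition, but does not say how it yields either fact.

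The missing idea is $\delta_0$-compactness inside $\widehat{\mathrm{Ind}}_\kappa(\mathcal{C})$.
\begin{itemize}
\item Large $\delta_0$-filtered colimits there are computed pointwise, so representables are $\delta_0$-compact.
\item Hence every object of $\mathrm{Ind}_\kappa(\mathcal{C})$ is $\delta_0$-compact, being a retract of a small $\kappa$-filtered colimit of representables.
\item Conversely, a $\delta_0$-compact object can be written, as in the preceding proof, as a large $\delta_0$-filtered colimit of the $F_{I'}$. It is therefore a retract of some $F_{I'}$, so it lies in $\mathrm{Ind}_\kappa(\mathcal{C})$ because the latter is idempotent complete.
\end{itemize}
Thus $\mathrm{Ind}_\kappa(\mathcal{C})=\widehat{\mathrm{Ind}}_\kappa(\mathcal{C})^{\delta_0}$. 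This subcategory is closed under small colimits in $\widehat{\mathrm{Ind}}_\kappa(\mathcal{C})$, and it is the smallest full subcategory containing the representables with that property. By the construction behind \cref{httcocompletion}, that is exactly $\mathcal{P}^{\mathrm{small}}_{\kappa\text{-small}}(\mathcal{C})$.

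This is the paper's argument, and it makes a separate universal-property check unnecessary. If you want to keep your route, the same compactness argument is what would fill Steps 1 and 2. Apply it to $\mathrm{Fun}(K,\widehat{\mathrm{Ind}}_\kappa(\mathcal{C}))$ for $\kappa$-small $K$, where pointwise $\delta_0$-compact diagrams are $\delta_0$-compact.
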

	\begin{proof}
		By the construction in \cite[Corollary 5.3.6.10]{htt}, it is equivalent to prove that $\op{Ind}_\kappa(\mc{C})\subset\widehat{\op{Ind}}_\kappa(\mc{C})$ is the smallest full subcategory which contains representables and closed under small colimits, i.e. to prove $\op{Ind}_\kappa(\mc{C})=\widehat{\op{Ind}}_\kappa(\mc{C})^{\delta_0}$. Since the representables generate $\widehat{\op{Ind}}_\kappa(\mc{C})$ under large colimits, it suffices to show that $\op{Ind}_\kappa(\mc{C})\subset\widehat{\op{Ind}}_\kappa(\mc{C})^{\delta_0}$ and $\op{Ind}_\kappa(\mc{C})$ is idempotent complete. Those are implied by \cref{diracA2} and \cref{diracA9}.
	\end{proof}
	\begin{de}
		Let $\hatcat^{\kappa\op{-lex}}$ denote the subcategory spanned by those $\infty$-categories which admit finite limits and those functors which preserve $\kappa$-small limits, where $\kappa<\delta_1$ is a large regular cardinal.
	\end{de}
	\begin{prop}
		Let $\kappa<\lambda<\delta_1 $ be two large regular cardinals. Then there exists an adjoint pair
		$$\hatcat^{\kappa\op{-lex}}\underset{}{\stackrel{\op{Pro}^\lambda_\kappa}{\rightleftarrows}}\hatcat^{\lambda\op{-lex}}$$
		by the dual version of \textup{\cite[Corollary 5.3.6.10]{htt}.}
	\end{prop}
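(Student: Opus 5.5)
The plan is to obtain the adjunction formally, by dualizing the cocompletion result \cref{httcocompletion}. Set $\mathcal{K}$ to be the collection of $\kappa$-small simplicial sets and $\mathcal{K}'$ the collection of $\lambda$-small simplicial sets. Both collections are $\delta_1$-small because $\kappa<\lambda<\delta_1$, and $\mathcal{K}\subset\mathcal{K}'$. Passing to opposite categories gives an equivalence $(-)^{\op{op}}:\hatcat\xrightarrow{\sim}\hatcat$. This equivalence carries $\hatcat^{\kappa\op{-lex}}$ isomorphically onto $\widehat{\op{Cat}}_\infty^{\mathcal{K}}$, since an $\infty$-category admits $\kappa$-small limits exactly when its opposite admits $\kappa$-small colimits, and a functor preserves them exactly when its opposite does. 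The same holds for $\lambda$.

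Next I apply \cref{httcocompletion} to the inclusion $\widehat{\op{Cat}}_\infty^{\mathcal{K}'}\subset\widehat{\op{Cat}}_\infty^{\mathcal{K}}$, which yields a left adjoint $\mathcal{P}^{\mathcal{K}'}_{\mathcal{K}}$. Conjugating by $(-)^{\op{op}}$ turns this into a left adjoint $\op{Pro}^\lambda_\kappa(\mathcal{C}):=\mathcal{P}^{\mathcal{K}'}_{\mathcal{K}}(\mathcal{C}^{\op{op}})^{\op{op}}$ to the inclusion $\hatcat^{\lambda\op{-lex}}\subset\hatcat^{\kappa\op{-lex}}$. Conjugation is legitimate here because $(-)^{\op{op}}$ is covariant on functors, so it preserves the direction of adjunctions. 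Unwinding the universal property shows that $\op{Pro}^\lambda_\kappa(\mathcal{C})$ is the universal way of formally adding $\lambda$-small limits to $\mathcal{C}$ while keeping its existing $\kappa$-small limits.

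The main obstacle is size bookkeeping, which I expect to be only a mild one. The categories involved are large, so the construction $\mathcal{P}^{\mathcal{K}'}_{\mathcal{K}}$ has to be carried out inside $\widehat{\mathcal{P}}(\mathcal{C})=\funct(\mathcal{C}^{\op{op}},\widehat{\mathcal{S}})$, one universe up, and the resulting category has to be locally large. To handle this I would rerun the argument of \cite[5.3.6.10]{htt} with the universe shifted by one: $\delta_1$ plays the role of the small cardinal and $\delta_2$ the role of the large one. The hypothesis $\lambda<\delta_1$ ensures that closure under $\lambda$-small colimits stays within large $\infty$-categories.

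A further point is that the definition requires objects of $\hatcat^{\kappa\op{-lex}}$ to admit finite limits. This causes no difficulty, since finite diagrams are $\kappa$-small and the definition therefore matches $\widehat{\op{Cat}}_\infty^{\mathcal{K}}$ under $(-)^{\op{op}}$.
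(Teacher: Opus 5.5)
Your proposal is correct and is essentially the paper's own argument. Conjugating the cocompletion adjunction of \cref{httcocompletion} (that is, \cite[Corollary 5.3.6.10]{htt}) by the involution $(-)^{\op{op}}$ of $\hatcat$ gives $\op{Pro}^\lambda_\kappa(\mathcal{C})=\mathcal{P}^{\mathcal{K}'}_{\mathcal{K}}(\mathcal{C}^{\op{op}})^{\op{op}}$, which agrees with the paper's remark $\op{Pro}^\lambda_\kappa(\mathcal{C})\simeq \op{Ind}^{\lambda}_\kappa(\mathcal{C}^{\op{op}})^{\op{op}}$.

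The one slip is in your last paragraph, where the implication runs the wrong way. That finite diagrams are $\kappa$-small shows that admitting $\kappa$-small limits implies admitting finite limits, not the converse. So the identification with $\widehat{\op{Cat}}_\infty^{\mathcal{K}}$ instead rests on reading ``finite limits'' in the definition of $\hatcat^{\kappa\op{-lex}}$ as the evident typo for ``$\kappa$-small limits''. That is the reading your first paragraph and the paper's citation already use.
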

	\begin{rem}
		We have the identification $\op{Pro}^\lambda_\kappa(\mc{C})\simeq \op{Ind}^{\lambda}_\kappa(\mc{C}^{\op{op}})^{\op{op}}$.
	\end{rem}
	\begin{prop}
		The above adjunction can be promoted to a symmetric monoidal adjunction
		$$\hatcat^{\kappa\op{-lex},\otimes}\underset{}{\stackrel{\op{Pro}^\lambda_\kappa}{\rightleftarrows}}\hatcat^{\lambda\op{-lex},\otimes}$$ by the dual version of \textup{\cite[Proposition 4.8.1.3]{ha}. }
	\end{prop}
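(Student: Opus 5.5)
The plan is to reduce everything to the colimit version \cite[Proposition 4.8.1.3]{ha}, by passing to opposite categories and moving up one universe. Let $\mathcal{K}_\kappa$ be the collection of $\kappa$-small simplicial sets, and likewise $\mathcal{K}_\lambda$. Since $\kappa<\lambda<\delta_1$, both are $\delta_1$-small collections. In the universe $V_{\delta_2}$ they therefore play the role that small collections of simplicial sets play in \cite[\S4.8.1]{ha}, and the large categories in $\hatcat$ play the role of the small ones. Write $\hatcat^{\mathcal{K}}$ as in \cref{httcocompletion}.

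\textbf{Step 1: transport along the opposite-category involution.} The involution $\mathcal{C}\mapsto\mathcal{C}^{\op{op}}$ of $\hatcat$ is an automorphism. It is symmetric monoidal for the cartesian product, since $(\mathcal{C}\times\mathcal{D})^{\op{op}}\simeq\mathcal{C}^{\op{op}}\times\mathcal{D}^{\op{op}}$ naturally. A functor preserves $\kappa$-small limits exactly when its opposite preserves $\kappa$-small colimits. Hence the involution restricts to equivalences
\[
\hatcat^{\kappa\op{-lex}}\simeq\hatcat^{\mathcal{K}_\kappa},\qquad \hatcat^{\lambda\op{-lex}}\simeq\hatcat^{\mathcal{K}_\lambda},
\]
and these are compatible with the two inclusions.

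\textbf{Step 2: define the monoidal structures and identify the left adjoint.} I \emph{define} the symmetric monoidal structure on $\hatcat^{\kappa\op{-lex}}$ by transporting the one on $\hatcat^{\mathcal{K}_\kappa}$ from \cite[\S4.8.1]{ha}, so that
\[
\mathcal{C}\otimes\mathcal{D}:=(\mathcal{C}^{\op{op}}\otimes\mathcal{D}^{\op{op}})^{\op{op}}.
\]
This object corepresents, in $\hatcat^{\kappa\op{-lex}}$, the functors out of $\mathcal{C}\times\mathcal{D}$ that preserve $\kappa$-small limits separately in each variable. Under Step 1, the left adjoint $\op{Pro}^\lambda_\kappa$ corresponds to $\mathcal{P}^{\mathcal{K}_\lambda}_{\mathcal{K}_\kappa}$, via the identification $\op{Pro}^\lambda_\kappa(\mathcal{C})\simeq\op{Ind}^\lambda_\kappa(\mathcal{C}^{\op{op}})^{\op{op}}$ recorded in the preceding remark.

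\textbf{Step 3: conclude.} \cite[Proposition 4.8.1.3]{ha}, applied in $V_{\delta_2}$, together with its companion statements in \cite[\S4.8.1]{ha}, gives two things. First, $\hatcat^{\mathcal{K}_\lambda}\subset\hatcat^{\mathcal{K}_\kappa}$ is a lax symmetric monoidal inclusion. Second, its left adjoint $\mathcal{P}^{\mathcal{K}_\lambda}_{\mathcal{K}_\kappa}$ is symmetric monoidal, i.e. the adjunction is a symmetric monoidal localization. Conjugating by the symmetric monoidal involution of Step 1 yields the desired symmetric monoidal adjunction for $\op{Pro}^\lambda_\kappa$. If one wants a formal statement that "a symmetric monoidal left adjoint is part of a monoidal adjunction", this is \cite[Corollary 7.3.2.7]{ha}.

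\textbf{Main obstacle: size bookkeeping.} One must check that the hypotheses of \cite[\S4.8.1]{ha} hold verbatim after raising the universe. Concretely, I need that $\mathcal{P}^{\mathcal{K}_\lambda}_{\mathcal{K}_\kappa}(\mathcal{C})$ of a large $\mathcal{C}$ is again large rather than very large. I would argue this from the explicit description as the closure of $\mathcal{C}$ under $\lambda$-small colimits inside $\widehat{\mathcal{P}}(\mathcal{C})$, which is a large-indexed iteration for $\lambda<\delta_1$. This is exactly the point that \cref{httcocompletion} asserts. A second point is that \cite[\S4.8.1]{ha} states the result for $\mathcal{K}\subset\mathcal{K}'$; I would also verify that the monoidal structure there is independent of whether one views $\mathcal{C}$ inside $\widehat{\mathcal{P}}$ or $\mathcal{P}$, which holds by the universal property.
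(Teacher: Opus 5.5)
Your proposal is correct and is what the paper means by ``the dual version of \cite[Proposition 4.8.1.3]{ha}'' (the paper gives nothing beyond that citation): transport along the symmetric monoidal involution $\mathcal{C}\mapsto\mathcal{C}^{\op{op}}$, apply Lurie's result (after enlarging the universe) to the adjunction between $\mathcal{P}^{\mathcal{K}_\lambda}_{\mathcal{K}_\kappa}$ and the inclusion, and conjugate back. One small correction: $\hatcat^{\mathcal{K}_\lambda}\subset\hatcat^{\mathcal{K}_\kappa}$ is a non-full subcategory, since a functor preserving $\kappa$-small colimits need not preserve $\lambda$-small ones, so the result is a symmetric monoidal adjunction but not a ``localization''; that gloss plays no role in your argument.
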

	\begin{rem}[Dual of \cite{ha} 4.8.1.9]
		$\mathrm{CAlg}(\hatcat)$ can be identified with the very large $\infty$-category of large symmetric monoidal $\infty$-categories. \\Unwinding the definitions, we see that $\mathrm{CAlg}(\hatcat^{\kappa\op{-lex}})$ can be identified with the subcategory of $\mathrm{CAlg}(\hatcat)$ spanned by the symmetric monoidal $\infty$-categories which are compatible with $\kappa$-small limits (meaning the tensor product $-\otimes-$ preserves $\kappa$-small limits separately), and those symmetric monoidal functors which preserve $\kappa$-small limits.
		
	\end{rem}
	\begin{cor}[Dual version of \cite{ha} 4.8.1.10] \label{moncomp}
		Let $\kappa<\lambda<\delta_1 $ be two large regular cardinals. By the following adjunction, $$\calg(\hatcat^{\kappa\op{-lex}})\underset{}{\stackrel{\op{Pro}^\lambda_\kappa}{\rightleftarrows}}\calg(\hatcat^{\lambda\op{-lex}})$$
		we see that for any large symmetric monoidal $\infty$-category $\mathcal{C}^{\otimes}$ for which the monoidal structure on $\mathcal{C}$ is compatible with $\kappa$-small limits, there exists a $\lambda$-completely large symmetric monoidal \infcat $\mathcal{D}^{\otimes}$ and a symmetric monoidal functor $\mathcal{C}^{\otimes} \rightarrow \mathcal{D}^{\otimes}$ with the following properties:
		\enu{
			\item The symmetric monoidal structure on $\mathcal{D}^{\otimes}$ is compatible with $\lambda$-small limits.
			\item The underlying functor $f: \mathcal{C} \rightarrow \mathcal{D}$ preserves $\kappa$-small limits.
			\item $f$ induces an identification $\mathcal{D} \simeq \op{Pro}^\lambda_\kappa(\mathcal{C})$, and is therefore fully faithful.
			\item $\mathcal{D}^{\otimes}$ is universal among those satisfying (1)-(3).}
		
	\end{cor}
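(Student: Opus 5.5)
The idea is to apply the adjoint functor theorem in its monoidal form, reducing to the two defining properties of the Pro-completion. First I identify the target. By the dual of \cref{httcocompletion} (with $\kappa$-small limits in place of colimits), the construction $\mathcal{C}\mapsto \op{Pro}^\lambda_\kappa(\mathcal{C})$ is left adjoint to the inclusion $\hatcat^{\lambda\op{-lex}}\subset\hatcat^{\kappa\op{-lex}}$. Via the identification $\op{Pro}^\lambda_\kappa(\mathcal{C})\simeq \op{Ind}^\lambda_\kappa(\mathcal{C}^{\op{op}})^{\op{op}}$, the unit $\mathcal{C}\to\op{Pro}^\lambda_\kappa(\mathcal{C})$ is the opposite of a Yoneda-type embedding, hence fully faithful. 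It also preserves $\kappa$-small limits, since $\op{Ind}^\lambda_\kappa$ preserves the existing $\kappa$-small colimits.

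Second, I use the symmetric monoidal promotion of this adjunction stated just above, which dualizes \cite[Proposition 4.8.1.3]{ha}. Its left adjoint $\op{Pro}^\lambda_\kappa$ is symmetric monoidal for the tensor products on $\hatcat^{\kappa\op{-lex}}$ and $\hatcat^{\lambda\op{-lex}}$ that corepresent functors preserving the relevant limits separately in each variable. The inclusion is then lax monoidal. Applying $\calg(-)$ therefore gives an adjunction
\[
\calg(\hatcat^{\kappa\op{-lex}})\rightleftarrows\calg(\hatcat^{\lambda\op{-lex}}),
\]
whose left adjoint is computed on underlying categories by $\op{Pro}^\lambda_\kappa$. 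This step follows the standard argument of \cite[Proposition 4.8.1.10]{ha} with limits in place of colimits.

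Third, I apply the dual of \cite[Remark 4.8.1.9]{ha}, recorded above. Under it, an object of $\calg(\hatcat^{\kappa\op{-lex}})$ is exactly a large symmetric monoidal $\infty$-category whose tensor product preserves $\kappa$-small limits separately. Given such a $\mathcal{C}^\otimes$, I set $\mathcal{D}^\otimes:=\op{Pro}^\lambda_\kappa(\mathcal{C})^\otimes$ and take $f$ to be the unit of the adjunction. The four properties then follow in turn.
\begin{itemize}
\item Property (1) holds because $\mathcal{D}^\otimes$ lies in $\calg(\hatcat^{\lambda\op{-lex}})$.
\item Property (2) holds because the unit is a morphism in $\calg(\hatcat^{\kappa\op{-lex}})$.
\item Property (3) follows from the first step, since the forgetful functor $\calg(\hatcat^{\kappa\op{-lex}})\to \hatcat^{\kappa\op{-lex}}$ commutes with the left adjoints.
\item Property (4) is the universal property of the unit of an adjunction.
\end{itemize}

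The main obstacle is the second step. One has to check that the tensor product on $\hatcat^{\kappa\op{-lex}}$ exists in the large setting and that $\op{Pro}^\lambda_\kappa$ is compatible with it, namely that the natural map
\[
\op{Pro}^\lambda_\kappa(\mathcal{C}\otimes\mathcal{C}')\to\op{Pro}^\lambda_\kappa(\mathcal{C})\otimes\op{Pro}^\lambda_\kappa(\mathcal{C}')
\]
is an equivalence. My plan is to pass to opposites throughout, where everything becomes the colimit statement of \cite[Proposition 4.8.1.3]{ha}. The size constraint $\kappa<\lambda<\delta_1$ keeps all the categories involved at most very large, so the argument there applies verbatim one universe up.
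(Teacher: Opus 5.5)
Your proposal is correct and matches the argument the paper intends. The adjunction comes from the dual of \cite[Corollary 5.3.6.10]{htt}, and its symmetric monoidal promotion is the dual of \cite[Proposition 4.8.1.3]{ha}; passing to $\operatorname{CAlg}$ and using the dual of \cite[Remark 4.8.1.9]{ha}, one takes $\mathcal{D}^\otimes$ to be the unit. This is the opposite of Lurie's proof of \cite[Corollary 4.8.1.10]{ha}, and your reduction to the colimit versions by passing to opposites is how the paper justifies each of these dual statements.
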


	\printbibliography[heading=bibintoc]

@article{1,
    AUTHOR = {Strickland, Neil P.},
     TITLE = {Formal schemes and formal groups},
   JOURNAL = {Contemp. Math.},
    VOLUME = {239},
      YEAR = {1999},
     PAGES = {263--352}
}

@misc{2,
    AUTHOR = {{The Stacks Project Authors}},
     TITLE = {Stacks Project},
       URL = {https://stacks.math.columbia.edu},
      YEAR = {2025},
 SHORTHAND = {Stacks}
}

@book{5,
    AUTHOR = {Hartshorne, Robin},
     TITLE = {Algebraic geometry},
    VOLUME = {52},
 PUBLISHER = {Springer},
      YEAR = {1977}
}

@book{12,
    AUTHOR = {Demazure, Michel and Gabriel, Pierre},
     TITLE = {Groupes alg{\'e}briques},
 PUBLISHER = {Masson \& Cie},
      YEAR = {1970}
}

@book{htt,
    AUTHOR = {Lurie, Jacob},
     TITLE = {Higher Topos Theory},
 PUBLISHER = {Princeton University Press},
      YEAR = {2009},
 SHORTHAND = {HTT}
}

@misc{ha,
    AUTHOR = {Lurie, Jacob},
     TITLE = {Higher Algebra},
      NOTE = {\url{https://www.math.ias.edu/~lurie/papers/HA.pdf}},
      YEAR = {2017},
 SHORTHAND = {HA}
}

@misc{stefanich2023classification,
    AUTHOR = {Stefanich, Germ{\'a}n},
     TITLE = {Classification of fully dualizable linear categories},
      NOTE = {\arxiv{2307.16337v2}},
      YEAR = {2025}
}

@misc{sag,
    AUTHOR = {Lurie, Jacob},
     TITLE = {Spectral Algebraic Geometry},
      NOTE = {\url{https://www.math.ias.edu/~lurie/papers/SAG-rootfile.pdf}},
      YEAR = {2018},
 SHORTHAND = {SAG}
}

@article{mathew2016galois,
    AUTHOR = {Mathew, Akhil},
     TITLE = {The Galois group of a stable homotopy theory},
   JOURNAL = {Adv. Math.},
    VOLUME = {291},
      YEAR = {2016},
     PAGES = {403--541}
}

@book{rudyak1998thom,
    AUTHOR = {Rudyak, Yuli B.},
     TITLE = {On Thom spectra, orientability, and cobordism},
 PUBLISHER = {Springer},
      YEAR = {1998}
}

@misc{ec1,
    AUTHOR = {Lurie, Jacob},
     TITLE = {Elliptic Cohomology I: Spectral Abelian Varieties},
      NOTE = {\url{https://www.math.ias.edu/~lurie/papers/Elliptic-I.pdf}},
      YEAR = {2018},
 SHORTHAND = {ECI}
}

@misc{dag4,
    AUTHOR = {Lurie, Jacob},
     TITLE = {Derived Algebraic Geometry IV: Deformation Theory},
      NOTE = {\url{https://www.math.ias.edu/~lurie/papers/DAG-IV.pdf}},
      YEAR = {2009},
 SHORTHAND = {DAGIV}
}

@misc{dag13,
    AUTHOR = {Lurie, Jacob},
     TITLE = {Derived Algebraic Geometry XIII: Rational and p-adic Homotopy Theory},
      NOTE = {\url{https://www.math.ias.edu/~lurie/papers/DAG-XIII.pdf}},
      YEAR = {2011},
 SHORTHAND = {DAGXIII}
}

@article{bachmann2022chow,
    AUTHOR = {Bachmann, Tom and Kong, Hana Jia and Wang, Guozhen and Xu, Zhouli},
     TITLE = {The Chow t-structure on the $\infty$-category of motivic spectra},
   JOURNAL = {Ann. of Math. (2)},
    VOLUME = {195},
    NUMBER = {2},
      YEAR = {2022},
     PAGES = {707--773}
}

@misc{lurie2025kerodon,
    AUTHOR = {Lurie, Jacob},
     TITLE = {Kerodon},
      NOTE = {Online monograph available at \url{https://kerodon.net}},
      YEAR = {2025},
 SHORTHAND = {Ker}
}

@article{hesselholt2023dirac,
    AUTHOR = {Hesselholt, Lars and Pstr{\k{a}}gowski, Piotr},
     TITLE = {Dirac geometry I: Commutative algebra},
   JOURNAL = {Peking Math. J.},
      YEAR = {2023},
     PAGES = {1--76}
}

@article{antieau2021cartier,
    AUTHOR = {Antieau, Benjamin and Nikolaus, Thomas},
     TITLE = {Cartier modules and cyclotomic spectra},
   JOURNAL = {J. Amer. Math. Soc.},
    VOLUME = {34},
    NUMBER = {1},
      YEAR = {2021},
     PAGES = {1--78}
}

@misc{ramzi2024locally,
    AUTHOR = {Ramzi, Maxime},
     TITLE = {Locally rigid $\infty$-categories},
      NOTE = {\arxiv{2410.21524v2}},
      YEAR = {2026}
}

@misc{ramzi2024dualizable,
    AUTHOR = {Ramzi, Maxime},
     TITLE = {Dualizable presentable $\infty$-categories},
      NOTE = {\arxiv{2410.21537}},
      YEAR = {2024}
}

@misc{burklund2020galois,
    AUTHOR = {Burklund, Robert and Hahn, Jeremy and Senger, Andrew},
     TITLE = {Galois reconstruction of Artin--Tate R-motivic spectra},
      NOTE = {\arxiv{2010.10325}},
      YEAR = {2020}
}

@misc{hebestreit2024note,
    AUTHOR = {Hebestreit, Fabian and Scholze, Peter},
     TITLE = {A note on higher almost ring theory},
      NOTE = {\arxiv{2409.01940}},
      YEAR = {2024}
}

@misc{ben2024perspective,
    AUTHOR = {Ben-Bassat, Oren and Kelly, Jack and Kremnizer, Kobi},
     TITLE = {A perspective on the foundations of derived analytic geometry},
      NOTE = {\arxiv{2405.07936}},
      YEAR = {2024}
}

@misc{raksit2020hochschild,
    AUTHOR = {Raksit, Arpon},
     TITLE = {Hochschild homology and the derived de Rham complex revisited},
      NOTE = {\arxiv{2007.02576}},
      YEAR = {2020}
}

@article{kelly2022analytic,
    AUTHOR = {Kelly, Jack and Kremnizer, Kobi and Mukherjee, Devarshi},
     TITLE = {An analytic Hochschild-Kostant-Rosenberg theorem},
   JOURNAL = {Adv. Math.},
    VOLUME = {410},
      YEAR = {2022},
     PAGES = {108694}
}

@article{hesselholt2024dirac,
    AUTHOR = {Hesselholt, Lars and Pstr{\k{a}}gowski, Piotr},
     TITLE = {Dirac geometry II: Coherent cohomology},
   JOURNAL = {Forum Math. Sigma},
    VOLUME = {12},
      YEAR = {2024},
     PAGES = {e27}
}

@article{toen2009dessous,
    AUTHOR = {To{\"e}n, Bertrand and Vaqui{\'e}, Michel},
     TITLE = {Au-dessous de Spec(Z)},
   JOURNAL = {J. K-Theory},
    VOLUME = {3},
    NUMBER = {3},
      YEAR = {2009},
     PAGES = {437--500}
}

@article{banerjee2012centre,
    AUTHOR = {Banerjee, Abhishek},
     TITLE = {Centre of monoids, centralisers, and localisation},
   JOURNAL = {Comm. Algebra},
    VOLUME = {40},
    NUMBER = {11},
      YEAR = {2012},
     PAGES = {3975--3993}
}

@article{banerjee2017noetherian,
    AUTHOR = {Banerjee, Abhishek},
     TITLE = {Noetherian schemes over abelian symmetric monoidal categories},
   JOURNAL = {Int. J. Math.},
    VOLUME = {28},
    NUMBER = {07},
      YEAR = {2017},
     PAGES = {1750051}
}

@article{mantovani2023localizations,
    AUTHOR = {Mantovani, Lorenzo},
     TITLE = {Localizations and completions of stable $\infty$-categories},
   JOURNAL = {Rend. Semin. Mat. Univ. Padova},
    VOLUME = {151},
      YEAR = {2024},
     PAGES = {1--62}
}

@article{hoyois2020cdh,
    AUTHOR = {Hoyois, Marc},
     TITLE = {Cdh descent in equivariant homotopy $K$-theory},
   JOURNAL = {Doc. Math.},
    VOLUME = {25},
      YEAR = {2020},
     PAGES = {457--482}
}

@book{cohn1985free,
    AUTHOR = {Cohn, Paul Moritz},
     TITLE = {Free rings and their relations},
 PUBLISHER = {Academic Press},
      YEAR = {1971}
}

@article{angeleri2020flat,
    AUTHOR = {Angeleri H{\"u}gel, Lidia and Marks, Frederik and {\v{S}}t'ov{\'i}{\v{c}}ek, Jan and Takahashi, Ryo and Vit{\'o}ria, Jorge},
     TITLE = {Flat ring epimorphisms and universal localizations of commutative rings},
   JOURNAL = {Q. J. Math.},
    VOLUME = {71},
    NUMBER = {4},
      YEAR = {2020},
     PAGES = {1489--1520}
}

@misc{arakawa2025monoidal,
    AUTHOR = {Arakawa, Kensuke},
     TITLE = {Monoidal Relative Categories Model Monoidal $\infty$-Categories},
      NOTE = {\arxiv{2504.14884}},
      YEAR = {2025}
}

@article{neeman2006non,
    AUTHOR = {Neeman, Amnon and Ranicki, Andrew and Schofield, Aidan},
     TITLE = {A non-commutative generalisation of Thomason's localisation theorem},
   JOURNAL = {London Math. Soc. Lecture Note Ser.},
    VOLUME = {330},
      YEAR = {2006},
     PAGES = {60}
}

@book{schofield1985representation,
    AUTHOR = {Schofield, Aidan Harry},
     TITLE = {Representation of rings over skew fields},
    VOLUME = {92},
 PUBLISHER = {Cambridge University Press},
      YEAR = {1985}
}

@incollection{lurie2008classification,
    AUTHOR = {Lurie, Jacob},
     TITLE = {On the classification of topological field theories},
 BOOKTITLE = {Current Developments in Mathematics, 2008},
 PUBLISHER = {International Press},
      YEAR = {2009},
     PAGES = {129--280}
}

@article{baez1995higher,
    AUTHOR = {Baez, John C. and Dolan, James},
     TITLE = {Higher-dimensional algebra and topological quantum field theory},
   JOURNAL = {J. Math. Phys.},
    VOLUME = {36},
    NUMBER = {11},
      YEAR = {1995},
     PAGES = {6073--6105}
}

@misc{kelly2025localisinginvariantsderivedbornological,
    AUTHOR = {Kelly, Jack and Mukherjee, Devarshi},
     TITLE = {Localising invariants in derived bornological geometry},
      NOTE = {\arxiv{2505.15750}},
      YEAR = {2025}
}

@misc{ramzi-BousfieldKan,
    AUTHOR = {Ramzi, Maxime},
     TITLE = {Deducing the Bousfield-Kan formula for homotopy (co)limits from first principles},
      NOTE = {Notes on author's webpage, \url{https://sites.google.com/view/maxime-ramzi-en/notes/bousfield-kan}}
}

@article{avramov1999locally,
    AUTHOR = {Avramov, Luchezar L.},
     TITLE = {Locally complete intersection homomorphisms and a conjecture of Quillen on the vanishing of cotangent homology},
   JOURNAL = {Ann. of Math. (2)},
    VOLUME = {150},
    NUMBER = {2},
      YEAR = {1999},
     PAGES = {455--487}
}

@article{bondarko2010weight,
    AUTHOR = {Bondarko, Mikhail V.},
     TITLE = {Weight structures vs. t-structures; weight filtrations, spectral sequences, and complexes (for motives and in general)},
   JOURNAL = {J. K-Theory},
    VOLUME = {6},
    NUMBER = {3},
      YEAR = {2010},
     PAGES = {387--504}
}

@article{bachmann2021norms,
    AUTHOR = {Bachmann, Tom and Hoyois, Marc},
     TITLE = {Norms in motivic homotopy theory},
   JOURNAL = {Ast{\'e}risque},
    VOLUME = {425},
      YEAR = {2021}
}

@article {MR4217953,
    AUTHOR = {Rasekh, Nima and Stonek, Bruno},
     TITLE = {The cotangent complex and {T}hom spectra},
   JOURNAL = {Abh. Math. Semin. Univ. Hambg.},
  FJOURNAL = {Abhandlungen aus dem Mathematischen Seminar der Universit\"at
              Hamburg},
    VOLUME = {90},
      YEAR = {2020},
    NUMBER = {2},
     PAGES = {229--252},
      ISSN = {0025-5858,1865-8784},
   MRCLASS = {55P43 (14F10)},
  MRNUMBER = {4217953},
MRREVIEWER = {Inbar\ Klang},
       DOI = {10.1007/s12188-020-00226-8},
       URL = {https://doi.org/10.1007/s12188-020-00226-8},
}

@misc{rossanigo2026quasiaffineschemessinglycompactly,
    AUTHOR = {Rossanigo, Giovanni},
     TITLE = {Quasi-affine schemes and singly compactly generated $t$-structures},
      NOTE = {\arxiv{2606.19417}},
      YEAR = {2026}
}
	\bigskip
	
	\textsc{Jiacheng Liang, Department of Mathematics, Johns Hopkins University, Baltimore, MD 21218, USA}
	
	\emph{Email address:} \href{mailto:jliang66@jhu.edu}{\texttt{jliang66@jhu.edu}}\bigskip

\end{document}